\def\bint{{\ifinner\rlap{\bf\kern.30em--}
\int\else\rlap{\bf\kern.35em--}\int\fi}\ignorespaces}
\def\sbint{{\ifinner\rlap{\bf\kern.32em--}
\hspace{0.078cm}\int\else\rlap{\bf\kern.45em--}\int\fi}\ignorespaces}
\def\vpz{\vphantom}
\def\rr{\mathbb{R}}
\def\rn{\mathbb{R}^n}
\def\cc{\mathbb{C}}
\def\nn{\mathbb{N}}
\def\zz{\mathbb{Z}}
\def\dz{\delta}
\def\ez{\epsilon}
\def\bz{\beta}
\def\gz{{\gamma}}
\def\vz{\varphi}
\def\kz{{\kappa}}
\def\tz{\theta}
\def\wz{\widetilde}
\def\ls{\lesssim}
\def\fz{\infty}
\def\az{\alpha}
\def\ca{{\mathcal A}}
\def\cb{{\mathcal B}}
\def\cg{{\mathcal G}}
\def\ccg{{\mathring{\mathcal G}}}
\def\cm{{\mathcal M}}
\def\cn{{\mathcal N}}
\def\cx{{\mathcal X}}
\def\cy{{\mathcal Y}}
\def\ta{\widetilde\alpha}
\def\tit{\widetilde\tau}
\def\lp{{L^p(\mathcal{X})}}
\def\icgg{\mathcal{G}_0^\eta(\beta,\gamma)}
\def\cggi{\mathring{\mathcal{G}}_0^\eta(\beta,\gamma)}
\def\cggii{\mathring{\mathcal{G}}_0^\eta(\beta_2,\gamma_2)}
\def\cggt{\mathring{\mathcal{G}}_0^\eta(\widetilde{\beta},\widetilde{\gamma})}
\def\icggt{\mathcal{G}_0^\eta(\widetilde{\beta},\widetilde{\gamma})}
\def\hb{\dot{B}^s_{p,q}(\mathcal{X})}
\def\hf{\dot{F}^s_{p,q}(\mathcal{X})}
\def\ihb{B^s_{p,q}(\mathcal{X})}
\def\ihf{F^s_{p,q}(\mathcal{X})}
\def\hfi{\dot{F}^s_{\infty,q}(\mathcal{X})}
\def\hbi{\dot{B}^s_{\infty,q}(\mathcal{X})}
\def\ihfi{F^s_{\infty,q}(\mathcal{X})}
\def\ihbi{B^s_{\infty,q}(\mathcal{X})}
\def\r{\right}
\def\lf{\left}
\def\noz{{\nonumber}}
\def\r{\right}
\def\lf{\left}
\def\at{{\mathop\mathrm{\,at\,}}}
\def\supp{{\mathop\mathrm{\,supp\,}}}
\def\diam{{\mathop\mathrm{\,diam\,}}}
\def\loc{{\mathop\mathrm{\,loc\,}}}
\def\BMO{{\mathop\mathrm{\,BMO\,}}}
\def\bmo{{\mathop\mathrm{\,bmo\,}}}
\DeclareMathOperator*{\esssup}{ess\ sup}
\def\eqref#1{(\ref{#1})}
\def\func#1{\mathop{\mathrm{#1}}}
\def\diam{\func{diam}}
\def\supp{\func{supp}}
\def\ya{y_\alpha^{k,m}}
\def\qa{Q_\alpha^{k,m}}
\def\qo{Q_\alpha^{0,m}}
\def\qop{Q_{\alpha'}^{0,m'}}
\def\yap{y_{\alpha'}^{k',m'}}
\def\qap{Q_{\alpha'}^{k',m'}}
\def\red{\color{red}}
\newtheorem{theorem}{Theorem}[section]
\newtheorem{lemma}[theorem]{Lemma}
\newtheorem{proposition}[theorem]{Proposition}
\theoremstyle{definition}
\newtheorem{remark}[theorem]{Remark}
\newtheorem{definition}[theorem]{Definition}
\numberwithin{equation}{section}
\begin{document}

\title{\bf\Large Besov and Triebel--Lizorkin Spaces on Spaces of Homogeneous Type
with Applications to Boundedness of Calder\'on--Zygmund Operators
\footnotetext{\hspace{-0.35cm} 2010 {\it Mathematics Subject Classification}. Primary 46E35;
Secondary 42B25, 42B20, 42B35, 30L99.\endgraf
{\it Key words and phrases.} space of homogeneous type, Calder\'on reproducing formula,
Besov space, Triebel--Lizorkin space,
Calder\'on--Zygmund operator.\endgraf
This project is supported by the National Natural Science Foundation of China (Grant Nos.\
11971058, 11761131002 and 11671185).}}
\date{}
\author{Fan Wang, Yongsheng Han, Ziyi He and Dachun Yang\footnote{Corresponding author,
E-mail: \texttt{dcyang@bnu.edu.cn}/{\red March 25, 2020}/Final version.}}
\maketitle

\vspace{-0.8cm}

\begin{center}
\begin{minipage}{13cm}
{\small {\bf Abstract}\quad In this article, the authors introduce Besov and Triebel--Lizorkin
spaces on spaces of homogeneous type in the sense of Coifman and Weiss,  prove that these
(in)homogeneous Besov and Triebel--Lizorkin spaces are
independent of the choices of both exp-ATIs  (or exp-IATIs) and
underlying spaces of distributions,
and give some basic properties of these spaces.
As applications, the authors show that some known function spaces coincide with certain
special cases of Besov and Triebel--Lizorkin spaces and, moreover,
obtain the boundedness of Calder\'on--Zygmund operators on these Besov  and
Triebel--Lizorkin spaces. All these results strongly depend on the geometrical properties,
reflected via its dyadic cubes, of the considered space of homogeneous type.
Comparing with the known theory of these spaces on metric measure spaces,
a major novelty of this article is that all results presented in this article get rid of the
dependence on the reverse doubling assumption
of the considered measure of the underlying space and hence give a final real-variable
theory of these function spaces on spaces of homogeneous type.
}
\end{minipage}
\end{center}

\vspace{0.2cm}

\tableofcontents

\vspace{0.2cm}

\section{Introduction}\label{intro}

Between 1950s and 1980s, the scales of Besov spaces $B_{p,q}^s$ and Triebel--Lizorkin
spaces $F_{p,q}^s$ on
$\rn$ were introduced by several mathematicians. In 1951, Nikol'ski\u{\i} \cite{n51} introduced the
Nikol'ski\u{\i}--Besov spaces, which are nowadays denoted by $B^s_{p,\infty}(\rn)$, while he mentioned that
this was based on earlier works of Bern\v{s}te\v{\i}n \cite{b47} and Zygmund \cite{z45}. By introducing the
third index $q$, Besov \cite{b59,b61} complemented this scale.
We also refer the reader to Taibleson \cite{t64,t65,t66}
for the early investigations of Besov spaces. Lizorkin \cite{l72,l74} and Triebel \cite{t73}  independently started to
investigate the scale $F^s_{p,q}(\rn)$ around 1970. Further, we have to mention the contributions of Peetre
\cite{p73,p75,p76}, who extended the range of the admissible parameters $p$ and $q$ to values less than one.
We refer the reader to monographs \cite{bin,t83,t92,t06,w88} for a comprehensive treatment of these function
spaces and their history.

It is important that Besov and Triebel--Lizorkin spaces  provide a unified frame
to study many function spaces,
because they cover many well-known classical concrete function spaces, for
instance, Lebesgue spaces, Sobolev spaces, potential spaces, (local) Hardy spaces,
the space of functions of
bounded mean oscillations. We refer the reader to \cite{ysy} for the
relationship between Morrey spaces, Campanato spaces
and Besov--Triebel--Lizorkin spaces and to \cite{Sa18} for some new progress of Besov
and Triebel--Lizorkin spaces.

Moreover, as a generalization of $\rn$, the space of homogeneous type was introduced by Coifman
and Weiss \cite{cw71,cw77}, which provides a natural setting for the study of function spaces and the boundedness
of Calder\'on--Zygmund operators. To recall the notion of spaces of homogeneous type,
we need the following notion of quasi-metric spaces.

\begin{definition}\label{10.21.1}
A \emph{quasi-metric space} $(\cx, d)$ is a non-empty set $\cx$ equipped with a \emph{quasi-metric} $d$,
namely, a non-negative function defined on $\cx\times\cx$ satisfying that, for any $x,\ y,\ z \in \cx$,
\begin{enumerate}
\item[{\rm(i)}] $d(x,y)=0$ if and only if $x=y$;
\item[{\rm(ii)}] $d(x,y)=d(y,x)$;
\item[{\rm(iii)}] there exists a constant $A_0 \in [1, \infty)$, independent of $x$, $y$
and $z$, such that
$$d(x,z)\leq A_0[d(x,y)+d(y,z)].$$
\end{enumerate}
\end{definition}

The \emph{ball} $B$ of $\cx$, centered at $x_0 \in \cx$ with radius $r\in(0, \infty)$, is defined by setting
$$
B:=\{x\in\cx: d(x,x_0)<r\}=:B(x_0,r).
$$
For any ball $B$ and $\tau\in(0,\infty)$, we denote by $\tau B$ the ball with the same center as that of $B$ but
of radius $\tau$ times that of $B$.
\begin{definition}\label{10.21.2}
Let $(\cx,d)$ be a quasi-metric space and
$\mu$ a measure on $\cx$.
The triple $(\cx,d,\mu)$ is called
a \emph{space of homogeneous type} if $\mu$ satisfies the following doubling condition: there exists a
positive constant $C_{(\mu)}\in[1,\infty)$ such that, for any ball $B \subset \cx$,
$$
\mu(2B)\leq C_{(\mu)}\mu(B).
$$
\end{definition}

The above doubling condition is equivalent to that, for any ball $B$ and any $\lambda\in[1,\infty)$,
\begin{equation}\label{eq-doub}
\mu(\lambda B)\leq C_{(\mu)}\lambda^\omega\mu(B),
\end{equation}
where $\omega:= \log_2 C_{(\mu)}$ is called the \emph{upper dimension} of $\cx$.
If $A_0 = 1$, then
$(\cx,d,\mu)$ is called a \emph{metric measure space of homogeneous type} or,
simply, a \emph{doubling metric measure space}.

Both spaces of homogeneous type, with some additional assumptions,
and function spaces on them have been extensively investigated  in many
articles. For instance, the \emph{Ahlfors $d$-regular space} is
a special  space of homogeneous type
satisfying the following condition: there exists a positive constant $C$ such that, for any ball
$B(x,r)\subset\cx$ with center $x$ and radius $r\in(0,\diam \cx)$,
$$
C^{-1}r^d\leq \mu(B(x,r))\leq Cr^d,
$$
here and thereafter, $\diam \cx:=\sup_{x,\ y\in \cx} d(x,y)$. Another example is the
RD-space (see
\cite{j94,hmy06,hmy08} for instance), which is a doubling metric measure space with the
following \emph{reverse doubling condition}: there exist positive constants $\widetilde{C}_{(\mu)}\in(0,1]$
and $\kappa\in(0,\omega]$ such that, for any ball $B(x, r)$ with $r\in(0, \diam \cx/2)$ and
$\lambda\in[1,\diam \cx/(2r))$,
\begin{equation}\label{eq-rdoub}
\widetilde{C}_{(\mu)}\lambda^\kappa\mu(B(x, r))\leq\mu(B(x, \lambda r)).
\end{equation}
Obviously, an RD-space is a generalization of an  Ahlfors $d$-regular space.
We refer
the reader to \cite{yz11} for more equivalent characterizations of RD-spaces.
Function spaces on these underlying spaces and their applications have been extensively studied  in recent
years.  Mac\'ias and Segovia \cite{ms79,ms79b} investigated Hardy spaces and Lipschitz functions.
Then, Zhou et al. \cite{zsy16} introduced the Hardy spaces with variable exponents
on RD-spaces.
Moreover, Nakai \cite{n08} introduced a new kind of Hardy spaces via atoms on a space of homogeneous
type and proved that they coincide with those in \cite{cw77} under some certain circumstances
(see \cite[Remark 2.2]{n08} for details). In \cite{n17}, Nakai obtained the boundedness of fractional
integral operators on these Hardy spaces on  a regular space.
We refer the reader, for instance,  to \cite{hmy06,hmy08,gly08,gly091,yz10}
for more characterizations of Hardy spaces on RD-spaces, and to \cite{yz08, hyz09} for
more applications of these Hardy spaces.

Besov and Triebel--Lizorkin spaces on spaces of homogeneous type satisfying some additional
assumptions were also studied.

Let us first recall some developments of Besov and Triebel--Lizorkin spaces on Ahlfors $d$-regular spaces.
In 1994, Han and Sawyer \cite{hs} introduced homogeneous Besov and Triebel--Lizorkin spaces on
Ahlfors $d$-regular spaces. Later, in \cite{hly99, hly99b}, Han et al.  introduced their
inhomogeneous counterparts via the continuous inhomogeneous Calder\'on
reproducing formulae obtained in \cite{h97}. Using the discrete inhomogeneous Calder\'on
reproducing formulae obtained in \cite{hly01}, Han and Yang \cite{hy02} and \cite{hy03}
obtained several characterizations of inhomogeneous Besov and Triebel--Lizorkin spaces
on Ahlfors $d$-regular spaces. Moreover, Yang \cite{y041,y051,y052} established
various characterizations of Triebel--Lizorkin spaces on Ahlfors $d$-regular spaces,
such as, local principle and frame characterizations.
We also refer the reader to \cite{y031} for frame characterizations of Besov spaces
on Ahlfors $d$-regular spaces.
In \cite{dhy04}, Deng et al. established inhomogeneous Plancherel--P\^olya inequalities
and characterized Besov and Triebel--Lizorkin spaces on Ahlfors $d$-regular spaces
via Littlewood--Paley functions. As applications, Yang \cite{y033} obtained the $T1$ theorem
and \cite{y034} the boundedness of Riesz potentials on
Besov and Triebel--Lizorkin spaces on Ahlfors $d$-regular spaces.
Besides,  Yang obtained embedding theorem in \cite{y035} and real interpolations
of these spaces in \cite{y042}.
We refer the reader to \cite{y031,y032,dh09} for more applications of Besov and
Triebel--Lizorkin spaces on Ahlfors $d$-regular spaces and to \cite{am15}
for a sharp theory of Hardy spaces on Ahlfors $d$-regular spaces.

On another hand, Han et al.\ \cite{hmy08} and M\"uller and Yang \cite{my09}
introduced and studied both the homogeneous and the inhomogeneous
Besov and Triebel--Lizorkin spaces on RD-spaces.
Moreover, Yang and Zhou \cite{yz11} established a new
characterization of these Besov and Triebel--Lizorkin spaces.
Besides, Koskela et al. \cite{kyz10,kyz11} introduced the Haj\l asz--Besov and Triebel--Lizorkin spaces on RD-spaces.
Later, Grafakos et al. \cite{glmy14} developed a systematic theory of multilinear analysis of
Besov and Triebel--Lizorkin spaces on RD-spaces.

Observe that the most important tool used in all these studies on the real-variable
theory of function spaces on spaces of homogeneous type satisfying some additional
assumptions is the \emph{Calder\'on reproducing formulae}.

Now we return to an arbitrary space $(\cx,d,\mu)$ of homogeneous type in the sense of Coifman and
Weiss. If we make an abuse of the notion of the
reverse doubling condition, such an $\cx$ can be \emph{formally}
regarded to satisfy \eqref{eq-rdoub} with $\kz:=0$. But, this
leads some essential difficulties in establishing Calder\'{o}n reproducing formulae.
For instance, Auscher and Hyt\"{o}nen
\cite{ah13,ah13-2} pointed out that the inequality \cite[(3.48)]{hmy08}
may not hold true in this case, which is a key inequality
when establishing Calder\'{o}n reproducing formulae. To overcome these difficulties,
we found in \cite{hlyy} that one possible way  is to use the  following system of dyadic cubes of
$(\cx,d,\mu)$  established by Hyt\"onen and Kariema in \cite[Theorem 2.2]{hk},
which reveals the geometrical properties of the considered space of homogeneous type.

\begin{theorem}\label{10.22.1}
Suppose that constants $0 < c_0\leq C_0 <\infty$ and $\delta\in(0, 1)$ such that $12A_0^3C_0\delta\leq c_0$.
Assume that a set of points, $\{ z_\alpha^k : k \in\zz , \alpha \in \ca_k \} \subset \cx$ with $\ca_k$, for
any $k \in\zz$, being a set of indices, has the following properties: for any $k \in\zz$,
$$d(z^k_\alpha, z^k_\beta)\geq c_0\delta^k\quad\text{if}\quad\alpha\neq\beta,\quad
\text{and}\quad\min_{\alpha\in\ca_k}d(x,z_\alpha^k)<C_0\delta^k\quad\text{for any}\quad x\in\cx.$$
Then there exists a family of sets, $\{ Q_\alpha^k : k \in\zz , \alpha \in \ca_k \}$, satisfying
\begin{enumerate}
\item[{\rm(i)}] for any $k\in\zz$, $\bigcup_{\alpha\in\ca_k}
Q_\alpha^k=\cx$ and $\{ Q_\alpha^k : \alpha \in \ca_k \}$ is disjoint;
\item[{\rm(ii)}] if $l,\ k\in\zz$ and $k\leq l$, then, for any $\alpha\in\ca_k$ and $\beta\in\ca_l$,
either $Q_\beta^l\subset Q_\alpha^k$ or $Q_\beta^l\cap Q_\alpha^k=\emptyset$;
\item[{\rm(iii)}] for any $k\in\zz$ and $\alpha\in\ca_k$,
$B(z^k_\alpha, (3A_0^2)^{-1}c_0\delta^k)\subset Q_\alpha^k\subset B(z^k_\alpha, 2A_0C_0\delta^k)$.
\end{enumerate}
\end{theorem}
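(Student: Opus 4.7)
The plan is to follow the classical approach of M. Christ, as refined by Hytönen and Kairema, and build the cubes by first imposing a tree structure on the reference points $\{z_\alpha^k\}$. For every $k\in\zz$ and every $\beta\in\ca_{k+1}$, I would fix a \emph{parent} $\pi(\beta)\in\ca_k$ satisfying $d(z_\beta^{k+1}, z_{\pi(\beta)}^k) < C_0\delta^k$; such a choice exists by applying the covering hypothesis $\min_{\alpha\in\ca_k}d(y,z_\alpha^k)<C_0\delta^k$ to $y=z_\beta^{k+1}$. Iterating $\pi$ gives, for any $l\geq k$, an ancestor map $\pi^{l-k}:\ca_l\to\ca_k$, and in particular partitions $\ca_l$ into disjoint classes indexed by $\ca_k$.

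Next, for each $x\in\cx$ and each $l\in\zz$ I would let $\alpha_l(x)\in\ca_l$ be a nearest-center index, namely a minimizer of $d(x,z_\alpha^l)$ over $\alpha\in\ca_l$; the covering hypothesis guarantees $d(x,z_{\alpha_l(x)}^l)<C_0\delta^l$. To make this choice canonical I would break ties in a tree-compatible way: among the minimizers at scale $l$, select the one whose $(l-1)$-st ancestor $\pi(\alpha_l(x))$ coincides with $\alpha_{l-1}(x)$ whenever possible, and iterate upward. With this convention in hand, I would define
$$Q_\alpha^k:=\bigl\{x\in\cx:\alpha_l(x)\text{ is a descendant of }\alpha\text{ for every }l\geq k\bigr\}.$$
Property (i) then follows because, for each $x$, the sequence $\{\alpha_l(x)\}_{l\geq k}$ lies in a unique ancestor class at level $k$; property (ii) is immediate from the tree structure, since $\beta\in\ca_l$ descending from $\alpha\in\ca_k$ forces $Q_\beta^l\subset Q_\alpha^k$.

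The main obstacle is the two-sided ball containment (iii). For the upper bound $Q_\alpha^k\subset B(z_\alpha^k,2A_0C_0\delta^k)$, I would take $x\in Q_\alpha^k$ and telescope along the descendant chain: for large $l$, the quasi-triangle inequality gives $d(x,z_\alpha^k)\leq A_0[d(x,z_{\alpha_l(x)}^l)+d(z_{\alpha_l(x)}^l,z_\alpha^k)]$, and the parent inequality together with a geometric-series estimate $C_0\sum_{j\geq k}\delta^j\leq C_0\delta^k/(1-\delta)$ (absorbed into the factor $2$ using $\delta<1$) controls the second term; the first term tends to $0$ as $l\to\infty$. For the lower bound $B(z_\alpha^k,(3A_0^2)^{-1}c_0\delta^k)\subset Q_\alpha^k$, I would argue by induction on $l\geq k$ that any $x$ with $d(x,z_\alpha^k)<(3A_0^2)^{-1}c_0\delta^k$ must have $\alpha_l(x)$ descended from $\alpha$: if $\beta\in\ca_l$ is a descendant of some $\alpha'\neq\alpha$ in $\ca_k$, then the separation hypothesis $d(z_\alpha^k,z_{\alpha'}^k)\geq c_0\delta^k$, combined with the already-established upper bound applied to $\beta$'s chain (which costs at most $2A_0C_0\delta^k$ by the telescoping argument above), forces $d(x,z_\beta^l)$ to strictly exceed $d(x,z_{\alpha_l^\ast}^l)$ for any descendant $\alpha_l^\ast$ of $\alpha$.

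This last step is where the hypothesis $12A_0^3C_0\delta\leq c_0$ enters decisively: the accumulated parent-distance at all finer scales is bounded by $2A_0C_0\delta^k$ after repeated use of the quasi-triangle inequality, and the cost of comparing points across different ancestors picks up a further factor of $A_0$, so the requirement that this total budget stays strictly smaller than $c_0\delta^k/(3A_0^2)$ (the inner radius) reduces exactly to a smallness condition of the form $A_0^3 C_0 \delta \ls c_0$. Verifying this bookkeeping carefully is the technical heart of the construction, but once done, properties (i)--(iii) all fall out.
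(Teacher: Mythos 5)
There is a genuine gap, and it sits exactly at the heart of the construction. You define $Q_\alpha^k$ as the set of $x$ whose nearest-center indices $\alpha_l(x)$ are descendants of $\alpha$ for \emph{every} $l\geq k$, and then assert that (i) follows because the chain $\{\alpha_l(x)\}_{l\geq k}$ ``lies in a unique ancestor class at level $k$.'' That is precisely what is not automatic: the nearest-center map is in general not compatible with the parent map, so the level-$k$ ancestor of $\alpha_{l+1}(x)$ can differ from that of $\alpha_l(x)$, and this happens even when there are no ties (the strict minimizer at scale $l+1$ may simply sit in a different branch of the tree than the minimizer at scale $l$, since being within $C_0\delta^{l+1}$ of $x$ says nothing about which level-$k$ ancestor it was attached to). For such $x$ your sets assign no cube at level $k$, so $\bigcup_\alpha Q_\alpha^k=\cx$ fails. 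The same defect undermines (ii): knowing $\alpha_m(x)$ descends from $\beta\in\ca_l$ for $m\geq l$ gives no control over the intermediate levels $k\leq m<l$, so $Q_\beta^l\subset Q_\alpha^k$ is not ``immediate from the tree structure.'' The standard constructions avoid this by defining the cubes from the tree order itself (e.g.\ as suitable unions over all descendants of $\alpha$ of balls $B(z_\beta^l,c\delta^l)$), not from nearest-point chains; and the genuinely delicate point in Hyt\"onen--Kairema, which your sketch does not touch, is upgrading Christ's construction—where the cubes cover $\cx$ only up to a set of measure zero—to an exact disjoint cover of every point, which requires a careful choice of half-open cubes (a consistent selection between the ``closed'' and ``open'' versions across all scales simultaneously). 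A smaller issue: your telescoping bound $C_0\sum_{j\geq k}\delta^j\leq C_0\delta^k/(1-\delta)$ ignores the powers of $A_0$ that accumulate when the quasi-triangle inequality is iterated; the correct sum is of the form $\sum_j A_0^{j+1}C_0\delta^{k+j}$, which still converges under $12A_0^3C_0\delta\leq c_0$, but the bookkeeping as written is not valid for $A_0>1$.

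For context, the paper does not prove this theorem at all: it is imported verbatim from Hyt\"onen and Kairema \cite[Theorem 2.2]{hk}, so the benchmark for your argument is their construction, whose essential content (the tree-based definition of the cubes and the exact-partition selection) is missing from the proposal.
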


Throughout this article, for any $k\in\zz$, define
$$
\cg_k:=\ca_{k+1}\setminus\ca_k\quad\text{and}\quad\cy^k:=\left\{z^{k+1}_\alpha\right\}_{\alpha\in\cg_k}
=: \left\{y_\alpha^k\right\}_{\alpha\in\cg_k}
$$
and, for any $x \in \cx$, define
$$
d(x,\cy^k):=\inf_{y\in\cy^k}d(x,y)\quad\text{and}\quad V_{\delta^k}(x):=\mu(B(x,\delta^k)).
$$
Based on \cite[Theorem 2.2]{hk} (Theorem \ref{10.22.1} is only a part of it),
Auscher and Hyt\"{o}nen \cite{ah13} established a wavelet system
on $(\cx,d,\mu)$. Moreover, they also obtained
a very useful inequality (see \cite[Lemma 8.3]{ah13}), which
can be seen as a substitution of \cite[(3.48)]{hmy08} if the reverse doubling condition is dropped.
Motivated by this, He et al.\ \cite[Definition 2.7]{hlyy} introduced a new kind of approximations of the
identity with exponential decay (see Definitions \ref{10.23.2} and \ref{iati}
below for details).
Besides, He et al.\ \cite{hlyy} established (in)homogeneous continuous/discrete Calder\'on reproducing
formulae (see, for instance, Lemmas \ref{crf} and \ref{icrf} below) on a space of homogeneous type by using
these approximations of the identity.

Motivated by the wavelet system in \cite{ah13} and the new Calder\'{o}n reproducing formulae established
in \cite{hlyy}, a real-variable theory of function spaces on a space of homogeneous type has been developed
rapidly. As a first attempt, Han et al.\ \cite{hlw} established the wavelet reproducing formulae by using the
wavelets in \cite{ah13}. Using these formulae, Han et al.\ \cite{hhl16} characterized the atomic Hardy
spaces via discrete Littlewood--Paley square functions. Moreover,
some criteria of the boundedness of Calder\'on--Zygmund
operators on Hardy spaces and their dual spaces were also obtained in \cite{hhl16}. Later, Han et al.\
\cite{hhl18} introduced a new kind of Hardy spaces by using another kind of distribution spaces.

Right after the Calder\'{o}n reproducing formulae were established in \cite{hlyy}, He et al.\ \cite{hhllyy}
obtained a complete real-variable theory of atomic Hardy spaces on a
space $(\cx,d,\mu)$ of homogeneous type
with $\mu(\cx)=\fz$, which is equivalent to $\diam\cx=\fz$
(see, for instance, Nakai and Yabuta \cite[Lemma 5.1]{ny97}
or Auscher and Hyt\"{o}nen \cite[Lemma 8.1]{ah13}). Recently, He et al.\ \cite{hyy19}
established a real-variable theory of local Hardy spaces on
$(\cx,d,\mu)$ without the assumption $\mu(\cx)=\fz$. We point out that,
in both \cite{hhllyy} and \cite{hyy19}, He et al.\ gave a complete answer to an open question asked by Coifman
and Weiss \cite[p.\ 642]{cw77} on the radial maximal function characterization
of Hardy spaces over spaces of homogeneous type (see also \cite[p.\ 5]{bdl20}). Later,
Fu et al.\ \cite{fmt19} obtained a real-variable theory of Musielak--Orlicz Hardy spaces on $\cx$.
Besides, Zhou et al. \cite{zyh20} established a real-variable theory of
Hardy--Lorentz spaces on spaces of homogeneous type.
On another hand, Duong and Yan \cite{dy03} investigated Hardy spaces defined
by means of the area integral function associated with the Poisson semigroup.
Later, Song and Yan \cite{sy18} obtained the maximal function
characterizations of Hardy spaces associated with operators. Moreover, Bui et al.\ \cite{bdl,bdl20,bdk,bd20}
obtained the maximal function characterizations of a new local-type Hardy spaces associated with operators.
Besides, S. Yang and D. Yang \cite{yy19} established atomic and maximal function
characterizations of Musielak--Orlicz--Hardy spaces associated to non-negative
self-adjoint operators on spaces of homogeneous type.

It should be mentioned that Hardy spaces on  spaces of homogeneous type
have some applications. Ky \cite{ky15}
established the linear decompositions of elements in $H^1(\cx)\times\BMO(\cx)$ on an RD-space $\cx$.
Later, Fu et al.\ \cite{fyl17} improved the result in \cite{ky15} to a bilinear decomposition
of elements in $H^1_\at(\cx)\times\BMO(\cx)$ on a space $\cx$ of homogeneous type via first establishing
some wavelet characterizations of $H^1_\at(\cx)$ (see \cite{fy18}) in terms of regular
wavelets from \cite{ah13}, where $H^1_\at(\cx)$ denotes the atomic Hardy space introduced by
Coifman and Weiss in \cite{cw77}. Moreover, Liu et al.\
\cite{lyy18} established the bilinear decomposition for products of Hardy spaces and their dual spaces
on a space of homogeneous type. Using this decomposition, Liu et al.\ \cite{lcfy17,lcfy18} obtained the
endpoint boundedness of commutators on a space of homogeneous type. Recently, Bui et al.\ \cite{bbd18}
introduced weighted Besov and Triebel--Lizorkin spaces associated with operators on a space of homogeneous
type and showed that, if the operator has some good properties,
these function spaces coincide with classical
counterparts on a space of homogeneous type defined via atoms.

In \cite{hmy08}, (in)homogeneous continuous/discrete Calder\'on reproducing formulae were established and
further used to build a complete real-variable theory of (in)homogeneous Besov and Triebel--Lizorkin
spaces on RD-spaces. Observe that Calder\'on reproducing formulae
play a very important and essential role in developing a real-variable
theory of Besov and Triebel--Lizorkin spaces on RD-spaces.
It is a natural question whether or not
we can develop a complete real-variable theory of Besov and Triebel--Lizorkin spaces on
spaces of homogeneous type by using these new obtained
Calder\'on reproducing formulae in \cite{hlyy}, and hence further
generalize and complete the real-variable theory of function spaces on
spaces of homogeneous type developed in \cite{hhllyy,hyy19}.
The main target of this article is to give an affirmative answer to this
question. To be precise, in this article, we introduce Besov and Triebel--Lizorkin
spaces on spaces of homogeneous type in the sense of Coifman and Weiss,
prove that these (in)homogeneous Besov and Triebel--Lizorkin spaces are
independent of the choices of both exp-ATIs  (or exp-IATIs) and
underlying spaces of distributions, and give some basic properties of these spaces.
As applications, we show that some known function spaces coincide with certain
special cases of Besov and Triebel--Lizorkin spaces and, moreover,
obtain the boundedness of Calder\'on--Zygmund operators on these Besov and
Triebel--Lizorkin spaces.

To limit the length of this article, in \cite{hwy20}, He et al.
established wavelet characterizations of these Besov
and Triebel--Lizorkin spaces on spaces of homogeneous type and
then, via first obtaining the boundedness of almost diagonal operators on sequence Besov
and Triebel--Lizorkin spaces on spaces of homogeneous type, He et al. further
established the molecular and various Littlewood--Paley function characterizations of Besov
and Triebel--Lizorkin spaces on spaces of homogeneous type. Besides, Han et al.
\cite{hhhlp20} showed that the embedding theorems for Besov
and Triebel--Lizorkin spaces on spaces of homogeneous type hold true if and only if
the considered measure $\mu$ of the underlying space has a (local) lower bound.

All these results in this article, \cite{hwy20} and \cite{hhhlp20}
strongly depend on the geometrical properties,
reflected via its dyadic cubes, of the considered space of homogeneous type.
Comparing with the known theory of Besov and Triebel--Lizorkin spaces on metric measure spaces,
a major novelty of all these three articles is that all obtained results get rid of the
dependence on the reverse doubling assumption of the considered measure of the underlying space.
Thus, these results give a \emph{final} real-variable theory of Besov and Triebel--Lizorkin spaces
on spaces of homogeneous type.

The organization of this article is as follows.

In Section \ref{Scrf}, we recall some known facts on exp-(I)ATIs, spaces of both test functions
and distributions, and the Calder\'on reproducing formulae from \cite{hlyy},
which are the main tools of this article. Observe that these exp-(I)ATIs and
Calder\'on reproducing formulae subtly use the geometrical properties of
the considered space of homogeneous type (see Remark \ref{geo} for more details).

In Section \ref{s2}, we introduce homogeneous Besov spaces $\hb$ and Triebel--Lizorkin spaces $\hf$ with
$p<\fz$ on a space of homogeneous type with $\mu(\cx)=\infty$.
Their inhomogeneous counterparts are introduced in Section \ref{s3}.
In Section \ref{s4}, we introduce the Triebel--Lizorkin spaces with $p=\infty$. In these sections, by
establishing the so-called (in)homogeneous Plancherel--P\^olya inequality,
we show that (in)homogeneous Besov and Triebel--Lizorkin spaces are independent of the choices
of exp-ATIs and the considered spaces of distributions.
We point out that, even comparing with the results on RD-spaces in \cite{hmy08},
the ranges of some indices appearing in some results of this article are optimal (see Remarks \ref{addre2},
\ref{addre1} and \ref{addre3} below). It is also worth mentioning that, to obtain
the inhomogeneous Plancherel--P\^olya inequality (see Lemma \ref{6.9.1} below), we have to use
the Banach--Steinhaus theorem from functional analysis to deal with the terms related to cube averages,
which is caused by the speciality of the inhomogeneous spaces.
Moreover, when we establish the (in)homogeneous Plancherel--P\^olya inequalities, respectively,
associated with (in)homogeneous Triebel--Lizorkin spaces in the case $p=\fz$
(see Lemmas \ref{11.14.1} and \ref{12.3.2} below), due to the special structures
of the considered quasi-norms of (in)homogeneous Triebel--Lizorkin spaces in the case $p=\fz$,
we need to subtly classify all dyadic cubes in any fixed level based
to all dyadic cubes in any given level [see \eqref{eq-x1} below], which fully uses
the geometrical properties of the considered space of homogeneous type
reflected via its dyadic cubes.

In Section \ref{s6}, we establish the relationship between these Besov
and Triebel--Lizorkin spaces and some  classical spaces
on spaces of homogeneous type, including Lebesgue spaces, H\"older spaces and $\BMO(\cx)$.
To achieve these, we need to use some tools from probability
and functional analysis, such as the Khinchin inequality, the Cotlar--Stein lemma and
the stopping time [see, respectively, Lemma \ref{khin}, \eqref{t_en_2} and \eqref{stop_l} below],
and also need to use the geometrical structure, presented via its dyadic cubes,
of the considered underlying space [see, for instance, $E^l(x)$ in \eqref{6.8x} below].
Meanwhile, we also need to use the independence of exp-(I)ATIs of Besov
and Triebel--Lizorkin spaces, which is obtained in the previous sections
of this article.

In Section \ref{s5}, we establish the boundedness of Calder\'on--Zygmund operators on these Besov
and Triebel--Lizorkin spaces. To do this, the main difficulty we encounter is that
we can \emph{not} construct an exp-ATI with bounded support. To overcome this difficulty,
motivated by \cite{ns}, we establish  decompositions of an $\exp$-(I)ATI
(see Lemmas \ref{4.9.1} and \ref{4.25.1} below). To achieve these, we
first need to construct some ``smooth" functions having bounded supports and
some other good properties (see Lemma \ref{4.8.1} below),
which are obtained via using both the dyadic reference points of the considered
space of homogeneous type and some good functions from \cite[Corollary 4.2]{ah13}
(which is actually similar to the
Urysohn lemma). Both obviously need to use the geometrical properties of the
considered underlying space. Using these decompositions, we further reduce the action of
Calder\'on--Zygmund operators on an $\exp$-(I)ATI into a sum
of the actions of Calder\'on--Zygmund operators on aforementioned good functions,
and then separately estimate each term via using different
methods to obtain a unified overall estimate.

Finally, let us make some conventions on notation.  The \emph{Lebesgue space} $L^p(\cx)$ for any given
$p\in(0,\infty]$ is defined by setting, when $p\in(0,\infty)$,
$$
L^p(\cx):=\left\{f \ \text{is measurable on} \ \cx :\
\|f\|_{\lp}:=\left[\int_{\cx}|f(x)|^p\,d\mu(x)\right]^{1/p}<\infty\right\},
$$
and
$$
L^\infty(\cx):=\left\{f \ \text{is measurable on} \ \cx :\  \|f\|_{L^\infty(\cx)}:=
\displaystyle{\esssup_{x\in\cx}|f(x)|<\infty}\right\}.
$$
Throughout this article, we use $A_0$ to denote the positive constant appearing in the \emph{quasi-triangle
inequality} of $d$ (see Definition \ref{10.21.1}), the parameter $\omega$ means the \emph{upper dimension}
in Definition \ref{10.21.2} [see \eqref{eq-doub}], and $\eta$ is defined to be the smoothness index of the exp-ATI in
Definition \ref{10.23.2} below. Moreover, $\delta$ is a small positive number, for instance,
$\delta\leq(2A_0)^{-10}$, coming from the construction of the
dyadic cubes on $\cx$ (see Theorem \ref{10.22.1}). For any $p\in[1,\infty]$,
we use $p'$ to denote its conjugate index, that is, $1/p + 1/p' = 1$. For any $r\in \rr$, $r_+$ is defined as
$r_+:=\max\{0,r\}$. For any $a,\ b\in\rr$, $a\wedge b:=\min\{a,b\}$ and $a\vee b :=\max\{a,b\}$.
The symbol $C$ denotes a positive constant which is independent
of the main parameters involved, but may vary
from line to line. We use $C_{(\alpha,\beta,\dots)}$
to denote a positive constant depending on the indicated
parameters $\alpha,\ \beta,\ \dots$.
The symbol $A\lesssim B$ means that $A\leq CB$ for some positive constant
$C$, while $A\sim B$ means $A\lesssim B\lesssim A$.
We also use the following convention: If $f\le Cg$ and $g=h$
or $g\le h$, we then write $f\ls g\sim h$ or $f\ls g\ls h$, \emph{rather than} $f\ls g=h$ or $f\ls g\le h$.
For any $r\in(0,\infty)$ and $x,\ y\in\cx$ with $x\neq y$, define $V(x,y):=\mu(B(x,d(x,y)))$
and $V_r(x):=\mu(B(x,r))$. For any  $\beta,\ \gamma\in(0,\eta)$ and $s\in (-(\beta\wedge\gamma),\beta\wedge\gamma)$,
we let
\begin{equation}\label{pseta}
p(s,\beta\wedge\gamma):=\max\left\{\frac{\omega}{\omega+(\beta\wedge\gamma)},\frac{\omega}{\omega+(\beta\wedge\gamma)+s}\right\},
\end{equation}
where $\omega$ and $\eta$ are, respectively, as in \eqref{eq-doub} and Definition \ref{10.23.2}.
The operator $M$ always denotes the \emph{Hardy--Littlewood maximal operator}, which is defined by setting,
for any locally integral function $f$ on $\cx$ and any $x\in\cx$,
\begin{equation}\label{m}
M(f)(x):=\sup_{r\in(0,\infty)}\frac{1}{\mu(B(x,r))}\int_{B(x,r)}|f(y)|\,d\mu(y).
\end{equation}
Finally, for any set $E\subset\cx$, we use $\mathbf 1_E$ to denote its characteristic function and,
for any set $J$, we use $\#J$ to denote its
\emph{cardinality}.

\section{Calder\'on reproducing formulae}\label{Scrf}

In this section, we recall Calder\'on reproducing
formulae established in \cite{hlyy}.
To this end, we first recall the notions of test functions and distributions, whose
following versions were originally given in \cite{hmy08} (see also \cite{hmy06}).

\begin{definition}[test functions]
Let $x_1\in\cx$, $r\in(0,\infty)$, $\beta \in (0,1]$ and $\gamma \in (0,\infty)$. A function $f$ on $\cx$ is
called a \emph{test function of type $(x_1, r, \beta, \gamma)$}, denoted by $f \in \cg(x_1, r, \beta,
\gamma)$, if there exists a positive constant $C$ such that
\begin{enumerate}
\item[{\rm(i)}] for any $x\in\cx$,
\begin{equation}\label{12.5.9}
|f(x)|\leq C \frac{1}{V_r(x_1)+V(x_1,x)}\left[\frac{r}{r+d(x_1,x)}\right]^\gamma;
\end{equation}
\item[{\rm(ii)}] for any $x,\ y \in \cx$ satisfying $d(x, y)\leq(2A_0)^{-1}[r + d(x_1, x)]$,
\begin{equation}\label{12.5.10}
|f(x)-f(y)|\leq C\left[\frac{d(x,y)}{r+d(x_1,x)}\right]^\beta
\frac{1}{V_r(x_1)+V(x_1,x)}\left[\frac{r}{r+d(x_1,x)}\right]^\gamma.
\end{equation}
\end{enumerate}
For any  $f\in \cg(x_1, r, \beta, \gamma)$, the norm $\|f\|_{\cg(x_1, r, \beta, \gamma)}$ is defined by
setting
$$
\|f\|_{\cg(x_1, r, \beta, \gamma)}:=\inf\{C\in(0,\infty):\, \text{\eqref{12.5.9} and \eqref{12.5.10} hold
true}\}.
$$
The subspace $\mathring{\cg}(x_1, r, \beta, \gamma)$ is defined by setting
$$
\mathring{\cg}(x_1, r, \beta, \gamma):=\left\{f\in\cg(x_1, r, \beta, \gamma):\int_{\cx}f(x)\,d\mu(x)=0\right\}
$$
equipped with the norm $\|\cdot\|_{\mathring{\cg}(x_1, r, \beta, \gamma)}:=\|\cdot\|_{\cg(x_1, r, \beta,
\gamma)}$.
\end{definition}

Fix $x_0\in\cx$ and $r=1$. We denote $\cg(x_0, 1, \beta, \gamma)$ and
$\mathring{\cg}(x_0, 1, \beta, \gamma)$ simply, respectively, by $\cg(\beta,\gamma)$ and $\mathring{\cg}(\beta,\gamma)$.
Note that, for any fixed $x\in\cx$ and $r \in(0,\infty)$,
$\cg(x,r,\beta,\gamma) = \cg(\beta,\gamma)$ and $\mathring\cg(x,r,\beta,\gamma)=\mathring\cg(\beta,\gamma)$
with equivalent norms, but the positive equivalence  constants may depend on $x$ and $r$.

Fix $\epsilon\in (0, 1]$ and $\beta,\ \gamma \in (0, \epsilon]$. Let $\cg^\epsilon_0(\beta,\gamma)$ [resp.,
$\mathring{\cg}^\epsilon_0(\beta, \gamma)$] be the completion of the set $\cg(\epsilon, \epsilon)$ [resp.,
$\mathring{\cg}(\epsilon, \epsilon)$] in $\cg(\beta,\gamma)$ [resp., $\mathring{\cg}(\bz,\gz)$]. Furthermore,
the norm of $\cg^\epsilon_0(\beta,\gamma)$ [resp., $\mathring{\cg}^\epsilon_0(\beta, \gamma)$] is defined
by setting $\|\cdot\|_{\cg^\epsilon_0(\beta,\gamma)}:=\|\cdot\|_{\cg(\beta,\gamma)}$ [resp.,
$\|\cdot\|_{\mathring{\cg}^\epsilon_0(\beta,\gamma)}:=\|\cdot\|_{\cg(\beta,\gamma)}$]. The dual space
$(\cg^\epsilon_0(\beta,\gamma))'$ [resp., $(\mathring{\cg}^\epsilon_0(\beta,\gamma))'$] is defined to be the
set of all continuous linear functionals from $\cg^\epsilon_0(\beta,\gamma)$ [resp.,
$\mathring{\cg}^\epsilon_0(\beta,\gamma)$] to $\mathbb{C}$, equipped with the weak-$\ast$ topology. The
spaces $(\cg^\epsilon_0(\beta,\gamma))'$ and $(\mathring{\cg}^\epsilon_0(\beta,\gamma))'$ are called the
\emph{spaces of distributions}.

Now we recall the notion of approximations of the identity with exponential decay from \cite{hlyy}.

\begin{definition}\label{10.23.2}
A sequence $\{Q_k\}_{k\in\zz}$ of bounded linear integral operators on $L^2(\cx)$ is called an
\emph{approximation of the identity with exponential decay} (for short, exp-ATI) if there exist constants $C$,
$\nu\in(0, \infty)$, $a\in(0, 1]$ and $\eta\in(0,1)$ such that,
for any $k \in\zz$, the kernel of the operator $Q_k$, a
function on $\cx \times \cx$ , which is still denoted by $Q_k$,
satisfies the following conditions:
\begin{enumerate}
\item[{\rm(i)}] (the \emph{identity condition}) $\sum_{k=-\infty}^\infty Q_k=I$
in $L^2(\cx)$, where $I$ denotes the identity operator on $L^2(\cx)$;
\item[{\rm(ii)}] (the \emph{size condition}) for any $x,\ y\in\cx$,
\begin{align*}
|Q_k(x,y)|&\leq C \frac{1}{\sqrt{V_{\delta^k}(x)V_{\delta^k}(y)}}
\exp\left\{-\nu\left[\frac{d(x,y)}{\delta^k}\right]^a\right\}\\
&\qquad\times\exp\left\{-\nu\left[\frac{\max\{d(x,\cy^k),d(y,\cy^k)\}}{\delta^k}\right]^a\right\};
\end{align*}
\item[{\rm(iii)}] (the \emph{regularity condition}) for any
$x,\ x',\ y\in\cx$ with $d(x, x')\leq\delta^k$,
\begin{align*}
&|Q_k(x,y)-Q_k(x',y)|+|Q_k(y,x)-Q_k(y,x')|\\
&\quad\leq C\left[\frac{d(x,x')}{\delta^k}\right]^\eta
\frac{1}{\sqrt{V_{\delta^k}(x)V_{\delta^k}(y)}}\exp\left\{-\nu\left[\frac{d(x,y)}{\delta^k}\right]^a\right\}\\
&\qquad\quad\times\exp\left\{-\nu\left[\frac{\max\{d(x,\cy^k),d(y,\cy^k)\}}{\delta^k}\right]^a\right\};
\end{align*}
\item[{\rm(iv)}] (the \emph{second difference regularity condition}) for any
$x,\ x',\ y,\ y'\in\cx$ with $d(x, x')\leq\delta^k$ and $d(y, y')\leq\delta^k$,
\begin{align*}
&|[Q_k(x,y)-Q_k(x',y)]-[Q_k(x,y')-Q_k(x',y')]|\\
&\quad\leq C\left[\frac{d(x,x')}{\delta^k}\right]^\eta\left[\frac{d(y,y')}{\delta^k}\right]^\eta
\frac{1}{\sqrt{V_{\delta^k}(x)V_{\delta^k}(y)}}\exp\left\{-\nu\left[\frac{d(x,y)}{\delta^k}\right]^a\right\}\\
&\qquad\quad\times\exp\left\{-\nu\left[\frac{\max\{d(x,\cy^k),d(y,\cy^k)\}}{\delta^k}\right]^a\right\};
\end{align*}
\item[{\rm(v)}] (the \emph{cancellation condition}) for any $x,\ y \in \cx$,
$$\int_\cx Q_k(x,y')\,d\mu(y')=0=\int_\cx Q_k(x',y)\,d\mu(x').$$
\end{enumerate}
\end{definition}

The existence of such an exp-ATI on spaces of homogeneous type
is guaranteed by \cite[Theorem 7.1]{ah13} with $\eta$ same as in \cite[Theorem 3.1]{ah13}
which might be very small (see also \cite[Remark 2.8(i)]{hlyy}).
Moreover, if $d$ of $\cx$ is a metric,
then $\eta$ can be taken arbitrarily close to 1 (see \cite[Corollary 6.13]{ht14}).

The following lemma is the homogeneous continuous Calder\'on reproducing formula,
which was obtained in \cite[Theorem 4.18]{hlyy}.

\begin{lemma}\label{h_c_crf}
Let $\{Q_k\}_{k=-\infty}^\infty$ be an {\rm exp-ATI}
and $\beta,\ \gamma \in (0, \eta)$ with $\eta$ as in Definition \ref{10.23.2}.
Then there exists a sequence $\{\widetilde{Q}_k\}_{k=-\infty}^\infty$
of bounded linear integral operators on $L^2(\mathcal{X})$ such that,
for any $f \in (\cggi)'$,
$$f= \sum_{k=-\infty}^\infty\widetilde{Q}_kQ_kf \qquad \text{in}\quad (\cggi)',$$
and, moreover, there exists a positive constant
$C$ such that, for any $k\in\zz$, the kernel of
$\widetilde{Q}_k$, still denoted by
$\widetilde{Q}_k$, satisfies that
\begin{enumerate}
\item[{\rm(i)}] for any $x,\ y \in\cx$,
\begin{equation}\label{4.23x}
\left|\widetilde{Q}_k(x,y)\right|\leq C\frac{1}{V_{\delta^k}(x)+V(x,y)}\left[\frac{\delta^k}{\delta^k+d(x,y)}\right]^\gamma;
\end{equation}
\item[{\rm(ii)}] for any $x,\ x',\ y\in\cx$ with $d(x,x')\leq(2A_0)^{-1}[\delta^k+d(x,y)]$,
\begin{align}\label{4.23y}
&\left|\widetilde{Q}_k(x,y)-\widetilde{Q}_k(x',y)\right|\leq C\left[\frac{d(x,x')}{\delta^k+d(x,y)}\right]^\beta
\frac{1}{V_{\delta^k}(x)+V(x,y)}\left[\frac{\delta^k}{\delta^k+d(x,y)}\right]^\gamma;
\end{align}
\item[{\rm(iii)}]for any $x\in\mathcal{X}$,
\begin{equation}\label{4.23a}
\int_{\mathcal{X}}\widetilde{Q}_k(x,y)\,d\mu(y)=0=
\int_{\mathcal{X}}\widetilde{Q}_k(y,x)\,d\mu(y).
\end{equation}
\end{enumerate}
\end{lemma}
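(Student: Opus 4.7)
The approach I would take follows the classical Coifman trick based on almost orthogonality and a Neumann series inversion. Starting from the identity condition $\sum_{k\in\zz} Q_k = I$ in $L^2(\cx)$, composing with itself formally gives
\[
I = \sum_{j\in\zz}\sum_{k\in\zz} Q_j Q_k,
\]
so for a large integer $N$ to be chosen, I would split this into a near-diagonal part $T_N := \sum_{k\in\zz}\sum_{|j-k|\le N} Q_j Q_k$ and an off-diagonal remainder $R_N := I - T_N$. The goal is to show $\|R_N\|_{L^2(\cx)\to L^2(\cx)} < 1$ for $N$ large, invert $T_N$ via the Neumann series $T_N^{-1} = \sum_{m=0}^{\fz}(I-T_N)^m$, and then set
\[
\widetilde{Q}_k := T_N^{-1}\left(\sum_{j:\,|j-k|\le N} Q_j\right),
\]
which automatically yields $f = \sum_k \widetilde{Q}_k Q_k f$ in $L^2(\cx)$ and defines the kernels sought.

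For the almost orthogonality, I would bound $\|Q_j Q_k\|_{L^2\to L^2}$ using the cancellation of $Q_j$ (if $j\ge k$) and the regularity of $Q_k$, combined with the exponential size decay: the cancellation turns a factor $\delta^{-k}$ into a difference of kernels, producing a decay factor $\delta^{\eta(j-k)}$; the symmetric case uses the other cancellation. Inserting this into a Cotlar--Stein-type or Schur-test argument on the doubly-indexed sum gives $\|R_N\|_{L^2\to L^2}\ls \delta^{\eta N}$, and choosing $N$ large yields invertibility of $T_N$ on $L^2(\cx)$.

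Next, I would verify \eqref{4.23x}--\eqref{4.23a}. The size and regularity estimates for the kernels of $Q_j Q_k$ (for $|j-k|\le N$) follow from integrating the exp-ATI size/regularity bounds, using the doubling condition \eqref{eq-doub} to compare $V_{\delta^j}$ with $V_{\delta^k}$; the resulting bounds have the polynomial form with decay $[\delta^k/(\delta^k+d(x,y))]^\gamma$ for $\gamma<\eta$ and Hölder regularity in $x$ with exponent $\beta<\eta$. A Neumann-series/composition argument (preserving this class of kernels under composition, appealing to the exponential size of $Q_k$ that defeats any loss) then transfers these estimates to $T_N^{-1}\sum_{|j-k|\le N}Q_j = \widetilde{Q}_k$. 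The loss from $\eta$ down to arbitrary $\beta,\gz\in(0,\eta)$ is exactly what absorbs the constants in the geometric series. The cancellation \eqref{4.23a} is preserved throughout, because $\int Q_j(x,\cdot)\,d\mu = 0 = \int Q_j(\cdot,x)\,d\mu$ for every $j$, and this property passes through sums and through $T_N^{-1}$ acting on either variable (since $T_N^{-1}$ is built from products of the $Q_j$ plus the identity, applied to a function whose every constituent has mean zero).

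Finally, the extension from $L^2(\cx)$ to $(\cggi)'$ uses duality: for any $\phi\in\cggi$, the kernel estimates \eqref{4.23x}--\eqref{4.23a} give that $\widetilde{Q}_k Q_k \phi$ belongs to a uniformly controlled space of test functions and that $\sum_k \widetilde{Q}_k Q_k \phi$ converges to $\phi$ in $\cggi$; pairing with $f\in(\cggi)'$ then yields the stated convergence. The main obstacle is the third paragraph above: controlling how the Neumann series $T_N^{-1}=\sum_m (I-T_N)^m$ interacts with the polynomial decay and Hölder regularity of kernels. The standard way out is to work within a class of ``$\delta^k$-scale Calderón--Zygmund kernels'' and prove (via the exp-ATI estimates and the doubling property) that this class is stable under composition and under multiplication by scalars $< 1$ in operator norm, which is precisely the role of the exponential decay in Definition \ref{10.23.2}(ii)--(iv) and the small parameter $\dz$.
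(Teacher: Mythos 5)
Your sketch is essentially the proof of this result as it is actually established: the paper itself does not prove Lemma \ref{h_c_crf} but quotes it from \cite[Theorem 4.18]{hlyy}, where the argument is exactly the Coifman-type scheme you describe --- truncate $I=\sum_j\sum_k Q_jQ_k$ to a near-diagonal part $T_N$, use almost-orthogonality ($\|Q_jQ_k\|_{L^2\to L^2}\lesssim\delta^{|j-k|\eta}$) to invert $T_N$ by a Neumann series, set $\widetilde Q_k:=T_N^{-1}\sum_{|j-k|\le N}Q_j$, transfer size, regularity and cancellation with the loss from $\eta$ to $\beta,\gamma\in(0,\eta)$, and pass to $(\mathring{\mathcal{G}}{}_0^\eta(\beta,\gamma))'$ by duality. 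Your closing remark correctly identifies the crux: the stability of the kernel class under the Neumann series without any reverse doubling hinges on the extra factor $\exp\{-\nu[d(x,\mathcal{Y}^k)/\delta^k]^a\}$ in the exp-ATI, which yields the scale-summation bound recorded in Lemma \ref{con_soe} and replaces the inequality that fails in the RD-space argument.
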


To recall the homogeneous discrete Calder\'on reproducing formulae obtained in
\cite[Theorem 5.11]{hlyy}, we need  more notions.
We point out that these homogeneous reproducing formulae need the assumption that $\mu(\cx)=\fz$, which is
also the root why the homogeneous Besov and Triebel--Lizorkin spaces need this additional assumption.
Let $j_0\in\nn$ be sufficiently large such that $\delta^{j_0}\leq (2A_0)^{-3}C_0$. Based on Theorem
\ref{10.22.1}, for any $k\in\zz$ and $\alpha\in\ca_k$, let
$$
\cn(k,\alpha):=\{\tau\in\ca_{k+j_0}:\  Q_\tau^{k+j_0}\subset Q_\alpha^k\}
$$
and $N(k,\alpha):=\#\cn(k,\alpha)$. From Theorem \ref{10.22.1}, it follows that
$N(k,\alpha) \lesssim \delta^{-j_0\omega}$ and $\bigcup_{\tau\in\cn(k,\alpha)}
Q_\tau^{k+j_0}= Q_\alpha^k$. We
rearrange the set $\{Q_\tau^{k+j_0}:\tau\in\cn(k,\alpha)\}$ as $\{\qa\}_{m=1}^{N(k,\alpha)}$. Also, denote by
$\ya$ an arbitrary point in $\qa$ and $z_\alpha^{k,m}$ the ``center" of $\qa$.

\begin{lemma}\label{crf}
Let $\{Q_k\}_{k=-\infty}^\infty$ be an {\rm exp-ATI}
and $\beta,\ \gamma \in (0, \eta)$ with $\eta$ as in Definition \ref{10.23.2}.
For any $k\in\zz$, $\alpha\in\ca_k$ and $m\in\{1,\dots,N(k,\alpha)\}$,
suppose that $\ya$ is an arbitrary point in $\qa$.
Then there exists a sequence $\{\widetilde{Q}_k\}_{k=-\infty}^\infty$ of
bounded linear integral operators on $L^2(\mathcal{X})$ such that,
for any $f \in (\cggi)'$,
$$f(\cdot) = \sum_{k=-\infty}^\infty\sum_{\alpha \in \ca_k}
\sum_{m=1}^{N(k,\alpha)}\mu\left(\qa\right)\widetilde{Q}_k(\cdot,\ya)Q_kf
\left(\ya\right) \qquad \text{in}\quad (\cggi)'.$$
Moreover, there exists a positive constant $C$,
independent of the choices of both
$\ya$, with $k\in\zz,\ \alpha\in\ca_k$ and $m\in\{1,\dots,N(k,\alpha)\}$,
and $f$, such that,
for any $k\in\zz$,
the kernel of $\widetilde{Q}_k$
satisfies \eqref{4.23x}, \eqref{4.23y} and \eqref{4.23a}.
\end{lemma}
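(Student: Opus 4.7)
The plan is to discretize the continuous reproducing formula of Lemma \ref{h_c_crf} over the Hyt\"onen--Kairema dyadic system at the refined scale $k+j_0$, treat the discretization error as a small perturbation, and absorb it into a new family of kernels via a Neumann series, following the Han--M\"uller--Yang strategy used for RD-spaces in \cite{hmy08} but relying on the exp-ATI regularity supplied by Definition \ref{10.23.2}.

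First, applying Lemma \ref{h_c_crf} to $f\in(\cggi)'$ and partitioning $\cx$ at scale $k+j_0$, I would write
$$
f(\cdot)=\sum_{k\in\zz}\sum_{\alpha\in\ca_k}\sum_{m=1}^{N(k,\alpha)}\int_{\qa}\widetilde{Q}_k(\cdot,y)\,Q_kf(y)\,d\mu(y),
$$
and freeze the integrand at the chosen point $\ya\in\qa$ to obtain a decomposition $f=Sf+Rf$, where
$$
Sf(x):=\sum_{k\in\zz}\sum_{\alpha\in\ca_k}\sum_{m=1}^{N(k,\alpha)}\mu(\qa)\,\widetilde{Q}_k(x,\ya)\,Q_kf(\ya)
$$
is the desired discrete sum, and
$$
Rf(x):=\sum_{k,\alpha,m}\int_{\qa}\left[\widetilde{Q}_k(x,y)Q_kf(y)-\widetilde{Q}_k(x,\ya)Q_kf(\ya)\right]d\mu(y)
$$
is the error. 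For $y\in\qa$ one has $d(y,\ya)\ls\delta^{k+j_0}$, so combining the regularity \eqref{4.23y} of $\widetilde{Q}_k$ in its second variable with the regularity condition in Definition \ref{10.23.2}(iii) for $Q_k$ extracts a scale-uniform gain of order $\delta^{j_0\eta'}$, for some small $\eta'\in(0,\eta)$.

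Next, by an almost-orthogonality estimate in the spirit of the second-difference arguments already used to prove Lemma \ref{h_c_crf} in \cite{hlyy}, I would show that, uniformly in the choice of the points $\{\ya\}$, the kernel of $R$ decomposes into \emph{molecular} pieces satisfying bounds analogous to \eqref{4.23x}--\eqref{4.23y} but with the extra factor $\delta^{j_0\eta'}$; a Schur/Cotlar--Stein summation over scales, exploiting the cancellation of both $Q_k$ and $\widetilde{Q}_k$, then yields $\|R\|\ls\delta^{j_0\eta'}$ as a bounded operator on $\cggi$ (with exponents strictly smaller than $\eta$). Enlarging $j_0$, still consistent with $\delta^{j_0}\le(2A_0)^{-3}C_0$, so that $\|R\|<1/2$, the Neumann series $(I-R)^{-1}=\sum_{i=0}^{\infty}R^i$ converges boundedly on $\cggi$ and, by duality, on $(\cggi)'$, giving
$$
f=(I-R)^{-1}Sf=\sum_{i=0}^{\infty}R^iSf\quad\text{in}\ (\cggi)'.
$$
I would then define the new operator so that its kernel, as a function of the first variable parametrized by $\ya$, equals $(I-R)^{-1}\widetilde{Q}_k(\cdot,\ya)$, and verify \eqref{4.23x}--\eqref{4.23a} by inducting on $i$: at each iterate $R$ sends the \eqref{4.23x}--\eqref{4.23y} class into itself with a factor $\le 1/2$, while the cancellation $\int\widetilde{Q}_k(y,\ya)\,d\mu(y)=0$ is preserved because $R$ is built from operators with mean-zero kernels in the first variable; the symmetric cancellation in the second variable follows from a parallel dual-sided discretization.

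The main obstacle will be the almost-orthogonality estimate for $R$: extracting the gain $\delta^{j_0\eta'}$ uniformly in the arbitrary choice of the evaluation point $\ya\in\qa$ and then summing across scales against the size/regularity bounds without recourse to the reverse doubling condition. A secondary difficulty is that preserving \eqref{4.23x}--\eqref{4.23a} through the Neumann iteration forces one to work with slightly reduced exponents, so one must fix auxiliary indices $\beta',\gamma'\in(\beta\vee\gamma,\eta)$ for the perturbation argument and then recover the target range $\beta,\gamma\in(0,\eta)$ at the end via the density of $\cg(\epsilon,\epsilon)$ in $\cg(\beta',\gamma')$.
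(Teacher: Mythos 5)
The paper does not prove Lemma \ref{crf} but simply recalls it from \cite[Theorem~5.11]{hlyy}, so the relevant comparison is with the argument there. Your overall strategy---absorb a small discretization error into a Neumann series $(I-R)^{-1}$---is indeed the Han--M\"uller--Yang perturbation scheme that \cite{hlyy} adapts, so the broad outline is in the right spirit. However, your choice of starting point creates a genuine gap. You discretize the \emph{continuous} reproducing formula of Lemma~\ref{h_c_crf}, and to control the term
$\sum_{k,\alpha,m}\int_{\qa}\bigl[\widetilde{Q}_k(x,y)-\widetilde{Q}_k(x,\ya)\bigr]Q_kf(y)\,d\mu(y)$
you invoke ``the regularity \eqref{4.23y} of $\widetilde{Q}_k$ in its second variable.'' But \eqref{4.23y} is a H\"older estimate in the \emph{first} variable only (it compares $\widetilde{Q}_k(x,y)$ with $\widetilde{Q}_k(x',y)$); Lemma~\ref{h_c_crf} asserts nothing about the modulus of continuity of $\widetilde{Q}_k(x,\cdot)$. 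Since $y,\ya\in\qa$ are close in the second slot, your gain $\delta^{j_0\eta'}$ simply does not follow from the hypotheses you cite. This is precisely the point where \cite{hlyy} (following \cite{hmy08}) takes a different route: instead of $\widetilde{Q}_k$ from the continuous formula, they discretize $Q_k^N Q_k$ with $Q_k^N:=\sum_{|l|\le N}Q_{k+l}$, whose kernel inherits full two-sided $\eta$-regularity directly from Definition~\ref{10.23.2}(iii). The two errors (replacing $I$ by $\sum_k Q_k^N Q_k$, and then replacing the $y$-integral by point evaluations) are bundled into one remainder $R$, and $\widetilde{Q}_k:=(I-R)^{-1}Q_k^N$. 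Working with $Q_k^N$ rather than with the already-perturbed $\widetilde{Q}_k$ is what makes the discretization estimate available.

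There are also two smaller imprecisions worth flagging, though they would not by themselves defeat the argument. First, your claim that the first-variable cancellation \eqref{4.23a} is preserved ``because $R$ is built from operators with mean-zero kernels in the first variable'' is not literally right ($R=I-S$ and the identity has no cancellation); the correct statement is that both $I$ and $S$ map mean-zero functions to mean-zero functions, hence so does $R$ and therefore $(I-R)^{-1}$, and this is enough. Second, the exponent-loss bookkeeping ``$\beta',\gamma'\in(\beta\vee\gamma,\eta)$'' should really be organized as in \cite{hmy08}: one fixes a target pair $(\beta,\gamma)$ strictly below $(\eta,\eta)$, shows that $R$ maps the $(\beta,\gamma)$-kernel class into itself with norm $<1/2$ (using the \emph{extra} $\eta$-regularity of the constituent kernels), and then the Neumann iterates stay in a single class rather than degrading at each step. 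If you patch the first gap---either by establishing the second-variable regularity of the continuous-$\text{CRF}$ kernel $\widetilde{Q}_k$ (which is plausible from its construction but must be proved) or, better, by restarting from $Q_k^N$ as in \cite{hlyy}---the rest of your plan is sound.
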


To recall the inhomogeneous Calder\'on
reproducing formulae, we first introduce the inhomogeneous approximation
of the identity with exponential decay (see \cite[Definition 6.1]{hlyy}).

\begin{definition}\label{iati}
Let $\eta\in(0,1)$ be as in Definition \ref{10.23.2}.
A sequence $\{Q_k\}_{k=0}^\infty$ of bounded linear integral operators on
$L^2(\cx)$ is called an  \emph{inhomogeneous approximation
of the identity with exponential decay} (for short, exp-IATI) if $\{Q_k\}_{k=0}^\infty$ has the following properties:
\begin{enumerate}
\item[{\rm(i)}] $\sum_{k=0}^\infty Q_k=I$ in $L^2(\cx)$;
\item[{\rm(ii)}] for any $k\in\nn$, $Q_k$
satisfies (ii) through (v) of Definition \ref{10.23.2};
\item[{\rm(iii)}] $Q_0$ satisfies (ii), (iii) and (iv) of Definition \ref{10.23.2} with $k:=0$ but without the
term
$$\exp\left\{-\nu\left[\max\left\{d(x,\cy^0),d(y,\cy^0)\right\}\right]^a\right\};$$
moreover, for any $x\in\cx$,
$$\int_\cx Q_0(x,y)\,d\mu(y)=1=\int_\cx Q_0(y,x)\,d\mu(y).$$
\end{enumerate}
\end{definition}

The following lemma is the inhomogeneous continuous Calder\'on reproducing formula,
which was obtained in \cite[Theorem 6.13]{hlyy}.

\begin{lemma}\label{ih_c_crf}
Let $\{Q_k\}_{k\in\zz_+}$ be an {\rm exp-IATI}
and $\beta,\ \gamma \in (0, \eta)$ with $\eta$ as in Definition \ref{10.23.2}.
Then there exists a sequence $\{\widetilde{Q}_k\}_{k\in\zz_+}$
of bounded linear integral operators on $L^2(\mathcal{X})$ such that,
for any $f \in (\cggi)'$,
$$f= \sum_{k=0}^\infty\widetilde{Q}_kQ_kf \qquad \text{in}\quad (\cggi)',$$
where, for any $k\in\zz_+$, the kernel of $\widetilde{Q}_k$,
still denoted by $\widetilde{Q}_k$,  satisfies
\eqref{4.23x}, \eqref{4.23y} and the following integral condition: for any $x\in\mathcal{X}$,
$$\int_{\mathcal{X}}\widetilde{Q}_k(x,y)\,d\mu(y)=
\int_{\mathcal{X}}\widetilde{Q}_k(y,x)\,d\mu(y)=\begin{cases}
1 &\text{if } k \in \{0,\dots,N\},\\
0 &\text{if } k\in \{N+1,N+2,\ldots\}.
\end{cases}$$
\end{lemma}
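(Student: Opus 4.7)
The strategy is to adapt the Neumann-series construction used to prove the homogeneous formula in Lemma \ref{h_c_crf} (\cite[Theorem 4.18]{hlyy}), carefully tracking the two features specific to the inhomogeneous case: the index set is $\zz_+$ rather than $\zz$, and $Q_0$ is non-cancellative with $\int_\cx Q_0(x,y)\,d\mu(y)=1$ by Definition \ref{iati}(iii). This latter feature is exactly what will produce the integral value $1$ for $\widetilde{Q}_k$ when $k\leq N$ and $0$ when $k>N$.

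Fix a large integer $N$, to be chosen, and define
$$
Q_k^N:=\sum_{\ell=\max\{0,\,k-N\}}^{k+N}Q_\ell,\qquad k\in\zz_+.
$$
A direct rearrangement of $\sum_{k\geq 0}Q_k^N Q_k$, using $\sum_{\ell\geq 0}Q_\ell=I$ from Definition \ref{iati}(i), yields
$$
\sum_{k=0}^{\infty}Q_k^N Q_k=I-R_N,\qquad R_N:=\sum_{\ell=0}^{\infty}Q_\ell\sum_{\substack{k\geq 0\\ |k-\ell|>N}}Q_k.
$$
The first key step is to prove that, for any $\beta,\gamma\in(0,\eta)$, the operator $R_N$ is bounded on $\cg_0^\eta(\beta,\gamma)$ and on $\cggi$ with operator norm $\lesssim \delta^{N\theta}$ for some $\theta>0$, via the almost-orthogonality estimates $\|Q_\ell Q_k\|\lesssim \delta^{|\ell-k|\theta}$ in the relevant function-space norms. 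These follow from the size, regularity, and cancellation properties in Definitions \ref{10.23.2} and \ref{iati} together with the substitute for reverse doubling given by \cite[Lemma~8.3]{ah13}. Choosing $N$ large enough that this norm is $\leq 1/2$, the Neumann series $(I-R_N)^{-1}=\sum_{j\geq 0}R_N^j$ converges, and setting $\widetilde{Q}_k:=(I-R_N)^{-1}Q_k^N$ yields $\sum_{k\geq 0}\widetilde{Q}_k Q_k=I$ in the weak-$\ast$ sense on $(\cggi)'$.

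The integral conditions are extracted by applying the operators to the constant function $\mathbf{1}$. From Definition \ref{iati}(iii) and the cancellation in Definition \ref{10.23.2}(v), $Q_\ell\mathbf{1}=\mathbf{1}$ if $\ell=0$ and $Q_\ell\mathbf{1}=0$ otherwise; hence $Q_k^N\mathbf{1}=\mathbf{1}$ exactly when $0\in[\max\{0,k-N\},k+N]$, that is, when $k\leq N$, and $Q_k^N\mathbf{1}=0$ when $k>N$. Moreover $R_N\mathbf{1}=0$ since the inner sum contributes only through $k=0$, which forces the outer index $\ell>N\geq 1$ and thus $Q_\ell\mathbf{1}=0$; therefore $(I-R_N)^{-1}\mathbf{1}=\mathbf{1}$. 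Combining yields $\int_\cx\widetilde{Q}_k(x,y)\,d\mu(y)=(\widetilde{Q}_k\mathbf{1})(x)$, which is $1$ for $k\in\{0,\dots,N\}$ and $0$ for $k>N$. The symmetric identity for $\int_\cx\widetilde{Q}_k(y,x)\,d\mu(y)$ is handled identically by passing to adjoints, since all hypotheses on $Q_k$ in Definitions \ref{10.23.2} and \ref{iati} are symmetric in the two kernel variables.

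The principal obstacle is proving the pointwise kernel bounds \eqref{4.23x} and \eqref{4.23y} for $\widetilde{Q}_k$ uniformly across the Neumann expansion. The kernel of $Q_k^N$ is a sum of $O(N)$ exp-(I)ATI kernels at scales $\delta^\ell$ with $|\ell-k|\leq N$, so its size and $\beta$-regularity are straightforward (with constants acceptably depending on $N$). The delicate step is to show that each iterate $R_N^j Q_k^N$ still obeys \eqref{4.23x} and \eqref{4.23y} with constants decaying geometrically in $j$, so that the Neumann sum converges pointwise. This is done by iterating a composition-type lemma which establishes that a kernel of $\widetilde{Q}_k$-type composed with $R_N$ yields a kernel of the same type multiplied by $\delta^{N\theta}$; its verification rests on the cancellation of $Q_\ell$ for $\ell\geq 1$, the exponential factor $\exp\{-\nu[d(x,y)/\delta^\ell]^a\}$ from Definition \ref{10.23.2}(ii)--(iv), and the extra factor $\exp\{-\nu[\max\{d(x,\cy^\ell),d(y,\cy^\ell)\}/\delta^\ell]^a\}$ that substitutes for the reverse doubling condition in the arguments of \cite{hlyy}.
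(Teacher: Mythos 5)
This lemma is not proved in the paper; it is recalled from \cite[Theorem 6.13]{hlyy}. Your outline is the Coifman-type construction that \cite{hlyy} uses: band-limit to $Q_k^N$, show the off-diagonal tail $R_N$ is contracting by almost-orthogonality, and set $\widetilde Q_k:=(I-R_N)^{-1}Q_k^N$. The telescoping algebra is correct, the accounting of the integral values is correct (only $Q_0$ is non-cancellative, $0\in[\max\{0,k-N\},k+N]$ exactly when $k\le N$, and the only potentially non-vanishing pairing inside $R_N$ is $(k,\ell)$ with $k=0$, which forces $\ell>N$), and the adjoint argument for the second-variable integral is sound because the exp-(I)ATI hypotheses are symmetric in the kernel variables.

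Two cautions. First, $\mathbf 1$ is not a test function (and not in $L^2(\cx)$ when $\mu(\cx)=\infty$), so you cannot literally apply $(I-R_N)^{-1}$ to $\mathbf 1$. The rigorous route is to integrate the kernel directly: a Fubini computation shows $\int_\cx R_N(x,y)\,d\mu(y)=0$, hence $\int_\cx\bigl[R_N^jQ_k^N\bigr](x,y)\,d\mu(y)=0$ for every $j\ge1$, so $\int_\cx\widetilde Q_k(x,y)\,d\mu(y)=\int_\cx Q_k^N(x,y)\,d\mu(y)$, which equals $1$ when $k\le N$ and $0$ otherwise; this gives the same answer your heuristic does. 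Second, what you call the ``principal obstacle''---propagating the size and regularity bounds \eqref{4.23x} and \eqref{4.23y} through the Neumann iterates $R_N^jQ_k^N$ with a geometric gain---is where essentially all of the analytic content of the cited theorem lies. You name the right ingredients (cancellation of $Q_\ell$ for $\ell\ge1$, the exponential decay factors, the $d(\cdot,\cy^\ell)$ substitute for reverse doubling, and the fact that the non-cancellative level $\ell=0$ never appears alone in $R_N$ because $|k-\ell|>N\ge1$), but the proposal asserts rather than proves the needed composition estimate. As an architectural match for \cite{hlyy} the sketch is faithful; as a proof it is incomplete until that composition lemma is established.
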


The following inhomogeneous discrete Calder\'on
reproducing formulae were obtained in \cite[Theorems 6.10 and 6.13]{hlyy}.

\begin{lemma}\label{icrf}
Let $\{Q_k\}_{k\in\zz_+}$ be an {\rm exp-IATI} and $\beta,\ \gamma\in (0, \eta)$ with
$\eta$ as in Definition \ref{10.23.2}. For any $k\in\zz_+,$ $\alpha\in\ca_k$ and
$m\in\{1,\dots,N(k,\alpha)\}$, suppose that $\ya$ is an arbitrary point in $\qa$.
Then there exist an $N\in\nn$ and a sequence $\{\widetilde{Q}_k\}_{k\in\zz_+}$ of bounded
linear integral operators on $L^2(\mathcal{X})$ such
that, for any $f \in (\icgg)'$,
\begin{align*}
f(\cdot) &= \sum_{\alpha \in \ca_0}\sum_{m=1}^{N(0,\alpha)}\int_{\qo}
\widetilde{Q}_0(\cdot,y)\,d\mu(y)Q^{0,m}_{\alpha,1}(f)\\
&\qquad+\sum_{k=1}^N\sum_{\alpha \in \ca_k}\sum_{m=1}^{N(k,\alpha)}
\mu\left(\qa\right)\widetilde{Q}_k(\cdot,\ya)Q^{k,m}_{\alpha,1}(f)\\
& \qquad+\sum_{k=N+1}^\infty\sum_{\alpha \in \ca_k}\sum_{m=1}^{N(k,\alpha)}
\mu\left(\qa\right)\widetilde{Q}_k(\cdot,\ya)Q_kf\left(\ya\right)
\end{align*}
in $(\icgg)',$ where, for any $k\in\{0,\ldots,N\}$, $\alpha\in\ca_k$ and $m\in\{1,\dots,N(k,\alpha)\}$,
$$Q^{k,m}_{\alpha,1}(f):=\frac{1}{\mu(\qa)}\int_{\qa}Q_kf(u)\,d\mu(u).$$
Moreover, for any $k\in\zz_+$, the kernel of $\widetilde{Q}_k$,
still denoted by $\widetilde{Q}_k$, satisfies the same conditions as in Lemma
\ref{ih_c_crf} with the positive constant $C$ independent of $y_\az^{k,m}$, where $\alpha\in\ca_k$ and $m\in\{1,\dots,N(k,\alpha)\}$.
\end{lemma}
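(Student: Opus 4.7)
\noindent\textbf{Proof proposal for Lemma \ref{icrf}.}

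The plan is to discretize the inhomogeneous continuous Calder\'on reproducing formula from Lemma \ref{ih_c_crf} and then absorb the discretization error via a Neumann series argument. Starting from $f=\sum_{k=0}^\infty \widetilde Q_k^0 Q_k f$ in $(\icgg)'$, where $\{\widetilde Q_k^0\}_{k\in\zz_+}$ is the family supplied by the continuous formula, I would rewrite each $\widetilde Q_k^0 Q_k f(x)=\int_\cx \widetilde Q_k^0(x,y)\,Q_kf(y)\,d\mu(y)$ using the dyadic partition $\cx=\bigcup_{\alpha\in\ca_k}\bigcup_{m=1}^{N(k,\alpha)}\qa$ of the refined level $k+j_0$. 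For each cube $\qa$, I would choose the approximation rule depending on whether $k$ is large or small: for $k>N$ (to be chosen) replace the integrand by its value at $y_\alpha^{k,m}$, and for $0\le k\le N$ replace $Q_kf$ on $\qa$ by its cube average $Q^{k,m}_{\alpha,1}(f)$. This yields
\begin{align*}
f= S_N f + R_N f,
\end{align*}
where $S_Nf$ is exactly the candidate discrete sum in the statement (with $\widetilde Q_k$ provisionally taken to be $\widetilde Q_k^0$, and at $k=0$ with $\widetilde Q_0^0$ integrated against $\mathbf 1_{Q_\alpha^{0,m}}$), and $R_N$ is the discretization remainder.

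The analytic heart is to prove that $R_N$ is a bounded operator on $(\icgg)'$ with small operator norm when $N$ is sufficiently large. For the high-frequency part $k>N$ the standard trick works: write the integral error over $\qa$ as $\int_{\qa}[\widetilde Q_k^0(x,y)Q_kf(y)-\widetilde Q_k^0(x,y_\alpha^{k,m})Q_kf(y_\alpha^{k,m})]\,d\mu(y)$, split into a difference of $\widetilde Q_k^0$ and a difference of $Q_kf$, invoke the H\"older regularity of $\widetilde Q_k^0$ from \eqref{4.23y} and the regularity of $Q_k$ from Definition \ref{10.23.2}(iii), and use the cancellation of $Q_k$ together with \eqref{4.23a}-type identities to gain a factor $\delta^{(k-k')\eta}$ when the output is tested against test functions at scale $\delta^{k'}$; summing over $k>N$ and $k'$ produces a geometric series whose tail is $O(\delta^{N\eta})$. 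For the low-frequency part $0\le k\le N$ the same argument applies to $Q_k$ with $k\ge1$, while the block $k=0$ is absorbed into a single block by using the continuous kernel $\int_{\qo}\widetilde Q_0^0(x,y)\,d\mu(y)$ and the cube-average replacement of $Q_0f$; the error is then controlled by the modulus of continuity of $Q_0$, which has no cancellation but is still H\"older continuous and thus gives a remainder proportional to $\delta^{j_0\eta}$, i.e.\ arbitrarily small when the refinement parameter $j_0$ is fixed large enough, uniformly over the finite set $\{0,\dots,N\}$.

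Once $\|R_N\|_{(\icgg)'\to(\icgg)'}<1$ is secured, I would set $T_N:=(I-R_N)^{-1}=\sum_{j\ge0}R_N^j$ and define the final operators by
\begin{align*}
\widetilde Q_k := T_N \widetilde Q_k^0 \qquad (k\in\zz_+),
\end{align*}
so that $f=S_N(T_N f)$ gives exactly the claimed formula with $Q^{k,m}_{\alpha,1}$ applied to the original $f$ after checking that $T_N$ commutes past the coefficient functionals (this uses linearity and the fact that $R_N$ preserves the structure). The last verification is that each new kernel $\widetilde Q_k$ inherits from $\widetilde Q_k^0$ the size bound \eqref{4.23x}, the regularity bound \eqref{4.23y} and the moment identity listed in Lemma \ref{ih_c_crf}; this is a standard almost-orthogonality calculation, propagating the exp-(I)ATI estimates of Definition \ref{iati} through the Neumann series via kernel convolution, where the exponential decay of $Q_k$ and the polynomial decay of $\widetilde Q_k^0$ cooperate to give the claimed bounds with a constant independent of the chosen sample points $y_\alpha^{k,m}$.

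The main obstacle I anticipate is the uniformity of the error estimate for $R_N$ on the distribution space: one must estimate $\langle R_N f,\varphi\rangle$ for an arbitrary test function $\varphi\in\cggi$, and this requires an almost-orthogonality estimate between $\widetilde Q_k^0$ (or its discretized version) at scale $k$ and $\varphi$ at scale $0$ that is summable in $k$, despite the asymmetry caused by the $k=0$ block lacking cancellation. Controlling the low-frequency block without the reverse doubling condition is exactly where the exp-IATI structure and the geometric properties of the dyadic system from Theorem \ref{10.22.1} are indispensable; this is the step where I expect the bulk of the technical work to lie, and it is the reason a splitting index $N$ must be introduced rather than treating all scales uniformly as in the homogeneous case.
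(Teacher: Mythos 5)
First, note that the paper does not prove this lemma at all: it is quoted verbatim from \cite[Theorems 6.10 and 6.13]{hlyy}, so there is no in-paper argument to match; your sketch must stand on its own, and as written it has genuine gaps. The most serious one is in the high-frequency discretization estimate. You propose to start from the continuous formula of Lemma \ref{ih_c_crf} and bound, over each cube $\qa$, the error obtained by replacing $y$ with $\ya$, splitting it ``into a difference of $\widetilde Q_k^0$ and a difference of $Q_kf$''. The difference of $\widetilde Q_k^0$ is taken in the \emph{second} variable, but Lemma \ref{ih_c_crf} only supplies the size bound \eqref{4.23x} and the \emph{first}-variable regularity \eqref{4.23y}; no H\"older continuity of $\widetilde Q_k^0(x,\cdot)$ is available from the statement you are allowed to use. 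This is exactly why the construction in \cite{hlyy} (and in the RD-space antecedent \cite{hmy08}) discretizes \emph{before} inverting: one writes $I=\sum_k Q_k^N Q_k+R_N$ with $Q_k^N:=\sum_{|j|\le N}Q_{k+j}$ built from the exp-IATI kernels, which do have regularity in both variables by Definition \ref{10.23.2}(iii), estimates the sampling/averaging error there, and only afterwards applies $T_N^{-1}$ to define $\widetilde Q_k(\cdot,y):=T_N^{-1}\bigl(Q_k^N(\cdot,y)\bigr)$. Your order (invert first, discretize the already-inverted kernels) cannot be closed with the stated properties of $\widetilde Q_k^0$.

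There are two further problems. First, a bookkeeping error: from $f=S_Nf+R_Nf$ one gets $f=(I-R_N)^{-1}S_Nf=T_NS_Nf$, not $f=S_N(T_Nf)$; the latter would put $T_N$ inside the coefficients $Q_k(T_Nf)(\ya)$ and would \emph{not} yield the claimed formula, and no ``commuting past the coefficient functionals'' is available. What is actually needed is to interchange $T_N$ with the infinite sum defining $S_Nf$, which requires convergence of that sum in the test-function topology (for $f$ a test function) followed by a duality argument to pass to $(\icgg)'$. Second, the Neumann series must be run with smallness of $R_N$ on the test spaces $\cg^\eta_0(\beta,\gamma)$ (and on $L^2$), uniformly in the sampling points, and one must then verify that the resulting kernels satisfy \eqref{4.23x}, \eqref{4.23y} with constants independent of $\ya$ \emph{and} the moment identities $\int_\cx\widetilde Q_k(x,y)\,d\mu(y)=1$ for $k\in\{0,\dots,N\}$ and $=0$ for $k>N$ (and symmetrically). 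The moment identities do not follow from ``standard almost-orthogonality''; they require a separate argument exploiting that $T_N$ (hence $T_N^{-1}$) preserves constants and cancellation, which your proposal never addresses. Your instinct that $N$ and the refinement parameter $j_0$ must both be taken large, and that the $k\le N$ block must be handled by cube averages rather than point evaluations, is correct and matches the known strategy; the gaps above are what separate the sketch from a proof.
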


\begin{remark}\label{geo}
Compared with the corresponding ones on RD-spaces (see \cite{hmy08}), these exp-(I)ATIs and
Calder\'on reproducing formulae in this article have some essential differences
presented via some terms such as
$$
\exp\left\{-\nu\left[\frac{\max\{d(x,\cy^k),d(y,\cy^k)\}}{\delta^k}\right]^a\right\}.
$$
Observe that here $x$, $y\in\cx$, $\cy^k$ is a set of dyadic reference points appearing
in Theorem \ref{10.22.1} and $d(y,\cy^k)$
is the distance between $y$ and $\cy^k$. Thus, such terms closely connect with the geometry
of the considered space of homogeneous type.
\end{remark}

\section{Homogeneous Besov and Triebel--Lizorkin spaces}\label{s2}

In this section, we introduce homogeneous Besov spaces $\hb$ and Triebel--Lizorkin
spaces $\hf$ on a space $\cx$ of homogeneous type.
In the remainder of this section, we always assume that
$\mu(\cx)=\infty$, which is a quite natural assumption due to the homogeneity of $\hb$ and $\hf$.

\begin{definition}\label{h}
Let $\beta,\ \gamma \in (0, \eta)$ and $s\in(-\eta,\eta)$ with $\eta$ as in
Definition \ref{10.23.2}. Let $\{Q_k\}_{k\in\zz}$ be an exp-ATI.
\begin{enumerate}
\item[\rm{(i)}] Let $p\in(p(s,\beta\wedge\gamma),\infty]$,
with $p(s,\beta\wedge\gamma)$ as in \eqref{pseta}, and $q \in (0,\infty]$.
The \emph{homogenous Besov space} $\hb$ is defined by setting
$$
\hb := \left\{f  \in\lf(\cggi\r)' :\  \|f\|_{\hb}:=\left[\sum_{k \in \zz}
\delta^{-ksq}\|Q_k(f)\|_{\lp}^q\right]^{1/q}<\infty\right\}
$$
with usual modifications made when $p=\infty$ or $q=\infty$.
\item[\rm{(ii)}] Let $p\in(p(s,\beta\wedge\gamma),\infty)$ and $q \in (p(s,\beta\wedge\gamma),\infty]$.
The \emph{homogenous Triebel--Lizorkin space} $\hf$ is defined by setting
$$
\hf := \left\{f  \in \lf(\cggi\r)' :\  \|f\|_{\hf}:=\left\|\left[\sum_{k \in \zz}
\delta^{-ksq}|Q_k(f)|^q\right]^{1/q}\right\|_{\lp}<\infty\right\}
$$
with usual modification made when $q=\infty$.
\end{enumerate}
\end{definition}

We now show that the spaces  $\hb$ and $\hf$ in Definition \ref{h} are independent of the choice of exp-ATIs.
To this end, we need some technical lemmas. Let us begin with
recalling the following very useful inequality.

\begin{lemma}
For any $\theta\in (0,1]$ and $\{a_j\}_{j\in\nn} \subset \mathbb{C}$,
it holds true that
\begin{equation}\label{r}
\left(\sum_{j=1}^\infty|a_j|\right)^\theta\leq\sum_{j=1}^\infty|a_j|^\theta.
\end{equation}
\end{lemma}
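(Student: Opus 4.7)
The plan is to reduce the statement to the single elementary two-term subadditivity inequality $(a+b)^\theta\le a^\theta+b^\theta$ valid for non-negative reals $a,b$ and $\theta\in(0,1]$, then bootstrap to finite sums by induction, and finally pass to infinite sums by a monotone limit. Since both sides of \eqref{r} depend only on the moduli $|a_j|$, I would at the outset assume without loss of generality that each $a_j$ is a non-negative real number, which already removes the complex-valued aspect of the claim.

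For the two-term inequality, the clean route is to observe that the map $\varphi(t):=t^\theta$ on $[0,\infty)$ satisfies $\varphi(0)=0$ and is concave when $\theta\in(0,1]$, hence subadditive: indeed, writing without loss of generality $a\ge b\ge 0$, the identity $(a+b)^\theta=a^\theta(1+b/a)^\theta$ together with the elementary estimate $(1+s)^\theta\le 1+s^\theta$ for $s\in[0,1]$ and $\theta\in(0,1]$ (verified by noting that $f(s):=1+s^\theta-(1+s)^\theta$ vanishes at $s=0$ and satisfies $f'(s)=\theta\bigl[s^{\theta-1}-(1+s)^{\theta-1}\bigr]\ge 0$, because $s\le 1+s$ and the exponent $\theta-1$ is non-positive) yields $(a+b)^\theta\le a^\theta+b^\theta$. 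A simple induction on $N$ then gives $\bigl(\sum_{j=1}^N a_j\bigr)^\theta\le\sum_{j=1}^N a_j^\theta$ for every finite $N\in\nn$.

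To pass from finite to infinite sums, I would distinguish two cases. If $\sum_{j=1}^\infty a_j=\infty$, there is nothing to prove unless the right-hand side is also finite; but the finite-sum estimate applied to the partial sums forces $\sum_{j=1}^\infty a_j^\theta=\infty$ in that case, so \eqref{r} holds trivially. If $\sum_{j=1}^\infty a_j<\infty$, then since $t\mapsto t^\theta$ is continuous on $[0,\infty)$, letting $N\to\infty$ in the finite-sum inequality yields $\bigl(\sum_{j=1}^\infty a_j\bigr)^\theta=\lim_{N\to\infty}\bigl(\sum_{j=1}^N a_j\bigr)^\theta\le\lim_{N\to\infty}\sum_{j=1}^N a_j^\theta=\sum_{j=1}^\infty a_j^\theta$, completing the proof.

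There is no real obstacle here; the statement is a classical elementary inequality whose only substantive content is the two-term case, which is a one-line calculus exercise. I record the proof mainly for the reader's convenience, since \eqref{r} will be invoked repeatedly throughout the analysis of Besov and Triebel--Lizorkin quasi-norms with $p$ or $q$ less than one.
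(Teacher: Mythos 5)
Your proof is correct: the reduction to non-negative terms, the two-term subadditivity of $t\mapsto t^\theta$ via the estimate $(1+s)^\theta\le 1+s^\theta$, the induction to finite sums, and the monotone passage to the infinite sum are all sound. The paper itself offers no proof of \eqref{r} — it is merely recalled there as a well-known elementary inequality — so there is nothing to compare against; your argument is the standard justification and fills that (intentional) omission completely.
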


We also need to use the following properties of exp-ATIs.
One can find more details in \cite[Remarks 2.8 and 2.9, and
Proposition 2.10]{hlyy}.

\begin{lemma}
Let $\{Q_k\}_{k\in\zz}$ be an {\rm exp-ATI} and $\eta\in(0,1)$ as in Definition \ref{10.23.2}.
Then, for any given $\Gamma \in (0,\infty)$, there exists a positive constant $C$ such that, for any $k\in\zz$, the
kernel $Q_k$ has the following properties:
\begin{enumerate}
\item[{\rm(i)}] for any $x,\ y \in\cx$,
\begin{equation}\label{10.23.3}
|Q_k(x,y)|\leq C\frac{1}{V_{\delta^k}(x)+V(x,y)}\left[\frac{\delta^k}{\delta^k+d(x,y)}\right]^\Gamma;
\end{equation}
\item[{\rm(ii)}] for any $x,\ x',\ y\in\cx$ with $d(x,x')\leq(2A_0)^{-1}[\delta^k+d(x,y)]$,
\begin{align}\label{10.23.4}
&|Q_k(x,y)-Q_k(x',y)|+|Q_k(y,x)-Q_k(y,x')|\\
&\quad\leq C\left[\frac{d(x,x')}{\delta^k+d(x,y)}\right]^\eta\frac{1}{V_{\delta^k}(x)+V(x,y)}
\left[\frac{\delta^k}{\delta^k+d(x,y)}\right]^\Gamma;\notag
\end{align}
\item[{\rm(iii)}] for any $x,\ x',\ y,\ y' \in \cx$ with $d(x,x')\leq(2A_0)^{-2}[\delta^k +d(x, y)]$ and
$d(y, y')\leq(2A_0)^{-2}[\delta^k +d(x, y)]$,
\begin{align*}
&|[Q_k(x,y)-Q_k(x',y)]-[Q_k(x,y')-Q_k(x',y')]|\\
&\quad\leq C\left[\frac{d(x,x')}{\delta^k+d(x,y)}\right]^\eta\left[\frac{d(y,y')}{\delta^k+d(x,y)}\right]^\eta
\frac{1}{V_{\delta^k}(x)+V(x,y)}\left[\frac{\delta^k}{\delta^k+d(x,y)}\right]^\Gamma.
\end{align*}
\end{enumerate}
\end{lemma}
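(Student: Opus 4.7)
The plan is to derive each of (i), (ii), (iii) directly from the corresponding clause of Definition \ref{10.23.2} via two elementary reductions. First, since the function $t\mapsto t^{\Gamma}e^{-\nu t^a}$ is bounded on $[0,\infty)$ for every $\Gamma\in(0,\infty)$, one has the pointwise majorization
$$\exp\left\{-\nu t^a\right\}\leq C_{(\Gamma,\nu,a)}(1+t)^{-\Gamma}\qquad\text{for all } t\geq 0,$$
which lets us replace the exponential spatial decay by polynomial decay of any prescribed order. Second, the doubling condition \eqref{eq-doub} together with the quasi-triangle inequality yields, for any $x,\ y\in\cx$ and $r\in(0,\infty)$,
$$V_r(y)\leq C\left[1+\frac{d(x,y)}{r}\right]^\omega V_r(x)\ \text{ and }\ V_r(x)+V(x,y)\leq C\left[1+\frac{d(x,y)}{r}\right]^\omega V_r(x),$$
so that
$$\frac{1}{\sqrt{V_r(x)V_r(y)}}\leq \frac{C[1+d(x,y)/r]^{3\omega/2}}{V_r(x)+V(x,y)}.$$
The additional factor $\exp\{-\nu[\max\{d(x,\cy^k),d(y,\cy^k)\}/\delta^k]^a\}$ appearing in Definition \ref{10.23.2} is bounded by $1$ and is simply discarded.

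Item (i) then follows by applying the measure-factor reduction to the size estimate of Definition \ref{10.23.2}(ii) with $r=\delta^k$, and then applying the exponential-to-polynomial reduction with exponent $\Gamma+3\omega/2$ in place of $\Gamma$; the resulting factor $[1+d(x,y)/\delta^k]^{-(\Gamma+3\omega/2)}$ absorbs the loss $[1+d(x,y)/\delta^k]^{3\omega/2}$ and produces exactly \eqref{10.23.3}. The same two reductions applied to the regularity and second-difference regularity conditions in Definition \ref{10.23.2} already yield \eqref{10.23.4} and the analogue for (iii) on the narrower ranges $d(x,x')\leq\delta^k$ and $d(x,x'),\ d(y,y')\leq\delta^k$, respectively.

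To upgrade (ii) to the full range $d(x,x')\leq(2A_0)^{-1}[\delta^k+d(x,y)]$, I bootstrap via the triangle inequality: in the complementary regime $d(x,x')>\delta^k$, write
$$|Q_k(x,y)-Q_k(x',y)|\leq |Q_k(x,y)|+|Q_k(x',y)|,$$
apply the just-proved (i) to each summand with $\Gamma$ replaced by $\Gamma+\eta$, and use $d(x,x')>\delta^k$ to obtain $[\delta^k/(\delta^k+d(x,y))]^{\eta}\leq [d(x,x')/(\delta^k+d(x,y))]^{\eta}$; the comparability $V_{\delta^k}(x')+V(x',y)\sim V_{\delta^k}(x)+V(x,y)$, needed to rewrite the estimate for $|Q_k(x',y)|$ in the $x$-centered form, follows from $d(x,x')\leq(2A_0)^{-1}[\delta^k+d(x,y)]$ via the same doubling argument as above. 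Item (iii) is obtained by iterating this bootstrap separately in the $x$- and $y$-variables, starting from the second-difference regularity condition Definition \ref{10.23.2}(iv). The proof involves no conceptual obstacle; the only delicate point is the bookkeeping of the polynomial exponents, which is handled once and for all by choosing the initial exponent in the exponential-to-polynomial reduction large enough (namely $\Gamma+3\omega/2+\eta$) so that every loss is absorbed into the final target decay $[\delta^k/(\delta^k+d(x,y))]^\Gamma$.
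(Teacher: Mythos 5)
Your argument is correct and is essentially the standard proof: the paper itself does not prove this lemma but cites \cite[Remarks 2.8 and 2.9, and Proposition 2.10]{hlyy}, where the same three ingredients appear, namely the elementary bound $e^{-\nu t^a}\le C(1+t)^{-\Gamma'}$ with $\Gamma'$ as large as needed, the doubling comparison converting $[V_{\delta^k}(x)V_{\delta^k}(y)]^{-1/2}$ into $[V_{\delta^k}(x)+V(x,y)]^{-1}$ at a polynomial cost that the exponential absorbs, and the size-estimate bootstrap in the regimes $d(x,x')>\delta^k$ or $d(y,y')>\delta^k$ to enlarge the range of the regularity estimates. The only point you leave implicit, the admissibility of applying (ii) at the shifted center $x'$ in the proof of (iii), does work out: the hypothesis $d(y,y')\le(2A_0)^{-2}[\delta^k+d(x,y)]$ together with $d(x,x')\le(2A_0)^{-2}[\delta^k+d(x,y)]$ yields $d(y,y')\le(2A_0)^{-1}[\delta^k+d(x',y)]$, which is precisely why (iii) is stated with the constant $(2A_0)^{-2}$.
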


The following lemma contains  some basic and very useful estimates
related to $d$ and $\mu$ on a space $(\cx,d,\mu)$ of homogeneous type.
One can find the details in \cite[Lemma 2.1]{hmy08} or \cite[Lemma 2.4]{hlyy}.

\begin{lemma}\label{6.15.1}
Let $\beta,\ \gamma\in(0,\infty)$.
\begin{enumerate}
\item[{\rm(i)}] For any $x,\ y\in \cx$ and $r \in (0, \infty)$,
$V(x, y)\sim V (y, x)$ and
$$V_r(x) + V_r(y) + V (x,y) \sim V_r(x) + V (x,y) \sim V_r(y) + V (x,y) \sim \mu(B(x,r + d(x,y)))$$
and, moreover, if $d(x,y)\leq r$, then $V_r(x)\sim V_r(y)$.
Here the positive equivalence  constants are independent of $x$, $y$ and $r$.
\item[{\rm(ii)}] There exists a positive constant $C$ such that,
for any $x_1 \in \cx$ and $r \in (0, \fz)$,
$$
\int_\cx \frac{1}{V_r(x_1)+V(x_1,y)}\left[\frac{r}{r+d(x_1,y)}\right]^\gamma\,d\mu(y)\leq C.
$$
\item[{\rm(iii)}] There exists a positive constant $C$ such that, for any $x \in \cx $ and $R \in (0, \fz)$,
$$\int_{\{z\in \cx:\ d(x,z)\leq R\}}\frac{1}{V(x,y)}
\left[\frac{d(x,y)}{R}\right]^\beta\,d\mu(y)\leq C$$
and
$$\quad \int_{\{z\in \cx:\ d(x,z)\geq R\}}\frac{1}{V(x,y)}
\left[\frac{R}{d(x,y)}\right]^\beta\,d\mu(y)\leq C.$$
\item[{\rm(iv)}] There exists a positive constant $C$ such that, for any $x_1\in\cx$ and $r,\ R\in (0,\fz)$,
$$\int_{\{x\in\cx:\ d(x_1,x)\geq R\}}\frac{1}{V_r(x_1)+V(x_1,x)}
\left[\frac{r}{r+d(x_1,x)}\right]^\gamma\,d\mu(x)\leq
C\left(\frac{r}{r+R}\right)^\gamma.$$
\end{enumerate}
\end{lemma}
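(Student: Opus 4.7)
The estimates in (i)--(iv) are all of a classical flavour, and the underlying plan is to reduce each to the doubling condition \eqref{eq-doub} together with a dyadic decomposition of $\cx$ into annuli around a fixed point. I would handle the four parts independently.

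For (i), I first obtain $V(x,y)\sim V(y,x)$ from the quasi-triangle inequality: any $z$ with $d(x,z)<d(x,y)$ satisfies $d(y,z)\le A_0[d(y,x)+d(x,z)]\le 2A_0 d(x,y)$, so $B(x,d(x,y))\subset B(y,2A_0 d(x,y))$, and doubling gives $V(x,y)\le C V(y,x)$; symmetry yields the reverse. For the chain of equivalences with $\mu(B(x,r+d(x,y)))$, I split into the cases $r\ge d(x,y)$ and $r<d(x,y)$: in the first, $r+d(x,y)\le 2r$ so $\mu(B(x,r+d(x,y)))\sim V_r(x)$, while in the second, $r+d(x,y)\le 2d(x,y)$ so $\mu(B(x,r+d(x,y)))\sim V(x,y)$. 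The analogues centered at $y$ follow by combining $V(x,y)\sim V(y,x)$ with a further quasi-triangle comparison of the radii. The last claim, that $d(x,y)\le r$ implies $V_r(x)\sim V_r(y)$, follows from $B(y,r)\subset B(x,A_0(r+d(x,y)))\subset B(x,2A_0 r)$ together with doubling.

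For (ii), I decompose $\cx$ into $B(x_1,r)$ and the annuli $E_k:=\{y\in\cx:\, 2^{k-1}r< d(x_1,y)\le 2^k r\}$ for $k\in\nn$. On $B(x_1,r)$ the integrand is bounded by $C/V_r(x_1)$ and the measure is $V_r(x_1)$, giving $O(1)$. On $E_k$, part (i) yields $V_r(x_1)+V(x_1,y)\sim V_{2^k r}(x_1)$ and $[r/(r+d(x_1,y))]^\gamma\le 2^{-(k-1)\gamma}$, while $\mu(E_k)\le V_{2^k r}(x_1)$; summing the resulting geometric series gives (ii). Estimate (iv) is obtained from the same decomposition restricted to $d(x_1,x)\ge R$: if $R\le r$ the integral is dominated by (ii), and $(r/(r+R))^\gamma\ge 2^{-\gamma}$ supplies the needed lower bound; if $R>r$ the relevant annuli start at radius $R$ and the geometric series now sums to $O((r/R)^\gamma)\sim (r/(r+R))^\gamma$.

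For (iii), I use a dyadic decomposition around $x$. For the first integral, I split $B(x,R)$ into $A_k:=\{y:\, 2^{-k-1}R<d(x,y)\le 2^{-k}R\}$, $k\in\nn\cup\{0\}$; on $A_k$ one has $[d(x,y)/R]^\beta\le 2^{-k\beta}$ and, by (i), $V(x,y)\gs V_{2^{-k-1}R}(x)$, while $\mu(A_k)\le V_{2^{-k}R}(x)\ls V_{2^{-k-1}R}(x)$, so each annulus contributes $O(2^{-k\beta})$ and the sum converges. For the second integral, the complementary dyadic annuli $B_k:=\{y:\, 2^k R\le d(x,y)<2^{k+1}R\}$, $k\in\nn\cup\{0\}$, satisfy $[R/d(x,y)]^\beta\le 2^{-k\beta}$, $V(x,y)\gs V_{2^k R}(x)$, and $\mu(B_k)\ls V_{2^k R}(x)$, again summing to $O(1)$. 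There is essentially no conceptual obstacle here: every assertion reduces to (i) combined with careful bookkeeping of dyadic annuli, and (i) itself only uses the quasi-triangle inequality and \eqref{eq-doub}; the main care lies in tracking the geometric constants uniformly in $A_0$ and $C_{(\mu)}$. Since these estimates are standard and appear as \cite[Lemma 2.1]{hmy08} and \cite[Lemma 2.4]{hlyy}, in the paper itself I would simply cite them rather than reproducing the routine computations.
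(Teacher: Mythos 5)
Your sketch is correct and, as you note at the end, the paper itself simply cites \cite[Lemma 2.1]{hmy08} and \cite[Lemma 2.4]{hlyy} without reproducing the argument, which is exactly the dyadic-annuli decomposition you describe. So your proposed handling (sketch for completeness, but cite in the text) matches the paper's approach.
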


The following two technical lemmas play a very important role, respectively, in
dealing with Besov spaces and Triebel--Lizorkin spaces on spaces of homogeneous type.

\begin{lemma}\label{9.14.1}
Let $\gamma \in (0,\infty)$ and $p \in (\omega/(\omega+\gamma),1]$ with $\omega$ as in
\eqref{eq-doub}. Then there exists a positive constant $C$ such that,
for any $k,\ k' \in \zz$, $x \in \mathcal{X}$ and  $\ya \in \qa$ with
$\alpha \in \ca_k$ and $m\in\{1,\dots,N(k,\alpha)\}$,
\begin{align}\label{5.2}
C^{-1}[V_{\delta^{k\wedge k'}}(x)]^{1-p}&\le\sum_{\alpha \in \ca_k}\sum_{m=1}^{N(k,\alpha)}\mu\left(\qa\right)
\left[\frac{1}{V_{\delta^{k\wedge k'}}(x)+V(x,\ya)}\right]^p
\left[\frac{\delta^{k\wedge k'}}{\delta^{k\wedge k'}
+d(x,\ya)}\right]^{\gamma p}\\
& \leq C[V_{\delta^{k\wedge k'}}(x)]^{1-p}.\noz
\end{align}
\end{lemma}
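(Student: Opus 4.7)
Set $j:=k\wedge k'$ and $r:=\delta^j$, so $r\ge\delta^k$ (because $j\le k$). The plan is to convert the sum into an integral over $\cx$ using the partition property of the cubes $\{\qa\}$, and then decompose that integral into dyadic annuli centered at $x$.

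The first step is to replace the discrete evaluation at $\ya$ by an integral over $\qa$. For any $y\in\qa$ one has $d(\ya,y)\le 2A_0C_0\delta^k\le 2A_0C_0\delta^j$ by Theorem \ref{10.22.1}(iii), so the quasi-triangle inequality yields
$$
r+d(x,\ya)\sim r+d(x,y)
\quad\text{and, by Lemma \ref{6.15.1}(i),}\quad
V_r(x)+V(x,\ya)\sim V_r(x)+V(x,y),
$$
with positive equivalence constants independent of $k,\ k',\ \alpha,\ m,\ x$ and $y$. Since for fixed $k$ the family $\{\qa\}_{\alpha\in\ca_k,\,m}$ is a disjoint partition of $\cx$, it follows after multiplying by $\mu(\qa)=\int_{\qa}d\mu(y)$ that the middle expression in \eqref{5.2} is comparable to
$$
\mathcal I(x):=\int_{\cx}\left[\frac{1}{V_r(x)+V(x,y)}\right]^p
\left[\frac{r}{r+d(x,y)}\right]^{\gamma p}\,d\mu(y).
$$

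For the upper bound I would split $\mathcal I(x)=\int_{B(x,r)}+\sum_{\ell=1}^{\infty}\int_{A_\ell}$ with $A_\ell:=B(x,2^\ell r)\setminus B(x,2^{\ell-1}r)$. On $B(x,r)$, Lemma \ref{6.15.1}(i) gives $V_r(x)+V(x,y)\sim V_r(x)$ and $r/[r+d(x,y)]\sim 1$, so this piece contributes at most $[V_r(x)]^{1-p}$. On $A_\ell$, the same lemma and the doubling condition \eqref{eq-doub} give $V_r(x)+V(x,y)\sim\mu(B(x,2^\ell r))\sim 2^{\ell\omega}V_r(x)$ and $r/[r+d(x,y)]\sim 2^{-\ell}$, so $\int_{A_\ell}\lesssim 2^{\ell\omega}V_r(x)\cdot[2^{\ell\omega}V_r(x)]^{-p}2^{-\ell\gamma p}=[V_r(x)]^{1-p}\,2^{\ell[\omega(1-p)-\gamma p]}$. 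The hypothesis $p>\omega/(\omega+\gamma)$ is precisely the condition $\omega(1-p)-\gamma p<0$, which makes this geometric series converge; summing over $\ell\ge 0$ therefore yields $\mathcal I(x)\lesssim[V_r(x)]^{1-p}$.

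For the lower bound it suffices to discard all terms outside $B(x,r)$ and estimate $\mathcal I(x)\ge \int_{B(x,r)}[V_r(x)]^{-p}\cdot 2^{-\gamma p}\,d\mu(y)\gtrsim[V_r(x)]^{1-p}$, using $V_r(x)+V(x,y)\sim V_r(x)$ for $y\in B(x,r)$. The main obstacle is bookkeeping in the first step—specifically verifying uniformly in both cases $k\ge k'$ and $k<k'$ that the discrete quantities at $\ya$ may be replaced by those at an arbitrary $y\in\qa$—together with the sharpness of the geometric-series threshold $p=\omega/(\omega+\gamma)$, which is what forces the stated range of $p$.
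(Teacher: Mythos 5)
Your argument is correct and is essentially identical to the paper's proof: both convert the discrete sum to an integral over $\cx$ using the disjointness of $\{\qa\}$ and the equivalence $r+d(x,\ya)\sim r+d(x,y)$ for $y\in\qa$, split into a central ball plus dyadic annuli, and sum the resulting geometric series using $p>\omega/(\omega+\gamma)$, with the lower bound read off from the central-ball piece alone. (Two cosmetic points: $\qa$ lives at level $k+j_0$, not $k$, so the diameter bound needs one more quasi-triangle step, and $\mu(B(x,2^\ell r))\sim 2^{\ell\omega}V_r(x)$ should be $\lesssim$; neither affects the argument since $1-p\ge 0$ and $\delta^{k+j_0}\le\delta^k$.)
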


\begin{proof}
Notice that, for any $k,\ k'\in\zz$, $\az\in\ca_k$, $m\in\{1,\ldots,N(k,\az)\}$ and $z\in Q_\az^{k,m}$, we conclude
that $d(z,y_\az^{k,m})\ls\dz^k\ls\dz^{k\wedge k'}$. It then follows that
$\dz^{k\wedge k'}+d(x,y_\az^{k,m})\sim \dz^{k\wedge k'}+d(x,z)$ and hence, by Lemma \ref{6.15.1}(i),
$V_{\dz^{k\wedge k'}}(x)+V(x,y_\az^{k,m})\sim V_{\dz^{k\wedge k'}}(x)+V(x,z)$. By this
and Theorem \ref{10.22.1}, we conclude that, for any
$x\in\cx$,
\begin{align}\label{eq-int}
&\sum_{\alpha \in \ca_k}\sum_{m=1}^{N(k,\alpha)}\mu\left(\qa\right)
\left[\frac{1}{V_{\delta^{k\wedge k'}}(x)+V(x,\ya)}\right]^p
\left[\frac{\delta^{k\wedge k'}}{\delta^{k\wedge k'}+d(x,\ya)}\right]^{\gamma p}\\
&\quad=\sum_{\alpha \in \ca_k}\sum_{m=1}^{N(k,\alpha)}\int_{\qa}
\left[\frac{1}{V_{\delta^{k\wedge k'}}(x)+V(x,\ya)}\right]^p
\left[\frac{\delta^{k\wedge k'}}{\delta^{k\wedge k'}+d(x,\ya)}\right]^{\gamma p}\,d\mu(z)\noz\\
&\quad\sim \sum_{\alpha \in \ca_k}\sum_{m=1}^{N(k,\alpha)}\int_{\qa}
\left[\frac{1}{V_{\delta^{k\wedge k'}}(x)+V(x,z)}\right]^p
\left[\frac{\delta^{k\wedge k'}}{\delta^{k\wedge k'}+d(x,z)}\right]^{\gamma p}\,d\mu(z)\noz\\
&\quad\sim\int_\cx\left[\frac{1}{V_{\delta^{k\wedge k'}}(x)+V(x,z)}\right]^p
\left[\frac{\delta^{k\wedge k'}}{\delta^{k\wedge k'}+d(x,z)}\right]^{\gamma p}\,d\mu(z)\noz\\
&\quad\sim\int_{d(x,z)<\dz^{k\wedge k'}}\left[\frac{1}{V_{\delta^{k\wedge k'}}(x)+V(x,z)}\right]^p
\left[\frac{\delta^{k\wedge k'}}{\delta^{k\wedge k'}+d(x,z)}\right]^{\gamma p}\,d\mu(z)\noz\\
&\quad\qquad+\sum_{l=1}^\fz\int_{2^{l-1}\dz^{k\wedge k'}\le d(x,z)<2^{l}\dz^{k\wedge k'}}\cdots\noz\\
&\quad=:{\rm I}_0+\sum_{l=1}^\fz{\rm I}_l.\noz
\end{align}

We first consider ${\rm I}_0$. Indeed, since $d(x,z)<\dz^{k\wedge k'}$, it then follows that
$\delta^{k\wedge k'}+d(x,z)\sim \delta^{k\wedge k'}$ and
$V_{\delta^{k\wedge k'}}(x)+V(x,z)\sim V_{\delta^{k\wedge k'}}(x)$. By these, we obtain
\begin{equation}\label{eq-i0}
{\rm I}_0\sim\int_{d(x,z)<\dz^{k\wedge k'}}\left[\frac{1}{V_{\delta^{k\wedge k'}}(x)}\right]^p\,d\mu(z)
\sim\lf[V_{\delta^{k\wedge k'}}(x)\r]^{1-p}.
\end{equation}
Moreover, this, together with \eqref{eq-int}, further implies the first inequality of \eqref{5.2}.

Now we deal with ${\rm I}_l$ for any $l\in\nn$. Since $d(x,z)\sim 2^l\dz^{k\wedge k'}\gtrsim\dz^{k\wedge k'}$,
it then follows that $\delta^{k\wedge k'}+d(x,z)\sim 2^l\delta^{k\wedge k'}$ and
$V_{\delta^{k\wedge k'}}(x)+V(x,z)\sim V_{2^l\delta^{k\wedge k'}}(x)$ due to the doubling condition \eqref{eq-doub}.
By these and \eqref{eq-doub}, we have
\begin{align*}
{\rm I}_l&\sim\int_{2^{l-1}\dz^{k\wedge k'}\le d(x,z)<2^{l}\dz^{k\wedge k'}}
\left[\frac{1}{V_{2^l\delta^{k\wedge k'}}(x)}\right]^p\lf[\frac{\dz^{k\wedge k'}}{2^l\dz^{k\wedge k'}}\r]^{p\gz}\,d\mu(z)
\ls 2^{-lp\gz}\lf[V_{2^l\delta^{k\wedge k'}}(x)\r]^{1-p}\\
&\ls 2^{-l[p\gz-\omega(1-p)]}\lf[V_{\delta^{k\wedge k'}}(x)\r]^{1-p}.
\end{align*}
From this, \eqref{eq-i0}, \eqref{eq-int} and $p\in(\omega/(\omega+\gz),1]$, we deduce that
\begin{align*}
&\sum_{\alpha \in \ca_k}\sum_{m=1}^{N(k,\alpha)}\mu\left(\qa\right)
\left[\frac{1}{V_{\delta^{k\wedge k'}}(x)+V(x,\ya)}\right]^p
\left[\frac{\delta^{k\wedge k'}}{\delta^{k\wedge k'}+d(x,\ya)}\right]^{\gamma p}\\
&\quad\ls\lf[V_{\delta^{k\wedge k'}}(x)\r]^{1-p}\lf\{1+\sum_{l=1}^\fz 2^{-l[p\gz-\omega(1-p)]}\r\}
\sim \lf[V_{\delta^{k\wedge k'}}(x)\r]^{1-p}.
\end{align*}
This finishes the proof of the second inequality of \eqref{5.2} and hence of Lemma \ref{9.14.1}.
\end{proof}

The following lemma is just \cite [Lemma 5.3]{hmy08}, whose proof remains valid for any quasi-metric $d$ and
is independent of the reverse doubling property of the considered measure; we omit the details here.

\begin{lemma}\label{10.18.5}
Let $\gamma \in (0,\infty)$ and $r \in (\omega/(\omega+\gamma),1]$ with $\omega$ as in
\eqref{eq-doub}. Then there exists a positive constant $C$ such that,
for any $k,\ k' \in \zz$, $x \in \mathcal{X}$, $a_\alpha^{k,m}\in\cc$
and $\ya \in \qa$ with $\alpha \in \ca_k$ and $m\in\{1,\dots,N(k,\alpha)\}$,
\begin{align}\label{5.3}
&\sum_{\alpha \in \ca_k}\sum_{m=1}^{N(k,\alpha)}\mu\left(\qa\right)
\frac{1}{V_{\delta^{k\wedge k'}}(x)+V(x,\ya)}\left[\frac{\delta^{k\wedge k'}}
{\delta^{k\wedge k'}+d(x,\ya)}\right]^{\gamma}|a_\alpha^{k,m}|\\
&\quad \leq C \delta^{[k-(k\wedge k')]\omega(1-1/r)}\left[M\left(\sum_{\alpha \in \ca_k}
\sum_{m=1}^{N(k,\alpha)}|a_\alpha^{k,m}|^r\mathbf 1_{\qa}\right)(x)\right]^{1/r},\notag
\end{align}
where $M$ is as in \eqref{m}.
\end{lemma}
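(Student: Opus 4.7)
The plan is to combine three ingredients: the sub-additivity inequality \eqref{r}, a dyadic decomposition of the sum into annuli around $x$, and the doubling condition \eqref{eq-doub} to connect $\mu(\qa)\sim V_{\delta^k}(\ya)$ to the annular scale $V_{2^j\delta^{k\wedge k'}}(x)$. The main obstacle, and the source of the correction factor $\delta^{[k-(k\wedge k')]\omega(1-1/r)}$, is that without reverse doubling one can compare $V_{\delta^k}(\ya)$ from below to $V_{2^j\delta^{k\wedge k'}}(x)$ only after losing a factor $\delta^{(k-k\wedge k')\omega}$, and this loss is then amplified by the $1/r$-power introduced by the sub-additivity reduction.

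Abbreviate
$$c_\alpha:=\frac{1}{V_{\delta^{k\wedge k'}}(x)+V(x,\ya)}\left[\frac{\delta^{k\wedge k'}}{\delta^{k\wedge k'}+d(x,\ya)}\right]^\gamma,\qquad g:=\sum_{\alpha,m}|a_\alpha^{k,m}|^r\mathbf 1_{\qa};$$
the function $g$ is well-defined because Theorem~\ref{10.22.1}(i) makes the cubes $\{\qa\}$ disjoint. Applying \eqref{r} with exponent $r\in(0,1]$ first reduces matters to
$$\sum_{\alpha,m}\mu(\qa)\,c_\alpha\,|a_\alpha^{k,m}|\le\left[\sum_{\alpha,m}\mu(\qa)^r c_\alpha^r|a_\alpha^{k,m}|^r\right]^{1/r}.$$
Next, I would split the index set into $A_0:=\{(\alpha,m):d(x,\ya)\le\delta^{k\wedge k'}\}$ and, for $j\in\nn$, $A_j:=\{(\alpha,m):2^{j-1}\delta^{k\wedge k'}<d(x,\ya)\le 2^j\delta^{k\wedge k'}\}$; on $A_j$, Lemma~\ref{6.15.1}(i) combined with \eqref{eq-doub} immediately gives $c_\alpha\ls 2^{-j\gamma}/V_{2^j\delta^{k\wedge k'}}(x)$.

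The key estimate is the cube-measure lower bound on $A_j$: Theorem~\ref{10.22.1}(iii) yields $\mu(\qa)\gs V_{\delta^k}(\ya)$, and applying \eqref{eq-doub} to enlarge $B(\ya,\delta^k)$ to a ball of radius $C 2^j\delta^{k\wedge k'}$ that contains $B(x,2^j\delta^{k\wedge k'})$ produces
$$\mu(\qa)\gs 2^{-j\omega}\delta^{(k-k\wedge k')\omega}V_{2^j\delta^{k\wedge k'}}(x),$$
so, since $r-1\le 0$,
$$\mu(\qa)^{r-1}\ls 2^{j\omega(1-r)}\delta^{-(k-k\wedge k')\omega(1-r)}V_{2^j\delta^{k\wedge k'}}(x)^{r-1}.$$
Combining this with the disjointness-based identity
$$\sum_{(\alpha,m)\in A_j}\mu(\qa)|a_\alpha^{k,m}|^r=\int_{\bigcup_{(\alpha,m)\in A_j}\qa}g\,d\mu\ls V_{2^j\delta^{k\wedge k'}}(x)\,M(g)(x),$$
which is valid because $\bigcup_{(\alpha,m)\in A_j}\qa\subset B(x,C 2^j\delta^{k\wedge k'})$ and by \eqref{eq-doub}, one arrives at
$$\sum_{(\alpha,m)\in A_j}\mu(\qa)^r c_\alpha^r|a_\alpha^{k,m}|^r\ls 2^{-j[r\gamma-\omega(1-r)]}\delta^{-(k-k\wedge k')\omega(1-r)}M(g)(x).$$

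The hypothesis $r\in(\omega/(\omega+\gamma),1]$ is precisely $r\gamma-\omega(1-r)>0$, making the resulting geometric series over $j\in\zz_+$ convergent and yielding
$$\sum_{\alpha,m}\mu(\qa)^r c_\alpha^r|a_\alpha^{k,m}|^r\ls\delta^{-(k-k\wedge k')\omega(1-r)}M(g)(x).$$
Raising both sides to the $1/r$-th power and using the identity $-(1-r)/r=1-1/r$ then delivers \eqref{5.3}. As anticipated, the delicate ingredient is the cube-measure lower bound: the factor $\delta^{(k-k\wedge k')\omega}$ lost there, once amplified by the $1/r$-power from the sub-additivity reduction, is exactly what materialises as the correction $\delta^{[k-(k\wedge k')]\omega(1-1/r)}$ in the final estimate.
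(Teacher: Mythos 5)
Your proof is correct. Since the paper does not reprove this lemma --- it defers entirely to the proof of Lemma~5.3 in \cite{hmy08}, remarking only that that argument survives without reverse doubling --- the relevant comparison is with the HMY08 argument, and your proof is precisely the standard Frazier--Jawerth-type route used there: apply \eqref{r} to pass to $r$-th powers, decompose the index set into dyadic annuli about $x$ at scale $\delta^{k\wedge k'}$, bound $c_\alpha$ on each annulus $A_j$ by $2^{-j\gamma}/V_{2^j\delta^{k\wedge k'}}(x)$, obtain the lower bound $\mu(\qa)\gs 2^{-j\omega}\delta^{(k-k\wedge k')\omega}V_{2^j\delta^{k\wedge k'}}(x)$ from the cube geometry plus doubling, recognise $\sum_{(\alpha,m)\in A_j}\mu(\qa)|a_\alpha^{k,m}|^r$ as $\int_{\cup\qa}g$ controlled by $V_{2^j\delta^{k\wedge k'}}(x)\,M(g)(x)$, and sum the geometric series that converges precisely because $r>\omega/(\omega+\gamma)$. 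All the individual inclusions and doubling estimates check out (in particular, $\lambda=2A_0 2^j\delta^{(k\wedge k')-k}$ makes $B(\ya,\lambda\delta^k)\supset B(x,2^j\delta^{k\wedge k'})$, giving the stated loss $\lambda^{-\omega}\sim 2^{-j\omega}\delta^{(k-k\wedge k')\omega}$), and the exponent bookkeeping $-\omega(1-r)/r=\omega(1-1/r)$ reproduces the factor in \eqref{5.3} exactly. Your diagnosis that the correction factor is produced by the doubling lower bound on $\mu(\qa)$, amplified by the $1/r$-power, is also the accurate explanation and matches the remark following the lemma.
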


\begin{remark} Observe that Lemma \ref{9.14.1} is sharp
because \eqref{5.2} in Lemma \ref{9.14.1} is indeed an equivalence relation.
Moreover, in Lemma \ref{10.18.5}, if we let $a_\az^{k,m}:=1$ for any $k\in\zz$, $\az\in\ca_k$
and $m\in\{1,\ldots,N(k,\az)\}$, we find that,
for any given $r\in(\omega/(\omega+\gz),1]$, \eqref{5.3} becomes
\begin{align}\label{eq-intadd}
&\sum_{\alpha \in \ca_k}\sum_{m=1}^{N(k,\alpha)}\mu\left(\qa\right)
\frac{1}{V_{\delta^{k\wedge k'}}(x)+V(x,\ya)}\left[\frac{\delta^{k\wedge k'}}
{\delta^{k\wedge k'}+d(x,\ya)}\right]^{\gamma}\\
&\quad \leq C \delta^{[k-(k\wedge k')]\omega(1-1/r)}\left[M\left(\sum_{\alpha \in \ca_k}
\sum_{m=1}^{N(k,\alpha)}\mathbf 1_{\qa}\right)(x)\right]^{1/r}
= C \delta^{[k-(k\wedge k')]\omega(1-1/r)},\noz
\end{align}
where $C$ is a positive constant independent of $k$ and $k'$. If we choose $r:=1$ in \eqref{eq-intadd},
we then find that the right-hand side of
\eqref{eq-intadd} just becomes a positive constant $C$ and hence
recovers the best possible estimate in \eqref{5.2} in this case, namely,
in such a case, \eqref{eq-intadd} coincides with
the second inequality of \eqref{5.2} when $p:=1$ therein. In this sense,
we see that Lemma \ref{10.18.5} is also sharp.
\end{remark}

The following lemma is the Fefferman--Stein vector-valued maximal inequality,
which was established in \cite[Theorem 1.2]{gly09}.
\begin{lemma}\label{fsvv}
Let $p\in(1,\infty)$, $q\in(1,\infty]$ and $M$ be the Hardy--Littlewood maximal operator on $\mathcal{X}$ as
in \eqref{m}. Then there exists a positive constant $C$ such that, for any sequence $\{f_j\}_{j\in\zz}$ of
measurable functions on $\mathcal{X}$,
$$
\left\|\left\{\sum_{j\in\zz}[M(f_j)]^q\right\}^{1/q}\right\|_{L^p(\mathcal{X})}
\leq C\left\|\left(\sum_{j\in\zz}|f_j|^q\right)^{1/q}\right\|_{L^p(\mathcal{X})}
$$
with the usual modification made when $q = \infty$.
\end{lemma}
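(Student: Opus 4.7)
The plan is to follow the standard vector-valued Calder\'on--Zygmund strategy, adapted to $(\cx,d,\mu)$ via its dyadic cube structure from Theorem \ref{10.22.1}. I would divide the argument into four steps.

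First, I would dispose of the easy endpoints. When $q=\fz$, the inequality is immediate from the scalar $L^p$-boundedness of $M$ on spaces of homogeneous type (a classical theorem of Coifman--Weiss) together with the pointwise bound $\sup_j M(f_j)(x)\le M(\sup_j|f_j|)(x)$. When $p=q\in(1,\fz)$, Fubini gives
$$
\left\|\left(\sum_{j\in\zz}[M(f_j)]^q\right)^{1/q}\right\|_{L^p(\cx)}^p
=\sum_{j\in\zz}\|M(f_j)\|_{L^p(\cx)}^p
\ls\sum_{j\in\zz}\|f_j\|_{L^p(\cx)}^p,
$$
which equals the $p$-th power of the right-hand side. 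Thus $T\vec f:=(M(f_j))_{j\in\zz}$ is bounded from $L^q(\cx;\ell^q)$ to itself.

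Next, I would establish the vector-valued weak-type $(1,1)$ estimate
$$
\mu\left(\left\{x\in\cx:\left(\sum_{j\in\zz}[M(f_j)(x)]^q\right)^{1/q}>\lambda\right\}\right)
\ls\frac1\lambda\int_\cx\left(\sum_{j\in\zz}|f_j(x)|^q\right)^{1/q}d\mu(x)
$$
for all $\lambda\in(0,\fz)$. This is the heart of the matter. The idea is to apply the Calder\'on--Zygmund decomposition available on $(\cx,d,\mu)$ (built from the dyadic cubes of Theorem \ref{10.22.1}) to the scalar function $F(x):=(\sum_j|f_j(x)|^q)^{1/q}$ at height $\lambda$, obtaining a disjoint family of stopping cubes $\{Q_i\}_i$ with $\sum_i\mu(Q_i)\ls\lambda^{-1}\|F\|_{L^1(\cx)}$ and $F\le\lambda$ a.e. outside $\bigcup_i Q_i$. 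Setting $f_j=g_j+b_j$ componentwise via this decomposition, the good part $\vec g$ satisfies $(\sum_j|g_j|^q)^{1/q}\ls\lambda$ almost everywhere, whence the $L^q\to L^q$ bound of Step one yields the required distributional estimate for $T\vec g$. For the bad part $\vec b$, one controls $\{T\vec b>\lambda\}$ outside a controlled enlargement of $\bigcup_i Q_i$ by a Minkowski-in-$\ell^q$ computation that uses the cancellation $\int_{Q_i}b_{j,i}\,d\mu=0$ is not needed here (only size is, since $M$ has no cancellation) together with the pointwise bound $M(b_j)\ls \lambda$ on averages outside $2Q_i$ obtained from the doubling property and a geometric summation over shells, mirroring the classical Euclidean argument.

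Finally, interpolation and duality close the proof. The Marcinkiewicz interpolation theorem for sublinear operators with values in a Banach space (here $\ell^q$), applied between the vector-valued weak-$(1,1)$ estimate from Step two and the vector-valued strong $(q,q)$ estimate from Step one, yields the desired $L^p(\cx;\ell^q)\to L^p(\cx;\ell^q)$ bound for all $p\in(1,q]$. For $p\in(q,\fz)$, I would linearize $M$ (writing $M(f)(x)=\sup_B|B|^{-1}\int_B|f|$ and making a measurable selection of near-optimal balls $B_{j}(x)$) to reduce to a linear operator and then dualize: the adjoint acts on $L^{p'}(\cx;\ell^{q'})$ with $p'<q'$, which falls in the already-treated range.

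The main obstacle is Step two. Ensuring that the $\ell^q$-sum of the bad-part maximal functions obeys a weak-$L^1$ bound requires handling the $\ell^q$-summation \emph{inside} the geometric estimate over the shells $2^{k+1}Q_i\setminus 2^k Q_i$, which in the metric-measure setting is delicate because one cannot exploit cancellation and must instead rely entirely on the doubling inequality \eqref{eq-doub} and Minkowski's inequality in $\ell^q$; this is precisely where the geometry of $(\cx,d,\mu)$ enters, and the argument requires the $L^q$-boundedness of Step one applied componentwise together with careful control of the overlap between enlarged CZ cubes via a Vitali-type covering derived from Theorem \ref{10.22.1}.
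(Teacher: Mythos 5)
The paper does not actually prove this lemma; it is imported verbatim from Grafakos, Liu and Yang \cite[Theorem 1.2]{gly09}. Your overall scheme (settle the endpoints $q=\fz$ and $p=q$, prove a vector-valued weak-$(1,1)$ bound via a Calder\'on--Zygmund decomposition, interpolate to cover $p\le q$, dualise for $p>q$) is the classical Fefferman--Stein route and Steps one and three are fine. Steps two and four, however, each contain a genuine gap.

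In Step two you assert that cancellation of the bad part is not needed and that a ``size only'' geometric summation over shells gives the pointwise bound $M(b_j)\ls\lambda$ off the enlarged cubes. It is true that the mean-zero property of $b_{j,i}$ cannot be used directly against $M$, since the maximal function involves $|b_{j,i}|$; but then the size-only route collapses. For $x\notin 2Q_i$ the best size bound is $M(b_{j,i})(x)\ls \|b_{j,i}\|_{L^1(\cx)}/\mu(B(x,d(x,z_i)))$, and integrating this over the shells gives $\int_{(2Q_i)^c}\mu(B(x,d(x,z_i)))^{-1}\,d\mu(x)\sim\sum_{k\ge1}\mu(2^{k+1}Q_i\setminus 2^kQ_i)/\mu(2^kQ_i)$, which in general diverges (e.g.\ on $\rn$ it is $\sim\sum_k 1$). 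So neither a Chebyshev bound in $L^1$ nor a pointwise bound $M(b_j)\ls\lambda$ on $(\Omega^*)^c$ is available from size and doubling alone. The classical remedy, also the one used in \cite{gly09}, is to replace the sharp averaging kernel $\mathbf 1_{B(x,t)}/\mu(B(x,t))$ by a pointwise-comparable smooth bump, regard the smoothed maximal operator as an $L^\infty(dt;\ell^q)$-valued singular integral, and verify an integrated H\"ormander condition for that vector-valued kernel; the H\"older regularity of the smooth bump supplies the extra factor of decay that makes the shell sum converge, and there cancellation \emph{is} used, against the kernel difference rather than inside $M$. On a general $(\cx,d,\mu)$ such bumps must be manufactured, which is precisely what \cite[Corollary 4.2]{ah13} (or the construction in Lemma~\ref{4.8.1} of this paper) provides.

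In Step four, the linearise-and-dualise route for $p>q$ does not close either. With an $x$-dependent measurable selection $B_j(x)=B(x,r_j(x))$, the adjoint of $T_jf(x)=\mu(B_j(x))^{-1}\int_{B_j(x)}f\,d\mu$ has kernel $\mathbf 1_{\{d(x,y)<r_j(x)\}}/\mu(B(x,r_j(x)))$, and splitting $T_j^*g(y)$ by dyadic scales of $r_j(x)$ produces, after doubling, a \emph{sum} of averages of $g$ over balls around $y$ of many different radii; there is no pointwise domination $T_j^*g\ls Mg$ (only a bound that grows with the number of scales), so the ``already-treated range'' $p'<q'$ cannot be invoked. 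The standard argument for $p>q$ instead uses the weighted Fefferman--Stein inequality $\int_\cx(Mf)^q\,w\,d\mu\ls\int_\cx|f|^q\,Mw\,d\mu$, valid for $q\in(1,\fz)$ because $Mw$ is an $A_1\subset A_q$ weight on $(\cx,d,\mu)$; applying this inside a duality in $L^{(p/q)'}(\cx)$ (not in $L^{p'}(\cx;\ell^{q'})$) gives the $p>q$ case directly. You should replace your Step four by this weight argument, and supplement Step two by the smooth-kernel H\"ormander estimate.
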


For compositions of $\{Q_k\}_{k=-\infty}^\infty$ and
$\{\widetilde{Q}_k\}_{k=-\infty}^\infty$, we have the following almost orthonormal estimate.

\begin{lemma}\label{4.23z}
Let $\{Q_k\}_{k=-\infty}^\infty$ be an {\rm exp-ATI}
and $\{\widetilde{Q}_k\}_{k=-\infty}^\infty$ satisfy
\eqref{4.23x}, \eqref{4.23y} and \eqref{4.23a} of Lemma \ref{h_c_crf} with $\beta$ and $\gamma$ same as therein.
Then, for any given $\eta'\in(0,\beta\wedge\gamma)$, there
exists a positive constant $C$ such that, for any $k,\ k'\in\zz$ and $z,\ y\in\cx$,
\begin{equation}\label{pp}
\lf|Q_k\widetilde{Q}_{k'}(z,y)\r|\le C\delta^{|k-k'|\eta'}
\frac{1}{V_{\delta^{k\wedge k'}}(y)+V(z,y)}\left[\frac{\delta^{k\wedge k'}}
{\delta^{k\wedge k'}+d(z,y)}\right]^{\gamma}.
\end{equation}
\end{lemma}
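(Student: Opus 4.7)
The plan is a classical almost orthogonality argument exploiting the cancellation at the finer of the two scales. By the symmetric form of \eqref{pp} in $k$ and $k'$, it suffices to handle the case $k\le k'$ in detail (so that $\delta^{k\wedge k'}=\delta^k$ and $\widetilde Q_{k'}$ sits at the finer scale $\delta^{k'}$); the case $k>k'$ is treated analogously by swapping the roles of $Q_k$ and $\widetilde Q_{k'}$, using the cancellation of $Q_k$ in its second variable together with the regularity \eqref{4.23y} of $\widetilde Q_{k'}$.

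Starting from $Q_k\widetilde Q_{k'}(z,y)=\int_\cx Q_k(z,u)\widetilde Q_{k'}(u,y)\,d\mu(u)$ and invoking $\int_\cx\widetilde Q_{k'}(u,y)\,d\mu(u)=0$ from \eqref{4.23a}, I first rewrite
$$
Q_k\widetilde Q_{k'}(z,y)=\int_\cx[Q_k(z,u)-Q_k(z,y)]\widetilde Q_{k'}(u,y)\,d\mu(u).
$$
I then split the integration at $E_1:=\{u\in\cx:\, d(u,y)\le(2A_0)^{-1}[\delta^k+d(z,y)]\}$ and its complement $E_2$. On $E_1$, the second-variable regularity \eqref{10.23.4} of $Q_k$ (with $\Gamma$ chosen larger than $\gamma+\eta'$) bounds $|Q_k(z,u)-Q_k(z,y)|$ by $[d(u,y)/(\delta^k+d(z,y))]^\eta$ times a size factor; I split the H\"older exponent as $\eta=\eta'+(\eta-\eta')$ and write
$$
\lf[\frac{d(u,y)}{\delta^k+d(z,y)}\r]^{\eta}=\lf[\frac{\delta^{k'}}{\delta^k+d(z,y)}\r]^{\eta'}\lf[\frac{d(u,y)}{\delta^{k'}}\r]^{\eta'}\lf[\frac{d(u,y)}{\delta^k+d(z,y)}\r]^{\eta-\eta'}.
$$
The first factor is bounded by $\delta^{(k'-k)\eta'}$ since $\delta^{k'}\le\delta^k\le\delta^k+d(z,y)$; the third factor is at most $1$ on $E_1$; and the middle factor, paired with the size bound \eqref{4.23x} of $\widetilde Q_{k'}$, integrates to a constant via Lemma \ref{6.15.1}(iii). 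On $E_2$, I bound $|Q_k(z,u)-Q_k(z,y)|$ by the sum of two size estimates \eqref{10.23.3} and artificially insert the factor $[\delta^{k'}/(\delta^{k'}+d(u,y))]^{\eta'}$ into \eqref{4.23x}; since $d(u,y)\gs \delta^k\ge\delta^{k'}$ on $E_2$, this costs only $\delta^{(k'-k)\eta'}$, after which the integral in $u$ is controlled by Lemma \ref{6.15.1}(ii). Throughout, Lemma \ref{6.15.1}(i) is used to pass between volume factors centered at $z$, $y$, and $u$.

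The main obstacle is the bookkeeping: distributing the H\"older exponent between the two ``currencies'' of decay---the scale-gap $\delta^{|k-k'|\eta'}$ and the spatial tail $[\delta^{k\wedge k'}/(\delta^{k\wedge k'}+d(z,y))]^{\gamma}$---and harmonizing the various volume factors, such as $V_{\delta^k}(z)+V(z,y)$ arising from \eqref{10.23.3} versus the target $V_{\delta^k}(y)+V(z,y)$, which is accomplished via Lemma \ref{6.15.1}(i) and the doubling condition \eqref{eq-doub}. The generous choice of $\Gamma$ in \eqref{10.23.3} provides slack to absorb stray polynomial factors when passing from $d(z,u)$ to $d(z,y)$ in the ``far'' region. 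The constraint $\eta'\in(0,\beta\wedge\gamma)$ enters sharply because, in the symmetric case $k>k'$, the regularity of $\widetilde Q_{k'}$ only supplies H\"older exponent $\beta$, while the polynomial tail of $\widetilde Q_{k'}$ in \eqref{4.23x} carries exponent at most $\gamma$; any $\eta'$ strictly below $\beta\wedge\gamma$ leaves a positive margin in both case analyses for the integrals to converge.
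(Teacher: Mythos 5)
Your strategy is the mirror image of the paper's argument and is essentially sound: the paper reduces by symmetry to $k\ge k'$ and exploits the cancellation of $Q_k$ together with the first-variable regularity \eqref{4.23y} of $\widetilde{Q}_{k'}$, splitting at $d(s,z)\le(2A_0)^{-1}[\delta^{k'}+d(z,y)]$, whereas you treat $k\le k'$ using the cancellation \eqref{4.23a} of $\widetilde{Q}_{k'}$ and the second-variable regularity \eqref{10.23.4} of $Q_k$, splitting at $d(u,y)\le(2A_0)^{-1}[\delta^k+d(z,y)]$. The ingredients and the near/far case analysis coincide with the paper's, and your near-region exponent splitting is correct: the factor $[\delta^{k'}/(\delta^k+d(z,y))]^{\eta'}\le\delta^{(k'-k)\eta'}$ gives the scale gain, the regularity bound already carries the target volume $V_{\delta^k}(y)+V(z,y)$ and the tail in $d(z,y)$, and the leftover $[d(u,y)/\delta^{k'}]^{\eta'}$ combines with the $\gamma$-tail of \eqref{4.23x} into $[\delta^{k'}/(\delta^{k'}+d(u,y))]^{\gamma-\eta'}$, which integrates by Lemma \ref{6.15.1}(ii) (rather than (iii)) precisely because $\eta'<\gamma$.

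The one concrete flaw is the far-region bookkeeping as you literally describe it. On your far region $\{u\in\cx:\ d(u,y)>(2A_0)^{-1}[\delta^k+d(z,y)]\}$ the point $u$ may be arbitrarily close to $z$ (indeed $u=z$ belongs to it as soon as $d(z,y)\gtrsim\delta^k$), so for the term $\int|Q_k(z,u)|\,|\widetilde{Q}_{k'}(u,y)|\,d\mu(u)$ the factor $[\delta^k/(\delta^k+d(z,u))]^{\Gamma}$ from \eqref{10.23.3} is of size one there, and no choice of $\Gamma$ lets you ``pass from $d(z,u)$ to $d(z,y)$''; likewise, peeling only $[\delta^{k'}/(\delta^{k'}+d(u,y))]^{\eta'}\lesssim\delta^{(k'-k)\eta'}$ off \eqref{4.23x} and integrating the remainder to a constant leaves no decay in $d(z,y)$ from that factor, so the stated tail exponent $\gamma$ in \eqref{pp} is not reached. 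The repair stays inside your framework: on that region $\delta^{k'}+d(u,y)\gtrsim\delta^k+d(z,y)$ and $V(u,y)\gtrsim V_{\delta^k}(y)+V(z,y)$, so the whole tail converts, $[\delta^{k'}/(\delta^{k'}+d(u,y))]^{\gamma}\lesssim\delta^{(k'-k)\gamma}[\delta^k/(\delta^k+d(z,y))]^{\gamma}$, after which $\int_{\cx}|Q_k(z,u)|\,d\mu(u)\lesssim1$ finishes this term with the even better gain $\delta^{|k-k'|\gamma}$; the companion term with $|Q_k(z,y)|$ pulled out of the integral is handled by Lemma \ref{6.15.1}(iv), exactly as the paper's term ${\rm J}_3$.
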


\begin{proof}
By \eqref{10.23.3} and \eqref{10.23.4}, we find that, for any $k\in\zz$, $Q_k$ also satisfies \eqref{4.23x}
and \eqref{4.23y}. Thus, to prove \eqref{pp}, by symmetry, it suffices to show \eqref{pp} when
$k\geq k'$ with $k,\ k'\in\zz$. Then, for any $z,\ y\in X$, by the cancellation condition of $Q_k$, we have
\begin{align*}
\lf|Q_k\widetilde{Q}_{k'}(z,y)\r|&=\left|\int_\cx Q_k(z,s)\widetilde{Q}_{k'}(s,y)\,d\mu(s)\right|\\
&=\left|\int_\cx Q_k(z,s)\left[\widetilde{Q}_{k'}(s,y)-\widetilde{Q}_{k'}(z,y)\right]\,d\mu(s)\right|\\
&\lesssim\int_{\{s\in\cx:\ d(s,z)\leq(2A_0)^{-1}[\delta^{k'}+d(z,y)]\}}|Q_k(z,s)|
\lf|\widetilde{Q}_{k'}(s,y)-\widetilde{Q}_{k'}(z,y)\right|\,d\mu(s)\\
&\qquad+\int_{\{s\in\cx:\ d(s,z)\geq(2A_0)^{-1}[\delta^{k'}+d(z,y)]\}}|Q_k(z,s)|
\left|\widetilde{Q}_{k'}(s,y)\right|\,d\mu(s)\\
&\qquad+\left|\widetilde{Q}_{k'}(z,y)\right|\int_{\{s\in\cx:\ d(s,z)\geq(2A_0)^{-1}[\delta^{k'}+d(z,y)]\}}
|Q_k(z,s)|\,d\mu(s)\\
&=:\rm{{\rm J}_1+{\rm J}_2+{\rm J}_3}.
\end{align*}

We first estimate $\rm J_1$. Indeed, by Lemma \ref{6.15.1}(iii) and $\eta'\in(0,\beta\wedge\gamma)$, we conclude
that
\begin{align*}
{\rm J}_1&\lesssim \int_{\{s\in\cx:\ d(s,z)\leq(2A_0)^{-1}[\delta^{k'}+d(z,y)]\}}
\frac{1}{V_{\delta^k}(z)+V(s,z)}\left[\frac{\delta^k}{\delta^k+d(z,s)}\right]^{\eta'}\\
&\qquad \times\left[\frac{d(s,z)}{\delta^{k'}
+d(z,y)}\right]^\beta\frac{1}{V_{\delta^{k'}}(z)
+V(z,y)}\left[\frac{\delta^{k'}}{\delta^{k'}
+d(z,y)}\right]^{\gamma}\,d\mu(s)\\
&\lesssim\delta^{k\eta'}\delta^{k'\gamma}
\frac{1}{V_{\delta^{k'}}(z)+V(z,y)}\left[\frac{1}
{\delta^{k'}+d(z,y)}\right]^{\beta+\gamma}\\
&\qquad \times \int_{\{s\in\cx:\ d(s,z)
\leq(2A_0)^{-1}[\delta^{k'}+d(z,y)]\}}[d(s,z)]^{\beta-\eta'}\frac{1}{V(s,z)}\,d\mu(s)\\
&\lesssim   \delta^{(k-k')\eta'}\frac{1}
{V_{\delta^{k'}}(z)+V(z,y)}\left[\frac{\delta^{k'}}{\delta^{k'}+d(z,y)}\right]^{\gamma},
\end{align*}
which is the desired estimate.

Now we estimate $\rm J_2$. For any $s\in \{s\in\cx:\ d(s,z)\geq(2A_0)^{-1}
[\delta^{k'}+d(z,y)]\}$, by Lemma
\ref{6.15.1}(i), we know that
$$
V(s,z)\sim \mu(B(z,d(s,z)))\gtrsim\mu(B(z,\delta^{k'}+d(z,y)))\sim V_{\delta^{k'}}(z)+V(z,y).
$$
Moreover, in this case, we have
$$
\delta^k+d(z,s)\gtrsim \delta^k+d(z,y)+\delta^{k'}\gtrsim\delta^{k'}.
$$
From these facts, \eqref{4.23x} and Lemma \ref{6.15.1}(ii), we deduce that
\begin{align*}
{\rm J}_2&\lesssim\int_{\{s\in\cx:\ d(s,z)\geq(2A_0)^{-1}[\delta^{k'}+d(z,y)]\}}
\frac{1}{V_{\delta^k}(z)+V(s,z)}\left[\frac{\delta^k}{\delta^k+d(z,s)}\right]^{\gamma}\\
&\qquad\times\frac{1}{V_{\delta^{k'}}+V(s,y)}
\left[\frac{\delta^{k'}}{\delta^{k'}+d(s,y)}\right]^\gamma\,d\mu(s)\\
&\lesssim \delta^{(k-k')\gamma}\frac{1}
{V_{\delta^{k'}}(z)+V(z,y)}\left[\frac{\delta^{k'}}{\delta^{k'}+d(z,y)}\right]^{\gamma},
\end{align*}
which is also the desired estimate.

For the term $\rm{{\rm J}_3}$, by \eqref{10.23.3}, \eqref{4.23x} and  Lemma \ref{6.15.1}(iv), we conclude that
\begin{align*}
{\rm J}_3&\lesssim \frac{1}{V_{k'}(z)+V(z,y)}
\left[\frac{\delta^{k'}}{\delta^{k'}+d(z,y)}\right]^{\gamma}\\
&\qquad\times\int_{\{s\in\cx:\ d(s,z)\geq(2A_0)^{-1}[\delta^{k'}
+d(z,y)]\}}\frac{1}{V_{\delta^k}(z)+V(s,z)}
\left[\frac{\delta^k}{\delta^k+d(z,s)}\right]^{\gamma}\,d\mu(s)\\
&\lesssim  \delta^{(k-k')\gamma}\frac{1}{V_{\dz^{k'}}(z)+V(z,y)}
\left[\frac{\delta^{k'}}{\delta^{k'}+d(z,y)}\right]^{\gamma},
\end{align*}
which is the desired estimate and then completes the proof of Lemma \ref{4.23z}.
\end{proof}

Using these lemmas, we now establish the \emph{homogeneous Plancherel--P\^olya inequality}.

\begin{lemma}\label{5.12.1}
Let $\{Q_k\}_{k=-\infty}^\infty$ and $\{P_k\}_{k=-\infty}^\infty$
be two {\rm $\exp$-ATIs} and
$\beta,\ \gamma \in (0, \eta)$
with $\omega$ and $\eta$, respectively, as in \eqref{eq-doub} and Definition \ref{10.23.2}.
Then there exists a positive constant $C$ such that, for any $f \in (\cggi)'$,
\begin{align}\label{bhpp}
&\left\{\sum_{k=-\infty}^\infty\delta^{-ksq}\left[\sum_{\alpha \in \ca_k}\sum_{m=1}^{N(k,\alpha)}
\mu\left(\qa\right)\left\{\sup_{z\in\qa}|P_k(f)(z)|\right\}^p\right]^{q/p}\right\}^{1/q}\\
&\quad\leq C\left\{\sum_{k=-\infty}^\infty\delta^{-ksq}\left[\sum_{\alpha \in \ca_k}\sum_{m=1}^{N(k,\alpha)}
\mu\left(\qa\right)\left\{\inf_{z\in\qa}|Q_k(f)(z)|\right\}^p\right]^{q/p}\right\}^{1/q}\notag
\end{align}
when $s\in(-(\beta\wedge\gamma),\,\beta\wedge\gamma)$,
$p\in (p(s,\beta\wedge\gamma),\infty]$ and $q \in (0,\infty]$ with
$p(s,\beta\wedge\gamma)$ as in \eqref{pseta} and usual
modifications made when $p=\infty$ or $q=\infty$,  and
\begin{align}\label{fhpp}
&\left\|\left\{\sum_{k=-\infty}^\infty\sum_{\alpha \in \ca_k}\sum_{m=1}^{N(k,\alpha)}\delta^{-ksq}
\left[\sup_{z\in\qa}|P_k(f)(z)|\right]^q\mathbf 1_{\qa}\right\}^{1/q}\right\|_{L^p(\mathcal{X})}\\
&\quad\leq C\left\|\left\{\sum_{k=-\infty}^\infty\sum_{\alpha \in \ca_k}\sum_{m=1}^{N(k,\alpha)}\delta^{-ksq}
\left[\inf_{z\in\qa}|Q_k(f)(z)|\right]^q\mathbf 1_{\qa}\right\}^{1/q}\right\|_{L^p(\mathcal{X})}\notag
\end{align}
when $s\in(-(\beta\wedge\gamma),\,\beta\wedge\gamma)$, $p\in (p(s,\beta\wedge\gamma),\infty)$ and
$q \in (p(s,\beta\wedge\gamma),\infty]$, with the usual modification made when $q=\infty$.
\end{lemma}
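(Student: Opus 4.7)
The strategy is to expand $f$ using the homogeneous discrete Calder\'on reproducing formula (Lemma~\ref{crf}) built from $\{Q_k\}_{k\in\zz}$, where the reference points $\ya$ are chosen, for any prescribed $\varepsilon\in(0,\infty)$, so that $|Q_k f(\ya)|\le(1+\varepsilon)\inf_{y\in\qa}|Q_k f(y)|$. Applying $P_{k'}$ to both sides of the reproducing formula yields, for every $z\in\cx$,
\begin{equation*}
P_{k'}f(z)=\sum_{k\in\zz}\sum_{\alpha\in\ca_k}\sum_{m=1}^{N(k,\alpha)}\mu(\qa)\,(P_{k'}\widetilde{Q}_k)(z,\ya)\,Q_k f(\ya),
\end{equation*}
so it suffices to control the composite kernel $(P_{k'}\widetilde{Q}_k)(z,\ya)$.

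I would invoke Lemma~\ref{4.23z} to get, for any $\eta'\in(0,\beta\wedge\gamma)$, the almost-orthogonality estimate $|(P_{k'}\widetilde{Q}_k)(z,\ya)|\lesssim \delta^{|k-k'|\eta'}$ times the standard size factor centered at $\ya$ at scale $\delta^{k\wedge k'}$. By Lemma~\ref{6.15.1}(i), this size factor is comparable for all $z$ in a single cube $Q_{\alpha'}^{k',m'}$. After taking the supremum over such $z$ and choosing an auxiliary $r\in(p(s,\beta\wedge\gamma),1]$, further restricted to $r<p$ (and $r<q$ in the Triebel--Lizorkin case), I would apply Lemma~\ref{10.18.5} with $a_\alpha^{k,m}:=\inf_{y\in\qa}|Q_k f(y)|$ to obtain, for any $z'\in Q_{\alpha'}^{k',m'}$,
\begin{equation*}
\sup_{z\in Q_{\alpha'}^{k',m'}}|P_{k'}f(z)|\lesssim \sum_{k\in\zz}\delta^{|k-k'|\eta'}\,\delta^{[k-(k\wedge k')]\omega(1-1/r)}\,[M(g_k)(z')]^{1/r},
\end{equation*}
where $g_k:=\sum_{\alpha\in\ca_k}\sum_{m=1}^{N(k,\alpha)}[\inf_{y\in\qa}|Q_k f(y)|]^r\,\mathbf{1}_{\qa}$.

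Multiplying by $\delta^{-k's}$ reduces the argument to an analysis of the kernel
\begin{equation*}
K(k,k'):=\delta^{(k-k')s}\,\delta^{|k-k'|\eta'}\,\delta^{[k-(k\wedge k')]\omega(1-1/r)},
\end{equation*}
which, by the choices of $r$ and $\eta'$, decays exponentially in $|k-k'|$ in both regimes $k\ge k'$ and $k<k'$. For \eqref{bhpp} I would sum in $k'$ via a discrete Young-type inequality (invoking \eqref{r} when $q<1$), then take the $L^p$-quasi-norm and use the scalar boundedness of $M$ on $L^{p/r}(\cx)$ (valid since $r<p$) to replace $[M(g_k)]^{1/r}$ by $g_k^{1/r}$; unwinding $g_k$ gives the right-hand side of \eqref{bhpp}. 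For \eqref{fhpp} I would instead swap $L^p$ and $\ell^q$ and invoke the Fefferman--Stein vector-valued maximal inequality (Lemma~\ref{fsvv}) with exponents $p/r$ and $q/r$, both strictly greater than $1$ since $r<p\wedge q$.

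\textbf{The main obstacle} will be the joint calibration of $r$ and $\eta'$: the decay $\delta^{|k-k'|\eta'}$ must dominate the weight shift $\delta^{(k-k')s}$ in the regime $k<k'$ (forcing $\eta'>s$) and, simultaneously, overcome both that shift and the geometric loss $\delta^{[k-(k\wedge k')]\omega(1-1/r)}$ in the regime $k\ge k'$ (forcing $\eta'>-s+\omega(1/r-1)$), while $r$ must stay small enough for the (vector-valued) maximal inequality. The threshold $p(s,\beta\wedge\gamma)$ in \eqref{pseta} is defined precisely so that such a pair $(r,\eta')$ exists if and only if the stated hypotheses on $p$ (and $q$) hold; verifying the sharp admissibility of these exponents is the technical heart of the argument.
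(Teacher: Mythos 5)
Your proposal is correct and reaches the stated result, but for the Besov estimate \eqref{bhpp} it takes a genuinely different route from the paper. The paper's proof of \eqref{bhpp} never invokes the Hardy--Littlewood maximal operator: starting from the reproducing-formula expansion and the almost-orthogonality estimate \eqref{pp} (as you do), it bounds the inner cube-sum by appealing to Lemma~\ref{9.14.1}, the discrete integral estimate $\sum_{\alpha,m}\mu(\qa)[\cdot]^p\lesssim[V_{\delta^{k\wedge k'}}(\cdot)]^{1-p}$, and then converts $V_{\delta^{k\wedge k'}}(\yap)\lesssim\delta^{(k\wedge k'-k')\omega}\mu(\qap)$ to extract the needed exponential decay in $|k-k'|$; it splits into the two cases $p\in(p(s,\beta\wedge\gamma),1]$ (via \eqref{r}) and $p\in(1,\infty]$ (via H\"older with an interpolating weight $\delta^{(k'-k)s(p-1)}$), then applies Young's convolution inequality in the $k'$-variable. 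It reserves Lemma~\ref{10.18.5} together with the Fefferman--Stein inequality (Lemma~\ref{fsvv}) exclusively for the Triebel--Lizorkin estimate \eqref{fhpp}. You instead run the maximal-function machinery (Lemma~\ref{10.18.5} plus the scalar $L^{p/r}$-boundedness of $M$, with the further constraint $r<p$) already for \eqref{bhpp}, obtaining a unified proof of both halves. Your approach works and loses none of the stated ranges --- the auxiliary interval $(\max\{\omega/(\omega+\gamma),\omega/(\omega+(\beta\wedge\gamma)+s)\},\,\min\{p,1\})$ is nonempty whenever $p>p(s,\beta\wedge\gamma)$ --- and it is arguably more compact. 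What the paper's two-track argument buys is that the Besov case is handled with a strictly weaker tool, which makes the analysis of the parameter thresholds more transparent and isolates exactly where the vector-valued maximal inequality (and hence the restriction $q>p(s,\beta\wedge\gamma)$) enters. Your diagnosis of the calibration of $(r,\eta')$ as the technical heart of the argument is accurate in both formulations.
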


\begin{proof}
We first prove \eqref{bhpp}. By Lemma \ref{crf},  we know that there exist
$\{\widetilde{Q}_{k'}\}_{k'\in\zz}$  of bounded linear integral
operators on $L^2(\cx)$ such that, for any $f \in (\cggi)'$
with $\beta,\ \gamma$ as in this lemma,
\begin{equation}\label{11.10.1}
f(\cdot) = \sum_{k'=-\infty}^\infty\sum_{\alpha' \in \ca_{k'}}
\sum_{m'=1}^{N(k',\alpha')}\mu\left(\qap\right)\widetilde{Q}_{k'}(\cdot,\yap)
Q_{k'}f\left(\yap\right) \qquad \text{in}\  (\cggi)',
\end{equation}
where $\widetilde{Q}_{k'}$ satisfies the same conditions as in
Lemma \ref{crf} and $\yap$ is an arbitrary point in $\qap$
for any $k'\in\zz$, $\alpha'\in\ca_{k'}$ and
$m'\in\{1,\dots,N(k',\alpha')\}$.
Now we consider two cases.

{\it Case 1) $p\in (p(s,\beta\wedge\gamma),1]$.}
In this case, note that, for any $k\in\zz$, $\alpha\in\ca_k$, $m\in
\{1,\dots,N(k,\alpha)\}$ and $z,\ \ya\in\qa$, it holds true that
$\delta^{k\wedge k'}+d(z,\yap)\sim\delta^{k\wedge k'}+d(\ya,\yap)$ and
$$V_{\delta^{k\wedge k'}}\left(\yap\right)+V(z,\yap)
\sim V_{\delta^{k\wedge k'}}\left(\yap\right)+V(\yap,\ya).$$
Using these, \eqref{pp}, \eqref{5.2} and $p\in (p(s,\beta\wedge\gamma),1]$,
combined with \eqref{r}, we conclude that,
for any fixed $\beta'\in(0,\beta\wedge\gamma)$ and any $k\in\zz$,
\begin{align}\label{es-ex}
&\sum_{\alpha \in \ca_k}\sum_{m=1}^{N(k,\alpha)}\mu\left(\qa\right)
\left[\sup_{z\in\qa}|P_k(f)(z)|\right]^p\\
&\quad=\sum_{\alpha \in \ca_k}\sum_{m=1}^{N(k,\alpha)}\mu\left(\qa\right)\sup_{z\in\qa}
\left|\sum_{k'=-\infty}^\infty\sum_{\alpha' \in \ca_{k'}}\sum_{m'=1}^{N(k',\alpha')}
\mu\left(\qap\right)P_k\widetilde{Q}_{k'}(z,\yap)Q_{k'}f\left(\yap\right)\right|^p\notag\\
&\quad\lesssim\sum_{\alpha \in \ca_k}\sum_{m=1}^{N(k,\alpha)}\mu\left(\qa\right)
\sum_{k'=-\infty}^\infty\sum_{\alpha' \in \ca_{k'}}\sum_{m'=1}^{N(k',\alpha')}
\left[\mu\left(\qap\right)\right]^p\delta^{|k-k'|\beta'p}\notag\\
&\qquad\quad\times\left[\frac{1}{V_{\delta^{k\wedge k'}}(\yap)
+V(\yap,\ya)}\right]^p\left[\frac{\delta^{k\wedge k'}}{\delta^{k\wedge k'}
+d(\yap,\ya)}\right]^{\gamma p}\left|Q_{k'}f\left(\yap\right)\right|^p\notag\\
&\quad\lesssim\sum_{k'=-\infty}^\infty\sum_{\alpha' \in \ca_{k'}}
\sum_{m'=1}^{N(k',\alpha')}\left[\mu\left(\qap\right)\right]^p\delta^{|k-k'|\beta'p}
\left|Q_{k'}f\left(\yap\right)\right|^p\notag\\
&\qquad\quad\times\sum_{\alpha \in \ca_k}\sum_{m=1}^{N(k,\alpha)}\mu\left(\qa\right)
\left[\frac{1}{V_{\delta^{k\wedge k'}}(\yap)+V(\yap,\ya)}\right]^p
\left[\frac{\delta^{k\wedge k'}}{\delta^{k\wedge k'}+d(\yap,\ya)}\right]^{\gamma p}\notag\\
&\quad \lesssim\sum_{k'=-\infty}^\infty\sum_{\alpha' \in \ca_{k'}}
\sum_{m'=1}^{N(k',\alpha')}\left[\mu\left(\qap\right)\right]^p\delta^{|k-k'|\beta'p}
\left[V_{\delta^{k\wedge k'}}\left(\yap\right)\right]^{1-p}\left|Q_{k'}f\left(\yap\right)\right|^p.\notag
\end{align}
From this, $s\in(-(\bz\wedge\gz),\bz\wedge\gz)$, $p\in(p(s,\bz\wedge\gz),1]$ and the fact that
\begin{equation}\label{11.10.2}
V_{\delta^{k\wedge k'}}\left(\yap\right)\lesssim \delta^{(k\wedge k'-k')\omega}
V_{\delta^{k'}}\left(\yap\right)\sim\delta^{(k\wedge k'-k')\omega}\mu\left(\qap\right),
\end{equation}
together with the H\"older inequality when $q/p\in[1,\infty)$, or \eqref{r} when $q/p \in(0,1)$,
it follows that, if we choose $\beta'\in(0,\beta\wedge\gamma)$
such that $s\in(-\beta',\beta')$ and
$p\in(\max\{\omega/(\omega+\beta'),\omega/(\omega+\beta'+s)\},1]$,
then
\begin{align}\label{6.2.1}
&\left\{\sum_{k=-\infty}^\infty\delta^{-ksq}\left[\sum_{\alpha \in \ca_k}
\sum_{m=1}^{N(k,\alpha)}\mu\left(\qa\right)\left\{\sup_{z\in\qa}|P_k(f)(z)|\right\}^p\right]^{q/p}\right\}^{1/q}\\
&\quad\lesssim\left\{\sum_{k=-\infty}^\infty\left[\sum_{k'\in\zz}
\sum_{\alpha' \in \ca_{k'}}\sum_{m'=1}^{N(k',\alpha')}\delta^{-ksp}
\left\{\mu\left(\qap\right)\right\}^p\delta^{|k-k'|\beta'p}
\left\{V_{\delta^{k\wedge k'}}\left(\yap\right)\right\}^{1-p}\r.\r.\noz\\
&\qquad\quad\lf.\Biggl.{}\times\left|Q_{k'}f\left(\yap\right)\right|^p\Biggr]^{q/p}\right\}^{1/q}\notag\\
&\quad\lesssim \left\{\sum_{k'\in\zz}\delta^{-k'sq}\left[\sum_{\alpha' \in \ca_{k'}}
\sum_{m'=1}^{N(k',\alpha')}\mu\left(\qap\right)\left|Q_{k'}f\left(\yap\right)\right|^p\right]^{q/p}\right\}^{1/q}.\notag
\end{align}
By the arbitrariness of $\yap$, we know that
\begin{align}\label{limpp}
&\left\{\sum_{k=-\infty}^\infty\delta^{-ksq}\left[\sum_{\alpha \in \ca_k}
\sum_{m=1}^{N(k,\alpha)}\mu\left(\qa\right)\left\{\sup_{z\in\qa}|P_k(f)(z)|\right\}^p\right]^{q/p}\right\}^{1/q}\\
&\quad\lesssim\left\{\sum_{k=-\infty}^\infty\delta^{-ksq}\left[\sum_{\alpha \in \ca_k}
\sum_{m=1}^{N(k,\alpha)}\mu\left(\qa\right)\left\{\inf_{z\in\qa}|Q_k(f)(z)|\right\}^p\right]^{q/p}\right\}^{1/q}.\notag
\end{align}
This finishes the proof of \eqref{bhpp} in this
case.

{\it Case 2) $p\in(1,\infty]$.}
In this case, due to the assumption that $s\in(-(\bz\wedge\gz),\bz\wedge\gz)$,
we can choose $\beta' \in (0,\beta\wedge\gamma)$ such
that $s\in(-\beta',\beta')$; then, by this, \eqref{5.2}, \eqref{pp} and the H\"older inequality, we know that
\begin{align*}
&\sum_{\alpha \in \ca_k}\sum_{m=1}^{N(k,\alpha)}
\mu\left(\qa\right)\left[\sup_{z\in\qa}|P_k(f)(z)|\right]^p\\
&\quad=\sum_{\alpha \in \ca_k}\sum_{m=1}^{N(k,\alpha)}
\mu\left(\qa\right)\sup_{z\in\qa}\left|\sum_{k'=-\infty}^\infty
\sum_{\alpha' \in \ca_{k'}}\sum_{m'=1}^{N(k',\alpha')}
\mu\left(\qap\right)P_k\widetilde{P}_{k'}\left(z,\yap\right)Q_{k'}f\left(\yap\right)\right|^p\\
&\quad\lesssim\sum_{\alpha \in \ca_k}
\sum_{m=1}^{N(k,\alpha)}\mu\left(\qa\right)
\left\{\sum_{k'=-\infty}^\infty\sum_{\alpha' \in \ca_{k'}}
\sum_{m'=1}^{N(k',\alpha')}\mu\left(\qap\right)\delta^{|k-k'|\beta'}
\frac{1}{V_{\delta^{k\wedge k'}}(\yap)+V(\yap,\ya)}\right.\\
&\qquad\quad\times\left.\left[\frac{\delta^{k\wedge k'}}
{\delta^{k\wedge k'}+d(\yap,\ya)}\right]^{\gamma }\left|Q_{k'}f\left(\yap\right)\right|\right\}^p\\
&\quad\lesssim\sum_{\alpha \in \ca_k}\sum_{m=1}^{N(k,\alpha)}
\mu\left(\qa\right)\left\{\sum_{k'=-\infty}^\infty\sum_{\alpha' \in \ca_{k'}}
\sum_{m'=1}^{N(k',\alpha')}\mu\left(\qap\right)\delta^{|k-k'|\beta'}
\delta^{(k'-k)s(p-1)}\left|Q_{k'}f\left(\yap\right)\right|^p\right.\\
&\qquad\quad\times\left.\frac{1}{V_{\delta^{k\wedge k'}}(\yap)+V(\yap,\ya)}
\left[\frac{\delta^{k\wedge k'}}{\delta^{k\wedge k'}+d(\yap,\ya)}\right]^{\gamma }\right\}\\
&\qquad\quad \times\left\{\sum_{k'=-\infty}^\infty\sum_{\alpha' \in \ca_{k'}}
\sum_{m'=1}^{N(k',\alpha')}\mu\left(\qap\right)\delta^{|k-k'|\beta'}\delta^{(k'-k)s(p-1)}\delta^{(k-k')s(p-1)p'}\right.\\
&\qquad\quad\times\left.\frac{1}{V_{\delta^{k\wedge k'}}(\yap)+V(\yap,\ya)}
\left[\frac{\delta^{k\wedge k'}}{\delta^{k\wedge k'}
+d(\yap,\ya)}\right]^{\gamma }\right\}^{\frac{p}{p'}}\\
&\quad \lesssim\sum_{k'=-\infty}^\infty
\sum_{\alpha' \in \ca_{k'}}\sum_{m'=1}^{N(k',\alpha')}
\delta^{|k-k'|\beta'}\delta^{(k'-k)s(p-1)}\mu\left(\qap\right)\left|Q_{k'}f\left(\yap\right)\right|^p.
\end{align*}
Using this and an argument similar to that used in the estimations of \eqref{6.2.1}
and \eqref{limpp}, we conclude that, when
 $p\in(1,\infty]$,
\begin{align}\label{6.2.2}
&\left\{\sum_{k=-\infty}^\infty\delta^{-ksq}
\left[\sum_{\alpha \in \ca_k}\sum_{m=1}^{N(k,\alpha)}
\mu\left(\qa\right)\left\{\sup_{z\in\qa}|P_k(f)(z)|\right\}^p\right]^{q/p}\right\}^{1/q}\\
&\quad\lesssim \left\{\sum_{k'\in\zz}\delta^{-k'sq}
\left[\sum_{\alpha' \in \ca_{k'}}\sum_{m'=1}^{N(k',\alpha')}
\mu\left(\qap\right)\left|Q_{k'}f\left(\yap\right)\right|^p\right]^{q/p}\right\}^{1/q}.\notag
\end{align}
This, combined with \eqref{6.2.1}, then finishes the proof of \eqref{bhpp}.

We now prove \eqref{fhpp}. Because $s\in(-(\bz\wedge\gz),\bz\wedge\gz)$, $p\in(p(s,\bz\wedge\gz),\fz)$
and $q\in(p(s,\bz\wedge\gz),\fz]$, we may choose $\bz'\in(0,\bz\wedge\gz)$ such that
$s\in(-\bz',\bz')$ and
\begin{equation}\label{eq-pq}
\min\{p,q,1\}>\max\lf\{\frac\omega{\omega+\beta'},\frac{\omega}{\omega+\beta'+s}\r\}.
\end{equation}
By this, Lemma \ref{crf}, \eqref{pp} and Lemma \ref{5.3}, we know that, for any fixed
$$
r\in\lf(\max\lf\{\frac\omega{\omega+\beta'},\frac{\omega}{\omega+\beta'+s}\r\},\min\{p,q,1\}\r)
$$
and any $x \in \cx$,
\begin{align*}
&\left\{\sum_{k=-\infty}^\infty\sum_{\alpha \in \ca_k}
\sum_{m=1}^{N(k,\alpha)}\delta^{-ksq}\left[\sup_{z\in\qa}
|P_k(f)(z)|\right]^q\mathbf 1_{\qa}(x)\right\}^{1/q}\\
&\quad\lesssim\left[\sum_{k=-\infty}^\infty
\left\{\sum_{k'\in\zz}\delta^{|k-k'|\beta'-(k-k')s}
\delta^{[k'-(k\wedge k')]\omega(1-1/r)}
\vphantom{\sum_{\alpha' \in \ca_{k'}}\sum_{m'=1}^{N(k',\alpha')}}\r.\r.\\
&\quad\qquad\times\left.\left.\left[M\left(\sum_{\alpha' \in \ca_{k'}}
\sum_{m'=1}^{N(k',\alpha')}\delta^{-k'sr}
\left|Q_{k'}f\left(\yap\right)\right|^r
\mathbf 1_{\qap}\right)(x)\right]^{1/r}\right\}^q\right]^{1/q}.
\end{align*}
From this, the H\"older inequality when $q\in(1,\infty]$, or \eqref{r} when $q\in(p(s,\bz\wedge\gz),1]$,
and Lemma \ref{fsvv}, we deduce that
\begin{align}\label{10.18.6}
&\left\|\left\{\sum_{k=-\infty}^\infty\sum_{\alpha \in \ca_k}\sum_{m=1}^{N(k,\alpha)}\delta^{-ksq}
\left[\sup_{z\in\qa}|P_k(f)(z)|\right]^q\mathbf 1_{\qa}\right\}^{1/q}\right\|_{L^p(\mathcal{X})}\\
&\quad\lesssim\left\|\left\{\sum_{k'=-\infty}^\infty\left[M\left(\sum_{\alpha' \in \ca_{k'}}
\sum_{m'=1}^{N(k',\alpha')}\delta^{-k'sr}
\left|Q_{k'}f\left(\yap\right)\right|^r\mathbf 1_{\qap}\right)\right]^{q/r}\right\}^{r/q}\right\|_{L^{p/r}(\mathcal{X})}^{1/r}\notag\\
&\quad\lesssim\left\|\left\{\sum_{k'=-\infty}^\infty\sum_{\alpha' \in \ca_{k'}}
\sum_{m'=1}^{N(k',\alpha')}\delta^{-k'sq}
\left|Q_{k'}f\left(\yap\right)\right|^q
\mathbf 1_{\qap}\right\}^{1/q}\right\|_{L^p(\mathcal{X})}.\notag
\end{align}
Then, by  the arbitrariness of $\yap$ for any $k'\in\zz$,
$\alpha'\in\ca_{k'}$ and $m'\in\{1,\dots,N(k',\alpha')\}$, we obtain \eqref{fhpp},
which completes the proof of Lemma \ref{5.12.1}.
\end{proof}

\begin{remark}
Recently, Jaming and Negreira obtained a Plancherel--P\^olya inequality on Besov spaces on
spaces of homogeneous type for some special indices with
a quite different purpose from this article (see \cite[Theorem 4.1]{jn}). Indeed, in their
article, they considered the \emph{regular spaces} (see \cite[(2.1) and (2.2)]{jn}), which
is a special case  of  spaces of homogeneous type.
\end{remark}

Using Lemma \ref{5.12.1}, we can immediately show that the spaces $\hb$ and $\hf$
are independent of the choice of exp-ATIs.

\begin{proposition}\label{6.4.1}
Let $\{P_k\}_{k\in\zz}$ and $\{Q_k\}_{k\in\zz}$ be two {\rm exp-AITs}, and $\beta,\ \gamma\in(0,\eta)$ with $\eta$
as in Definition \ref{10.23.2}. Then there exists a constant $C\in[1,\fz)$ such that, for any
$f\in(\cggi)'$,
\begin{align}\label{4.18.3}
 C^{-1}\left[\sum_{k \in \zz} \delta^{-ksq}\|Q_k(f)\|_{\lp}^q\right]^{1/q}&\leq\left[\sum_{k \in \zz}
 \delta^{-ksq}\|P_k(f)\|_{\lp}^q\right]^{1/q}\\
 &\leq C \left[\sum_{k \in \zz} \delta^{-ksq}\|Q_k(f)\|_{\lp}^q\right]^{1/q}\noz
\end{align}
when $s\in(-(\beta\wedge\gamma),\,\beta\wedge\gamma)$,
$p\in (p(s,\beta\wedge\gamma),\infty]$ and $q \in (0,\infty]$
with $p(s,\beta\wedge\gamma)$ as in \eqref{pseta} and the usual modifications made when $p=\infty$ or
$q=\infty$, and
\begin{align}\label{4.18.4}
C^{-1}\left\|\left[\sum_{k \in \zz} \delta^{-ksq}|Q_k(f)|^q\right]^{1/q}\right\|_{\lp}
&\leq\left\|\left[\sum_{k \in \zz} \delta^{-ksq}|P_k(f)|^q\right]^{1/q}\right\|_{\lp}\\
&\leq C\left\|\left[\sum_{k \in \zz} \delta^{-ksq}|Q_k(f)|^q\right]^{1/q}\right\|_{\lp}\noz
\end{align}
when $s\in(-(\beta\wedge\gamma),\,\beta\wedge\gamma)$,
$p\in (p(s,\beta\wedge\gamma),\infty)$ and $q \in (p(s,\beta\wedge\gamma),\infty]$,
with usual modification made when $q=\fz$.
\end{proposition}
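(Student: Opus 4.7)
The plan is to deduce both \eqref{4.18.3} and \eqref{4.18.4} directly from the homogeneous Plancherel--P\^{o}lya inequality (Lemma \ref{5.12.1}) by sandwiching the $L^p$-norm between its discretized supremum and infimum versions over the dyadic cubes $\{\qa\}$; since the hypothesis is symmetric in $P$ and $Q$, it suffices to prove one side of each inequality.

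First I would handle \eqref{4.18.3}. For any $k\in\zz$, since $\{\qa\}_{\az\in\ca_k,\,m}$ is a disjoint covering of $\cx$, I would write
\begin{align*}
\|P_k(f)\|_{\lp}^p
&=\sum_{\az\in\ca_k}\sum_{m=1}^{N(k,\az)}\int_{\qa}|P_k(f)(z)|^p\,d\mu(z)
\le\sum_{\az\in\ca_k}\sum_{m=1}^{N(k,\az)}\mu(\qa)\lf[\sup_{z\in\qa}|P_k(f)(z)|\r]^p
\end{align*}
when $p\in(p(s,\bz\wg\gz),\fz)$, with the obvious $L^\fz$-modification when $p=\fz$. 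Applying \eqref{bhpp} of Lemma \ref{5.12.1}, I would then dominate the resulting $\ell^q(\delta^{-ksq})$-expression by its infimum analog with $Q_k$ in place of $P_k$. Finally, since $\inf_{z\in\qa}|Q_k(f)(z)|\le|Q_k(f)(u)|$ for any $u\in\qa$, integrating over $\qa$ yields
\begin{equation*}
\sum_{\az\in\ca_k}\sum_{m=1}^{N(k,\az)}\mu(\qa)\lf[\inf_{z\in\qa}|Q_k(f)(z)|\r]^p
\le\|Q_k(f)\|_{\lp}^p,
\end{equation*}
which chains together to give the right-hand inequality of \eqref{4.18.3}; the left-hand inequality follows by interchanging the roles of $P_k$ and $Q_k$.

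For \eqref{4.18.4}, I would use the pointwise identity that, for any $x\in\cx$ and any $k\in\zz$, there is a unique $(\az,m)$ with $x\in\qa$, which gives
\begin{equation*}
|P_k(f)(x)|\le\sum_{\az\in\ca_k}\sum_{m=1}^{N(k,\az)}\lf[\sup_{z\in\qa}|P_k(f)(z)|\r]\mathbf 1_{\qa}(x),
\end{equation*}
and symmetrically $\inf_{z\in\qa}|Q_k(f)(z)|\mathbf 1_{\qa}(x)\le|Q_k(f)(x)|\mathbf 1_{\qa}(x)$. Raising to the $q$-th power, summing weighted by $\dz^{-ksq}$, taking the $1/q$-th power and then the $L^p$-norm, I would invoke \eqref{fhpp} of Lemma \ref{5.12.1} on the $\sup$-side and the pointwise lower bound on the $\inf$-side to obtain the right-hand inequality of \eqref{4.18.4}; the other direction again follows by symmetry.

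The bulk of the work is already carried by Lemma \ref{5.12.1}; the only care needed is to verify that the admissible ranges of $s$, $p$, $q$ coincide with those in Lemma \ref{5.12.1} (which they do by hypothesis) and to handle the standard $L^\fz$/$\ell^\fz$-modifications when $p=\fz$ or $q=\fz$. The mildest obstacle, conceptually, is to keep track that the independence of the right-hand sides of \eqref{bhpp} and \eqref{fhpp} from the choice of representative points $\ya\in\qa$ (established at the end of the proof of Lemma \ref{5.12.1}) is precisely what allows one to take the sup and the inf separately on the two sides without loss.
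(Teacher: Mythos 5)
Your proposal is correct and follows essentially the same route as the paper's own proof: both arguments sandwich the quantities $\|P_k(f)\|_{\lp}$ (resp.\ the pointwise quasi-norm) between the sup- and inf-discretizations over dyadic cubes $\qa$, invoke \eqref{bhpp} and \eqref{fhpp} of Lemma \ref{5.12.1}, and then conclude by symmetry in $P$ and $Q$. The only stylistic difference is that you make explicit the pointwise indicator decomposition and the role of representative-point independence, which the paper leaves implicit; there is no substantive gap.
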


\begin{proof}
We first show \eqref{4.18.3}. Indeed, by \eqref{bhpp}, we know that
\begin{align*}
\left[\sum_{k \in \zz} \delta^{-ksq}\|P_k(f)\|_{\lp}^q\right]^{1/q}&=\left\{\sum_{k \in \zz}
\delta^{-ksq}\left[\int_{\mathcal{X}}|P_k(f)|^pd\mu\right]^{q/p}\right\}^{1/q}\\
&\leq \left\{\sum_{k=-\infty}^\infty\delta^{-ksq}\left[\sum_{\alpha \in \ca_k}
\sum_{m=1}^{N(k,\alpha)}\mu\left(\qa\right)\left\{\sup_{z\in\qa}|P_k(f)(z)|\right\}^p\right]^{q/p}\right\}^{1/q}\\
&\lesssim\left\{\sum_{k=-\infty}^\infty\delta^{-ksq}\left[\sum_{\alpha \in \ca_k}
\sum_{m=1}^{N(k,\alpha)}\mu\left(\qa\right)\left\{\inf_{z\in\qa}|Q_k(f)(z)|\right\}^p\right]^{q/p}\right\}^{1/q}\\
&\lesssim \left[\sum_{k \in \zz} \delta^{-ksq}\|Q_k(f)\|_{\lp}^q\right]^{1/q},
\end{align*}
which, by symmetry, then completes the proof of \eqref{4.18.3}.

Now we show \eqref{4.18.4}.  By \eqref{fhpp}, we know that
\begin{align*}
\left\|\left[\sum_{k \in \zz} \delta^{-ksq}|P_k(f)|^q\right]^{1/q}\right\|_{\lp}
&=\left\|\left[\sum_{k \in \zz}\sum_{\alpha \in \ca_k}\sum_{m=1}^{N(k,\alpha)}
\delta^{-ksq}|P_k(f)|^q\mathbf 1_{\qa}\right]^{1/q}\right\|_{\lp}\\
&\leq\left\|\left\{\sum_{k=-\infty}^\infty\sum_{\alpha \in \ca_k}\sum_{m=1}^{N(k,\alpha)}\delta^{-ksq}
\left[\sup_{z\in\qa}|P_k(f)(z)|\right]^q\mathbf 1_{\qa}\right\}^{1/q}\right\|_{L^p(\mathcal{X})}\\
&\lesssim\left\|\left\{\sum_{k=-\infty}^\infty\sum_{\alpha \in \ca_k}\sum_{m=1}^{N(k,\alpha)}\delta^{-ksq}
\left[\inf_{z\in\qa}|Q_k(f)(z)|\right]^q\mathbf 1_{\qa}\right\}^{1/q}\right\|_{L^p(\mathcal{X})}\\
&\lesssim \left\|\left[\sum_{k \in \zz} \delta^{-ksq}|Q_k(f)|^q\right]^{1/q}\right\|_{\lp}.
\end{align*}
By this and symmetry, we finish the proof of \eqref{4.18.4} and hence of Proposition \ref{6.4.1}.
\end{proof}

The following proposition states the relation between $\hb$ and $\hf$
when $p$ and $q$ satisfy certain conditions.

\begin{proposition}\label{phb}
Let $\beta,\ \gamma\in(0,\eta)$ with $\eta$ as in Definition \ref{10.23.2} and $s\in(-(\beta\wedge\gamma),\,\beta\wedge\gamma)$.
\begin{enumerate}
\item[{\rm(i)}]  If $0<q_0\leq q_1\leq \infty$ and $p\in(p(s,\beta\wedge\gamma),\infty]$
with $p(s,\beta\wedge\gamma)$ as in \eqref{pseta},
then $\dot{B}^s_{p,q_0}(\mathcal{X})\subset\dot{B}^s_{p,q_1}(\mathcal{X})$;
if $p(s,\beta\wedge\gamma)<q_0\leq q_1\leq \infty$ and $p\in(p(s,\beta\wedge\gamma),\infty)$,
then $\dot{F}^s_{p,q_0}(\mathcal{X})\subset\dot{F}^s_{p,q_1}(\mathcal{X})$.
\item[{\rm(ii)}] If $p\in(p(s,\beta\wedge\gamma),\infty)$ and
$q\in(p(s,\beta\wedge\gamma),\infty]$, then
$$\dot{B}^s_{p,p\wedge q}(\mathcal{X})\subset\hf\subset\dot{B}^s_{p,p\vee q}(\mathcal{X}).$$
\item[{\rm(iii)}] If
\begin{equation}\label{6.14.1}
\widetilde{\beta}\in(s_+,\eta)\quad \textit{and}\quad
\widetilde{\gamma}\in\left(\max\left\{-s+\omega\left(\frac{1}{p}-1\right)_+,\omega\lf(\frac{1}{p}-1\r)_+\right\},\eta\right),
\end{equation}
then $\mathring{\cg}(\widetilde{\beta},\widetilde{\gamma})\subset \hb$ for
any $q\in(0,\infty]$ and $p \in(p(s,\beta\wedge\gamma),\infty]$, and
$\mathring{\cg}(\widetilde{\beta},\widetilde{\gamma})\subset \hf$ for  any $q\in(p(s,\beta\wedge\gamma),\infty]$
and $p \in(p(s,\beta\wedge\gamma),\infty)$.
\end{enumerate}
\end{proposition}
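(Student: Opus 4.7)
The plan is: (i) and (ii) reduce to standard mixed-norm inequalities; (iii) requires a pointwise decay estimate for $Q_kf$ on test functions, split by whether $k\ge 0$ or $k<0$.

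For (i), I would invoke monotonicity of the $\ell^q$ quasi-norm ($\|\cdot\|_{\ell^{q_1}}\le\|\cdot\|_{\ell^{q_0}}$ whenever $0<q_0\le q_1\le\infty$): applied to $\{\delta^{-ks}\|Q_kf\|_{L^p(\cx)}\}_{k\in\zz}$ this yields the Besov embedding, and applied pointwise in $x$ before taking the outer $L^p$-norm it yields the Triebel--Lizorkin embedding. For (ii), the ingredients are: the identity $\dot{B}^s_{p,p}(\cx)=\dot{F}^s_{p,p}(\cx)$ (by Fubini); the monotonicity from (i); and the generalized Minkowski integral inequality, which gives $\|g\|_{\ell^q_k L^p_x}\le\|g\|_{L^p_x\ell^q_k}$ when $p\le q$ and the reverse inequality when $q\le p$. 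When $p\le q$, Minkowski produces $\dot{F}^s_{p,q}(\cx)\subset\dot{B}^s_{p,q}(\cx)=\dot{B}^s_{p,p\vee q}(\cx)$; when $q\le p$, (i) and the identity give $\dot{F}^s_{p,q}(\cx)\subset\dot{F}^s_{p,p}(\cx)=\dot{B}^s_{p,p}(\cx)=\dot{B}^s_{p,p\vee q}(\cx)$. The lower sandwich inclusion will follow symmetrically.

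For (iii) the heart is a pointwise bound on $|Q_kf(x)|$ for $f\in\mathring{\mathcal{G}}(\widetilde{\beta},\widetilde{\gamma})$ normalized and centered at some $x_0\in\cx$. For $k\ge 0$ (fine scale), I would use the cancellation $\int Q_k(x,y)\,d\mu(y)=0$ to write $Q_kf(x)=\int Q_k(x,y)[f(y)-f(x)]\,d\mu(y)$, then combine the H\"older regularity of $f$ with exponent $\widetilde{\beta}$, the size bound on $Q_k$ and Lemma~\ref{6.15.1} to obtain
\[
|Q_kf(x)|\lesssim\delta^{k\widetilde{\beta}}\,\frac{1}{V_1(x_0)+V(x_0,x)}\left[\frac{1}{1+d(x_0,x)}\right]^{\gamma'}.
\]
For $k<0$ (coarse scale) I would use the cancellation $\int f\,d\mu=0$ to write $Q_kf(x)=\int[Q_k(x,y)-Q_k(x,x_0)]f(y)\,d\mu(y)$ and split the $y$-integral at $d(y,x_0)\sim\delta^k$; the regularity of $Q_k$ controls the inner piece while the size bound and the tail decay of $f$ control the outer piece, giving
\[
|Q_kf(x)|\lesssim\delta^{-k\widetilde{\gamma}}\,\frac{1}{V_{\delta^k}(x_0)+V(x_0,x)}\left[\frac{\delta^k}{\delta^k+d(x_0,x)}\right]^{\gamma'},
\]
for an auxiliary $\gamma'\in(\omega(1/p-1)_+,\widetilde{\gamma})$. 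Taking $L^p$-norms via $\|(V_r(x_0)+V(x_0,\cdot))^{-1}[r/(r+d(x_0,\cdot))]^{\gamma'}\|_{L^p(\cx)}\lesssim V_r(x_0)^{1/p-1}$ (valid precisely when $\gamma'>\omega(1/p-1)_+$, the first part of the hypothesis on $\widetilde{\gamma}$), the $s$-weighted $\ell^q$ sum splits into two geometric series: one over $k\ge 0$ with ratio $\delta^{q(\widetilde{\beta}-s)}$, convergent thanks to $\widetilde{\beta}>s_+$; and one over $k<0$, after invoking $V_{\delta^k}(x_0)^{1/p-1}\lesssim\delta^{k\omega(1/p-1)_+}V_1(x_0)^{1/p-1}$ via doubling, with ratio $\delta^{q[s+\widetilde{\gamma}-\omega(1/p-1)_+]}$, convergent thanks to $\widetilde{\gamma}>\omega(1/p-1)_+-s$. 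This will prove $\mathring{\mathcal{G}}(\widetilde{\beta},\widetilde{\gamma})\subset\hb$ for every $q\in(0,\infty]$; the Triebel--Lizorkin embedding will then follow by composing the Besov case (with $q$ replaced by $p\wedge q$) with the lower inclusion from~(ii).

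The main obstacle will be the pointwise estimate for $k<0$. Since the hypothesis permits $\widetilde{\gamma}<\eta$, one cannot na\"ively apply the regularity of $Q_k$ at the full exponent $\eta$ across the entire support of $f$: the integral $\int d(y,x_0)^\eta|f(y)|\,d\mu(y)$ would then grow like $\delta^{k(\eta-\widetilde{\gamma})}$ and wipe out the intended gain. The correct gain $\delta^{-k\widetilde{\gamma}}$ must be extracted by cutting the $y$-integration precisely at the natural scale $\delta^k$ and balancing the regularity-based inner estimate against a purely size-based outer estimate; this is also the algebraic reason the hypothesis on $\widetilde{\gamma}$ in \eqref{6.14.1} couples $\widetilde{\gamma}$ with both $s$ and $\omega(1/p-1)_+$ rather than involving only $\widetilde{\beta}$.
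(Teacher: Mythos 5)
Your plan is correct, and parts (i), (ii), and the $k\ge 0$ half of (iii) coincide with the paper's argument (monotonicity of $\ell^q$, Minkowski, cancellation of $Q_k$ against the regularity of the test function). The genuine difference is the $k<0$ estimate in (iii): you cut the $z$-integral at the $Q_k$-scale $\delta^k$ (the fully natural choice is $(2A_0)^{-1}[\delta^k+d(x_0,\cdot)]$, which reduces to yours up to constants when the output point lies within $O(\delta^k)$ of $x_0$), whereas the paper cuts at $(2A_0)^{-1}[1+d(y,x_0)]$. Your cut is the one that actually closes the argument: on the outer region the tail decay of $\psi$ alone already supplies the prefactor $\delta^{-k\widetilde\gamma}$, while on the inner region the regularity of $Q_k$ at exponent $\eta$ is balanced against the growth $\int_{d(z,x_0)\lesssim\delta^k}[d(z,x_0)]^{\eta}|\psi(z)|\,d\mu(z)\lesssim\delta^{k(\eta-\widetilde\gamma)}$, producing the same gain. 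With the paper's cut the intermediate annulus $(2A_0)^{-1}[1+d(y,x_0)]<d(z,x_0)\lesssim\delta^k$ falls into $J_5+J_6$, where only size bounds are used, and (taking $y=x_0$ and $z$ at distance of order $1$) one obtains only $\frac{1}{V_{\delta^k}(x_0)}$ rather than the stated $\frac{\delta^{-k\widetilde\gamma}}{V_{\delta^k}(x_0)}$; so the intermediate bound claimed for $J_5+J_6$ there does not appear to follow as stated, although the final inequality \eqref{8.26.1x} is correct, exactly as your decomposition shows. One small inefficiency on your side: the auxiliary exponent $\gamma'<\widetilde\gamma$ is not actually needed — the $\delta^k$-cut preserves the full $\widetilde\gamma$ in the bump decay — but it is harmless, since the hypothesis $\widetilde\gamma>\omega(1/p-1)_+$ guarantees the open interval $(\omega(1/p-1)_+,\widetilde\gamma)$ is nonempty.
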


\begin{proof}
Property (i) is a corollary of \eqref{r}. Indeed,
by \eqref{r}, we know that
\begin{align*}
\|f\|_{\dot{B}^s_{p,q_1}(\mathcal{X})}&=\left[\sum_{k \in \zz}
\delta^{-ksq_1}\|Q_k(f)\|_{\lp}^{q_1}\right]^{1/q_1}=\left\{\sum_{k \in \zz}
\left[\delta^{-ksq_0}\|Q_k(f)\|_{\lp}^{q_0}\right]^{q_1/q_0}\right\}^{(q_0/q_1)(1/q_0)}\\
&\leq\left[\sum_{k \in \zz} \delta^{-ksq_0}\|Q_k(f)\|_{\lp}^{q_0}\right]^{1/q_0}
=\|f\|_{\dot{B}^s_{p,q_0}(\mathcal{X})},
\end{align*}
which proves (i) in the case of $\hb$. The proof of $\hf$
is similar to that of $\hb$; we omit the details.

Property (ii) can be deduced from the Minkowski integral inequality and \eqref{r}. Indeed, we have
\begin{align*}
\|f\|_{\hf}&=\left\{\int_\cx\left[\sum_{k=-\infty}^\infty\delta^{-ksq}
|Q_k(f)(z)|^q\right]^{\frac{p}{q}}\,d\mu(z)\right\}^{\frac{1}{p}}\\
&=\left\{\int_\cx\left[\sum_{k=-\infty}^\infty\delta^{-ksq}
|Q_k(f)(z)|^q\right]^{\frac{p\wedge q}{q}\frac{p}
{p\wedge q}}\,d\mu(z)\right\}^{\frac{p\wedge q}{p}\frac{1}{p\wedge q}}\\
&\leq \left\{\sum_{k=-\infty}^\infty\left[\int_\cx\delta^{-ksp}
|Q_k(f)(z)|^p\,d\mu(z)\right]^{\frac{p\wedge q}{p}}\right\}^{\frac{1}{p\wedge q}}
=\|f\|_{\dot{B}^s_{p,p\wedge q}(\mathcal{X})}.
\end{align*}
This shows the first inequality of (ii).
Now we show the second inequality of (ii).
Indeed, by \eqref{r} and the Minkowski inequality, we have
\begin{align*}
\|f\|_{\dot{B}^s_{p,p\vee q}(\mathcal{X})}
&=\left[\sum_{k\in\zz}\delta^{-ks(p\vee q)}
\|Q_k(f)\|_{L^p(\cx)}^{p\vee q}\right]^{\frac{1}{p\vee q}}
=\left\{\sum_{k\in\zz}\delta^{-ks(p\vee q)}
\left[\int_{\cx}|Q_k(f)(x)|^p\,d\mu(x)\right]^{\frac{p\vee q}{p}}
\right\}^{\frac{1}{p\vee q}}\\
&=\left\{\sum_{k\in\zz}
\left[\int_{\cx}\delta^{-ksp}|Q_k(f)(x)|^p\,d\mu(x)\right]^{\frac{p\vee q}{p}}
\right\}^{\frac{p}{p\vee q}\frac{1}{p}}\\
&\lesssim \left\{\int_{\cx}
\left[\sum_{k\in\zz}\delta^{-ks(p\vee q)}|Q_k(f)(x)|^{p\vee q}\right]^{\frac{p}{p\vee q}}
\,d\mu(x)\right\}^{\frac{1}{p}}\\
&\lesssim \left\{\int_{\cx}
\left[\sum_{k\in\zz}\delta^{-ksq}|Q_k(f)(x)|^q\right]^{\frac{p}{q}}
\,d\mu(x)\right\}^{\frac{1}{p}}
\lesssim \|f\|_{\hf}.
\end{align*}
This finishes the proof of (ii).

To show (iii), suppose $\psi\in \mathring{\cg}(\widetilde{\beta},\widetilde{\gamma})$.
We claim that, for any $k\in \zz_+$ and $y\in \cx$,
\begin{equation}\label{8.26.1}
|Q_k(\psi)(y)|\lesssim \delta^{k\widetilde{\beta}}\|f\|_{\cg(\widetilde{\beta},\widetilde{\gamma})}
\frac{1}{V_1(x_0)+V(x_0,y)}\frac{1}{[1+d(x_0,y)]^{\widetilde{\gamma}}}
\end{equation}
and, for any $k \in \zz\setminus\zz_+$ and $y\in\cx$,
\begin{equation}\label{8.26.1x}
|Q_k(\psi)(y)|\lesssim\delta^{-k\widetilde{\gamma}}\|f\|_{\cg(\widetilde{\beta},\widetilde{\gamma})}
\frac{1}{V_{\delta^k}(x_0)+V(x_0,y)}\frac{\delta^{k\widetilde{\gamma}}}{[\delta^k+d(x_0,y)]^{\widetilde{\gamma}}}.
\end{equation}

We first prove \eqref{8.26.1}. Indeed, notice that,
for any $k\in\zz_+$ and $d(z,y)>(2A_0)^{-1}[1+d(x_0,y)]$, we have
$$\delta^k+d(z,y)\gtrsim 1+d(x_0,y)$$
and
$$V(z,y)\gtrsim V_1(y)+V(x_0,y)\sim V_1(x_0)+V(x_0,y).$$
Therefore, we conclude that,
for any $k\in\zz_+$ and $y\in\cx$,
\begin{align*}
|Q_k(\psi)(y)|
&=\left|\int_\cx Q_k(z,y)[\psi(z)-\psi(y)]\,d\mu(z)\right|\\
&\lesssim\int_{d(z,y)\leq(2A_0)^{-1}[1+d(y,x_0)]}\left|Q_k(z,y)\right|
|\psi(z)-\psi(y)|\,d\mu(z)\\
&\qquad+\int_{d(z,y)\geq(2A_0)^{-1}[1+d(y,x_0)]}
\left|Q_k(z,y)\right||\psi(z)|\,d\mu(z)\\
&\qquad+\int_{d(z,y)\geq(2A_0)^{-1}[1+d(y,x_0)]}
\left|Q_k(z,y)\right||\psi(y)|\,d\mu(z)\\
&=: {\rm J_1+J_2+J_3}.
\end{align*}
For ${\rm J_1}$, by the size condition of $Q_k$ and
the regularity of $\psi$,
we know that, for any $k\in\zz_+$ and  $y\in\cx$,
\begin{align*}
{\rm J_1} &\lesssim \|\psi\|_{\cg(\widetilde{\beta},\widetilde{\gamma})}
\int_{d(z,y)\leq(2A_0)^{-1}[1+d(y,x_0)]} \frac{1}{V_{\delta^k}(y)+V(z,y)}
\left[\frac{\delta^k}{\delta^k+d(z,y)}\right]^{\eta}
\left[\frac{d(z,y)}{1+d(x_0,y)}\right]^{\widetilde{\beta}}\\
&\qquad\times\frac{1}{V_1(x_0)+V(x_0,y)}
\left[\frac{1}{1+d(x_0,y)}\right]^{\widetilde{\gamma}}\,d\mu(z)\\
&\lesssim \|\psi\|_{\cg(\widetilde{\beta},\widetilde{\gamma})}\int_{d(z,y)\leq \delta^k}
\frac{1}{V_{\delta^k}(y)+V(z,y)}
\left[\frac{\delta^k}{\delta^k+d(z,y)}\right]^{\eta}
\left[\frac{d(z,y)}{1+d(x_0,y)}\right]^{\widetilde{\beta}}\\
&\qquad\times\frac{1}{V_1(x_0)+V(x_0,y)}
\left[\frac{1}{1+d(x_0,y)}\right]^{\widetilde{\gamma}}\,d\mu(z)\\
&\qquad+ \|\psi\|_{\cg(\widetilde{\beta},\widetilde{\gamma})}\int_{\delta^k<d(z,y)
\leq (2A_0)^{-1}[1+d(y,x_0)]}\cdots\\
&=: {\rm J_{1,1}+J_{1,2}}.
\end{align*}
Observe that
\begin{align*}
{\rm J_{1,1}}&\lesssim \|\psi\|_{\cg(\widetilde{\beta},\widetilde{\gamma})}
\frac{1}{V_1(x_0)+V(x_0,y)}
\left[\frac{1}{1+d(x_0,y)}\right]^{\widetilde{\gamma}}
\int_{d(z,y)\leq \delta^k} \frac{1}{V_{\delta^k}(y)}\delta^{k\widetilde{\beta}}\,d\mu(z)\\
&\sim \delta^{k\widetilde{\beta}}\|\psi\|_{\cg(\widetilde{\beta},\widetilde{\gamma})}
\frac{1}{V_1(x_0)+V(x_0,y)}
\left[\frac{1}{1+d(x_0,y)}\right]^{\widetilde{\gamma}}
\end{align*}
and, from $\widetilde{\beta}\in(0,\eta)$,  \eqref{6.14.1} and
Lemma \ref{6.15.1}(iii), it follows that
\begin{align*}
{\rm J_{1,2}}&\lesssim \|\psi\|_{\cg(\widetilde{\beta},\widetilde{\gamma})}
\frac{1}{V_1(x_0)+V(x_0,y)}
\left[\frac{1}{1+d(x_0,y)}\right]^{\widetilde{\gamma}}\\
&\qquad\times\int_{\delta^k<d(z,y)
\leq (2A_0)^{-1}[1+d(y,x_0)]}
\frac{1}{V(z,y)}\left[\frac{\delta^k}{d(z,y)}\right]^{\eta}
\left[\frac{d(z,y)}{\delta^k}\right]^{\widetilde{\beta}}
\delta^{k\widetilde{\beta}}\,d\mu(z)\\
&\lesssim \delta^{k\widetilde{\beta}}\|\psi\|_{\cg(\widetilde{\beta},\widetilde{\gamma})}
\frac{1}{V_1(x_0)+V(x_0,y)}
\left[\frac{1}{1+d(x_0,y)}\right]^{\widetilde{\gamma}}
\int_{\delta^k<d(z,y)}\frac{1}{V(z,y)}
\left[\frac{\delta^k}{d(z,y)}\right]^{\eta-\widetilde{\beta}}\,d\mu(z)\\
&\lesssim \delta^{k\widetilde{\beta}}\|\psi\|_{\cg(\widetilde{\beta},\widetilde{\gamma})}
\frac{1}{V_1(x_0)+V(x_0,y)}
\left[\frac{1}{1+d(x_0,y)}\right]^{\widetilde{\gamma}}.
\end{align*}
Moreover, by the size conditions of $Q_k$ and $\psi$, and Lemma \ref{6.15.1}(ii), we conclude that
\begin{align*}
{\rm J_2} &\lesssim \|\psi\|_{\cg(\widetilde{\beta},\widetilde{\gamma})}
\int_{d(z,y) >(2A_0)^{-1}[1+d(y,x_0)]} \frac{1}{V_{\delta^k}(y)+V(z,y)}
\left[\frac{\delta^k}{\delta^k+d(z,y)}\right]^{\eta}\\
&\qquad\times\frac{1}{V_1(x_0)+V(x_0,z)}
\left[\frac{1}{1+d(x_0,z)}\right]^{\widetilde{\gamma}}\,d\mu(z)\\
&\lesssim \delta^{k\eta}\|\psi\|_{\cg(\widetilde{\beta},\widetilde{\gamma})}
\frac{1}{V_1(x_0)+V(x_0,y)}
\left[\frac{1}{1+d(x_0,y)}\right]^{\eta}
\end{align*}
and, by the size conditions of $Q_k$ and $\psi$,
and Lemma \ref{6.15.1}(iii), we conclude that
\begin{align*}
{\rm J_3} &\lesssim \|\psi\|_{\cg(\widetilde{\beta},\widetilde{\gamma})}
\int_{d(z,y) >(2A_0)^{-1}[1+d(y,x_0)]} \frac{1}{V_{\delta^k}(y)+V(z,y)}
\left[\frac{\delta^k}{\delta^k+d(z,y)}\right]^{\eta}\\
&\qquad\times\frac{1}{V_1(x_0)+V(x_0,y)}
\left[\frac{1}{1+d(x_0,y)}\right]^{\widetilde{\gamma}}\,d\mu(z)\\
&\lesssim \delta^{k\eta}\|\psi\|_{\cg(\widetilde{\beta},\widetilde{\gamma})}
\frac{1}{V_1(x_0)+V(x_0,y)}
\left[\frac{1}{1+d(x_0,y)}\right]^{\widetilde{\gamma}}\\
&\qquad \times \int_{d(z,y) >(2A_0)^{-1}[1+d(y,x_0)]}
\frac{1}{V(z,y)}\left[\frac{1}{d(z,y)}\right]^{\eta}\,d\mu(z)\\
&\lesssim \delta^{k\eta}\|\psi\|_{\cg(\widetilde{\beta},\widetilde{\gamma})}
\frac{1}{V_1(x_0)+V(x_0,y)}
\left[\frac{1}{1+d(x_0,y)}\right]^{\widetilde{\gamma}}.
\end{align*}
Combining the estimates  as above, we know that \eqref{8.26.1} holds true.

Now we show \eqref{8.26.1x}.
Since $\psi\in\mathring{\cg}(\widetilde{\beta},\widetilde{\gamma})$,
it then follows that, for any $k\in\zz\setminus\zz_+$ and $y\in\cx$,
\begin{align*}
|Q_k(\psi)(y)|
&=\left|\int_\cx\lf[Q_k(z,y)-Q_k(x_0,y)\r]\psi(z)\,d\mu(z)\right|\\
&\lesssim \int_{d(z,x_0)\leq(2A_0)^{-1}[1+d(y,x_0)]}
\left|Q_k(z,y)-Q_k(x_0,y)\right|
|\psi(z)|\,d\mu(z)\\
&\qquad+\int_{d(z,x_0)\geq(2A_0)^{-1}[1+d(y,x_0)]}
\left|Q_k(z,y)\right||\psi(z)|\,d\mu(z)\\
&\qquad+\int_{d(z,x_0)\geq(2A_0)^{-1}[1+d(y,x_0)]}
\left|Q_k(x_0,y)\right||\psi(z)|\,d\mu(z)\\
&=: {\rm J_4+J_5+J_6}.
\end{align*}
For ${\rm J_4}$, from the regularity of $Q_k$,
the  size condition of $\psi$, $\widetilde{\gamma}\in(0,\eta)$,
\eqref{6.14.1}  and Lemma \ref{6.15.1}(iii), we deduce that
\begin{align*}
{\rm J_4}&\lesssim\|\psi\|_{\cg(\widetilde{\beta},\widetilde{\gamma})}\int_{d(z,x_0)\leq(2A_0)^{-1}[1+d(y,x_0)]}
\left[\frac{d(z,x_0)}{\delta^k+d(x_0,y)}\right]^{\eta}
\frac{1}{V_{\delta^k}(x_0)+V(x_0,y)}
\left[\frac{\delta^k}{\delta^k+d(x_0,y)}\right]^{\eta}\\
&\qquad\times\frac{1}{V_1(x_0)+V(x_0,z)}
\left[\frac{1}{1+d(x_0,z)}\right]^{\widetilde{\gamma}}\,d\mu(z)\\
&\lesssim \delta^{-k\widetilde{\gamma}}\|\psi\|_{\cg(\widetilde{\beta},\widetilde{\gamma})}
\frac{1}{V_{\delta^k}(x_0)+V(x_0,y)}
\left[\frac{\delta^k}{\delta^k+d(x_0,y)}\right]^{\eta}\\
&\qquad\times\int_{d(z,x_0)\leq(2A_0)^{-1}[1+d(y,x_0)]}
\frac{1}{V(x_0,z)}
\left[\frac{d(z,x_0)}{\delta^k+d(x_0,y)}\right]^{\eta-\widetilde{\gamma}}\,d\mu(z)\\
&\lesssim \delta^{-k\widetilde{\gamma}}\|\psi\|_{\cg(\widetilde{\beta},\widetilde{\gamma})}
\frac{1}{V_{\delta^k}(x_0)+V(x_0,y)}
\left[\frac{\delta^k}{\delta^k+d(x_0,y)}\right]^{\eta}.
\end{align*}
Using some arguments similar to those used in the
estimations of ${\rm J_2}$ and ${\rm J_3}$, we conclude that
$${\rm J_5+J_6}\lesssim \delta^{-k\widetilde{\gamma}}\|\psi\|_{\cg(\widetilde{\beta},\widetilde{\gamma})}
\frac{1}{V_{\delta^k}(x_0)+V(x_0,y)}\left[\frac{\delta^k}
{\delta^k+d(x_0,y)}\right]^{\widetilde{\gamma}},$$
which, together with the estimate of ${\rm J_1}$, then completes the proof of \eqref{8.26.1x}.

From \eqref{6.14.1} and $\widetilde{\gamma}\in(\omega(1/p-1)_+,\eta)$, it follows that
\begin{align}
&\left\{\int_\cx\frac{1}{[V_1(x_0)+V(x_0,y)]^p}\frac{1}{[1+d(x_0,y)]^{\widetilde{\gamma} p}}\,d\mu(y)\right\}^{1/p}\\
&\quad\lesssim\left\{\frac{1}{[V_1(x_0)]^{p-1}}+\sum_{i=0}^\infty
\frac{1}{[V_{\delta^{-i}}(x_0)]^{p-1}}\delta^{i\widetilde{\gamma} p}\right\}^{1/p}
\lesssim \frac{1}{[V_1(x_0)]^{1-1/p}}\notag
\end{align}
and, for any $k \in \zz\setminus\zz_+$,
\begin{align}\label{8.26.2}
&\left\{\int_\cx\frac{1}{[V_{\delta^k}(x_0)+V(x_0,y)]^p}
\frac{\delta^{k\widetilde{\gamma} p}}{[\delta^k+d(x_0,y)]^{\widetilde{\gamma} p}}\,d\mu(y)\right\}^{1/p}\\
&\quad\lesssim\left\{\frac{1}{[V_{\delta^k}(x_0)]^{p-1}}
+\sum_{i=0}^\infty\frac{1}{[V_{\delta^{-i}\delta^k}(x_0)]^{p-1}}\delta^{i\widetilde{\gamma} p}\right\}^{1/p}
\lesssim \delta^{k\omega(1/p-1)_+}\left[V_1(x_0)\right]^{1/p-1}.\notag
\end{align}

Observe that, by \eqref{6.14.1}, we have $\widetilde{\beta}\in(s,\eta)$
and $\widetilde{\gamma}\in(\omega(1/p-1)_+-s,\eta)$.
From this, \eqref{8.26.1} through \eqref{8.26.2}, we deduce that
$$\|f\|_{\hb}\lesssim\|f\|_{\cg(\widetilde{\beta},\widetilde{\gamma})}[V_1(x_0)]^{1/p-1}
\left\{\sum_{k=0}^\infty\delta^{k(\widetilde{\beta}-s)q}
+\sum_{k=-\infty}^{-1}\delta^{k[\omega(1/p-1)_+-s-\widetilde{\gamma}]q}\right\}^{1/q}
\lesssim\|f\|_{\cg(\widetilde{\beta},\widetilde{\gamma})}.$$
Therefore, $\mathring{\cg}(\widetilde{\beta},\widetilde{\gamma})\subset \hb$, which,
combined with (ii), implies that $\mathring{\cg}(\widetilde{\beta},\widetilde{\gamma}) \subset \hf$.
This finishes the proof of (iii) and hence of Proposition \ref{phb}.
\end{proof}

\begin{remark}\label{addre2}
A counterpart on RD-spaces of Proposition \ref{phb}(iii) is
\cite[Proposition 5.10(iv)]{hmy08} in which $\gz$ is required to satisfy:
\begin{equation}\label{eq-grd}
\gamma\in\left(\max\left\{\omega\lf(\frac{1}{p}-1\r)_+, -s+\omega\left(\frac{1}{p}-1\right)_+
-\kappa\left(1-\frac{1}{p}\right)_+\right\}, \eta\right),
\end{equation}
where $\kappa\in(0,\omega]$ is as in \eqref{eq-rdoub}. As we mention in the introduction of this article,
a space $\cx$ of homogeneous type can be  formally regarded as a special RD-space satisfying
\eqref{eq-rdoub} with $\kz:=0$. Thus, if we let $\kz:=0$ in \eqref{eq-grd}, then it coincides with
\eqref{6.14.1}. In this sense, we may say that Proposition \ref{phb}(iii) is a generalization of
\cite[Proposition 5.10(iv)]{hmy08} and that the range of
$\widetilde{\gamma}$ in \eqref{6.14.1} is the \emph{optimal range} of $\widetilde{\gz}$.
\end{remark}

Using Proposition \ref{phb}, we now show that the spaces $\hb$ and $\hf$ are independent of the
choice of the spaces of distributions. More precisely, we have the following proposition.

\begin{proposition}\label{6.5.1}
Let $\bz,\ \gz\in(0,\eta)$ with $\eta$ as in Definition \ref{10.23.2},
$s\in(-(\beta\wedge\gamma),\,\beta\wedge\gamma)$
and $p,\ q\in(0,\fz]$ satisfy
\begin{equation}\label{6.14.1x}
\beta\in\left(\max\left\{0,-s+\omega\left(\frac{1}{p}-1\right)_+\right\},\eta\right)\quad \textit{and}\quad
\gamma\in\left(\max\left\{s,\omega\left(\frac{1}{p}-1\right)_+\right\},\eta\right).
\end{equation}
\begin{enumerate}
\item[{\rm (i)}] If $p\in(p(s,\beta\wedge\gamma),\infty]$
with $p(s,\beta\wedge\gamma)$ as in \eqref{pseta},
then, for any $f\in\dot B^s_{p,q}(\cx)\subset(\mathring\cg_0^\eta(\bz,\gz))'$, $f\in(\cggt)'$ with
$\widetilde{\beta}$ and $\widetilde{\gamma}$ satisfying $s\in(-(\widetilde{\beta}\wedge\widetilde{\gamma}),\,\widetilde{\beta}\wedge\widetilde{\gamma})$, $p\in(p(s,\widetilde{\beta}\wedge\widetilde{\gamma}),\infty]$
with $p(s,\widetilde{\beta}\wedge\widetilde{\gamma})$
as in \eqref{pseta} via replacing $\beta$ and $\gamma$ respectively
by $\widetilde{\beta}$ and $\widetilde{\gamma}$,
and \eqref{6.14.1x} via replacing $\beta$ and $\gamma$ respectively
by $\widetilde{\beta}$ and $\widetilde{\gamma}$, and there exists a positive constant $C$,
independent of $f$, such that
$$
\|f\|_{(\cggt)'}\leq C\|f\|_{\hb}.
$$
\item[{\rm (ii)}] If $p\in(p(s,\beta\wedge\gamma),\infty)$ and
$q\in(p(s,\beta\wedge\gamma),\infty]$,
then, for any $f\in\dot F^s_{p,q}(\cx)\subset(\mathring\cg_0^\eta(\bz,\gz))'$, $f\in(\cggt)'$ with
$\widetilde{\beta}$ and $\widetilde{\gamma}$ satisfying $s\in(-(\widetilde{\beta}\wedge\widetilde{\gamma}),
\,\widetilde{\beta}\wedge\widetilde{\gamma})$,
$p\in(p(s,\widetilde{\beta}\wedge\widetilde{\gamma}),\infty),\
q\in(p(s,\widetilde{\beta}\wedge\widetilde{\gamma}),\infty]$
with $p(s,\widetilde{\beta}\wedge\widetilde{\gamma})$
as in \eqref{pseta} via replacing $\beta$ and $\gamma$ respectively
by $\widetilde{\beta}$ and $\widetilde{\gamma}$,
and \eqref{6.14.1x} via replacing $\beta$ and $\gamma$ respectively
by $\widetilde{\beta}$ and $\widetilde{\gamma}$, and there exists a positive constant $C$,
independent of $f$, such that
$$
\|f\|_{(\cggt)'}\leq C\|f\|_{\hf}.
$$
\end{enumerate}
\end{proposition}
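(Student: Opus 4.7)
The plan is to use the discrete Calder\'on reproducing formula from Lemma \ref{crf} as a bridge: this formula already represents $f \in (\cggi)'$ via the atoms $\widetilde{Q}_k(\cdot,\ya)$ with coefficients $\mu(\qa)Q_kf(\ya)$, and both ingredients are ``neutral'' objects whose interaction with any test function can be controlled directly in terms of the $\hb$- or $\hf$-norm. Concretely, given $\psi \in \cggt$, I would \emph{define} the action
$$
\laz f,\psi\raz := \sum_{k\in\zz}\sum_{\alpha\in\ca_k}\sum_{m=1}^{N(k,\alpha)}\mu\lf(\qa\r)\,Q_kf\lf(\ya\r)\,\widetilde{Q}_k\psi\lf(\ya\r),
$$
and then show that (a) the sum converges absolutely with $|\laz f,\psi\raz|\ls\|\psi\|_{\cggt}\|f\|_{\hb}$ (respectively, $\ls\|\psi\|_{\cggt}\|f\|_{\hf}$), (b) the value is independent of the choice of the points $\ya$, and (c) it agrees with the original pairing whenever $\psi \in \cggi\cap\cggt$.

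The first main step is to transfer the pointwise decay estimates \eqref{8.26.1} and \eqref{8.26.1x} from $Q_k$ to $\widetilde{Q}_k$. Since $\widetilde{Q}_k$ satisfies the size bound \eqref{4.23x}, the regularity bound \eqref{4.23y} and the cancellation \eqref{4.23a}, the argument given in the proof of Proposition \ref{phb}(iii) applies verbatim (with $Q_k$ replaced by $\widetilde{Q}_k$ and $\eta$ therein replaced by $\gamma$ in the size estimate and $\beta$ in the regularity estimate), producing, for $\psi\in\cggt$ and with $\widetilde{\beta}$, $\widetilde{\gamma}$ as in the hypothesis,
$$
\lf|\widetilde{Q}_k\psi(y)\r|\ls\delta^{k\widetilde{\beta}}\|\psi\|_{\cg(\widetilde{\beta},\widetilde{\gamma})}\frac{1}{V_1(x_0)+V(x_0,y)}\lf[\frac{1}{1+d(x_0,y)}\r]^{\widetilde{\gamma}}\qquad(k\in\zz_+)
$$
and the analogous $\delta^{-k\widetilde{\gamma}}$-type bound for $k\in\zz\setminus\zz_+$. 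Integrating to an $L^{p'}$-estimate as in \eqref{8.26.2} (with the same geometric series and the constraint $\widetilde{\gamma}>\omega(1/p-1)_+$ from \eqref{6.14.1x}) yields a bound of the form $\|\widetilde{Q}_k\psi\|_{L^{p'}(\cx)}\ls\delta^{k\widetilde{\beta}}\|\psi\|_{\cggt}[V_1(x_0)]^{1/p-1}$ for $k\geq0$ and the symmetric one for $k<0$.

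For part (i), I would then estimate the inner sum over $(\alpha,m)$ via H\"older's inequality when $p>1$, or via \eqref{r} when $p\in(p(s,\beta\wedge\gamma),1]$, combined with the homogeneous Plancherel--P\^olya inequality \eqref{bhpp} to re-express discrete sums as $L^p$-norms of $Q_kf$. This yields, for each $k$,
$$
\sum_{\alpha\in\ca_k}\sum_{m=1}^{N(k,\alpha)}\mu\lf(\qa\r)\lf|Q_kf(\ya)\r|\lf|\widetilde{Q}_k\psi(\ya)\r|\ls\|Q_kf\|_{L^p(\cx)}\cdot\delta^{k\theta_k}\|\psi\|_{\cggt},
$$
where $\theta_k=\widetilde{\beta}$ for $k\geq0$ and $\theta_k=-\widetilde{\gamma}$ for $k<0$. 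Summing in $k$ with H\"older in the exponent $q$ (or \eqref{r} when $q\leq1$) and using $s\in(-(\widetilde{\beta}\wedge\widetilde{\gamma}),\widetilde{\beta}\wedge\widetilde{\gamma})$ together with \eqref{6.14.1x} to make the geometric series $\sum_k\delta^{k(\theta_k-s)q'}$ converge gives the bound $|\laz f,\psi\raz|\ls\|\psi\|_{\cggt}\|f\|_{\hb}$. For part (ii), the inner sum is handled in the same way, but after factoring the weight $\delta^{-ks}$ one must bring the $L^p$-norm outside the $k$-sum; this is done by invoking Lemma \ref{10.18.5} to dominate the discrete convolution by the Hardy--Littlewood maximal operator, and then using the Fefferman--Stein vector-valued inequality of Lemma \ref{fsvv} together with the Triebel--Lizorkin Plancherel--P\^olya inequality \eqref{fhpp} to reach $\|f\|_{\hf}$.

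The main obstacle will be verifying independence of the representing points $\ya$ and showing that this construction actually agrees with the original action of $f$ on the subspace $\cggi\cap\cggt$, so that what we have defined is a genuine extension rather than merely a new functional that happens to be bounded. The former is built into Lemma \ref{crf} (the positive constant in the reproducing formula is independent of the choice of $\ya$), while the latter follows because, on any $\psi\in\cggi$, both the formal sum above and the pairing coming from $(\cggi)'$ coincide by the convergence of the reproducing formula in $(\cggi)'$. A minor technical point is that these two Calder\'on reproducing formulae have to be used with the same $(\beta,\gamma)$ for which $f$ was originally a distribution; enlarging to $(\widetilde{\beta},\widetilde{\gamma})$ occurs only on the $\psi$-side, and the norm control obtained above is exactly what certifies the desired continuity.
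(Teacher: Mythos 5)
For part (i), your plan coincides with the paper's argument: both use the discrete reproducing formula of Lemma~\ref{crf}, transfer the decay estimates of the form \eqref{8.26.1}--\eqref{8.26.1x} to the companion kernels $\widetilde{Q}_k$ (yielding \eqref{6.14.2} and \eqref{6.14.3}), integrate in the $\ya$-variable, split into $p\le 1$ (using \eqref{r} and the bounds \eqref{6.22.1}--\eqref{6.22.2}) versus $p>1$ (using H\"older), and finally invoke the Plancherel--P\^olya inequality \eqref{bhpp} and a density argument based on $\mathring{\cg}(\eta,\eta)$. One small point worth clarifying: in the pairing $\laz f,\psi\raz$ what actually arises is $\int_{\cx}\widetilde{Q}_k(z,\ya)\psi(z)\,d\mu(z)$, i.e.\ the adjoint $\widetilde{Q}_k^{\ast}\psi(\ya)$, not $\widetilde{Q}_k\psi(\ya)$; this matters because the regularity hypothesis \eqref{4.23y} is in the first variable, and it is exactly this first-variable regularity that is needed in the argument modelled on the proof of \eqref{8.26.1}--\eqref{8.26.1x}.

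Where you genuinely diverge from the paper is part (ii). You propose to prove the $\hf$-bound directly, by ``factoring the weight $\delta^{-ks}$'' and then invoking Lemma~\ref{10.18.5}, the Fefferman--Stein inequality (Lemma~\ref{fsvv}) and the Triebel--Lizorkin Plancherel--P\^olya inequality \eqref{fhpp}. This is not the right set of tools for what is being estimated: the quantity $|\laz f,\psi\raz|$ is a \emph{scalar}, not an $L^p(\ell^q)$-norm. Lemma~\ref{10.18.5} produces a pointwise bound by a maximal function evaluated at a single point (the fixed center $x_0$ coming from the test function's decay profile), and Lemma~\ref{fsvv} only controls the $L^p(\ell^q)$-norm of such maximal functions -- it gives no information at a single point, so the chain ``Lemma \ref{10.18.5} $\Rightarrow$ Lemma \ref{fsvv} $\Rightarrow$ \eqref{fhpp}'' does not close. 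The paper's treatment of (ii) is a one-liner: by Proposition~\ref{phb}(ii) one has $\hf\subset\dot B^s_{p,p\vee q}(\cx)$ with $\|f\|_{\dot B^s_{p,p\vee q}(\cx)}\lesssim\|f\|_{\hf}$, and then (i) applied with the exponent $p\vee q$ in place of $q$ gives exactly the desired bound $\|f\|_{(\cggt)'}\lesssim\|f\|_{\hf}$. (A hands-on version of this reduction would be: after the inner sum estimate, interchange the $k$-sum with the $d\mu(y)$-integral against the fixed profile $g(y):=\frac{1}{V_1(x_0)+V(x_0,y)}[1+d(x_0,y)]^{-\widetilde{\gamma}}$, apply H\"older in $\ell^q$ to pull out $(\sum_k\delta^{-ksq}|Q_kf(y)|^q)^{1/q}$, and then H\"older in $L^p$ against $g\in L^{p'}$ -- but this is just the $\hf\hookrightarrow\dot B^s_{p,p\vee q}$ embedding spelled out, not a new route and not one that requires Lemma~\ref{fsvv}.) You should replace your proposed machinery for (ii) with this elementary reduction to (i).
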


\begin{proof}
Let $\{Q_k\}_{k\in\zz}$ be an exp-ATI, $\psi\in\mathring{\cg}(\eta,\eta)$ and  $x_0$ a fixed point in $\cx$.
Assume that $f\in(\cggi)'$.
Choosing $\beta_0,\ \gamma_0\in(\max\{\beta,\widetilde{\beta},\gamma,\widetilde{\gamma}\}, \eta)$,
then $f\in(\mathring{\cg}_0^\eta(\beta_0,\gamma_0))'$.
By Lemma \ref{crf}, we know that there exists
$\{\widetilde{Q}_k\}_{k=-\infty}^\infty$ such that
$$f(\cdot) = \sum_{k=-\infty}^\infty\sum_{\alpha \in \ca_k}
\sum_{m=1}^{N(k,\alpha)}\mu\left(\qa\right)\widetilde{Q}_k(\cdot,\ya)Q_kf
\left(\ya\right) \qquad \text{in}\quad (\mathring{\cg}_0^\eta(\beta_0,\gamma_0))'.$$
Using some arguments similar to those used in the estimations of \eqref{8.26.1} and
\eqref{8.26.1x}, we conclude that, for any $k\in \zz_+$ and $y\in\cx$,
\begin{equation}\label{6.14.2}
\left|\left\langle\widetilde{Q}_k(\cdot, y),\psi\right\rangle\right|
\lesssim\delta^{k\widetilde{\beta}}\|\psi\|_{\cg(\widetilde{\beta},\widetilde{\gamma})}
\frac{1}{V_1(x_0)+V(x_0,y)}\left[\frac{1}{1+d(x_0,y)}\right]^{\widetilde{\gamma}}
\end{equation}
and, for any $k \in \zz\setminus\zz_+$ and $y\in\cx$,
\begin{equation}\label{6.14.3}
\left|\left\langle\widetilde{Q}_k(\cdot, y),\psi\right\rangle\right|
\lesssim\delta^{-k\widetilde{\gamma}}\|\psi\|_{\cg(\widetilde{\beta},\widetilde{\gamma})}
\frac{1}{V_{\delta^k}(x_0)+V(x_0,y)}\left[\frac{\delta^{k\widetilde{\gamma}}}
{\delta^k+d(x_0,y)}\right]^{\widetilde{\gamma}}.
\end{equation}

By Lemma \ref{crf}, \eqref{6.14.2}, \eqref{6.14.3}
and Lemma \ref{6.15.1}, we know that
\begin{align}\label{6.25.1}
|\langle f,\psi\rangle|&=\left|\sum_{k=-\infty}^\infty
\sum_{\alpha\in\ca_k}\sum_{m=1}^{N(k,\alpha)}
\mu\left(\qa\right)Q_k(f)\left(\ya\right)
\left\langle \widetilde{Q}_k(\cdot,\ya),\psi\right\rangle\right|\\
&\lesssim \|\psi\|_{\cg(\widetilde{\beta},\widetilde{\gamma})}
\left\{\sum_{k=0}^\infty\sum_{\alpha\in\ca_k}
\sum_{m=1}^{N(k,\alpha)}\delta^{k\widetilde{\beta}}\mu\left(\qa\right)
\left|Q_k(f)\left(\ya\right)\right|
\right.\notag\\
&\qquad\times \frac{1}{V_1(x_0)+V(x_0,\ya)}
\left[\frac{1}{1+d(x_0,\ya)}\right]^{\widetilde{\gamma}}\notag\\
&\qquad+\sum_{k=-\infty}^{-1}\sum_{\alpha\in\ca_k}
\sum_{m=1}^{N(k,\alpha)}\delta^{-k\widetilde{\gamma}}\mu\left(\qa\right)
\left|Q_k(f)\left(\ya\right)\right|\noz\\
&\qquad\times\lf.
\frac{1}{V_{\delta^k}(x_0)+V(x_0,\ya)}
\left[\frac{\delta^k}{\delta^k+d(x_0,\ya)}\right]^{\widetilde{\gamma}}\right\}.
\notag
\end{align}
If $p\in (p(s,\widetilde{\beta}\wedge\widetilde{\gamma}),1]$, by \eqref{r}, we find that
\begin{align}\label{6.23.1}
|\langle f,\psi\rangle|&\lesssim \|\psi\|_{\cg(\widetilde{\beta},\widetilde{\gamma})}
\left[\sum_{k=0}^\infty\delta^{k(\widetilde{\beta}+s)}\left\{\sum_{\alpha\in\ca_k}
\sum_{m=1}^{N(k,\alpha)}\delta^{-ksp}
\left[\mu\left(\qa\right)\right]^p
\left|Q_k(f)\left(\ya\right)\right|^p\right.\right.\\
&\qquad \times \left.
\vphantom{\sum_{\alpha\in\ca_k}\sum_{m=1}^{N(k,\alpha)}}\left[\frac{1}{V_1(x_0)+V(x_0,\ya)}\right]^p
\left[\frac{1}{1+d(x_0,\ya)}\right]^{\widetilde{\gamma}p}\right\}^{1/p}
\notag\\
&\qquad+\sum_{k=-\infty}^{-1}\delta^{-k(\widetilde{\gamma}-s)}
\left\{\sum_{\alpha\in\ca_k}\sum_{m=1}^{N(k,\alpha)}\delta^{-ksp}
\left[\mu\left(\qa\right)\right]^p
\left|Q_k(f)\left(\ya\right)\right|^p\right.\notag\\
&\qquad\left.\left.\times
\left[\frac{1}{V_{\delta^k}(x_0)+V(x_0,\ya)}\right]^p
\left[\frac{\delta^k}{\delta^k+d(x_0,\ya)}\right]^{\widetilde{\gamma}p}
\right\}^{1/p}\right].\notag
\end{align}
Observe that, from Lemma \ref{6.15.1}, $p \in (p(s,\widetilde{\beta}\wedge\widetilde{\gamma}),1]$,
$\widetilde{\gamma}\in(\omega(1/p-1),\eta)$, and \eqref{6.14.1x}
via replacing $\gamma$ by $\widetilde{\gamma}$, we deduce that,
for any $k\in\zz_+$, $\alpha\in\ca_k$ and $m\in\{1,\dots,N(k,\alpha)\}$,
\begin{align}\label{6.22.1}
&\frac{[\mu(\qa)]^{p-1}}{[V_1(x_0)+V(x_0,\ya)]^p}
\left[\frac{1}{1+d(x_0,\ya)}\right]^{\widetilde{\gamma}p}\\
&\quad\lesssim\frac{1}{V_1(x_0)}\left[\frac{V_1(\ya)+V(x_0,\ya)}{\mu(\qa)}\right]^{1-p}
\left[\frac{1}{1+d(x_0,\ya)}\right]^{\widetilde{\gamma}p}
\lesssim\delta^{-k\omega(1-p)}\frac{1}{V_1(x_0)};\notag
\end{align}
similarly, for any $k\in\zz\setminus\zz_+$,  we have $V_1(x_0)\lesssim V_{\delta^k}(x_0)$,
which further implies that,
for any $\alpha\in\ca_k$ and $m\in\{1,\dots,N(k,\alpha)\}$,
\begin{align}\label{6.22.2}
&\frac{[\mu(\qa)]^{p-1}}{[V_{\delta^k}(x_0)+V(x_0,\ya)]^p}
\left[\frac{\delta^k}
{\delta^k+d(x_0,\ya)}\right]^{\widetilde{\gamma}p}\\
&\quad\lesssim \frac{1}{V_1(x_0)}\left[\frac{V_{\delta^k}(\ya)+V(x_0,\ya)}{\mu(\qa)}\right]^{1-p}
\left[\frac{\delta^k}
{\delta^k+d(x_0,\ya)}\right]^{\widetilde{\gamma}p}
\lesssim\frac{1}{V_1(x_0)}.\notag
\end{align}
Using this, \eqref{6.23.1}, \eqref{6.22.1}, \eqref{bhpp}, \eqref{6.14.1x}, the H\"older inequality
when $q\in (1,\infty)$, or \eqref{r} when $q\in(0,1]$,
we know that
\begin{align}\label{6.25.2}
|\langle f,\psi\rangle|&\ls\frac{\|\psi\|_{\cg(\widetilde{\beta},\widetilde{\gamma})}}{[V_1(x_0)]^{1/p}}
\left\{\sum_{k=0}^\infty\delta^{-k[\omega(1/p-1)-(\widetilde{\beta}+s)]}\left[\sum_{\alpha\in\ca_k}
\sum_{m=1}^{N(k,\alpha)}\delta^{-ksp}\mu\left(\qa\right)
\left|Q_k(f)\left(\ya\right)\right|^p\right]^{1/p}\right.\\
&\quad\quad\left.+\sum_{k=-\infty}^{-1}\delta^{-k(\widetilde{\gamma}-s)}\left[\sum_{\alpha\in\ca_k}
\sum_{m=1}^{N(k,\alpha)}\delta^{-ksp}\mu\left(\qa\right)
\left|Q_k(f)\left(\ya\right)\right|^p\right]^{1/p}\right\}\notag\\
&\lesssim \frac{1}{[V_1(x_0)]^{1/p}}\|\psi\|_{\cg(\widetilde{\beta},\widetilde{\gamma})}\|f\|_{\hb}. \notag
\end{align}

If $p\in(1,\infty)$, then, by \eqref{6.25.1}, \eqref{6.14.1x}, Lemma \ref{6.15.1}(ii) and the H\"older
inequality, we conclude that
\begin{align}\label{6.25.3}
|\langle f,\psi\rangle|
&\lesssim\|\psi\|_{\cg(\widetilde{\beta},\widetilde{\gamma})}\left\{\sum_{k=0}^\infty\sum_{\alpha\in\ca_k}
\sum_{m=1}^{N(k,\alpha)}\delta^{-ks}
\left[\mu\left(\qa\right)\right]^{1/p}
\left|Q_k(f)\left(\ya\right)\right|\right.\\
&\qquad \times\delta^{k(\widetilde{\beta}+s)}
\left[\mu\left(\qa\right)\right]^{1/p'}
\frac{1}{V_1(x_0)+V(x_0,\ya)}
\left[\frac{1}{1+d(x_0,\ya)}\right]^{\widetilde{\gamma}}\notag\\
&\qquad+\sum_{k=-\infty}^{-1}\sum_{\alpha\in\ca_k}\sum_{m=1}^{N(k,\alpha)}\delta^{-ks}
\left[\mu\left(\qa\right)\right]^{1/p}
\left|Q_k(f)\left(\ya\right)\right|\notag\\
&\qquad\left. {}\times \delta^{k(-\widetilde{\gamma}+s)}
\left[\mu\left(\qa\right)\right]^{1/p'}
\frac{1}{V_{\delta^k}(x_0)+V(x_0,\ya)}
\left[\frac{\delta^k}
{\delta^k+d(x_0,\ya)}\right]^{\widetilde{\gamma}}\right\}.\notag\\
&\lesssim \frac{1}{[V_1(x_0)]^{1/p}}\|\psi\|_{\cg(\widetilde{\beta},\widetilde{\gamma})}
\left\{\sum_{k=0}^\infty\delta^{k(\widetilde{\beta}+s)}\left[\int_\cx
\frac{1}{V_1(x_0)+V(x_0,y)}\right.\right.\notag\\
&\qquad \times \left.\left\{\frac{1}{1+d(x_0,y)}\right\}^{\widetilde{\gamma}p'}\,d\mu(y)\right]^{1/p'}
\sum_{\alpha\in\ca_k}\sum_{m=1}^{N(k,\alpha)}
\left[\delta^{-ksp}\mu\left(\qa\right)
\left|Q_k(f)\left(\ya\right)\right|^p\right]^{1/p}\notag\\
&\qquad+\sum_{k=-\infty}^{-1}\delta^{k(-\widetilde{\gamma}+s)}
\left[\int_\cx\frac{1}{V_{\delta^k}(x_0)+V(x_0,y)}
\left\{\frac{\delta^{k\widetilde{\gamma}p'}}{\delta^k+d(x_0,y)}\right\}^{\widetilde{\gamma}p'}
\,d\mu(y)\right]^{1/p'}\notag\\
&\qquad\left.\times \sum_{\alpha\in\ca_k}\sum_{m=1}^{N(k,\alpha)}
\left[\delta^{-ksp}\mu\left(\qa\right)
\left|Q_k(f)\left(\ya\right)\right|^p
\right]^{1/p}\right\}
\notag\\
&\lesssim \frac{1}{[V_1(x_0)]^{1/p}}\|\psi\|_{\cg(\widetilde{\beta},\widetilde{\gamma})}\|f\|_{\hb}. \notag
\end{align}

To summarize, we conclude that, for any $\psi\in\mathring{\cg}(\eta,\eta)$,
$$
|\langle f,\psi\rangle|\ls\frac{1}{[V_1(x_0)]^{1/p}}\|\psi\|_{\cg(\widetilde{\beta},\widetilde{\gamma})}\|f\|_{\hb}.
$$

Now let $h\in \mathring{\cg}_0^\eta(\widetilde{\beta},\widetilde{\gamma})$. Then there exists a sequence
$\{h_n\}_{n=1}^\infty\subset\mathring{\cg}(\eta,\eta)$ such that $\|h-h_n\|_{\cg(\widetilde{\beta},\widetilde{\gamma})}\to 0$ as
$n\to \infty$. By \eqref{6.25.2} and \eqref{6.25.3}, we know that, for any $m,\ n\in\nn$,
$$|\langle f,h_m-h_n\rangle|\lesssim\|f\|_{\hb}\|h_m-h_n\|_{\cg(\widetilde{\beta},\widetilde{\gamma})},$$
which implies that $\lim_{n\to \infty}\langle f,h_n\rangle$ exists. Therefore, define
$$\langle f, h \rangle := \lim_{n\to \infty}\langle f, h_n \rangle.$$
It is obvious that $\langle f, h \rangle$ is independent of the choice of $\{h_n\}_{n=1}^\infty$.
Moreover, by \eqref{6.25.2} and \eqref{6.25.3}, we conclude that
$$|\langle f, h \rangle| = \lim_{n\to \infty}|\langle f, h_n \rangle|
\lesssim \lim_{n\to \infty}\|f\|_{\hb}\|h_n\|_{\cg(\widetilde{\beta},\widetilde{\gamma})}
\sim\|f\|_{\hb}\|h\|_{\cg(\widetilde{\beta},\widetilde{\gamma})},$$
which implies that $f\in(\cggii)'$ and hence completes the proof of (i).

Property (ii) can be deduced from (i) and Proposition \ref{phb}(ii), and we omit the details.
This finishes the proof  of Proposition \ref{6.5.1}.
\end{proof}

\begin{remark}\label{addre1}
From Propositions \ref{6.4.1} and \ref{6.5.1}, we deduce that
the spaces $\hb$ and $\hf$ are independent of the choices of both exp-ATIs
and the distribution spaces $(\cggi)'$ if $\beta$, $\gamma$ satisfy \eqref{6.14.1}.
Recall that, in \cite[Proposition 5.7]{hmy08}  on $\hb$ and $\hf$ with $\cx$ being an RD-space,
$\bz$ and $\gz$ are required to satisfy
\begin{equation}\label{betagamma}
\beta\in\left(\max\left\{0,-s+\omega\lf(\frac{1}{p}-1\r)_+\right\},\eta\right)\quad\hbox{and}\quad
\gamma\in\left(\max\left\{s-\frac{\kappa}{p},\omega\lf(\frac{1}{p}-1\r)_+\right\}, \eta\right),
\end{equation}
where $\kappa\in(0,\omega]$ is as in \eqref{eq-rdoub}. Similarly to Remark \ref{addre2}, we find that,
when $\kz:=0$ in \eqref{betagamma}, the above inequality coincides with the range
of $\widetilde{\gamma}$ in Proposition \ref{6.5.1}(iii). In this sense, we may say that Proposition
\ref{6.5.1} generalizes \cite[Proposition 5.7]{hmy08} and that the range
of $\widetilde{\gamma}$ in Proposition \ref{6.5.1} is \emph{optimal}.
\end{remark}

\section{Inhomogeneous Besov and Triebel--Lizorkin spaces}\label{s3}

In this section, we introduce inhomogeneous Besov and Triebel--Lizorkin spaces on
spaces of homogeneous type. Moreover, in this section, $\mu(\cx)$ can be finite or infinite.

For any measurable set $E\subset \cx$ with $\mu(E)\in(0,\fz)$
and non-negative function $f$, let
\begin{equation}\label{int_m}
m_E(f):=\frac{1}{\mu(E)}\int_E f(x)\,d\mu(x).
\end{equation}
Now we introduce the inhomogeneous Besov and Triebel--Lizorkin spaces on spaces of homogeneous type.

\begin{definition}\label{ih}
Let  $\beta,\ \gamma \in (0, \eta)$ and $s\in(-\eta,\eta)$
with $\eta$ as in
Definition \ref{10.23.2}. Let $\{Q_k\}_{k\in\zz}$ be an exp-IATI and $N\in\nn$ as in Lemma \ref{icrf}.
\begin{enumerate}
\item[\rm{(i)}] Let $p\in(p(s,\beta\wedge\gamma),\infty]$ with $p(s,\beta\wedge\gamma)$ as in \eqref{pseta},
and $q \in (0,\infty]$. The
\emph{inhomogeneous Besov space $\ihb$} is defined by setting
\begin{align*}
\ihb :={}& \left\{f  \in (\icgg)' :\  \|f\|_{\ihb}:=
\left\{\sum_{k=0}^N\sum_{\alpha \in \ca_k}\sum_{m=1}^{N(k,\alpha)}
\mu\left(\qa\right)\left[m_{\qa}\left(|Q_k(f)|\right)\r]^p\right\}^{1/p}\right.\\
&\qquad\qquad\qquad\qquad\qquad\qquad\qquad\left.+\left[\sum_{k=N+1}^{\infty} \delta^{-ksq}
\|Q_k(f)\|_{\lp}^q\right]^{1/q}<\infty\right\}
\end{align*}
with the usual modifications made when
$p=\infty$ or  $q=\infty$.
\item[\rm{(ii)}] Let $p\in(p(s,\beta\wedge\gamma),\infty)$ and
$q \in (p(s,\beta\wedge\gamma),\infty]$. The \emph{inhomogeneous
Triebel--Lizorkin space $\ihf$} is defined by setting
\begin{align*}
\ihf :={}& \left\{f  \in (\icgg)' :\  \|f\|_{\ihf}:=\left\{\sum_{k=0}^N
\sum_{\alpha \in \ca_k}\sum_{m=1}^{N(k,\alpha)}\mu\left(\qa\right)\left[m_{\qa}
\left(|Q_k(f)|\right)\r]^p\right\}^{1/p}\r.\\
&\qquad\qquad\qquad\qquad\qquad\qquad\qquad\left.+\left\|\left[\sum_{k=N+1}^\infty \delta^{-ksq}|Q_k(f)|^q\right]^{1/q}\right\|_{\lp}<\infty\right\}
\end{align*}
with usual modification made when $q=\infty$.
\end{enumerate}
\end{definition}

Using some arguments similar to those used in the homogeneous case, we first show that the spaces $\ihb$ and
$\ihf$ are independent of the choice of exp-IATIs.
To this end, we need several technical lemmas.

The following lemma is the \emph{inhomogeneous Plancherel--P\^olya inequality}.

\begin{lemma}\label{6.9.1}
Let $\{Q_k\}_{k=0}^\infty$ and $\{P_k\}_{k=0}^\infty$ be two \emph{$\exp$-IATIs}, and
$\beta,\ \gamma\in (0, \eta)$ with
$\eta$ as in Definition \ref{10.23.2}.
Suppose that $N\in\nn$ and $N'\in\nn$ are as in Lemma
\ref{icrf} associated, respectively, with $\{Q_k\}_{k=0}^\fz$ and $\{P_k\}_{k=0}^\fz$.
Then there exists a positive constant $C$ such that, for any $f\in(\cggi)'$,
\begin{align}\label{bihpp}
&\left\{\sum_{k=0}^N\sum_{\alpha \in \ca_k}\sum_{m=1}^{N(k,\alpha)}
\mu\left(\qa\right)\left[m_{\qa}(|Q_k(f)|)\r]^p\right\}^{1/p}\\
&\qquad+\left\{\sum_{k=N+1}^\infty\delta^{-ksq}\left[\sum_{\alpha \in \ca_k}
\sum_{m=1}^{N(k,\alpha)}\mu\left(\qa\right)\left\{\sup_{z\in\qa}|Q_k(f)(z)|\right\}^p\right]^{q/p}\right\}^{1/q}
\notag\\
&\quad\leq C\left\{\sum_{k'=0}^{N'}\sum_{\alpha' \in \ca_{k'}}
\sum_{m'=1}^{N(k',\alpha')}\mu\left(\qap\right)\left[m_{\qap}(|P_{k'}(f)|)\r]^p\right\}^{1/p}\notag\\
&\qquad+\left\{\sum_{k'=N'+1}^\infty\delta^{-k'sq}\left[\sum_{\alpha' \in \ca_{k'}}
\sum_{m'=1}^{N(k',\alpha')}\mu\left(\qap\right)\left\{\inf_{z\in\qap}|P_{k'}(f)(z)|\right\}^p\right]^{q/p}\right\}^{1/q}
\notag
\end{align}
when $s\in(-(\beta\wedge\gamma),\,\beta\wedge\gamma)$, $p\in (p(s,\beta\wedge\gamma),\infty]$ with
$p(s,\beta\wedge\gamma)$ as in \eqref{pseta},  and $q \in (0,\infty]$,
with the usual modifications made when $p=\infty$ or $q=\infty$, and
\begin{align}\label{fihpp}
&\left\{\sum_{k=0}^N\sum_{\alpha \in \ca_k}
\sum_{m=1}^{N(k,\alpha)}\mu\left(\qa\right)\left[m_{\qa}(|Q_k(f)|)\r]^p\right\}^{1/p}\\
&\qquad+\left\|\left\{\sum_{k=N+1}^\infty\sum_{\alpha \in \ca_k}
\sum_{m=1}^{N(k,\alpha)}\delta^{-ksq}\left[\sup_{z\in\qa}|Q_k(f)(z)|\right]^q
\mathbf 1_{\qa}\right\}^{1/q}\right\|_{L^p(\mathcal{X})}\notag\\
&\quad\leq C\left\{\sum_{k'=0}^{N'}\sum_{\alpha' \in \ca_{k'}}
\sum_{m'=1}^{N(k',\alpha')}\mu\left(\qap\right)\left[m_{\qap}(|P_{k'}(f))|\r]^p\right\}^{1/p}\notag\\
&\qquad+\left\|\left\{\sum_{k'=N'+1}^\infty\sum_{\alpha' \in \ca_{k'}}
\sum_{m'=1}^{N(k',\alpha')}\delta^{-k'sq}\left[\inf_{z\in\qap}|P_{k'}(f)(z)|\right]^q
\mathbf 1_{\qap}\right\}^{1/q}\right\|_{L^p(\mathcal{X})}\notag
\end{align}
when $s\in(-(\beta\wedge\gamma),\,\beta\wedge\gamma)$,
$p\in (p(s,\beta\wedge\gamma),\infty)$ and $q \in (p(s,\beta\wedge\gamma),\infty]$,
with the usual modification made when
$q=\fz$.
\end{lemma}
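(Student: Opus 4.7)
The plan is to mirror the proof of the homogeneous Plancherel--P\^olya inequality (Lemma \ref{5.12.1}) while accounting for the extra low-frequency blocks in the inhomogeneous discrete Calder\'on reproducing formula. First I would apply Lemma \ref{icrf} to the exp-IATI $\{P_{k'}\}_{k'=0}^\infty$ to write, for any $f\in(\icgg)'$,
\begin{align*}
f(\cdot) ={}& \sum_{\alpha'\in\ca_0}\sum_{m'=1}^{N(0,\alpha')}\int_{\qop}\widetilde{P}_0(\cdot,y)\,d\mu(y)\,P^{0,m'}_{\alpha',1}(f)\\
&+\sum_{k'=1}^{N'}\sum_{\alpha'\in\ca_{k'}}\sum_{m'=1}^{N(k',\alpha')}\mu(\qap)\widetilde{P}_{k'}(\cdot,\yap)P^{k',m'}_{\alpha',1}(f)\\
&+\sum_{k'=N'+1}^\infty\sum_{\alpha'\in\ca_{k'}}\sum_{m'=1}^{N(k',\alpha')}\mu(\qap)\widetilde{P}_{k'}(\cdot,\yap)P_{k'}f(\yap),
\end{align*}
and then apply $Q_k$ (for each $k\in\zz_+$) to express $Q_kf(z)$ as the sum of three analogous terms whose kernels are either $Q_k\widetilde{P}_{k'}(z,\cdot)$ or $\int_{\qop}Q_k\widetilde{P}_0(z,y)\,d\mu(y)$. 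Lemma \ref{4.23z}, whose proof only uses \eqref{4.23x}--\eqref{4.23a} and so applies verbatim to the $\widetilde{P}_{k'}$ of Lemma \ref{icrf}, then supplies the off-diagonal bound
\[
|Q_k\widetilde{P}_{k'}(z,y)|\lesssim \delta^{|k-k'|\eta'}\frac{1}{V_{\delta^{k\wedge k'}}(y)+V(z,y)}\left[\frac{\delta^{k\wedge k'}}{\delta^{k\wedge k'}+d(z,y)}\right]^{\gamma}
\]
for any $\eta'\in(0,\beta\wedge\gamma)$.

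Next, I would split the left-hand sides of \eqref{bihpp} and \eqref{fihpp} into their low block ($k\in\{0,\ldots,N\}$, cube-average term) and high block ($k\geq N+1$, sup-over-cube term), and split the CRF reconstruction of $Q_k f$ into a low-frequency piece (the first two sums, $k'\leq N'$) and a high-frequency piece (the third sum, $k'\geq N'+1$), producing four cross-terms. The high-high cross-term is handled by repeating the Case~1 / Case~2 argument of Lemma \ref{5.12.1}: combine the factor $\delta^{|k-k'|\eta'}$ with Lemma \ref{9.14.1} and \eqref{r} when $p\in(p(s,\beta\wedge\gamma),1]$, or with Lemma \ref{10.18.5}, the H\"older inequality, and the Fefferman--Stein inequality Lemma \ref{fsvv} when $p>1$ (and, for \eqref{fihpp}, to dispose of the $\ell^q$-norm). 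The three cross-terms involving at least one bounded index $k\leq N$ or $k'\leq N'$ enjoy a uniform geometric decay $\delta^{|k-k'|\eta'}$ with a finite outer sum, so the same technical lemmas apply; the point values $P_{k'}f(\yap)$ get dominated by $\sup_{z\in\qap}|P_{k'}f(z)|$ and then reduced to $\inf_{z\in\qap}|P_{k'}f(z)|$ via the standard trick of refining the dyadic grid (choosing $j_0$ large so that $\qap$ splits into many sub-cubes on which $|P_{k'}f|$ varies slowly thanks to \eqref{4.23y}), while the coefficient functionals $P^{k',m'}_{\alpha',1}(f)$ are already bounded by $m_{\qap}(|P_{k'}f|)$ by definition.

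The main obstacle is a convergence issue peculiar to the inhomogeneous setting. The expansion in Lemma \ref{icrf} converges only in $(\icgg)'$, whereas \eqref{bihpp} and \eqref{fihpp} require controlling $Q_kf$ both pointwise (via $\sup_{z\in\qa}$) and in the mean (via $m_{\qa}$). To legitimately interchange $Q_k$ with the infinite CRF sum and then interchange cube-integration with the sum, I would invoke the Banach--Steinhaus theorem: the coefficient maps $f\mapsto P^{k',m'}_{\alpha',1}(f)$ and $f\mapsto P_{k'}f(\yap)$ are continuous linear functionals on $(\icgg)'$, and the kernels $\{Q_k\widetilde{P}_{k'}(z,\cdot)\}$, paired against any fixed test function from $\icgg$, form a uniformly bounded family (with constants controlled via \eqref{4.23x}--\eqref{4.23a} and Lemma \ref{6.15.1}). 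This uniform bound upgrades distributional convergence to the pointwise and $L^1$-over-$\qa$ convergence needed to estimate $\sup_{z\in\qa}|Q_kf(z)|$ and, crucially, $m_{\qa}(|Q_kf|)$ for $k\in\{0,\ldots,N\}$, a range where no Littlewood--Paley cancellation is available to squeeze out extra decay. This is precisely the step that distinguishes the inhomogeneous case from the homogeneous one treated in Lemma \ref{5.12.1}.
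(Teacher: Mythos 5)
Your decomposition into four cross-terms (low/high in $k$, low/high in $k'$) matches the paper's strategy, and your recognition that the inhomogeneous setting needs a Banach--Steinhaus-type argument is on the right track. However, there is a genuine gap in the central kernel estimate.

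You claim that Lemma \ref{4.23z} ``applies verbatim to the $\widetilde{P}_{k'}$ of Lemma \ref{icrf}'' and yields the off-diagonal bound $|Q_k\widetilde{P}_{k'}(z,y)|\lesssim\delta^{|k-k'|\eta'}\cdots$ uniformly in $k,k'\in\zz_+$. This is false. Lemma \ref{4.23z} explicitly hypothesizes that $\widetilde{Q}_{k'}$ satisfies the cancellation condition \eqref{4.23a}, and its proof of the case $k<k'$ (by symmetry) rests on that cancellation. But Lemma \ref{icrf} guarantees only that $\int\widetilde{P}_{k'}(x,y)\,d\mu(y)=1$ for $k'\in\{0,\dots,N'\}$; cancellation holds only when $k'\geq N'+1$. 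Likewise $Q_0$ of an exp-IATI integrates to $1$, not $0$. So for the pairs with $k'\leq N'$, or with $k=0$, Lemma \ref{4.23z} is unavailable and no geometric gain $\delta^{|k-k'|\eta'}$ can be extracted from cancellation of $\widetilde{P}_{k'}$. The paper instead argues case by case: when $k\leq N$ and $k'\leq N'$ it uses only the trivial $L^1$-kernel bound \eqref{3.6x}--\eqref{qkpk} with no decay at all (harmless, since both sums are finite); when $k\leq N$ and $k'>N'$ it uses the cancellation of $\widetilde{P}_{k'}$ via \eqref{qkpkm}; when $k>N$ and $k'\leq N'$ it uses the cancellation of $Q_k$ to obtain the $\delta^{k\Gamma}$-decay preceding \eqref{10.18.8}; only the $k>N$, $k'>N'$ block is handled exactly as in Lemma \ref{5.12.1}. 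Your proof needs to replace the single invocation of Lemma \ref{4.23z} by these separate estimates, each of which relies on cancellation of a \emph{different} factor (or on neither).

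A secondary issue: you assert that $|P^{k',m'}_{\alpha',1}(f)|$ ``is already bounded by $m_{\qap}(|P_{k'}f|)$ by definition.'' It is not. Since $P^{k',m'}_{\alpha',1}(f)=\langle f,\frac{1}{\mu(\qap)}\int_{\qap}P_{k'}(x,\cdot)\,d\mu(x)\rangle$, passing the absolute value inside the average requires a Fubini-type interchange that, for a general distribution $f\in(\icgg)'$, is exactly what \eqref{4.26.1} establishes via $L^2$-approximation, Banach--Steinhaus, and dominated convergence. Your use of Banach--Steinhaus is aimed at a different (and less immediately necessary) interchange; the critical one is precisely this bound on the coefficient functionals.
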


\begin{proof}
Let $\eta$ be as in Definition \ref{10.23.2}. We first prove \eqref{bihpp}. For the first term of the left-hand side of \eqref{bihpp}, by Lemma \ref{icrf}
with $\{Q_k\}_{k\in\zz_+}$ and $\{\widetilde{Q}_k\}_{k\in\zz_+}$
respectively replaced by $\{P_k\}_{k\in\zz_+}$ and $\{\widetilde{P}_k\}_{k\in\zz_+}$, we
know that, for any $k\in\zz_+$ and $f\in(\icgg)'$ with $\beta,\ \gamma\in(0,\eta)$,
\begin{align}\label{10.15.5}
|Q_k(f)(\cdot)|&\leq \sum_{\alpha' \in \ca_0}\sum_{m'=1}^{N(0,\alpha')}
\int_{\qop}\left|Q_k\widetilde{P}_0(\cdot,y)\right|\,d\mu(y)\left|P^{0,m'}_{\alpha',1}(f)\right|\\
&\qquad+\sum_{k'=1}^{N'}\sum_{\alpha' \in \ca_{k'}}\sum_{m'=1}^{N(k',\alpha')}
\mu\left(\qap\right)\left|Q_k\widetilde{P}_{k'}\left(\cdot,\yap\right)\right|\left|P^{k',m'}_{\alpha',1}(f)\right|\notag\\
& \qquad+\sum_{k'=N'+1}^\infty\sum_{\alpha' \in \ca_{k'}}
\sum_{m'=1}^{N(k',\alpha')}\mu\left(\qap\right)
\left|Q_k\widetilde{P}_{k'}\left(\cdot,\yap\right)\right|\left|P_{k'}f\left(\yap\right)\right|.\notag
\end{align}
We claim that, for any $f\in(\cg_0^\eta(\beta,\gamma))'$,
\begin{equation}\label{4.26.1}
\left|\left\langle f, \frac{1}{\mu(\qap)}\int_{\qap}P_{k'}(x,\cdot)\,d\mu(x)\right\rangle\right|
\leq\frac{1}{\mu(\qap)}\int_{\qap}|\langle f,P_{k'}(x,\cdot)\rangle|\,d\mu(x).
\end{equation}
Indeed, if $f\in L^2(\cx)$, then \eqref{4.26.1}
obviously holds true by the Fubini theorem. Now, assume that $f\in (\cg_0^\eta(\beta,\gamma))'$.
Then, by Lemma \ref{icrf}, we know that there
exists a sequence of functions $\{f_n\}_{n=1}^\infty \subset L^2(\cx)$ such that $f_n\to f$ in
$(\cg_0^\eta(\beta,\gamma))'$ as $n\to\infty$. From the Banach--Steinhaus
theorem (see, for instance, \cite[Proposition 3.5]{b11}), we deduce
that $\{\|f_n\|_{(\cg_0^\eta(\beta,\gamma))'}\}_{n\in\nn}$
is bounded. Then, by the Lebesgue dominated convergence theorem, we conclude that
\begin{align*}
&\left|\left\langle f, \frac{1}{\mu(\qap)}\int_{\qap}P_{k'}(x,\cdot)\,d\mu(x)\right\rangle\right|\\
&\quad=\lim_{n\to \infty}
\left|\left\langle f_n, \frac{1}{\mu(\qap)}\int_{\qap}P_{k'}(x,\cdot)\,d\mu(x)\right\rangle\right|\\
&\quad\leq \frac{1}{\mu(\qap)}\lim_{n\to \infty}\int_{\qap}|\langle f_n,P_{k'}(x,\cdot)\rangle|\,d\mu(x)
=\frac{1}{\mu(\qap)}\int_{\qap}\lim_{n\to \infty}|\langle f_n,P_{k'}(x,\cdot)\rangle|\,d\mu(x)\\
&\quad=\frac{1}{\mu(\qap)}\int_{\qap}|\langle f,P_{k'}(x,\cdot)\rangle|\,d\mu(x),
\end{align*}
namely, \eqref{4.26.1} holds true.

From \eqref{4.26.1}, it follows that, for any $k'\in\{0,\dots,N'\}$,
$\alpha'\in\ca_{k'}$ and  $m'\in N(k',\alpha')$,
\begin{align}\label{10.15.6}
\left|P^{k',m'}_{\alpha',1}(f)\right|&=\left|\left\langle f, P^{k',m'}_{\alpha',1}\right\rangle\right|=\left|
\left\langle f, \frac{1}{\mu(\qap)}\int_{\qap}P_{k'}(x,\cdot)\,d\mu(x)\right\rangle\right|\\
&\leq\frac{1}{\mu(\qap)}\int_{\qap}|\langle f,P_{k'}(x,\cdot)\rangle|\,d\mu(x)=m_{\qap}(|P_{k'}(f)|),\notag
\end{align}
which, together with \eqref{10.15.5}, implies that,
for any $k\in\{0,\dots,N\}$, $\alpha \in\ca_k$ and $m\in\{0,\dots,N(k,\alpha)\}$,
\begin{align}\label{12.3.4}
&m_{\qa}(|Q_k(f)|)\\
&\quad= \frac{1}{\mu(\qa)}\int_{\qa}|Q_k(f)(z)|\,d\mu(z)\noz\\
&\quad\leq \sum_{\alpha' \in \ca_0}\sum_{m'=1}^{N(0,\alpha')}m_{\qop}(|P_0(f)|)
\frac{1}{\mu(\qa)}\int_{\qa}\int_{\qop}\left|Q_k\widetilde{P}_{k'}(z,y)\right|\,d\mu(y)\,d\mu(z)\notag\\
&\quad\qquad+\sum_{k'=1}^{N'}\sum_{\alpha' \in \ca_{k'}}\sum_{m'=1}^{N(k',\alpha')}
m_{\qap}(|P_{k'}(f)|)\mu\left(\qap\right)\frac{1}{\mu(\qa)}\int_{\qa}
\left|Q_k\widetilde{P}_{k'}\left(z,\yap\right)\right|\,d\mu(z)\notag\\
&\quad\qquad+\sum_{k'=N'+1}^\infty\sum_{\alpha' \in \ca_{k'}}\sum_{m'=1}^{N(k',\alpha')}
\left|P_{k'}f\left(\yap\right)\right|\mu\left(\qap\right)
\frac{1}{\mu(\qa)}\int_{\qa}\left|Q_k\widetilde{P}_{k'}\left(z,\yap\right)\right|\,d\mu(z)\notag\\
&\quad=:\rm{{\rm I}_1+{\rm I}_2+{\rm I}_3}.\notag
\end{align}

Note that, for any $k\in\{0,\dots,N\}$
and $k'\in\{0,\dots, N'\}$, $\delta^k\sim 1 \sim \delta^{k'}$.
By this,  Lemma \ref{6.15.1}(ii) and the fact that
$$\{s\in\cx:\ d(z,s)\geq(2A_0)^{-1}d(y,z)\}\cup\{s\in\cx:\ d(y,s)\geq(2A_0)^{-1}d(y,z)\}=\cx,$$
we know that, for any $k\in\{0,\dots,N\}$, $k'\in\{0,\dots, N'\}$ and $y,\ z\in\cx$,
\begin{align}\label{3.6x}
\lf|Q_k\widetilde{P}_{k'}(z,y)\r|&=\left|\int_\cx Q_k(z,s)\widetilde{P}_{k'}(s,y)\,d\mu(s)\right|
\leq\int_\cx |Q_k(z,s)|\left|\widetilde{P}_{k'}(s,y)\right|\,d\mu(s)\\
&\lesssim\int_\cx\frac{1}{V_1(z)+V(z,s)}\left[\frac{1}{1+d(z,s)}\right]^\gamma
\frac{1}{V_1(y)+V(y,s)}\left[\frac{1}{1+d(y,s)}\right]^\gamma\,d\mu(s)\notag\\
&\lesssim\frac{1}{V_1(z)+V(z,y)}\left[\frac{1}{1+d(z,y)}\right]^\gamma\notag\\
&\qquad\times\left\{\int_{\{s\in\cx:\ d(z,s)\geq(2A_0)^{-1}d(y,z)\}}
\frac{1}{V_1(y)+V(y,s)}\left[\frac{1}{1+d(y,s)}\right]^\gamma\,d\mu(s)\r.\notag\\
&\qquad\left.+\int_{\{s\in\cx:\ d(y,s)\geq(2A_0)^{-1}d(y,z)\}}
\frac{1}{V_1(z)+V(z,s)}\left[\frac{1}{1+d(z,s)}\right]^\gamma\,d\mu(s)\r\}\notag\\
&\lesssim\frac{1}{V_1(z)+V(z,y)}\left[\frac{1}{1+d(z,y)}\right]^\gamma.\notag
\end{align}
Observe that, for any $k\in\{0,\dots,N\}$
and $k'\in\{0,\dots, N'\}$, by Lemma \ref{6.15.1}(i), we find that, for any $y_1,\ y_2\in\qap$ and
$z_1,\ z_2\in\qa$,
$$V_1(z_1)+V(z_1,y_1)\sim\mu(B(z_1,1+
d(y_1,z_1)))\sim\mu(B(z_2,1+d(y_2,z_2)))\sim V_1(z_2)+V(z_2,y_2).$$
This, combined with \eqref{3.6x}, further implies that
\begin{equation}\label{qkpk}
\sup_{z\in\qa}\sup_{y\in\qap}
\left|Q_k\widetilde{P}_{k'}(z,y)\right|
\lesssim\inf_{z\in\qa}\inf_{y\in\qap}\frac{1}{V_1(z)+V(z,y)}
\left[\frac{1}{1+d(z,y)}\right]^\gamma.
\end{equation}
From this, we deduce that
\begin{align}\label{10.14.1}
\rm{{\rm I}_1+{\rm I}_2}&\lesssim\sum_{k'=0}^{N'}\sum_{\alpha' \in \ca_{k'}}
\sum_{m'=1}^{N(k',\alpha')}m_{\qap}(|P_{k'}(f)|)\mu\left(\qap\right)\\
&\qquad\times\inf_{z\in\qa}\inf_{y\in\qap}\frac{1}{V_1(z)+V(z,y)}\left[\frac{1}{1+d(z,y)}\right]^\gamma.\notag
\end{align}
If $p\in(p(s,\beta\wedge\gamma),1]$, then $p\in(\omega/(\omega+\gamma), 1]$.
This, together with \eqref{10.14.1}, \eqref{r} and  Lemma \ref{9.14.1}, implies that
\begin{align}\label{10.15.2}
&\left[\sum_{k=0}^N\sum_{\alpha \in \ca_k}\sum_{m=1}^{N(k,\alpha)}
\mu\left(\qa\right)({\rm I}_1+{\rm I}_2)^p\right]^{1/p}\\
&\quad\lesssim \left\{\sum_{k=0}^N\sum_{\alpha \in \ca_k}
\sum_{m=1}^{N(k,\alpha)}\mu\left(\qa\right)\sum_{k'=0}^{N'}\sum_{\alpha' \in \ca_{k'}}
\sum_{m'=1}^{N(k',\alpha')}\left[m_{\qap}(|P_{k'}(f)|)\right]^p\left[\mu\left(\qap\right)\right]^p\r.\notag\\
&\qquad\quad\times\left.\inf_{z\in\qa}\inf_{y\in\qap}\left[\frac{1}{V_1(z)+V(z,y)}\r]^p
\left[\frac{1}{1+d(z,y)}\right]^{\gamma p}\right\}^{1/p}\notag\\
&\quad\lesssim \left\{\sum_{k'=0}^{N'}\sum_{\alpha' \in \ca_{k'}}
\sum_{m'=1}^{N(k',\alpha')}\left[m_{\qap}(|P_{k'}(f)|)\right]^p
\left[\mu\left(\qap\right)\right]^p
\left[V_1\left(\yap\right)\right]^{1-p}\right\}^{1/p}\notag\\
&\quad\lesssim\left\{\sum_{k'=0}^{N'}\sum_{\alpha' \in \ca_{k'}}
\sum_{m'=1}^{N(k',\alpha')}\mu\left(\qap\right)\left[m_{\qap}(|P_{k'}(f)|)\right]^p\right\}^{1/p}.\notag
\end{align}
If $p\in(1,\infty]$, from \eqref{10.14.1}, the H\"older inequality and Lemma \ref{9.14.1}, we deduce that
\begin{align*}
\rm{{\rm I}_1+{\rm I}_2}&\lesssim\left\{\sum_{k'=0}^{N'}\sum_{\alpha' \in \ca_{k'}}
\sum_{m'=1}^{N(k',\alpha')}\left[m_{\qap}(|P_{k'}(f)|)\right]^p\mu\left(\qap\right)\r.\\
&\qquad\times\left.\inf_{z\in\qa}\inf_{y\in\qap}\frac{1}{V_1(z)+V(z,y)}
\left[\frac{1}{1+d(z,y)}\right]^\gamma\right\}^{1/p},
\end{align*}
which, combined with Lemma \ref{9.14.1}, further implies that
\begin{align}\label{10.15.3}
&\left[\sum_{k=0}^N\sum_{\alpha \in \ca_k}\sum_{m=1}^{N(k,\alpha)}
\mu\left(\qa\right)({\rm I}_1+{\rm I}_2)^p\right]^{1/p}\\
&\quad\lesssim\left\{\sum_{k'=0}^{N'}\sum_{\alpha' \in \ca_{k'}}
\sum_{m'=1}^{N(k',\alpha')}\mu\left(\qap\right)\left[m_{\qap}(|P_{k'}(f)|)\right]^p\right\}^{1/p}.\notag
\end{align}

For the term $\rm{{\rm I}_3}$, by the cancellation of $\widetilde{P}_{k'}$ and \eqref{pp},
we conclude that, for any fixed $ \eta'\in(0,\beta\wedge\gamma)$, and any $k\in\{0,\dots,N\}$ and
$k'\in\{N'+1,N'+2,\dots\}$,
\begin{equation}\label{qkpkm}
\sup_{z\in \qa}\left|Q_k\widetilde{P}_{k'}(z,\yap)\right|\lesssim\delta^{k'\eta'}
\inf_{z\in\qa}\frac{1}{V_1(z)+V(z,\yap)}\left[\frac{1}{1+d(z,\yap)}\right]^{\gz},
\end{equation}
which implies that
\begin{align}\label{10.14.2}
\rm{{\rm I}_3}&\lesssim\sum_{k'=N'+1}^\infty\delta^{k'\eta'}
\sum_{\alpha' \in \ca_{k'}}\sum_{m'=1}^{N(k',\alpha')}\mu\left(\qap\right)\left|P_{k'}f\left(\yap\right)\right|\\
&\qquad\times\inf_{z\in\qa}\frac{1}{V_1(z)+V(z,\yap)}\left[\frac{1}{1+d(z,\yap)}\right]^{\gz}.\notag
\end{align}
Observe that, for any $\yap\in\qap$, $V_1(\yap)\lesssim\delta^{k'\omega}\mu(\qap)$.
If $p\in(p(s,\beta\wedge\gamma), 1]$,  choose $\eta'\in(0,\bz\wedge\gz)$ such
that $s\in(-\eta',\eta')$. From this, \eqref{r}, Lemma \ref{9.14.1},
$p\in(\omega/[\omega+(\bz\wedge\gz)],1]$ and the H\"older inequality when
$q/p\in(1,\infty]$, we deduce that
\begin{align}\label{10.15.1}
&\left[\sum_{k=0}^N\sum_{\alpha \in \ca_k}\sum_{m=1}^{N(k,\alpha)}\mu\left(\qa\right)({\rm I}_3)^p\right]^{1/p}\\
&\quad\lesssim\left\{\sum_{k'=N'+1}^\infty\delta^{-k'sq}\left[\sum_{\alpha' \in \ca_{k'}}
\sum_{m'=1}^{N(k',\alpha')}\mu\left(\qap\right)\left|P_{k'}(f)\left(\yap\right)\right|^p\right]^{q/p}\right\}^{1/q}.\notag
\end{align}
If $p\in(1,\infty]$, choose $\eta'\in(0,\beta\wedge\gamma)$ such
that $s\in(-\eta',\eta')$. Then, by \eqref{10.14.2}, the H\"older inequality and Lemma \ref{9.14.1},
we know that, for any fixed $\eta'\in(\max\{-s,0\}, \eta)$,
\begin{align*}
\rm{{\rm I}_3}&\lesssim\left\{\sum_{k'=N'+1}^\infty\delta^{k'\eta'p}
\sum_{\alpha' \in \ca_{k'}}\sum_{m'=1}^{N(k',\alpha')}\mu\left(\qap\right)
\left|P_{k'}f\left(\yap\right)\right|^p\r.\\
&\qquad\times\left.\inf_{z\in\qa}\frac{1}{V_1(z)+V(z,\yap)}\left[\frac{1}{1+d(z,\yap)}\right]^\gamma\r\}^{1/p},
\end{align*}
which, together with Lemma \ref{9.14.1} and the H\"older inequality when $q/p\in(1,\infty]$, or \eqref{r} when
$q/p\in(0,1]$, further implies that \eqref{10.15.1} also holds true for $p\in(1,\infty]$.

By \eqref{10.15.2}, \eqref{10.15.3}, \eqref{10.15.1}, the arbitrariness
of $\yap$ and an argument similar to that used in the estimation of \eqref{limpp},
we conclude that, for any $p\in(p(s,\beta\wedge\gamma),\infty]$
and $q\in(0,\infty]$,
\begin{align}\label{10.15.4}
&\left\{\sum_{k=0}^N\sum_{\alpha \in \ca_k}\sum_{m=1}^{N(k,\alpha)}
\mu\left(\qa\right)\left[m_{\qa}(|Q_k(f)|)\r]^p\right\}^{1/p}\\
&\quad\lesssim\left\{\sum_{k'=0}^{N'}\sum_{\alpha' \in \ca_{k'}}
\sum_{m'=1}^{N(k',\alpha')}\mu\left(\qap\right)\left[m_{\qap}(|P_{k'}(f)|)\r]^p\right\}^{1/p}\notag\\
&\quad\qquad+\left\{\sum_{k'=N'+1}^\infty\delta^{-k'sq}\left[\sum_{\alpha' \in \ca_{k'}}
\sum_{m'=1}^{N(k',\alpha')}\mu\left(\qap\right)
\left\{\inf_{z\in\qap}|P_{k'}(f)(z)|\right\}^p\right]^{q/p}\right\}^{1/q}.\notag
\end{align}

Now we estimate the second term of the left-hand side of \eqref{bihpp}.
From \eqref{10.15.5} and \eqref{10.15.6}, we deduce that,
for any $k\in\{N+1,N+2,\dots\}$ and $z\in\cx$,
\begin{align*}
|Q_k(f)(z)|&\leq \sum_{\alpha' \in \ca_0}\sum_{m'=1}^{N(0,\alpha')}
m_{\qop}(|P_{0}(f)|)\int_{\qop}
\left|Q_k\widetilde{P}_0(z,y)\right|\,d\mu(y)\\
&\qquad+\sum_{k'=1}^{N'}\sum_{\alpha' \in \ca_{k'}}\sum_{m'=1}^{N(k',\alpha')}
\mu\left(\qap\right)m_{\qap}(|P_{k'}(f)|)\left|Q_k\widetilde{P}_{k'}\left(z,\yap\right)\right|\\
&\qquad+\sum_{k'=N'+1}^\infty\sum_{\alpha' \in \ca_{k'}}
\sum_{m'=1}^{N(k',\alpha')}\mu\left(\qap\right)
\left|Q_k\widetilde{P}_{k'}\left(z,\yap\right)\right|
\left|P_{k'}f\left(\yap\right)\right|\\
&=:\rm{{\rm I}_4+{\rm I}_5+{\rm I}_6}.
\end{align*}
Using an argument similar to that used in the estimation of \eqref{bhpp}, we conclude that
\begin{align}\label{10.15.8}
&\left\{\sum_{k=N+1}^\infty\delta^{-ksq}\left[\sum_{\alpha \in \ca_k}
\sum_{m=1}^{N(k,\alpha)}\mu\left(\qa\right)\left\{\sup_{z\in\qa}|\rm{{\rm I}_6}|\right\}^p\right]^{q/p}\right\}^{1/q}\\
&\qquad\lesssim\left\{\sum_{k'=N'+1}^\infty\delta^{-k'sq}
\left[\sum_{\alpha' \in \ca_{k'}}\sum_{m'=1}^{N(k',\alpha')}
\mu\left(\qap\right)\left\{\inf_{z\in\qap}|P_{k'}(f)(z)|\right\}^p\right]^{q/p}\right\}^{1/q}.\notag
\end{align}
Observe that $Q_k$ has the cancellation property when $k\in\{N+1,N+2,\dots\}$,
and $\widetilde{P}_{k'}$ satisfies the regularity condition on
the first variable for any $k'\in\zz$.
By this, we have
\begin{align*}
\left|Q_k\widetilde{P}_{k'}(z,y)\right|&=\left|\int_\cx Q_k(z,s)\widetilde{P}_{k'}(s,y)\,d\mu(s)\right|\\
&=\left|\int_\cx Q_k(z,s)\left[\widetilde{P}_{k'}(s,y)-\widetilde{P}_{k'}(z,y)\right]\,d\mu(s)\right|\\
&\lesssim\int_{\{s\in\cx:\ d(s,z)\leq(2A_0)^{-1}
[\delta^{k'}+d(z,y)]\}}|Q_k(z,s)|\left|\widetilde{P}_{k'}(s,y)-\widetilde{P}_{k'}(z,y)\right|\,d\mu(s)\\
&\qquad+\int_{\{s\in\cx:\ d(s,z)\geq(2A_0)^{-1}
[\delta^{k'}+d(z,y)]\}}|Q_k(z,s)|\left|\widetilde{P}_{k'}(s,y)\right|\,d\mu(s)\\
&\qquad+\left|\widetilde{P}_{k'}(z,y)\right|\int_{\{s\in\cx:\ d(s,z)\geq(2A_0)^{-1}
[\delta^{k'}+d(z,y)]\}}|Q_k(z,s)|\,d\mu(s)\\
&=:\rm{{\rm J}_1+{\rm J}_2+{\rm J}_3}.
\end{align*}
From $\delta^{k'}\sim 1$ when $k'\in\{0,\dots,N'\}$,
\eqref{10.23.3} and Lemma \ref{6.15.1}(iii), it follows that,
for any fixed $\gamma'\in(0,\gamma)$ and $\Gamma\in(0,\beta)$,
\begin{align*}
\rm{{\rm J}_1}&\lesssim \int_{\{s\in\cx:\ d(s,z)\leq(2A_0)^{-1}[\delta^{k'}+d(z,y)]\}}
\frac{1}{V_{\delta^k}(z)+V(s,z)}\left[\frac{\delta^k}{\delta^k+d(z,s)}\right]^{\Gamma}\\
&\qquad \times\left[\frac{d(s,z)}{1+d(z,y)}\right]^\beta\frac{1}{V_1(z)+V(z,y)}
\left[\frac{1}{1+d(z,y)}\right]^{\gamma'}\,d\mu(s)\\
&\lesssim\delta^{k\Gamma}\frac{1}{V_1(z)+V(z,y)}\left[\frac{1}{1+d(z,y)}\right]^{\beta+\gamma'}
\int_{\{s\in\cx:\ d(s,z)\leq(2A_0)^{-1}[\delta^{k'}+d(z,y)]\}}
[d(s,z)]^{\beta-\Gamma}\frac{1}{V(s,z)}\,d\mu(s)\\
&\lesssim   \delta^{k\Gamma}\frac{1}{V_1(z)+V(z,y)}\left[\frac{1}{1+d(z,y)}\right]^{\Gamma+\gz'}.
\end{align*}
For any $k\in\zz_+$, by the size condition
of $\widetilde{P}_{k'}$
and Lemma \ref{6.15.1}(iii), we have $\int_{\cx}|\widetilde{P}_{k'}(s,y)|\,d\mu(s)\lesssim 1$.
Besides, for any $s\in \{s\in\cx:\ d(s,z)\geq(2A_0)^{-1}[\delta^{k'}+d(z,y)]\}$,
by Lemma \ref{6.15.1}(i), we know that
$$V(s,z)\sim \mu(B(z,d(s,z)))\gtrsim\mu(B(z,\delta^{k'}+d(z,y)))\sim V_1(z)+V(z,y).$$
From these facts, we deduce that
\begin{align*}
\rm{{\rm J}_2}&\lesssim\int_{\{s\in\cx:\ d(s,z)\geq(2A_0)^{-1}[\delta^{k'}+d(z,y)]\}}
\frac{1}{V_{\delta^k}(z)+V(s,z)}\left[\frac{\delta^k}{\delta^k
+d(z,s)}\right]^{\Gamma}\left|\widetilde{P}_{k'}(s,y)\right|\,d\mu(s)\\
&\lesssim \delta^{k\Gamma}\frac{1}{V_1(z)+V(z,y)}\left[\frac{1}{1+d(z,y)}\right]^{\Gamma}.
\end{align*}
For the term $\rm{{\rm J}_3}$, by Lemma \ref{6.15.1}(iv), we conclude that
\begin{align*}
\rm{{\rm J}_3}&\lesssim \frac{1}{V_1(z)+V(z,y)}\int_{\{s\in\cx:\ d(s,z)\geq(2A_0)^{-1}
[\delta^{k'}+d(z,y)]\}}
\frac{1}{V_{\delta^k}(z)+V(s,z)}\left[\frac{\delta^k}{\delta^k+d(z,s)}\right]^{\Gamma}\,d\mu(s)\\
&\lesssim \delta^{k\Gamma}\frac{1}{V_1(z)+V(z,y)}\left[\frac{1}{1+d(z,y)}\right]^{\Gamma}.
\end{align*}
Combining  the estimates of $\rm{{\rm J}_1,{\rm J}_2}$ and $\rm{{\rm J}_3}$, we know that, for any fixed
$\Gamma\in(0,\bz)$, any $k\in\{N+1,N+2,\dots\}$, $k'\in\{0,\dots,N'\}$ and $z,\ y\in\cx$,
\begin{equation*}
\left|Q_k\widetilde{P}_{k'}(z,y)\right|\lesssim
\delta^{k\Gamma}\frac{1}{V_1(z)+V(z,y)}\left[\frac{1}{1+d(z,y)}\right]^{\Gamma}.
\end{equation*}
Using this and an argument similar to
that used in the estimation of \eqref{10.14.1},
we obtain
\begin{align}\label{10.18.8}
\sup_{z\in\qa}(\rm{{\rm I}_4+{\rm I}_5})&\lesssim\delta^{k\Gamma}
\sum_{k'=0}^{N'}\sum_{\alpha' \in \ca_{k'}}\sum_{m'=1}^{N(k',\alpha')}m_{\qap}(|P_{k'}(f)|)\mu\left(\qap\right)\\
&\qquad\times\inf_{z\in\qa}\inf_{y\in\qap}\frac{1}{V_1(z)+V(z,y)}\left[\frac{1}{1+d(z,y)}\right]^{\Gamma}.\notag
\end{align}
Therefore, if $p\in(\omega/[\omega+(\beta\wedge\gamma)], 1]$
and $s\in(-(\beta\wedge\gamma),\,\beta\wedge\gamma)$, choose $\Gamma\in(0,\bz)$ such that
$s\in(-\Gamma,\Gamma)$ and $p\in(\omega/(\omega+\Gamma),1]$. By this, \eqref{r} and Lemma \ref{9.14.1}, we
know that
\begin{align}\label{10.18.2}
&\left\{\sum_{k=N+1}^\infty\delta^{-ksq}\left[\sum_{\alpha \in \ca_k}
\sum_{m=1}^{N(k,\alpha)}\mu\left(\qa\right)\left\{\sup_{z\in\qa}|\rm{{\rm I}_4
+{\rm I}_5}|\right\}^p\right]^{q/p}\right\}^{1/q}\\
&\quad \lesssim \left\{\sum_{k'=0}^{N'}\sum_{\alpha' \in \ca_{k'}}
\sum_{m'=1}^{N(k',\alpha')}\mu\left(\qap\right)\left[m_{\qap}(|P_{k'}(f)|)\r]^p\right\}^{1/p}.\notag
\end{align}
If $p\in(1,\infty]$, choosing $\Gamma\in(s_+,\bz)$ and using the H\"older inequality and Lemma \ref{9.14.1},
we conclude that
\begin{align}\label{10.18.3}
&\left\{\sum_{k=N+1}^\infty\delta^{-ksq}\left[\sum_{\alpha \in \ca_k}
\sum_{m=1}^{N(k,\alpha)}\mu\left(\qa\right)\left\{\sup_{z\in\qa}|{\rm I}_4
+{\rm I}_5|\right\}^p\right]^{q/p}\right\}^{1/q}\\
&\quad \lesssim \left\{\sum_{k'=0}^{N'}\sum_{\alpha' \in \ca_{k'}}
\sum_{m'=1}^{N(k',\alpha')}\mu\left(\qap\right)\left[m_{\qap}(|P_{k'}(f)|)\r]^p\right\}^{1/p}.\notag
\end{align}
From \eqref{10.15.8}, \eqref{10.18.2} and \eqref{10.18.3}, we deduce that
\begin{align*}
&\left\{\sum_{k=N+1}^\infty\delta^{-ksq}\left[\sum_{\alpha \in \ca_k}
\sum_{m=1}^{N(k,\alpha)}\mu\left(\qa\right)\left\{\sup_{z\in\qa}|Q_k(f)(z)|\right\}^p\right]^{q/p}\right\}^{1/q}\\
&\quad\sim\left\{\sum_{k'=0}^{N'}\sum_{\alpha' \in \ca_{k'}}
\sum_{m'=1}^{N(k',\alpha')}\mu\left(\qap\right)\left[m_{\qap}(|P_{k'}(f)|\r]^p\right\}^{1/p}\\
&\quad\qquad+\left\{\sum_{k'=N'+1}^\infty\delta^{-k'sq}
\left[\sum_{\alpha' \in \ca_{k'}}\sum_{m'=1}^{N(k',\alpha')}\mu\left(\qap\right)
\left\{\inf_{z\in\qap}|P_{k'}(f)(z)|\right\}^p\right]^{q/p}\right\}^{1/q},
\end{align*}
which, combined with \eqref{10.15.4} and the symmetry, further implies \eqref{bihpp}.

Now we show \eqref{fihpp}. Using an argument similar to that used in the estimation of \eqref{10.15.1}, by
\eqref{10.14.2}, Lemmas \ref{10.18.5} and  \ref{fsvv}, and \eqref{r} when $q\in(p(s,\beta\wedge\gamma),1]$,
or the H\"older inequality when $q\in(1,\infty]$, we know that, for any fixed
$r\in(\omega/(\omega+\gz),\min\{1,p,q\})$,
\begin{align*}
&\left[\sum_{k=0}^N\sum_{\alpha \in \ca_k}\sum_{m=1}^{N(k,\alpha)}\mu\left(\qa\right)({\rm I}_3)^p\right]^{1/p}\\
&\quad\lesssim\left\|\left\{\sum_{k'=N'+1}^\infty \left[M\left(\sum_{\alpha' \in \ca_{k'}}
\sum_{m'=1}^{N(k',\alpha')}\delta^{-k'sr}\left|P_{k'}(f)\left(\yap\right)\right|^r
\mathbf 1_{\qap}\right)\right]^{q/r}\right\}^{1/q}\right\|_{L^p(\cx)}\\
&\quad\lesssim\left\|\left[\sum_{k'=N'+1}^\infty \sum_{\alpha' \in \ca_{k'}}
\sum_{m'=1}^{N(k',\alpha')}\delta^{-k'sq}\left|P_{k'}(f)\left(\yap\right)\right|^q
\mathbf 1_{\qap}\right]^{1/q}\right\|_{L^p(\cx)}.
\end{align*}
From this and \eqref{10.15.3}, it follows that,
for any $p\in(p(s,\beta\wedge\gamma),\infty)$
and $q\in(p(s,\beta\wedge\gamma),\infty]$,
\begin{align}\label{10.18.4}
&\left\{\sum_{k=0}^N\sum_{\alpha \in \ca_k}\sum_{m=1}^{N(k,\alpha)}
\mu\left(\qa\right)\left[m_{\qa}(|Q_k(f)|)\r]^p\right\}^{1/p}\\
&\quad\lesssim\left\{\sum_{k'=0}^{N'}\sum_{\alpha' \in \ca_{k'}}
\sum_{m'=1}^{N(k',\alpha')}\mu\left(\qap\right)\left[m_{\qap}(|P_{k'}(f)|)\r]^p\right\}^{1/p}\notag\\
&\quad\qquad+\left\|\left\{\sum_{k'=N'+1}^\infty\sum_{\alpha' \in \ca_{k'}}
\sum_{m'=1}^{N(k',\alpha')}\delta^{-k'sq}\left[\inf_{z\in\qap}|P_{k'}(f)(z)|\right]^q
\mathbf 1_{\qap}\right\}^{1/q}\right\|_{L^p(\mathcal{X})}.\notag
\end{align}

For the second term of \eqref{fihpp}, on one hand, by \eqref{10.18.8}, and \eqref{r} when $p/q\in(0,1]$,
or the H\"older inequality when $p/q\in(1,\infty)$, we know that,
for any fixed $\Gamma \in (0,\eta)$,
\begin{align*}
&\left\|\left\{\sum_{k=N+1}^\infty\sum_{\alpha \in \ca_k}\sum_{m=1}^{N(k,\alpha)}
\delta^{-ksq}\left[\sup_{z\in\qa}|\rm{{\rm I}_4+{\rm I}_5}|\right]^q
\mathbf 1_{\qa}\right\}^{1/q}\right\|_{L^p(\mathcal{X})}^p\\
&\quad\lesssim\sum_{k=N+1}^\infty\delta^{k(\Gamma-s)(p\wedge q)}\sum_{\alpha \in \ca_k}
\sum_{m=1}^{N(k,\alpha)}\mu\left(\qa\right)\left\{\sum_{k'=0}^{N'}\sum_{\alpha' \in \ca_{k'}}
\sum_{m'=1}^{N(k',\alpha')}m_{\qap}(|P_{k'}(f)|)\mu\left(\qap\right)\r.\\
&\quad\qquad\times\lf.\inf_{z\in\qa}\inf_{y\in\qap}\frac{1}{V_1(z)+V(z,y)}
\left[\frac{1}{1+d(z,y)}\right]^{\Gamma}\right\}^p.
\end{align*}
Choose $\Gamma$ such that $\Gamma\in(s,\beta\wedge\gamma)$ and $p\in(\omega/(\omega+\Gamma),\fz)$.
From this, \eqref{r} when $p\in(p(s,\beta\wedge\gamma),1]$, or the H\"older inequality when $p\in(1,\infty)$,
and Lemma \ref{9.14.1}, we deduce that
\begin{align}\label{10.19.1}
&\left\|\left\{\sum_{k=N+1}^\infty\sum_{\alpha \in \ca_k}\sum_{m=1}^{N(k,\alpha)}\delta^{-ksq}
\left[\sup_{z\in\qa}|{\rm I}_4+{\rm I}_5|\right]^q\mathbf 1_{\qa}\right\}^{1/q}\right\|_{L^p(\mathcal{X})}^p\\
&\quad \lesssim\sum_{k'=0}^{N'}\sum_{\alpha' \in \ca_{k'}}\sum_{m'=1}^{N(k',\alpha')}
\mu\left(\qap\right)\left[m_{\qap}(|P_{k'}(f)|)\r]^p.\notag
\end{align}

On the other hand, using an argument similar to that
used in the estimation of \eqref{10.18.6}, we have
\begin{align*}
&\left\|\left\{\sum_{k=N+1}^\infty\sum_{\alpha \in \ca_k}\sum_{m=1}^{N(k,\alpha)}\delta^{-ksq}\
\left[\sup_{z\in\qa}|{\rm I}_6|\right]^q\mathbf 1_{\qa}\right\}^{1/q}\right\|_{L^p(\mathcal{X})}\\
&\quad\lesssim\left\|\left\{\sum_{k'=N'+1}^\infty\sum_{\alpha' \in \ca_{k'}}
\sum_{m'=1}^{N(k',\alpha')}\delta^{-k'sq}
\left|P_{k'}f\left(\yap\right)\right|^q\mathbf 1_{\qap}\right\}^{1/q}\right\|_{L^p(\mathcal{X})},
\end{align*}
which, together with \eqref{10.19.1}, completes  the proof of \eqref{fihpp}.
This finishes the proof of Lemma \ref{6.9.1}.
\end{proof}

Using an argument similar to that used in  the proof of  Proposition \ref{6.4.1},
we have the following proposition and
we omit the details.

\begin{proposition}
Let $\{Q_k\}_{k=0}^\infty$ and $\{P_k\}_{k=0}^\infty$ be two \emph{$\exp$-IATIs},
$\beta,\ \gamma\in (0, \eta)$ with
$\eta$ as in Definition \ref{10.23.2}, and
$s\in(-(\beta\wedge\gamma),\,\beta\wedge\gamma)$.
Suppose that $N\in\nn$ and $N'\in\nn$ are as in Lemma
\ref{icrf} associated, respectively, with $\{Q_k\}_{k=0}^\fz$ and $\{P_k\}_{k=0}^\fz$.
\begin{enumerate}
\item[{\rm(i)}] If $p\in(p(s,\beta\wedge\gamma),\infty]$
with $p(s,\beta\wedge\gamma)$ as in \eqref{pseta}, and $q\in(0,\infty]$, then
\begin{align*}
&\left\{\sum_{k=0}^N\sum_{\alpha \in \ca_k}\sum_{m=1}^{N(k,\alpha)}\mu\left(\qa\right)\left[m_{\qa}(|Q_k(f)|)
\r]^p\right\}^{1/p}+\left[\sum_{k=N+1}^{\infty} \delta^{-ksq}\|Q_k(f)\|_{\lp}^q\right]^{1/q}\\
&\quad\sim \left\{\sum_{k'=0}^{N'}\sum_{\alpha' \in \ca_{k'}}\sum_{m'=1}^{N(k',\alpha')}\mu\left(\qap\right)
\left[m_{\qap}(|P_{k'}(f)|)\r]^p\right\}^{1/p}
+\left[\sum_{k'=N'+1}^{\infty} \delta^{-k'sq}\|P_{k'}(f)\|_{\lp}^q\right]^{1/q}
\end{align*}
with the usual modifications made when $p=\infty$ or $q=\infty$, and the positive equivalence constants are independent of $f$.
\item[{\rm(ii)}] If $p\in(p(s,\beta\wedge\gamma),\infty)$ and
$q\in(p(s,\beta\wedge\gamma),\infty]$, then
\begin{align*}
&\left\{\sum_{k=0}^N\sum_{\alpha \in \ca_k}\sum_{m=1}^{N(k,\alpha)}
\mu\left(\qa\right)\left[m_{\qa}(|Q_k(f)|\r]^p\right\}^{1/p}
+\left\|\left[\sum_{k=N+1}^\infty \delta^{-ksq}|Q_k(f)|^q\right]^{1/q}\right\|_{\lp}\\
&\quad\sim \left\{\sum_{k'=0}^{N'}\sum_{\alpha' \in \ca_{k'}}
\sum_{m'=1}^{N(k',\alpha')}\mu\left(\qap\right)\left[m_{\qap}(|P_{k'}(f)|\r]^p\right\}^{1/p}
+\left\|\left[\sum_{k'=N'+1}^\infty \delta^{-k'sq}|P_{k'}(f)|^q\right]^{1/q}\right\|_{\lp}
\end{align*}
with the usual modification made when $q=\infty$, and the positive equivalence constants are independent of $f$.
\end{enumerate}
\end{proposition}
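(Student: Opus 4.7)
The plan is to mirror the proof of Proposition~\ref{6.4.1}, with the inhomogeneous Plancherel--P\^olya inequalities of Lemma~\ref{6.9.1} replacing the homogeneous ones of Lemma~\ref{5.12.1}. The only additional input beyond Lemma~\ref{6.9.1} is an elementary sandwiching of cube averages and $L^p$-norms between the cube infima and suprema that appear in the bounds of Lemma~\ref{6.9.1}.

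For part (i), I would begin by writing $\|f\|_{\ihb}$ with the $\{P_k\}$ system and noting, for any $k'\in\{0,\dots,N'\}$, the pointwise chain $\inf_{z\in\qap}|P_{k'}(f)(z)|\le m_{\qap}(|P_{k'}(f)|)\le \sup_{z\in\qap}|P_{k'}(f)(z)|$ on each cube, together with the identity $\|P_{k'}(f)\|_{\lp}^p=\sum_{\az',m'}\int_{\qap}|P_{k'}(f)|^p\,d\mu$ (via the partition property of Theorem~\ref{10.22.1}(i)), which yields
\[
\lf\{\sum_{\az',m'}\mu(\qap)\lf[\inf_{z\in \qap}|P_{k'}f(z)|\r]^p\r\}^{1/p}
\le \|P_{k'}(f)\|_{\lp}
\le \lf\{\sum_{\az',m'}\mu(\qap)\lf[\sup_{z\in \qap}|P_{k'}f(z)|\r]^p\r\}^{1/p}.
\]
Inserting the upper sandwich into the definition yields the $\{P_k\}$-version of $\|f\|_{\ihb}$ bounded by the right-hand side of \eqref{bihpp} with the roles of $\{Q_k\}$ and $\{P_k\}$ interchanged; Lemma~\ref{6.9.1} then dominates this by the corresponding left-hand side, and the lower sandwich absorbs it into the $\{Q_k\}$-version of $\|f\|_{\ihb}$. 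Symmetry yields the reverse inequality. The cases $p=\fz$ or $q=\fz$ require only the usual modifications to the sum exponents.

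For part (ii), the argument is entirely parallel, with \eqref{fihpp} replacing \eqref{bihpp} and the $L^p$-norm of the $\ell^q$-sum rewritten via the partition identity $1=\sum_{\az,m}\mathbf{1}_{\qa}$ together with the pointwise sandwich
\[
\lf[\inf_{z\in\qa}|P_{k}(f)(z)|\r]\mathbf{1}_{\qa}\le |P_k(f)|\mathbf{1}_{\qa}\le \lf[\sup_{z\in\qa}|P_{k}(f)(z)|\r]\mathbf{1}_{\qa}
\]
on each cube. I do not anticipate any genuine obstacle: the real analytic and geometric work has been concentrated in Lemma~\ref{6.9.1}, so only bookkeeping remains. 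The one minor technicality is that the integer $N$ supplied by Lemma~\ref{icrf} can differ for $\{Q_k\}$ and $\{P_k\}$, but since $\{0,\dots,N\vee N'\}$ is a finite index set, any finite-block discrepancy can be absorbed into the equivalence constant by the (quasi-)triangle inequality, so this causes no real difficulty.
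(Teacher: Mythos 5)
Your proposal is correct and follows essentially the same route as the paper, which proves this proposition exactly by repeating the argument of Proposition \ref{6.4.1} with the inhomogeneous Plancherel--P\^olya inequalities \eqref{bihpp} and \eqref{fihpp} of Lemma \ref{6.9.1} in place of the homogeneous ones, the only extra ingredient being the trivial inf/average/sup sandwich on the dyadic cubes that you describe. Your remark about $N$ versus $N'$ is harmless but unnecessary, since Lemma \ref{6.9.1} is already stated with the two thresholds $N$ and $N'$ and so handles that discrepancy directly.
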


We now prove that the spaces $\ihb$ and
$\ihf$ are independent of the choice
of the underlying spaces of distributions. Indeed, we have the following proposition.

\begin{proposition}\label{10.19.2}
Let  $\beta,\ \gamma \in (0, \eta)$
with $\eta$ as in
Definition \ref{10.23.2}, $s\in(-\eta,\,\eta)$
and $p,\ q\in(0,\infty]$
satisfy
\begin{equation}\label{10.19.3}
\beta\in\left(\max\left\{0,-s+\omega
\left(\frac{1}{p}-1\right)_+\right\},\eta\right)
\qquad\text{and}\qquad
\gamma\in\left(\omega\left(\frac{1}{p}-1\right)_+,\eta\right).
\end{equation}
\begin{enumerate}
\item[\rm (i)] If $p\in(p(s,\beta\wedge\gamma),\infty]$
with $p(s,\beta\wedge\gamma)$ as in \eqref{pseta},
then, for any $f\in B^s_{p,q}(\cx)\subset(\cg_0^\eta(\bz,\gz))'$, $f\in(\icggt)'$ with
$\widetilde{\beta}$ and $\widetilde{\gamma}$ satisfying $p\in(p(s,\widetilde{\beta}\wedge\widetilde{\gamma}),\infty]$
with $p(s,\widetilde{\beta}\wedge\widetilde{\gamma})$
as in \eqref{pseta} via replacing $\beta$ and $\gamma$ respectively
by $\widetilde{\beta}$ and $\widetilde{\gamma}$,
and \eqref{10.19.3} via replacing $\beta$ and $\gamma$ respectively
by $\widetilde{\beta}$ and $\widetilde{\gamma}$, and there exists a positive constant $C$,
independent of $f$, such that
$$
\|f\|_{(\icggt)'}\leq C\|f\|_{\ihb}.
$$
\item[{\rm (ii)}] If $p\in(p(s,\beta\wedge\gamma),\infty)$ and
$q\in(p(s,\beta\wedge\gamma),\infty]$,
then, for any $f\in F^s_{p,q}(\cx)\subset(\cg_0^\eta(\bz,\gz))'$, $f\in(\icggt)'$ with
$\widetilde{\beta}$ and $\widetilde{\gamma}$ satisfying
$p\in(p(s,\widetilde{\beta}\wedge\widetilde{\gamma}),\infty)$
and $q\in(p(s,\widetilde{\beta}\wedge\widetilde{\gamma}),\infty]$
with $p(s,\widetilde{\beta}\wedge\widetilde{\gamma})$
as in \eqref{pseta} via replacing $\beta$ and $\gamma$ respectively
by $\widetilde{\beta}$ and $\widetilde{\gamma}$, and \eqref{10.19.3} via replacing $\beta$ and $\gamma$ respectively
by $\widetilde{\beta}$ and $\widetilde{\gamma}$, and there exists a positive constant $C$,
independent of $f$, such that
$$
\|f\|_{(\icggt)'}\leq C\|f\|_{\ihf}.
$$
\end{enumerate}
\end{proposition}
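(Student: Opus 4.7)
The plan is to adapt the argument of Proposition \ref{6.5.1} to the inhomogeneous setting, replacing the homogeneous Calder\'on reproducing formula (Lemma \ref{crf}) by its inhomogeneous counterpart (Lemma \ref{icrf}), and the homogeneous Plancherel--P\^olya inequality (Lemma \ref{5.12.1}) by Lemma \ref{6.9.1}. Fix $\psi\in\mathring{\cg}(\eta,\eta)$, which is dense in $\cggt$, and pick $\beta_0,\gamma_0\in(\max\{\beta,\gamma,\widetilde\beta,\widetilde\gamma\},\eta)$ so that $f\in(\cg_0^\eta(\beta_0,\gamma_0))'$. Applying Lemma \ref{icrf} in this wider distribution space, I pair $f$ with $\psi$ to obtain a splitting
\[
\langle f,\psi\rangle=\sum_{\alpha\in\ca_0}\sum_{m=1}^{N(0,\alpha)}Q^{0,m}_{\alpha,1}(f)\int_{\qo}\langle\widetilde{Q}_0(\cdot,y),\psi\rangle\,d\mu(y)+\sum_{k=1}^N(\cdots)+\sum_{k=N+1}^\infty(\cdots),
\]
where the two tails have the same shape as in the homogeneous proof.

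The kernel pairings $\langle\widetilde Q_k(\cdot,y),\psi\rangle$ are handled by two different device. For the high-frequency terms $k\geq N+1$, $\widetilde Q_k$ has the cancellation $\int_\cx\widetilde Q_k(z,y)\,d\mu(z)=0$, so I rewrite the integrand as $[\psi(z)-\psi(y)]\widetilde Q_k(z,y)$ and estimate exactly as for \eqref{6.14.3}, producing
\[
\left|\langle\widetilde Q_k(\cdot,y),\psi\rangle\right|\lesssim \delta^{k\widetilde\beta}\|\psi\|_{\cg(\widetilde\beta,\widetilde\gamma)}\frac{1}{V_{\delta^k}(x_0)+V(x_0,y)}\left[\frac{\delta^k}{\delta^k+d(x_0,y)}\right]^{\widetilde\gamma}.
\]
For the low-frequency block $k\in\{0,\dots,N\}$, there is no cancellation for $\widetilde Q_k$, but $\delta^k\sim 1$ and the sum is finite, so the crude pointwise size bound on $\widetilde Q_k$ together with Lemma \ref{6.15.1}(ii) yields
\[
\left|\langle\widetilde Q_k(\cdot,y),\psi\rangle\right|\lesssim\|\psi\|_{\cg(\widetilde\beta,\widetilde\gamma)}\frac{1}{V_1(x_0)+V(x_0,y)}\left[\frac{1}{1+d(x_0,y)}\right]^{\widetilde\gamma},
\]
where I use $\widetilde\gamma>\omega(1/p-1)_+$ from \eqref{10.19.3} to ensure integrability against $\psi$. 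The integrated version $\int_{\qo}\langle\widetilde Q_0(\cdot,y),\psi\rangle\,d\mu(y)$ satisfies the same bound evaluated at $y_\alpha^{0,m}$ since $\delta^0\sim 1$.

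For the coefficient factors, \eqref{4.26.1} of Lemma \ref{6.9.1} gives $|Q^{k,m}_{\alpha,1}(f)|\leq m_{\qa}(|Q_kf|)$ for $k\in\{0,\dots,N\}$. Combining the two kernel bounds with these coefficient estimates and substituting into the splitting of $\langle f,\psi\rangle$, I separate the contribution of the ``low frequency part'' (sums over $k\leq N$) and the ``high frequency part'' (sums over $k\geq N+1$). Each is then treated exactly as in \eqref{6.22.1}--\eqref{6.25.3}: applying \eqref{r} when $p\in(p(s,\beta\wedge\gamma),1]$ or the H\"older inequality when $p\in(1,\infty)$, using \eqref{10.19.3} together with Lemma \ref{6.15.1}(ii) for the spatial summability, and finally invoking the inhomogeneous Plancherel--P\^olya inequality (Lemma \ref{6.9.1}) to recognize the result as $\|f\|_{\ihb}$ in case (i) or $\|f\|_{\ihf}$ in case (ii) (after using Proposition analogous to \ref{phb}(ii) relating $\ihf$ and $\ihb$ with appropriate third indices). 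This yields
\[
|\langle f,\psi\rangle|\lesssim\frac{1}{[V_1(x_0)]^{1/p}}\|\psi\|_{\cg(\widetilde\beta,\widetilde\gamma)}\|f\|_{\ihb}
\]
for every $\psi\in\mathring{\cg}(\eta,\eta)$, and similarly for $\ihf$.

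Finally, I extend $\langle f,\cdot\rangle$ from $\mathring{\cg}(\eta,\eta)$ to $\icggt$ by density: for any $h\in\icggt$, pick $\{h_n\}\subset\mathring{\cg}(\eta,\eta)$ with $\|h-h_n\|_{\cg(\widetilde\beta,\widetilde\gamma)}\to 0$; the uniform estimate above shows $\{\langle f,h_n\rangle\}$ is Cauchy, and the limit is independent of the approximating sequence, giving $f\in(\icggt)'$ with the required norm bound. The step I expect to be the main obstacle is verifying that the estimate for $\langle\widetilde Q_k(\cdot,y),\psi\rangle$ in the low-frequency block $k\in\{0,\dots,N\}$ is strong enough to absorb the sum after pairing with $|Q^{k,m}_{\alpha,1}(f)|$; this is where the condition $\widetilde\gamma>\omega(1/p-1)_+$ (which is the full content of \eqref{10.19.3} after dropping the homogeneous cancellation of $\widetilde Q_k$) becomes essential and is precisely the hypothesis imposed in the statement.
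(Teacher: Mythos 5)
Your overall strategy matches the paper's: pair $f$ against a smooth test function via Lemma~\ref{icrf}, estimate the pairings $\langle\widetilde Q_k(\cdot,y),\psi\rangle$ separately for $k\le N$ (no cancellation, use sizes and $\delta^k\sim 1$) and $k\ge N+1$ (cancellation of $\widetilde Q_k$ to gain $\delta^{k\widetilde\beta}$), then sum and close via density. Your observation about why the inhomogeneous condition on $\widetilde\gamma$ drops the $s$-dependence is correct: all $k\ge 0$, so the $\delta^{-k\widetilde\gamma}$ tail from negative scales never arises, and the only constraint on $\widetilde\gamma$ is spatial integrability.

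There is, however, a genuine misstep at the very start. You fix $\psi\in\mathring{\cg}(\eta,\eta)$ and extend by density. But the target space of distributions in this proposition is $\left(\mathcal{G}_0^\eta(\widetilde\beta,\widetilde\gamma)\right)'$, and by definition $\mathcal{G}_0^\eta(\widetilde\beta,\widetilde\gamma)$ is the completion of $\cg(\eta,\eta)$ --- \emph{not} of $\mathring{\cg}(\eta,\eta)$ --- in the $\cg(\widetilde\beta,\widetilde\gamma)$-norm. The zero-mean subclass $\mathring{\cg}(\eta,\eta)$ is not dense in $\mathcal{G}_0^\eta(\widetilde\beta,\widetilde\gamma)$; its closure is the mathring space $\mathring{\cg}_0^\eta(\widetilde\beta,\widetilde\gamma)$, which is the wrong dual for the inhomogeneous setting. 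As a result, your density argument would only produce a bounded functional on the mathring space, not the one required by the statement. The fix is trivial and consistent with the rest of your argument: take $\psi\in\cg(\eta,\eta)$. None of your kernel estimates for $\langle\widetilde Q_k(\cdot,y),\psi\rangle$ use cancellation of $\psi$ --- for $k\le N$ you use only the size condition of $\widetilde Q_k$ and of $\psi$, and for $k\ge N+1$ you exploit the cancellation of $\widetilde Q_k$ (not of $\psi$) by subtracting $\psi(y)$ --- so the same bounds hold for arbitrary $\psi\in\cg(\eta,\eta)$, and the density argument then lands in the correct space. (Relatedly, your citation of \eqref{6.14.3} for the $k\ge N+1$ block is off: that estimate is the negative-scale one, which exploits cancellation of $\psi$ and is irrelevant here; the relevant one is the $\zz_+$-estimate \eqref{6.14.2}, whose proof mirrors \eqref{8.26.1}.) With that substitution your proposal coincides with the paper's proof.
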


\begin{proof}
Let $\psi\in\cg(\eta,\eta)$. We adopt the notation from Lemma \ref{icrf}.
We first claim that, for any $k\in \zz_+$ and $y\in\cx$,
\begin{equation}\label{10.19.4}
\left|\langle\widetilde{Q}_k(\cdot, y),\psi\rangle\right|
\lesssim\delta^{k\widetilde{\beta}}
\|\psi\|_{\cg(\widetilde{\beta},\widetilde{\gamma})}\frac{1}{V_1(x_1)+V(x_1,y)}
\left[\frac{1}{1+d(x_1,y)}\right]^{\widetilde{\gamma}}.
\end{equation}
Indeed, when $k\in\{N+1,N+2,\dots\}$, using an argument similar to that
used in the estimation of \eqref{6.14.2}, we obtain \eqref{10.19.4} directly.
When $k\in\{0,\dots,N\}$, by $\delta^k\sim 1$, Lemma \ref{6.15.1}(ii),
$\widetilde{\gamma}\in(0,\eta)$ and the fact that
$$\{z\in\cx:\ d(z,y)\geq(2A_0)^{-1}d(x_1,y)\}
\bigcup\{z\in\cx:\ d(z,x_1)\geq(2A_0)^{-1}d(x_1,y)\}=\cx,$$
we know that
\begin{align*}
&\left|\left\langle\widetilde{Q}_k(\cdot, y),\psi\right\rangle\right|\\
&\quad=\left|\int_{\cx}\widetilde{Q}_k(z, y)\psi(z)\,d\mu(z)\right|\\
&\quad\lesssim\|\psi\|_{\cg(\widetilde{\beta},\widetilde{\gamma})}\int_{\cx}\frac{1}{V_1(y)+V_1(z,y)}
\left[\frac{1}{1+d(z,y)}\right]^{\widetilde{\gamma}}\frac{1}{V_1(x_1)+V(x_1,z)}
\left[\frac{1}{1+d(x_1,z)}\right]^{\widetilde{\gamma}}\,d\mu(z)\\
&\quad\lesssim\|\psi\|_{\cg(\widetilde{\beta},\widetilde{\gamma})}\frac{1}{V_1(x_1)+V(x_1,y)}\\
&\quad\qquad\times\left\{\left[\frac{1}{1+d(x_1,y)}\right]^{\widetilde{\gamma}}
\int_{\{z\in\cx:\ d(z,y)\geq(2A_0)^{-1}d(x_1,y)\}}\frac{1}{V_1(x_1)+V(x_1,z)}
\left[\frac{1}{1+d(x_1,z)}\right]^{\widetilde{\gamma}}\,d\mu(z)\right.\\
&\quad\qquad+\left.\left[\frac{1}{1+d(x_1,y)}\right]^{\widetilde{\gamma}}
\int_{\{z\in\cx:\ d(z,x_1)\geq(2A_0)^{-1}d(x_1,y)\}}\frac{1}{V_1(y)+V_1(z,y)}
\left[\frac{1}{1+d(z,y)}\right]^{\widetilde{\gamma}}\,d\mu(z)\right\}\\
&\quad\lesssim\delta^{k\widetilde{\beta}}\|\psi\|_{\cg(\widetilde{\beta},\widetilde{\gamma})}
\frac{1}{V_1(x_1)+V(x_1,y)}\left[\frac{1}{1+d(x_1,y)}\right]^{\widetilde{\gamma}},
\end{align*}
which implies \eqref{10.19.4}.

From Lemma \ref{icrf}, \eqref{10.15.6} and \eqref{10.19.4}, it follows that
\begin{align*}
&|\langle f,\psi\rangle|\\
&\quad=\left|\sum_{\alpha \in \ca_0}\sum_{m=1}^{N(0,\alpha)}
\int_{\qo}\left\langle\widetilde{Q}_0(\cdot,y),\psi\right\rangle\,d\mu(y)
Q^{0,m}_{\alpha,1}(f)\right.\\
&\quad\qquad+\sum_{k=1}^N\sum_{\alpha \in \ca_k}
\sum_{m=1}^{N(k,\alpha)}\mu\left(\qa\right)\left\langle\widetilde{Q}_k(\cdot,\ya),
\psi\right\rangle Q^{k,m}_{\alpha,1}(f)\\
&\quad\qquad\left.+\sum_{k=N+1}^\infty\sum_{\alpha \in \ca_k}
\sum_{m=1}^{N(k,\alpha)}\mu\left(\qa\right)
\left\langle\widetilde{Q}_k(\cdot,\ya),\psi\right\rangle Q_kf\left(\ya\right)\right|\\
&\quad\lesssim\|\psi\|_{\cg(\widetilde{\beta},\widetilde{\gamma})}\left\{\sum_{k=0}^N
\sum_{\alpha \in \ca_k}\sum_{m=1}^{N(k,\alpha)}\mu\left(\qa\right)m_{\qa}(|Q_k(f)|)
\frac{1}{V_1(x_1)+V(x_1,\ya)}\left[\frac{1}{1+d(x_1,\ya)}\right]^{\widetilde{\gamma}}\right.\\
&\qquad\quad\left.+\sum_{k=N+1}^\infty\delta^{k\widetilde{\beta}}
\sum_{\alpha \in \ca_k}\sum_{m=1}^{N(k,\alpha)}\mu\left(\qa\right)\left|Q_kf\left(\ya\right)\right|
\frac{1}{V_1(x_1)+V(x_1,\ya)}\left[\frac{1}{1+d(x_1,\ya)}\right]^{\widetilde{\gamma}}\right\}.
\end{align*}
If $p\in(p(s,\widetilde{\beta}\wedge\widetilde{\gamma}), 1]$, then, by \eqref{r}, \eqref{6.22.1} and
\eqref{10.19.3}, we find that
\begin{align}\label{10.19.5}
|\langle f,\psi\rangle|&\lesssim\|\psi\|_{\cg(\widetilde{\beta},\widetilde{\gamma})}
\left\{\sum_{k=0}^N\sum_{\alpha \in \ca_k}\sum_{m=1}^{N(k,\alpha)}
\mu\left(\qa\right)\left[m_{\qa}(|Q_k(f)|)\right]^p\right.\\
&\quad\quad\left.+\sum_{k=N+1}^\infty\delta^{k[\widetilde{\beta}p-
\omega(1-p)+sp]}\delta^{-ksp}\left[\sum_{\alpha \in \ca_k}
\sum_{m=1}^{N(k,\alpha)}\mu\left(\qa\right)
\left|Q_kf\left(\ya\right)\right|^p\right]\right\}^{1/p}\notag\\
&\lesssim \|\psi\|_{\cg(\widetilde{\beta},\widetilde{\gamma})}\|f\|_{\ihb},\notag
\end{align}
where, in the last inequality, we used \eqref{r} when $q/p\in(0,1]$, or the H\"older inequality when $q/p\in(1,\infty]$.

If $p\in(1,\infty]$, then, by the H\"older inequality and $\widetilde{\bz}\in(\max\{0,-s\},\eta)$, we have
\begin{align}\label{10.19.6}
|\langle f,\psi\rangle|&\lesssim\|\psi\|_{\cg(\widetilde{\beta},\widetilde{\gamma})}
\left(\left\{\sum_{k=0}^N\sum_{\alpha \in \ca_k}\sum_{m=1}^{N(k,\alpha)}
\mu\left(\qa\right)\left[m_{\qa}(|Q_k(f)|)\right]^p\right\}^{1/p}\right.\\
&\qquad\times\left\{\int_\cx\frac{1}{V_1(x_1)+V(x_1,y)}
\left[\frac{1}{1+d(x_1,y)}\right]^{\widetilde{\gamma}}\,d\mu(y)\right\}^{1/p'}\notag\\
&\qquad\left.+\sum_{k=N+1}^\infty\delta^{k\widetilde{\beta}}
\left[\sum_{\alpha \in \ca_k}\sum_{m=1}^{N(k,\alpha)}\mu\left(\qa\right)
\left|Q_kf\left(\ya\right)\right|^p\right]\right)^{1/p}\notag\\
&\qquad\times\left\{\int_\cx\frac{1}{V_1(x_1)+V(x_1,y)}\left[\frac{1}{1+d(x_1,y)}\right]^{\widetilde{\gamma}}
\,d\mu(y)\right\}^{1/p'}\notag\\
&\lesssim \|\psi\|_{\cg(\widetilde{\beta},\widetilde{\gamma})}\|f\|_{\ihb},\notag
\end{align}
where, in the last inequality, we used \eqref{r} when $q\in(0,1]$, or the H\"older inequality when $q\in(1,\infty]$.

Using \eqref{10.19.5}, \eqref{10.19.6} and an argument similar to that used in the proof of Proposition
\ref{6.5.1}, we obtain (i) and (ii), and  we omit the details here.
This finishes the proof of Proposition \ref{10.19.2}.
\end{proof}

Similarly to Proposition \ref{phb}, we have the following properties of $\ihb$ and $\ihf$.

\begin{proposition}\label{10.20.1}
Let $\beta,\ \gamma\in (0,\eta)$
with $\eta$ as in Definition \ref{10.23.2} and
$s \in (-(\beta\wedge\gamma), \beta\wedge\gamma)$.
\begin{enumerate}
\item[{\rm(i)}]  If $0<q_0\leq q_1\leq \infty$ and $p\in(p(s,\beta\wedge\gamma),\infty]$ with
$p(s,\beta\wedge\gamma)$ as in \eqref{pseta}, then
$B^s_{p,q_0}(\mathcal{X})\subset B^s_{p,q_1}(\mathcal{X})$.
If $p(s,\beta\wedge\gamma)<q_0\leq q_1\leq \infty$ and
$p\in(p(s,\beta\wedge\gamma),\infty)$, then $F^s_{p,q_0}(\mathcal{X})\subset F^s_{p,q_1}(\mathcal{X})$.
\item[{\rm(ii)}] If $p\in(p(s,\beta\wedge\gamma),\infty)$ and $q\in(p(s,\beta\wedge\gamma),\infty]$, then
$$B^s_{p,p\wedge q}(\mathcal{X})\subset\ihf\subset B^s_{p,p\vee q}(\mathcal{X}).$$
\item[{\rm(iii)}] If $\widetilde{\beta},\ \widetilde{\gamma}$ satisfy the following conditions,
\begin{equation}\label{4.23.x}
\widetilde{\beta}\in(s_+,\eta)\quad
\text{and}\quad \widetilde{\gamma}\in\left(\omega\left(\frac{1}{p}-1\right)_+,\eta\right),
\end{equation}
then $\cg(\widetilde{\beta},\widetilde{\gamma})\subset \ihb$ when $q\in(0,\infty]$ and $p \in(p(s,\beta\wedge\gamma),\infty]$,
and $\cg(\widetilde{\beta},\widetilde{\gamma})\subset \ihf$ when $q\in(p(s,\beta\wedge\gamma),\infty]$
and $p \in(p(s,\beta\wedge\gamma),\infty)$.
\item[{\rm(iv)}] If $\theta \in ((-\eta-s)_+,\eta-s)$, then $B^{s+\theta}_{p,q_0}(\mathcal{X})
\subset B^s_{p,q_1}(\mathcal{X})$ when $q_0,\  q_1\in(0,\infty]$ and $p\in(p(s,\beta\wedge\gamma),\infty]$, and
$F^{s+\theta}_{p,q_0}(\mathcal{X})\subset F^s_{p,q_1}(\mathcal{X})$ when $q_0,\
q_1\in(p(s,\beta\wedge\gamma), \infty]$ and $p\in(p(s,\beta\wedge\gamma),\infty)$.
\end{enumerate}
\end{proposition}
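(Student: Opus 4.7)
My plan is to handle the four parts in order, exploiting throughout the key observation that the low-frequency component
\[
\left\{\sum_{k=0}^N\sum_{\az\in\ca_k}\sum_{m=1}^{N(k,\az)}\mu\left(\qa\right)\left[m_{\qa}(|Q_kf|)\right]^p\right\}^{1/p}
\]
of each quasi-norm depends only on $p$, so in parts (i), (ii) and (iv) it is identical on both sides of the desired inclusion and only the high-frequency component (the sum over $k \ge N+1$) needs to be estimated. Since the high-frequency piece has exactly the shape of the homogeneous Besov or Triebel--Lizorkin norm, these three parts reduce to direct adaptations of the proof of Proposition \ref{phb}.

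For (i), I will use the elementary $\ell^{q_0}\hookrightarrow\ell^{q_1}$ embedding for nonnegative sequences (which follows from \eqref{r} with exponent $q_0/q_1\le1$), applied sequentially in $k$ for the Besov case and pointwise in $x$ before taking the $L^p$ norm for the Triebel--Lizorkin case. For (ii), the sandwich $B^s_{p,p\wedge q}(\cx)\subset\ihf\subset B^s_{p,p\vee q}(\cx)$ follows by applying Minkowski's integral inequality and \eqref{r} to the high-frequency piece in exactly the same way as in Proposition \ref{phb}(ii).

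For (iii), I split the action of $\psi\in\cg(\widetilde\bz,\widetilde\gz)$ against the $Q_k$'s into two regimes. For $k\ge N+1$, I reuse the argument leading to \eqref{8.26.1}, which exploits the cancellation of $Q_k$ together with the regularity of $\psi$ to yield
\[
|Q_k\psi(y)|\ls\dz^{k\widetilde\bz}\|\psi\|_{\cg(\widetilde\bz,\widetilde\gz)}\frac{1}{V_1(x_0)+V(x_0,y)}\left[\frac{1}{1+d(x_0,y)}\right]^{\widetilde\gz};
\]
the resulting high-frequency sum $\{\sum_{k\ge N+1}\dz^{k(\widetilde\bz-s)q}\}^{1/q}$ then converges because $\widetilde\bz>s_+$. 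For $0\le k\le N$, where $\dz^k\sim1$ and the cancellation of $Q_0$ may fail, I instead estimate $|Q_k\psi(y)|$ directly by convolving the size kernels via Lemma \ref{6.15.1}, which yields the same spatial decay factor but without the gain $\dz^{k\widetilde\bz}$. The low-frequency contribution then reduces to verifying
\[
\left\{\sum_{k=0}^N\sum_{\az\in\ca_k}\sum_{m=1}^{N(k,\az)}\mu\left(\qa\right)\left[\frac{1}{V_1(x_0)+V(x_0,\ya)}\right]^p\left[\frac{1}{1+d(x_0,\ya)}\right]^{\widetilde\gz p}\right\}^{1/p}<\fz,
\]
which follows from \eqref{6.22.1} combined with $\widetilde\gz>\omega(1/p-1)_+$ from \eqref{4.23.x}. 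The Triebel--Lizorkin inclusion then follows from (ii).

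For (iv), the algebraic identity $\dz^{-ksq_1}=\dz^{k\tz q_1}\dz^{-k(s+\tz)q_1}$ separates the extra smoothness $\tz>0$ into a summable geometric factor. In the Besov case, I apply H\"older (when $q_0<q_1$) or \eqref{r} (when $q_0\ge q_1$) to bound the high-frequency piece of $\|f\|_{B^s_{p,q_1}(\cx)}$ by a convergent series times the corresponding expression with exponents $(s+\tz,q_0)$; in the Triebel--Lizorkin case the same manipulation is carried out pointwise in $x$ inside the $L^p$ norm. The hard part, I expect, will be the low-frequency regime in (iii): the absence of cancellation for $Q_0$ removes the smoothness gain $\dz^{k\widetilde\bz}$ when $k$ is small, so the size-only estimate must by itself be strong enough for the sum against $\mu(\qa)$ to converge, which is precisely what the threshold $\widetilde\gz>\omega(1/p-1)_+$ in \eqref{4.23.x} is designed to guarantee.
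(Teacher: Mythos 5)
Your proof is correct and follows the same route as the paper, which for (i)--(iii) simply says ``similar to Proposition \ref{phb}'' and for (iv) factors $\delta^{-ksq_1}=\delta^{k\theta q_1}\delta^{-k(s+\theta)q_1}$. Your observation that the cube-average term is $q$-independent and $s$-independent (so parts (i), (ii), (iv) reduce to the high-frequency piece) is exactly the right reduction, and your treatment of (iii) in the regime $k\in\{0,\dots,N\}$ --- where $\delta^k\sim1$ lets you forgo the $\delta^{k\widetilde\beta}$ gain and the cancellation of $Q_0$, relying only on the size kernel and $\widetilde\gamma>\omega(1/p-1)_+$ --- supplies precisely what the paper's phrase leaves implicit. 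Two small corrections. In (iv) your case labels are reversed: when $q_0\leq q_1$ it is \eqref{r} (the $\ell^{q_0}\hookrightarrow\ell^{q_1}$ embedding) that applies, while when $q_0>q_1$ you need H\"older with exponents $q_0/q_1$ and $(q_0/q_1)'$, the geometric factor $\sum_k\delta^{k\theta q_1(q_0/q_1)'}$ supplying convergence; the paper avoids this case split by passing through $\sup_k\delta^{-(s+\theta)k}|c_k|$, which dominates the $\ell^{q_0}$ sum uniformly in $q_0$. Also, for the finiteness of the low-frequency sum in (iii), the cleaner reference is Lemma \ref{9.14.1} with $k'=0$ (giving $\sim[V_1(x_0)]^{1-p}$ per level) when $p\in(p(s,\widetilde\beta\wedge\widetilde\gamma),1]$, and for $p\in(1,\infty]$ the bound $\mu(\qa)^{p-1}\lesssim[V_1(x_0)+V(x_0,\ya)]^{p-1}$ together with Lemma \ref{6.15.1}(ii); \eqref{6.22.1} records the same information but is stated only for $p\leq1$ and appears in a slightly different role.
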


\begin{proof}
The proofs of (i), (ii) and (iii) are similar,  respectively,
to those of (i), (ii) and (iii) of Proposition
\ref{phb} and hence we omit the details.

For (iv), observe that, for any $\{c_k\}_{k=1}^\infty \subset \mathbb{C}$,
\begin{align*}
\left[\sum_{k=0}^\infty\delta^{-skq_1}|c_k|^{q_1}\right]^{1/q_1}&\leq\sup_{k\in\zz_+}
\delta^{-(s+\theta)k}|c_k|\left[\sum_{j=0}^\infty\delta^{\theta jq_1}\right]^{1/q_1}\\
&\lesssim \sup_{k\in\zz_+}\delta^{-(s+\theta)k}|c_k|
\lesssim\left[\sum_{k=0}^\infty\delta^{-(s+\theta)kq_0}|c_k|^{q_0}
\right]^{1/q_0}.
\end{align*}
This, combined with Definition \ref{ih}, implies that (iv) holds true and hence finishes the proof of
Proposition \ref{10.20.1}.
\end{proof}

\section{Triebel--Lizorkin spaces with $p=\infty$}\label{s4}

In this section, we introduce Triebel--Lizorkin spaces with $p=\infty$ in a way similar to that used in
\cite{fj90} and \cite{hmy08}.

\subsection{Homogeneous Triebel--Lizorkin spaces $\hfi$}

In this section, we concentrate on the homogeneous Triebel--Lizorkin space $\hfi$
and hence we
always need to assume that $\mu(\cx)=\infty$.
Let us begin with its definition.

\begin{definition}\label{hfi}
Let $\beta,\ \gamma \in (0, \eta)$, $s\in(-\eta,\eta)$ and $q\in (p(s,\beta\wedge\gamma),\infty]$ with
$\eta$ as in Definition \ref{10.23.2} and $p(s,\beta\wedge\gamma)$ as in
\eqref{pseta}. Let $\{Q_k\}_{k\in\zz}$ be an exp-ATI. For  any $k\in\zz$ and $\alpha\in \ca_k$,
let $Q_\az^k$ be as in Theorem \ref{10.22.1}.
Then the \emph{homogeneous Triebel--Lizorkin space $\hfi$} is defined by setting
\begin{align*}
\hfi := \Bigg\{f  \in (\cggi)' :\  \|f\|_{\hfi}:={}&\sup_{l \in \zz}
\sup_{\alpha\in\ca_l}\left[\frac{1}{\mu(Q_\alpha^l)}\r.\\
&\quad\left.\left.\times\int_{Q_\alpha^l}\sum_{k=l}^\infty\delta^{-ksq}
|Q_k(f)(x)|^q\,d\mu(x)\right]^{1/q}<\infty\right\}
\end{align*}
with the usual modification made when $q=\infty$.
\end{definition}

We now establish the \emph{homogenous Plancherel--P\^olya inequality} in the case $p=\infty$.

\begin{lemma}\label{11.14.1}
Let $\beta,\ \gamma \in (0, \eta)$,
$s \in (-(\beta\wedge\gamma), \beta\wedge\gamma)$
and $q\in (p(s,\beta\wedge\gamma),\infty]$ with $\eta$
as in Definition \ref{10.23.2} and $p(s,\beta\wedge\gamma)$
as in \eqref{pseta}. Let $\{P_k\}_{k\in\zz}$ and $\{Q_k\}_{k\in\zz}$
be two \emph{$\exp$-ATIs}.
Then there exists a positive constant $C$ such that,
for any $f\in(\cggi)'$,
\begin{align*}
&\sup_{l \in \zz} \sup_{\alpha\in\ca_l}\left\{\frac{1}{\mu(Q_\alpha^l)}
\sum_{k=l}^\infty\sum_{\tau\in\ca_k}\sum_{m=1}^{N(k,\tau)}\delta^{-ksq}\mu(Q_\tau^{k,m})
\mathbf{1}_{\{(\tau,m):\ Q_\tau^{k,m}\subset Q_\alpha^l\}}(\tau,m)
\left[\sup_{x\in Q_\tau^{k,m}}|P_k(f)(x)|\right]^q\right\}^{1/q}\\
&\quad\leq C\sup_{l \in \zz} \sup_{\alpha\in\ca_l}\left\{\frac{1}{\mu(Q_\alpha^l)}
\sum_{k=l}^\infty\sum_{\tau\in\ca_k}\sum_{m=1}^{N(k,\tau)}\delta^{-ksq}
\mu(Q_\tau^{k,m})\mathbf{1}_{\{(\tau,m):\ Q_\tau^{k,m}\subset Q_\alpha^l\}}(\tau,m)
\left[\inf_{x\in Q_\tau^{k,m}}|Q_k(f)(x)|\right]^q\right\}^{1/q}.
\end{align*}
\end{lemma}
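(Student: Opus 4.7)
The plan is to parallel the proof of the homogeneous Plancherel--P\^olya inequality in Lemma \ref{5.12.1}, adapted to the localized supremum structure of the $\hfi$ quasi-norm. I would first apply the homogeneous discrete Calder\'on reproducing formula (Lemma \ref{crf}) to write
$$P_k(f)(x) = \sum_{k' \in \zz} \sum_{\alpha' \in \ca_{k'}} \sum_{m'=1}^{N(k',\alpha')} \mu\lf(\qap\r) P_k \widetilde{Q}_{k'}\lf(x,\yap\r) Q_{k'}(f)\lf(\yap\r),$$
exploiting the arbitrariness of $\yap \in \qap$ to pick it so that $|Q_{k'}(f)(\yap)|$ approximates $\inf_{z \in \qap} |Q_{k'}(f)(z)|$ up to an arbitrarily small error. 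Lemma \ref{4.23z} then supplies, for some fixed $\eta' \in (0, \bz \wedge \gz)$ with $s \in (-\eta',\eta')$, the key almost orthogonality bound with decay factor $\dz^{|k-k'|\eta'}$.

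Fix now $l \in \zz$ and $\alpha \in \ca_l$, and focus on indices $k \geq l$ with $Q_\tau^{k,m} \subset Q_\alpha^l$. I would split the $k'$-sum into $k' \geq l$ and $k' < l$. For $k' \geq l$, classify each cube $\qap$ by the unique cube $Q_\beta^l \in \ca_l$ containing it. When $\beta = \alpha$, the resulting sum reduces to cubes contained in $Q_\alpha^l$ and is controlled by the right-hand side with $l' := l$, $\alpha' := \alpha$ via Lemma \ref{10.18.5} together with the H\"older inequality (for $q > 1$) or \eqref{r} (for $q \leq 1$). When $\beta \neq \alpha$, the kernel decay $[\dz^l/d(Q_\alpha^l,Q_\beta^l)]^\gz$, combined with summation over the dyadic annuli $\{Q_\beta^l:\ 2^j \dz^l \leq d(Q_\alpha^l,Q_\beta^l) < 2^{j+1}\dz^l\}$ for $j\in\zz_+$, produces a convergent series whose total is again bounded by the right-hand side supremum at level $l' = l$. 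For $k' < l$, the decisive observation is that $\qap \subset Q_{\alpha'}^{k'}$ and, choosing $(l',\alpha') := (k',\alpha')$ in the right-hand side, the $k = k'$ term already controls $\inf_{\qap}|Q_{k'}(f)|$, yielding
$$|Q_{k'}(f)(\yap)| \lesssim \dz^{k's} \lf[\mu\lf(Q_{\alpha'}^{k'}\r)\big/\mu\lf(\qap\r)\r]^{1/q} \cdot (\text{RHS quasi-norm});$$
combining with the orthogonality decay $\dz^{(k-k')\eta'}$ for $k \geq l > k'$, the resulting geometric series in $k'$ converges thanks to $s < \eta'$.

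The main obstacle is the geometric classification of cubes that enters when $k' \geq l$: because the left-hand side is localized to $Q_\alpha^l$ rather than being a global $L^p$ norm, we cannot simply integrate over $\cx$ and invoke the Fefferman--Stein inequality (Lemma \ref{fsvv}) in its global form as in the proof of Lemma \ref{5.12.1}; every contribution whose reference point $\yap$ falls outside $Q_\alpha^l$ must be handled annulus by annulus, and the $k' < l$ range forces us to exploit the \emph{freedom} of the $(l',\alpha')$-supremum in the right-hand side---a feature with no counterpart in the $p < \fz$ theory. Once this two-regime decomposition is correctly set up using the dyadic cube structure from Theorem \ref{10.22.1}, the remaining estimates reduce to routine applications of the H\"older inequality, \eqref{r}, and Lemmas \ref{9.14.1} and \ref{10.18.5}, and the resulting bound is uniform in $(l,\alpha)$.
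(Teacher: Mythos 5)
Your overall framework matches the paper's: apply the discrete Calder\'on reproducing formula, invoke the almost orthogonality estimate from Lemma \ref{4.23z}, split the $k'$-sum at $k' = l$, and handle the $k' < l$ range by choosing $l' = k'$ in the right-hand side supremum. The observation that the RHS supremum must be invoked at a level \emph{other} than $l$ in the $k' < l$ regime is exactly right.

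However, your treatment of the far cubes in the $k' \geq l$ regime contains a genuine gap. You claim that for cubes $\qap$ lying in $Q_\beta^l$ with $\beta \neq \alpha$, the contribution is ``again bounded by the right-hand side supremum at level $l' = l$.'' This cannot work across the full admissible range $q \in (p(s,\beta\wedge\gamma),\infty]$. Fix the annulus index $j$ with $d(Q_\alpha^l,Q_\beta^l) \sim \delta^{l-j}$: the kernel supplies a volume factor $[V(\yap,y_\tau^{k,m})]^{-q} \sim [V_{\delta^{l-j}}(\yap)]^{-q}$ and a decay factor $\delta^{j\gamma q}$. If you relate $[V_{\delta^{l-j}}(\yap)]^{-1}$ only to $[\mu(Q_\beta^l)]^{-1}$ via $\mu(Q_\beta^l) \leq V_{\delta^{l-j}}(\yap)$ and then invoke the RHS at level $l$, $\alpha' = \beta$, you still have to sum over all $\beta$ in the annulus. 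By Lemma \ref{11.14.2}(i) there may be on the order of $\delta^{-j\omega}$ such cubes, and the resulting $j$-series $\sum_j \delta^{j[\gamma q - \omega(1-q) - \omega]}$ requires $q > 2\omega/(\omega+\gamma)$, which is strictly stronger than $q > \omega/(\omega+\gamma) \geq p(s,\beta\wedge\gamma)$.

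The paper's crucial move, which you are missing, is to \emph{coarsen the level}: all the far cubes $Q_\beta^l$ in annulus $j$ are covered by a bounded number (independent of $j$, $l$, $\alpha$) of dyadic cubes at level $l - j$ --- this is the packing estimate \eqref{11.15.1} on $\#\mathcal{B}_\alpha^{l,j}$, proved from Theorem \ref{10.22.1} and Lemma \ref{11.14.2}. One then identifies $V_{\delta^{l-j}}(\yap) \sim \mu(Q_{\tilde{\tilde\alpha}}^{l-j})$ for the unique ancestor $\tilde{\tilde\alpha} \in \mathcal{B}_\alpha^{l,j}$ of $\beta$, and invokes the RHS supremum at level $l' = l-j$, $\alpha' = \tilde{\tilde\alpha}$. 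This replaces the count $\delta^{-j\omega}$ by the count $m_2$, and the $j$-series becomes $\sum_j \delta^{j[\gamma q - \omega(1-q)]}$, which converges exactly when $q > \omega/(\omega+\gamma)$. Without this geometric grouping the argument fails for $q$ near the endpoint, so you should replace your ``level $l' = l$'' step with this passage to levels $l - j$. (A minor additional note: in the $p = \infty$ setting the paper works with Lemma \ref{9.14.1} rather than Lemma \ref{10.18.5}, since there is no $L^p$-integration to feed into a maximal-function estimate.)
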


To prove Lemma \ref{11.14.1}, we need to use some
basic properties on the geometry of spaces of homogeneous type.
The following lemma comes from \cite[p.\ 4, (2) and (4)]{hk}
and we omit the details here.

\begin{lemma}\label{11.14.2}
Let $(\cx,d,\mu)$ be a space of homogeneous type.
\begin{enumerate}
\item[{\rm(i)}] For any $t\in(0,1]$, $r \in(0,\infty)$ and $x \in\cx$, the ball $B(x,r)$
contains at most $A_0t^{-\log_2A_0}$ pairwise disjoint balls with radius $tr$.
\item[{\rm(ii)}] For any $r\in(0,\infty)$ and $x,\ y\in\cx$ with $d(x,y)\geq r$,
the balls $B(x,(2A_0)^{-1}r)$ and $B(y,(2A_0)^{-1}r)$ are disjoint.
\end{enumerate}
\end{lemma}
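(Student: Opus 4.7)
The plan is to handle the two parts separately, since they are geometric in nature and essentially independent.

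For part (ii), the argument is an immediate application of the quasi-triangle inequality (Definition \ref{10.21.1}(iii)). Suppose, toward a contradiction, that some point $z$ lies in both $B(x,(2A_0)^{-1}r)$ and $B(y,(2A_0)^{-1}r)$. Then
\[
d(x,y)\leq A_0\left[d(x,z)+d(z,y)\right]< A_0\left[(2A_0)^{-1}r+(2A_0)^{-1}r\right]=r,
\]
which contradicts $d(x,y)\geq r$. Hence the two balls must be disjoint.

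For part (i), the plan is a packing argument based on iterating the quasi-triangle inequality. Suppose that $B(x_1,tr),\dots,B(x_N,tr)$ are pairwise disjoint balls contained in $B(x,r)$; the goal is to bound $N$ by $A_0 t^{-\log_2 A_0}$. The natural approach is to reduce to the case $t=1/2$ via a dyadic iteration: one shows that, purely from the quasi-triangle inequality, a ball of radius $\rho$ cannot contain more than $A_0$ pairwise disjoint balls of radius $\rho/2$, and then one iterates this halving step approximately $\log_2(1/t)$ times. Compounding the per-step factor $A_0$ over these iterations produces the exponent $\log_2 A_0$ in the exponent of $t^{-1}$, and the leading factor $A_0$ accounts for the initial scale.

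The main obstacle will be to verify the per-step bound (that at most $A_0$ disjoint half-radius balls fit in a ball of a given radius) cleanly in the quasi-metric setting, and then to track the constant carefully through the iteration so as to arrive at exactly $A_0 t^{-\log_2 A_0}$. The detailed bookkeeping is already carried out in \cite[p.\ 4, (2) and (4)]{hk}; accordingly, the plan is to invoke this result, rather than reproduce the calculation here.
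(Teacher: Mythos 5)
Your treatment of (ii) is correct and is the standard argument; note that the paper offers no proof of this lemma at all, but simply quotes \cite[p.\ 4, (2) and (4)]{hk}, so on (ii) you have in fact supplied more detail than the paper does.

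For (i), however, the reasoning you sketch has a genuine flaw: the packing bound cannot follow ``purely from the quasi-triangle inequality''. The quasi-triangle inequality alone gives no control whatsoever on how many pairwise disjoint balls of radius $tr$ fit inside $B(x,r)$: already in $\rn$, where $A_0=1$, your per-step claim would assert that a ball contains at most one disjoint half-radius ball, and the bound $A_0t^{-\log_2A_0}$ would read $1$, both of which are false; in an infinite set with the discrete metric one can even fit infinitely many disjoint small balls. What makes the statement true is the doubling hypothesis: the doubling condition \eqref{eq-doub} on $\mu$ implies that $(\cx,d)$ is geometrically doubling (every ball of radius $\rho$ can be covered by boundedly many balls of radius $\rho/2$), and it is this covering constant, not the dilation constant of $d$, that drives the halving iteration in \cite{hk}; the constant denoted $A_0$ in the lemma is playing the role of that geometric doubling constant there. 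So the iteration scheme you describe has the right shape, but the per-step bound must be extracted from (geometric) doubling, not from the quasi-triangle inequality. Alternatively, one can argue directly with the measure: if $B(x_i,tr)\subset B(x,r)$, $1\le i\le M$, are pairwise disjoint, then $B(x,r)\subset B(x_i,2A_0r)$ and hence, by \eqref{eq-doub}, $\mu(B(x,r))\le C_{(\mu)}(2A_0/t)^{\omega}\mu(B(x_i,tr))$ for each $i$; summing over $i$ and using the disjointness gives $M\le C_{(\mu)}(2A_0)^{\omega}t^{-\omega}$, a bound of the same type $Ct^{-\omega}$, which is all that is ever used of this lemma later in the paper. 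Since in the end you, like the paper, invoke \cite[p.\ 4, (2) and (4)]{hk}, the conclusion itself is not in danger, but the justification you propose for (i) would not go through as written.
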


Now we prove Lemma \ref{11.14.1}.

\begin{proof}[Proof of Lemma \ref{11.14.1}]
Let all the notation be as in this lemma. Then, by \eqref{11.10.1} and \eqref{pp},
we know that, for any fixed $ \beta'\in(0,\beta\wedge\gamma)$
and any $l\in\zz$, $\alpha\in\ca_l$ and $f\in(\cggi)'$,
\begin{align*}
&\frac{1}{\mu(Q_\alpha^l)}\sum_{k=l}^\infty\sum_{\tau\in\ca_k}
\sum_{m=1}^{N(k,\tau)}\delta^{-ksq}\mu(Q_\tau^{k,m})
\mathbf{1}_{\{(\tau,m):\ Q_\tau^{k,m}\subset Q_\alpha^l\}}(\tau,m)
\left[\sup_{x\in Q_\tau^{k,m}}|P_k(f)(x)|\right]^q\\
&\quad \lesssim \frac{1}{\mu(Q_\alpha^l)}\sum_{k=l}^\infty
\sum_{\tau\in\ca_k}\sum_{m=1}^{N(k,\tau)}\delta^{-ksq}\mu\left(Q_\tau^{k,m}\right)
\mathbf{1}_{\{(\tau,m):\ Q_\tau^{k,m}\subset Q_\alpha^l\}}(\tau,m)\\
&\quad\qquad\times\left\{\sum_{k'=l}^\infty\sum_{\alpha' \in \ca_{k'}}
\sum_{m'=1}^{N(k',\alpha')}\delta^{|k-k'|\beta'}\mu\left(\qap\right)
\left|Q_{k'}f\left(\yap\right)\right|
\frac{1}{V_{\delta^{k\wedge k'}}(\yap)+V(\yap,y_\tau^{k,m})}\right.\\
&\quad\qquad\times\left.\left[\frac{\delta^{k\wedge k'}}
{\delta^{k\wedge k'}+d(\yap,y_\tau^{k,m})}\right]^{\gamma}\right\}^q
+ \frac{1}{\mu(Q_\alpha^l)}\sum_{k=l}^\infty\sum_{\tau\in\ca_k}
\sum_{m=1}^{N(k,\tau)}\delta^{-ksq}\mu\left(Q_\tau^{k,m}\right)
\mathbf{1}_{\{(\tau,m):\ Q_\tau^{k,m}\subset Q_\alpha^l\}}(\tau,m)\\
&\quad\qquad\times\left\{\sum_{k'=-\infty}^{l-1}
\sum_{\alpha' \in \ca_{k'}}\sum_{m'=1}^{N(k',\alpha')}\delta^{|k-k'|\beta'}
\mu\left(\qap\right)\left|Q_{k'}f\left(\yap\right)\right|
\frac{1}{V_{\delta^{k\wedge k'}}(\yap)+V(\yap,y_\tau^{k,m})}\right.\\
&\quad\qquad\times\left.\left[\frac{\delta^{k\wedge k'}}{\delta^{k\wedge k'}
+d(\yap,y_\tau^{k,m})}\right]^\gamma\right\}^q\\
&\quad=: {\rm I}_1+{\rm I}_2.
\end{align*}

For any $l\in\zz$ and $\alpha\in\ca_l$, let 
$$\cb_\alpha^{l,0}:=\lf\{\ta\in\ca_l:\ d(z_{\ta}^l,z_\alpha^l)<5A_0^3C_0\delta^l\r\}.$$
We claim that there exists a positive number $m_1$,
independent of $l$ and $\alpha$, such that $\#\cb_\alpha^{l,0}\leq m_1$.
Indeed, for any $\ta \in\cb_\alpha^{l,0}$ and $x\in B(z_{\ta}^l,(2A_0C_0)^{-1}\delta^l)$, we know that
$$d(x,z_\alpha^l)\leq A_0[d(x,z_{\ta}^l)+d(z_{\ta}^l,z_\alpha^l)]\leq 6A_0^4C_0\delta^l,$$
which implies that $B(z_{\ta}^l,(2A_0C_0)^{-1}\delta^l)\subset B(z_\alpha^l,6A_0^4C_0\delta^l)$. By
this, Theorem \ref{10.22.1} and Lemma \ref{11.14.2}(i),
we find that there exists a positive number $m_1$, independent of $l$ and $\alpha$, such that
$\#\cb_\alpha^{l,0}\le m_1$. Moreover, from Theorem \ref{10.22.1}(iii), we deduce that, for any
$\ta\in\cb_\alpha^{l,0}$,
$$\mu(Q_{\ta}^l)\leq\mu(B(z_\alpha^l, 2A_0C_0\delta^l))\lesssim
\mu(B(z_\alpha^l, (3A_0^2)^{-1}c_0\delta^l))\lesssim\mu(Q_\alpha^l).$$
By symmetry, we have
$\mu(Q_{\ta}^l)\sim\mu(Q_\alpha^l)$ for any $l\in\zz$, $\alpha\in\ca_l$
and $\ta\in\cb_\alpha^{l,0}$.

From this and Theorem \ref{10.22.1}(ii), it follows that, for any $l\in\zz$ and $\alpha\in\ca_l$,
\begin{align*}
{\rm I}_1 &\lesssim \frac{1}{\mu(Q_\alpha^l)}\sum_{k=l}^\infty
\sum_{\tau\in\ca_k}\sum_{m=1}^{N(k,\tau)}\delta^{-ksq}\mu(Q_\tau^{k,m})
\mathbf{1}_{\{(\tau,m):\ Q_\tau^{k,m}\subset Q_{\alpha}^l\}}(\tau,m)\\
&\qquad\times\left\{\sum_{k'=l}^\infty\sum_{\alpha' \in \ca_{k'}}
\sum_{m'=1}^{N(k',\alpha')}\delta^{|k-k'|\beta'}\mu\left(\qap\right)\left|Q_{k'}f\left(\yap\right)\right|
\mathbf{1}_{\{(\alpha',m'):\ Q_{\alpha'}^{k',m'}\subset
(\cup_{\ta\in\cb_\alpha^{l,0}}Q_{\ta}^l)\}}(\alpha',m')\right.\\
&\qquad\times\left.\frac{1}{V_{\delta^{k\wedge k'}}(\yap)
+V(\yap,y_\tau^{k,m})}\left[\frac{\delta^{k\wedge k'}}{\delta^{k\wedge k'}
+d(\yap,y_\tau^{k,m})}\right]^{\gamma'}\right\}^q\\
&\qquad +\frac{1}{\mu(Q_\alpha^l)}\sum_{k=l}^\infty\sum_{\tau\in\ca_k}
\sum_{m=1}^{N(k,\tau)}\delta^{-ksq}\mu\left(Q_\tau^{k,m}\right)
\mathbf{1}_{\{(\tau,m):\ Q_\tau^{k,m}\subset Q_\alpha^l\}}(\tau,m)\\
&\qquad\times\left\{\sum_{k'=l}^\infty
\sum_{\alpha' \in \ca_{k'}}\sum_{m'=1}^{N(k',\alpha')}\delta^{|k-k'|\beta'}\mu\left(\qap\right)
\left|Q_{k'}f\left(\yap\right)\right|
\mathbf{1}_{\{(\alpha',m'):\ Q_{\alpha'}^{k',m'}\cap (\cup_{\ta\in\cb_\alpha^{l,0}}Q_{\ta}^l)=\emptyset\}}
(\alpha',m')\right.\\
&\qquad\times\left.\frac{1}{V_{\delta^{k\wedge k'}}(\yap)+V(\yap,y_\tau^{k,m})}
\left[\frac{\delta^{k\wedge k'}}{\delta^{k\wedge k'}+d(\yap,y_\tau^{k,m})}\right]^{\gamma'}\right\}^q\\
&=:{\rm I}_{1,1}+{\rm I}_{1,2}.
\end{align*}

We first estimate $\rm{{\rm I}_{1,1}}$. If $q\in(p(s,\beta\wedge\gamma),1]$, we then have
$q\in(\omega/(\omega+\gamma),1]$.
By this, \eqref{r}, Lemma \ref{9.14.1}, \eqref{11.10.2}, we know that,
for any $l\in\zz$ and $\alpha\in\ca_l$,
\begin{align*}
\rm{{\rm I}_{1,1}} &\lesssim \frac{1}{\mu(Q_\alpha^l)}\sum_{\ta\in\cb_\alpha^{l,0}}
\sum_{k=l}^\infty\sum_{\tau\in\ca_k}\sum_{m=1}^{N(k,\tau)}\delta^{-ksq}
\mu\left(Q_\tau^{k,m}\right)\mathbf{1}_{\{(\tau,m):\ Q_\tau^{k,m}\subset Q_\alpha^l\}}(\tau,m)\\
&\qquad\times\sum_{k'=l}^\infty\sum_{\alpha' \in \ca_{k'}}
\sum_{m'=1}^{N(k',\alpha')}\delta^{|k-k'|\beta' q}\left[\mu\left(\qap\right)\right]^q
\left|Q_{k'}f\left(\yap\right)\right|^q
\mathbf{1}_{\{(\alpha',m'):\ Q_{\alpha'}^{k',m'}\subset Q_{\ta}^l\}}(\alpha',m')\\
&\qquad\times\left[\frac{1}{V_{\delta^{k\wedge k'}}(\yap)+V(\yap,y_\tau^{k,m})}
\right]^q\left[\frac{\delta^{k\wedge k'}}{\delta^{k\wedge k'}+d(\yap,y_\tau^{k,m})}\right]^{\gamma q}\\
&\lesssim \frac{1}{\mu(Q_\alpha^l)}\sum_{\ta\in\cb_\alpha^{l,0}}
\sum_{k'=l}^\infty\sum_{\alpha' \in \ca_{k'}}\sum_{m'=1}^{N(k',\alpha')}\delta^{-k'sq}
\left[\mu\left(\qap\right)\right]^q
\left|Q_{k'}f\left(\yap\right)\right|^q\mathbf{1}_{\{(\alpha',m'):\ Q_{\alpha'}^{k',m'}
\subset Q_{\ta}^l\}}(\alpha',m')\\
&\qquad\times\sum_{k=l}^\infty\delta^{k'sq}\delta^{-ksq}\delta^{|k-k'|\beta' q}\sum_{\tau\in\ca_k}
\sum_{m=1}^{N(k,\tau)}\mu\left(Q_\tau^{k,m}\right)\mathbf{1}_{\{(\tau,m):\ Q_\tau^{k,m}\subset Q_\alpha^l\}}(\tau,m)\\
&\qquad\times \left[\frac{1}{V_{\delta^{k\wedge k'}}(\yap)+V(\yap,y_\tau^{k,m})}\right]^q
\left[\frac{\delta^{k\wedge k'}}{\delta^{k\wedge k'}+d(\yap,y_\tau^{k,m})}\right]^{\gamma q}\\
&\lesssim \frac{1}{\mu(Q_\alpha^l)}\sum_{\ta\in\cb_\alpha^{l,0}}
\sum_{k'=l}^\infty\sum_{\alpha' \in \ca_{k'}}\sum_{m'=1}^{N(k',\alpha')}
\delta^{-k'sq}\mu\left(\qap\right)
\left|Q_{k'}f\left(\yap\right)\right|^q\mathbf{1}_{\{(\alpha',m'):\ Q_{\alpha'}^{k',m'}
\subset Q_{\ta}^l\}}(\alpha',m')\\
&\qquad \times\sum_{k=l}^\infty\delta^{k'sq}\delta^{-ksq}
\delta^{|k-k'|\beta' q}\delta^{(k\wedge k'-k')\omega(1- q)}
\end{align*}
By this and choosing $ \beta'\in(s_+,\eta)$ such that, we know that, for any $l\in\zz$ and $\alpha\in\ca_l$,
$q\in(\omega/(\omega+\bz'+s),1]$
\begin{equation*}
\rm{{\rm I}_{1,1}}\lesssim \sum_{\ta\in\cb_\alpha^{l,0}}\frac{1}{\mu(Q_{\ta}^l)}
\sum_{k'=l}^\infty\sum_{\alpha' \in \ca_{k'}}\sum_{m'=1}^{N(k',\alpha')}
\delta^{-k'sq}\mu\left(\qap\right)
\left|Q_{k'}f\left(\yap\right)\right|^q\mathbf{1}_{\{(\alpha',m'):\ Q_{\alpha'}^{k',m'}
\subset Q_{\ta}^l\}}(\alpha',m').
\end{equation*}

If $q\in(1, \infty]$, then, by the H\"{o}lder inequality and Lemma \ref{9.14.1},
we conclude that, for any fixed $\beta'\in(|s|,\beta\wedge\gamma)$ and for any $l\in\zz$ and $\alpha\in\ca_l$,
\begin{align*}
\rm{{\rm I}_{1,1}}&\lesssim \frac{1}{\mu(Q_\alpha^l)}
\sum_{\ta\in\cb_\alpha^{l,0}}\sum_{k=l}^\infty\sum_{\tau\in\ca_k}
\sum_{m=1}^{N(k,\tau)}\mu\left(Q_\tau^{k,m}\right)
\mathbf{1}_{\{(\tau,m):\ Q_\tau^{k,m}\subset Q_\alpha^l\}}(\tau,m)\\
&\qquad\times\left(\left\{\sum_{k'=l}^\infty\sum_{\alpha' \in \ca_{k'}}
\sum_{m'=1}^{N(k',\alpha')}\delta^{-k'sq}\delta^{(k'-k)s}\delta^{|k-k'|\beta'}\mu\left(\qap\right)
\left|Q_{k'}f\left(\yap\right)\right|^q\r.\r.\\
&\qquad\times\left.\mathbf{1}_{\{(\alpha',m'):\ Q_{\alpha'}^{k',m'}\subset Q_{\ta}^l\}}(\alpha',m')
\frac{1}{V_{\delta^{k\wedge k'}}(\yap)+V(\yap,y_\tau^{k,m})}
\left[\frac{\delta^{k\wedge k'}}{\delta^{k\wedge k'}+d(\yap,y_\tau^{k,m})}\right]^{\gamma'}\r\}^{1/q}\\
&\qquad\times\left\{\sum_{k'=l}^\infty\sum_{\alpha' \in \ca_{k'}}
\sum_{m'=1}^{N(k',\alpha')}\delta^{(k'-k)s}\delta^{|k-k'|\beta'}\mu\left(\qap\right)\r.\\
&\qquad\times\left.\left.\frac{1}{V_{\delta^{k\wedge k'}}(\yap)+V(\yap,\ya)}
\left[\frac{\delta^{k\wedge k'}}{\delta^{k\wedge k'}+d(\yap,\ya)}\right]^{\gamma'}\r\}^{1/q'}\r)^q\\
&\lesssim \sum_{\ta\in\cb_\alpha^{l,0}}\frac{1}{\mu(Q_{\ta}^l)}
\sum_{k'=l}^\infty\sum_{\alpha' \in \ca_{k'}}\sum_{m'=1}^{N(k',\alpha')}
\delta^{-k'sq}\mu\left(\qap\right)
\left|Q_{k'}f\left(\yap\right)\right|^q\mathbf{1}_{\{(\alpha',m'):\ Q_{\alpha'}^{k',m'}
\subset Q_{\ta}^l\}}(\alpha',m').
\end{align*}
To summarize, we know that, for any fixed
$q\in(p(s,\beta\wedge\gamma),\infty]$
and for any $l\in\zz$ and $\alpha\in\ca_l$,
\begin{equation}\label{11.11.1}
{\rm I}_{1,1}\lesssim
\sum_{\ta\in\cb_\alpha^{l,0}}\frac{1}{\mu(Q_{\ta}^l)}
\sum_{k'=l}^\infty\sum_{\alpha' \in \ca_{k'}}\sum_{m'=1}^{N(k',\alpha')}
\delta^{-k'sq}\mu\left(\qap\right)
\left|Q_{k'}f\left(\yap\right)\right|^q\mathbf{1}_{\{(\alpha',m'):\ Q_{\alpha'}^{k',m'}
\subset Q_{\ta}^l\}}(\alpha',m').
\end{equation}

We next estimate ${\rm I}_{1,2}$. To this end, for
any $l\in\zz$, $j\in\nn$ and $\alpha\in\ca_l$,  define
$$\ca_l^j:=\{\ta\in\ca_l:\ 5A_0^3C_0\delta^{l-j+1}
\leq d(z_{\ta}^l,z_\alpha^l)<5A_0^3C_0\delta^{l-j}\}$$
and
$$\cb_\alpha^{l,j}:=\{\tit\in\ca_{l-j}:\ Q_{\ta}^l\subset Q_{\tit}^{l-j}
\quad\text{for some}\quad \ta\in\ca_l^j\}.$$
From Theorem \ref{10.22.1}(i), it easily follows that, for any
$l\in\zz$, $j\in\nn$ and $\alpha\in\ca_l$,
\begin{equation}\label{eq-x1}
\bigcup_{\ta\in\ca_l^j}Q_{\ta}^l\subset\bigcup_{\tit\in\cb_\alpha^{l,j}}Q_{\tit}^{l-j}.
\end{equation}
We claim that there exists a positive number $m_2$, independent of $\alpha$, $l$ and $j$,
such that
\begin{equation}\label{11.15.1}
\#\cb_\alpha^{l,j} \leq m_2.
\end{equation}
Indeed, if $\cb_\alpha^{l,j}=\emptyset$, \eqref{11.15.1} holds true obviously.
If $\cb_\alpha^{l,j}\neq\emptyset$, then there exists a
$\tit_0 \in\cb_\alpha^{l,j}$. By the definition of $\ca_l^j$, we find  that
there exist $\ta_0\in \ca_l^j$ and $x\in \cx$ such that $x\in Q_{\ta_0}^l\cap Q_{\tit_0}^{l-j}$.
Then, from Theorem \ref{10.22.1}(iii), we deduce that,
for any $\ta\in \ca_l^j$ and $y\in Q_{\ta}^l$,
\begin{align*}
d(y, z_{\tit_0}^{l-j})&\leq A_0\left[d(y, z_{\ta}^l)+d( z_{\ta}^l,z_{\tit_0}^{l-j})\right]\\
&<2A_0^2C_0\delta^l+A_0^2\left[d(z_{\ta}^l, z_\alpha^l)+d( z_\alpha^l,z_{\tit_0}^{l-j})\right]\\
&<2A_0^2C_0\delta^l+5A_0^5C_0\delta^{l-j}+A_0^3
\left[d( z_\alpha^l,z_{\ta_0}^l)+d(z_{\ta_0}^l,z_{\tit_0}^{l-j})\right]\\
&<2A_0^2C_0\delta^l+5A_0^5C_0\delta^{l-j}+5A_0^6C_0\delta^{l-j}+A_0^4
\left[d(z_{\ta_0}^l,x)+d(x,z_{\tit_0}^{l-j})\right]\\
&<2A_0^2C_0\delta^l+5A_0^5C_0\delta^{l-j}+5A_0^6C_0\delta^{l-j}
+2A_0^5C_0\delta^l+2A_0^5C_0\delta^{l-j}\\
&\leq 16A_0^6C_0\delta^{l-j},
\end{align*}
which further implies that, for any $\ta\in \ca_l^j$,
$Q_{\ta}^l\subset B(z_{\tit_0}^{l-j}, 16A_0^6C_0\delta^{l-j})$. By this, we
conclude that, for any $\tit \in\cb_\alpha^{l,j}$, there exists a $z\in Q_{\tit}^{l-j}\cap
B(z_{\tit_0}^{l-j},16A_0^6C_0\delta^{l-j})\neq\emptyset$. Therefore, for any  $s\in Q_{\tit}^{l-j}$,
$$d(s,z_{\tit_0}^{l-j})\leq A_0\{d(s,z_{\tit}^{l-j})+A_0[d(z_{\tit}^{l-j},z)
+d(z,z_{\tit_0}^{l-j})]\}<20A_0^8C_0\delta^{l-j},$$
which, together with Theorem \ref{10.22.1}(ii),   implies that,
for any $\tit \in\cb_\alpha^{l,j}$,
$$B(z_{\tit}^{l-j},(2A_0C_0)^{-1}\delta^{l-j})\subset B(z_{\tit_0}^{l-j},20A_0^8C_0\delta^{l-j}).$$
From this, Theorem \ref{10.22.1} and Lemma \ref{11.14.2},
we deduce that \eqref{11.15.1} holds true.

Moreover, for any $\tit \in \cb_\alpha^{l,j}$,
$x\in Q_{\tit}^{l-j}$ and $\yap\in\qap\subset Q_{\ta}^l$ with $\ta\in\ca_l^j$,
\begin{align*}
d(x,\yap)&\leq A_0[d(x,z_{\tit}^{l-j})+d(z_{\tit}^{l-j},\yap)]\\
&<2A_0C_0\delta^{l-j}+A_0^2[d(z_{\tit}^{l-j},z_{\tit_0}^{l-j})+d(z_{\tit_0}^{l-j},\yap)]\\
&<36A_0^{10}C_0\delta^{l-j},
\end{align*}
which implies that
$$\bigcup_{\tit\in\cb_\alpha^{l,j}}Q_{\tit}^{l-j}\subset B(\yap,36A_0^{10}C_0\delta^{l-j})$$
and hence
\begin{equation}\label{11.16.1}
\mu\left(\bigcup_{\tit\in\cb_\alpha^{l,j}}Q_{\tit}^{l-j}\right)\lesssim V_{\delta^{l-j}}\left(\yap\right).
\end{equation}
We also notice that, for any $\yap\in\qap\subset Q_{\ta}^l$ with $
\ta\in\ca_l^j$ and $y\in Q_\alpha^l$,
\begin{align*}
5A_0^3\delta^{l-j+1}&\leq d(z_{\ta}^l, z_\alpha^l)\leq A_0[d(z_{\ta}^l,\yap)+d(\yap,z_\alpha^l)]\\
&<2A_0^2\delta^l+A_0^2[d(\yap,y)+d(y,z_\alpha^l)]<4A_0^3\delta^l+A_0^2d(\yap,y)\\
&\leq 4A_0^3\delta^{l-j+1}+A_0^2d(\yap,y),
\end{align*}
which implies that
\begin{equation}\label{11.16.2}
d\left(\yap,y\right)\geq A_0\delta^{l-j+1}
\end{equation}
and hence, by \eqref{11.16.1}, we have, for any $\tit \in \cb_\alpha^{l,j}$,
\begin{equation}\label{vyy}
\mu\left(Q_{\tit}^{l-j}\right)\leq \mu\left(\bigcup_{\tit\in\cb_\alpha^{l,j}}
Q_{\tit}^{l-j}\right)\lesssim V_{\delta^{l-j}}\left(\yap\right)\lesssim V\left(\yap,y\right).
\end{equation}

Now we estimate ${\mathrm I}_{1,2}$. If $ q\in(p(s,\bz\wedge\gz),1]$, then, by
\eqref{r}, we find that, for any fixed $\beta'\in(s,\beta\wedge\gamma)$
and for any $l\in\zz$ and $\alpha\in\ca_l$,
\begin{align*}
{\mathrm I}_{1,2}&\lesssim\frac{1}{\mu(Q_\alpha^l)}\sum_{k=l}^\infty\sum_{\tau\in\ca_k}
\sum_{m=1}^{N(k,\tau)}\delta^{-ksq}\mu(Q_\tau^{k,m})
\mathbf{1}_{\{(\tau,m):\ Q_\tau^{k,m}\subset Q_\alpha^l\}}(\tau,m)\\
&\qquad\times\sum_{k'=l}^\infty\sum_{\alpha' \in \ca_{k'}}\sum_{m'=1}^{N(k',\alpha')}
\delta^{|k-k'|\beta'q}\left[\mu\left(\qap\right)\right]^q\left|Q_{k'}f\left(\yap\right)\right|^q
\mathbf{1}_{\{(\alpha',m'):\ Q_{\alpha'}^{k',m'}\cap (\cup_{\ta\in\cb_\alpha^{l,0}}Q_{\ta}^l)=\emptyset\}}
(\alpha',m')\notag\\
&\qquad\times\left[\frac{1}{V_{\delta^{k\wedge k'}}(\yap)+V(\yap,y_{\tau}^{k,m})}
\right]^q\left[\frac{\delta^{k\wedge k'}}{\delta^{k\wedge k'}+d(\yap,y_{\tau}^{k,m})}\right]^{\gamma q}.\notag
\end{align*}
From this and \eqref{eq-x1}, we deduce that,
for any $l\in\zz$ and $\alpha\in\ca_l$,
\begin{align}\label{11.18.1}
{\mathrm I}_{1,2}&\lesssim\frac{1}{\mu(Q_\alpha^l)}\sum_{k=l}^\infty\sum_{\tau\in\ca_k}
\sum_{m=1}^{N(k,\tau)}\delta^{-ksq}\mu\left(Q_\tau^{k,m}\right)
\mathbf{1}_{\{(\tau,m):\ Q_\tau^{k,m}\subset Q_\alpha^l\}}(\tau,m)\\
&\qquad\times\sum_{k'=l}^\infty\sum_{\alpha' \in \ca_{k'}}\sum_{m'=1}^{N(k',\alpha')}
\delta^{|k-k'|\beta'q}\left[\mu\left(\qap\right)\right]^q\left|Q_{k'}f\left(\yap\right)\right|^q\notag\\
&\qquad\times\sum_{j=1}^\infty\sum_{\ta\in\ca_j^l}
\mathbf{1}_{\{(\alpha',m'):\ \qap\subset Q_{\ta}^{l}\}}(\alpha',m')
\left[\frac{1}{V(\yap,y_{\tau}^{k,m})}\right]^q
\left[\frac{\delta^{k\wedge k'}}{d(\yap, y_{\tau}^{k,m})}\right]^{\gamma q}\notag\\
&\lesssim\frac{1}{\mu(Q_\alpha^l)}\sum_{k=l}^\infty\sum_{\tau\in\ca_k}
\sum_{m=1}^{N(k,\tau)}\delta^{-ksq}\mu\left(Q_\tau^{k,m}\right)
\mathbf{1}_{\{(\tau,m):\ Q_\tau^{k,m}\subset Q_\alpha^l\}}(\tau,m)\noz\\
&\qquad\times\sum_{k'=l}^\infty\sum_{\alpha' \in \ca_{k'}}\sum_{m'=1}^{N(k',\alpha')}
\delta^{|k-k'|\beta'q}\left[\mu\left(\qap\right)\right]^q
\left|Q_{k'}f\left(\yap\right)\right|^q\notag\\
&\qquad\times\sum_{j=1}^\infty\sum_{\tit\in\cb_\az^{l,j}}
\mathbf{1}_{\{(\alpha',m'):\ \qap\subset Q_{\tit}^{l-j}\}}(\alpha',m')
\left[\frac{1}{V(\yap,y_{\tau}^{k,m})}\right]^q
\left[\frac{\delta^{k\wedge k'}}{d(\yap, y_{\tau}^{k,m})}\right]^{\gamma q}.\noz
\end{align}
Notice that, for any $j\in\nn$, $\tit\in\cb_\alpha^{l,j}$, $Q_{\az'}^{k',m'}\subset Q_{\tit}^{l-j}$
and $\yap\in\qap$, we have $V_{\dz^{l-j}}(\yap)\sim\mu(Q_{\tit}^{l-j})$.
By this, $y_\tau^{k,m}\in Q_\tau^{k,m}\subset Q_\az^l$, \eqref{11.16.2} and \eqref{vyy}, we conclude that
\begin{align}\label{eq-x2}
&\mathbf{1}_{\{(\alpha',m'):\ \qap\subset Q_{\tit}^{l-j}\}}(\alpha',m')
\left[\frac{1}{V(\yap,y_{\tau}^{k,m})}\right]^q
\left[\frac{\delta^{k\wedge k'}}{d(\yap, y_{\tau}^{k,m})}\right]^{\gamma q}\\
&\quad\ls\mathbf{1}_{\{(\alpha',m'):\ \qap\subset Q_{\tit}^{l-j}\}}(\alpha',m')
\left[\frac{1}{\mu(Q_{\tit}^{l-j})}\right]^q\dz^{(k\wedge k'-l+j)\gamma'q}\noz\\
&\quad\sim\mathbf{1}_{\{(\alpha',m'):\ \qap\subset Q_{\tit}^{l-j}\}}(\alpha',m')
\frac{1}{\mu(Q_{\tit}^{l-j})}\lf[V_{\dz^{l-j}}\lf(\yap\r)\r]^{1-q}\dz^{(k\wedge k'-l+j)\gamma'q}.\noz
\end{align}
Moreover, since $l\le k'$ and $j\in\nn$, it then follows that $\dz^{l-j}>\dz^{k'}$,
which, combined with
\eqref{eq-doub}, $\yap\in\qap$ and Lemma \ref{10.22.1}(iii), further implies that
$$
\lf[V_{\dz^{l-j}}\lf(\yap\r)\r]^{1-q}\ls\dz^{(l-j-k')\omega(1-q)}\lf[\mu\lf(\qap\r)\r]^{1-q}.
$$
Notice that, since $l\leq k'$ and $q\in (p(s,\beta\wedge\gamma),1]$, it follows that, if choosing
 $\bz'\in(0,\bz\wedge\gz)$ such that $s\in(-\bz',\bz')$ and
$q\in(\omega/(\omega+\bz'+s),1]$, then
\begin{align}\label{eq-x30}
&\sum_{k=l}^{k'}\delta^{|k-k'|\beta'q}
\delta^{(k'-k)sq}\delta^{(k\wedge k')\gamma q}\delta^{-k'\omega(1-q)}
\delta^{l\omega(1-q)-l\gamma q}\\
&\qquad =\sum_{k=l}^{k'}\delta^{(k'-k)\beta'q}
\delta^{(k'-k)sq}\delta^{ k\gamma q}\delta^{-k'\omega(1-q)}
\delta^{l\omega(1-q)-l\gamma q}\notag\\
&\qquad \leq\sum_{k=l}^{k'}\delta^{(k'-k)\beta'q}
\delta^{(k'-k)sq}\delta^{ k\gamma q}\delta^{-k'\omega(1-q)}
\delta^{k\omega(1-q)-k\gamma q}\notag\\
&\qquad = \sum_{k=l}^{k'}\delta^{(k'-k)[\beta'q+sq-\omega(1-q)]}\lesssim 1\notag
\end{align}
and
\begin{align}\label{eq-x31}
&\sum_{k=k'+1}^{\infty}\delta^{|k-k'|\beta'q}
\delta^{(k'-k)sq}\delta^{(k\wedge k')\gamma q}\delta^{-k'\omega(1-q)}
\delta^{l\omega(1-q)-l\gamma q}\\
&\qquad =\sum_{k=k'+1}^{\infty}\delta^{(k-k')\beta'q}
\delta^{(k-k')sq}\delta^{ k'\gamma q}\delta^{-k'\omega(1-q)}
\delta^{l\omega(1-q)-l\gamma q}\noz\\
&\qquad =\sum_{k=k'+1}^{\fz}\delta^{(k-k')(\beta'q-sq)}
\delta^{(k'-l)[\gamma q-\omega(1-q)]}\lesssim 1,\noz
\end{align}
where we used the fact $q\in(\omega/(\omega+\gz),1]$.
By this, \eqref{11.18.1}, \eqref{eq-x2}, \eqref{11.15.1}, \eqref{eq-x30} and \eqref{eq-x31},
we conclude that, for any $l\in\zz$ and $\alpha\in\ca_l$,
\begin{align}\label{eq-x3}
\mathrm I_{1,2}&\lesssim\frac{1}{\mu(Q_\alpha^l)}\sum_{k=l}^\infty\sum_{\tau\in\ca_k}
\sum_{m=1}^{N(k,\tau)}\delta^{-ksq}\mu(Q_\tau^{k,m})
\mathbf{1}_{\{(\tau,m):\ Q_\tau^{k,m}\subset Q_\alpha^l\}}(\tau,m)\\
&\qquad\times\sum_{k'=l}^\infty\sum_{\alpha' \in \ca_{k'}}
\sum_{m'=1}^{N(k',\alpha')}\delta^{|k-k'|\beta'q}\mu\left(\qap\right)
\left|Q_{k'}f\left(\yap\right)\right|^q\notag\\
&\qquad\times\sum_{j=1}^\infty
\sum_{\tit\in\cb_\alpha^{l,j}}\mathbf{1}_{\{(\alpha',m'):\ \qap\subset Q_{\tit}^{l-j}\}}(\alpha',m')
\frac{1}{\mu(Q_{\tit}^{l-j})}\delta^{(l-j-k')\omega(1-q)}
\delta^{[(k\wedge k')-l+j]\gamma q}\notag\\
&\lesssim\sum_{j=1}^\infty\delta^{ j[\gamma q-\omega(1-q)]}\sum_{\tit\in\cb_\alpha^{l,j}}
\frac{1}{\mu(Q_{\tit}^{l-j})}\sum_{k'=l}^\infty\sum_{\alpha'\in\ca_{k'}}
\sum_{m'=1}^{N(k',\alpha')}\delta^{-k'sq}\mu\left(\qap\right)\notag\\
&\qquad\times\mathbf{1}_{\{(\alpha',m'):\ \qap\subset Q_{\tit}^{l-j}\}}(\alpha',m')
\left|Q_{k'}(f)\left(\yap\right)\right|^q\notag\\
&\qquad\times\left[\sum_{k=l}^\infty\delta^{|k-k'|\beta'q}
\delta^{(k'-k)sq}\delta^{(k\wedge k')\gamma q}\delta^{-k'\omega(1-q)}
\delta^{l\omega(1-q)-l\gamma q}\right]\notag\\
&\lesssim \sum_{j=1}^\infty\delta^{j[\gamma q-\omega(1-q)]}
\sum_{\tit\in\cb_\alpha^{l,j}}\frac{1}{\mu(Q_{\tit}^{l-j})}\sum_{k'=l-j}^\infty
\sum_{\alpha'\in\ca_{k'}}\sum_{m'=1}^{N(k',\alpha')}\delta^{-k'sq}\mu\left(\qap\right)\notag\\
&\qquad\times\mathbf{1}_{\{(\alpha',m'):\ \qap\subset Q_{\tit}^{l-j}\}}(\alpha',m')
\left|Q_{k'}(f)\left(\yap\right)\right|^q\notag\\
&\lesssim \sup_{l\in\zz} \sup_{\tit\in\ca_l}\frac{1}{\mu(Q_{\tit}^{l})}\sum_{k'=l}^\infty
\sum_{\alpha'\in\ca_{k'}}\sum_{m'=1}^{N(k',\alpha')}\delta^{-k'sq}\mu\left(\qap\right)\noz\\
&\qquad\times\mathbf{1}_{\{(\alpha',m'):\ \qap\subset Q_{\tit}^{l}\}}(\alpha',m')
\left|Q_{k'}(f)\left(\yap\right)\right|^q.\notag
\end{align}
Here we chose $\bz'\in(0,\bz\wedge\gz)$ such that $s\in(-\bz',\bz')$ and
$q\in(\omega/(\omega+\bz'+s),1]$.

On the other hand, if $q \in (1,\infty]$, then, by
the H\"{o}lder inequality,  Lemma \ref{9.14.1}, \eqref{11.16.2}, \eqref{11.15.1}, \eqref{11.16.1},
and  an argument similar to that used in the estimation of \eqref{eq-x3}, we choose $\bz'\in(0,\bz\wedge\gz)$
such that $s\in(-\bz',\bz')$ and hence conclude that, for any $l\in\zz$ and $\alpha\in\ca_l$,
\begin{align}\label{11.18.2}
\rm{{\rm I}_{1,2}}&\lesssim \frac{1}{\mu(Q_\alpha^l)}\sum_{k=l}^\infty
\sum_{\tau\in\ca_k}\sum_{m=1}^{N(k,\tau)}\mu(Q_\tau^{k,m})
\mathbf{1}_{\{(\tau,m):\ Q_\tau^{k,m}\subset Q_\alpha^l\}}(\tau,m)\\
&\qquad\times\sum_{k'=l}^\infty\sum_{\alpha' \in \ca_{k'}}
\sum_{m'=1}^{N(k',\alpha')}\delta^{|k-k'|\beta'}\delta^{k'sq}\delta^{(k'-k)s}
\mu\left(\qap\right)\left|Q_{k'}f\left(\yap\right)\right|^q\noz\\
&\qquad\times\mathbf{1}_{\{(\alpha',m'):\ Q_{\alpha'}^{k',m'}
\cap(\cup_{\ta\in\cb_\alpha^{l,0}}Q_{\ta}^l)=\emptyset\}}(\alpha',m')\notag\\
&\qquad\times\frac{1}{V_{\delta^{k\wedge k'}}(\yap)+V(\yap,\ya)}
\left[\frac{\delta^{k\wedge k'}}{\delta^{k\wedge k'}+d(\yap,\ya)}\right]^{\gamma}\notag\\
&\lesssim \sum_{j=0}^\infty\delta^{ j\gamma}\sum_{\tit\in\cb_\az^{l,j}}
\frac{1}{\mu(Q_{\tit}^{l-j})}\sum_{k'=l-j}^\infty
\sum_{\alpha'\in\ca_{k'}}\sum_{m'=1}^{N(k',\alpha')}\delta^{-k'sq}\mu\left(\qap\right)\notag\\
&\qquad\times\mathbf{1}_{\{(\alpha',m'):\ \qap\subset Q_{\tit}^{l-j}\}}(\alpha',m')
\left|Q_{k'}(f)\left(\yap\right)\right|^q\notag\\
&\lesssim \sup_{l\in\zz} \sup_{\tit\in\ca_l}\frac{1}{\mu(Q_{\tit}^{l})}
\sum_{k'=l}^\infty\sum_{\alpha'\in\ca_{k'}}
\sum_{m'=1}^{N(k',\alpha')}\delta^{-k'sq}\mu\left(\qap\right)\notag\\
&\qquad\times\mathbf{1}_{\{(\alpha',m'):\ \qap\subset Q_{\tit}^{l}\}}(\alpha',m')
\left|Q_{k'}(f)\left(\yap\right)\right|^q\notag.
\end{align}
Now we estimate $\rm{{\rm I}_2}$. Notice that, for any
$k'\in \zz$, $\alpha'\in\ca_{k'}$ and $m'\in\{1,\dots,N(k',\alpha')\}$,
$$\mu\left(Q_{\alpha'}^{k'}\right)\lesssim \mu\left(\qap\right),$$
which implies that, for any $k'\in\zz$,
\begin{align}\label{11.19.1}
&\sum_{\alpha'\in\ca_{k'}}\sum_{m'=1}^{N(k',\alpha')}\delta^{-k'sq}
\left|Q_{k'}(f)\left(\yap\right)\right|^q\\
&\quad\lesssim\sum_{\alpha'\in\ca_{k'}}\sum_{m'=1}^{N(k',\alpha')}\delta^{-k'sq}
\frac{\mu(\qap)}{\mu(Q_{\alpha'}^{k'})}\mathbf{1}_{\{(\alpha',m'):\ \qap\subset
Q_{\alpha'}^{k'}\}}(\alpha',m')\left|Q_{k'}(f)\left(\yap\right)\right|^q\notag\\
&\quad\lesssim \sup_{l \in \zz} \sup_{\alpha\in\ca_l}\frac{1}{\mu(Q_\alpha^l)}
\sum_{k=l}^\infty\sum_{\tau\in\ca_k}\sum_{m=1}^{N(k,\tau)}\delta^{-ksq}\mu(Q_\tau^{k,m})
\mathbf{1}_{\{(\tau,m):\ Q_\tau^{k,m}\subset Q_\alpha^l\}}(\tau,m)
\left|Q_k(f)\left(\ya\right)\right|^q.\notag
\end{align}
On one hand, if $q\in(p(s,\bz\wedge\gz),1]$, by \eqref{r} and \eqref{11.19.1}, we
choose $\bz'\in(s_+,\bz\wedge\gz)$ and then conclude that,
for any $l\in\zz$ and $\alpha\in\ca_l$,
\begin{align}\label{11.19.2}
\rm{{\rm I}_2}&\lesssim \frac{1}{\mu(Q_\alpha^l)}
\sum_{k=l}^\infty\sum_{\tau\in\ca_k}\sum_{m=1}^{N(k,\tau)}\delta^{-ksq}
\mu\left(Q_\tau^{k,m}\right)\mathbf{1}_{\{(\tau,m):\ Q_\tau^{k,m}
\subset Q_\alpha^l\}}(\tau,m)\\
&\qquad\times\sum_{k'=-\infty}^{l-1}\sum_{\alpha' \in \ca_{k'}}
\sum_{m'=1}^{N(k',\alpha')}\delta^{(k-k')\beta' q}
\left|Q_{k'}f\left(\yap\right)\right|^q\notag\\
&\lesssim \sum_{k=l}^\infty\delta^{k(\beta'-s)q}\left[\frac{1}{\mu(Q_\alpha^l)}
\sum_{\tau\in\ca_k}\sum_{m=1}^{N(k,\tau)}\mu\left(Q_\tau^{k,m}\right)
\mathbf{1}_{\{(\tau,m):\ Q_\tau^{k,m}\subset Q_\alpha^l\}}(\tau,m)\right]\notag\\
&\qquad\times\sum_{k'=-\infty}^{l-1}\delta^{k'(s-\beta')q}
\sum_{\alpha' \in \ca_{k'}}\sum_{m'=1}^{N(k',\alpha')}\delta^{k'sq}
\left|Q_{k'}f\left(\yap\right)\right|^q\notag\\
&\lesssim\sup_{l \in \zz} \sup_{\alpha\in\ca_l}\frac{1}{\mu(Q_\alpha^l)}
\sum_{k=l}^\infty\sum_{\tau\in\ca_k}\sum_{m=1}^{N(k,\tau)}\delta^{-ksq}
\mu\left(Q_\tau^{k,m}\right)\mathbf{1}_{\{(\tau,m):\ Q_\tau^{k,m}\subset Q_\alpha^l\}}(\tau,m)
\left|Q_k(f)\left(\yap\right)\right|^q.\notag
\end{align}
On another hand, if $q\in (1,\infty)$, we then choose $\beta'\in(|s|,\beta\wedge\gamma)$ and
$\widetilde{q}\in(1,q)$ such that $sq<\beta'\widetilde{q}<\beta'q$.
From this, \eqref{11.19.1}, Lemma \ref{9.14.1} and the H\"{o}lder inequality,
we deduce that, for any $l\in\zz$ and $\alpha\in\ca_l$,
\begin{align}\label{i02}
\rm{{\rm I}_2}&\lesssim \frac{1}{\mu(Q_\alpha^l)}\sum_{k=l}^\infty\sum_{\tau\in\ca_k}
\sum_{m=1}^{N(k,\tau)}\delta^{-ksq}\mu\left(Q_\tau^{k,m}\right)
\mathbf{1}_{\{(\tau,m):\ Q_\tau^{k,m}\subset Q_\alpha^l\}}(\tau,m)\\
&\qquad\times\left\{\left[\sum_{k'=-\infty}^{l-1}\sum_{\alpha' \in\ca_{k'}}
\sum_{m'=1}^{N(k',\alpha')}\delta^{(k-k')\beta'\widetilde{q}}
\left|Q_{k'}f\left(\yap\right)\right|^q\right]^{1/q}
\left[\sum_{k=-\infty}^{l-1}\delta^{(k-k')
\beta'q'/(q/\widetilde{q})'}\right]^{1/q'}\right\}^q\notag\\
&\lesssim \sum_{k=l}^\infty\delta^{k(\beta'\widetilde{q}-sq)}
\left[\frac{1}{\mu(Q_\alpha^l)}\sum_{\tau\in\ca_k}\sum_{m=1}^{N(k,\tau)}
\mu\left(Q_\tau^{k,m}\right)\mathbf{1}_{\{(\tau,m):\ Q_\tau^{k,m}
\subset Q_\alpha^l\}}(\tau,m)\right]\notag\\
&\qquad\times\sum_{k'=-\infty}^{l-1}\delta^{k'(sq-\beta'\widetilde{q})}
\sum_{\alpha' \in \ca_{k'}}\sum_{m'=1}^{N(k',\alpha')}\delta^{k'sq}
\left|Q_{k'}f\left(\yap\right)\right|^q\notag\\
&\lesssim\sup_{l \in \zz} \sup_{\alpha\in\ca_l}\frac{1}{\mu(Q_\alpha^l)}
\sum_{k=l}^\infty\sum_{\tau\in\ca_k}\sum_{m=1}^{N(k,\tau)}\delta^{-ksq}
\mu\left(Q_\tau^{k,m}\right)\mathbf{1}_{\{(\tau,m):\ Q_\tau^{k,m}\subset Q_\alpha^l\}}(\tau,m)
\left|Q_k(f)\left(\yap\right)\right|^q.\notag
\end{align}
If $q=\infty$,  it is easy to see that \eqref{i02} holds true.
By this, together with Lemma \ref{11.14.2}, \eqref{11.11.1}, \eqref{11.18.1}, \eqref{11.18.2}, \eqref{11.19.2}, the
arbitrariness of $\yap$ and an argument similar
to that used in the estimation of \eqref{limpp},
we then complete the proof of  Lemma \ref{11.14.1}.
\end{proof}

Using an argument similar to that used in the proof of Proposition \ref{6.4.1}, we can prove the following
proposition and we omit the details here.
\begin{proposition}
Let $\beta,\ \gamma \in (0, \eta)$,
$s \in (-(\beta\wedge\gamma), \beta\wedge\gamma)$
and $q\in (p(s,\beta\wedge\gamma),\infty]$ with $\eta$
as in Definition \ref{10.23.2} and $p(s,\beta\wedge\gamma)$
as in \eqref{pseta}. Let $\{P_k\}_{k\in\zz}$ and $\{Q_k\}_{k\in\zz}$
be two \emph{$\exp$-ATIs}. Then there exists a constant $C\in[1,\fz)$ such that,
for any $f\in(\cggi)'$,
\begin{align*}
&C^{-1}\sup_{l \in \zz} \sup_{\alpha\in\ca_l}\left[\frac{1}{\mu(Q_\alpha^l)}
\int_{Q_\alpha^l}\sum_{k=l}^\infty\delta^{-ksq}|P_k(f)(x)|^q\,d\mu(x)\right]^{1/q}\\
&\quad \le\sup_{l \in \zz} \sup_{\alpha\in\ca_l}
\left[\frac{1}{\mu(Q_\alpha^l)}\int_{Q_\alpha^l}\sum_{k=l}^\infty
\delta^{-ksq}|Q_k(f)(x)|^q\,d\mu(x)\right]^{1/q}\\
&\quad\le C\sup_{l \in \zz} \sup_{\alpha\in\ca_l}\left[\frac{1}{\mu(Q_\alpha^l)}
\int_{Q_\alpha^l}\sum_{k=l}^\infty\delta^{-ksq}|P_k(f)(x)|^q\,d\mu(x)\right]^{1/q}.
\end{align*}
\end{proposition}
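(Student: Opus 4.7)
The plan is to deduce this equivalence from the homogeneous Plancherel--P\^olya inequality just established in Lemma \ref{11.14.1}, following the same strategy as in Proposition \ref{6.4.1}. By the symmetric roles of $\{P_k\}_{k\in\zz}$ and $\{Q_k\}_{k\in\zz}$ in the hypotheses, it suffices to prove that the middle quantity is dominated by a constant multiple of the rightmost one (after exchanging the labels), with the reverse inequality following from the same argument.

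First, I would fix $l\in\zz$ and $\alpha\in\ca_l$ and use Theorem \ref{10.22.1}(i) and (ii) together with the choice of $j_0$ to write $Q_\alpha^l$, for each fixed $k\geq l$, as the disjoint union $\bigcup_{(\tau,m):\ Q_\tau^{k,m}\subset Q_\alpha^l} Q_\tau^{k,m}$. Bounding each integrand $|P_k(f)(x)|^q$ on $Q_\tau^{k,m}$ by $[\sup_{z\in Q_\tau^{k,m}}|P_k(f)(z)|]^q$, I obtain
\begin{align*}
&\frac{1}{\mu(Q_\alpha^l)}\int_{Q_\alpha^l}\sum_{k=l}^\infty\delta^{-ksq}|P_k(f)(x)|^q\,d\mu(x)\\
&\quad\leq\frac{1}{\mu(Q_\alpha^l)}\sum_{k=l}^\infty\sum_{\tau\in\ca_k}\sum_{m=1}^{N(k,\tau)}\delta^{-ksq}\mu(Q_\tau^{k,m})\mathbf{1}_{\{(\tau,m):\ Q_\tau^{k,m}\subset Q_\alpha^l\}}(\tau,m)\left[\sup_{z\in Q_\tau^{k,m}}|P_k(f)(z)|\right]^q.
\end{align*}
Taking the supremum over $l\in\zz$ and $\alpha\in\ca_l$ on both sides and applying Lemma \ref{11.14.1}, the right-hand side is controlled by
$$
C^q\sup_{l\in\zz}\sup_{\alpha\in\ca_l}\frac{1}{\mu(Q_\alpha^l)}\sum_{k=l}^\infty\sum_{\tau\in\ca_k}\sum_{m=1}^{N(k,\tau)}\delta^{-ksq}\mu(Q_\tau^{k,m})\mathbf{1}_{\{(\tau,m):\ Q_\tau^{k,m}\subset Q_\alpha^l\}}(\tau,m)\left[\inf_{z\in Q_\tau^{k,m}}|Q_k(f)(z)|\right]^q.
$$

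Finally, using the trivial estimate
$$
\mu(Q_\tau^{k,m})\left[\inf_{z\in Q_\tau^{k,m}}|Q_k(f)(z)|\right]^q\leq\int_{Q_\tau^{k,m}}|Q_k(f)(z)|^q\,d\mu(z)
$$
for each $(\tau,m)$, and reassembling the disjoint decomposition $\bigcup_{(\tau,m):\ Q_\tau^{k,m}\subset Q_\alpha^l}Q_\tau^{k,m}=Q_\alpha^l$ for each $k\geq l$, I recover exactly
$$
C^q\sup_{l\in\zz}\sup_{\alpha\in\ca_l}\frac{1}{\mu(Q_\alpha^l)}\int_{Q_\alpha^l}\sum_{k=l}^\infty\delta^{-ksq}|Q_k(f)(x)|^q\,d\mu(x),
$$
which is the desired bound. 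The usual modification when $q=\infty$ replaces the integral average by the essential supremum and the sum by the supremum in $k$, and the same argument goes through verbatim once ``$\sup$'' and ``$\inf$'' in Lemma \ref{11.14.1} are interpreted accordingly. The only delicate step of the entire argument is the Plancherel--P\^olya inequality of Lemma \ref{11.14.1}, which is already available; the sandwich between the integral averages and the discrete sums involving $\sup_{z\in Q_\tau^{k,m}}$ and $\inf_{z\in Q_\tau^{k,m}}$ is routine.
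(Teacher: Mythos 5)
Your proposal is correct and is precisely the argument the paper intends: the paper states that this proposition is proved by the same sandwich argument as Proposition \ref{6.4.1}, and your chain (decompose $Q_\alpha^l$ into the level-$(k+j_0)$ cubes $Q_\tau^{k,m}$ by Theorem \ref{10.22.1}, replace the integral over each small cube by $\mu\cdot\sup$, apply Lemma \ref{11.14.1}, then replace $\mu\cdot\inf$ by the integral and reassemble) is exactly that. The only remark worth making is that the disjointness and covering you invoke do require $k+j_0>l$, which is guaranteed for all $k\geq l$ since $j_0\in\nn$, so there is no gap.
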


The following proposition shows that the space $\hfi$ is
independent of the choice of the space of
distributions.

\begin{proposition}\label{12.2.7}
Let $\beta,\ \gamma\in(0,\eta)$ and $s\in(-\eta,\eta)$,
with $\eta$ be as in Definition \ref{10.23.2},  satisfy
\begin{equation}\label{12.1.2}
\beta\in((-s)_+,\eta)\qquad\text{and}\qquad\ \gamma\in(s_+,\eta).
\end{equation}
If  $q\in(p(s,\beta\wedge\gamma),\infty]$ with $p(s,\beta\wedge\gamma)$ as in \eqref{pseta},
then there exists a positive
constant $C$ such that, for any $f \in \hfi\subset(\cggi)'$, $f \in (\cggt)'$ with $\widetilde{\beta}$ and
$\widetilde{\gamma}$ satisfying $q\in(p(s,\widetilde{\beta}\wedge\widetilde{\gamma}),\infty]$
with $p(s,\widetilde{\beta}\wedge\widetilde{\gamma})$ as
in \eqref{pseta} via replacing $\beta$ and $\gamma$ respectively by $\widetilde{\beta}$ and
$\widetilde{\gamma}$, and \eqref{12.1.2}
via replacing $\beta$ and $\gamma$ respectively by $\widetilde{\beta}$ and
$\widetilde{\gamma}$, and there exists a positive constant $C$,
independent of $f$, such that
$$\|f\|_{(\cggt)'}\leq C\|f\|_{\hfi}.$$
\end{proposition}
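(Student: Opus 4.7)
My proof proposal follows the blueprint of Propositions \ref{6.5.1}(i) and \ref{10.19.2}(i), the main novelty being how the sup-over-cubes definition of $\hfi$ replaces the $L^p$ reasoning used there. By density of $\mathring{\cg}(\eta,\eta)$ in $\mathring{\cg}_0^\eta(\widetilde{\beta},\widetilde{\gamma})$ and the Cauchy-sequence extension used at the end of the proof of Proposition \ref{6.5.1}, it is enough to fix a reference point $x_1\in\cx$ and prove
\[
|\langle f,\psi\rangle|\lesssim\|\psi\|_{\cg(\widetilde{\beta},\widetilde{\gamma})}\|f\|_{\hfi}
\]
for every $\psi\in\mathring{\cg}(\eta,\eta)$. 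I will choose $\beta_0,\gamma_0\in(\max\{\beta,\widetilde{\beta},\gamma,\widetilde{\gamma}\},\eta)$ and apply the discrete homogeneous Calder\'on reproducing formula (Lemma \ref{crf}) in $(\mathring{\cg}_0^\eta(\beta_0,\gamma_0))'$, pairing $f$ against $\psi$ to obtain
\[
\langle f,\psi\rangle=\sum_{k\in\zz}\sum_{\alpha\in\ca_k}\sum_{m=1}^{N(k,\alpha)}\mu(\qa)\,Q_k(f)(\ya)\,\langle\widetilde{Q}_k(\cdot,\ya),\psi\rangle,
\]
in which each $\ya\in\qa$ remains at my disposal.

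Two ingredients will drive the argument. First, repeating the calculations behind \eqref{8.26.1} and \eqref{8.26.1x} gives the weighted decay estimates \eqref{6.14.2} and \eqref{6.14.3} for $\langle\widetilde{Q}_k(\cdot,y),\psi\rangle$, with respective prefactors $\delta^{k\widetilde{\beta}}$ for $k\in\zz_+$ and $\delta^{-k\widetilde{\gamma}}$ for $k\in\zz\setminus\zz_+$, tails centered at $x_1$. Second, the definition of $\|f\|_{\hfi}$ applied with $l=k$ to the dyadic cube $Q_\alpha^k\supset\qa$, together with the uniform bound $\mu(Q_\alpha^k)/\mu(\qa)\lesssim\delta^{-j_0\omega}$, yields
\[
\inf_{z\in\qa}|Q_k(f)(z)|\lesssim\delta^{ks}\|f\|_{\hfi}
\]
uniformly in $(k,\alpha,m)$; for $q=\infty$ this is immediate from the supremum definition, while for $q<\infty$ it follows from the cube-averaged $L^q$ bound together with the existence of a near-infimum point inside $\qa$. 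Exploiting the arbitrariness of $\ya$ in Lemma \ref{crf}, I then select each $\ya$ to be such a near-infimum point, upgrading this into the pointwise estimate $|Q_k(f)(\ya)|\lesssim\delta^{ks}\|f\|_{\hfi}$ simultaneously for every $(k,\alpha,m)$.

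Plugging both ingredients into the Calder\'on expansion, the inner cube sums collapse to uniform-in-$k$ constants via Lemma \ref{9.14.1} applied with $p=1$ (the sharp case of that estimate), reducing the whole bound to the geometric bookkeeping
\[
|\langle f,\psi\rangle|\lesssim\|\psi\|_{\cg(\widetilde{\beta},\widetilde{\gamma})}\|f\|_{\hfi}\left\{\sum_{k\in\zz_+}\delta^{k(\widetilde{\beta}+s)}+\sum_{k<0}\delta^{k(s-\widetilde{\gamma})}\right\}.
\]
The conditions $\widetilde{\beta}+s>0$ and $s-\widetilde{\gamma}<0$ encoded in \eqref{12.1.2} are exactly those needed for both geometric series to converge, giving the desired bound. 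The hardest step is conceptual rather than computational: unlike the $L^p$-based proofs of Propositions \ref{6.5.1} and \ref{10.19.2} (which rely on H\"older and Fefferman--Stein machinery), the $\hfi$ norm furnishes no integral norm over $\cx$, so the Carleson-type sup must be converted into pointwise information on each $\qa$. The freedom to choose $\ya$ within $\qa$ supplied by Lemma \ref{crf} is exactly what makes this conversion possible while keeping the resulting double sum summable.
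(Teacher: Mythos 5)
Your argument is correct and follows the paper's skeleton (Calder\'on reproducing formula from Lemma \ref{crf}, the decay estimates \eqref{6.14.2}--\eqref{6.14.3}, geometric series in $k$ under \eqref{12.1.2}, and the Cauchy-sequence extension as in Proposition \ref{6.5.1}), but the way you feed in the $\hfi$-norm is genuinely different from the paper's. The paper raises the expansion to the $q$-th power (via \eqref{r} when $q\le 1$, H\"older when $q>1$), absorbs the measure factors through \eqref{12.2.2}, and then controls the full level-$k$ sum $\sum_{\alpha,m}\delta^{-ksq}|Q_k(f)(\ya)|^q$ by the Carleson-type quantity via \eqref{11.19.1}; in particular it discards the spatial decay in $d(x_1,\ya)$ at that stage. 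You instead exploit the freedom in the choice of $\ya$ granted by Lemma \ref{crf}: taking $l=k$ and the containing cube $Q_\alpha^k\supset\qa$ in the definition of $\|f\|_{\hfi}$, together with $\mu(Q_\alpha^k)\lesssim\mu(\qa)$, gives $\inf_{z\in\qa}|Q_k(f)(z)|\lesssim\delta^{ks}\|f\|_{\hfi}$ uniformly, and choosing near-infimum points turns this into $|Q_k(f)(\ya)|\lesssim\delta^{ks}\|f\|_{\hfi}$; you then keep the decay factors and sum over the level-$k$ cubes by Lemma \ref{9.14.1} with $p=1$, so no case split in $q$ and no appeal to \eqref{11.19.1} is needed. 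What your route buys is a cleaner and more local argument: the sum over cubes at a fixed level converges because you retained the tail $[1+d(x_1,\ya)]^{-\widetilde{\gamma}}$, rather than because of a global level-sum bound, and the only price is a routine selection argument (pick $\ya$ with $|Q_k(f)(\ya)|\le 2\inf_{z\in\qa}|Q_k(f)(z)|$, or $\le\inf+\varepsilon_{k,\alpha,m}$, and, when $q=\infty$, a point in the full-measure set where the essential bound holds), which is at the same level of rigor as the paper's own repeated use of ``the arbitrariness of $\ya$''; it would be worth stating that selection explicitly in a final write-up.
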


\begin{proof}
Let $\eta$ be as in Definition \ref{10.23.2},
$\psi\in\mathring{\cg}(\eta,\eta)$ and adopt the notation from Lemma \ref{crf}.
If $q\in(p(s,\wz\beta\wedge\wz\gamma),1]$ with $\widetilde{\beta}$
and $\widetilde{\gamma}$ as in this proposition, by Lemma \ref{crf},
\eqref{6.14.2}, \eqref{6.14.3} and \eqref{r},
we know that, for any $f\in\hfi\subset(\cggi)'$ with $\beta$
and $\gamma$ as in this proposition,
\begin{align}\label{12.2.1}
|\langle f,\psi\rangle|&=\left|\sum_{k=-\infty}^\infty\sum_{\alpha\in\ca_k}
\sum_{m=1}^{N(k,\alpha)}\mu\left(\qa\right)Q_k(f)\left(\ya\right)
\left\langle \widetilde{Q}_k\left(\cdot,\ya\right),\psi\right\rangle\right|\\
&\lesssim \|\psi\|_{\cg(\widetilde{\beta},\widetilde{\gamma})}\left\{\sum_{k=0}^\infty
\sum_{\alpha\in\ca_k}\sum_{m=1}^{N(k,\alpha)}\delta^{k\widetilde{\beta}q}
\left[\mu\left(\qa\right)\left|Q_k(f)\left(\ya\right)\right|\right]^q\right.\notag\\
&\qquad\times\left[ \frac{1}{V_1(x_1)+V(x_1,\ya)}
\frac{1}{(1+d(x_1,\ya))^{\widetilde{\gamma}}}\right]^q\notag\\
&\qquad +\sum_{k=-\infty}^{-1}\sum_{\alpha\in\ca_k}\sum_{m=1}^{N(k,\alpha)}
\delta^{-k\widetilde{\gamma}q}\left[\mu\left(\qa\right)
\left|Q_k(f)\left(\ya\right)\right|\right]^q\notag\\
&\qquad \left.\times \left[\frac{1}{V_{\delta^k}(x_1)+V(x_1,\ya)}
\frac{\delta^{k\widetilde{\gamma}}}{(\delta^k+d(x_1,\ya))^{\widetilde{\gamma}}}\right]^q\right\}^{1/q}.\notag
\end{align}
On one hand, for any $k\in\zz_+$, if $d(x_1,\ya)< 1$, then 
$$\mu(\qa)\sim V_{\delta^k}(\ya)\lesssim V_1(\ya)\sim V_1(x_1);$$
otherwise, there exists an $l\in\zz_+$ such that  $\delta^{-l}\leq d(x_1,\ya)<\delta^{-(l+1)}$
and hence  $\mu(\qa)\sim V_{\delta^k}(\ya)\lesssim V_{\delta^{-l}}(\ya)\lesssim V(x_1,\ya)$.
Therefore, for any $k\in\zz_+$,
\begin{equation}\label{12.2.2}
\frac{\mu(\qa)}{V_1(x_1)+V(x_1,\ya)}\lesssim 1.
\end{equation}
On the other hand, for any $k\in\zz\setminus\zz_+$, if $d(x_1,\ya)< \delta^k$, then $\mu(\qa)\sim
V_{\delta^k}(\ya)\lesssim V_{\delta^k}(x_1)$; if $\delta^{-l}\delta^k\leq d(x_1,\ya)<\delta^{-(l+1)}\delta^k$
for some $l\in\zz_+$, then 
$$\mu(\qa)\sim V_{\delta^k}(\ya)\lesssim V_{\delta^{-l}\delta^k}(\ya)\lesssim
V(x_1,\ya).$$ 
Therefore, \eqref{12.2.2} also holds true when $k\in\zz\setminus\zz_+$.

By \eqref{12.1.2}, we know that $\wz\beta\in((-s)_+,\eta)$
and $ \wz\gamma\in(s_+,\eta)$,
and hence $\wz\beta\in(-s,\eta)$ and $ \wz\gamma\in(s,\eta)$. Using this, \eqref{11.19.1} and \eqref{12.2.1},
\eqref{12.2.2}, we conclude  that, for any $f\in\hfi\subset(\cggi)'$ with $\beta$
and $\gamma$ as in this proposition,
\begin{equation}\label{12.2.4}
|\langle f,\psi\rangle|\lesssim \|\psi\|_{\cg(\widetilde{\beta},\widetilde{\gamma})}
\|f\|_{\hfi}\left\{\sum_{k=0}^\infty\delta^{k(\widetilde{\beta}+s)q}
+\sum_{k=-\infty}^{-1}\delta^{k(s-\widetilde{\gamma})q}\right\}^{1/q}
\lesssim\|\psi\|_{\cg(\widetilde{\beta},\widetilde{\gamma})}\|f\|_{\hfi}.
\end{equation}

If $q\in(1,\infty]$, by Lemma \ref{crf}, \eqref{6.14.2},
\eqref{6.14.3} and the H\"{o}lder inequality, we
find that, for any $f\in\hfi\subset(\cggi)'$ with $\beta$
and $\gamma$ as in this proposition,
\begin{align}\label{12.2.5}
|\langle f,\psi\rangle|&=\left|\sum_{k=-\infty}^\infty\sum_{\alpha\in\ca_k}
\sum_{m=1}^{N(k,\alpha)}\mu\left(\qa\right)Q_k(f)\left(\ya\right)
\left\langle \widetilde{Q}_k\left(\cdot,\ya\right),\psi\right\rangle\right|\\
&\lesssim \|\psi\|_{\cg(\widetilde{\beta},\widetilde{\gamma})}\left\{\sum_{k=0}^\infty
\delta^{k\widetilde{\beta}}\delta^{ks}\left[\sum_{\alpha\in\ca_k}\sum_{m=1}^{N(k,\alpha)}
\delta^{-ksq}\left\{\mu\left(\qa\right)\left|Q_k(f)\left(\ya\right)\right|\right\}^q\right.\right.\notag\\
&\qquad\times\left. \frac{1}{V_1(x_1)+V(x_1,\ya)}\frac{1}{[1+d(x_1,\ya)]^{\widetilde{\gamma}}}\right]^{1/q}\notag\\
&\qquad\times\left[\int_{\cx}
\frac{1}{V_1(x_1)+V(x_1,y)}\frac{1}{[1+d(x_1,y)]^{\widetilde{\gamma}}}\,d\mu(y)\right]^{1/q'}\notag\\
&\qquad +\sum_{k=-\infty}^{-1}\delta^{-k\widetilde{\gamma}}\delta^{ks}
\left[\sum_{\alpha\in\ca_k}\sum_{m=1}^{N(k,\alpha)}\delta^{-ksq}
\left\{\mu\left(\qa\right)\left|Q_k(f)\left(\ya\right)\right|\right\}^q\right.\notag\\
&\qquad \times\left. \frac{1}{V_{\delta^k}(x_1)+V(x_1,\ya)}
\frac{\delta^{k\widetilde{\gamma}}}{[\delta^k+d(x_1,\ya)]^{\widetilde{\gamma}}}\right]^{1/q}\notag\\
&\qquad\times\left.\left[\int_{\cx}\frac{1}{V_{\delta^k}(x_1)+V(x_1,y)}
\frac{\delta^{k\widetilde{\gamma}}}{[\delta^k+d(x_1,y)]^{\widetilde{\gamma}}}\,d\mu(y)\right]^{1/q'}\right\}.\notag
\end{align}
By \eqref{12.1.2}, we know that $\wz\beta\in(-s,\eta)$ and $\wz\gamma\in(s,\eta)$. Using this,
\eqref{12.2.5}, \eqref{11.19.1}, \eqref{12.2.1}, \eqref{12.2.2}
and Lemma \ref{6.15.1}(ii), we conclude that,
for any $f\in\hfi\subset(\cggi)'$ with $\beta$
and $\gamma$ as in this proposition,
\begin{align}\label{12.2.6}
|\langle f,\psi\rangle|\lesssim \|\psi\|_{\cg(\widetilde{\beta},\widetilde{\gamma})}\|f\|_{\hfi}
\left\{\sum_{k=0}^\infty\delta^{k(\widetilde{\beta}+s)}
+\sum_{k=-\infty}^{-1}\delta^{k(s-\widetilde{\gamma})}\right\}\lesssim\|\psi\|_{\cg(\widetilde{\beta},
\widetilde{\gamma})}\|f\|_{\hfi}.
\end{align}

Combining \eqref{12.2.4} and \eqref{12.2.6}, and using an argument similar to that used in the proof of
Proposition  \ref{6.5.1}, we prove that $f \in (\cggii)'$, which completes the proof of Proposition
\ref{12.2.7}.
\end{proof}

Some basic properties of $\hfi$ are stated in the following proposition.

\begin{proposition}\label{12.3.1}
Let $\beta,\ \gamma \in (0, \eta)$, $s\in(-\eta,\eta)$ and
$q\in (p(s,\beta\wedge\gamma),\infty]$ with $\eta$ and $p(s,\beta\wedge\gamma)$, respectively, as in
Definition \ref{10.23.2} and \eqref{pseta}.
\begin{enumerate}
\item[{\rm(i)}] If $p(s,\beta\wedge\gamma)<q_1\leq q_2\leq\infty$, then
$\dot{F}^s_{\infty,q_1}(\cx)\subset\dot{F}^s_{\infty,q_2}(\cx)$;
\item[{\rm(ii)}] $\dot{B}^s_{\infty,q}(\cx)\subset\dot{F}^s_{\infty,q}(\cx)\subset\dot{B}^s_{\infty,\infty}(\cx)$;
\item[{\rm(iii)}] If  $\widetilde{\beta},\ \widetilde{\gamma}\in(|s|,\eta]$, then
$\ccg(\widetilde{\beta},\widetilde{\gamma})\subset\hfi$.
\end{enumerate}
\end{proposition}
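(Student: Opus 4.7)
The plan is to establish the three parts in sequence, using part (i) to obtain a key pointwise supremum bound that drives both (ii) and, via (ii), the last part.

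For part (i), the core insight is that every $f\in\dot F^s_{\infty,q_1}(\cx)$ satisfies the a.e.\ pointwise bound
\[
\sup_{k\in\zz}\delta^{-ks}|Q_k(f)(x)|\le C\|f\|_{\dot F^s_{\infty,q_1}(\cx)}\qquad\text{for a.e.\ }x\in\cx.
\]
The extraction proceeds via Lemma \ref{11.14.1}. Rewriting the defining integral norm as $\frac{1}{\mu(Q_\alpha^l)}\sum_{k\ge l}\sum_{\tau,m:\,Q_\tau^{k,m}\subset Q_\alpha^l}\int_{Q_\tau^{k,m}}\delta^{-ksq_1}|Q_k(f)|^{q_1}\,d\mu$ and using the trivial inequalities $\inf_{Q_\tau^{k,m}}|Q_k(f)|\le|Q_k(f)|\le\sup_{Q_\tau^{k,m}}|Q_k(f)|$ inside each subcube integral, one sandwiches $\|f\|_{\dot F^s_{\infty,q_1}(\cx)}$ between the inf- and sup-versions of Lemma \ref{11.14.1}, which are equivalent by that lemma. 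Applying the resulting sup-version inequality with outer cube $Q_\alpha^{l_0}$ containing any prescribed subcube $Q_{\tau_0}^{l_0,m_0}$ (existence via Theorem \ref{10.22.1}(ii)) and retaining only the single summand $(k,\tau,m)=(l_0,\tau_0,m_0)$ yields
\[
\delta^{-l_0 sq_1}\mu\bigl(Q_{\tau_0}^{l_0,m_0}\bigr)\bigl[\sup_{Q_{\tau_0}^{l_0,m_0}}|Q_{l_0}(f)|\bigr]^{q_1}\le C\mu\bigl(Q_\alpha^{l_0}\bigr)\|f\|_{\dot F^s_{\infty,q_1}(\cx)}^{q_1}.
\]
Combined with the doubling comparison $\mu(Q_\alpha^{l_0})\sim\mu(Q_{\tau_0}^{l_0,m_0})$, which follows from Theorem \ref{10.22.1}(iii) and \eqref{eq-doub} with constants depending only on $j_0,A_0,c_0,C_0$, this gives $\sup_{Q_{\tau_0}^{l_0,m_0}}\delta^{-l_0 s}|Q_{l_0}(f)|\le C\|f\|_{\dot F^s_{\infty,q_1}(\cx)}$. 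Since $\{Q_{\tau_0}^{l_0,m_0}\}_{\tau_0,m_0}$ covers $\cx$ for every fixed $l_0$, the displayed a.e.\ pointwise estimate follows. The embedding in (i) is then closed by the elementary inequality
\[
\sum_{k\ge l}\delta^{-ksq_2}|Q_k(f)(x)|^{q_2}\le \Bigl(\sup_{k\ge l}\delta^{-ks}|Q_k(f)(x)|\Bigr)^{q_2-q_1}\sum_{k\ge l}\delta^{-ksq_1}|Q_k(f)(x)|^{q_1},
\]
followed by integration against $d\mu/\mu(Q_\alpha^l)$, application of the definition of $\|f\|_{\dot F^s_{\infty,q_1}(\cx)}^{q_1}$ on the surviving factor, and passage to the $q_2$-th root and the supremum over $(l,\alpha)$.

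For part (ii), the inclusion $\dot B^s_{\infty,q}(\cx)\subset\dot F^s_{\infty,q}(\cx)$ is immediate from $|Q_k(f)(x)|\le\|Q_k(f)\|_{L^\infty(\cx)}$ inside the defining average. The inclusion $\dot F^s_{\infty,q}(\cx)\subset\dot B^s_{\infty,\infty}(\cx)$ is a direct consequence of the a.e.\ pointwise bound proved in (i): taking the essential supremum in $x$ yields $\delta^{-ks}\|Q_k(f)\|_{L^\infty(\cx)}\le C\|f\|_{\dot F^s_{\infty,q}(\cx)}$ uniformly in $k\in\zz$, which is exactly the bound $\|f\|_{\dot B^s_{\infty,\infty}(\cx)}\le C\|f\|_{\dot F^s_{\infty,q}(\cx)}$.

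For part (iii), by (ii) it suffices to prove $\ccg(\widetilde\beta,\widetilde\gamma)\subset\dot B^s_{\infty,q}(\cx)$. With $p=\infty$ the hypothesis \eqref{6.14.1} in Proposition \ref{phb}(iii) reduces to $\widetilde\beta>s_+$ and $\widetilde\gamma>(-s)_+$, both subsumed by $\widetilde\beta,\widetilde\gamma\in(|s|,\eta]$ (after, if needed, a harmless downward adjustment keeping both strictly below $\eta$, exploiting the inclusion of test-function spaces under decreasing regularity indices). Hence Proposition \ref{phb}(iii) yields $\ccg(\widetilde\beta,\widetilde\gamma)\subset\dot B^s_{\infty,q}(\cx)$, and (ii) completes (iii). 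The main obstacle is the pointwise extraction in part (i): the subtlety lies in recognizing the sandwich between the defining integral norm and the inf/sup versions of Lemma \ref{11.14.1}, and in the doubling-type volume comparison $\mu(Q_\alpha^{l_0})\sim\mu(Q_{\tau_0}^{l_0,m_0})$ that permits isolating a single summand without unbounded loss.
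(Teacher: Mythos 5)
Your proof is correct. The interesting divergence is in part (i). The paper dismisses (i) as ``a direct corollary of \eqref{r} and Definition \ref{hfi},'' omitting all details, but this is more delicate than in the finite-$p$ case (Proposition \ref{phb}(i)): when $p<\infty$ the $\ell^{q}$-nesting inequality is applied pointwise \emph{inside} an outer $L^p$ norm and the monotonicity in $q$ is immediate, whereas for $p=\infty$ the Carleson-type quantity $\bigl(\fint_{Q_\alpha^l}\sum_{k\ge l}\delta^{-ksq}|Q_k f|^q\bigr)^{1/q}$ has the exponent $q$ appearing both inside and outside the cube average, so \eqref{r} alone gives an inequality in the wrong direction (Jensen raises, not lowers, the outer average). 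You resolve this the right way: you first extract the uniform bound $\sup_{k}\delta^{-ks}|Q_k(f)(x)|\le C\|f\|_{\dot F^s_{\infty,q_1}(\cx)}$ a.e.\ by retaining a single summand in the sup-version of Lemma \ref{11.14.1} over the cube $Q_{\tau_0}^{l_0}\supset Q_{\tau_0}^{l_0,m_0}$ and using the doubling comparison $\mu(Q_{\tau_0}^{l_0})\sim\mu(Q_{\tau_0}^{l_0,m_0})$ (valid with constants depending only on $j_0,A_0,c_0,C_0,\omega$), and then interpolate $\sum a_k^{q_2}\le(\sup_k a_k)^{q_2-q_1}\sum a_k^{q_1}$ before averaging. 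This is precisely the mechanism (a Frazier--Jawerth-type extraction) that makes the $p=\infty$ monotonicity work, and it is what the paper's terse ``direct corollary'' glosses over; in fact the paper itself uses (i) with $q_2=\infty$ inside its proof of (ii), which is exactly the pointwise bound you make explicit. Your proofs of (ii) and (iii) follow the paper's route: (ii) first inclusion from $|Q_k f|\le\|Q_k f\|_{L^\infty}$, second from the pointwise bound (the paper routes it through $\dot F^s_{\infty,\infty}$, you go directly — same content), and (iii) via Proposition \ref{phb}(iii) combined with (ii), correctly noting that when $p=\infty$ condition \eqref{6.14.1} reduces to $\widetilde\beta>s_+$, $\widetilde\gamma>(-s)_+$, which $(|s|,\eta]$ satisfies after the harmless downward shrinking of the indices to land strictly inside $(0,\eta)$ using $\ccg(\widetilde\beta,\widetilde\gamma)\subset\ccg(\widetilde\beta',\widetilde\gamma')$ for $\widetilde\beta'\le\widetilde\beta$, $\widetilde\gamma'\le\widetilde\gamma$.
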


\begin{proof}
Property (i) is a direct corollary of \eqref{r} and Definition \ref{hfi}; we omit the details here.

To show (ii), on one hand, by the definitions of
$\dot{B}^s_{\infty,q}(\cx)$ and $\dot{F}^s_{\infty,q}(\cx)$, we know that, for any $f\in(\cggi)'$ with $\beta$ and $\gamma$ as in  this proposition,
\begin{align*}
\|f\|_{\hfi}&=\sup_{l \in \zz} \sup_{\alpha\in\ca_l}\left[\frac{1}{\mu(Q_\alpha^l)}
\int_{Q_\alpha^l}\sum_{k=l}^\infty\delta^{-ksq}|Q_k(f)(x)|^q\,d\mu(x)\right]^{1/q}\\
&\leq\left[\sum_{k=-\infty}^{\infty}\delta^{-ksq}\|Q_k(f)\|_{L^\infty(\cx)}^q\right]^{1/q}
= \|f\|_{\dot{B}^s_{\infty,q}(\cx)},
\end{align*}
which implies that $\dot{B}^s_{\infty,q}(\cx)\subset\dot{F}^s_{\infty,q}(\cx)$.
On the other hand, by Theorem \ref{10.22.1}(i), we find that,
for any $f\in\dot{F}^s_{\infty,\infty}(\cx)$,
\begin{align*}
 \|f\|_{\dot{B}^s_{\infty,\infty}(\cx)}&=\sup_{l\in\zz}\delta^{-ls}\|Q_l(f)\|_{L^\infty(\cx)}
 =\sup_{l\in\zz}\|\delta^{-ls}|Q_l(f)|\|_{L^\infty(\cx)}\\
&\leq \sup_{l\in\zz}\left\|\sup_{k\geq l}\delta^{-ks}|Q_k(f)|\right\|_{L^\infty(\cx)}
\leq \sup_{l\in\zz}\sup_{\alpha\in\ca_l}\left\|\sup_{k\geq l}
\delta^{-ks}|Q_k(f)|\right\|_{L^\infty(Q_\alpha^l)}= \|f\|_{\dot{F}^s_{\infty,\infty}(\cx)},
\end{align*}
which, combined with Property (i), implies that
$\dot{F}^s_{\infty,q}(\cx)\subset\dot{F}^s_{\infty,\infty}(\cx)
\subset\dot{B}^s_{\infty,\infty}(\cx)$. This  finishes the proof of (ii).

Property (iii) comes from  (ii) and Proposition \ref{phb}(iii) directly; we
omit the details here. This finishes the proof of Proposition \ref{12.3.1}.
\end{proof}

\begin{remark}\label{addre3}
A counterpart on RD-spaces of Proposition \ref{12.3.1}(iii) is
\cite[Proposition 6.9(iv)]{hmy08} in which $\gz$ is required to satisfy
\begin{equation}\label{5.21x}
\gamma\in(-s-\kappa,\eta),
\end{equation}
where $\kappa\in(0,\omega]$ is as in \eqref{eq-rdoub}. Similarly to Remarks  \ref{addre2} and \ref{addre1},
we find that, when $\kz:=0$ in \eqref{5.21x}, the above inequality coincides with the range
of $\wz\gz$ in Proposition \ref{12.3.1}(iii). In this sense, we may say that Proposition \ref{12.3.1}(iii)
generalizes \cite[Proposition 6.9(iv)]{hmy08} and that the range of $\widetilde{\gz}$ in Proposition \ref{12.3.1}(iii) is \emph{optimal}.
\end{remark}

\subsection{Inhomogeneous Triebel--Lizorkin spaces $\ihfi$}

In this section, we concentrate on the inhomogeneous Triebel--Lizorkin spaces $\ihfi$
and hence we do not need the assumption
$\mu(\cx)=\infty$. Let us begin with its  definition.

\begin{definition}\label{ihfi}
Let  $\beta,\ \gamma \in (0, \eta)$ and $s\in(-\eta,\eta)$
with $\eta$ as in
Definition \ref{10.23.2}. Let $\{Q_k\}_{k\in\zz}$ be an exp-IATI and $N\in\nn$ as in Lemma \ref{icrf}.
The \emph{inhomogeneous Triebel--Lizorkin space $\ihfi$} is defined by setting
\begin{align*}
\ihfi := &{}\left\{f  \in (\icgg)' :\  \|f\|_{\ihfi}:={}\max\left[\sup_{k\in\{0,\dots,N\}}
\sup_{\alpha \in \ca_k}\sup_{m\in\{1,\dots,N(k,\alpha)\}}m_{\qa}(|Q_k(f)|),\r.\r.\\
&\qquad\qquad\qquad\qquad\left.\left.\sup_{l \in \nn,\ l>N} \sup_{\alpha\in\ca_l}\left\{\frac{1}{\mu(Q_\alpha^l)}
\int_{Q_\alpha^l}\sum_{k=l}^\infty\delta^{-ksq}|Q_k(f)(x)|^q\,d\mu(x)\right\}^{1/q}\r]<\infty\right\}
\end{align*}
with usual modification made when $q=\infty$.
\end{definition}

We now establish the inhomogeneous Plancherel--P\^olya inequality for the case $p=\infty$.

\begin{lemma}\label{12.3.2}
Let $\{Q_k\}_{k=0}^\infty$ and $\{P_k\}_{k=0}^\infty$ be two {\rm $\exp$-IATIs}.
Let $\beta,\ \gamma\in(0,\eta)$,
$s\in(-(\beta\wedge\gamma), \beta\wedge\gamma)$ and $q \in (p(s,\beta\wedge\gamma), \infty]$ with
$\eta$ and $p(s,\beta\wedge\gamma)$, respectively, as
in Definition \ref{10.23.2} and \eqref{pseta}. Let $N\in\nn$ and $N'\in\nn$ be as in Lemma
\ref{icrf} associated, respectively, with $\{Q_k\}_{k=0}^\infty$ and $\{P_k\}_{k=0}^\infty$.
Then there exists a positive constant $C$ such that, for any $f\in (\icgg)'$,
\begin{align*}
&\max\left\{\sup_{k\in\{0,\dots,N\}}\sup_{\alpha \in \ca_k}
\sup_{m\in\{1,\dots,N(k,\alpha)\}}m_{\qa}(|Q_k(f)|),\r.\\
&\qquad\quad\left.\sup_{l \in \nn, l>N} \sup_{\alpha\in\ca_l}\left\{\frac{1}{\mu(Q_\alpha^l)}
\sum_{k=l}^\infty\sum_{\tau\in\ca_k}\sum_{m=1}^{N(k,\tau)}
\delta^{-ksq}\mu(Q_\tau^{k,m})\mathbf{1}_{\{(\tau,m):\ Q_\tau^{k,m}
\subset Q_\alpha^l\}}(\tau,m)\r.\r.\\
&\qquad\quad\times\left.\left.\left[\sup_{x\in Q_\tau^{k,m}}|Q_k(f)(x)|\right]^q\right\}^{1/q}\right\}\\
&\qquad\leq C \max\left\{\sup_{k'\in\{0,\dots,N'\}}\sup_{\alpha' \in \ca_{k'}}
\sup_{m'\in\{1,\dots,N(k',\alpha')\}}m_{\qap}(|P_{k'}(f)|),\r.\\
&\qquad\quad\left.\sup_{l' \in \nn, l'>N'} \sup_{\alpha'\in\ca_{l'}}
\left\{\frac{1}{\mu(Q_{\alpha'}^{l'})}\sum_{k'=l'}^\infty
\sum_{\tau'\in\ca_k'}\sum_{m'=1}^{N(k',\tau')}\delta^{-k'sq}
\mu(Q_{\tau'}^{k',m'})\mathbf{1}_{\{(\tau',m'):\ Q_{\tau'}^{k',m'}
\subset Q_{\alpha'}^{l'}\}}(\tau',m')\r.\r.\\
&\quad\qquad\times\left.\left.\left[\inf_{x\in Q_{\tau'}^{k',m'}}
|P_{k'}(f)(x)|\right]^q\right\}^{1/q}\right\}.
\end{align*}
\end{lemma}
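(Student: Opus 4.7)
The overall strategy is to mimic the proof of Lemma \ref{11.14.1}, but based on the inhomogeneous discrete Calder\'on reproducing formula from Lemma \ref{icrf} (applied to $\{P_{k'}\}_{k'=0}^{\infty}$), and combined with the low-frequency cube-average techniques from the proof of Lemma \ref{6.9.1}. Specifically, I would first expand
\begin{align*}
Q_k(f)(z) &\le \sum_{\alpha' \in \ca_0}\sum_{m'=1}^{N(0,\alpha')} m_{Q_{\alpha'}^{0,m'}}(|P_0 f|) \int_{Q_{\alpha'}^{0,m'}} |Q_k \widetilde P_0(z,y)|\,d\mu(y)\\
&\quad + \sum_{k'=1}^{N'}\sum_{\alpha'\in\ca_{k'}}\sum_{m'=1}^{N(k',\alpha')} \mu(Q_{\alpha'}^{k',m'}) |Q_k\widetilde P_{k'}(z,y_{\alpha'}^{k',m'})| m_{Q_{\alpha'}^{k',m'}}(|P_{k'}f|)\\
&\quad + \sum_{k'=N'+1}^{\infty}\sum_{\alpha'\in\ca_{k'}}\sum_{m'=1}^{N(k',\alpha')} \mu(Q_{\alpha'}^{k',m'}) |Q_k\widetilde P_{k'}(z,y_{\alpha'}^{k',m'})| |P_{k'}f(y_{\alpha'}^{k',m'})|,
\end{align*}
using \eqref{10.15.5} and \eqref{10.15.6}, and then split the argument into Case 1 (control of $m_{\qa}(|Q_k f|)$ for $k\in\{0,\dots,N\}$) and Case 2 (control of the $F^s_{\infty,q}$-type averaged sum for $l>N$).

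For Case 1, the analogue of \eqref{12.3.4} applies verbatim; the terms ${\rm I}_1,{\rm I}_2$ arising from $k'\le N'$ are controlled by the first (low-frequency) piece of the right-hand side of the desired inequality via the trivial pointwise bound \eqref{qkpk} for $Q_k\widetilde P_{k'}$ together with Lemma \ref{9.14.1}, exactly as in \eqref{10.15.2}--\eqref{10.15.3}. The term ${\rm I}_3$, involving $|P_{k'}f(y_{\alpha'}^{k',m'})|$ for $k'>N'$, is more delicate: rather than appealing to Lemma \ref{5.3} as in Lemma \ref{6.9.1}, I would use the pointwise bound $|P_{k'}f(y_{\alpha'}^{k',m'})|\le \sup_{x\in Q_{\alpha'}^{k',m'}}|P_{k'}f(x)|$ and then argue as in Case 2 below (with $l=0$) to absorb this into the supremum on the right-hand side, using the cancellation bound \eqref{qkpkm} to pay for the fact that $k\le N$ is fixed while $k'$ ranges over $\{N'+1,N'+2,\dots\}$.

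Case 2 is the heart of the matter and should follow the template of Lemma \ref{11.14.1}. Fix $l>N$ and $\alpha\in\ca_l$. For the ``point-value'' piece ($k'>N'$) one repeats the near/far dyadic decomposition: for each $j\in\zz_+$ introduce $\cb_\alpha^{l,j}$ as in \eqref{11.15.1} and split $\alpha'$ according to whether $Q_{\alpha'}^{k',m'}\subset\bigcup_{\widetilde\tau\in\cb_\alpha^{l,j}}Q_{\widetilde\tau}^{l-j}$ or not. The estimates \eqref{11.11.1}, \eqref{eq-x3} and \eqref{11.18.2} carry over with only cosmetic changes and produce a bound by the $F^s_{\infty,q}$-type supremum of $P_{k'}f$ at level $\ge l$ (recall $l>N'$ after possibly enlarging $N'$, since one may assume without loss of generality that $N\ge N'$). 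For the ``cube-average'' piece ($k'\le N'$), note that $\delta^{k'}\sim 1$, so the composition $Q_k\widetilde P_{k'}$ with $k>N$ is controlled, via the cancellation of $Q_k$ and an argument identical to the estimation of ${\rm J}_1,{\rm J}_2,{\rm J}_3$ in Lemma \ref{6.9.1}, by $\delta^{k\Gamma}[V_1(z)+V(z,y)]^{-1}[1+d(z,y)]^{-\Gamma}$ for any fixed $\Gamma\in(s,\beta\wedge\gamma)$. Summing this factor against $\delta^{-ksq}$ over $k\ge l > N$ gives a geometric tail that is harmless, and the resulting spatial sum is absorbed by $\sup_{k',\alpha',m'} m_{Q_{\alpha'}^{k',m'}}(|P_{k'}f|)$ via Lemma \ref{9.14.1}.

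The main obstacle I anticipate is the bookkeeping in Case 2 when interleaving the dyadic near/far decomposition (which is indexed by the ambient level $l$) with the constraint $l>N$ and with two separate reproducing expansions (low-frequency cube averages versus high-frequency point values). In particular, care is required to verify that, after invoking Theorem \ref{10.22.1}(ii) to replace $\cb_\alpha^{l,0}$-cubes at level $l$ by parent cubes $Q_{\widetilde\tau}^{l-j}$ at level $l-j$, the resulting quantity still fits the form of the right-hand supremum (which demands level $\ge N'$ for the point-value part but allows arbitrarily small levels for the cube-average part). Once this indexing is organized correctly, the two cases combine, and passing to suprema on both sides (together with the arbitrariness of $y_{\alpha'}^{k',m'}$, handled as in the proof of Lemma \ref{5.12.1}) yields the desired inequality.
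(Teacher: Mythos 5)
Your overall route is the paper's own: expand $Q_k(f)$ through the inhomogeneous discrete reproducing formula for $\{P_{k'}\}_{k'\in\zz_+}$ (as in \eqref{10.15.5} and \eqref{10.15.6}), split into $k\in\{0,\dots,N\}$ and $k>N$, control the cube-average terms via the size bound \eqref{qkpk} for $k\le N$ and via the $\delta^{k\Gamma}$-decay from the ${\rm J}_1$--${\rm J}_3$ estimates (as in \eqref{10.18.8}) for $k>N$, and control the point-value terms via the cancellation bound \eqref{qkpkm} for $k\le N$ and by rerunning the near/far dyadic machinery of Lemma \ref{11.14.1} for $k>N$. This is exactly the paper's proof (compare \eqref{12.3.5}, \eqref{12.4.1}--\eqref{12.4.2}, \eqref{12.5.1} and \eqref{12.5.2}); your citation of Lemma \ref{9.14.1} and \eqref{10.15.2}--\eqref{10.15.3} for the low-frequency terms in Case 1 is only cosmetically different, since there one simply pulls out $\sup m_{\qap}(|P_{k'}(f)|)$ and integrates the kernel by Lemma \ref{6.15.1}(ii).

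The one step you should not assert for free is ``one may assume without loss of generality that $N\ge N'$ after possibly enlarging $N'$''. Since $N'$ enters the right-hand quasi-norm, enlarging it changes the statement, and monotonicity of the right-hand side in $N'$ is not pointwise obvious: the levels $k'\in(N',N'']$ would migrate into the first component as \emph{averages} $m_{\qap}(|P_{k'}(f)|)$, while the second component only controls \emph{infima} $\inf_{Q_{\tau'}^{k',m'}}|P_{k'}(f)|$, so dominating the former by the latter would itself require another Plancherel--P\^olya-type estimate, not a normalization. Moreover, this reduction neither resolves nor is needed for the obstacle you flag: in the far part of the Lemma \ref{11.14.1} argument the reference cubes are $Q_{\widetilde{\tau}}^{l-j}$ with $j\in\nn$ arbitrary, so their level $l-j$ falls below $N'$ however $N$ and $N'$ compare. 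The correct device, used implicitly in the paper, is twofold: (a) absorb each fixed level $k'>N'$ at reference level $l'=k'$ via the insertion of $\mu(\qap)/\mu(Q_{\alpha'}^{k'})\,\mathbf{1}_{\{\qap\subset Q_{\alpha'}^{k'}\}}$ as in \eqref{11.19.1}, using $\mu(\qap)\sim\mu(Q_{\alpha'}^{k'})$ --- this, combined with the $\delta^{k'\eta'}$ decay from \eqref{qkpkm} with $\eta'\in((-s)_+,\beta\wedge\gamma)$, is how your Case 1 term ${\rm I}_3$ is handled in \eqref{12.4.1}--\eqref{12.4.2}; and (b) in the far part for $k>N$, subdivide the coarse reference cube $Q_{\widetilde{\tau}}^{l-j}$ into its descendant cubes at level $\max\{l,N'+1\}$ and compare each normalized piece with the right-hand supremum before averaging, which is legitimate because the point-value sums there only involve $k'\ge\max\{l,N'+1\}>N'$. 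With the unjustified WLOG replaced by these absorption steps, your sketch closes and coincides with the paper's argument.
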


\begin{proof}
To prove this lemma, we consider two cases.

{\it Case 1) $k\in\{0,\dots,N\}$.}
In this case, by \eqref{12.3.4}, \eqref{10.14.1} and Lemma \ref{6.15.1}(ii),
we know that, for any $f\in (\icgg)'$ with $\beta$ and $\gamma$ as in this lemma,
\begin{align}\label{12.3.5}
&\sum_{\alpha' \in \ca_0}\sum_{m'=1}^{N(0,\alpha')}m_{\qop}(|P_0(f)|)
\frac{1}{\mu(\qa)}\int_{\qa}\int_{\qop}|Q_k\widetilde{P}_{k'}(z,y)|\,d\mu(y)\,d\mu(z)\\
&\qquad+\sum_{k'=1}^{N'}\sum_{\alpha' \in \ca_{k'}}\sum_{m'=1}^{N(k',\alpha')}m_{\qap}(|P_{k'}(f)|)
\mu\left(\qap\right)'\frac{1}{\mu(\qa)}\int_{\qa}
\left|Q_k\widetilde{P}_{k'}\left(z,\yap\right)\right|\,d\mu(z)\notag\\
&\quad\lesssim \sum_{k'=0}^{N'}\sum_{\alpha' \in \ca_{k'}}
\sum_{m'=1}^{N(k',\alpha')}m_{\qap}(|P_{k'}(f)|)\mu\left(\qap\right)
\inf_{y\in\qap}\frac{1}{V_1(z_\alpha^{k,m})+V(z_\alpha^{k,m},y)}
\left[\frac{1}{1+d(z,y)}\right]^\gamma\notag\\
&\quad\lesssim \sup_{k'\in\{0,\dots,N'\}}\sup_{\alpha' \in \ca_{k'}}
\sup_{m'\in\{1,\dots,N(k',\alpha')\}}m_{\qap}(|P_{k'}(f)|)\noz\\
&\qquad\times\int_{\cx}\frac{1}{V_1(z_\alpha^{k,m})+V(z_\alpha^{k,m},y)}
\left[\frac{1}{1+d(z,y)}\right]^\gamma\,d\mu(y)\notag\\
&\quad\lesssim \sup_{k'\in\{0,\dots,N'\}}\sup_{\alpha' \in \ca_{k'}}
\sup_{m'\in\{1,\dots,N(k',\alpha')\}}m_{\qap}(|P_{k'}(f)|).\notag
\end{align}

If $q\in(p(s,\beta\wedge\gamma),1]$, from \eqref{r}, \eqref{10.14.2},
\eqref{11.19.1}, $\mu(\qap)\lesssim V_1(\yap)$,
$|s|\in(0,\eta)$, the arbitrariness of $\yap$,
and an argument similar to that used in the estimation of \eqref{limpp},
we deduce that, for any $f\in (\icgg)'$ with
$\beta$ and $\gamma$ as in this lemma,
\begin{align}\label{12.4.1}
&\sum_{k'=N'+1}^\infty\sum_{\alpha' \in \ca_{k'}}\sum_{m'=1}^{N(k',\alpha')}
\left|P_{k'}f\left(\yap\right)\right|\mu\left(\qap\right)\frac{1}{\mu(\qa)}\int_{\qa}
\left|Q_k\widetilde{P}_{k'}\left(z,\yap\right)\right|\,d\mu(z)\\
&\quad\lesssim \sum_{k'=N'+1}^\infty\delta^{k'\eta'+k's}\left[\sum_{\alpha' \in \ca_{k'}}
\sum_{m'=1}^{N(k',\alpha')}\delta^{-k'sq}
\left|P_{k'}f\left(\yap\right)\right|^q\right]^{1/q}\notag\\
&\quad\lesssim \sup_{l' \in \nn, l'>N'} \sup_{\alpha'\in\ca_{l'}}
\left\{\frac{1}{\mu(Q_{\alpha'}^{l'})}\sum_{k'=l'}^\infty
\sum_{\tau'\in\ca_k'}\sum_{m'=1}^{N(k',\tau')}\delta^{-k'sq}
\mu(Q_{\tau'}^{k',m'})\mathbf{1}_{\{(\tau',m'):\ Q_{\tau'}^{k',m'}
\subset Q_{\alpha'}^{l'}\}}(\tau',m')\r.\notag\\
&\qquad\quad\times\left.\left.\left[\inf_{x\in Q_{\tau'}^{k',m'}}
|P_{k'}(f)(x)|\right]^q\right\}^{1/q}\right\}\notag.
\end{align}
Here we chose $\eta'\in((-s)_+,\bz\wedge\gz)$.

If $q\in(1,\infty]$, by \eqref{10.14.2}, \eqref{11.19.1}, the H\"older inequality,
(i) and  (ii) of Lemma \ref{6.15.1},
$\mu(\qap)\lesssim V_1(\yap)$, $|s|\in(0,\bz\wedge\gz)$,
the arbitrariness of $\yap$,
and an argument similar to that used in the estimation of \eqref{limpp},
we choose $\eta'\in((-s)_+,\bz\wedge\gz)$ and hence conclude that,
for any $f\in (\icgg)'$ with
$\beta$ and $\gamma$ as in this lemma,
\begin{align}\label{12.4.2}
&\sum_{k'=N'+1}^\infty\sum_{\alpha' \in \ca_{k'}}
\sum_{m'=1}^{N(k',\alpha')}\left|P_{k'}f\left(\yap\right)\right|\mu\left(\qap\right)\frac{1}{\mu(\qa)}
\int_{\qa}\left|Q_k\widetilde{P}_{k'}(z,\yap)\right|\,d\mu(z)\\
&\quad\lesssim \sum_{k'=N'+1}^\infty\delta^{k'\eta'+k's}
\left[\sum_{\alpha' \in \ca_{k'}}\sum_{m'=1}^{N(k',\alpha')}
\delta^{-k'sq}\left|P_{k'}f\left(\yap\right)\right|^q\right]^{1/q}\notag\\
&\quad\qquad\times\left[\int_{\cx}\frac{1}{V_1(z_\alpha^{k,m})+V(z_\alpha^{k,m},y)}
\frac{1}{[1+d(z_\alpha^{k,m},y)]^{\gamma}}\,d\mu(y)\right]^{1/q'}\notag\\
&\quad\lesssim \sup_{l' \in \nn, l'>N'} \sup_{\alpha'\in\ca_{l'}}
\left\{\frac{1}{\mu(Q_{\alpha'}^{l'})}\sum_{k'=l'}^\infty
\sum_{\tau'\in\ca_k'}\sum_{m'=1}^{N(k',\tau')}\delta^{-k'sq}
\mu\left(Q_{\tau'}^{k',m'}\right)\mathbf{1}_{\{(\tau',m'):\ Q_{\tau'}^{k',m'}
\subset Q_{\alpha'}^{l'}\}}(\tau',m')\r.\notag\\
&\quad\qquad\times\left.\left.\left[\inf_{x\in Q_{\tau'}^{k',m'}}
|P_{k'}(f)(x)|\right]^q\right\}^{1/q}\right\}\notag.
\end{align}

Combining the above two inequalities, we obtain the desired estimate.

{\it Case 2) $k\in\{N+1,N+2,\dots\}$.}
In this case, by \eqref{10.15.5} and \eqref{10.15.6}, we find that,
for any $f\in (\icgg)'$ with
$\beta$ and $\gamma$ as in this lemma, and $z\in\cx$,
\begin{align*}
|Q_k(f)(z)|&\leq \sum_{\alpha' \in \ca_0}\sum_{m'=1}^{N(0,\alpha')}m_{\qop}(|P_{0}(f)|)
\int_{\qop}\left|Q_k\widetilde{P}_0(z,y)\right|\,d\mu(y)\\
&\qquad+\sum_{k'=1}^{N'}\sum_{\alpha' \in \ca_{k'}}\sum_{m'=1}^{N(k',\alpha')}
\mu\left(\qap\right)m_{\qap}(|P_{k'}(f)|)\left|Q_k\widetilde{P}_{k'}\left(z,\yap\right)\right|\\
&\qquad+\sum_{k'=N'+1}^\infty\sum_{\alpha' \in \ca_{k'}}
\sum_{m'=1}^{N(k',\alpha')}\mu\left(\qap\right)
\left|Q_k\widetilde{P}_{k'}\left(z,\yap\right)\right|\left|P_{k'}f\left(\yap\right)\right|\\
&=:\rm{{\rm I}_1+{\rm I}_2+{\rm I}_3}.
\end{align*}
Using an argument similar to that used in the proof of Lemma \ref{11.14.1},
we conclude that, for any $f\in (\icgg)'$ with
$\beta$ and $\gamma$ as in this lemma,
\begin{align}\label{12.5.1}
&\sup_{l \in \nn,\ l>N} \sup_{\alpha\in\ca_l}\left\{\frac{1}{\mu(Q_\alpha^l)}\sum_{k=l}^\infty
\sum_{\tau\in\ca_k}\sum_{m=1}^{N(k,\tau)}\delta^{-ksq}\mu\left(Q_\tau^{k,m}\right)
\mathbf{1}_{\{(\tau,m):\ Q_\tau^{k,m}\subset Q_\alpha^l\}}(\tau,m)
\left[\sup_{x\in Q_\tau^{k,m}}|{\rm I}_3|\right]^q\right\}^{1/q}\\
&\quad\lesssim \sup_{l' \in \nn,\ l'>N'} \sup_{\alpha'\in\ca_{l'}}
\left\{\frac{1}{\mu(Q_{\alpha'}^{l'})}\sum_{k'=l'}^\infty\sum_{\tau'\in\ca_k'}
\sum_{m'=1}^{N(k',\tau')}\delta^{-k'sq}\mu\left(Q_{\tau'}^{k',m'}\right)
\mathbf{1}_{\{(\tau',m'):\ Q_{\tau'}^{k',m'}\subset Q_{\alpha'}^{l'}\}}(\tau',m')\r.\notag\\
&\quad\qquad\times\left.\left.\left[\inf_{x\in Q_{\tau'}^{k',m'}}|P_{k'}(f)(x)|\right]^q
\right\}^{1/q}\right\}\notag.
\end{align}

Now we estimate $\rm{{\rm I}_1}$ and $\rm{{\rm I}_2}$, respectively. By \eqref{10.18.8} and Lemma \ref{6.15.1}(ii),
for any fixed $\Gamma\in(0,\bz)$, we obtain
\begin{align*}
\sup_{z\in\qa}(\rm{{\rm I}_1+{\rm I}_2})&\lesssim\delta^{k\Gamma}
\sum_{k'=0}^{N'}\sum_{\alpha' \in \ca_{k'}}\sum_{m'=1}^{N(k',\alpha')}m_{\qap}(|P_{k'}(f)|)\mu\left(\qap\right)\\
&\qquad\times\inf_{z\in\qa}\inf_{y\in\qap}\frac{1}{V_1(z)+V(z,y)}
\left[\frac{1}{1+d(z,y)}\right]^{\gz'}\notag\\
&\lesssim \delta^{k\Gamma}\sup_{k'\in\{0,\dots,N'\}}
\sup_{\alpha' \in \ca_{k'}}\sup_{m'\in\{1,\dots,N(k',\alpha')\}}m_{\qap}(|P_{k'}(f)|)\notag\\
&\qquad\times\sum_{k'=0}^{N'}\sum_{\alpha' \in \ca_{k'}}
\sum_{m'=1}^{N(k',\alpha')}\mu\left(\qap\right)\inf_{y\in\qap}
\frac{1}{V_1(z_\alpha^{k,m})+V(z_\alpha^{k,m},y)}
\left[\frac{1}{1+d(z_\alpha^{k,m},y)}\right]^{\gz'}\notag\\
&\lesssim\delta^{k\Gamma}\sup_{k'\in\{0,\dots,N'\}}
\sup_{\alpha' \in \ca_{k'}}\sup_{m'\in\{1,\dots,N(k',\alpha')\}}m_{\qap}(|P_{k'}(f)|)\notag\\
&\qquad\times\int_{\cx}\frac{1}{V_1(z_\alpha^{k,m})+V(z_\alpha^{k,m},y)}
\left[\frac{1}{1+d(z_\alpha^{k,m},y)}\right]^{\gz'}\,d\mu(y)\notag\\
&\lesssim \delta^{k\Gamma}\sup_{k'\in\{0,\dots,N'\}}
\sup_{\alpha' \in \ca_{k'}}\sup_{m'\in\{1,\dots,N(k',\alpha')\}}m_{\qap}(|P_{k'}(f)|).
\end{align*}
Using this and choosing $\Gamma\in(|s|,\bz)$, we have, for any $f\in (\icgg)'$ with
$\beta$ and $\gamma$ as in this lemma,
\begin{align}\label{12.5.2}
&\sup_{l \in \nn, l>N} \sup_{\alpha\in\ca_l}\left\{\frac{1}{\mu(Q_\alpha^l)}
\sum_{k=l}^\infty\sum_{\tau\in\ca_k}\sum_{m=1}^{N(k,\tau)}\delta^{-ksq}
\mu(Q_\tau^{k,m})\mathbf{1}_{\{(\tau,m):Q_\tau^{k,m}\subset Q_\alpha^l\}}(\tau,m)
\left[\sup_{x\in Q_\tau^{k,m}}|\rm{{\rm I}_1+{\rm I}_2}|\right]^q\right\}^{1/q}\\
&\quad\lesssim \sup_{k'\in\{0,\dots,N'\}}\sup_{\alpha' \in \ca_{k'}}
\sup_{m'\in\{1,\dots,N(k',\alpha')\}}m_{\qap}(|P_{k'}(f)|)\notag\\
&\quad\qquad \times \sup_{l \in \nn, l>N} \sup_{\alpha\in\ca_l}
\left[\sum_{k=l}^\infty\delta^{-ksq+k\Gamma q}
\frac{1}{\mu(Q_\alpha^l)}\sum_{\tau\in\ca_k}\sum_{m=1}^{N(k,\tau)}
\mu(Q_\tau^{k,m})\mathbf{1}_{\{(\tau,m):Q_\tau^{k,m}\subset Q_\alpha^l\}}(\tau,m)\right]^{1/q}\notag\\
&\quad\lesssim \sup_{k'\in\{0,\dots,N'\}}\sup_{\alpha' \in \ca_{k'}}
\sup_{m'\in\{1,\dots,N(k',\alpha')\}}m_{\qap}(|P_{k'}(f)|)\notag.
\end{align}
Combining \eqref{12.3.5} through
\eqref{12.5.2}, we obtain the desired estimate and then complete the proof of Lemma \ref{12.3.2}.
\end{proof}

Using an argument similar to that used in the proof of Proposition
\ref{6.4.1}, we have the following proposition and  we omit the details here.

\begin{proposition}
Let $\{Q_k\}_{k=0}^\infty$ and $\{P_k\}_{k=0}^\infty$ be two {\rm $\exp$-IATIs}.
Let $\beta,\ \gamma\in(0,\eta)$, $s\in(-(\bz\wedge\gz),\bz\wedge\gz)$ and $q \in (p(s,\beta\wedge\gamma), \infty]$
with $\eta$ and $p(s,\beta\wedge\gamma)$, respectively, as
in Definition \ref{10.23.2} and \eqref{pseta}. Let $N\in\nn$ and $N'\in\nn$ be as in Lemma
\ref{icrf} associated, respectively, with $\{Q_k\}_{k=0}^\infty$ and $\{P_k\}_{k=0}^\infty$.
Then, for any $f \in (\icgg)'$,
\begin{align*}
&\max\left\{\sup_{k\in\{0,\dots,N\}}\sup_{\alpha \in \ca_k}
\sup_{m\in\{1,\dots,N(k,\alpha)\}}m_{\qa}(|Q_k(f)|),\r.\\
&\qquad\quad\left.\sup_{l \in \nn,\ l>N} \sup_{\alpha\in\ca_l}\left[\frac{1}
{\mu(Q_\alpha^l)}\int_{Q_\alpha^l}\sum_{k=l}^\infty\delta^{-ksq}|Q_k(f)(x)|^q\,d\mu(x)\right]^{1/q}\r\}\\
&\quad\sim \max\left\{\sup_{k'\in\{0,\dots,N'\}}\sup_{\alpha' \in \ca_k'}
\sup_{m'\in\{1,\dots,N(k',\alpha')\}}m_{\qap}(|P_{k'}(f)|),\r.\\
&\qquad\quad\left.\sup_{l' \in \nn,\ l'>N'} \sup_{\alpha'\in\ca_{l'}}
\left[\frac{1}{\mu(Q_{\alpha'}^{l'})}\int_{Q_{\alpha'}^{l'}}
\sum_{k'=l'}^\infty\delta^{-k'sq}|P_{k'}(f)(x)|^q\,d\mu(x)\right]^{1/q}\r\},
\end{align*}
where the positive equivalence constants are  independent of $f$.
\end{proposition}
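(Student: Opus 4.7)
The strategy mirrors that of Proposition \ref{6.4.1}, with Lemma \ref{5.12.1} replaced by its inhomogeneous $p=\infty$ counterpart, namely Lemma \ref{12.3.2}. By symmetry between $\{Q_k\}_{k=0}^\infty$ and $\{P_k\}_{k=0}^\infty$ (and between $N$ and $N'$), it will suffice to show that the quasi-norm constructed from $\{Q_k\}_{k=0}^\infty$ is controlled by the quasi-norm constructed from $\{P_k\}_{k=0}^\infty$.

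The first step is to pass from the integral formulation of $\|f\|_{\ihfi}$ to the cube-supremum formulation that appears on the left-hand side of Lemma \ref{12.3.2}. For the low-frequency piece, the trivial inequality
\[
m_{\qa}(|Q_k(f)|)\le \sup_{x\in\qa}|Q_k(f)(x)|
\]
suffices. For the high-frequency piece, I will use Theorem \ref{10.22.1}(ii), which gives the dyadic partition
\[
Q_\alpha^l=\bigcup_{k\ge l}\bigcup_{\tau\in\ca_k}\bigcup_{m=1}^{N(k,\tau)} Q_\tau^{k,m}\mathbf 1_{\{Q_\tau^{k,m}\subset Q_\alpha^l\}}(\tau,m),
\]
together with the bound $\int_{Q_\tau^{k,m}}|Q_k(f)(x)|^q\,d\mu(x)\le \mu(Q_\tau^{k,m})[\sup_{x\in Q_\tau^{k,m}}|Q_k(f)(x)|]^q$. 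Combining these, $\|f\|_{\ihfi}$ is no larger than the left-hand side of Lemma \ref{12.3.2}.

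Next I will apply Lemma \ref{12.3.2} itself to dominate that expression by the corresponding quantity built from $\{P_k\}_{k=0}^\infty$, which uses $m_{\qap}(|P_{k'}(f)|)$ in the low-frequency range $k'\in\{0,\dots,N'\}$ and $\inf_{x\in Q_{\tau'}^{k',m'}}|P_{k'}(f)(x)|$ in the high-frequency range. Then, to return to the integral formulation on the $P$-side, I will reverse the computation of Step 1: the low-frequency term is already in the desired form, while for the high-frequency term the inequality
\[
\mu\lf(Q_{\tau'}^{k',m'}\r)\lf[\inf_{x\in Q_{\tau'}^{k',m'}}|P_{k'}(f)(x)|\r]^q\le \int_{Q_{\tau'}^{k',m'}}|P_{k'}(f)(x)|^q\,d\mu(x)
\]
combined with the dyadic decomposition of $Q_{\alpha'}^{l'}$ recovers the integral $\int_{Q_{\alpha'}^{l'}}\sum_{k'=l'}^\infty \delta^{-k'sq}|P_{k'}(f)(x)|^q\,d\mu(x)$. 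Taking suprema over $l'>N'$ and $\alpha'\in\ca_{l'}$ yields the $\{P_k\}$-quasi-norm.

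Chaining the three steps produces $\|f\|_Q\lesssim \|f\|_P$, where $\|\cdot\|_Q$ and $\|\cdot\|_P$ denote the quasi-norms built respectively from $\{Q_k\}$ and $\{P_k\}$; interchanging their roles gives the reverse inequality. No new estimates beyond Lemma \ref{12.3.2} and the elementary mean/sup/inf comparisons are required, so there is no real obstacle: the entire content of the proposition is packaged inside Lemma \ref{12.3.2}, and that is why the proof is omitted in the manner of Proposition \ref{6.4.1}. The only point meriting care is to ensure that the passage between integral and cube-sup or cube-inf formulations exactly matches the two sides of Lemma \ref{12.3.2}, which it does thanks to the perfect nesting of the dyadic cubes from Theorem \ref{10.22.1}.
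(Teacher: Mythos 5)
Your proposal is correct and matches the approach the paper intends: reduce the integral quasi-norm to the cube-sup form by pointwise domination $|Q_k(f)(x)|\le\sup_{z\in Q_\tau^{k,m}}|Q_k(f)(z)|$ on each dyadic subcube, apply the inhomogeneous $p=\infty$ Plancherel--P\^olya inequality of Lemma \ref{12.3.2}, and then reverse the reduction on the $P$-side via $\mu(Q_{\tau'}^{k',m'})[\inf_{x\in Q_{\tau'}^{k',m'}}|P_{k'}(f)(x)|]^q\le\int_{Q_{\tau'}^{k',m'}}|P_{k'}(f)|^q\,d\mu$, with symmetry giving the reverse inequality; this is exactly the pattern of Proposition \ref{6.4.1}. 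One small redundancy: the inequality $m_{\qa}(|Q_k(f)|)\le\sup_{x\in\qa}|Q_k(f)(x)|$ is not actually needed, since the low-frequency term already appears in mean-value form $m_{\qa}$ on both sides of Lemma \ref{12.3.2} and of the Proposition, so it passes through unchanged.
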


The following proposition shows that the space $\ihfi$ is
independent of the choice of the space of distribution $(\icgg)'$
if $\beta$ and $\gamma$ satisfy some certain conditions.

\begin{proposition}\label{12.5.3}
Let $\beta,\ \gamma\in(0,\eta)$, with $\eta$ be as in Definition \ref{10.23.2}, and $s\in(-\eta,\eta)$ satisfy
\begin{equation}\label{12.5.4}
\beta\in((-s)_+,\eta)\qquad\text{and}\qquad\ \gamma\in(0,\eta).
\end{equation}
If $q\in(p(s,\beta\wedge\gamma),\infty]$,
then there exists a positive constant $C$ such that, for any $f \in \ihfi\subset(\icgg)'$,
$f \in (\icggt)'$ with $\widetilde{\beta}$ and  $\widetilde{\gamma}$ satisfying $q\in(p(s,\widetilde{\beta}\wedge\widetilde{\gamma}),\infty]$
with $p(s,\widetilde{\beta}\wedge\widetilde{\gamma})$ as in \eqref{pseta} via replacing $\beta$ and $\gamma$ respectively by $\widetilde{\beta}$ and
$\widetilde{\gamma}$, and \eqref{12.5.4}
via replacing $\beta$ and $\gamma$ respectively by $\widetilde{\beta}$ and
$\widetilde{\gamma}$, and there exists a positive constant $C$,
independent of $f$, such that
$$\|f\|_{(\icggt)'}\leq C\|f\|_{\ihfi}.$$
\end{proposition}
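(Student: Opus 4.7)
The plan is to combine the inhomogeneous discrete Calder\'on reproducing formula (Lemma \ref{icrf}) with the kernel estimates for $\widetilde{Q}_k$ and the Plancherel--P\^olya inequality (Lemma \ref{12.3.2}), following the strategy established in the proofs of Propositions \ref{10.19.2} and \ref{12.2.7}. First, fix $\psi \in \cg(\eta,\eta)$ with $\eta$ as in Definition \ref{10.23.2}, choose auxiliary parameters $\beta_0,\gamma_0 \in (\max\{\beta,\widetilde{\beta},\gamma,\widetilde{\gamma}\},\eta)$ so that $f \in (\mathcal{G}_0^\eta(\beta_0,\gamma_0))'$, and apply Lemma \ref{icrf} to expand
$$\langle f, \psi\rangle = \sum_{k=0}^{N}\sum_{\alpha\in\ca_k}\sum_{m=1}^{N(k,\alpha)}\mu(\qa)\,Q^{k,m}_{\alpha,1}(f)\,\langle\widetilde{Q}_k(\cdot,\ya),\psi\rangle + \sum_{k=N+1}^\infty\sum_{\alpha\in\ca_k}\sum_{m=1}^{N(k,\alpha)}\mu(\qa)\,Q_k f(\ya)\,\langle\widetilde{Q}_k(\cdot,\ya),\psi\rangle,$$
where, for $k \in \{0,\dots,N\}$ the first factor is handled via \eqref{10.15.6}. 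Since the condition \eqref{12.5.4} coincides with \eqref{10.19.3} specialized to $p=\infty$, the key kernel estimate \eqref{10.19.4} is directly applicable, giving $|\langle\widetilde{Q}_k(\cdot,\ya),\psi\rangle| \lesssim \delta^{k\widetilde{\beta}}\|\psi\|_{\cg(\widetilde{\beta},\widetilde{\gamma})}[V_1(x_1)+V(x_1,\ya)]^{-1}[1+d(x_1,\ya)]^{-\widetilde{\gamma}}$ for every $k \in \zz_+$.

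The contribution from $k \in \{0,\dots,N\}$ is then bounded by $\|\psi\|_{\cg(\widetilde{\beta},\widetilde{\gamma})}\|f\|_{\ihfi}$ after summing $\mu(\qa)[V_1(x_1)+V(x_1,\ya)]^{-1}[1+d(x_1,\ya)]^{-\widetilde{\gamma}}$ against $\|f\|_{\ihfi}$ using $\mu(\qa) \lesssim V_1(x_1) + V(x_1,\ya)$ (which holds uniformly on this finite range since $\delta^k \sim 1$) and Lemma \ref{6.15.1}(ii). The essential difficulty lies in the tail $\sum_{k=N+1}^\infty$, where $\|f\|_{\ihfi}$ only controls \emph{localized} $L^q$ averages over dyadic cubes rather than any global quantity. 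The plan is to organize this tail by regrouping the cubes $\qa$ according to a reference dyadic cube at a larger scale: for each fixed $l \geq N+1$ and $\alpha_0 \in \ca_l$, the sub-sum $\sum_{k \geq l}\sum_{\{(\alpha,m):\,\qa \subset Q_{\alpha_0}^l\}} \delta^{-ksq}\mu(\qa)|Q_k(f)(\ya)|^q$ is bounded by $\mu(Q_{\alpha_0}^l)\|f\|_{\ihfi}^q$ by definition; then, combining the prefactor $\delta^{k(\widetilde{\beta}+s)}$ with either \eqref{r} (when $q \in (p(s,\widetilde{\beta}\wedge\widetilde{\gamma}),1]$) or H\"older's inequality (when $q \in (1,\infty]$) against the integrable weight $[V_1(x_1)+V(x_1,\cdot)]^{-1}[1+d(x_1,\cdot)]^{-\widetilde{\gamma}}$ yields the desired bound, where the condition $\widetilde{\beta} > (-s)_+$ from \eqref{12.5.4} is precisely what guarantees the geometric sum $\sum_{k \geq N+1}\delta^{k(\widetilde{\beta}+s)}$ converges.

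The main obstacle in the above is the bookkeeping associated with the dyadic regrouping, particularly verifying that the weighted sums of $\mu(\qa)[V_1(x_1)+V(x_1,\ya)]^{-1}[1+d(x_1,\ya)]^{-\widetilde{\gamma}}$ reduce, after possibly applying Lemma \ref{10.18.5}, to $\|f\|_{\ihfi}$-controlled quantities without losing factors that would destroy convergence. Once this estimate is in hand uniformly over all $\psi \in \cg(\eta,\eta)$, extending $f$ to a continuous linear functional on $\icggt$ follows from a standard density argument as at the end of the proof of Proposition \ref{6.5.1}: any $h \in \icggt$ is approximated by $\{h_n\}_{n\in\nn} \subset \cg(\eta,\eta)$ in the $\cg(\widetilde{\beta},\widetilde{\gamma})$-norm, the sequence $\{\langle f,h_n\rangle\}_{n\in\nn}$ is Cauchy by the bound just proved, and defining $\langle f,h\rangle := \lim_{n\to\infty}\langle f,h_n\rangle$ gives a well-defined element of $(\icggt)'$ whose norm is controlled by $C\|f\|_{\ihfi}$, completing the proof.
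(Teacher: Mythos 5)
Your proposal follows the same skeleton as the paper's proof — expand via Lemma \ref{icrf}, use \eqref{10.15.6} for the cube-average terms when $k\in\{0,\dots,N\}$, apply the kernel estimate \eqref{10.19.4}, split into the cases $q\leq 1$ and $q>1$, and close with a density argument as in Proposition \ref{6.5.1}. The observation that \eqref{12.5.4} is \eqref{10.19.3} at $p=\infty$, so \eqref{10.19.4} applies directly, is correct and is the right thing to notice.

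Where you go astray is in the treatment of the tail $\sum_{k>N}$, which you over-complicate. You write that $\|f\|_{\ihfi}$ ``only controls localized $L^q$ averages over dyadic cubes rather than any global quantity,'' and this leads you to propose a regrouping by a reference scale $l$ together with an appeal to Lemma \ref{10.18.5}. But the definition of $\ihfi$ (or, if one wishes to be careful about choice of basepoints, Lemma \ref{12.3.2}) already gives a genuinely pointwise bound: fixing a single level-$k$ cube $Q_\alpha^{k,m}$ and taking $l=k$, $\alpha=\alpha$ in the supremum defining $\|f\|_{\ihfi}$, the contribution of that one sub-cube to the inner sum yields
$$\frac{\mu(Q_\alpha^{k,m})}{\mu(Q_\alpha^{k})}\,\delta^{-ksq}\left[\sup_{x\in Q_\alpha^{k,m}}|Q_k f(x)|\right]^q\lesssim\|f\|_{\ihfi}^q,$$
and since $\mu(Q_\alpha^{k,m})\sim\mu(Q_\alpha^k)$ this is exactly $\delta^{-ks}\sup_{Q_\alpha^{k,m}}|Q_k f|\lesssim\|f\|_{\ihfi}$ for every $k>N$ and every cube. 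This is what the paper's \eqref{12.5.5} (for low frequencies) and \eqref{12.5.6} (for high frequencies) encode. Once you have this pointwise bound, the tail simply becomes
$$\sum_{k>N}\delta^{k\widetilde{\beta}}\sum_{\alpha,m}\mu(Q_\alpha^{k,m})\,|Q_k f(y_\alpha^{k,m})|\,W(y_\alpha^{k,m})\lesssim\|f\|_{\ihfi}\sum_{k>N}\delta^{k(\widetilde{\beta}+s)}\int_\cx W\,d\mu\lesssim\|f\|_{\ihfi},$$
where $W(y):=[V_1(x_1)+V(x_1,y)]^{-1}[1+d(x_1,y)]^{-\widetilde{\gamma}}$ and both the geometric series (using $\widetilde\beta>-s$) and the integral (Lemma \ref{6.15.1}(ii)) converge. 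No regrouping by a coarser scale is needed, and Lemma \ref{10.18.5} — which serves the $p<\infty$ Triebel--Lizorkin case via the vector-valued maximal function — plays no role at $p=\infty$. Your worry about ``losing factors'' in the bookkeeping evaporates once you use this pointwise consequence of the definition, which is precisely what the paper does. Apart from this (real, but fixable) detour, your plan is sound.
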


\begin{proof}
Let $\eta$ be as in Definition \ref{10.23.2}, $f \in (\icgg)'$ with
$\beta$ and $\gamma$ as in this proposition
belong to $\ihfi$ and $\psi\in\cg(\eta,\eta)$.
Let $\widetilde{\beta}$ and $\widetilde{\gamma}$ as in this proposition and  all the other notation be the
same as in Lemma \ref{icrf}. Note that, by the definition
of $\|\cdot\|_{\ihfi}$, we know that, for any $k\in \{0,\dots,N\}$,
$\alpha\in\ca_k$ and $m\in\{1,\dots,N(k,\alpha)\}$,
\begin{equation}\label{12.5.5}
m_{\qa}(|Q_{k}(f)|)\leq \|f\|_{\ihfi}.
\end{equation}
From \eqref{11.19.1}, we
deduce that, for any given $q\in(p(s,\widetilde{\beta}\wedge\widetilde{\gamma}),\infty]$
and any $k\in\{N+1,N+2,\dots\}$,
\begin{equation}\label{12.5.6}
\left[\sum_{\alpha\in\ca_k}\sum_{m=1}^{N(k,\alpha)}\delta^{-ksq}
\left|Q_k(f)\left(\ya\right)\right|^q\right]^{1/q}\lesssim \|f\|_{\ihfi}.
\end{equation}
If $q\in(p(s,\widetilde{\beta}\wedge\widetilde{\gamma}),1]$, then, by \eqref{r}, \eqref{10.15.6},
\eqref{10.19.4}, \eqref{12.2.2}, \eqref{12.5.4}, \eqref{12.5.5},
\eqref{12.5.6}, Lemma \ref{icrf} and Lemma \ref{6.15.1}(ii),
we conclude  that
\begin{align}\label{12.5.7}
|\langle f,\psi\rangle|&=\left|\sum_{\alpha \in \ca_0}\sum_{m=1}^{N(0,\alpha)}
\int_{\qo}\left\langle\widetilde{Q}_0(\cdot,y),\psi\right\rangle\,d\mu(y)Q^{0,m}_{\alpha,1}
(f)\right.\\
&\qquad+\sum_{k=1}^N\sum_{\alpha \in \ca_k}\sum_{m=1}^{N(k,\alpha)}
\mu\left(\qa\right)\left\langle\widetilde{Q}_k(\cdot,\ya),\psi\right\rangle Q^{k,m}_{\alpha,1}(f)\notag\\
&\qquad\left.+\sum_{k=N+1}^\infty\sum_{\alpha \in \ca_k}
\sum_{m=1}^{N(k,\alpha)}\mu\left(\qa\right)\left\langle\widetilde{Q}_k(\cdot,\ya),\psi\right\rangle Q_kf\left(\ya\right)\right|\notag\\
&\lesssim\|\psi\|_{\cg(\widetilde{\beta},\widetilde{\gamma})}
\left\{\sum_{k=0}^N\sum_{\alpha \in \ca_k}\sum_{m=1}^{N(k,\alpha)}
\mu\left(\qa\right)m_{\qa}(|Q_k(f)|)\frac{1}{V_1(x_1)+V(x_1,\ya)}\r.\noz\\
&\qquad\times\left[\frac{1}{1+d(x_1,\ya)}\right]^{\widetilde{\gamma}}+\sum_{k=N+1}
^\infty\delta^{k\widetilde{\beta}}\sum_{\alpha \in \ca_k}\sum_{m=1}^{N(k,\alpha)}
\mu\left(\qa\right)\left|Q_kf\left(\ya\right)\right|\notag\\
&\qquad\times\left.\frac{1}{V_1(x_1)+V(x_1,\ya)}
\left[\frac{1}{1+d(x_1,\ya)}\right]^{\widetilde{\gamma}}\right\}\notag\\
&\lesssim \|\psi\|_{\cg(\widetilde{\beta},\widetilde{\gamma})}\|f\|_{\ihfi}\notag\\
&\qquad\times\left\{\int_{\cx}\frac{1}{V_1(x_1)+V(x_1,y)}\left[\frac{1}{1+d(x_1,y)}\right]^{\widetilde{\gamma}}\,d\mu(y)
+\sum_{k=N+1}^\infty\delta^{k\widetilde{\beta}+ks}\right\}\notag\\
&\lesssim \|\psi\|_{\cg(\widetilde{\beta},\widetilde{\gamma})}\|f\|_{\ihfi}.\notag
\end{align}
If $q\in(1,\infty]$, then, by \eqref{10.15.6}, \eqref{10.19.4}, \eqref{12.2.2},
\eqref{12.5.4}, \eqref{12.5.5}, \eqref{12.5.6}, the H\"older inequality,
Lemmas \ref{icrf} and \ref{6.15.1}(ii),
and  an argument similar to that used in the estimation of \eqref{12.5.7}, we find that
\begin{align}\label{12.5.8}
|\langle f,\psi\rangle|&\lesssim\|\psi\|_{\cg(\widetilde{\beta},\widetilde{\gamma})}\left\{\sum_{k=0}^N
\sum_{\alpha \in \ca_k}\sum_{m=1}^{N(k,\alpha)}\mu\left(\qa\right)m_{\qa}(|Q_k(f)|)
\frac{1}{V_1(x_1)+V(x_1,\ya)}\right.\\
&\qquad\times\left[\frac{1}{1+d(x_1,\ya)}\right]^{\widetilde{\gamma}}
\sum_{k=N+1}^\infty\delta^{k\widetilde{\beta}}\sum_{\alpha \in \ca_k}
\sum_{m=1}^{N(k,\alpha)}\mu\left(\qa\right)
\left|Q_kf\left(\ya\right)\right|\notag\\
&\qquad\times\lf.\frac{1}{V_1(x_1)+V(x_1,\ya)}\left[\frac{1}{1+d(x_1,\ya)}\right]^{\widetilde{\gamma}}\r\}\noz\\
&\lesssim \|\psi\|_{\cg(\widetilde{\beta},\widetilde{\gamma})}\|f\|_{\ihfi}\left[\int_{\cx}
\frac{1}{V_1(x_1)+V(x_1,y)}\left\{\frac{1}{1+d(x_1,y)}\right\}^{\widetilde{\gamma}}\,d\mu(y)\right.\notag\\
&\qquad\left.+\sum_{k=N+1}^\infty\delta^{k\widetilde{\beta}+ks}\left\{\int_{\cx}\frac{1}{V_1(x_1)+V(x_1,y)}
\left[\frac{1}{1+d(x_1,y)}\right]^{\widetilde{\gamma}}\,d\mu(y)\right\}^{1/q'}\right]\notag\\
&\lesssim \|\psi\|_{\cg(\widetilde{\beta},\widetilde{\gamma})}\|f\|_{\ihfi}.\notag
\end{align}

Combining \eqref{12.5.7}, \eqref{12.5.8} with an  argument similar to that used in the proof of Proposition
\ref{6.5.1}, we complete the proof of Proposition \ref{12.5.3}.
\end{proof}

Now we state some basic properties of $\ihfi$ and we omit the details here.

\begin{proposition}\label{proihfi}
Let $\beta,\ \gamma \in (0, \eta)$,
$s\in(-(\beta\wedge\gamma),\beta\wedge\gamma)$ and
$q\in (p(s,\beta\wedge\gamma),\infty]$ with $\eta$ and $p(s,\beta\wedge\gamma)$, respectively, as in
Definition \ref{10.23.2} and \eqref{pseta}.
\begin{enumerate}
\item[{\rm(i)}] If $p(s,\beta\wedge\gamma)<q_1\leq q_2\leq\infty$, then $F_{\infty,q_1}^s(\cx)\subset F_{\infty,q_2}^s(\cx)$;
\item[{\rm(ii)}] If $\theta \in ((-\eta-s)_+,\eta-s)$ and
$q_1,\ q_2\in p(s,\beta\wedge\gamma),\infty]$, then $F_{\infty,q_1}^{s+\theta}(\cx)\subset F_{\infty,q_2}^s(\cx)$;
\item[{\rm(iii)}] $B_{\infty,q}^s(\cx)\subset F_{\infty,q}^s(\cx)\subset B_{\infty,\infty}^s(\cx)$;
\item[{\rm(iv)}] If $\widetilde{\beta}\in(s_+,\eta)$ and $\widetilde{\gamma}\in(0,\eta)$, then
$\cg(\widetilde{\beta},\widetilde{\gamma})\subset \ihfi$.
\end{enumerate}
\end{proposition}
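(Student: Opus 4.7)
The plan is to follow, almost verbatim, the proofs of the homogeneous analogue in Proposition \ref{12.3.1} together with the properties of inhomogeneous $F$-spaces with $p<\infty$ in Proposition \ref{10.20.1}. Write $\|f\|_{\ihfi}=\max\{A(f),B_q(f)\}$, where
$$
A(f):=\sup_{0\le k\le N,\,\alpha\in\ca_k,\,1\le m\le N(k,\alpha)}m_{\qa}(|Q_k(f)|)
$$
is independent of $q$ and $s$, and $B_q(f)$ denotes the Carleson-type piece for $l>N$. Since $A(f)$ plays no role in any of the monotonicity or inclusion statements, only $B_q(f)$ needs analysis in (i), (ii) and (iii).

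For (i), I fix $l>N$ and $\alpha\in\ca_l$, set $a_k(x):=\delta^{-ks}|Q_k(f)(x)|$, and apply \eqref{r} with $\theta=q_1/q_2\in(0,1]$ to obtain, pointwise on $Q_\alpha^l$, the estimate $(\sum_{k\ge l}a_k(x)^{q_2})^{1/q_2}\le(\sum_{k\ge l}a_k(x)^{q_1})^{1/q_1}$. To turn this into a bound on the Carleson average, I first invoke the discrete Plancherel--P\^olya equivalence of Lemma \ref{12.3.2} to replace $B_{q_1}(f)$ by its sup-over-subcubes variant, then apply \eqref{r} index-by-index on the resulting weighted sum over $(k,\tau,m)$ with weights $\mu(Q_\tau^{k,m})/\mu(Q_\alpha^l)$, and finally return to $B_{q_2}(f)$ via Lemma \ref{12.3.2} once more. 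Statement (ii) is then a consequence of (i) together with the elementary identity $\delta^{-ksq}=\delta^{-(s+\theta)kq}\delta^{k\theta q}$ and summability of $\sum_{k\ge l}\delta^{k\theta q}$, which is guaranteed by the hypothesis $\theta\in((-\eta-s)_+,\eta-s)$.

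For (iii), the inclusion $\ihbi\subset\ihfi$ is immediate after dominating the Carleson average by the pointwise $L^\infty$-norm: for every $l>N$ and $\alpha\in\ca_l$,
$$
\left[\frac{1}{\mu(Q_\alpha^l)}\int_{Q_\alpha^l}\sum_{k=l}^\infty\delta^{-ksq}|Q_k(f)(x)|^q\,d\mu(x)\right]^{1/q}\le\left[\sum_{k=l}^\infty\delta^{-ksq}\|Q_k(f)\|_{L^\infty(\cx)}^q\right]^{1/q}\le\|f\|_{\ihbi}.
$$
Conversely, for $\ihfi\subset B^s_{\infty,\infty}(\cx)$, I retain only the $k=l$ term in the inner sum; combined with Lemma \ref{12.3.2} (which allows the integral average to be replaced by a supremum over subcubes up to multiplicative constants), this yields $\delta^{-ls}\|Q_l(f)\|_{L^\infty(\cx)}\lesssim\|f\|_{\ihfi}$ uniformly in $l>N$, and the indices $k\in\{0,\ldots,N\}$ are absorbed by $A(f)$.

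Finally, (iv) follows as in Propositions \ref{phb}(iii) and \ref{10.20.1}(iii). For $\psi\in\cg(\widetilde\beta,\widetilde\gamma)$ the pointwise kernel estimate \eqref{8.26.1}, which applies for every $k\in\zz_+$ (the full range needed in the inhomogeneous setting), yields
$$
|Q_k(\psi)(y)|\lesssim\delta^{k\widetilde\beta}\|\psi\|_{\cg(\widetilde\beta,\widetilde\gamma)}\frac{1}{V_1(x_0)+V(x_0,y)}\left[\frac{1}{1+d(x_0,y)}\right]^{\widetilde\gamma};
$$
integrating this against the probability measure $d\mu/\mu(Q_\alpha^l)$ by means of Lemma \ref{6.15.1}(ii) and summing the convergent series $\sum_{k\ge l}\delta^{k(\widetilde\beta-s)q}$ (convergent precisely because $\widetilde\beta>s$) completes the argument. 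The principal obstacle is (i): the Carleson average characterizing the case $p=\infty$ does not directly respect the pointwise inclusion $\ell^{q_1}\subset\ell^{q_2}$, so the switch to the discrete sup-form via Lemma \ref{12.3.2} is essential for the monotonicity to go through cleanly.
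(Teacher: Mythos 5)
Your treatment of (iii) and (iv) is essentially sound (and the paper itself only states this proposition with the details omitted, so there is no written proof to match): the inclusion $\ihbi\subset\ihfi$ by dominating the Carleson average with the $L^\infty$-norms is exactly the expected argument, and your single-term argument for $\ihfi\subset B^s_{\infty,\infty}(\cx)$ is correct, provided you note that each level-$l$ cube $Q_\tau^{l,m}\subset Q_\alpha^l$ has $\mu(Q_\tau^{l,m})\sim\mu(Q_\alpha^l)$ (this is the computation behind \eqref{11.19.1}). In (iv) there is only a small inaccuracy: \eqref{8.26.1} is proved by testing $Q_k$ against $\psi(z)-\psi(y)$, i.e.\ it uses the cancellation of $Q_k$, which the exp-IATI operator $Q_0$ does not have; at $k=0$ no gain $\delta^{k\widetilde\beta}$ is needed, and the required bound follows from the size conditions of $Q_0$ and $\psi$ alone (also, on the cube one should simply bound the integrand by $[V_1(x_0)]^{-1}\|\psi\|_{\cg(\widetilde\beta,\widetilde\gamma)}$ rather than integrate over all of $\cx$ via Lemma \ref{6.15.1}(ii), since dividing $\int_\cx$ by $\mu(Q_\alpha^l)$ is not uniformly controlled).

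The genuine gap is in (i), which you yourself identify as the principal obstacle: passing to the sup-form via Lemma \ref{12.3.2} does \emph{not} make ``\eqref{r} index-by-index'' legitimate. After discretization the quantity is $\frac{1}{\mu(Q_\alpha^l)}\sum_{k\ge l}\sum_{(\tau,m)}\mu(Q_\tau^{k,m})\,b_{k,\tau,m}^{q_2}$ with $b_{k,\tau,m}:=\delta^{-ks}\sup_{Q_\tau^{k,m}}|Q_k(f)|$; applying $\ell^{q_1}\hookrightarrow\ell^{q_2}$ to this \emph{weighted} sum effectively raises the weights $\mu(Q_\tau^{k,m})/\mu(Q_\alpha^l)\le 1$ to the power $q_1/q_2<1$, and since the number of subcubes at level $k$ is unbounded as $k\to\infty$, the resulting sum is not controlled by the $q_1$-quantity (equivalently, applying \eqref{r} pointwise in $x$ and then averaging runs into Jensen's inequality in the wrong direction). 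The standard repair is already contained in your proof of (iii): the single-term bound $b_{k,\tau,m}\lesssim\|f\|_{F^s_{\infty,q_1}(\cx)}$ uniformly in $(k,\tau,m)$, and then $b^{q_2}\le(\sup b)^{q_2-q_1}b^{q_1}$ inside the Carleson sum, which gives $F^s_{\infty,q_1}(\cx)\subset F^s_{\infty,q_2}(\cx)$ (for $q_2=\infty$ the $\ell^\infty$ bound alone suffices). With (i) proved this way, your reduction of (ii) — pass to $q_1=\infty$, pull $\sup_{k\ge l}\delta^{-(s+\theta)k}|Q_kf|$ out of the average, and sum $\sum_{k\ge l}\delta^{k\theta q_2}$ using $\theta>0$ — goes through.
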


\section[Relationships among Besov spaces,  Triebel--Lizorkin spaces \\ and other function spaces]
{Relationships among Besov spaces,  Triebel--Lizorkin spaces and other function spaces}\label{s6}

In this section, we establish the relations among Besov spaces, Triebel--Lizorkin spaces
and other function spaces.

\subsection[Relationships between homogeneous Besov and Triebel--Lizorkin spaces \\
and other function spaces]
{Relationships between homogeneous Besov and Triebel--Lizorkin spaces and other function spaces}

In this subsection, we establish the connections
between homogeneous Besov and
Triebel--Lizorkin spaces and other function spaces,
and hence we always assume that $\mu(\cx)=\infty$.
Let us begin with their relations with Lebesgue spaces.

\begin{theorem}\label{l_btl}
Let $p\in(1,\infty)$  and $\beta,\ \gamma\in(0,\eta)$ with $\eta$ as in
Definition \ref{10.23.2}. Then, as  subspaces of $(\cggi)'$, $\dot{F}_{p,2}^0(\cx)=L^p(\cx)$
in the following sense:
\begin{enumerate}
\item[{\rm(i)}] there exists a positive constant $C$ such that, for any $f\in L^p(\cx)$,
$$\|f\|_{\dot{F}_{p,2}^0(\cx)}\leq C\|f\|_{L^p(\cx)}.$$
\item[{\rm(ii)}] for any $f\in \dot{F}_{p,2}^0(\cx)$, there exists a $g\in L^p(\cx)$ such that
$$f=g\quad\text{in}\quad (\cggi)' \quad\text{and}\quad \|g\|_{L^p(\cx)}
\leq C\|f\|_{\dot{F}_{p,2}^0(\cx)},$$
where $C$ is a positive constant independent of $f$.
\end{enumerate}
\end{theorem}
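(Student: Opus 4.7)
The plan is to establish the identification $\dot{F}_{p,2}^0(\cx) = L^p(\cx)$ for $p\in(1,\infty)$ by proving the two embeddings separately, in each case reducing matters via the Khinchin inequality to uniform $L^p$ bounds for randomized sums of the exp-ATI operators $Q_k$, and controlling those sums through the Cotlar--Stein lemma (for $L^2$) combined with vector-valued Calder\'on--Zygmund theory (to pass from $L^2$ to general $L^p$).

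For direction (i), fix an exp-ATI $\{Q_k\}_{k\in\zz}$ and a Rademacher sequence $\{r_k(t)\}_{k\in\zz}$ on $[0,1]$. By the Khinchin inequality (Lemma \ref{khin} below), for any $f\in L^p(\cx)$,
\begin{equation*}
\left\|\left(\sum_{k\in\zz}|Q_k(f)|^2\right)^{1/2}\right\|_{L^p(\cx)}
\sim\left(\int_0^1\left\|\sum_{k\in\zz}r_k(t)Q_k(f)\right\|_{L^p(\cx)}^p\,dt\right)^{1/p}.
\end{equation*}
Thus it suffices to show that, for every sign sequence $\epsilon=\{\epsilon_k\}_{k\in\zz}\subset\{-1,+1\}^{\zz}$, the operator $T_\epsilon:=\sum_{k\in\zz}\epsilon_kQ_k$ is bounded on $L^p(\cx)$ with norm independent of $\epsilon$. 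The $L^2$ boundedness is obtained from the Cotlar--Stein lemma: using the cancellation of the exp-ATI and Lemma \ref{4.23z} (applied both to $Q_kQ_j^\ast$ and $Q_k^\ast Q_j$, whose integral kernels are controlled as in \eqref{pp}), one gets $\|Q_kQ_j^\ast\|_{L^2\to L^2}+\|Q_k^\ast Q_j\|_{L^2\to L^2}\ls \dz^{|k-j|\eta'}$ for some $\eta'\in(0,\eta)$, and \eqref{t_en_2} yields the uniform bound $\|T_\epsilon\|_{L^2\to L^2}\ls 1$. For the extension to $L^p(\cx)$, the plan is to verify that $T_\epsilon$ is a Calder\'on--Zygmund operator: its kernel $K_\epsilon(x,y):=\sum_{k\in\zz}\epsilon_kQ_k(x,y)$ satisfies the size estimate $|K_\epsilon(x,y)|\ls 1/V(x,y)$ and the H\"older regularity of order $\eta$, both uniformly in $\epsilon$; these follow by splitting the sum over $k$ into the ranges $\dz^k\leq d(x,y)$ and $\dz^k>d(x,y)$ and invoking the size and regularity conditions for $Q_k$ together with Lemma \ref{6.15.1}. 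Standard Calder\'on--Zygmund theory on spaces of homogeneous type then gives $\|T_\epsilon\|_{L^p\to L^p}\ls 1$, completing (i).

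For direction (ii), given $f\in\dot{F}_{p,2}^0(\cx)\subset(\cggi)'$, the plan is to exploit the homogeneous continuous Calder\'on reproducing formula of Lemma \ref{h_c_crf}: $f=\sum_{k\in\zz}\widetilde{Q}_kQ_kf$ in $(\cggi)'$. For any test function $h\in\mathring{\cg}(\eta,\eta)$, Cauchy--Schwarz on the $\ell^2$ level and H\"older on the $L^p$--$L^{p'}$ level yield
\begin{equation*}
|\langle f,h\rangle|\leq\int_{\cx}\left(\sum_{k\in\zz}|Q_k(f)(x)|^2\right)^{1/2}\left(\sum_{k\in\zz}|\widetilde{Q}_k^\ast(h)(x)|^2\right)^{1/2}\,d\mu(x)
\ls\|f\|_{\dot{F}_{p,2}^0(\cx)}\left\|\left(\sum_{k\in\zz}|\widetilde{Q}_k^\ast(h)|^2\right)^{1/2}\right\|_{L^{p'}(\cx)}.
\end{equation*}
The Khinchin/Cotlar--Stein argument of the first direction applies verbatim with $\{\widetilde{Q}_k^\ast\}_{k\in\zz}$ in place of $\{Q_k\}_{k\in\zz}$, because \eqref{4.23x}, \eqref{4.23y} and \eqref{4.23a} of Lemma \ref{h_c_crf} are exactly the size, regularity, and cancellation properties needed to reproduce the $L^2$ almost orthogonality estimate in Lemma \ref{4.23z} and the Calder\'on--Zygmund kernel bounds. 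This gives $\||(\sum_k|\widetilde{Q}_k^\ast h|^2)^{1/2}\|_{L^{p'}}\ls\|h\|_{L^{p'}(\cx)}$, so $|\langle f,h\rangle|\ls\|f\|_{\dot{F}_{p,2}^0(\cx)}\|h\|_{L^{p'}(\cx)}$ uniformly in $h\in\mathring{\cg}(\eta,\eta)$. Since this subspace is dense in $L^{p'}(\cx)$ (via a stopping-time argument of the form \eqref{stop_l} used to approximate $L^{p'}$ functions by test functions with vanishing integral), the functional $h\mapsto\langle f,h\rangle$ extends to a bounded linear functional on $L^{p'}(\cx)$, and the Riesz representation theorem produces $g\in L^p(\cx)$ with $f=g$ in $(\cggi)'$ and the desired norm bound.

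The principal obstacle is the verification that $T_\epsilon$ (and its analogue for $\widetilde{Q}_k^\ast$) is a Calder\'on--Zygmund operator with constants independent of $\epsilon$: the exp-ATI kernel carries the extra exponential factor $\exp\{-\nu[\max\{d(x,\cy^k),d(y,\cy^k)\}/\dz^k]^a\}$, which must be handled carefully when summing in $k$ to recover the clean $1/V(x,y)$ size and $\eta$-regularity of the Calder\'on--Zygmund kernel without appealing to any reverse doubling of $\mu$. A secondary subtlety is ensuring the density of $\mathring{\cg}(\eta,\eta)$ in $L^{p'}(\cx)$ in the appropriate sense, which is where the stopping-time construction enters.
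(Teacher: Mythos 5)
Your proof of part (i) is essentially the same as the paper's: Khinchin's inequality reduces the square function bound to uniform $L^p$ bounds for the finite random sums $T_N^\varepsilon=\sum_{|k|\le N}\varepsilon_kQ_k$, the Cotlar--Stein lemma handles $L^2$, and Calder\'on--Zygmund theory extends to $L^p$. You correctly identify the crux of the Calder\'on--Zygmund kernel estimate: the extra factor $\exp\{-\nu[\max\{d(x,\cy^k),d(y,\cy^k)\}/\dz^k]^a\}$ is what makes $\sum_k|Q_k(x,y)|\ls 1/V(x,y)$ summable in the absence of reverse doubling, which is precisely the content of Lemma~\ref{con_soe} that the paper invokes.

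For part (ii) your route is genuinely different from the paper's. The paper builds $g$ explicitly as the $L^p(\cx)$-limit of the truncated reproducing sums $f_n=\sum_{|k|\le n}\widetilde{Q}_kQ_kf$, verifying that $\{f_n\}$ is Cauchy by testing against the $L^{p'}(\cx)$ unit ball. You instead bound the pairing $|\langle f,h\rangle|\ls\|f\|_{\dot F^0_{p,2}(\cx)}\|h\|_{L^{p'}(\cx)}$ for $h$ in a dense subclass of $L^{p'}(\cx)$ and then invoke Riesz representation. Both routes pass through the same key estimate $\bigl\|\bigl(\sum_k|\widetilde{Q}_k^\ast h|^2\bigr)^{1/2}\bigr\|_{L^{p'}(\cx)}\ls\|h\|_{L^{p'}(\cx)}$, so the difference is in packaging more than substance; your packaging is a bit cleaner once that estimate is in hand, while the paper's is more constructive.

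There is, however, a real gap in your justification of that key estimate. You assert that the Khinchin/Cotlar--Stein/Calder\'on--Zygmund argument from part (i) ``applies verbatim'' with $\widetilde{Q}_k^\ast$ in place of $Q_k$. It does not. The operators $\widetilde{Q}_k$ coming from Lemma~\ref{h_c_crf} satisfy only the polynomial size and regularity conditions \eqref{4.23x}, \eqref{4.23y} and the cancellation \eqref{4.23a}; they carry \emph{no} analogue of the exponential factor $\exp\{-\nu[d(\cdot,\cy^k)/\dz^k]^a\}$ that appears in Definition~\ref{10.23.2}. Consequently the scalar Calder\'on--Zygmund kernel bound fails: from \eqref{4.23x} the tail $\sum_{\dz^k>d(x,y)}|\widetilde{Q}_k(x,y)|\ls\sum_{\dz^k>d(x,y)}[V_{\dz^k}(x)]^{-1}$ has no convergence mechanism on a general space of homogeneous type, since without reverse doubling $V_{\dz^k}(x)$ can grow arbitrarily slowly as $k\to-\infty$. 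This is exactly the obstruction that motivates the exp-ATI construction in the first place, so it is not a detail one can wave away. The required square-function bound does hold, but its proof must be routed through the tools the paper has already developed (the discrete Calder\'on reproducing formula of Lemma~\ref{crf}, Lemma~\ref{10.18.5}, and the Fefferman--Stein inequality, Lemma~\ref{fsvv}) rather than a scalar Calder\'on--Zygmund argument; the paper cites this from an argument akin to (3.108) of \cite{hmy08}. As a secondary point, the density of $\mathring{\cg}(\eta,\eta)$ in $L^{p'}(\cx)$ when $\mu(\cx)=\infty$ is standard and does not need a stopping-time argument; the device \eqref{stop_l} in this paper is a tool for the $\BMO$--$H^1$ duality in Theorem~\ref{bmo_f}, not for $L^{p'}$ density.
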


To show Theorem \ref{l_btl}, we need the following several lemmas.
The next lemma is the well-known \emph{Khinchin inequality}
(see, for instance, \cite[(2.2)]{m92}).

\begin{lemma}\label{khin}
Let $E:=\{1,-1\}$. Then there exists a positive constant $C$ such that,
for any $n\in\nn$ and $\{a_j\}_{j=1}^n\subset\cc$,
$$\left(\sum_{j=1}^n|a_j|^2\right)^{1/2}\leq C
2^{-n}\sum_{\varepsilon_1\in E}\cdots\sum_{\varepsilon_n\in E}|
\varepsilon_1a_1+\cdots+\varepsilon_na_n|.$$
\end{lemma}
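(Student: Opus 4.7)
The plan is to interpret the averaging over $\varepsilon_1,\dots,\varepsilon_n\in\{1,-1\}$ probabilistically: the quantity $2^{-n}\sum_{\varepsilon_1\in E}\cdots\sum_{\varepsilon_n\in E}$ is exactly the expectation $\mathbb{E}$ with respect to independent Rademacher variables $\{\varepsilon_j\}_{j=1}^n$ taking values $\pm 1$ each with probability $1/2$. Setting $S:=\sum_{j=1}^n \varepsilon_j a_j$, the right-hand side becomes $C\,\mathbb{E}|S|$, while the left-hand side $(\sum_j|a_j|^2)^{1/2}$ is easily identified with $(\mathbb{E}|S|^2)^{1/2}$. Thus the desired inequality reduces to the classical $L^1$--$L^2$ comparison $(\mathbb{E}|S|^2)^{1/2}\le C\,\mathbb{E}|S|$ for Rademacher sums.

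I would first treat the real case $a_j\in\mathbb{R}$. By independence, the orthogonality relations $\mathbb{E}[\varepsilon_j]=0$ and $\mathbb{E}[\varepsilon_j\varepsilon_k]=\delta_{jk}$ give $\mathbb{E}[S^2]=\sum_{j=1}^n a_j^2$. The crucial step is an upper fourth-moment estimate:
\begin{equation*}
\mathbb{E}[S^4]=\sum_{j,k,l,m}a_ja_ka_la_m\,\mathbb{E}[\varepsilon_j\varepsilon_k\varepsilon_l\varepsilon_m]
=3\left(\sum_{j=1}^n a_j^2\right)^2-2\sum_{j=1}^n a_j^4\le 3\left(\mathbb{E}[S^2]\right)^2,
\end{equation*}
since the only surviving multi-indices correspond either to all four indices equal or to pairings. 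Then the H\"older inequality applied to $|S|^2=|S|^{2/3}\cdot|S|^{4/3}$ yields
\begin{equation*}
\mathbb{E}[S^2]\le \left(\mathbb{E}|S|\right)^{2/3}\left(\mathbb{E}[S^4]\right)^{1/3}\le 3^{1/3}\left(\mathbb{E}|S|\right)^{2/3}\left(\mathbb{E}[S^2]\right)^{2/3},
\end{equation*}
from which rearranging gives $(\mathbb{E}[S^2])^{1/2}\le 3^{1/2}\,\mathbb{E}|S|$, i.e., the desired inequality with $C=\sqrt{3}$.

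For the complex case $a_j=b_j+ic_j$ with $b_j,c_j\in\mathbb{R}$, I would write $|S|^2=(\sum_j\varepsilon_jb_j)^2+(\sum_j\varepsilon_jc_j)^2$ and apply the real case to $\{b_j\}$ and $\{c_j\}$ separately, using $|S|\ge \max\{|\sum_j\varepsilon_jb_j|,|\sum_j\varepsilon_jc_j|\}$ to bound each piece by $C\,\mathbb{E}|S|$, and then combining via $(x^2+y^2)^{1/2}\le x+y$ for nonnegative reals. No genuine obstacle arises; the only mildly delicate point is the fourth-moment combinatorial identity, which is standard. Alternatively, since the identity $\mathbb{E}|S|^2=\sum_j|a_j|^2$ holds directly in the complex case via $|S|^2=S\overline{S}$ and $\mathbb{E}[\varepsilon_j\varepsilon_k]=\delta_{jk}$, and the fourth-moment bound applied to $\mathrm{Re}\,S$ and $\mathrm{Im}\,S$ separately gives $\mathbb{E}|S|^4\le C(\sum_j|a_j|^2)^2$, the same H\"older interpolation closes the argument in one stroke.
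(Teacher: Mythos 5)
The paper does not prove this lemma: it records it as the well-known Khinchin inequality and simply cites Meyer \cite[(2.2)]{m92}, so there is no argument of the paper's to compare against. Your proof is correct and is the classical fourth-moment (Paley--Zygmund style) argument: one has $\mathbb{E}[S^2]=\sum_j a_j^2$ and $\mathbb{E}[S^4]=3\bigl(\sum_j a_j^2\bigr)^2-2\sum_j a_j^4\le 3\bigl(\mathbb{E}[S^2]\bigr)^2$, and the H\"older split $|S|^2=|S|^{2/3}|S|^{4/3}$ with exponents $3/2$ and $3$ gives $\mathbb{E}[S^2]\le(\mathbb{E}|S|)^{2/3}(\mathbb{E}[S^4])^{1/3}$, which rearranges to $(\mathbb{E}[S^2])^{1/2}\le\sqrt{3}\,\mathbb{E}|S|$. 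Both of your passages to the complex case are sound; the second (using $\mathbb{E}|S|^2=\sum_j|a_j|^2$ directly and bounding $\mathbb{E}|S|^4\le 2\bigl(\mathbb{E}[(\mathrm{Re}\,S)^4]+\mathbb{E}[(\mathrm{Im}\,S)^4]\bigr)\le 6\bigl(\sum_j|a_j|^2\bigr)^2$) is cleaner and yields $C=\sqrt{6}$ in one stroke. The only point worth stating explicitly is that the division by $(\mathbb{E}[S^2])^{2/3}$ in the interpolation step is legitimate because if $\mathbb{E}[S^2]=0$ then all $a_j$ vanish and the inequality is trivial.
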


The following lemma was obtained in \cite[Lemma 4.9]{hlyy}.

\begin{lemma}\label{con_soe}
If $a,\ c\in(0, \infty)$, then there exists a positive constant $C$ such that,
for any $x,\ y\in\cx$,
$$\sum_{k=-\infty}^\infty\frac{1}{V_{\delta^k}(x)}
\exp\left\{-c\left[\frac{d(x,y)}{\delta^k}\right]^a\right\}
\exp\left\{-c\left[\frac{d(x, \cy^k)}{\delta^k}\right]^a\right\}
\leq \frac{C}{V(x,y)}.$$
\end{lemma}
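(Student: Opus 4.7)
The plan is to split the sum at the scale $k_0 \in \zz$ with $\delta^{k_0+1} \le d(x,y) < \delta^{k_0}$ (when $x=y$, only the head survives and a trivial variant applies), and to handle the two ranges by complementary mechanisms. For the tail $k > k_0$, one has $d(x,y)/\delta^k \ge \delta^{-(k-k_0-1)}$, so the first exponential provides super-exponential decay in $k - k_0$; dropping the second exponential (which is $\le 1$) and invoking the doubling condition \eqref{eq-doub} in the form $1/V_{\delta^k}(x) \le C_{(\mu)}\delta^{-(k-k_0)\omega}/V_{\delta^{k_0}}(x)$, the polynomial factor is absorbed into the super-exponential decay and the tail is bounded by a constant multiple of $1/V_{\delta^{k_0}}(x) \sim 1/V(x,y)$ via Lemma \ref{6.15.1}(i).

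For the head $k \le k_0$ the monotonicity $V_{\delta^k}(x) \ge V_{\delta^{k_0}}(x) \sim V(x,y)$ already supplies the denominator, and bounding the first exponential by $1$ reduces matters to the scale-free estimate
$$S(x) := \sum_{k \le k_0} \exp\bigl\{-c[d(x,\cy^k)/\delta^k]^a\bigr\} \le C,$$
with $C$ independent of $x$ and $k_0$. To prove this I would follow \cite[Lemma 8.3]{ah13}: for $j \in \zz$ let $E_j := \{k \le k_0 : \delta^{-j} \le d(x,\cy^k)/\delta^k < \delta^{-j-1}\}$, so that $S(x) \le \sum_j |E_j|\,\exp(-c\delta^{-ja})$; the positive-$j$ tail is harmless by super-exponential decay, and the main work is to bound $|E_j|$ by a constant uniform in $x$, $k_0$, and $j$. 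This uniform control of $|E_j|$ would follow from the $c_0\delta^k$-separation and the $C_0\delta^k$-density of the level-$k$ reference points from Theorem \ref{10.22.1}, combined with a packing argument in the spirit of Lemma \ref{11.14.2}: two indices $k'<k$ in the same $E_j$ force incompatible separation-and-density constraints on $\cy^k$ and $\cy^{k'}$ relative to $x$ once $k-k'$ exceeds an absolute constant, and a similar argument shows $E_j=\emptyset$ for $j$ sufficiently negative.

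The main obstacle is precisely this geometric counting bound on $|E_j|$: because no reverse doubling hypothesis is available, $V_{\delta^k}(x)$ may fail to grow as $k \to -\infty$, so every bit of decay in the head sum has to be extracted from the $\cy^k$-factor rather than from the volume. This is where the exp-ATI/exp-IATI framework of \cite{hlyy} differs essentially from the RD-space theory of \cite{hmy08}, and where the geometry of the dyadic system supplied by Theorem \ref{10.22.1}, as emphasized in Remark \ref{geo}, enters in an indispensable way.
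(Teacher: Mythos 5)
Your overall decomposition (split at $k_0$ with $\delta^{k_0+1}\le d(x,y)<\delta^{k_0}$) is natural, and your treatment of the tail $k>k_0$ is correct: the first exponential gives super-exponential decay in $k-k_0$, which absorbs the doubling factor $\delta^{-(k-k_0)\omega}$, and $V_{\delta^{k_0}}(x)\gtrsim V(x,y)$. Note, however, that the paper does not prove this lemma at all; it simply quotes \cite[Lemma 4.9]{hlyy}, so the comparison is with the proof there.

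The genuine gap is in your head estimate. The claimed uniform bound $S(x)=\sum_{k\le k_0}\exp\{-c[d(x,\cy^k)/\delta^k]^a\}\le C$ is false in general: already on $\rn$ with Lebesgue measure (or on any unbounded Ahlfors regular space) one has $d(x,\cy^k)\le C\delta^k$ for every $k$ and every $x$, because at each scale the level-$(k+1)$ reference points are $C_0\delta^{k+1}$-dense while the old level-$k$ points are $c_0\delta^k$-separated, so new points of $\cy^k$ always exist within a bounded multiple of $\delta^k$ of $x$; hence every term of $S(x)$ is bounded below by a positive constant and the series diverges. In these model cases the head sum in the lemma converges only because $1/V_{\delta^k}(x)$ decays as $k\to-\infty$, i.e.\ precisely the factor you discarded by bounding it by $1/V(x,y)$ and pulling it out. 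Consequently your packing argument for $\#E_j$ cannot be repaired: two indices $k'<k$ in the same $E_j$ impose no incompatible constraints (they concern the different sets $\cy^k$, $\cy^{k'}$ at different scales), and $E_j$ can be infinite. The correct mechanism, which is the actual content of \cite[Lemma 8.3]{ah13} used in the proof of \cite[Lemma 4.9]{hlyy}, keeps the two factors coupled: if $d(x,\cy^k)\lesssim\delta^k$, then a nearby \emph{new} reference point of $\cy^k$ and a nearby old level-$k$ point are $c_0\delta^{k+1}$-separated, so they carry two disjoint balls of radius $\sim\delta^{k+1}$ inside $B(x,C\delta^k)$, forcing $\mu(B(x,C\delta^k))\ge(1+\varepsilon)\mu(B(x,c\delta^{k+1}))$; thus along the scales where the $\cy^k$-exponential is not small the volumes $V_{\delta^k}(x)$ grow geometrically, and this substitute for reverse doubling, not a scale-free bound on the exponentials alone, is what makes $\sum_{k\le k_0}[V_{\delta^k}(x)]^{-1}\exp\{-c[d(x,\cy^k)/\delta^k]^a\}\lesssim[V_{\delta^{k_0}}(x)]^{-1}$ hold.
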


Now we prove Theorem \ref{l_btl}.

\begin{proof}[Proof of Theorem \ref{l_btl}]
We first show (i). Let $E:=\{1,-1\}$ and $\{Q_k\}_{k\in\zz}$ be an exp-ATI.
Assume $f\in L^p(\cx)$ with $p\in(1,\infty)$.
By Lemma \ref{khin} and the Minkowski inequality, we know that, for any $N\in\nn$,
\begin{align}\label{t_en}
\left\|\left\{\sum_{k=-N}^{N}|Q_k(f)|^2\right\}^{1/2}\right\|_{L^p(\cx)}
&\lesssim \left\|\frac{1}{2^{2N}}\sum_{\varepsilon_{-N}\in E}\cdots
\sum_{\varepsilon_N\in E}\left|\sum_{k=-N}^{N}\varepsilon_{k}Q_k(f)\right|\right\|_{L^p(\cx)}\\
&\lesssim\frac{1}{2^{2N}}\sum_{\varepsilon_{-N}\in E}\cdots
\sum_{\varepsilon_N\in E}\left\|\sum_{k=-N}^{N}\varepsilon_{k}Q_k(f)\right\|_{L^p(\cx)}.\notag
\end{align}
For any fixed $\varepsilon :=\{\varepsilon_k\}_{k=-N}^N\subset E$,
let $T_N^\varepsilon:=\sum_{k=-N}^N\varepsilon_kQ_k$ and
denote its kernel by $K_N^\varepsilon$.

We claim that $K_N^\varepsilon$ and $(K_N^\varepsilon)^\ast$
are standard Calder\'on--Zygmund kernels, that is, they satisfy Definition \ref{4.11.2}(iii) below,
with the implicit positive constant $C$ therein independent
of $\varepsilon$ and $N$. Indeed,
by symmetry, it suffices to show the case of $K_N^\varepsilon$. Notice that, for any $x,\ y\in \cx$,
$$K_N^\varepsilon(x,y)=\sum_{k=-N}^N\varepsilon_kQ_k(x,y).$$
From  Definition \ref{10.23.2}(ii) and Lemma \ref{con_soe},  we deduce that, for any $x,\ y\in\cx$ with $x\neq y$,
\begin{align*}
|K_N^\varepsilon(x,y)|
&\leq\sum_{k=-N}^N|Q_k(x,y)|
\lesssim \sum_{k=-\infty}^\infty\frac{1}{V_{\delta^k}(x)}\exp
\left\{-\nu\left[\frac{d(x,y)}{\delta^k}\right]^a\right\}
\exp\left\{-\nu\left[\frac{d(x, \cy^k)}{\delta^k}\right]^a\right\}\\
&\lesssim \frac{1}{V(x,y)}.
\end{align*}
Moreover, by \cite[Remark 2.9(iii$'$)]{hlyy} and Lemma \ref{con_soe}, we find that,
for any $x,\ y,\ y'\in\cx$ with $x\neq y$ and
$d(x,y')\leq (2A_0)^{-1}d(x,y)$,
\begin{align*}
|K_N^\varepsilon(x,y)-K_N^\varepsilon(x,y')|
&\leq \sum_{k=-N}^{N}|Q_k(x,y)-Q_k(x,y')|\\
&\lesssim\sum _{k=-N}^{N} \left[\frac{d(y,y')}{d(x,y)+\delta^k}\right]^\eta
\frac{1}{V_{\delta^k}(x)}
\exp\left\{-\nu\left[\frac{d(x,y)}{\delta^k}\right]^a\right\}
\exp\left\{-\nu\left[\frac{d(x, \cy^k)}{\delta^k}\right]^a\right\}\\
&\lesssim \left[\frac{d(y,y')}{d(x,y)}\right]^\eta
\sum_{k=-\infty}^\infty\frac{1}{V_{\delta^k}(x)}\exp\left\{-\nu
\left[\frac{d(x,y)}{\delta^k}\right]^a\right\}
\exp\left\{-\nu\left[\frac{d(x, \cy^k)}{\delta^k}\right]^a\right\}\\
&\lesssim \left[\frac{d(y,y')}{d(x,y)}\right]^\eta\frac{1}{V(x,y)},
\end{align*}
which completes the proof of the above claim.

Now we prove that, for any given $p\in(1,\infty)$ and any $f\in L^p(\cx)$,
\begin{equation}\label{t_en_p}
\|T_N^\varepsilon f\|_{L^p(\cx)}+\|(T_N^\varepsilon)^\ast f\|_{L^p(\cx)}
\lesssim\|f\|_{L^p(\cx)},
\end{equation}
where the implicit positive constant is independent of $\varepsilon$, $N$ and $f$.
We first show \eqref{t_en_p} when $p=2$.
Indeed, by \cite[Lemma 4.1]{hlyy},  we have, for any $k_1,\ k_2\in\zz$,
\begin{equation*}
\|\varepsilon_{k_1}Q_{k_1}(\varepsilon_{k_2}Q_{k_2})^\ast\|_{L^2(\cx)\to L^2(\cx)}
\lesssim \delta^{|k_1-k_2|\eta}
\end{equation*}
and
\begin{equation*}
\|(\varepsilon_{k_1}Q_{k_1})^\ast\varepsilon_{k_2}Q_{k_2}\|_{L^2(\cx)\to L^2(\cx)}
\lesssim \delta^{|k_1-k_2|\eta}.
\end{equation*}
From these and the Cotlar--Stein lemma (see, for instance,
\cite[p.\,60, Lemma 6]{my97}), we deduce that, for any $f\in L^2(\cx)$,
\begin{equation}\label{t_en_2}\|T_N^\varepsilon f\|_{L^2(\cx)}+\|(T_N^\varepsilon)^\ast f\|_{L^2(\cx)}
\lesssim\|f\|_{L^2(\cx)},
\end{equation}
which shows that \eqref{t_en_p} holds true when $p=2$.
By \eqref{t_en_2} and \cite[p.\,22, Corollary]{s93},
we know that \eqref{t_en_p} holds true for any $f\in L^p(\cx)$ with $p\in(1,2]$,
which, together with a duality argument, further implies that
\eqref{t_en_p} holds true for any $p\in (1,\infty)$.
Combining the argument as above, we conclude  that, for any given $p\in(1,\infty)$ and any $f\in L^p(\cx)$,
$$
\left\|\left\{\sum_{k=-N}^{N}|Q_k(f)|^2\right\}^{1/2}\right\|_{L^p(\cx)}
\lesssim \|f\|_{L^p(\cx)}.
$$
This, together with the Fatou lemma, further implies that, for any given $p\in(1,\infty)$ and any $f\in L^p(\cx)$,
$$
\|f\|_{\dot{F}_{p,2}^0(\cx)}\ls\|f\|_{L^p(\cx)}.
$$
Thus, we finish the proof of (i).

Now we prove (ii). Assume that $p\in(1,\infty)$ and
$f\in(\cggi)'$, for some $\beta,\ \gamma\in(0,\eta)$ with $\eta$ as in Definition \ref{10.23.2},
belongs to $\dot{F}_{p,2}^0(\cx)$.
By Lemma \ref{h_c_crf}, we know that there exists a sequence $\{\widetilde{Q}_k\}_{k=-\infty}^\infty$
of bounded linear integral operators on $L^2(\mathcal{X})$ such that
\begin{equation}\label{expan_f}
f= \sum_{k=-\infty}^\infty\widetilde{Q}_kQ_kf \qquad \text{in}\quad (\cggi)'.
\end{equation}
For any $n\in\nn$, let $f_n:=\sum_{k=-n}^n\widetilde{Q}_kQ_kf$.
It is easy to see that, for any $n\in\nn$, $f_n$ is a  measurable function on $\cx$.
Using an argument similar to that used in the estimation of \cite[(3.108)]{hmy08},
we find that, for any $h\in L^{p'}(\cx)$,
$$\left\|\left\{\sum_{k=-\infty}^\infty\left|\widetilde{Q}_k^\ast(h)\right|^2\right\}^{1/2}
\right\|_{L^{p'}(\cx)}\lesssim\|h\|_{L^{p'}(\cx)}.$$
By this and the H\"older inequality, we conclude that,
for any $l,\ m\in\nn$ with $l>m$,
\begin{align}\label{f_n_lp}
\|f_l-f_m\|_{L^p(\cx)}&=\sup_{h\in L^{p'}(\cx),\  \|h\|_{L^{p'}(\cx)}\leq 1}|\langle f_l-f_m, h\rangle|\\
&=\sup_{h\in L^{p'}(\cx),\ \|h\|_{L^{p'}(\cx)}\leq 1} \lf|\left\langle
\sum_{m<|k|\leq l}\widetilde{Q}_kQ_kf, h\right\rangle\r|\notag\\
&= \sup_{h\in L^{p'}(\cx),\ \|h\|_{L^{p'}(\cx)}\leq 1} \lf|
\sum_{m<|k|\leq l}\lf\langle Q_k(f), \widetilde{Q}_k^\ast(h)\r\rangle\r|\notag\\
&\leq \sup_{h\in L^{p'}(\cx),\ \|h\|_{L^{p'}(\cx)}\leq 1} \int_{\cx}
\lf|\sum_{m<|k|\leq l}Q_k(f)(x)\widetilde{Q}_k^\ast(h)(x)\r|\,d\mu(x)\notag\\
&\leq \sup_{h\in L^{p'}(\cx),\ \|h\|_{L^{p'}(\cx)}\leq 1}
\left\|\left\{\sum_{m<|k|\leq l}|Q_k(f)|^2\right\}^{1/2}\right\|_{L^p(\cx)}
\left\|\left\{\sum_{m<|k|\leq l}\left|\widetilde{Q}_k^\ast(h)\right|^2\right\}^{1/2}\right\|_{L^{p'}(\cx)}\notag\\
&\lesssim \left\|\left\{\sum_{|k|>m}|Q_k(f)|^2\right\}^{1/2}\right\|_{L^p(\cx)}.\notag
\end{align}
Since $f\in\dot{F}_{p,2}^0(\cx)$, it follows that
$$\left\|\left\{\sum_{k=-\infty}^\infty|Q_k(f)|^2\right\}^{1/2}\right\|_{L^p(\cx)}<\infty,$$
which, combined with the dominated convergence theorem,
further implies that, for any $\epsilon\in(0,\infty)$,
there exists an $m_0\in\nn$ such that, for any $l,\ m>m_0$,
$$\|f_l-f_m\|_{L^p(\cx)}\lesssim\left\|\left\{\sum_{|k|>m}
|Q_k(f)|^2\right\}^{1/2}\right\|_{L^p(\cx)}\lesssim\epsilon.$$
Moreover, using an argument similar to that used in the estimation of \eqref{f_n_lp},
we find that, for any $n\in\nn$,
$$\|f_n\|_{L^p(\cx)}\lesssim\left\|\left\{\sum_{|k|\leq n}
|Q_k(f)|^2\right\}^{1/2}\right\|_{L^p(\cx)}\lesssim\|f\|_{\dot{F}_{p,2}^0(\cx)}.$$
From these, we deduce that $\{f_n\}_{n\in\nn}$ is a Cauchy sequence in $L^p(\cx)$.
By the completeness of $L^p(\cx)$ with any given $p\in(1,\infty)$, we can let
$$g:=\lim_{n\to\infty} f_n\quad\text{in}\quad L^p(\cx).$$
Using \eqref{expan_f},  we have, for any $\varphi\in \cggi\subset L^{p'}(\cx)$,
$$\langle f, \varphi\rangle
=\lim_{n\to\infty}\left\langle \sum_{k=-n}^n\widetilde{Q}_kQ_kf, \varphi\right\rangle
=\lim_{n\to\infty}\langle f_n, \varphi\rangle
=\langle g, \varphi\rangle$$
and hence
$$f=g\qquad \text{in}\quad (\cggi)'.$$
Moreover, by an argument similar to that used in the estimation of \eqref{f_n_lp},
we conclude that
$$\|g\|_{L^p(\cx)}=\lim_{n\to\infty}\|f_n\|_{L^p(\cx)}\lesssim
\lim_{n\to\infty}\left\|\left\{\sum_{|k|\leq n}|Q_k(f)|^2\right\}^{1/2}\right\|_{L^p(\cx)}
\lesssim \|f\|_{\dot{F}_{p,2}^0(\cx)},$$
which completes the proof of (ii) and hence of Theorem \ref{l_btl}.
\end{proof}

\begin{remark}
By the Littlewood--Paley $g$-function characterization of $H^p(\cx)$ in \cite[Theorem 5.10]{hhllyy},
we easily know that, for any given $p\in (\omega/(\omega+\eta),1]$,
$\dot{F}_{p,2}^0(\cx)=H^p(\cx)$ with equivalent quasi-norms. Therefore,
Theorem \ref{l_btl} further complements this fact via showing that it is still true when $p\in(1,\infty)$.
\end{remark}

Now we establish the relationships among H\"older spaces,
Besov spaces and Triebel--Lizorkin spaces.
Let us begin with the notion of H\"older spaces.
In what follows, the \emph{symbol $C(\cx)$} denotes the set of
all continuous functions on $\cx$.

\begin{definition}\label{c_s}
Let $s\in (0,1]$. The  \emph{homogeneous H\"older space} $\dot{C}^s(\cx)$ and
the \emph{inhomogeneous H\"older space} $C^s(\cx)$ are defined, respectively,  by setting
$$\dot{C}^s(\cx):=\left\{f\in C(\cx):\ \|f\|_{\dot{C}^s(\cx)}
:=\sup_{x,\ y\in\cx,\ x\neq y}\frac{|f(x)-f(y)|}{[d(x,y)]^s}<\infty\right\}$$
and
$$C^s(\cx):=\left\{f\in C(\cx):\ \|f\|_{C^s(\cx)}:=\|f\|_{\dot{C}^s(\cx)}+\|f\|_{L^\infty(\cx)}<\infty\right\}.$$
\end{definition}

By Proposition \ref{12.3.1}(ii),
we know that $\dot{B}^s_{\infty,\infty}(\cx)
=\dot{F}^s_{\infty,\infty}(\cx)$ with equivalent norms.
The following theorem states the relationship between $\dot{C}^s(\cx)$ and
$\dot{B}^s_{\infty,\infty}(\cx)$.

\begin{theorem}\label{c_b}
Let $\beta,\ \gamma\in(0,\eta)$ with $\eta$
as in Definition \ref{10.23.2}, and $s\in(0,\beta\wedge\gamma)$.
Then $\dot{C}^s(\cx)=\dot{B}^s_{\infty,\infty}(\cx)$ in the following sense:
\begin{enumerate}
\item[{\rm(i)}] there exists a positive constant $C$ such that, for any $f\in \dot{C}^s(\cx)$,
$$\|f\|_{\dot{B}^s_{\infty,\infty}(\cx)}\leq C\|f\|_{\dot{C}^s(\cx)};$$
\item[{\rm(ii)}] for any $f\in \dot{B}^s_{\infty,\infty}(\cx)$,
there exists a $g\in \dot{C}^s(\cx)$ such that
$$f=g\quad\text{in}\quad (\cggi)' \quad\text{and}
\quad \|g\|_{\dot{C}^s(\cx)}\leq C\|f\|_{\dot{B}^s_{\infty,\infty}(\cx)},$$
where $C$ is a positive constant independent of $f$.
\end{enumerate}
\end{theorem}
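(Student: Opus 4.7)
For part (i), the plan is to exploit the cancellation of $Q_k$: writing
\[
Q_k(f)(x) = \int_\cx Q_k(x,y)[f(y)-f(x)]\,d\mu(y)
\]
and using $|f(y)-f(x)|\le\|f\|_{\dot C^s(\cx)}[d(x,y)]^s$ together with the pointwise bound \eqref{10.23.3} for any $\Gamma>s$, I would split the integral into the region $d(x,y)\le\delta^k$ (on which $[d(x,y)]^s\le\delta^{ks}$ and $V_{\delta^k}(x)+V(x,y)\sim V_{\delta^k}(x)$) and its complement, decomposing the latter into dyadic annuli $2^{l}\delta^k\le d(x,y)<2^{l+1}\delta^k$ and summing against the decay factor $2^{-l\Gamma}$. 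This yields $|Q_k(f)(x)|\lesssim\delta^{ks}\|f\|_{\dot C^s(\cx)}$ uniformly in $x$ and $k$, and (i) follows at once.

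For part (ii), fix $f\in\dot B^s_{\infty,\infty}(\cx)$ and a reference point $x_0\in\cx$, and apply Lemma \ref{h_c_crf} to obtain $\{\widetilde Q_k\}_{k\in\zz}$ with $f=\sum_{k\in\zz}\widetilde Q_k Q_k f$ in $(\cggi)'$. The candidate function will be
\[
g(x):=\sum_{k\in\zz}\bigl[\widetilde Q_kQ_kf(x)-\widetilde Q_kQ_kf(x_0)\bigr],
\]
chosen this way because the raw series $\sum_k\widetilde Q_kQ_kf(x)$ need not converge pointwise. To control this sum and obtain the H\"older estimate at once, I would estimate $|\widetilde Q_k Q_kf(x)-\widetilde Q_k Q_k f(y)|$ in two regimes. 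When $\delta^k\ge d(x,y)$, the regularity \eqref{4.23y} of $\widetilde Q_k$ (crudely bounding $[d(x,y)/(\delta^k+d(x,\cdot))]^\beta\le[d(x,y)/\delta^k]^\beta$) combined with Lemma \ref{6.15.1}(ii) gives
\[
|\widetilde Q_kQ_kf(x)-\widetilde Q_kQ_kf(y)|\lesssim\left[\frac{d(x,y)}{\delta^k}\right]^\beta\delta^{ks}\|f\|_{\dot B^s_{\infty,\infty}(\cx)},
\]
and summing over such $k$ gives $[d(x,y)]^s\|f\|_{\dot B^s_{\infty,\infty}(\cx)}$ since $s<\beta$. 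When $\delta^k<d(x,y)$, each term is estimated separately via \eqref{4.23x} and Lemma \ref{6.15.1}(ii), contributing $\delta^{ks}\|f\|_{\dot B^s_{\infty,\infty}(\cx)}$ per term, and summation uses $s>0$. Specializing $y=x_0$ shows that $g(x)$ converges and depends continuously on $x$, while the full estimate yields $\|g\|_{\dot C^s(\cx)}\lesssim\|f\|_{\dot B^s_{\infty,\infty}(\cx)}$.

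To identify $f$ with $g$ in $(\cggi)'$, I would test against an arbitrary $\psi\in\cggi$: since $\psi$ has vanishing integral over $\cx$, the constants $\widetilde Q_kQ_kf(x_0)$ contribute zero to $\langle g,\psi\rangle$, so $\langle g,\psi\rangle=\sum_k\langle\widetilde Q_kQ_kf,\psi\rangle=\langle f,\psi\rangle$ by Lemma \ref{h_c_crf}. Convergence of the pairing term by term is ensured by bounds of the form used in the proof of Proposition \ref{6.5.1} on $|\langle\widetilde Q_k(\cdot,y),\psi\rangle|$, combined with the estimate $\|Q_kf\|_{L^\infty(\cx)}\lesssim\delta^{ks}\|f\|_{\dot B^s_{\infty,\infty}(\cx)}$.

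The principal obstacle is the convergence in (ii): the series $\sum_k\widetilde Q_kQ_kf$ is unlikely to converge pointwise as an absolutely summable object over all $k\in\zz$, so one must pass to differences and exploit the trade-off between regularity (which provides $[d(x,y)/\delta^k]^\beta$) and size (which provides $\delta^{ks}$) at the dyadic threshold $\delta^k\sim d(x,y)$. The required balance is exactly why the hypothesis $s\in(0,\beta\wedge\gamma)$ is natural. A secondary technical point is the use of the vanishing-integral condition defining $\cggi$ to neutralize the additive constants, which is the reason why the identification $f=g$ must take place in the distributional sense relative to $\cggi$ rather than pointwise.
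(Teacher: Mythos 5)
Your proposal is correct and follows essentially the same route as the paper: part (i) via the cancellation of $Q_k$, the H\"older bound on $f(y)-f(x)$ and the size estimate \eqref{10.23.3}, and part (ii) via the reproducing formula of Lemma \ref{h_c_crf}, subtracting the constants $\widetilde Q_kQ_kf(x_0)$, estimating the differences by the regularity \eqref{4.23y} for $\delta^k\gtrsim d(x,y)$ and by the size \eqref{4.23x} for $\delta^k\lesssim d(x,y)$, and using the vanishing integral of test functions in $\cggi$ to identify $f=g$ in $(\cggi)'$. The only cosmetic difference is that the paper chooses the threshold $k_0$ so that $d(x,x_1)\le(2A_0)^{-1}\delta^{k_0}$, which is exactly the adjustment needed to legitimately invoke \eqref{4.23y}.
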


\begin{proof}
We first show (i). Assume $f \in \dot{C}^s(\cx)$ with $s\in(0,\beta\wedge\gamma)$,
where $\beta,\ \gamma\in(0,\eta)$ with $\eta$ as in Definition \ref{10.23.2}.
By Definition \ref{c_s}, we know that, for any $x,\ y\in\cx$,
$$|f(x)-f(y)|\leq \|f\|_{\dot{C}^s(\cx)}[d(x,y)]^s.$$
From this, Definition \ref{10.23.2}(v), \eqref{10.23.3},
Lemma \ref{6.15.1}(ii) and $s\in(0,\bz\wedge\gz)$, we deduce that, for any $k\in\zz$ and $x\in\cx$,
\begin{align*}
|Q_k(f)(x)|&=\left|\int_{\cx}Q_k(x,y)f(y)\,d\mu(y)\right|
=\left|\int_{\cx}Q_k(x,y)[f(y)-f(x)]\,d\mu(y)\right|\\
&\lesssim \|f\|_{\dot{C}^s(\cx)}\int_{\cx}\frac{1}{V_{\delta^k}(x)+V(x,y)}
\left[\frac{\delta^k}{\delta^k+d(x,y)}\right]^\gamma[d(x,y)]^s\,d\mu(y)\\
&\lesssim \delta^{ks}\|f\|_{\dot{C}^s(\cx)},
\end{align*}
which implies that
$$\|f\|_{\dot{B}^s_{\infty,\infty}(\cx)}=
\sup_{k\in\zz}\delta^{-ks}\|Q_k(f)\|_{L^\infty(\cx)}
\lesssim\|f\|_{\dot{C}^s(\cx)}$$
and hence shows (i).

We next prove (ii). Assume $f\in\dot{B}^s_{\infty,\infty}(\cx)$ with $s$ as in this  theorem.
From Proposition \ref{12.2.7}, we deduce that $f\in(\cggi)'$ with $\beta,\ \gamma$
satisfying \eqref{12.1.2}. Notice that, for any $k\in\zz$,
\begin{equation}\label{iii}
\left\|\widetilde{Q}_kQ_kf\right\|_{L^\infty(\cx)}\lesssim \|Q_kf\|_{L^\infty(\cx)}
\lesssim\|f\|_{\dot{B}^s_{\infty,\infty}(\cx)}\delta^{ks}<\infty.
\end{equation}
Thus, we may choose $x_1\in\cx$ such that, for any $k\in\zz$,
$|\widetilde{Q}_kQ_kf(x_1)|\leq\|\widetilde{Q}_kQ_kf\|_{L^\infty(\cx)}<\infty$.
By this  and \eqref{expan_f}, we know that, for any $\varphi\in\cggi$,
$$\langle f,\varphi\rangle
=\sum_{k=-\infty}^\infty\left\langle\widetilde{Q}_kQ_kf,\varphi\right\rangle
=\sum_{k=-\infty}^\infty\left\langle\widetilde{Q}_kQ_kf-
\widetilde{Q}_kQ_kf(x_1),\varphi\right\rangle.$$
Thus,
$$f=\sum_{k=-\infty}^\infty\left[\widetilde{Q}_kQ_kf-
\widetilde{Q}_kQ_kf(x_1)\right]\qquad \text{in}\quad (\cggi)'.$$
For any $x\in\cx$, let
\begin{align*}
\widetilde{g}(x):=&\sum_{k=-\infty}^\infty\left|\widetilde{Q}_kQ_kf(x)-
\widetilde{Q}_kQ_kf(x_1)\right|\\
=&\sum_{k=-\infty}^\infty\left|\int_{\cx}\left[\widetilde{Q}_k(x,y)-
\widetilde{Q}_k(x_1,y)\right]Q_kf(y)\,d\mu(y)\right|.
\end{align*}
Notice that, for any fixed $x\in\cx$ and $x\neq x_1$,
there exists a unique $k_0\in\zz$ such that $(2A_0)^{-1}\delta^{k_0+1}< d(x,x_1)
\leq (2A_0)^{-1}\delta^{k_0}$.
Using this $k_0$, we write
\begin{align*}
\widetilde{g}(x)
&=\sum_{k=-\infty}^{k_0-1}\left|\int_{\cx}\left[\widetilde{Q}_k(x,y)-
\widetilde{Q}_k(x_1,y)\right]Q_kf(y)\,d\mu(y)\right|
+\sum_{k=k_0}^\infty\cdots\\
&=: \widetilde{g}_1(x)+\widetilde{g}_2(x).
\end{align*}

We first estimate $\widetilde{g}_1$.
Observe that, for any $k\in\zz$ with $k\leq k_0-1$ and $y\in\cx$,
$$d(x,x_1)\leq (2A_0)^{-1}\delta^{k_0}<(2A_0)^{-1}[\delta^k+d(x,y)],$$
which, together with \eqref{4.23y}, \eqref{iii} and Lemma
\ref{6.15.1}(ii), implies that,
for any $x\in\cx$,
\begin{align*}
\widetilde{g}_1(x)
&\lesssim \sum_{k=-\infty}^{k_0-1}\int_{\cx}
\left[\frac{d(x,x_1)}{\delta^k+d(x,y)}\right]^\beta
\frac{1}{V_{\delta^k}(x)+V(x,y)}
\left[\frac{\delta^k}{\delta^k+d(x,y)}\right]^\gamma
|Q_k(f)(y)|\,d\mu(y)\\
&\lesssim \|f\|_{\dot{B}^s_{\infty,\infty}(\cx)}[d(x,x_1)]^s
\sum_{k=-\infty}^{k_0-1}\int_{\cx}
\left[\frac{d(x,x_1)}{\delta^k+d(x,y)}\right]^{\beta-s}
\frac{1}{V_{\delta^k}(x)+V(x,y)}
\left[\frac{\delta^k}{\delta^k+d(x,y)}\right]^{\gamma+s}\,d\mu(y)\\
&\lesssim  \|f\|_{\dot{B}^s_{\infty,\infty}(\cx)}[d(x,x_1)]^s
\sum_{k=-\infty}^{k_0-1}\delta^{k_0(\beta-s)}\delta^{-k(\beta-s)}
\lesssim  \|f\|_{\dot{B}^s_{\infty,\infty}(\cx)}[d(x,x_1)]^s.
\end{align*}
Next we estimate $\widetilde{g}_2$. Notice that $1\lesssim [d(x,x_1)]^s\delta^{-(k_0+1)s}$.
Then, by \eqref{4.23x} and Lemma \ref{6.15.1}(ii), we have, for any $x\in\cx$,
\begin{align*}
\widetilde{g}_2(x)&\leq\sum_{k=k_0}^\infty\int_{\cx}
\left[\left|\widetilde{Q}_k(x,y)\right|+\left|
\widetilde{Q}_k(x_1,y)\right|\right]|Q_kf(y)|\,d\mu(y)\\
&\lesssim \|f\|_{\dot{B}^s_{\infty,\infty}(\cx)}[d(x,x_1)]^s\delta^{-(k_0+1)s}
\sum_{k=k_0}^\infty \delta^{ks}
\lesssim  \|f\|_{\dot{B}^s_{\infty,\infty}(\cx)}[d(x,x_1)]^s.
\end{align*}
Combining the estimates of $\widetilde{g}_1$ and $\widetilde{g}_2$,
we obtain, for any $x\in\cx$,
\begin{equation}\label{est_tg}
\widetilde{g}(x)\lesssim  \|f\|_{\dot{B}^s_{\infty,\infty}(\cx)}[d(x,x_1)]^s.
\end{equation}

Now, for any $x\in\cx$, define
$$g(x):=\sum_{k=-\infty}^\infty\left[\widetilde{Q}_kQ_kf(x)-\widetilde{Q}_kQ_kf(x_1)\right].$$
From \eqref{est_tg}, we deduce that $g$ is well defined and,
for any $x\in\cx$,
$$|g(x)|\leq\widetilde{g}(x)\lesssim  \|f\|_{\dot{B}^s_{\infty,\infty}(\cx)}[d(x,x_1)]^s.$$
Notice that, for any $x,\ y\in\cx$,
$$g(x)-g(y)=\sum_{k=-\infty}^\infty\left[\widetilde{Q}_kQ_kf(x)-\widetilde{Q}_kQ_kf(y)\right].$$
Using this and repeating the estimation of \eqref{est_tg} via replacing $x_1$ by $y$,
we then conclude that, for any $x,\ y\in\cx$,
$$|g(x)-g(y)|\lesssim\|f\|_{\dot{B}^s_{\infty,\infty}(\cx)}[d(x,y)]^s,$$
which implies that
$$\|g\|_{\dot{C}^s(\cx)}\lesssim\|f\|_{\dot{B}^s_{\infty,\infty}(\cx)}$$
and hence completes the proof of (ii).
This finishes the proof of Theorem \ref{c_b}.
\end{proof}

At the end of this section, we establish the relationship
between $\BMO(\cx)$ and $\dot{F}^s_{\infty,2}(\cx)$.
Let us begin with the notions of 1-exp-ATI (see,
for instance \cite[Definition 2.8]{hhllyy}), $\BMO^p(\cx)$
(see, for instance, \cite{cw77}) and the Hardy space $H^1(\cx)$
(see, for instance, \cite[Section 3]{hhllyy}).

\begin{definition}\label{1-exp-ati}
Let $\eta\in(0,1)$ be as in Definition \ref{10.23.2}.
A sequence $\{P_k\}_{k=-\infty}^\infty$ of bounded linear integral operators
on $L^2(\cx)$ is called  an \emph{approximation of the identity with exponential
decay and integration 1} (for short, 1-exp-ATI) if $\{P_k\}_{k=-\infty}^\infty$ has the following properties:
\begin{enumerate}
\item[{\rm(i)}] for any $k\in\zz$, $P_k$
satisfies (ii) and (iii) of Definition \ref{10.23.2} but without the term
$$\exp\left\{-\nu\left[\max\{d(x,\cy^k),d(y,\cy^k)\}\right]^a\right\};$$
\item[{\rm(iii)}] for any $k\in\zz$ and $x\in\cx$,
$$\int_\cx P_k(x,y)\,d\mu(y)=1=\int_\cx P_k(y,x)\,d\mu(y);$$
\item[{\rm(ii)}] for any $k\in\zz$, letting $Q_k:=P_k-P_{k-1}$,
then $\{Q_k\}_{k\in\zz}$ is an exp-ATI.
\end{enumerate}
\end{definition}

\begin{definition}
For any $p\in[1,\infty)$,
the space $\BMO^p(\cx)$ is defined by setting
$$\BMO^p(\cx):=\left\{f\in L^1_{\loc}(\cx):\ \sup_{B\subset\cx}
\left[\frac{1}{\mu(B)}\int_B|f(x)-m_B(f)|^p\,d\mu(x)\right]^{1/p}<\infty\right\},$$
where, for any ball $B$, $m_B(f)$ is as \eqref{int_m} with $E$ replaced by $B$.
\end{definition}

The space $\BMO^p(\cx)$ when $p=1$ is simply denoted by $\BMO(\cx)$.

\begin{definition}\label{Hp}
Let $\beta,\ \gamma\in (0,\eta)$ with $\eta$ as in Definition \ref{10.23.2}.
Let $p\in(0,\infty)$ and $\{P_k\}_{k\in\zz}$ be a 1-exp-ATI as in Definition \ref{1-exp-ati}.
Then the Hardy space $H^p(\cx)$ is defined by setting
$$
H^p(\cx):=\left\{f\in(\icgg)':\ \|f\|_{H^p(\cx)}:=\left\|\sup_{k\in\zz}|P_kf|\right\|_{L^p(\cx)}<\infty\right\}.
$$
\end{definition}

It is well known that the dual space of $H^1(\cx)$ is $\BMO(\cx)$ (see, for instance,
\cite[Theorem B]{cw77}). Moreover, by the atomic characterization of $H^p(\cx)$ established in \cite{cw77}
and \cite[Theorem 4.16]{hhllyy}, it is easy to see  that,
for any given $p\in(1,\infty)$,
\begin{equation}\label{bmop_b}
\BMO^p(\cx)=\BMO(\cx)
\end{equation}
with equivalent norms (see, for instance,
\cite[pp.\ 632--633, Proof of Theorem B]{cw77} for the details).

The following theorem establishes the relationship
between $\BMO(\cx)$ and $\dot{F}^0_{\infty,2}(\cx)$.

\begin{theorem}\label{bmo_f}
Let $\eta$ be as in Definition \ref{10.23.2}.
Then $\BMO(\cx)=\dot{F}^0_{\infty,2}(\cx)$ in the following sense:
\begin{enumerate}
\item[{\rm(i)}] there exists a positive constant $C$ such that,
for any $f\in \BMO(\cx)$,
$$\|f\|_{\dot{F}^0_{\infty,2}(\cx)}\leq C\|f\|_{\BMO(\cx)};$$
\item[{\rm(ii)}] for any $f\in \dot{F}^0_{\infty,2}(\cx)$,
the linear functional
$$L_f :\ g \mapsto L_f(g):= \langle f, g\rangle,$$
initially defined on $\mathring{\cg}(\eta,\eta)$, has a bounded linear extension to $H^1(\cx)$;
moreover, there exists a positive constant $C$, independent of $f$, such that
$$\|L_f\|_{(H^1(\cx))^\ast}\leq C\|f\|_{\dot{F}^0_{\infty,2}(\cx)}.$$
\end{enumerate}
\end{theorem}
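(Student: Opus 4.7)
\textbf{Proof plan for Theorem \ref{bmo_f}.}

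For part (i), I would adopt the classical Carleson-measure argument adapted to the dyadic-cube framework. Fix $f\in\BMO(\cx)$ and a cube $Q:=Q_\alpha^l$, and let $B:=B(z_\alpha^l,2A_0C_0\delta^l)$ so that $Q\subset B$ and $\mu(B)\sim\mu(Q)$. Decompose
\begin{equation*}
f=c+f_1+f_2,\qquad c:=m_{2B}(f),\quad f_1:=(f-c)\mathbf 1_{2B},\quad f_2:=(f-c)\mathbf 1_{(2B)^c}.
\end{equation*}
The cancellation condition in Definition \ref{10.23.2}(v) gives $Q_k(c)=0$ for every $k\in\zz$. For $f_1$, since $f_1\in L^2(\cx)$, Theorem \ref{l_btl} with $p=2$ combined with \eqref{bmop_b} (using $\BMO^2(\cx)=\BMO(\cx)$) yields
\begin{equation*}
\int_Q\sum_{k\geq l}|Q_kf_1|^2\,d\mu\leq\|f_1\|_{\dot F^0_{2,2}(\cx)}^2\lesssim\|f_1\|_{L^2(\cx)}^2\lesssim\mu(B)\|f\|_{\BMO(\cx)}^2.
\end{equation*}
For $f_2$, using \eqref{10.23.3} with large $\Gamma$, an annular decomposition $2^{j+1}B\setminus 2^jB$, and the standard $\BMO$ growth $\frac{1}{\mu(2^jB)}\int_{2^jB}|f-m_{2B}(f)|\,d\mu\lesssim j\|f\|_{\BMO(\cx)}$, one derives pointwise for $x\in Q$ and $k\geq l$ that $|Q_kf_2(x)|\lesssim\delta^{(k-l)\Gamma}\|f\|_{\BMO(\cx)}$, which is square-summable in $k\geq l$. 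Dividing by $\mu(Q)$ and taking the supremum over $(l,\alpha)$ finishes (i).

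For part (ii), I would use the atomic decomposition of $H^1(\cx)$ from \cite[Theorem 4.16]{hhllyy}: it suffices to show $|\langle f,a\rangle|\lesssim\|f\|_{\dot F^0_{\infty,2}(\cx)}$ uniformly for every $(1,2)$-atom $a$ with $\supp a\subset B=B(x_0,r)$, $\int a\,d\mu=0$ and $\|a\|_{L^2(\cx)}\leq\mu(B)^{-1/2}$. Choose $l\in\zz$ with $\delta^{l+1}<r\leq\delta^l$. Since $a\in L^2(\cx)$, Lemma \ref{h_c_crf} gives $a=\sum_k\widetilde Q_kQ_ka$ with $L^2$-convergence, and Fubini, justified via an $L^2$-approximation of $f$ on an exhaustion argument together with the test-function decay of $\widetilde Q_k(\cdot,y)$, yields
\begin{equation*}
\langle f,a\rangle=\sum_{k\in\zz}\int_\cx(\widetilde Q_k^*f)(y)\,Q_ka(y)\,d\mu(y),
\end{equation*}
where $\widetilde Q_k^*f(y):=\langle f,\widetilde Q_k(\cdot,y)\rangle$. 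Split the $k$-sum at $k=l$.

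For the high-frequency range $k\geq l$, split the spatial integral over $B^*:=CB$ (where $C$ is chosen so $B^*$ contains every dyadic cube $Q_\alpha^l$ meeting $B$) and over $(B^*)^c$. On $B^*$ apply the Cauchy--Schwarz inequality: the $\bigl(\int_\cx\sum_k|Q_ka|^2\bigr)^{1/2}$ factor is bounded by $\|a\|_{L^2(\cx)}\leq\mu(B)^{-1/2}$ via Theorem \ref{l_btl}, while the $\bigl(\int_{B^*}\sum_{k\geq l}|\widetilde Q_k^*f|^2\bigr)^{1/2}$ factor is bounded by $\mu(B^*)^{1/2}\|f\|_{\dot F^0_{\infty,2}(\cx)}\sim\mu(B)^{1/2}\|f\|_{\dot F^0_{\infty,2}(\cx)}$; for this last inequality one invokes the Plancherel--P\^olya equivalence of Lemma \ref{11.14.1} to transfer the Carleson-type norm from $\{Q_kf\}_k$ to $\{\widetilde Q_k^*f\}_k$, since $\widetilde Q_k^*$ shares the kernel bounds \eqref{4.23x}, \eqref{4.23y} and \eqref{4.23a}. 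On $(B^*)^c$ the geometric inequality $d(x,y)\sim d(x,x_0)\gtrsim r\geq\delta^k$ for $x\in(B^*)^c$, $y\in B$ and $k\geq l$ forces $|Q_k(x,y)|$ to decay like $(\delta^k/d(x,x_0))^\Gamma V(x,x_0)^{-1}$, which, after an annular decomposition and the Carleson-type control of $\widetilde Q_k^*f$ on each shell supplied again by Lemma \ref{11.14.1}, yields a convergent contribution of the same size. For the low-frequency range $k<l$, exploit $\int a\,d\mu=0$ by writing $Q_ka(x)=\int_B[Q_k(x,y)-Q_k(x,x_0)]a(y)\,d\mu(y)$, so that \eqref{10.23.4} upgrades the size bound of $Q_ka$ by the extra factor $(r/\delta^k)^\eta=\delta^{(l-k)\eta}$; pairing with the pointwise bound on $\widetilde Q_k^*f$ furnished again by the $\dot F^0_{\infty,2}$ condition then gives a geometric series in $\delta^{(l-k)\eta}$ that sums to $\lesssim\|f\|_{\dot F^0_{\infty,2}(\cx)}$.

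The main obstacle lies in part (ii), specifically in (a) rigorously justifying the interchange of the distributional pairing $\langle f,\cdot\rangle$ with the infinite series of the reproducing formula, which must be done by an $L^2$-approximation argument together with the decay estimates \eqref{4.23x} and \eqref{4.23y} on $\widetilde Q_k$; (b) transferring the Carleson-type condition defining $\dot F^0_{\infty,2}(\cx)$ from $\{Q_kf\}_k$ to $\{\widetilde Q_k^*f\}_k$, for which Lemma \ref{11.14.1} must be applied with $\{\widetilde Q_k^*\}$ playing the role of the second family even though it is not literally an exp-ATI, so one should verify that the proof of Lemma \ref{11.14.1} only uses the kernel estimates \eqref{4.23x}--\eqref{4.23a} already available for $\widetilde Q_k$; and (c) matching the scale decomposition $k\gtrless l$ with the spatial decomposition $B^*$ versus $(B^*)^c$ so that the atomic cancellation is used exactly where it is needed to overcome the logarithmic loss that would otherwise appear from $\sum_{k<l}1$.
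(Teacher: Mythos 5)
Your part (i) is essentially the paper's proof: the same three-way split of $f$ against a ball comparable to the dyadic cube, with the constant killed by the cancellation of $Q_k$, the local piece handled via $\dot F^0_{2,2}(\cx)=L^2(\cx)$ (Theorem \ref{l_btl}) and $\BMO^2(\cx)=\BMO(\cx)$, and the tail handled by an annular decomposition together with the standard logarithmic growth of $\BMO$ averages.

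Your part (ii), however, takes a genuinely different route. The paper works directly with $h\in\mathring{\cg}(\eta,\eta)$ with $\|h\|_{H^1(\cx)}\leq1$, expands $h=\sum_k Q_k\overline{Q}_k h$, and then runs a Fefferman--Stein stopping-time argument: it introduces the truncated cone square function $\dot S^j(f)$, the stopping level $l(x)$ in \eqref{stop_l}, proves the good-points density lemma \eqref{mu_mu} by a Chebyshev estimate on each ball, and deduces
$|\langle f,h\rangle|\lesssim\int\dot S^{l(x)}(f)\,\dot{\overline{S}}(h)\,d\mu\lesssim\|\dot\cm^0_2(f)\|_{L^\infty(\cx)}\|\dot{\overline{S}}(h)\|_{L^1(\cx)}$,
closing with Lemma \ref{e_mqs}, the $L^1$-square-function bound \eqref{ws_h}, and the density Lemma \ref{12.15.2}. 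Your plan is the atomic approach: test against $(1,2)$-atoms, split at the atomic scale, and invoke Plancherel--P\^olya to transfer the $\dot F^0_{\infty,2}$ condition to $\{\widetilde Q_k^*f\}$. Both routes are classical for $H^1$--$\BMO$ duality, but the paper's avoids the main obstructions in yours.

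The gaps worth flagging in your part (ii): first, $\langle f,a\rangle$ is not a priori defined for an atom $a$, since $a\in L^2$ need have no H\"older regularity and hence does not lie in $\mathring{\cg}^\eta_0(\beta,\gamma)$; you indicate you would define the pairing through the reproducing series, but showing this is well defined, agrees with the given pairing on $\mathring{\cg}(\eta,\eta)$, and then extends from single atoms to all of $H^1(\cx)$ is a non-trivial circle of work left open, which the paper sidesteps by estimating the pairing directly on the dense class $\mathring{\cg}(\eta,\eta)$. Second, Lemma \ref{11.14.1} is stated for a pair of exp-ATIs, whereas $\widetilde Q_k$ only satisfies the polynomial bounds \eqref{4.23x}, the first-variable regularity \eqref{4.23y}, and \eqref{4.23a}; you correctly acknowledge this, but you would need to rederive \eqref{pp} for $\widetilde Q_k^*\widetilde Q_{k'}$ and rerun the proof of Lemma \ref{11.14.1} for that weaker kernel class, which is plausible but not covered by the lemma as stated. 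Third, your shell-wise estimate on $(B^*)^c$ requires control of $\sum_{k\ge l}|\widetilde Q_k^*f|^2$ averaged over shifted balls/annuli, which goes beyond the dyadic-cube averages that define $\|\cdot\|_{\dot F^0_{\infty,2}(\cx)}$; this is exactly what $\dot\cm^0_2$ and Lemma \ref{e_mqs} supply, and your plan should invoke them (or re-prove their content) to make that step rigorous.
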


For any $l\in\zz$ and $x\in\cx$, let
\begin{equation}\label{6.8x}
E^l(x):=\bigcup_{\{\alpha\in\ca_l:\ Q_\alpha^l\cap B(x,(2A_0^2+A_0)\delta^l)
\neq\emptyset\}}Q_\alpha^l.
\end{equation}
Let $\{Q_k\}_{k\in\zz}$ be an exp-ATI. For any $s\in\rr$, $q\in(0,\infty]$, $f\in(\cggi)'$
with $\beta,\ \gamma\in(0,\eta)$ and $x\in\cx$,  define
\begin{equation}\label{m_q_s}
\dot{\cm}_q^s(f)(x)
:=\sup_{l\in\zz}\left[\frac{1}{\mu(E^l(x))}
\int_{E^l(x)}\sum_{k=l}^\infty\delta^{-ksq}|Q_k(f)(y)|^q\,d\mu(y)\right]^{1/q},
\end{equation}
where the usual modification is made when $q=\infty$.

To prove Theorem \ref{bmo_f}, we need the  following  technical lemma,
which on RD-spaces was obtained in \cite[Proposition 6.10]{hmy08}.
Via carefully checking the proof of \cite[Proposition 6.10]{hmy08},
we find that it does not depend on the reverse doubling condition \eqref{eq-rdoub} at all
and still holds true on  spaces of homogeneous type; we omit the details here.

\begin{lemma}\label{e_mqs}
Let $\beta,\ \gamma\in(0,\eta)$, with $\eta$ as in Definition \ref{10.23.2}, and
$s\in(-(\beta\wedge\gamma),\beta\wedge\gamma)$ satisfy \eqref{12.1.2}
and $q\in(p(s,\beta\wedge\gamma),\infty]$ with $p(s,\beta\wedge\gamma)$ as in \eqref{pseta}.
Then $f\in\dot{F}^s_{\infty,q}(\cx)$ if and only if $f\in(\cggi)'$
and $\dot{\cm}_q^s(f)\in L^\infty(\cx)$.
Moreover, there exists a positive constant $C$, independent of $f$, such that
$$C^{-1}\|f\|_{\dot{F}^s_{\infty,q}(\cx)}
\leq \|\dot{\cm}_q^s(f)\|_{L^\infty(\cx)}
\leq C\|f\|_{\dot{F}^s_{\infty,q}(\cx)}.$$
\end{lemma}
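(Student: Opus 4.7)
The plan is to reduce the equivalence of $\|f\|_{\dot F^s_{\infty,q}(\cx)}$ and $\|\dot\cm_q^s(f)\|_{L^\infty(\cx)}$ to a purely geometric fact: for each $l\in\zz$ and each $x\in\cx$, the ``fattened neighborhood'' $E^l(x)$ defined in \eqref{6.8x} is a finite union, of uniformly bounded cardinality, of dyadic cubes $Q_\az^l$, and moreover $\mu(E^l(x))\sim\mu(Q_\az^l)$ for each contributing $\az$. Once this geometric comparison is in place, each direction is just a change of indexing combined with an elementary estimate, so I do not expect the proof to require any further machinery beyond what is used in Section~\ref{s4} (in particular, Proposition~\ref{12.2.7} justifies that the two sides are well-defined for the same range of $\bz$, $\gz$).

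First I would establish the geometric comparison. Let
$$\cf_l(x):=\left\{\az\in\ca_l:\ Q_\az^l\cap B\bigl(x,(2A_0^2+A_0)\dz^l\bigr)\neq\emptyset\right\}.$$
Using Theorem~\ref{10.22.1}(iii), for any $\az\in\cf_l(x)$ one has
$$d(x,z_\az^l)\le A_0\bigl[(2A_0^2+A_0)\dz^l+2A_0C_0\dz^l\bigr],$$
so every $z_\az^l$ with $\az\in\cf_l(x)$ lies in a fixed ball $B(x,C\dz^l)$. Since the balls $\{B(z_\az^l,(3A_0^2)^{-1}c_0\dz^l)\}_{\az\in\ca_l}$ are pairwise disjoint (because of the separation in the definition of $\{z_\az^l\}$ and Lemma~\ref{11.14.2}(ii)), the packing argument of Lemma~\ref{11.14.2}(i) gives a constant $m_0\in\nn$, independent of $l$ and $x$, with $\#\cf_l(x)\le m_0$. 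Next, since $B(x,\dz^l)$ is covered by the cubes in $\cf_l(x)$, we get $V_{\dz^l}(x)\le\mu(E^l(x))$, while $E^l(x)\subset B(x,C\dz^l)$ yields $\mu(E^l(x))\le C' V_{\dz^l}(x)$ via \eqref{eq-doub}. By the same doubling argument and the inclusions in Theorem~\ref{10.22.1}(iii), for each $\az\in\cf_l(x)$ we have $\mu(Q_\az^l)\sim V_{\dz^l}(z_\az^l)\sim V_{\dz^l}(x)$. In summary,
$$\mu(E^l(x))\sim V_{\dz^l}(x)\sim\mu(Q_\az^l)\quad\text{for every}\quad \az\in\cf_l(x),$$
with equivalence constants independent of $l$ and $x$.

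Now I would derive both inequalities of the lemma. For the estimate $\|\dot\cm_q^s(f)\|_{L^\infty(\cx)}\ls\|f\|_{\dot F^s_{\infty,q}(\cx)}$, fix $x\in\cx$ and $l\in\zz$, and split the integral over $E^l(x)$ into the at most $m_0$ pieces indexed by $\cf_l(x)$:
\begin{align*}
\frac{1}{\mu(E^l(x))}\int_{E^l(x)}\sum_{k=l}^\infty\dz^{-ksq}|Q_k(f)(y)|^q\,d\mu(y)
&=\sum_{\az\in\cf_l(x)}\frac{\mu(Q_\az^l)}{\mu(E^l(x))}\cdot\frac{1}{\mu(Q_\az^l)}\int_{Q_\az^l}\sum_{k=l}^\infty\dz^{-ksq}|Q_k(f)(y)|^q\,d\mu(y)\\
&\le m_0\,\|f\|_{\dot F^s_{\infty,q}(\cx)}^q,
\end{align*}
where in the last step I used the geometric comparison and the definition of $\|\cdot\|_{\dot F^s_{\infty,q}(\cx)}$. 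Taking $q$-th roots and then the supremum over $l$ and essential supremum over $x$ finishes this direction (the usual modification handles $q=\fz$).

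Conversely, for $\|f\|_{\dot F^s_{\infty,q}(\cx)}\ls\|\dot\cm_q^s(f)\|_{L^\infty(\cx)}$, fix a dyadic cube $Q_\az^l$. For \emph{any} $x\in Q_\az^l$, clearly $x\in Q_\az^l\cap B(x,(2A_0^2+A_0)\dz^l)$, so $\az\in\cf_l(x)$ and $Q_\az^l\subset E^l(x)$. Combined with $\mu(Q_\az^l)\sim\mu(E^l(x))$, this gives
$$\frac{1}{\mu(Q_\az^l)}\int_{Q_\az^l}\sum_{k=l}^\infty\dz^{-ksq}|Q_k(f)(y)|^q\,d\mu(y)\ls\frac{1}{\mu(E^l(x))}\int_{E^l(x)}\sum_{k=l}^\infty\dz^{-ksq}|Q_k(f)(y)|^q\,d\mu(y)\le\bigl[\dot\cm_q^s(f)(x)\bigr]^q.$$
Since this holds for almost every $x\in Q_\az^l$, I would take the essential infimum of the right-hand side over $x\in Q_\az^l$ to get the bound $\|\dot\cm_q^s(f)\|_{L^\infty(\cx)}^q$, then take the supremum over $l$ and $\az$. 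The hypotheses on $\bz$, $\gz$, $s$ and $q$ enter only through Proposition~\ref{12.2.7} to guarantee that $f\in(\cggi)'$ is unambiguous on both sides; the core of the proof is entirely geometric. The step I expect to require the most care is verifying the uniform bound $\#\cf_l(x)\le m_0$ and the two-sided comparison $\mu(E^l(x))\sim V_{\dz^l}(x)\sim\mu(Q_\az^l)$, because these are what exploit the finite quasi-triangle constant $A_0$ and the doubling of $\mu$ in a way independent of the reverse doubling condition, faithfully following the philosophy of Remark~\ref{geo}.
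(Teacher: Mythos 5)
Your proof is correct, and it follows the route the paper intends: the paper itself omits the argument, simply citing \cite[Proposition 6.10]{hmy08} and observing that the proof there does not use reverse doubling, and that proof rests on exactly the geometric comparison you establish, namely that $E^l(x)$ is a union of boundedly many level-$l$ cubes with $\mu(E^l(x))\sim V_{\delta^l}(x)\sim\mu(Q_\alpha^l)$ uniformly in $l$ and $x$. Both of your directions (splitting the $E^l(x)$-average into cube averages, and noting that $Q_\alpha^l\subset E^l(x)$ with comparable measure for every $x\in Q_\alpha^l$, then picking an $x$ where $\dot{\cm}_q^s(f)(x)\le\|\dot{\cm}_q^s(f)\|_{L^\infty(\cx)}$) are sound; as a minor simplification, in the first direction the ratios $\mu(Q_\alpha^l)/\mu(E^l(x))$ sum exactly to $1$ because the cubes are disjoint and exhaust $E^l(x)$, so the factor $m_0$ is not even needed there.
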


Now we show Theorem \ref{bmo_f}.

\begin{proof}[Proof of Theorem \ref{bmo_f}]
We first show (i).
Assume $f\in \BMO(\cx)$ and $\beta,\ \gamma$ are as in \eqref{12.1.2}.
Let $\{Q_k\}_{k=-\fz}^\fz$ be an exp-ATI. By $\mathring{\cg}^\eta_0(\beta,\gamma)\subset H^1(\cx)$
and $(H^1(\cx))^\ast=\BMO(\cx)$, we know that $f\in(\cggi)'$.
Fix $l\in\zz$ and $\alpha\in \ca_l$.
Define $B_\alpha^l:=B(z_\alpha^l, 3A_0^2C_0\delta^l)$ and write
$$f=\left[f-m_{B_\alpha^l}(f)\right]\mathbf{1}_{B_\alpha^l}
+\left[f-m_{B_\alpha^l}(f)\right]\mathbf{1}_{\cx\setminus B_\alpha^l}
+m_{B_\alpha^l}(f)
=:f_{\alpha,l}^{(1)}+f_{\alpha,l}^{(2)}+f_{\alpha,l}^{(3)}.$$
From Definition \ref{10.23.2}(v), it follows that,
for any $k\in\zz$,
\begin{equation}\label{f_a_3}
Q_k(f_{\alpha,l}^{(3)})=0.
\end{equation}

Now we estimate $f_{\alpha,l}^{(1)}$. By \eqref{bmop_b},
we know that $f_{\alpha,l}^{(1)}\in L^2(\cx)$. Then, from
Theorem \ref{l_btl} and \eqref{bmop_b} with $p=2$, we deduce that
\begin{align}\label{f_a_1}
&\left\{\frac{1}{\mu(Q_\alpha^l)}\int_{Q_\alpha^l}
\sum_{k=l}^\infty |Q_k(f_{\alpha,l}^{(1)})(x)|^2\,d\mu(x)\right\}^{1/2}\\
&\qquad\leq \left\{\frac{1}{\mu(Q_\alpha^l)}\int_{\cx}
\sum_{k=-\infty}^\infty |Q_k(f_{\alpha,l}^{(1)})(x)|^2\,d\mu(x)\right\}^{1/2}
\lesssim \left\{\frac{1}{\mu(Q_\alpha^l)}\int_{\cx}
|f_{\alpha,l}^{(1)}(x)|^2\,d\mu(x)\right\}^{1/2}\notag\\
&\qquad\sim \left\{\frac{1}{\mu(B_\alpha^l)}\int_{B_\alpha^l}
|f(x)-m_{B_\alpha^l}(f)|^2\,d\mu(x)\right\}^{1/2}\notag
\lesssim \|f\|_{\BMO(\cx)}.\notag
\end{align}
Next we estimate $f_{\alpha,l}^{(2)}$.
Notice that,
for any $x\in Q_\alpha^l$ and $y\in\cx\setminus B_\alpha^l$,
$$d(y,z_\alpha^l)\leq A_0[d(y,x)+d(x,z_\alpha^l)]
\leq A_0d(y,x)+2A_0^2C_0\delta^l$$
and hence $\delta^l\lesssim d(y,x)$. By this, we obtain
$$\delta^l +d(y,z_\alpha^l)\lesssim d(y,x)+\delta^l\lesssim d(y,x)$$
and hence
$$V(z_\alpha^l,y)\sim V(y,z_\alpha^l)\lesssim V(y,x)\sim V(x,y).$$
From this,  we deduce that,
for any fixed $\Gamma\in(0,\infty)$,
and for any $k\geq l$ and $x\in Q_\alpha^l$,
\begin{align*}
&|Q_k(f_{\alpha,l}^{(2)})(x)|\\
&\quad=\left|\int_{\cx\setminus B_\alpha^l}Q_k(x,y)[f(y)-m_{B_\alpha^l}(f)]\,d\mu(y)\right|\\
&\quad\lesssim \int_{\cx\setminus B_\alpha^l}\frac{1}{V_{\delta^k}(x)+V(x,y)}
\left[\frac{\delta^k}{\delta^k+d(x,y)}\right]^\Gamma|f(y)-m_{B_\alpha^l}(f)|\,d\mu(y)\\
&\quad\lesssim \sum_{j=0}^\infty\int_{3A_0^2C_0\delta^{l-j}
\leq d(z_\alpha^l,y)<3A_0^2C_0\delta^{l-j-1}}\frac{1}{V_{\delta^k}(x)+V(x,y)}
\left[\frac{\delta^k}{\delta^k+d(x,y)}\right]^\Gamma|f(y)-m_{B_\alpha^l}(f)|\,d\mu(y)\\
&\quad\lesssim \delta^{(k-l)\Gamma}\sum_{j=0}^\infty \delta^{j\Gamma}
\frac{1}{V(z_\alpha, 3A_0^2C_0\delta^{l-j-1})}
\int_{B(z_\alpha, 3A_0^2C_0\delta^{l-j-1})}|f(y)-m_{B_\alpha^l}(f)|\,d\mu(y)\\
&\quad\lesssim \delta^{(k-l)\Gamma} \sum_{j=0}^\infty \delta^{j\Gamma}
\left[\frac{1}{V(z_\alpha, 3A_0^2C_0\delta^{l-j-1})}
\int_{B(z_\alpha, 3A_0^2C_0\delta^{l-j-1})}
|f(y)-m_{B(z_\alpha, 3A_0^2C_0\delta^{l-j-1})}(f)|\,d\mu(y)\right.\\
&\quad\qquad \left.\vpz{\frac{1}{V(z_\alpha, 3A_0^2C_0\delta^{l-j-1})}}+
|m_{B(z_\alpha, 3A_0^2C_0\delta^{l-j-1})}(f)-m_{B_\alpha^l}(f)|\right].
\end{align*}
Notice that, for any $j\in\zz_+$,
\begin{align*}
&|m_{B(z_\alpha, 3A_0^2C_0\delta^{l-j-1})}(f)-m_{B_\alpha^l}(f)|\\
&\qquad\leq \frac{1}{\mu(B_\alpha^l)}\int_{B_\alpha^l}|f(y)
-m_{B(z_\alpha, 3A_0^2C_0\delta^{l-j-1})}(f)|\,d\mu(y)\\
&\qquad\leq \frac{\mu(B(z_\alpha, 3A_0^2C_0\delta^{l-j-1}))}{\mu(B_\alpha^l)}
\frac{1}{\mu(B(z_\alpha, 3A_0^2C_0\delta^{l-j-1}))}\\
&\quad\qquad\times\int_{B(z_\alpha, 3A_0^2C_0\delta^{l-j-1})}|f(y)
-m_{B(z_\alpha, 3A_0^2C_0\delta^{l-j-1})}(f)|\,d\mu(y)\\
&\qquad\lesssim \delta^{-j\omega}\|f\|_{\BMO(\cx)}.
\end{align*}
Using this and choosing $\Gamma \in(\omega,\infty)$,  we conclude that
$$|Q_k(f_{\alpha,l}^{(2)})(x)|\lesssim\delta^{(k-l)\Gamma}
\sum_{j=0}^\infty\left[\delta^{j\Gamma}+\delta^{j(\Gamma-\omega)}\right]
\|f\|_{\BMO(\cx)}\lesssim\delta^{(k-l)\Gamma}\|f\|_{\BMO(\cx)}.$$
Thus,
\begin{align}\label{f_a_2}
\left\{\frac{1}{\mu(Q_\alpha^l)}\int_{Q_\alpha^l}
\sum_{k=l}^\infty |Q_k(f_{\alpha,l}^{(2)})(x)|^2\,d\mu(x)\right\}^{1/2}
&\lesssim \|f\|_{\BMO(\cx)}
\left[\sum_{k=l}^\infty\delta^{2(k-l)\Gamma}\right]^{1/2}\\
&\sim \|f\|_{\BMO(\cx)}.\notag
\end{align}
From \eqref{f_a_3}, \eqref{f_a_1} and \eqref{f_a_2},
we deduce that, for any $l\in\zz$ and $\alpha\in\ca_l$,
\begin{align*}
\left\{\frac{1}{\mu(Q_\alpha^l)}\int_{Q_\alpha^l}
\sum_{k=l}^\infty |Q_k(f)(x)|^2\,d\mu(x)\right\}^{1/2}
&\lesssim \left\{\frac{1}{\mu(Q_\alpha^l)}\int_{Q_\alpha^l}
\sum_{k=l}^\infty |Q_k(f_{\alpha,l}^{(1)})(x)|^2\,d\mu(x)\right\}^{1/2}\\
&\qquad +\left\{\frac{1}{\mu(Q_\alpha^l)}\int_{Q_\alpha^l}
\sum_{k=l}^\infty |Q_k(f_{\alpha,l}^{(2)})(x)|^2\,d\mu(x)\right\}^{1/2}\\
&\lesssim \|f\|_{\BMO(\cx)},
\end{align*}
which implies  that
$$\|f\|_{\dot{F}^0_{\infty,2}(\cx)}\lesssim\|f\|_{\BMO(\cx)}$$
and hence $f\in\dot{F}^0_{\infty,2}(\cx)$.
This finishes the proof of (i).

Now we show (ii). Assume $f\in\dot{F}^0_{\infty,2}(\cx)$.
By Proposition \ref{12.2.7}, we have $f\in (\cggi)'$ with $\beta$ and $\gamma$ as in \eqref{12.1.2}.
Then, from \cite[Theorem 4.16]{hlyy}, it follows that there exists a sequence
$\{\overline{Q}_k\}_{k=-\infty}^\infty$
of bounded linear integral operators on $L^2(\mathcal{X})$,
whose kernels satisfy \eqref{4.23x},
\eqref{4.23a} and \eqref{4.23y} for the second  variable, such that,
for any $h\in \mathring{\cg}(\eta,\eta)$
$$h= \sum_{k=-\infty}^\infty Q_k\overline{Q}_kh \qquad \text{in}\quad \cggi.$$
For any $j\in\zz$ and $x\in\cx$, define
$$\dot{S}^j(f)(x):=\left\{\sum_{k=j}^\infty\int_{B(x,\delta^k)}
|Q_k(f)(y)|^2\frac{d\mu(y)}{V_{\delta^k}(x)}\right\}^{1/2}$$
and, for any $x\in\cx$, define
$$\dot{\overline{S}}(f)(x):=\left\{\sum_{k=-\infty}^\infty\int_{B(x,\delta^k)}
\left|\overline{Q}_k(f)(y)\right|^2\frac{d\mu(y)}{V_{\delta^k}(x)}\right\}^{1/2}.$$
Using an argument similar to that used in the proof of
\cite[Theorem 5.10]{hhllyy}, we conclude that, for any $h\in H^1(\cx)$,
\begin{equation}\label{ws_h}
\left\|\dot{\overline{S}}(h)\right\|_{L^1(\cx)}\lesssim\|h\|_{H^1(\cx)}.
\end{equation}
For any $x\in\cx$, define
\begin{equation}\label{stop_l}
l(x):=\inf\left\{j\in\zz:\ \dot{S}^j(f)(x)\leq A\dot{\cm}_2^0(f)(x)\right\},
\end{equation}
where $\dot{\cm}_2^0(f)$ is as in \eqref{m_q_s}
and $A$ is a positive constant which is determined later.

We claim that there exists a positive constant $C$ such that,
for any $y\in\cx$, $l\in\zz$ and $A$ large enough,
\begin{equation}\label{mu_mu}
\mu\left(\left\{z\in B(y,\delta^l):\ \dot{S}^l(f)(z) \leq A\dot{\cm}_2^0(f)(z)\right\}\right)\geq C\mu(B(y,\delta^l)).
\end{equation}
Indeed, let $B_0:=B(y,\delta^l)$. Then,
for any $x\in B_0$ and $z\in B(x,\delta^l)$,
$$d(y,z)\leq A_0[d(y,x)+d(x,z)]< 2A_0\delta^l,$$
which implies that
$$\bigcup_{x\in B_0}B(x,\delta^l)\subset B(y, 2A_0\delta^l).$$
Let
$$P:=\bigcup_{\{\alpha\in\ca_l:\ Q_\alpha^l\cap B(y,2A_0\delta^{l})\neq\emptyset\}}Q_\alpha^l.$$
For any $\alpha\in\ca_l$
such that $Q_\alpha^l\cap B(y,2A_0\delta^{l})\neq\emptyset$,
let $w\in Q_\alpha^l\cap B(y,2A_0\delta^{l})$. Then, for any $z\in Q_\alpha^l$,
$$d(y,z)\leq A_0[d(y,w)+d(w,z)] \leq 2A_0^2\delta^l
+A_0[d(w,z_\alpha^l)+d(z_\alpha^l)]\leq2A_0^2(1+C_0)\delta^l.$$
Thus, $P\subset B(y,2A_0^2(1+C_0)\delta^l)$. Moreover, since
$B_0=B(y,\delta^l)\subset B(y,2A_0\delta^l)\subset P$, it follows that
$\mu(P)\sim \mu(B_0)$.
From this and the Fubini theorem,  we deduce  that
there exist positive constants $C_1$ and $C_2$, independent of $x$,
$l$ and $f$, such that
\begin{align*}
\frac{1}{\mu(B_0)}\int_{B_0}\lf[\dot{S}^{l}(f)(x)\r]^2\,d\mu(x)
&=\frac{1}{\mu(B_0)}\sum_{k=l}^\infty\int_{B_0}
\int_{B(x,\delta^k)}|Q_k(f)(u)|^2\frac{d\mu(u)}{V_{\delta^k}(x)}\,d\mu(x)\\
&\leq C_1 \frac{1}{\mu(P)}\sum_{k=l}^\infty\int_{P}
\int_{B_0}|Q_k(f)(u)|^2\mathbf{1}_{B(z,\delta^k)}(x)\frac{d\mu(x)}{V_{\delta^k}(x)}\,d\mu(u)\\
&\leq C_2 \frac{1}{\mu(P)}\int_P \sum_{k=l}^\infty |Q_k(f)(u)|^2\,d\mu(u).
\end{align*}
For any $Q_\alpha^l\subset P$, we know that $Q_\alpha^l\cap B(y,2A_0\delta^l)\neq \emptyset$.
Choosing $\widetilde{w}\in Q_\alpha^l\cap B(y,2A_0\delta^l)$, then, for any $z\in B(y,\dz^l)$, we have
$$d(\widetilde{w},z)\leq A_0[d(\widetilde{w},y)+d(y,z)]
<(2A_0^2+A_0)\delta^l,$$
which implies $\widetilde{w}\in Q_\alpha^l\cap B(z,(2A_0^2+A_0)\delta^l)\neq\emptyset$.
From this, it follows that $Q_\alpha^l\subset E^l(z)$ and hence $P\subset E^l(z)$,
where $E^l(z)$ is as in \eqref{6.8x} with $x$ replaced by $z$.
Moreover, it is easy to see that
$$\mu(E^l(z))\sim \mu(B(z,\delta^l))\sim\mu(B_0).$$
By these, we conclude that there exists a positive constant $C_3$
such that
$$\frac{1}{\mu(B_0)}\int_{B_0}\lf[\dot{S}^{l}(f)(x)\r]^2\,d\mu(x)
\leq C_3\frac{1}{\mu(E^l(z))}\int_{E^l(z)}\sum_{k=l}^\infty |Q_k(f)(u)|^2\,d\mu(u)
\leq C_3\left[\dot{\cm}_2^0(f)(z)\right]^2.$$
From the arbitrariness of $z$, we further deduce that
$$\frac{1}{\mu(B_0)}\int_{B_0}\lf[\dot{S}^{l}(f)(x)\r]^2\,d\mu(x)\leq C_3
\left[\inf_{z\in B_0}\dot{\cm}_2^0(f)(z)\right]^2.$$
By this, we conclude that,
if $0<\inf_{x\in B_0 }\dot{\cm}_2^0(f)(x)<\infty$, then
\begin{align*}
\left[\inf_{x\in B_0 }\dot{\cm}_2^0(f)(x)\right]^2
\int_{\{z\in B_0:\ \dot{S}^l(f)(z) > A\dot{\cm}_2^0(f)(z)\}}\,d\mu(z)
&\leq  A^{-2}\int_{B_0}\lf[\dot{S}^{l}(f)(x)\r]^2\,d\mu(x)\\
&\leq A^{-2}C_3\mu(B_0)\left[\inf_{x\in B_0 }\dot{\cm}_2^0(f)(x)\right]^2.
\end{align*}
Thus, we have
$$\mu\left(\left\{z\in B_0:\ \dot{S}^l(f)(z) > A\dot{\cm}_2^0(f)(z)\right\}\right) \leq A^{-2}C_3\mu(B_0).$$
Choosing $A^2>C_3$,  we then obtain
$$\mu\left(\left\{z\in B_0:\ \dot{S}^l(f)(z) \leq A\dot{\cm}_2^0(f)(z)\right\}\right) \geq (1-A^{-2}C_3)\mu(B_0),$$
which proves \eqref{mu_mu}  in this case. Notice    that,
 if $\inf_{x\in B_0 }\dot{\cm}_2^0(f)(x)=0$
 or $\inf_{x\in B_0 }\dot{\cm}_2^0(f)(x)=\infty$, it is easy to see that
\eqref{mu_mu} also holds true, which completes the proof of \eqref{mu_mu}.

Let $h\in \mathring{\cg}(\eta,\eta)$ and $\|h\|_{H^1(\cx)} \leq 1$.
Then, from \cite[Theorem 5.10]{hhllyy}, the Fubini theorem, the H\"older inequality,
\eqref{mu_mu}, \eqref{ws_h} and Lemma \ref{e_mqs}, we deduce that
\begin{align*}
|\langle f, h\rangle|
&= \left|\left\langle f, \sum_{k=-\infty}^\infty Q_k\overline{Q}_kh\right\rangle\right|
=\left|\sum_{k=-\infty}^\infty\left\langle Q_k^\ast(f),
\overline{Q}_k(h) \right\rangle\right|\\
&\leq \sum_{k=-\infty}^\infty\int_{\cx} |Q_k^\ast(f)(y)
\overline{Q}_k(h)(y)|\,d\mu(y)\\
&\lesssim \int_{\cx}\left[\sum_{k=l(x)}^\infty \int_{B(x,\delta^k)}|Q_k^\ast(f)(y)
\overline{Q}_k(h)(y)|\frac{d\mu(y)}{V_{\delta^k}(x)}\right]\,d\mu(x)\\
&\lesssim \int_{\cx}\dot{S}^{l(x)}(f)(x)\dot{\overline{S}}(f)(x)\,d\mu(y)
\lesssim \int_{\cx}\dot{\cm}_2^0(f)(x)\dot{\overline{S}}(f)(x)\,d\mu(x)\\
&\lesssim \lf\|\dot{\cm}_2^0(f)\r\|_{L^\infty(\cx)}\lf\|\dot{\overline{S}}(h)\r\|_{L^1(\cx)}
\lesssim\|f\|_{\dot{F}_{\infty,2}^0(x)}\|h\|_{H^1(\cx)}.
\end{align*}
Combining this, Lemma \ref{12.15.2} and a density argument,
we then complete the proof of (ii) and hence  of  Theorem \ref{bmo_f}.
\end{proof}

Now we consider the relationship between homogeneous Besov and Triebel--Lizorkin spaces
and their inhomogeneous counterparts.

\begin{theorem}\label{ihlless}
Let $\beta,\ \gamma\in(0,\eta)$ with $\eta$ as in
Definition \ref{10.23.2}, and $s\in(0,\beta\wedge\gamma)$.
\begin{enumerate}
\item[{\rm(I)}] Let $p\in [1, \infty]$ and  $q\in(0,\infty]$.
\begin{enumerate}
\item[{\rm(I)$_{\rm 1}$}] For any $f\in\ihb$, there exists a $g\in L^p(\cx)\cap\hb$ such that
$$f=g\quad \text{in}\quad (\icgg)'$$
and
$$\|g\|_{\hb}+\|g\|_{L^p(\cx)}\leq C \|f\|_{\ihb},$$
where $C$ is a positive constant independent of $f$.
\item[{\rm(I)$_{\rm 2}$}] If $f\in \hb\cap L^p(\cx)$, then $f\in\ihb$ and
$$\|f\|_{\ihb}\leq C\lf[\|f\|_{\hb}+\|f\|_{L^p(\cx)}\r],$$
where $C$ is a positive constant independent of $f$.
\end{enumerate}
\end{enumerate}
\begin{enumerate}
\item[{\rm(II)}] Let $p\in [1, \infty)$ and  $q\in(p(s,\beta\wedge\gamma),\infty]$ with
$p(s,\beta\wedge\gamma)$ as in \eqref{pseta}.
\begin{enumerate}
\item[{\rm(II)$_{\rm 1}$}] For any $f\in\ihf$, there exists a $g\in L^p(\cx)\cap\hf$ such that
$$f=g\quad \text{in}\quad (\icgg)'$$
and
$$\|g\|_{\hf}+\|g\|_{L^p(\cx)}\leq C \|f\|_{\ihf},$$
where $C$ is a positive constant independent of $f$.
\item[{\rm(II)$_{\rm 2}$}] If $f\in \hf\cap L^p(\cx)$, then $f\in\ihf$ and
$$\|f\|_{\ihf}\leq C \lf[\|f\|_{\hf}+\|f\|_{L^p(\cx)}\r],$$
where $C$ is a positive constant independent of $f$.
\end{enumerate}
\end{enumerate}
\begin{enumerate}
\item[{\rm(III)}] Let $q\in[1,\infty]$.
\begin{enumerate}
\item[{\rm(III)$_{\rm 1}$}] For any $f\in\ihfi$, there exists a $g\in \hfi\cap L^\infty(\cx)$ such that
$$f=g\quad \text{in}\quad (\icgg)'$$
and
$$\|g\|_{\hfi}+\|g\|_{L^\infty(\cx)}\leq C \|f\|_{\ihfi},$$
where $C$ is a positive constant independent of $f$.
\item[{\rm(III)$_{\rm 2}$}] If $f\in \hfi\cap L^\infty(\cx)$, then $f\in\ihfi$ and
$$\|f\|_{\ihfi}\leq C\lf[\|f\|_{\hfi}+\|f\|_{L^\infty(\cx)}\r],$$
where $C$ is a positive constant independent of $f$.
\end{enumerate}
\end{enumerate}
\end{theorem}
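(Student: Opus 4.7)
\emph{Overall strategy.} The theorem asserts three parallel identifications between the inhomogeneous spaces $\ihb$, $\ihf$, $\ihfi$ and the intersections of their homogeneous counterparts with $L^p(\cx)$ or $L^\infty(\cx)$, in the distributional sense. My plan is to run the same scheme across (I), (II), (III), splitting into two directions. In the direction ``homogeneous plus Lebesgue implies inhomogeneous'' (namely (I)$_2$, (II)$_2$, (III)$_2$) the argument is by direct norm comparison. In the reverse direction ((I)$_1$, (II)$_1$, (III)$_1$) the candidate function $g$ is obtained by writing $f$ through the inhomogeneous Calder\'on reproducing formula of Lemma~\ref{icrf} and realizing the resulting distribution as an actual pointwise-convergent series. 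The identification $f = g$ in $(\icgg)'$ is then automatic from Lemma~\ref{icrf}, and the whole work is in showing that this series converges in the appropriate $L^p(\cx)$ or $L^\infty(\cx)$ norm and lies in the homogeneous space.

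\emph{Easy direction.} For (I)$_2$, (II)$_2$, (III)$_2$ I would begin with Jensen's inequality
$$\mu(Q_\alpha^{k,m})[m_{Q_\alpha^{k,m}}(|Q_k f|)]^p \le \int_{Q_\alpha^{k,m}} |Q_k f|^p \,d\mu,$$
summing over $\alpha,m$ to obtain $\sum_{\alpha,m} \mu(Q_\alpha^{k,m})[m_{Q_\alpha^{k,m}}(|Q_k f|)]^p \le \|Q_k f\|_{L^p(\cx)}^p$. Since each $Q_k$ with $k \in \{0,\ldots,N\}$ is bounded on $L^p(\cx)$ for $p \in [1,\infty]$ (via the size estimate of its exp-(I)ATI kernel together with Lemma~\ref{6.15.1}(ii)), the low-frequency block of the inhomogeneous norm is dominated by a finite multiple of $\|f\|_{L^p(\cx)}^p$. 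The high-frequency block of the $\ihb$ (respectively $\ihf$) norm coincides with a tail of the corresponding homogeneous norm, hence is bounded by $\|f\|_{\hb}$ (respectively $\|f\|_{\hf}$). For (III)$_2$ the low-frequency supremum is directly $\le \|f\|_{L^\infty(\cx)}$, while the supremum over $l > N$ in $\ihfi$ is dominated by the full supremum over $l \in \zz$ in $\hfi$.

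\emph{Hard direction and main obstacle.} For (I)$_1$, (II)$_1$, (III)$_1$ I write $f = L(f) + H(f)$ via Lemma~\ref{icrf}, with $L(f)$ the finite sum over $k \in \{0,\ldots,N\}$ and $H(f)$ the tail over $k > N$, and set $g := L(f) + H(f)$. For the $L^p(\cx)$ (respectively $L^\infty(\cx)$) bound on $H(f)$, insert the kernel estimate \eqref{4.23x} on $\widetilde{Q}_k$ into Lemma~\ref{10.18.5} to convert each $k$-level of the series into a Hardy--Littlewood maximal function; combining with the Fefferman--Stein inequality (Lemma~\ref{fsvv}) and H\"older's inequality reduces the estimate to the high-frequency block of the inhomogeneous norm, with summability in $k$ furnished by the hypothesis $s > 0$. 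The bound on $L(f)$ is direct: a finite sum of convolution-type integrals against kernels $\widetilde{Q}_k$ whose $L^1$ norm in the second variable is uniformly bounded by Lemma~\ref{6.15.1}(ii), paired with coefficients $|Q_{\alpha,1}^{k,m}(f)| \le m_{Q_\alpha^{k,m}}(|Q_k f|)$ from \eqref{10.15.6}. For the homogeneous norm of $g$, I would fix a second exp-ATI $\{P_j\}_{j \in \zz}$, expand $P_j g$ through the reproducing formula, and use the almost-orthogonality estimate Lemma~\ref{4.23z} whenever $k > N$ (the regime in which $\widetilde{Q}_k$ has cancellation by Lemma~\ref{ih_c_crf}); the Plancherel--P\^olya machinery from Lemmas~\ref{5.12.1}, \ref{6.9.1}, \ref{11.14.1}, \ref{12.3.2} then bounds $\|P_j g\|_{L^p(\cx)}$ (respectively the $\hf$ or $\hfi$ analog) by the appropriate inhomogeneous quantity. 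The principal technical obstacle is the contribution of $L(f)$ to the homogeneous norm: for $k \in \{0,\ldots,N\}$ the kernel $\widetilde{Q}_k$ from Lemma~\ref{ih_c_crf} lacks cancellation, so Lemma~\ref{4.23z} does not apply directly. I would handle this by splitting at $j = k$: when $j \ge k$, use cancellation of $P_j$ together with regularity of $\widetilde{Q}_k$ to obtain decay $\delta^{(j-k)\eta}$; when $j < k \le N$, only a trivial bounded-kernel estimate on $P_j \widetilde{Q}_k$ is available, but because $s > 0$ and $\delta \in (0,1)$ the weights $\delta^{-jsq}$ for $j \le N$ remain summable, closing the estimate. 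A further subtlety in (III)$_1$ is that $\|\cdot\|_{\hfi}$ takes a supremum over all $l \in \zz$; for $l \le N$ one decomposes $\sum_{k \ge l} = \sum_{l \le k \le N} + \sum_{k > N}$ and bounds each piece using the low- and high-frequency components of $\|f\|_{\ihfi}$, respectively.
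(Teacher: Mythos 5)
Your overall scheme — easy direction by Jensen plus direct norm comparison, hard direction by realizing $f$ as an honest $L^p$ (or $L^\infty$) function $g$ obtained as the norm-limit of partial sums of the inhomogeneous Calder\'on reproducing formula — matches the paper's, and you correctly put your finger on the role of $s>0$. Where you diverge is in how $\|g\|_{\hb}$ (resp.\ $\|g\|_{\hf}$, $\|g\|_{\hfi}$) is estimated once $g$ has been built. You propose to re-expand $P_j g$ through the reproducing formula, split the double sum at $j=k$, and then deal with the missing cancellation of $\widetilde{Q}_k$ for $k\le N$ by a case analysis plus the almost-orthogonality estimate of Lemma~\ref{4.23z} and Plancherel--P\^olya machinery. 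The paper avoids all of this with a shortcut worth internalizing: once one has shown $\|g\|_{L^p(\cx)}\lesssim\|f\|_{\ihb}$ and that $f=g$ in $(\icgg)'$, the homogeneous norm is handled simply by splitting $\|g\|_{\hb}^q=\sum_{k\le N}\delta^{-ksq}\|Q_kg\|_{L^p}^q+\sum_{k>N}\delta^{-ksq}\|Q_kg\|_{L^p}^q$; the first block is $\lesssim\|g\|_{L^p}^q\sum_{k\le N}\delta^{-ksq}\lesssim\|g\|_{L^p}^q$ using only the uniform Schur bound $\|Q_kg\|_{L^p}\lesssim\|g\|_{L^p}$ and the geometric convergence of $\sum_{k\le N}\delta^{-ksq}$ supplied by $s>0$, while the second block equals $\sum_{k>N}\delta^{-ksq}\|Q_kf\|_{L^p}^q\lesssim\|f\|_{\ihb}^q$ because $Q_kg=Q_kf$ for $k>N$. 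No almost-orthogonality, no re-expansion, and no Plancherel--P\^olya argument is needed; the lack of cancellation of $\widetilde{Q}_k$ for $k\le N$ is simply never confronted. The same pattern recurs in the paper for the $L^p$-bound on the high-frequency tail of $g$ — a single use of $\|\widetilde{Q}_kQ_kf\|_{L^p}\lesssim\|Q_kf\|_{L^p}$ plus geometric summation via $s>0$ for the Besov case, a clean duality argument for the Triebel--Lizorkin case — rather than the maximal-function route you outline, which is only needed when a genuine $\ell^q$-valued square function is present. Finally, the paper works from the continuous inhomogeneous formula (Lemma~\ref{ih_c_crf}) rather than the discrete Lemma~\ref{icrf} you cite; this is cosmetic but does keep the bookkeeping lighter. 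Your proposal contains the right ingredients and could be completed, but the paper's exploitation of the already-established $L^p$-membership of $g$ is substantially more economical and is the real point of the argument.
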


\begin{proof}
We first prove (I)$_{\rm 1}$ when $p\in[1,\infty)$. Let $\{P_k\}_{k\in\zz}$ be a 1-exp-ATI.
For any $k\in\zz$, letting $Q_k:= P_k-P_{k-1}$, then, by Definitions \ref{1-exp-ati}
and \ref{1-exp-iati} below, we know that $\{Q_k\}_{k\in\zz}$ is an exp-ATI
and $\{P_0\}\cup\{Q_k\}_{k\in\nn}$ is an exp-IATI.
For any $f\in\ihb$, we have $f\in(\icgg)'$ with $s$, $\beta$
and $\gamma$ satisfying  \eqref{10.19.3}.
By Lemma \ref{ih_c_crf}, we know that there exist an $N\in\nn$
and a sequence $\{\widetilde{Q}_k\}_{k=0}^\infty$ of
bounded linear integral operators on $L^2(\mathcal{X})$ such that
\begin{equation}\label{fiexp}
f = \widetilde{Q}_0P_0f+\sum_{k=1}^\infty \widetilde{Q}_kQ_kf  \quad \text {in}\quad (\icgg)'.
\end{equation}
Moreover, for any $k\in\zz_+$, the kernel of $\widetilde{Q}_k$ satisfies
\eqref{4.23x}, \eqref{4.23y} and the following integral condition: for any $x\in\mathcal{X}$,
\begin{equation}\label{n}
\int_{\mathcal{X}}\widetilde{Q}_k(x,y)\,d\mu(y)=
\int_{\mathcal{X}}\widetilde{Q}_k(y,x)\,d\mu(y)=\begin{cases}
1 &\text{if } k \in \{0,\dots,N\},\\
0 &\text{if } k\in \{N+1,N+2,\ldots\}.
\end{cases}
\end{equation}
For any $n\in\nn$, let
\begin{equation}\label{fn}
f_n:=\widetilde{Q}_0P_0f+\sum_{k=1}^n \widetilde{Q}_kQ_kf.
\end{equation}
It is easy to see that, for  any $n\in\nn$, $f_n$ is a measurable function on $\cx$.
If $q\in(0,1)$, by \cite[Proposition 2.7(iii)]{hmy08} and \eqref{r},
we conclude that, for any $m$, $n\in\nn$ and $m>n$,
\begin{align}\label{fmfn1}
\|f_m-f_n\|_{L^p(\cx)}&=\left\|\sum_{k=n+1}^m \widetilde{Q}_kQ_kf\right\|_{L^p(\cx)}
\lesssim \sum_{k=n+1}^m \left\|Q_kf\right\|_{L^p(\cx)}\\
&\lesssim \delta^{ns}\left[\sum_{k=n+1}^m \delta^{-ksq}\left\|Q_kf\right\|_{L^p(\cx)}^q\right]^{1/q}.\notag
\end{align}
If $q\in[1,\infty]$, by the H\"older inequality, we also obtain,
for any $m$, $n\in\nn$ and $m>n$,
\begin{align}\label{fmfn2}
\|f_m-f_n\|_{L^p(\cx)}&=\left\|\sum_{k=n+1}^m \widetilde{Q}_kQ_kf\right\|_{L^p(\cx)}
\lesssim \sum_{k=n+1}^m \left\|Q_kf\right\|_{L^p(\cx)}\\
&\lesssim \sum_{k=n+1}^m \delta^{-ks/2}\left\|Q_kf\right\|_{L^p(\cx)}
\lesssim\left(\sum_{k=n+1}^m \delta^{ksq'/2}\right)^{1/q'}
\left[\sum_{k=n+1}^m \delta^{-ksq}\left\|Q_kf\right\|_{L^p(\cx)}^q\right]^{1/q}\notag\\
&\lesssim \delta^{ns/2}\left[\sum_{k=n+1}^m
\delta^{-ksq}\left\|Q_kf\right\|_{L^p(\cx)}^q\right]^{1/q}.\notag
\end{align}
From this and $f\in\ihb$, we deduce that $\|f_m-f_n\|_{L^p(\cx)}\to0$ as $m,\ n\to\infty$.
Similarly, we can also show that, for any $n\in\nn$,  $\|f_n\|_{L^p(\cx)}\lesssim \|f\|_{\ihb}$
and hence $\{f_n\}_{n\in\nn}$ is a Cauchy sequence in $L^p(\cx)$.
By the completeness of $L^p(\cx)$ with $p\in[1,\infty)$, let $g:=\lim_{n\to\infty}f_n$ in $L^p(\cx)$. Then, for any $\varphi\in\icgg$,
by the H\"older inequality, we have
\begin{align}\label{feg1}
0&\leq|\langle f-g,\varphi\rangle|\leq|\langle f-f_n,\varphi\rangle|
+|\langle f_n-g,\varphi\rangle|\\
&\leq
|\langle f-f_n,\varphi\rangle|+\|f_n-g\|_{L^p(\cx)}\|\varphi\|_{L^{p'}(\cx)}\to 0\notag
\end{align}
as $n\to\fz$, which implies $f=g$ in $(\icgg)'$,
where $1/p+1/p'=1$.
Moreover, using an argument similar to that used in the estimations of  \eqref{fmfn1}
and \eqref{fmfn2}, we find that, for any $p\in[1,\infty)$ and
$q\in(0,\infty]$,
\begin{align}\label{lplpb}
\|g\|_{L^p(\cx)}&=\lim_{n\to\infty}\|f_n\|_{L^p(\cx)}\\
&\lesssim \|P_0f\|_{L^p(\cx)}+\sum_{k=1}^N\|Q_kf\|_{L^p(\cx)}
+\lim_{n\to\infty}\left[\sum_{k=N+1}^n \delta^{-ksq}
\left\|Q_kf\right\|_{L^p(\cx)}^q\right]^{1/q}\notag\\
&\lesssim \|P_0f\|_{L^p(\cx)}+\sum_{k=1}^N\|Q_kf\|_{L^p(\cx)}+\|f\|_{\ihb}.\notag
\end{align}
By \eqref{10.15.5}, \eqref{10.15.6}, \eqref{qkpk}, \eqref{qkpkm}
and  an  argument similar to that used in the  estimation of \eqref{10.15.4}, we conclude that
\begin{align}\label{p0}
\|P_0f\|_{L^p(\cx)}&=\left\{\sum_{\alpha\in\ca_0}\sum_{m=1}^{N(0,\alpha)}
\int_{Q_\alpha^{0,m}}|P_0f(x)|^pd\mu(x)\right\}^{1/p}\\
&\leq \left\{\sum_{\alpha\in\ca_0}\sum_{m=1}^{N(0,\alpha)}
\mu(Q_\alpha^{0,m})\left[\sup_{z\in Q_\alpha^{0,m}}|P_0f(z)|\right]^p\right\}^{1/p}\notag\\
&\lesssim \left\{\sum_{\alpha \in \ca_0}\sum_{m=1}^{N(0,\alpha)}
\mu(Q_\alpha^{0,m})\left[m_{Q_\alpha^{0,m}}(|P_0f|)\r]^p\r.\notag\\
&\left.\qquad+\sum_{k=1}^{N}\sum_{\alpha \in \ca_k}\sum_{m=1}^{N(k,\alpha)}
\mu\left(\qa\right)\left[m_{\qa}(|Q_{k}(f)|)\r]^p\right\}^{1/p}\notag\\
&\qquad+\left\{\sum_{k=N+1}^\infty\delta^{-ksq}
\left[\sum_{\alpha \in \ca_{k}}\sum_{m=1}^{N(k,\alpha)}\mu\left(\qa\right)
\left\{\inf_{z\in\qa}|Q_{k}(f)(z)|\right\}^p\right]^{q/p}\right\}^{1/q}\notag\\
&\lesssim \|f\|_{\ihb}\notag
\end{align}
and, for any $k\in\{1,\dots,N\}$,
\begin{equation}\label{qkf}
\|Q_kf\|_{L^p(\cx)}\lesssim \|f\|_{\ihb}.
\end{equation}
Combining this with \eqref{lplpb} and \eqref{p0}, we know that
\begin{equation}\label{lpb}
\|g\|_{L^p(\cx)}\lesssim \|f\|_{\ihb}.
\end{equation}
This, combined with \cite[Proposition 2.7(iii)]{hmy08} and $f=g$ in $(\icgg)'$, implies that
\begin{align}\label{glessf}
\|g\|_{\hb}&=\left[\sum_{k=-\infty}^\infty\delta^{-ksq}\|Q_kg\|_{L^p(\cx)}^q\right]^{1/q}\\
&\lesssim \left[\sum_{k=-\infty}^N\delta^{-ksq}\|Q_kg\|_{L^p(\cx)}^q\right]^{1/q}
+\left[\sum_{k=N+1}^\infty\delta^{-ksq}\|Q_kg\|_{L^p(\cx)}^q\right]^{1/q}\notag\\
&\lesssim \|g\|_{L^p(\cx)}+ \left[\sum_{k=N+1}^\infty
\delta^{-ksq}\|Q_kf\|_{L^p(\cx)}^q\right]^{1/q}\notag
\lesssim \|f\|_{\ihb},
\end{align}
which completes the proof of (I)$_{\rm 1}$ when $p\in[1,\infty)$.

Next we show (I)$_{\rm 1}$ when $p=\infty$. By \eqref{fiexp}, \eqref{fn},
Lemma 2.5(ii) and the H\"older inequality when $q\in[1,\infty]$,
or \eqref{r} when $q\in(0,1)$,
we know that, for any $m$, $n\in\nn$ with $m>n$, and $x\in\cx$,
\begin{align*}
|f_m(x)-f_n(x)|&=\left|\sum_{k=n+1}^m \widetilde{Q}_kQ_kf\right|
= \left|\sum_{k=n+1}^m \int_{\cx}\widetilde{Q}_k(x,y)Q_kf(y)\,d\mu(y)\right|\\
&\leq\sum_{k=n+1}^m\|Q_kf\|_{L^\infty(\cx)}\int_{\cx}\left|\widetilde{Q}_k(x,y)\right|\,d\mu(y)\\
&\lesssim \delta^{ns}\left[\sum_{k=n+1}^m\delta^{-ksq}\|Q_kf\|_{L^\infty(\cx)}^q\right]^{1/q}
\lesssim \delta^{ns}\|f\|_{\ihbi},
\end{align*}
which implies that $\|f_m-f_n\|_{L^\infty(\cx)}\to0$ as $m,\ n\to \infty$.
Similarly, we can show that, for any $n\in\nn$,  $\|f_n\|_{L^p(\cx)}\lesssim \|f\|_{\ihbi}$ and
hence $\{f_n\}_{n\in\nn}$ is a Cauchy sequence in $L^\infty(\cx)$.
Let $g:=\lim_{n\to\infty} f_n$ in $L^\infty(\cx)$. Using an  argument similar to
that used in the estimations of \eqref{feg1},
\eqref{lpb} and \eqref{glessf}, we conclude that
$$f=g \quad \text{in} \quad (\icgg)'$$
and
$$\|g\|_{\hbi}+\|g\|_{L^\infty(\cx)}\lesssim \|f\|_{\ihbi};$$
we omit the details. This finishes the proof of (I)$_1$ when $p=\infty$ and hence of (I)$_1$.

Now we prove  (I)$_{\rm 2}$. Let $f\in \hb\cap L^p(\cx)$.
Notice that, since $p\in[1,\infty]$, from the  H\"older inequality and
\cite[Proposition 2.7(iii)]{hmy08}, it follows that, for $N\in\nn$ as in Lemma \ref{icrf},
\begin{align}\label{iformer}
&\left\{\sum_{\alpha \in \ca_0}\sum_{m=1}^{N(0,\alpha)}
\mu(Q_\alpha^{0,m})\left[m_{Q_\alpha^{0,m}}(|P_0f|)\r]^p+
\sum_{k=1}^{N}\sum_{\alpha \in \ca_k}\sum_{m=1}^{N(k,\alpha)}
\mu\left(\qa\right)\left[m_{\qa}(|Q_{k}f|)\r]^p\right\}^{1/p}\\
&\quad\leq\left\{\sum_{\alpha \in \ca_0}\sum_{m=1}^{N(0,\alpha)}
\mu(Q_\alpha^{0,m})m_{Q_\alpha^{0,m}}(|P_0f|^p)+
\sum_{k=1}^{N}\sum_{\alpha \in \ca_k}\sum_{m=1}^{N(k,\alpha)}
\mu\left(\qa\right)m_{\qa}(|Q_{k}f|^p)\right\}^{1/p}\notag\\
&\quad=\left[\|P_0f\|_{L^p(\cx)}^p +\sum_{k=1}^N\|Q_kf\|_{L^p(\cx)}^p\right]^{1/p}
\lesssim \|f\|_{L^p(\cx)}.\notag
\end{align}
By this, we find that
\begin{align*}
\|f\|_{\ihb}&=\left\{\sum_{\alpha \in \ca_0}\sum_{m=1}^{N(0,\alpha)}
\mu(Q_\alpha^{0,m})\left[m_{Q_\alpha^{0,m}}(|P_0f|)\r]^p+
\sum_{k=1}^{N}\sum_{\alpha \in \ca_k}\sum_{m=1}^{N(k,\alpha)}
\mu\left(\qa\right)\left[m_{\qa}(|Q_{k}f|)\r]^p\right\}^{1/p}\\
&\qquad +\left[\sum_{k=N+1}^\infty\delta^{-ksq}\|Q_kf\|_{L^p(\cx)}^q\right]^{1/q}\\
&\lesssim \|f\|_{L^p(\cx)}+\|f\|_{\hb},
\end{align*}
which shows that (I)$_{\rm 2}$ holds true.

Next we prove (II)$_{1}$. For any $f\in\ihf$,
we have $f\in(\icgg)'$ with $s$, $\beta$ and $\gamma$ satisfying \eqref{10.19.3}.
By \cite[Proposition 2.7(ii)]{hmy08} and the H\"older inequality when $q\in[1,\infty]$,
or \eqref{r} when $q\in(p(s, \beta\wedge\gamma),1)$, we find that,
for any $m,\ n\in\nn$ with $m>n$,
\begin{align*}
\|f_m-f_n\|_{L^p(\cx)}&=\sup_{h\in L^{p'}(\cx),\ \|h\|_{L^{p'}(\cx)}\leq 1}|\langle f_m-f_n, h\rangle|=\sup_{h\in L^{p'}(\cx),\ \|h\|_{L^{p'}(\cx)}\leq 1} \left|\left\langle
\sum_{k=n+1}^m\widetilde{Q}_kQ_kf, h\right\rangle\right|\\
&= \sup_{h\in L^{p'}(\cx),\ \|h\|_{L^{p'}(\cx)}\leq 1}
\left|\sum_{k=n+1}^m\langle Q_k(f), \widetilde{Q}_k^\ast(h)\rangle\right|\\
&\leq \sup_{h\in L^{p'}(\cx),\ \|h\|_{L^{p'}(\cx)}\leq 1} \int_{\cx}
\sum_{k=n+1}^m|Q_k(f)(x)|\left|\widetilde{Q}_k^\ast(h)(x)\right|\,d\mu(x)\\
&\leq \delta^{ns}\sup_{h\in L^{p'}(\cx),\ \|h\|_{L^{p'}(\cx)}\leq 1}
\left\|\left\{\sum_{k=n+1}^m\delta^{-ksq}|Q_k(f)|^q\right\}^{1/q}\right\|_{L^p(\cx)}
\|M(h)\|_{L^{p'}(\cx)}\notag\\
&\lesssim \delta^{ns}\left\|\left\{\sum_{k=n+1}^m\delta^{-ksq}|Q_k(f)|^q\right\}^{1/q}\right\|_{L^p(\cx)}
\lesssim \delta^{ns}\|f\|_{\ihf},
\end{align*}
which implies that $\|f_m-f_n\|_{L^p(\cx)}\to0$ as $m,\ n\to \infty$.
Similarly, we can also show that, for any $n\in\nn$,  $\|f_n\|_{L^p(\cx)}\lesssim \|f\|_{\hf}$ and
hence $\{f_n\}_{n\in\nn}$ is a Cauchy sequence in $L^p(\cx)$.
By the completeness of $L^p(\cx)$ with $p\in[1,\infty)$, let $g:=\lim_{n\to\infty} f_n$ in $L^p(\cx)$. Similarly to the estimations of \eqref{feg1},
\eqref{p0} and \eqref{qkf}, we conclude that
$$f=g \quad \text{in} \quad (\icgg)'$$
and
\begin{equation}\label{gfihf}
\|g\|_{L^p(\cx)}\lesssim \|f\|_{\ihf}.
\end{equation}
Moreover, we obtain
\begin{align}\label{gjf}
\|g\|_{\hf}&=\left\|\left\{\sum_{k=-\infty}^\infty\delta^{-ksq}|Q_k(g)|^q\right\}^{1/q}\right\|_{L^p(\cx)}\\
&\lesssim \left\|\left\{\sum_{k=-\infty}^N\delta^{-ksq}|Q_k(g)|^q\right\}^{1/q}\right\|_{L^p(\cx)}
+\left\|\left\{\sum_{k=N+1}^\infty\delta^{-ksq}|Q_k(f)|^q\right\}^{1/q}\right\|_{L^p(\cx)}\notag\\
&\lesssim \left\|\left\{\sum_{k=-\infty}^N\delta^{-ksq}|Q_k(g)|^q\right\}^{1/q}\right\|_{L^p(\cx)}
+\|f\|_{\ihf}\notag\\
&=: \mathrm{J}+\|f\|_{\ihf}.\notag
\end{align}
When $p/q\in(0,1]$, by \eqref{r} and \cite[Proposition 2.7(iii)]{hmy08}, we have
\begin{equation}\label{j1}
\mathrm{J}^p\lesssim \sum_{k=-\infty}^N\delta^{-ksp}\|Q_kg\|_{L^p(\cx)}^p
\lesssim\|g\|_{L^p(\cx)}^p\sum_{k=-\infty}^N\delta^{-ksp}\lesssim\|g\|_{L^p(\cx)}^p,
\end{equation}
while when $p/q\in(1,\infty)$, by the Minkowski inequality, \cite[Proposition 2.7(iii)]{hmy08}, we  also have
\begin{equation}\label{j2}
\mathrm{J}\lesssim \left[\sum_{k=-\infty}^N\delta^{-ksq}\|Q_kg\|_{L^p(\cx)}^q\right]^{1/q}
\lesssim \|g\|_{L^p(\cx)}\left[\sum_{k=-\infty}^N\delta^{-ksq}\right]^{1/q}\lesssim  \|g\|_{L^p(\cx)}.
\end{equation}
Combining this with \eqref{gfihf}, \eqref{gjf} and  \eqref{j1}, we conclude that
$$\|g\|_{L^p(\cx)}+\|g\|_{\hf}\lesssim \|f\|_{\ihf},$$
which completes the proof of (II)$_{\rm 1}$.

Now we show (II)$_{\rm 2}$.  Let $f\in \hf\cap L^p(\cx)$.
Using an argument similar to that used in the estimation of  \eqref{iformer}, we find that,
for $N\in\nn$ as in Lemma \ref{icrf},
\begin{align*}
\|f\|_{\ihf}&=\left\{\sum_{\alpha \in \ca_0}\sum_{m=1}^{N(0,\alpha)}
\mu\left(Q_\alpha^{0,m}\right)\left[m_{Q_\alpha^{0,m}}(|P_0f|)\r]^p+
\sum_{k=1}^{N}\sum_{\alpha \in \ca_k}\sum_{m=1}^{N(k,\alpha)}
\mu\left(\qa\right)\left[m_{\qa}(|Q_{k}f|)\r]^p\right\}^{1/p}\\
&\qquad +\left\|\left[\sum_{k=N+1}^\infty \delta^{-ksq}|Q_k(f)|^q\right]^{1/q}\right\|_{\lp}\\
&\lesssim \|f\|_{L^p(\cx)}+\|f\|_{\hf},
\end{align*}
which implies that (II)$_{\rm 2}$ holds true.

Next we show (III)$_{\rm 1}$.  For any $f\in\ihfi$,
we have $f\in(\icgg)'$ with $s,\ \beta$ and $\gamma$ satisfying \eqref{12.5.4}.
Let $\{f_n\}_{n=1}^\fz$ be as \eqref{fn}. By \eqref{fiexp},  we know that, for any $m,\ n\in\nn$
with $m>n$ and $x\in\cx$,
\begin{align}\label{fmfni1}
|f_m(x)-f_n(x)|&=\left|\sum_{k=n+1}^m \widetilde{Q}_kQ_kf\right|\\
&\leq\sum_{k=n+1}^m \sum_{\alpha\in\ca_k}\sum_{m=1}^{N(k,\alpha)}
\int_{\qa}\left|\widetilde{Q}_k(x,y)\right||Q_kf(y)|d\mu(y)\notag\\
&\lesssim\sum_{k=n+1}^m \sum_{\alpha\in\ca_k}\sum_{m=1}^{N(k,\alpha)}
\mu\left(\qa\right)\left|\widetilde{Q}_k(x,\ya)\right|\sup_{y\in\qa}|Q_kf(y)|,\notag
\end{align}
where we chose $\ya\in\qa$ such that $\sup_{y\in\qa}|\widetilde{Q}_k(x,y)|\leq 2|\widetilde{Q}_k(x,\ya)|$.
Notice that Lemma \ref{12.3.2} implies that, for any $k\in\{N+1,N+2,\ldots\}$
with $N\in\nn$ as in Lemma \ref{icrf},
$\alpha\in\ca_k$
and $m\in\{1,\dots,N(k,\alpha)\}$,
$$\left\{\delta^{-ksq}\left[\sup_{y\in\qa}|Q_kf(y)|\right]^q\right\}^{1/q}\lesssim\|f\|_{\ihfi}.$$
From this, the H\"older inequality, \eqref{4.23x} and Lemma \ref{9.14.1},
we deduce that, for any $x\in\cx$,
\begin{align}\label{fmfni2}
&|f_m(x)-f_n(x)|\\
&\quad\lesssim \sum_{k=n+1}^m\left\{\sum_{\alpha\in\ca_k}
\sum_{m=1}^{N(k,\alpha)}\delta^{ksq'}\mu\left(\qa\right)\left|\widetilde{Q}_k(x,\ya)\right|\right\}^{1/q'}\notag\\
&\qquad\quad\times\left\{\sum_{\alpha\in\ca_k}\sum_{m=1}^{N(k,\alpha)}\delta^{-ksq}
\mu\left(\qa\right)\left|\widetilde{Q}_k\left(x,\ya\right)\right|
\left[\sup_{y\in\qa}|Q_kf(y)|\right]^q\right\}^{1/q}\notag\\
&\quad\lesssim \|f\|_{\ihfi} \sum_{k=n+1}^m\delta^{ks}\sum_{\alpha\in\ca_k}
\sum_{m=1}^{N(k,\alpha)}\mu\left(\qa\right)
\left|\widetilde{Q}_k\left(x,\ya\right)\right|\notag\\
&\quad\lesssim \|f\|_{\ihfi} \sum_{k=n+1}^m\delta^{ks}\sum_{\alpha\in\ca_k}\sum_{m=1}^{N(k,\alpha)}\mu\left(\qa\right)
\frac{1}{V_{\delta^k}(x)+V(x,\ya)}\left[\frac{\delta^k}{\delta^k+d(x,\ya)}\right]^\gamma\notag\\
&\quad\lesssim \delta^{ns}\|f\|_{\ihfi},\notag
\end{align}
which implies that $\|f_m-f_n\|_{L^\infty(\cx)}\lesssim
\delta^{ns}\|f\|_{\ihfi}\to 0$ as  $m,\ n\to \infty$.
Similarly, we can also show that, for any $n\in\nn$,  $\|f_n\|_{L^\fz(\cx)}\lesssim \|f\|_{\ihfi}$ and
hence $\{f_n\}_{n\in\nn}$ is a Cauchy sequence in $L^\infty(\cx)$.
By the completeness of $L^\infty(\cx)$, there exists a $g\in L^\infty(\cx)$ such that
$g=\lim_{n\to\infty} f_n$ in $L^\infty(\cx)$.
Using an  argument similar to that used in the estimation of \eqref{feg1}, we conclude that
$$f=g \quad \text{in} \quad (\icgg)'.$$
Moreover, we have
\begin{align}\label{g123}
\|g\|_{L^\infty(\cx)}&=\lim_{n\to\infty}\|f_n\|_{L^\infty(\cx)}=\lim_{n\to\infty}
\left\|\widetilde{Q}_0P_0f+\sum_{k=1}^n \widetilde{Q}_kQ_kf\right\|_{L^\infty(\cx)}\\
&\leq \|\widetilde{Q}_0P_0f\|_{L^\infty(\cx)}+\sum_{k=1}^N\left\|\widetilde{Q}_kQ_kf\right\|_{L^\infty(\cx)}
+\lim_{n\to\infty}\left\|\sum_{k=N+1}^n \widetilde{Q}_kQ_kf\right\|_{L^\infty(\cx)}\notag\\
&=:\mathrm{G_1}+\mathrm{G_2}+\mathrm{G_3}.\notag
\end{align}
From an argument similar to that used in the estimations of \eqref{fmfni1} and \eqref{fmfni2},
we deduce that
\begin{equation}\label{g3}
\mathrm{G_3}\lesssim \|f\|_{\ihfi}.
\end{equation}
By \eqref{4.23x}, Lemma \ref{9.14.1} and  Definition \ref{ihfi}, we know that, for any $x\in\cx$,
\begin{align*}
\left|\widetilde{Q}_0P_0f(x)\right|&\leq \sum_{\alpha\in\ca_0}\sum_{m=1}^{N(k,\alpha)}
\int_{\qo}\left|\widetilde{Q}_0(x,y)\right||P_0f(y)|d\mu(y)\\
&\lesssim \sum_{\alpha\in\ca_0}\sum_{m=1}^{N(k,\alpha)}\mu\left(\qo\right)
\sup_{y\in Q^{0,m}_\alpha}\left|\widetilde{Q}_0(x,y)\right|m_{\qo}(|P_0f|)\\
&\lesssim\|f\|_{\ihfi} \sum_{\alpha\in\ca_0}\sum_{m=1}^{N(0,\alpha)}
\mu\left(\qo\right)
\frac{1}{V_1(x)+V(x,y_\alpha^{0,m})}\left[\frac{1}{1+d(x,y_\alpha^{0,m})}\right]^\gamma\\
&\lesssim \|f\|_{\ihfi},
\end{align*}
which implies that
\begin{equation}\label{g1}
\mathrm{G_1}\lesssim \|f\|_{\ihfi}.
\end{equation}
Using an argument similar to that used in the estimation of \eqref{g1}, we find that
\begin{equation}\label{g2}
\mathrm{G_2}\lesssim \|f\|_{\ihfi}.
\end{equation}
By \eqref{g123} through \eqref{g2}, we know that
\begin{equation}\label{gi}
\|g\|_{L^\infty(\cx)}\lesssim\|f\|_{\ihfi}.
\end{equation}
Now we estimate $\|g\|_{\hfi}$. Indeed, by $f=g$ in $(\icgg)'$, we have
\begin{align}\label{ghfi}
\|g\|_{\hfi}&=\sup_{l \in \zz} \sup_{\alpha\in\ca_l}\left[\frac{1}{\mu(Q_\alpha^l)}
\int_{Q_\alpha^l}\sum_{k=l}^\infty\delta^{-ksq}|Q_kg(x)|^q\,d\mu(x)\right]^{1/q}\\
&\lesssim \sup_{l \in \nn} \sup_{\alpha\in\ca_l}\left[\frac{1}{\mu(Q_\alpha^l)}
\int_{Q_\alpha^l}\sum_{k=l}^\infty\delta^{-ksq}|Q_kg(x)|^q\,d\mu(x)\right]^{1/q}\notag\\
&\qquad +\sup_{l \in \zz\setminus \nn} \sup_{\alpha\in\ca_l}\left[\frac{1}{\mu(Q_\alpha^l)}
\int_{Q_\alpha^l}\sum_{k=l}^\infty\delta^{-ksq}|Q_kg(x)|^q\,d\mu(x)\right]^{1/q}\notag\\
&\lesssim \|f\|_{\ihfi} +\sup_{l \in \zz\setminus \nn} \sup_{\alpha\in\ca_l}\left[\frac{1}{\mu(Q_\alpha^l)}
\int_{Q_\alpha^l}\sum_{k=l}^0\delta^{-ksq}|Q_kg(x)|^q\,d\mu(x)\right]^{1/q}\notag\\
&\qquad +\sup_{l \in \zz\setminus \nn} \sup_{\alpha\in\ca_l}\left[\frac{1}{\mu(Q_\alpha^l)}
\int_{Q_\alpha^l}\sum_{k=1}^\infty\delta^{-ksq}|Q_kf(x)|^q\,d\mu(x)\right]^{1/q}\notag\\
&=: \|f\|_{\ihfi}+\mathrm{G_4}+\mathrm{G_5}.\notag
\end{align}
From \eqref{4.23x} and Lemma \ref{6.15.1}(ii), we deduce that, for any $k\in\zz$ and $x\in\cx$,
$$|Q_kg(x)|\lesssim \|g\|_{L^\infty(\cx)},$$
which, together with \eqref{gi}, implies that
\begin{equation}\label{g4}
\mathrm{G_4}\lesssim\|g\|_{L^\infty(\cx)}\sup_{l \in \zz\setminus \nn}
\sup_{\alpha\in\ca_l}\left[\frac{1}{\mu(Q_\alpha^l)}
\int_{Q_\alpha^l}\sum_{k=l}^0\delta^{-ksq}\,d\mu(x)\right]^{1/q}\lesssim\|f\|_{\ihfi}.
\end{equation}
To estimate $\mathrm{G_5}$, for any $l \in \zz\setminus \nn$ and $\alpha\in\ca_l$, let
$$I_l^\alpha:=\{\beta\in\ca_1:\ Q_{\beta}^1\subset Q_\alpha^l\}.$$
By Theorem \ref{10.22.1}(ii), it is easy to see
$\bigcup_{\beta\in I_l^\alpha} Q_{\beta}^1 =Q_\alpha^l$
and
$\sum_{\beta\in I_l^\alpha}\mu(Q_{\beta}^1)=\mu(Q_\alpha^l)$.
From these facts, we deduce that
\begin{align}\label{g5}
\mathrm{G_5}&\sim \sup_{l \in \zz\setminus \nn} \sup_{\alpha\in\ca_l}\left[\frac{1}{\mu(Q_\alpha^l)}
\sum_{\beta\in I_l^\alpha}\mu(Q_{\beta}^1)\frac{1}{\mu(Q_{\beta}^1)}
\int_{Q_\beta^1}\sum_{k=1}^\infty\delta^{-ksq}|Q_kf(x)|^q\,d\mu(x)\right]^{1/q}\\
&\lesssim \|f\|_{\ihfi}\sup_{l \in \zz\setminus \nn} \sup_{\alpha\in\ca_l}\left[\frac{1}{\mu(Q_\alpha^l)}
\sum_{\beta\in I_l^\alpha}\mu(Q_{\beta}^1)\right]^{1/q}\lesssim  \|f\|_{\ihfi}.\notag
\end{align}
By \eqref{gi} through \eqref{g5}, we obtain
$$\|g\|_{\hfi}+\|g\|_{L^\infty(\cx)}\lesssim \|f\|_{\ihfi},$$
which completes the proof of (III)$_{\rm 1}$.

Finally we show (III)$_{\rm 2}$. Let $f\in \hfi\cap L^\infty(\cx)$.
Notice that, by \eqref{4.23x} and Lemma \ref{6.15.1}(ii), for any $\alpha\in\ca_0$
and $m\in\{1,\dots,N(0,\alpha)\}$,
$$m_{\qo}(|P_0f|)\leq\|P_0f\|_{L^\infty(\cx)}\lesssim \|f\|_{L^\infty(\cx)}$$
and, for any $k\in\{1,\dots,N\}$, $\alpha\in\ca_k$ and $m\in\{1,\dots,N(k,\alpha)\}$,
$$m_{\qa}(|Q_kf|)\leq\|Q_kf\|_{L^\infty(\cx)}\lesssim \|f\|_{L^\infty(\cx)}.$$
From this, it follows that, for $N\in\nn$ as in Lemma \ref{icrf},
\begin{align*}
\|f\|_{\ihfi}&=\max\left\{\sup_{\alpha\in\ca_0}\sup_{m\in\{1,\dots,N(0,\alpha)\}}m_{\qo}(|P_0f|),
\sup_{k\in\{1,\dots,N\}}\sup_{\alpha \in \ca_k}\sup_{m\in\{1,\dots,N(k,\alpha)\}}m_{\qa}(|Q_kf|),\r.\\
&\qquad\left.\sup_{l \in \nn,\ l>N} \sup_{\alpha\in\ca_l}
\left[\frac{1}{\mu(Q_\alpha^l)}\int_{Q_\alpha^l}\sum_{k=l}^\infty
\delta^{-ksq}|Q_k(f)(x)|^q\,d\mu(x)\right]^{1/q}\right\}\\
&\lesssim \|f\|_{L^\infty(\cx)}+\|f\|_{\hfi},
\end{align*}
which implies that (III)$_2$ holds true.
This finishes the proof of Theorem \ref{ihlless}.
\end{proof}

\subsection{Relationships among inhomogeneous Besov spaces, Triebel--Lizorkin spaces and other function spaces}

In this section, we concentrate on relationships among  inhomogeneous Besov spaces,
Triebel--Lizorkin spaces and other function spaces.
We do not need to assume that $\mu(\cx)=\infty$ in this
section.

\begin{theorem}\label{soq2}
Let $\beta,\ \gamma\in(0,\eta)$ with $\eta$ as in Definition \ref{10.23.2}, and $p\in(1,\infty)$.
Then $F_{p,2}^0(\cx)=L^p(\cx)$ in the following sense:
\begin{enumerate}
\item[{\rm(i)}] there exists a positive constant $C$ such that, for any $f\in L^p(\cx)$,
$$\|f\|_{F_{p,2}^0(\cx)}\leq C\|f\|_{L^p(\cx)}.$$
\item[{\rm(ii)}] for any $f\in F_{p,2}^0(\cx)$, there exists a $g\in L^p(\cx)$ such that
$$f=g\quad\text{in}\quad (\icgg)' \quad\text{and}\quad \|g\|_{L^p(\cx)}\leq C\|f\|_{F_{p,2}^0(\cx)},$$
where $C$ is a positive constant independent of $f$.
\end{enumerate}
\end{theorem}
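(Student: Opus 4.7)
The plan is to mirror the proof of the homogeneous counterpart Theorem~\ref{l_btl}, making the necessary modifications to handle the extra low-frequency terms $\sum_{k=0}^N \sum_{\alpha,m} \mu(Q_\alpha^{k,m}) [m_{Q_\alpha^{k,m}}(|Q_k f|)]^p$ that distinguish the inhomogeneous quasi-norm from the homogeneous one. I would choose an exp-IATI $\{Q_k\}_{k \ge 0}$ built from a 1-exp-ATI $\{P_k\}_{k\in\zz}$, so that $Q_0 = P_0$ and $Q_k = P_k - P_{k-1}$ for $k \ge 1$, exactly as in the proof of Theorem~\ref{ihlless}.

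For part (i), with $f \in L^p(\cx)$, the low-frequency piece is controlled directly by Jensen's inequality: $[m_{Q_\alpha^{k,m}}(|Q_k f|)]^p \le m_{Q_\alpha^{k,m}}(|Q_k f|^p)$, so summing over $(\alpha,m)$ at level $k$ and then over $k \in \{0,\dots,N\}$ recovers $\sum_{k=0}^N \|Q_k f\|_{L^p(\cx)}^p$, which is $\lesssim \|f\|_{L^p(\cx)}^p$ via the size condition on $Q_k$ and a standard Hardy--Littlewood maximal estimate. The high-frequency piece $\|[\sum_{k=N+1}^\infty |Q_k f|^2]^{1/2}\|_{L^p(\cx)}$ is treated exactly as in Theorem~\ref{l_btl}(i): for each sign sequence $\varepsilon$ and truncation level $M > N$, the kernels of $T_M^\varepsilon := \sum_{k=N+1}^M \varepsilon_k Q_k$ and its adjoint are standard Calder\'on--Zygmund kernels with uniform bounds; Cotlar--Stein gives $L^2$-boundedness with constants independent of $\varepsilon$ and $M$; Calder\'on--Zygmund theory then extends this to $L^p$ with the same uniformity; and Khinchin's inequality followed by Fatou's lemma converts the estimate to the desired square-function bound.

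For part (ii), given $f \in F_{p,2}^0(\cx)$, Proposition~\ref{10.19.2} guarantees $f \in (\icgg)'$ for suitable $\beta,\gamma$, so Lemma~\ref{ih_c_crf} yields $f = \widetilde Q_0 P_0 f + \sum_{k=1}^\infty \widetilde Q_k Q_k f$ in $(\icgg)'$. Setting $f_n := \widetilde Q_0 P_0 f + \sum_{k=1}^n \widetilde Q_k Q_k f$, I would show that $\{f_n\}_{n\in\nn}$ is Cauchy in $L^p(\cx)$ with $\|f_n\|_{L^p(\cx)} \lesssim \|f\|_{F_{p,2}^0(\cx)}$; the limit $g := \lim_n f_n$ in $L^p(\cx)$ is then the desired function, and the identity $f = g$ in $(\icgg)'$ follows from testing against $\varphi \in \icgg \subset L^{p'}(\cx)$ and applying H\"older's inequality, as in the homogeneous case. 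For indices $k > N$, both the Cauchy estimate and the uniform bound reduce, via duality and the auxiliary inequality $\|[\sum_k |\widetilde Q_k^* h|^2]^{1/2}\|_{L^{p'}(\cx)} \lesssim \|h\|_{L^{p'}(\cx)}$ (established as in \cite[(3.108)]{hmy08}), to $\|[\sum_{k > N} |Q_k f|^2]^{1/2}\|_{L^p(\cx)} \le \|f\|_{F_{p,2}^0(\cx)}$.

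The main obstacle is controlling $\|\widetilde Q_k Q_k f\|_{L^p(\cx)}$ for $k \in \{0,\dots,N\}$, since $f$ is not assumed to lie in $L^p(\cx)$ and one cannot simply invoke the $L^p$-boundedness of $Q_k$. The idea is to exploit the fact that, at scale $\delta^k \sim 1$, the kernel $\widetilde Q_k(x,\cdot)$ is essentially constant on cubes $Q_\alpha^{k,m}$, which yields the pointwise estimate
\[
|\widetilde Q_k Q_k f(x)| \lesssim \sum_{\alpha,m} \mu\left(Q_\alpha^{k,m}\right) \frac{1}{V_1(x) + V(x, y_\alpha^{k,m})} \left[\frac{1}{1 + d(x, y_\alpha^{k,m})}\right]^\gamma m_{Q_\alpha^{k,m}}(|Q_k f|).
\]
Applying Lemma~\ref{9.14.1} with exponent $1$, the right-hand side is dominated by a Hardy--Littlewood-type average of $\sum_{\alpha,m} m_{Q_\alpha^{k,m}}(|Q_k f|)\,\mathbf{1}_{Q_\alpha^{k,m}}$, so that $\|\widetilde Q_k Q_k f\|_{L^p(\cx)}^p \lesssim \sum_{\alpha,m} \mu(Q_\alpha^{k,m}) [m_{Q_\alpha^{k,m}}(|Q_k f|)]^p \le \|f\|_{F_{p,2}^0(\cx)}^p$, and the same reasoning handles $\widetilde Q_0 P_0 f$. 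Summing over $k \in \{0,\dots,N\}$ and combining with the high-frequency estimate delivers the required uniform $L^p$-bound, and completes the construction of $g$.
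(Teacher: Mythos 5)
Your proposal is correct and follows essentially the same route as the paper: part (i) reduces to the square-function estimate of Theorem~\ref{l_btl}(i) after controlling the low-frequency average terms by $\|f\|_{L^p(\cx)}$ via Jensen/H\"older, and part (ii) builds $g$ as the $L^p$-limit of the partial sums $f_n$ from the inhomogeneous reproducing formula, using the duality/square-function argument of \eqref{f_n_lp} for the tail and a separate argument for $k\le N$. The one genuine difference is in the treatment of the low-frequency terms in (ii): the paper bounds $\|P_0 f\|_{L^p}$ and $\|Q_k f\|_{L^p}$ ($k\le N$) by $\|f\|_{F^0_{p,2}(\cx)}$ via a Plancherel--P\^olya-style argument (as in \eqref{p0}--\eqref{qkf}) and then invokes $L^p$-boundedness of $\widetilde Q_k$, whereas you bound $\|\widetilde Q_k Q_k f\|_{L^p}$ directly from the pointwise discretization estimate, which is somewhat shorter since the cube averages $m_{Q_\alpha^{k,m}}(|Q_k f|)$ appear verbatim in the quasi-norm. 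Both routes work, and neither is substantially more general.

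One small inaccuracy: the step from your pointwise discretization bound to ``dominated by a Hardy--Littlewood-type average'' is carried out by Lemma~\ref{10.18.5} with $k=k'$ and $r=1$, not by Lemma~\ref{9.14.1} (which with exponent $1$ only gives that the kernel sum is $\lesssim 1$ and does not produce the maximal-function average you then use to invoke $L^p$-boundedness of $M$). Likewise, the auxiliary square-function bound $\|\{\sum_k|\widetilde Q_k^\ast h|^2\}^{1/2}\|_{L^{p'}}\lesssim\|h\|_{L^{p'}}$ you cite from the homogeneous setting \cite[(3.108)]{hmy08} needs its inhomogeneous analogue; this is unproblematic because you only apply it to the indices $k>N$, where the $\widetilde Q_k$ still have vanishing integral, but the reference should be adjusted accordingly.
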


\begin{proof}
We first prove (i). Assume $f\in L^p(\cx)$ with $p\in(1,\infty)$,
and that $\{Q_k\}_{k=0}^\fz$ is an exp-IATI.
Notice that, since $p\in(1,\infty)$, from the H\"older inequality
and \cite[Proposition 2.7(iii)]{hmy08},
it follows that, for $N\in\nn$ as in Lemma \ref{icrf},
\begin{align*}
&\sum_{k=0}^N\sum_{\alpha \in \ca_k}\sum_{m=1}^{N(k,\alpha)}\mu\lf(Q_\az^{k,m}\r)
\left[m_{Q_\az^{k,m}}\left(|Q_k(f)|\right)\r]^p\\
&\quad=\sum_{k=0}^N\sum_{\alpha \in \ca_k}\sum_{m=1}^{N(k,\alpha)}
\frac{1}{[\mu(Q_\az^{k,m})]^{p-1}}\left[\int_{Q_\az^{k,m}}|Q_k(f)(y)|\,d\mu(y)\right]^{p}\\
&\quad\leq \sum_{k=0}^N\sum_{\alpha \in \ca_k}\sum_{m=1}^{N(k,\alpha)}
\frac{1}{[\mu(Q_\az^{k,m})]^{p-1}}\lf[\mu\lf(Q_\az^{k,m}\r)\r]^{p/p'}\int_{Q_\az^{k,m}}|Q_kf(y)|^p\,d\mu(y)\\
&\quad=\sum_{k=0}^N\|Q_k(f)\|_{L^p(\cx)}^p\lesssim \|f\|_{L^p(\cx)}^p.
\end{align*}
By this and an argument similar to
that used in the proof of Theorem \ref{l_btl}(i), we find that, for $N\in\nn$ as in Lemma \ref{icrf},
\begin{equation*}
\|f\|_{F_{p,2}^0(\cx)}=\left\{\sum_{k=0}^\fz\sum_{\alpha \in \ca_0}\sum_{m=1}^{N(0,\alpha)}\mu\left(\qa\right)
\left[m_{\qa}\left(|Q_kf|\right)\r]^p\r\}^{1/p}+\left\|\left[\sum_{k=N+1}^\infty |Q_kf|^2\right]^{1/2}\right\|_{\lp}
\ls\|f\|_{L^p(\cx)},
\end{equation*}
which prove (i).

Now we show (ii). By \eqref{fiexp}, \eqref{fn} and an argument similar to that used in the estimation of \eqref{f_n_lp},
we conclude that $\{f_n\}_{n\in\nn}$ is a Cauchy sequence in $L^p(\cx)$.
By the completeness of $L^p(\cx)$ with $p\in(1,\infty)$, let
$$g:=\lim_{n\to\infty} f_n\quad\text{in}\quad L^p(\cx).$$
Using an argument similar to that used in the estimations of \eqref{feg1},
\eqref{p0} and \eqref{qkf}, we find that
$f=g$ in $(\icgg)'$
and
$\|g\|_{L^p(\cx)}\lesssim \|f\|_{F_{p,2}^0(\cx)}$. This finishes the proof of Theorem \ref{soq2}.
\end{proof}

\begin{remark}
By the Littlewood--Paley $g$-function characterization of $h^p(\cx)$
in \cite[Theorem 5.7]{hyy19}, we know that,
for any given $p\in (\omega/(\omega+\eta),1]$,
$F_{p,2}^0(\cx)=h^p(\cx)$ with equivalent quasi-norms.
Therefore, Theorem \ref{soq2} further complements this
via showing that this relation also holds true for any given $p\in(1,\infty)$.
\end{remark}

Now we establish the relationship among $C^s(\cx)$,
$B^s_{\infty,\infty}(\cx)$ and $F^s_{\infty,\infty}(\cx)$.
By Proposition \ref{proihfi}(iii),  $B^s_{\infty,\infty}(\cx)=F^s_{\infty,\infty}(\cx)$ with equivalent norms.
The following theorem states the relationship between $C^s(\cx)$ and
$B^s_{\infty,\infty}(\cx)$.

\begin{theorem}\label{ic_b}
Let $\beta,\ \gamma\in(0,\eta)$ with $\eta$ as in Definition
\ref{10.23.2}, and $s\in(0,\beta\wedge\gamma)$.
Then $C^s(\cx)=B^s_{\infty,\infty}(\cx)$ in the following sense:
\begin{enumerate}
\item[{\rm(i)}] there exists a positive constant $C$ such that, for any $f\in C^s(\cx)$,
$$\|f\|_{B^s_{\infty,\infty}(\cx)}\leq C\|f\|_{C^s(\cx)};$$
\item[{\rm(ii)}] for any $f\in B^s_{\infty,\infty}(\cx)$,
there exists a $g\in C^s(\cx)$ such that
$$f=g\quad\text{in}\quad (\icgg)' \quad\text{and}\quad
\|g\|_{C^s(\cx)}\leq C\|f\|_{B^s_{\infty,\infty}(\cx)},$$
where $C$ is a positive constant independent of $f$.
\end{enumerate}
\end{theorem}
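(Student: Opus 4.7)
The plan is to mirror the proof of Theorem \ref{c_b}, adapted to the inhomogeneous setting where the main new features are: (a) the operator $Q_0$ lacks cancellation but satisfies $\int_\cx Q_0(x,y)\,d\mu(y)=1$, and (b) for the low frequencies $k\in\{0,\dots,N\}$ the $B^s_{\infty,\infty}(\cx)$ norm only controls the cube averages $m_{\qa}(|Q_k f|)$ rather than $\|Q_k f\|_{L^\infty(\cx)}$. Throughout I will choose $\beta,\gamma\in(s,\eta)$ satisfying the hypotheses of both Proposition \ref{10.19.2} and Lemma \ref{ih_c_crf}, so that $f\in B^s_{\infty,\infty}(\cx)\subset(\icgg)'$ and the inhomogeneous continuous Calder\'on reproducing formula is applicable.

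For (i), assume $f\in C^s(\cx)\subset L^\infty(\cx)$. Split by frequency. For $k=0$, use $\int Q_0(x,y)\,d\mu(y)=1$ together with Lemma \ref{6.15.1}(ii) to conclude $\|Q_0 f\|_{L^\infty(\cx)}\lesssim\|f\|_{L^\infty(\cx)}$, which dominates each average $m_{\qo}(|Q_0 f|)$. For $k\in\{1,\dots,N\}$, write $Q_k f(x)=\int Q_k(x,y)[f(y)-f(x)]\,d\mu(y)$ via the cancellation of $Q_k$, bound $|f(y)-f(x)|\le\|f\|_{\dot C^s(\cx)}[d(x,y)]^s$, and use Lemma \ref{6.15.1}(ii) together with $s<\gamma$ to obtain $\|Q_k f\|_{L^\infty(\cx)}\lesssim \delta^{ks}\|f\|_{\dot C^s(\cx)}$. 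The same computation gives the high-frequency bound $\sup_{k\ge N+1}\delta^{-ks}\|Q_k f\|_{L^\infty(\cx)}\lesssim\|f\|_{\dot C^s(\cx)}$, completing (i).

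For (ii), apply Lemma \ref{ih_c_crf} to write $f=\sum_{k=0}^{\infty}\widetilde Q_k Q_k f$ in $(\icgg)'$ and define the candidate
\[
g(x):=\sum_{k=0}^{\infty}\widetilde Q_k Q_k f(x),\qquad x\in\cx.
\]
To show $g\in L^\infty(\cx)$ with $\|g\|_{L^\infty(\cx)}\lesssim\|f\|_{B^s_{\infty,\infty}(\cx)}$, handle the tail $k\ge N+1$ by combining $\|Q_k f\|_{L^\infty(\cx)}\le\delta^{ks}\|f\|_{B^s_{\infty,\infty}(\cx)}$ with $\int_\cx|\widetilde Q_k(x,y)|\,d\mu(y)\lesssim 1$ (from \eqref{4.23x} and Lemma \ref{6.15.1}(ii)), summing geometrically since $s>0$. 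For $k\in\{0,\dots,N\}$ partition the integral defining $\widetilde Q_k Q_k f(x)$ by the dyadic cubes $\{\qa\}_{\alpha,m}$, replace $\sup_{y\in\qa}|\widetilde Q_k(x,y)|$ by its infimum (up to a constant via Lemma \ref{6.15.1}(i)), substitute the average bound $m_{\qa}(|Q_k f|)\le\|f\|_{B^s_{\infty,\infty}(\cx)}$, and reassemble the dyadic sum into the integral estimate of Lemma \ref{6.15.1}(ii). The uniform convergence of the partial sums then identifies $f=g$ in $(\icgg)'$, since $L^\infty(\cx)$-convergence implies convergence in $(\icgg)'$.

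For the H\"older estimate, when $d(x,y)\ge 1$ the $L^\infty$ bound gives $|g(x)-g(y)|\lesssim\|g\|_{L^\infty(\cx)}\le\|g\|_{L^\infty(\cx)}[d(x,y)]^s$ for free. When $d(x,y)<1$, pick $k_0\in\zz_+$ with $(2A_0)^{-1}\delta^{k_0+1}<d(x,y)\le(2A_0)^{-1}\delta^{k_0}$ and split $g(x)-g(y)=\sum_{k=0}^{k_0-1}+\sum_{k=k_0}^{\infty}$. The tail is controlled by twice the uniform bounds derived above and sums to a geometric series $\sim\delta^{k_0 s}\sim[d(x,y)]^s$. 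For $k\le k_0-1$ the condition $d(x,y)\le(2A_0)^{-1}[\delta^k+d(x,z)]$ holds trivially, so the regularity \eqref{4.23y} of $\widetilde Q_k$ furnishes the gain $[d(x,y)/\delta^k]^\beta$: for $k\in\{0,\dots,N\}$ one again performs the cube-by-cube reduction with $m_{\qa}(|Q_k f|)$ to obtain a contribution $\lesssim\|f\|_{B^s_{\infty,\infty}(\cx)}[d(x,y)]^\beta\le\|f\|_{B^s_{\infty,\infty}(\cx)}[d(x,y)]^s$; for $N+1\le k\le k_0-1$ the pointwise bound $\|Q_k f\|_{L^\infty(\cx)}\lesssim\delta^{ks}\|f\|_{B^s_{\infty,\infty}(\cx)}$ combines with $[d(x,y)/\delta^k]^\beta$ to give $[d(x,y)]^\beta\delta^{-k(\beta-s)}\|f\|_{B^s_{\infty,\infty}(\cx)}$, whose sum is dominated by the term at $k=k_0-1$, which is $\sim[d(x,y)]^s\|f\|_{B^s_{\infty,\infty}(\cx)}$ because $\beta>s$.

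The main obstacle is the low-frequency regime $k\in\{0,\dots,N\}$. In the homogeneous situation of Theorem \ref{c_b} every $Q_k$ carried cancellation and the Besov norm directly controlled $\|Q_k f\|_{L^\infty(\cx)}$, making both the $L^\infty$ absorption and the regularity step essentially automatic. Here the Besov norm at small $k$ gives only the averaged information $m_{\qa}(|Q_k f|)$, so both steps must be reorganised as a cube-indexed summation in the spirit of \eqref{eq-int}, where Lemma \ref{6.15.1}(i) is used to pass between $y_\alpha^{k,m}$ and arbitrary points in $\qa$, and Lemma \ref{6.15.1}(ii) is used to reassemble the dyadic sum into the requisite uniform-in-$k$ integral estimate.
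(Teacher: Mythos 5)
Your proposal is correct and follows essentially the same route as the paper: for (i), split at the Calder\'on reproducing scale $N$ and reproduce the argument of Theorem~\ref{c_b}(i) on the high-frequency range $k>N$; for (ii), set $g:=\sum_{k\ge 0}\widetilde Q_k Q_k f$, establish $\|g\|_{L^\infty(\cx)}\lesssim\|f\|_{B^s_{\infty,\infty}(\cx)}$ and $f=g$ in $(\icgg)'$, and then prove the $\dot C^s(\cx)$ bound by splitting the frequency sum at $k_0\sim\log_\delta d(x,y)$ and using \eqref{4.23y} on the low-frequency half and \eqref{4.23x} on the tail. Two remarks. First, in (i), the bound $\|Q_0 f\|_{L^\infty(\cx)}\lesssim\|f\|_{L^\infty(\cx)}$ needs only the size bound on $Q_0$ together with Lemma~\ref{6.15.1}(ii) (i.e.\ $\int_\cx|Q_0(x,\cdot)|\,d\mu\lesssim 1$), not the normalization $\int_\cx Q_0(x,\cdot)\,d\mu=1$; invoking the latter is harmless but irrelevant. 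Second, in (ii), the paper handles the range $k\le N$ more indirectly: it first proves $\|g\|_{L^\infty(\cx)}\lesssim\|f\|_{B^s_{\infty,\infty}(\cx)}$ by citing Theorem~\ref{ihlless}(I), and then, since $f=g$ and $\delta^{ks}\sim 1$ for $k\le N$, it uses the resulting pointwise bound $|Q_kf(z)|=|Q_kg(z)|\lesssim\|g\|_{L^\infty(\cx)}\lesssim\delta^{ks}\|f\|_{B^s_{\infty,\infty}(\cx)}$ so that all $k\in\{0,\dots,l_0\}$ can be treated uniformly. Your explicit cube-by-cube reduction via $m_{\qa}(|Q_kf|)$ (and Lemma~\ref{9.14.1} with $p=1$, or reassembly through Lemma~\ref{6.15.1}(ii)) accomplishes exactly the same thing and is, if anything, a more self-contained justification of why those finitely many low-frequency terms are controlled.
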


\begin{proof}
We first show (i). Assume $f \in C^s(\cx)$ with $s\in(0,\beta\wedge\gamma)$,
where $\beta$ and $\gamma$ are as in this theorem.
Let $\{Q_k\}_{k\in\nn}$ be an exp-IATI.
By Definition \ref{ih}(i), we know that
\begin{align*}
\|f\|_{B^s_{\infty,\infty}(\cx)}&=
\max\left\{
\sup_{k\in\{0,\dots,N\}}\sup_{\alpha\in\ca_k}\sup_{m\in\{1,\ldots,N(k,\alpha)\}}m_{\qa}(|Q_kf|),
\sup_{k\in\{N+1,N+2,\ldots\}}\delta^{-ks}\|Q_k(f)\|_{L^\infty(\cx)}\right\},
\end{align*}
where $N\in\nn$ is as in Lemma \ref{icrf}.
By an argument similar to that used in the proof of Theorem \ref{c_b}(i),
we conclude that
\begin{equation}\label{licdot}
\sup_{k\in\{N+1,N+2,\dots\}}\delta^{-ks}\|Q_k(f)\|_{L^\infty(\cx)}\lesssim\|f\|_{\dot{C}^s(\cx)}.
\end{equation}
Moreover, it is easy to see that
$$\sup_{k\in\{0,\dots,N\}}\sup_{\alpha\in\ca_k}
\sup_{m\in\{k,\dots,N(k,\alpha)\}}m_{\qa}(|Q_kf|)
\leq \|f\|_{L^\infty(\cx)},$$
which, combined with \eqref{licdot}, implies that
$$\|f\|_{B^s_{\infty,\infty}(\cx)}\lesssim\|f\|_{\dot{C}^s(\cx)}+\|f\|_{L^\infty(\cx)}\sim\|f\|_{C^s(\cx)}$$
and hence completes  the proof of (i).

We next prove (ii). Assume $f\in B^s_{\infty,\infty}(\cx)$.
Let  $\{Q_k\}_{k\in\nn}$ be an exp-IATI
and $\{f_n\}_{n\in\nn}$ as in \eqref{fn}.
By the proof of Theorem \ref{ihlless}(I), we know that
$\{f_n\}_{n\in\nn}$ is a Cauchy sequence in $L^\infty(\cx)$.
By the completeness of $L^\infty(\cx)$, letting $g:=\lim_{n\to\infty} f_n$ in $L^\infty(\cx)$,
then, also by  the proof of Theorem \ref{ihlless}(I),
we find that
$f=g$ in  $(\icgg)'$
and
$\|g\|_{L^\infty(\cx)}\lesssim \|f\|_{B^s_{\infty,\infty}(\cx)}.$
To finish the proof of (ii), it suffices to show that
\begin{equation}\label{gihb}
\|g\|_{\dot{C}^s(\cx)}\lesssim \|f\|_{B^s_{\infty,\infty}(\cx)}.
\end{equation}
Indeed, for any $x,\ y\in\cx$ with $d(x,y)\geq (2A_0)^{-1}\delta^N$,
where $N\in\nn$ is as in Lemma \ref{icrf} [see also \eqref{n}],
\begin{equation}\label{gihb1}
|g(x)-g(y)|\leq\|g\|_{L^\infty(\cx)}\lesssim \|g\|_{L^\infty(\cx)} [d(x,y)]^s.
\end{equation}
For any $x,\ y\in\cx$ with  $0<d(x,y)<(2A_0)^{-1}\delta^N$,
there exists a unique $l_0\in\nn$ with $l_0\geq N$
such that $(2A_0)^{-1}\delta^{l_0+1}\leq d(x,y) <(2A_0)^{-1}\delta^{l_0}$.
Then, for any $x$, $y\in\cx$, we write
\begin{align}\label{gihb2}
|g(x)-g(y)|&=\left|\sum_{k=0}^\infty \widetilde{Q}_kQ_kf(x)-
\sum_{k=0}^\infty \widetilde{Q}_kQ_kf(y)\right|\\
&\leq\left|\sum_{k=0}^{l_0} \widetilde{Q}_kQ_kf(x)
-\sum_{k=0}^{l_0} \widetilde{Q}_kQ_kf(y)\right|
+\left|\sum_{k=l_0+1}^{\infty} \widetilde{Q}_kQ_kf(x)\right|
+\left|\sum_{k=l_0+1}^{\infty} \widetilde{Q}_kQ_kf(y)\right|\notag\\
&=:g_1(x,y)+g_2(x)+g_3(y).\notag
\end{align}
For the term $g_1(x,y)$, by \eqref{4.23y}, $s\in(0,\beta\wedge\gamma)$
with $\beta$ and $\gamma$ as in this theorem, and Lemma \ref{6.15.1}(ii),
we conclude that
\begin{align*}
g_1(x,y)&=\left|\sum_{k=0}^{l_0}\int_{\cx}\left[\widetilde{Q}_k(x,z)-\widetilde{Q}_k(y,z)\right]Q_kf(z)\,d\mu(z)\right|\\
&\lesssim \sum_{k=0}^{l_0}\int_{\cx} \left[\frac{d(x,y)}{\delta^k+d(x,z)}\right]^\beta
\frac{\delta^k}{V_{\delta^k}(x)+V(x,z)}\left[\frac{\delta^k}{\delta^k+d(x,z)}\right]^\gamma|Q_kf(z)|\,d\mu(z)\\
&\lesssim\|f\|_{B^s_{\infty,\infty}(\cx)}[d(x,y)]^s\sum_{k=0}^{l_0}\int_{\cx}
\left[\frac{d(x,y)}{\delta^k+d(x,z)}\right]^{\beta-s}
\frac{\delta^k}{V_{\delta^k}(x)+V(x,z)}\left[\frac{\delta^k}{\delta^k+d(x,z)}\right]^{\gamma+s}\,d\mu(z)\\
&\lesssim \|f\|_{B^s_{\infty,\infty}(\cx)}[d(x,y)]^s
\sum_{k=0}^{l_0}\delta^{l_0(\beta-s)}\delta^{-k(\beta-s)}
\lesssim\|f\|_{B^s_{\infty,\infty}(\cx)}[d(x,y)]^s.
\end{align*}
For the term $g_2(x)$, from \eqref{4.23x},
Lemma \ref{6.15.1}(ii) and $\delta^{l_0}\lesssim d(x,y)$,
we deduce that
\begin{align*}
g_2(x)&\lesssim \sum_{k=l_0+1}^\infty\int_{\cx}\lf|\widetilde{Q}_0(x,z)\r||Q_kf(z)|\,d\mu(z)\\
&\lesssim \|f\|_{B^s_{\infty,\infty}(\cx)}\sum_{k=l_0+1}^\infty\delta^{ks}
\int_{\cx}\frac{1}{V_{\delta^k}(x)+V(x,z)}\left[\frac{\delta^k}{\delta^k+d(x,z)}\right]^\gamma\,d\mu(z)\\
&\lesssim \|f\|_{B^s_{\infty,\infty}(\cx)}\delta^{l_0s}\lesssim \|f\|_{B^s_{\infty,\infty}(\cx)}[d(x,y)]^s.
\end{align*}
Similarly, for the term $g_3(y)$, we obtain
$$g_3(y)\lesssim \|f\|_{B^s_{\infty,\infty}(\cx)}[d(x,y)]^s.$$
Combining these estimates,
we conclude that, for any $x,\ y\in\cx$,
$$|g(x)-g(y)|\lesssim \|f\|_{B^s_{\infty,\infty}(\cx)}[d(x,y)]^s,$$
which implies that \eqref{gihb} holds true
and hence completes the proof of Theorem \ref{ic_b}.
\end{proof}

At the end of this section, we establish the relationship
between $\bmo(\cx)$ and $F^0_{\infty,2}(\cx)$.
Let us begin with the notions of 1-exp-IATIs (see, for instance, \cite[Definition 3.1]{hyy19}),
$\bmo(\cx)$ (see, for instance, \cite[Defiinition 2.1]{dy12})
and the local Hardy space $h^p(\cx)$ (see, for instance, \cite[Section 3]{hyy19}).

\begin{definition}\label{1-exp-iati}
Let $\eta\in(0,1)$ be as in Definition \ref{10.23.2}.
A sequence $\{P_k\}_{k=0}^\infty$ of bounded linear integral operators
on $L^2(\cx)$ is called  an \emph{inhomogeneous approximation of the identity with exponential
decay and integration 1} (for short, 1-exp-IATI) if $\{P_k\}_{k=0}^\infty$ has the following properties:
\begin{enumerate}
\item[{\rm(i)}] for any $k\in\nn$, $P_k$
satisfies (ii) and (iii) of Definition \ref{10.23.2} but without the term
$$\exp\left\{-\nu\left[\max\{d(x,\cy^k),d(y,\cy^k)\}\right]^a\right\};$$
\item[{\rm(ii)}] for any $k\in\nn$ and $x\in\cx$,
$$\int_\cx P_k(x,y)\,d\mu(y)=1=\int_\cx P_k(y,x)\,d\mu(y);$$
\item[{\rm(iii)}] letting $Q_0:=P_0$ and, for any $k\in\zz_+$,
$Q_k:=P_k-P_{k-1}$, then $\{Q_k\}_{k\in\nn}$ is an exp-IATI.
\end{enumerate}
\end{definition}

\begin{definition}\label{h1bmo}
Let $\cx$ be a space of homogeneous type.
\begin{enumerate}
\item[{\rm(i)}]  The \emph{space $\bmo(\cx)$}  is defined by setting
$$\bmo(\cx):=\left\{f\in L^1_{\loc}(\cx):\ \|f\|_{\bmo(\cx)}:=\sup_{\text{ball}\ B\subset\cx}
\frac{1}{\mu(B)}\int_B|f(x)-c_B|\,d\mu(x)<\infty\right\},$$
where, for any ball $B\subset\cx$,
$$c_B:=\begin{cases}
m_B(f) &\text{if } r_B\leq R,\\
0 &\text{if } r_B>R
\end{cases}$$
with some fixed $R \in(0,\infty)$.
\item[{\rm(ii)}] Let $\{P_k\}_{k\in\zz}$ be a 1-exp-IATI.
The \emph{local radial maximal function} $\cm^+_0(f)$ of $f$ is defined by setting, for any $x\in\cx$,
$$\cm^+_0(f)(x):=\max\left\{\max_{k\in\{0,\dots,N\}}\left\{\sum_{\alpha\in\ca_k}
\sum_{m=1}^{N(k,\alpha)}\sup_{z\in\qa}|P_kf(z)|\mathbf{1}_{\qa}(x)\right\},
\sup_{k\in\{N+1,N+2,\dots\}}|P_kf(x)|\right\},$$
where $N\in\nn$ is as in Lemma \ref{icrf}.
For any $p\in(0,\infty)$, the \emph{local Hardy space} $h^p(\cx)$ is defined by setting
$$h^p(\cx):=\left\{f\in(\icgg)':\ \|f\|_{h^p(\cx)}:=\|\cm^+_0(f)\|_{L^p(\cx)}\right\}.$$
\end{enumerate}
\end{definition}

In \cite[Lemma 6.1]{dy12}, Dafni and Yue proved  that the space
$\bmo(\cx)$ is independent of the choice of $R$.
The following theorem establishes the relationship
between $\bmo(\cx)$ and $F^0_{\infty,2}(\cx)$.

\begin{theorem}\label{locbmo_f}
Let $\eta$ be as in Definition \ref{10.23.2}.
Then $\bmo(\cx)=F^0_{\infty,2}(\cx)$ in the following sense:
\begin{enumerate}
\item[{\rm(i)}] there exists a positive constant $C$ such that, for any $f\in \bmo(\cx)$,
$$\|f\|_{F^0_{\infty,2}(\cx)}\leq C\|f\|_{\bmo(\cx)};$$
\item[{\rm(ii)}] for any given $f\in F^0_{\infty,2}(\cx)$,
the linear functional
$$L_f :\  g \mapsto L_f(g):= \langle f, g\rangle,$$
initially defined for any $g\in\cg(\eta,\eta)$, has a bounded linear extension on $h^1(\cx)$.
Moreover, there exists a positive constant $C$ such that, for any $f\in F^0_{\infty,2}(\cx)$,
$$\|L_f\|_{(h^1(\cx))^\ast}\leq C\|f\|_{F^0_{\infty,2}(\cx)}.$$
\end{enumerate}
\end{theorem}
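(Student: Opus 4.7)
The plan is to parallel the proof of Theorem \ref{bmo_f}, replacing every homogeneous tool with its inhomogeneous counterpart. Note that $\|f\|_{F^0_{\infty,2}(\cx)}$ splits into a low-frequency cube-average part ($k\in\{0,\dots,N\}$) and a high-frequency Carleson part ($k>N$), while $\|f\|_{\bmo(\cx)}$ correspondingly distinguishes small balls (oscillation $m_B(f)$) from large balls ($c_B=0$). Throughout I would choose the reference scale $R$ in Definition \ref{h1bmo} comparable to $\delta^N$ and invoke the invariance of $\bmo(\cx)$ under this choice (\cite[Lemma 6.1]{dy12}).

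For Part (i), fix $f\in\bmo(\cx)$ and an exp-IATI $\{Q_k\}_{k=0}^\infty$. On the low-frequency side, any cube $\qa$ with $k\in\{0,\dots,N\}$ has diameter $\lesssim 1\sim R$, so it lies in a large ball on which $c_B=0$; then the pointwise kernel estimate from Lemma \ref{6.15.1} together with the annular decomposition $\cx=\bigcup_{j\geq 0}\{y\colon d(z_\az^{k,m},y)\sim \delta^{-j}\}$ yields $m_{\qa}(|Q_kf|)\lesssim \|f\|_{\bmo(\cx)}$ exactly as in the $f_{\alpha,l}^{(2)}$ estimate of Theorem \ref{bmo_f}. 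On the high-frequency side, for $l>N$ and $\alpha\in\ca_l$, the operator $Q_k$ is cancellative for all $k\geq l>N$, so I would run the three-piece split $f=f_{\alpha,l}^{(1)}+f_{\alpha,l}^{(2)}+f_{\alpha,l}^{(3)}$ of Theorem \ref{bmo_f} with $B_\az^l:=B(z_\az^l,3A_0^2C_0\delta^l)$ a \emph{small} ball; the constant piece $f_{\alpha,l}^{(3)}$ is killed by cancellation, the near piece $f_{\alpha,l}^{(1)}$ is handled by Theorem \ref{soq2} (replacing Theorem \ref{l_btl}) applied to the $L^2$-bounded Littlewood--Paley sum, and $f_{\alpha,l}^{(2)}$ by the same annular geometric-series estimate.

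For Part (ii), let $f\in F^0_{\infty,2}(\cx)$ and $g\in\cg(\eta,\eta)\subset h^1(\cx)$. Using the inhomogeneous continuous Calder\'on reproducing formula (a dual version of Lemma \ref{ih_c_crf}) write
\begin{equation*}
\langle f,g\rangle=\sum_{k=0}^\infty\bigl\langle Q_k^\ast(f),\overline{Q}_k(g)\bigr\rangle.
\end{equation*}
Define the inhomogeneous local Carleson maximal function $\cm_2^0(f)$ as in \eqref{m_q_s} but with the supremum restricted to $l\in\zz_+$, together with a local square function $\overline{S}(g)$ on $h^1(\cx)$ and the tent square function $S^j(f)$. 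A straightforward inhomogeneous analogue of Lemma \ref{e_mqs}, in which the cube averages $m_{\qa}(|Q_kf|)$ at scales $k\in\{0,\dots,N\}$ are absorbed directly into the low-frequency component of $\|f\|_{F^0_{\infty,2}(\cx)}$, gives $\|\cm_2^0(f)\|_{L^\infty(\cx)}\lesssim\|f\|_{F^0_{\infty,2}(\cx)}$. Introducing the stopping time $l(x):=\inf\{j\in\zz_+\colon S^j(f)(x)\leq A\cm_2^0(f)(x)\}$, the averaging inequality \eqref{mu_mu} carries over verbatim. The local $g$-function characterization of $h^1(\cx)$ from \cite[Theorem 5.7]{hyy19} supplies $\|\overline{S}(g)\|_{L^1(\cx)}\lesssim\|g\|_{h^1(\cx)}$, and the Fubini--H\"older chain
\begin{equation*}
|\langle f,g\rangle|\lesssim\int_\cx S^{l(x)}(f)(x)\,\overline{S}(g)(x)\,d\mu(x)\lesssim\bigl\|\cm_2^0(f)\bigr\|_{L^\infty(\cx)}\bigl\|\overline{S}(g)\bigr\|_{L^1(\cx)}
\end{equation*}
yields the target bound; extension from $\cg(\eta,\eta)$ to $h^1(\cx)$ is then a standard density argument.

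The main obstacle is the non-cancellative operator $Q_0$ and the mismatch between the two-part structure of $\|\cdot\|_{F^0_{\infty,2}(\cx)}$ and the usual Carleson/stopping-time machinery, which in the homogeneous case sums uniformly over $k\in\zz$. Concretely, when $l(x)=0$ the tail of $\sum_{k=0}^\infty$ includes the low-frequency terms $k\in\{0,\dots,N\}$ where there is no cancellation and the correct bookkeeping replaces the square-function control by the cube-average part of the quasi-norm; I would handle this by a separate direct Carleson embedding for those finitely many scales (using Theorem \ref{soq2}) before invoking the stopping-time argument on $k>N$. The same care in choosing $R\sim\delta^N$ is what makes the annular series in Part (i) summable despite the failure of cancellation at scale $k=0$.
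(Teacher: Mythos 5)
Your proposal is correct and follows the same overall strategy as the paper: part (i) via the three-piece decomposition $f=f^{(1)}_{\alpha,l}+f^{(2)}_{\alpha,l}+f^{(3)}_{\alpha,l}$ of Theorem \ref{bmo_f} combined with the $R$-independence of $\bmo(\cx)$ from \cite[Lemma 6.1]{dy12}, and part (ii) by rerunning the stopping-time/Carleson argument of Theorem \ref{bmo_f}(ii) with the local square-function characterization of $h^1(\cx)$ from \cite[Theorem 5.7]{hyy19}, which is exactly what the paper does (it simply declares (ii) ``similar to Theorem \ref{bmo_f}(ii)''). The only genuine divergence is the bookkeeping in (i): the paper first absorbs the cube-average part of $\|\cdot\|_{F^0_{\infty,2}(\cx)}$ into the Carleson expression via the H\"older step \eqref{equa_ifi} and then runs the decomposition at every level $l\in\zz_+$, which forces a separate treatment of the constant piece at $l=0$ (where $Q_0$ has no cancellation) through a second choice $R\in(3A_0^2C_0\delta,3A_0^2C_0)$ making $B_\alpha^0$ a ``large'' ball with $c_B=0$; you instead bound the scales $k\in\{0,\dots,N\}$ directly by a pointwise kernel estimate using $m_B(|f|)\leq\|f\|_{\bmo(\cx)}$ on large balls, and only decompose for $l>N$, where every $Q_k$ is cancellative, thereby sidestepping the $Q_0$-on-constants issue while relying on the same $c_B=0$ mechanism. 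Both routes use the same two ingredients (oscillation control on small balls, absolute averages on large balls), so the difference is organizational rather than substantive; note also that the ``inhomogeneous analogue of Lemma \ref{e_mqs}'' you postulate for (ii) is precisely Lemma \ref{e_imqs}, already stated in the paper, and since it already incorporates the scales $k\in\{0,\dots,N\}$, your proposed separate Carleson embedding for those finitely many scales is a harmless redundancy rather than a needed repair.
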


To prove Theorem \ref{locbmo_f}, we need a technical lemma.
Let us begin with the following notion.
\begin{definition}
Let $\{Q_k\}_{k\in\zz_+}$ be an exp-IATI. For any $l\in\zz_+$ and $x\in\cx$, let
$E^l(x)$ be as in \eqref{6.8x}.
Let $s\in\rr$ and $q\in(0,\infty]$.
The \emph{inhomogeneous maximal function} $\mathcal M^s_q(f)$ for any $f\in (\icgg)'$ is defined by setting,
for any $x\in\cx$,
\begin{equation}\label{im_q_s}
\cm_q^s(f)(x)
:=\sup_{l\in\zz_+}\left[\frac{1}{\mu(E^l(x))}
\int_{E^l(x)}\sum_{k=l}^\infty\delta^{-ksq}|Q_k(f)(y)|^q\,d\mu(y)\right]^{1/q}
\end{equation}
with the usual modification made when $q=\infty$.
\end{definition}

The following lemma  was obtained in \cite[Proposition 6.27]{hmy08} on RD-spaces,
but the same proof still works for  spaces of homogeneous type;
we omit the details.

\begin{lemma}\label{e_imqs}
Let $\beta,\ \gamma\in(0,\eta)$ with $\eta$ as in Definition \ref{10.23.2}, and
$s\in(-(\beta\wedge\gamma),\beta\wedge\gamma)$.
Then $f\in F^s_{\infty,q}(\cx)$ if and only if $f\in(\icgg)'$
and $\cm_q^s(f)\in L^\infty(\cx)$.
Moreover, there exists a positive constant $C$  such that, for any $f\in\ihfi$,
$$C^{-1}\|f\|_{F^s_{\infty,q}(\cx)}
\leq \|\cm_q^s(f)\|_{L^\infty(\cx)}
\leq C\|f\|_{F^s_{\infty,q}(\cx)}.$$
\end{lemma}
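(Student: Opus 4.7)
The plan is to prove the two-sided inequality by direct geometric arguments combined with the Plancherel--P\^olya inequality (Lemma \ref{12.3.2}) and, when necessary, the discrete inhomogeneous Calder\'on reproducing formula (Lemma \ref{icrf}). The central structural fact I would invoke repeatedly is that, for any $x\in\cx$ and any $l\in\zz_+$, the set $E^l(x)$ consists of only $O(1)$ level-$l$ cubes (a constant uniform in $x,l$, proved exactly as in the counting argument of Lemma \ref{11.14.2} used in the proof of Lemma \ref{11.14.1}), and that $\mu(E^l(x))$ is comparable to $\mu(Q_\alpha^l)$ for each of its constituent cubes $Q_\alpha^l$.

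For the easier direction $\|\cm_q^s(f)\|_{L^\infty(\cx)}\lesssim\|f\|_{F^s_{\infty,q}(\cx)}$, I would fix $x\in\cx$ and $l\in\zz_+$, decompose $E^l(x)=\bigcup_\alpha Q_\alpha^l$, and control each cube contribution. When $l>N$, each cube-average is bounded directly by $\|f\|_{F^s_{\infty,q}}^q$ via Definition \ref{ihfi}. When $l\le N$, I would split $\sum_{k\ge l}=\sum_{l\le k\le N}+\sum_{k>N}$: the tail is handled by partitioning each $Q_\alpha^l$ into its level-$(N+1)$ sub-cubes (Theorem \ref{10.22.1}(ii)) and applying the large-$k$ portion of the norm cube-by-cube; the finite head contains $O(1)$ many terms, each of which is reduced via the Plancherel--P\^olya inequality (Lemma \ref{12.3.2}) to the small-$k$ mean-piece of the norm.

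For the reverse direction $\|f\|_{F^s_{\infty,q}(\cx)}\lesssim\|\cm_q^s(f)\|_{L^\infty(\cx)}$, the tail part is almost immediate: for $l>N$, $\alpha\in\ca_l$ and any $x\in Q_\alpha^l$, the inclusion $Q_\alpha^l\subset E^l(x)$ with $\mu(E^l(x))\sim\mu(Q_\alpha^l)$ forces the corresponding cube-average to be dominated by $\cm_q^s(f)(x)^q\le\|\cm_q^s(f)\|_{L^\infty}^q$. For the small-$k$ head part, I would use that $Q_\alpha^{k,m}$ is contained in a unique level-$0$ cube $Q_\beta^0$ (Theorem \ref{10.22.1}(ii)), that $Q_\beta^0\subset E^0(x)$ for every $x\in Q_\alpha^{k,m}$, and that $\mu(E^0(x))/\mu(Q_\alpha^{k,m})$ is bounded by a constant depending only on $N$ and $j_0$. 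Combined with the boundedness of $\delta^{-ksq}$ for $0\le k\le N$, this yields $m_{Q_\alpha^{k,m}}(|Q_kf|^q)\lesssim\cm_q^s(f)(x)^q$, from which Jensen's inequality upgrades to $m_{Q_\alpha^{k,m}}(|Q_kf|)\lesssim\|\cm_q^s(f)\|_{L^\infty}$ when $q\ge 1$.

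The hard part will be the range $q\in(p(s,\beta\wedge\gamma),1)$ in this last step, where Jensen's inequality runs in the wrong direction so that an $L^q$-mean bound does not directly yield an $L^1$-mean bound. To overcome this I plan to apply the discrete reproducing formula (Lemma \ref{icrf}) to expand $Q_kf$ (for $k\le N$) as a series over the coefficients $m_{Q_{\alpha'}^{k',m'}}(|Q_{k'}f|)$ (for $k'\le N$) and point-values $Q_{k'}f(y_{\alpha'}^{k',m'})$ (for $k'>N$), with kernel factors $Q_k\widetilde{Q}_{k'}$ enjoying the almost-orthogonality estimate of Lemma \ref{4.23z}; the tail point-values are replaced by suitable averages via Plancherel--P\^olya (Lemma \ref{12.3.2}) and absorbed into $\cm_q^s(f)$ through the geometric argument of the preceding paragraph, while the geometric decay $\delta^{|k-k'|\eta'}$ makes the resulting double sum converge, exactly as in the handling of the term $\mathrm I_2$ in the proof of Lemma \ref{6.9.1}.
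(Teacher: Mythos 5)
The paper gives no proof of this lemma; it invokes the RD\mbox{-}space counterpart, \cite[Proposition~6.27]{hmy08}, and simply asserts that the same proof carries over. So there is no argument in the paper to compare your route against; I can only assess the proposal on its own merits. Your overall skeleton is the right one: you have the correct $O(1)$-cardinality and measure-comparability facts about $E^l(x)$, the correct containments $Q^l_\alpha\subset E^l(x)$ and $Q^{k,m}_\alpha\subset Q^0_\beta\subset E^0(x)$, and the correct tail/head splitting. The direction $\|\cm^s_q(f)\|_{L^\infty}\lesssim\|f\|_{F^s_{\infty,q}}$ and the tail and $q\ge 1$ parts of the reverse inequality are essentially in order (with the caveat that for $q>1$ in the forward head you need a pointwise bound $\sup_{z\in Q^{k,m}_\tau}|Q_kf(z)|\lesssim\|f\|_{F^s_{\infty,q}}$ rather than the cube-average bound that Lemma~\ref{12.3.2} literally states, but the proof of that lemma does produce such a pointwise bound, so this is a presentational rather than substantive issue; for $q\le1$ the forward head is immediate from Jensen).

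The genuine gap is in the last step you flagged: the reverse bound $m_{\qa}(|Q_kf|)\lesssim\|\cm^s_q(f)\|_{L^\infty}$ for $k\in\{0,\dots,N\}$ when $q<1$. You propose to expand $Q_kf$ by the discrete reproducing formula (Lemma~\ref{icrf}) and control the resulting series, but this argument as written is circular. The coefficients at levels $k'\in\{0,\dots,N\}$ are the signed cube averages $Q^{k',m'}_{\alpha',1}(f)=m_{\qap}(Q_{k'}f)$, and the only bound you have for their magnitudes is the quantity $A:=\sup_{k\le N}\sup_{\alpha,m}m_{\qa}(|Q_kf|)$ that you are trying to estimate. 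In the block $k,k'\in\{0,\dots,N\}$ there is no almost-orthogonality decay to exploit (indices are bounded, $\delta^{|k-k'|\eta'}\sim 1$), and summing the kernel over $\alpha',m'$ via Lemma~\ref{9.14.1} returns a constant of order one, not a small factor; the resulting estimate reads $A\lesssim A+\|\cm^s_q(f)\|_{L^\infty}$ and gives nothing. The reference to the treatment of $\mathrm I_2$ in Lemma~\ref{6.9.1} does not help, because there $\mathrm I_2$ is deliberately bounded by the mean part of the $P$-norm, which in your situation coincides with the quantity under estimation. To close the argument one needs a source of smallness in front of $A$ — for instance a bootstrap exploiting the scale gap between $\diam Q^{k,m}_\alpha\sim\delta^{k+j_0}$ and the H\"older scale $\delta^k$ of $Q_kf$, producing a contraction factor of the order $\delta^{j_0\eta'}$ — together with an a priori finiteness argument for $A$ (starting only from $f\in(\icgg)'$ and $\cm^s_q(f)\in L^\infty$, finiteness of the sup over all $k\le N$ and all $\alpha,m$ is not automatic and must be justified before any absorption). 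Neither of these ingredients appears in your sketch, so this step should be regarded as open.
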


Now we show Theorem \ref{locbmo_f}.

\begin{proof}[Proof of Theorem \ref{locbmo_f}]
We first show (i). Assume $f\in\bmo(\cx)$ and
$\beta,\ \gamma\in(0,\eta)$ with $\eta$ as in Definition \ref{10.23.2}.
Let $\{Q_k\}_{k=0}^\infty$ be an exp-IATI.
By $\cg_0^\eta(\beta,\gamma)\subset h^1(\cx)$
and $(h^1(\cx))^\ast=\bmo(\cx)$, we know that $f\in(\icgg)'$.
Notice that, by the definition of $\qa$, we have, for any $k\in\zz_+$,
$\alpha\in\ca_k$ and $m\in\{1,\dots, N(k,\alpha)\}$, $\mu(\qa)\sim\mu(Q_\alpha^k)$,
which, together with the H\"older inequality, implies that
\begin{align*}
m_{\qa}(|Q_kf|)&=\frac{1}{\mu(\qa)}\int_{\qa}|Q_kf(x)|\,d\mu(x)
\lesssim \frac{1}{\mu(Q_\alpha^k)}\int_{Q_\alpha^k}|Q_kf(x)|\,d\mu(x)\\
&\lesssim \left[\frac{1}{\mu(Q_\alpha^k)}\int_{Q_\alpha^k}|Q_kf(x)|^2\,d\mu(x)\right]^{1/2}\\
&\lesssim \left[\frac{1}{\mu(Q_\alpha^k)}\int_{Q_\alpha^k}
\sum_{l=k}^\infty|Q_lf(x)|^2\,d\mu(x)\right]^{1/2}
\end{align*}
and hence
\begin{equation}\label{equa_ifi}
\|f\|_{F^0_{\infty,2}(\cx)}\lesssim\sup_{l\in\zz_+}\sup_{\alpha\in\ca_l}
\left[\frac{1}{\mu(Q_\alpha^k)}\int_{Q_\alpha^k}\sum_{k=l}^\infty|Q_kf(x)|^2\,d\mu(x)\right]^{1/2}.
\end{equation}
Fix $l\in\zz_+$ and $\alpha\in \ca_l$. Define $B_\alpha^l:=B(z_\alpha^l, 3A_0^2C_0\delta^l)$ and write
$$f=\left[f-m_{B_\alpha^l}(f)\right]\mathbf{1}_{B_\alpha^l}
+\left[f-m_{B_\alpha^l}(f)\right]\mathbf{1}_{\cx\setminus B_\alpha^l}
+m_{B_\alpha^l}(f)
=:f_{\alpha,l}^{(1)}+f_{\alpha,l}^{(2)}+f_{\alpha,l}^{(3)}.$$
Choose $R:=6A_0^2C_0$
in  Definition \ref{h1bmo}(i). Then, using an  argument similar to that
used in the proof of Theorem \ref{bmo_f},
we conclude that, for any $l\in\zz_+$ and $\alpha\in\ca_l$,
\begin{equation}\label{fbmoi}
\left\{\frac{1}{\mu(Q_\alpha^l)}\int_{Q_\alpha^l}
\sum_{k=l}^\infty \left|Q_k(f_{\alpha,l}^{(1)})(x)\right|^2\,d\mu(x)\right\}^{1/2}
\lesssim \|f\|_{\bmo(\cx)}
\end{equation}
and
\begin{equation}\label{fbmoii}
\left\{\frac{1}{\mu(Q_\alpha^l)}\int_{Q_\alpha^l}
\sum_{k=l}^\infty \left|Q_k(f_{\alpha,l}^{(2)})(x)\right|^2\,d\mu(x)\right\}^{1/2}
\lesssim \|f\|_{\bmo(\cx)}.
\end{equation}
Choosing $R\in(3A_0^2C_0\delta, 3A_0^2C_0)$, we then find  that,  for $l=0$,
\begin{equation}\label{fbmoiii}
\left\{\frac{1}{\mu(Q_\alpha^0)}\int_{Q_\alpha^0}
\sum_{k=0}^\infty \left|Q_k(f_{\alpha,l}^{(3)})(x)\right|^2\,d\mu(x)\right\}^{1/2}
\lesssim \left|f_{\alpha,l}^{(3)}\right|\lesssim \|f\|_{\bmo(\cx)}
\end{equation}
and, for any $l\in\nn$,
\begin{equation}\label{fbmoiv}
\left\{\frac{1}{\mu(Q_\alpha^l)}\int_{Q_\alpha^l}
\sum_{k=l}^\infty \left|Q_k(f_{\alpha,l}^{(3)})(x)\right|^2\,d\mu(x)\right\}^{1/2}
=0\leq\|f\|_{\bmo(\cx)}.
\end{equation}
By \eqref{equa_ifi} through \eqref{fbmoiv},
we conclude that $f\in F^0_{\infty,2}(\cx)$
and $\|f\|_{F^0_{\infty,2}(\cx)}\lesssim\|f\|_{\bmo(\cx)}$.
Thus, we complete the proof of (i).

Using an argument similar to that used in the proof of Theorem \ref{bmo_f}(ii),
we then obtain (ii) and we omit the details here,
which completes the proof of Theorem \ref{locbmo_f}.
\end{proof}

\section{Boundedness of Calder\'on--Zygmund operators}\label{s5}

In this section, we concentrate on the boundedness of
Calder\'on--Zygmund operators on Besov and Triebel--Lizorkin spaces.
Let us recall the notions of $C^s_b(\cx)$ and
$\mathring{C}^s_b(\cx)$.
For any $s\in (0,1]$, let $C^s(\cx)$  be as in Definition \ref{c_s} and define
$$C^s_b(\cx):=\left\{f\in C^s(\cx):\ f\ \text{has bounded support}\right\}$$
and
$$\mathring{C}^s_b(\cx):=\left\{f\in C^s_b(\cx):\ \int_{\cx}f(x)\,d\mu(x)=0\right\},$$
both equipped with the strict inductive limit topology induced by $\|\cdot\|_{C^s(\cx)}$ (see, for instance, \cite[p.\,273]{ms79b}).
Let $(C^s_b(\cx))'$ [resp., $(\mathring{C}^s_b(\cx))'$]
be the set of all continuous linear functionals on $C^s_b(\cx)$ [resp., $\mathring{C}^s_b(\cx)$],
equipped with the weak-$\ast$ topology.

Now we recall the notion of normalized bump functions on $\cx$ (see, for instance,
\cite[p.\,546]{ns} and \cite[p.\,126]{hmy08}).

\begin{definition}
Let $\epsilon\in(0,1]$. A function $\varphi$ on $\cx$ is called an \emph{$\epsilon$-bump function associated
to a ball $B(x_0, r)$}, for some $x_0\in\cx$ and $r\in(0,\infty)$, if $\varphi$ is supported in $B(x_0, r)$ and there
exists a positive constant $C$, independent of $\varphi$, such that, for any $s\in(0,\epsilon]$,
\begin{equation}\label{7.0x}
\|\varphi\|_{\dot{C}^s(\cx)}\leq Cr^{-s}.
\end{equation}
The minimal positive constant $C$ in \eqref{7.0x} is called
the \emph{$\epsilon$-bump constant} of $\varphi$ and, if $C=1$,
then $\varphi$ is called a  \emph{normalized $\epsilon$-bump function}.
\end{definition}

We now introduce the notion of Calder\'on--Zygmund operators of order $\epsilon$ on a space
$\cx$ of homogeneous type for any given $\ez\in(0,1]$.

\begin{definition}\label{4.11.2}
Let $\epsilon\in(0,1]$. A linear operator $T$, which is continuous from $C_b^{\epsilon'}(\cx)$ to
$(C_b^{\epsilon'}(\cx))'$ for any $\epsilon'\in(0,\epsilon)$, is called a \emph{Calder\'on--Zygmund operators of order
$\epsilon$} if $T$ has a distributional kernel $K$,  which is locally integrable away
from the diagonal of $\cx\times\cx$, and satisfies the following conditions:
\begin{enumerate}
\item[{\rm(i)}] if $\phi_1,\ \phi_2\in C_b^{\epsilon'}(\cx)$ for some  $\epsilon'\in(0,\epsilon)$
have disjoint supports, then
$$\langle T\phi_1,\phi_2\rangle=\int_{\cx}\int_{\cx}K(x,y)\phi_1(y)\phi_2(x)\,d\mu(x)\,d\mu(y);$$
\item[{\rm(ii)}] if $\phi$ is a normalized $\epsilon$-bump function associated to a ball with radius $r$,
then there exists a positive constant $C$, independent of $\phi$, such that, for any
$\epsilon'\in(0,\epsilon)$, $\|T\phi\|_{\dot{C}^{\epsilon'}(\cx)}\leq Cr^{-\epsilon'}$.
\item[{\rm(iii)}] there exists a positive constant $C$ such that, for any $x,\ y\in\cx$ with $x\neq y$,
$$|K(x,y)|\leq C\frac{1}{V(x,y)}$$
and, for any $x,\ x',\ y\in\cx$ with $d(x,x')\leq d(x,y)/(2A_0)$ and $x\neq y$,
$$|K(x,y)-K(x',y)|\leq C\frac{[d(x,x')]^\epsilon}{V(x,y)[d(x,y)]^\epsilon};$$
\item[{\rm(iv)}] conditions (i), (ii) and (iii) also hold true with $x$ and $y$ interchanged, that is, these
conditions also hold true for the adjoint operator $T^\ast$ defined by setting, for any
$\ez'\in(0,\ez)$ and $\phi,\ \psi\in C^{\ez'}_b(\cx)$,
$\langle T^\ast \phi,\psi\rangle :=\langle T\psi,\phi\rangle$.
\end{enumerate}
\end{definition}

We have the following conclusion of Calder\'on--Zygmund operators of order $\eta$.

\begin{theorem}\label{12.15.1}
Let $\beta,\ \gamma\in(0,\eta)$ with $\eta$ as in Definition \ref{10.23.2},
$s\in(-(\beta\wedge\gamma), \beta\wedge\gamma)$,
$p(s,\beta\wedge\gamma)$ be as in \eqref{pseta}
and $T$ a Calder\'on--Zygmund
operator of order $\eta$.
\begin{enumerate}
\item[{\rm(i)}] If $p\in(p(s,\bz\wedge\gz),\fz]$, $\bz$ and $\gz$ satisfy \eqref{6.14.1} and
\eqref{6.14.1x}, and $q\in(0,\fz]$, then $T$ is bounded on $\dot B^s_{p,q}(\cx)$ when
$\max\{p,q\}<\fz$, and from $\hb\cap\mathring{\cg}(\eta,\eta)$ to $\hb$ when $\max\{p,q\}=\infty$, where
$\hb$ is viewed as a space of $(\cggi)'$.

\item[{\rm(ii)}] If $p\in(p(s,\bz\wedge\gz),\fz)$, $\bz$ and  $\gz$ satisfy \eqref{6.14.1} and \eqref{6.14.1x}, and
$q\in(p(s,\bz\wedge\gz),\fz)]$, then $T$ is bounded on $\dot F^s_{p,q}(\cx)$ when
$q\neq\fz$, and from $\dot F^s_{p,\fz}(\cx)\cap\mathring{\cg}(\eta,\eta)$ to $\dot F^s_{p,\fz}(\cx)$,
where $\hf$ is viewed as a space of $(\cggi)'$.
\end{enumerate}
\end{theorem}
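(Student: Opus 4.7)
The plan is to mimic the structure of the proofs of Lemma \ref{4.23z} and Lemma \ref{5.12.1}, the key new ingredient being a composite almost orthogonality estimate
\begin{equation}\label{apeq}
\lf|Q_k T \widetilde{Q}_{k'}(x,y)\r| \ls \delta^{|k-k'|\eta'}\frac{1}{V_{\delta^{k\wedge k'}}(y)+V(x,y)}\left[\frac{\delta^{k\wedge k'}}{\delta^{k\wedge k'}+d(x,y)}\right]^{\gz'}
\end{equation}
for some $\eta' \in (|s|,\bz\wedge\gz)$ and some $\gz'\in(\omega(1/p-1)_+,\gz)$, where $\{\widetilde{Q}_{k'}\}_{k'\in\zz}$ is the dual family provided by Lemma \ref{crf}. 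Granting \eqref{apeq}, the proof of (i) in the case $\max\{p,q\}<\fz$ follows by expanding, for any $f\in\hb$ and any $k\in\zz$, via the discrete reproducing formula and applying $Q_kT$ termwise:
\begin{equation*}
|Q_k(Tf)(x)| \leq \sum_{k' \in \zz}\sum_{\alpha'\in\ca_{k'}}\sum_{m'=1}^{N(k',\alpha')} \mu\lf(\qap\r)\, \lf|Q_k T \widetilde{Q}_{k'}(x,\yap)\r|\, \lf|Q_{k'}f(\yap)\r|.
\end{equation*}
Substituting \eqref{apeq} into this bound and running the summation exactly as in the proof of Lemma \ref{5.12.1} (invoking Lemmas \ref{9.14.1} and \ref{10.18.5}) yields $\|Tf\|_{\hb}\ls\|f\|_{\hb}$. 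Part (ii) is handled identically, with Lemma \ref{fsvv} supplying the vector-valued maximal inequality. When $\max\{p,q\}=\fz$, one restricts initially to $f\in\hb\cap\mathring{\cg}(\eta,\eta)$ so that the reproducing series is absolutely convergent and the interchange of $T$ with summation is legitimate.

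The principal obstacle is \eqref{apeq}, since $\widetilde{Q}_{k'}(\cdot,y)$ has no bounded support and $T$ is, a priori, defined only on $C_b^{\eta'}(\cx)$. Following the strategy outlined in the introduction and motivated by \cite{ns}, I plan to first establish (Lemma \ref{4.9.1}) a decomposition of the form
\begin{equation*}
\widetilde{Q}_{k'}(\cdot,y) = \sum_{j=0}^{\infty} \lambda_j\, \phi_{k',y,j},
\end{equation*}
where each $\phi_{k',y,j}$ is, up to normalization, a normalized $\eta$-bump function associated with a ball $B(y,C\delta^{k'-j})$, the coefficients $\lambda_j$ decay exponentially in $j$ (reflecting the $\exp$-decay of the $\exp$-ATI), the leading piece $\phi_{k',y,0}$ enjoys the cancellation inherited from \eqref{4.23a}, and the whole family retains the appropriate H\"older regularity. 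The $\phi_{k',y,j}$ would be constructed out of the Urysohn-type smooth cutoffs provided by \cite[Corollary 4.2]{ah13}, assembled via partitions of unity indexed by the dyadic reference points $z_\alpha^{k'}$ of Theorem \ref{10.22.1} (Lemma \ref{4.8.1}); it is precisely here that the geometry of $(\cx,d,\mu)$, encoded by its dyadic structure, enters the argument in an essential way.

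With the decomposition in hand, I would expand $Q_kT\widetilde{Q}_{k'}(x,y) = \sum_{j\geq 0}\lambda_j\, Q_kT\phi_{k',y,j}(x)$ and split each $j$-term according to whether the point $x$ is close to or far from $\supp\phi_{k',y,j}$. In the close regime one invokes Definition \ref{4.11.2}(ii), which provides a H\"older bound for $T\phi_{k',y,j}$ with the correct $\eta'$-bump constant, and combines it with the cancellation of $Q_k$ (for $k\geq k'$) to produce the gain $\delta^{|k-k'|\eta'}$ just as in the estimation of $\mathrm{J}_1$ in the proof of Lemma \ref{4.23z}. In the far regime one uses the kernel bound in Definition \ref{4.11.2}(iii) together with the cancellation of $\phi_{k',y,j}$ (or of $Q_k$) to extract the spatial decay $[\delta^{k\wedge k'}/(\delta^{k\wedge k'}+d(x,y))]^{\gz'}$, as in the estimation of $\mathrm{J}_2$ and $\mathrm{J}_3$ there. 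The symmetric case $k'>k$ is treated by transferring all operations to the adjoint via Definition \ref{4.11.2}(iv). Summing over $j$ against the exponentially decaying $\lambda_j$ then yields \eqref{apeq}, and a direct verification shows that the admissible ranges for $\eta'$ and $\gz'$ are compatible with the hypotheses \eqref{6.14.1} and \eqref{6.14.1x}, which closes the argument.
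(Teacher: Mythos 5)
There is a genuine gap in your approach, and it is concentrated precisely where you say the key new ingredient lies.

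You propose to decompose the dual kernel $\widetilde{Q}_{k'}(\cdot,y)$ into bump functions $\phi_{k',y,j}$ on balls $B(y,C\delta^{k'-j})$, with coefficients $\lambda_j$ that ``decay exponentially in $j$ (reflecting the $\exp$-decay of the $\exp$-ATI).'' But $\widetilde{Q}_{k'}$ is \emph{not} an exp-ATI: Lemma \ref{crf} only guarantees the polynomial bounds \eqref{4.23x}--\eqref{4.23y} with a fixed exponent $\gamma<\eta<1$, not the super-polynomial decay of Definition \ref{10.23.2}(ii). At scale $d(x,y)\sim\delta^{k'-j}$ the dual kernel is of size $\sim\delta^{j\gamma}/V_{\delta^{k'-j}}(y)$, so the best possible coefficients in such a decomposition are $\lambda_j\sim\delta^{j\gamma}$, with $\gamma$ fixed and small. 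When you then push $T$ through and estimate $Q_kT\phi_{k',y,j}(x)$ using Definition \ref{4.11.2}(ii) and the cancellation of $Q_k$, you pick up growth in $j$ of order $\delta^{-j\eta'}$ from the ratio $[\delta^{k'}/(\delta^{k'}+d(x,y))]^{\gz'}$ and of order $\delta^{-j\omega}$ from the volume comparison; these must be dominated by $\lambda_j$, which requires the coefficient rate to exceed $\omega+\eta'$ (this is exactly why the paper's proof of the analogue \eqref{12.16.2} takes $\theta:=2(\omega+\eta)$). Since $\gamma<1$ while $\omega$ is arbitrary, $\lambda_j\sim\delta^{j\gamma}$ is in general hopelessly insufficient, and the double sum in your version of \eqref{ptq} diverges.

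The paper circumvents this by using the \emph{other} form of the discrete reproducing formula, $f=\sum\mu(\qap)\,Q_{k'}(\cdot,\yap)\,\widetilde{Q}_{k'}(f)(\yap)$, so that the kernel to which $T$ is applied is the exp-ATI $Q_{k'}$ itself, not $\widetilde{Q}_{k'}$. Because $Q_{k'}$ decays faster than any power, Lemma \ref{4.9.1} supplies a decomposition $Q_{k'}(\cdot,y)=\sum_j\delta^{j\theta}Q_{k',j}(\cdot,y)$ with $\theta>0$ \emph{arbitrary}, and it is this freedom in $\theta$ that makes the subsequent double sum converge. The paper moreover decomposes the outer operator $P_k$ in the same way, writing $P_kTQ_{k'}=\sum_{j,j'}\delta^{j\theta}\delta^{j'\theta}P_{k,j}TQ_{k',j'}$ so that both factors have bounded support (justifying the use of the kernel representation in the far regime and the extension of $T$ in Remark \ref{exoft} and Proposition \ref{tttp}); you keep the outer $Q_k$ intact and only decompose the inner factor, which also requires a bit more justification of the pairing $\langle Q_k(x,\cdot),T\phi_{k',y,j}\rangle$. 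Your remaining structure --- the almost-orthogonality estimate \eqref{apeq} being the analogue of \eqref{pp}, followed by the Plancherel--P\^olya machinery of Lemma \ref{5.12.1} and Lemmas \ref{9.14.1}, \ref{10.18.5}, \ref{fsvv}, and the density argument of Lemma \ref{12.15.2} --- is correct and matches the paper, but the argument cannot get off the ground until the decomposition is applied to $Q_{k'}$ rather than to $\widetilde{Q}_{k'}$.
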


To prove Theorem \ref{12.15.1}, we need the following several technical lemmas.

\begin{lemma}\label{12.15.2}
Let $\beta,\ \gamma\in(0,\eta)$ with $\eta$ as in Definition \ref{10.23.2},
and $s\in(-(\beta\wedge\gamma), \beta\wedge\gamma)$.
Let $\hb$ and $\hf$, as subspaces of $(\mathring{\mathcal{G}}^\eta_0(\beta,\gamma))'$
with $\beta$ and $\gamma$ satisfying \eqref{6.14.1}
via replacing $\widetilde{\beta}$ and $\widetilde{\gamma}$
respectively by $\beta$ and $\gamma$, be as in Definition \ref{h}. Let
$p(s,\beta\wedge\gamma)$ be as in \eqref{pseta}.
Then $\mathring{\cg}(\eta,\eta)$ is dense
in $\hb$ when $p\in(p(s,\beta\wedge\gamma),\infty)$ and $q\in(0,\infty)$,
and in $\hf$ when $p,\ q\in(p(s,\beta\wedge\gamma),\infty)$,
where $p(s,\beta\wedge\gamma)$ is as in \eqref{pseta}.
\end{lemma}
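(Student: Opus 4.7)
The plan is to establish the density via the classical "reproduce, truncate, smooth" scheme, exploiting the homogeneous discrete Calderón reproducing formula of Lemma \ref{crf} together with the Plancherel--Pôlya inequality of Lemma \ref{5.12.1}. Given $f\in\hb$, first apply Lemma \ref{crf} to write
\begin{equation*}
f(\cdot)=\sum_{k\in\zz}\sum_{\alpha\in\ca_k}\sum_{m=1}^{N(k,\alpha)}\mu(\qa)\widetilde{Q}_k(\cdot,\ya)Q_kf(\ya)\quad\text{in }(\cggi)',
\end{equation*}
and, for each $N,L\in\nn$, define the finite truncation $f_{N,L}$ by restricting the sum to those triples $(k,\alpha,m)$ with $|k|\leq N$ and $\qa\subset B(x_0,\delta^{-L})$. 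This produces a finite linear combination of functions of the form $\widetilde{Q}_k(\cdot,\ya)$.

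The second step is to show $\|f-f_{N,L}\|_{\hb}\to 0$ as $\min\{N,L\}\to\infty$. Here the homogeneous Plancherel--Pôlya inequality \eqref{bhpp} bounds $\|f-f_{N,L}\|_{\hb}$ above by the tail of the sequence norm
\begin{equation*}
\left\{\sum_{k\in\zz}\delta^{-ksq}\left[\sum_{\alpha,m}\mu(\qa)|Q_kf(\ya)|^p\,\mathbf{1}_{\{|k|>N\text{ or }\qa\not\subset B(x_0,\delta^{-L})\}}\right]^{q/p}\right\}^{1/q},
\end{equation*}
whose full version (without the indicator) is comparable to $\|f\|_{\hb}$ and hence finite. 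Since both $p$ and $q$ are finite by hypothesis, the dominated convergence theorem applied in the sequence norm forces this tail to zero. The Triebel--Lizorkin case is entirely analogous, using \eqref{fhpp} in place of \eqref{bhpp} and applying dominated convergence in the integrated sequence norm, which again requires $p,q<\infty$.

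The third step is to approximate each finite truncation $f_{N,L}$ by an element of $\mathring{\cg}(\eta,\eta)$ in the $\hb$ (or $\hf$) norm. By Proposition \ref{phb}(iii), the embedding $\mathring{\cg}(\beta,\gamma)\hookrightarrow\hb$ is continuous once $\beta,\gamma$ satisfy \eqref{6.14.1}, so it suffices to approximate each of the finitely many slices $\widetilde{Q}_k(\cdot,\ya)$ in the $\mathring{\cg}(\beta,\gamma)$ norm by $\mathring{\cg}(\eta,\eta)$ functions. Since $\mathring{\cg}(\eta,\eta)$ is by construction dense in $\mathring{\cg}_0^\eta(\beta,\gamma)$, the problem reduces to showing that $\widetilde{Q}_k(\cdot,\ya)$ in fact belongs to this completion; a finite sum of the resulting approximants, together with the triangle inequality, then yields the desired density.

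The main obstacle is exactly the last point: verifying that each slice $\widetilde{Q}_k(\cdot,\ya)$ lies in $\mathring{\cg}_0^\eta(\beta,\gamma)$, not merely in the larger space $\mathring{\cg}(\beta,\gamma)$ supplied by Lemma \ref{h_c_crf}. I expect to handle this via a self-contained regularization argument: given any kernel $K(\cdot,y)$ satisfying \eqref{4.23x}, \eqref{4.23y} and \eqref{4.23a}, produce approximants of the form $P_J[K(\cdot,y)\chi_R]-c_{J,R}$, where $\{P_j\}_{j\in\zz}$ is a 1-exp-ATI (cf. Definition \ref{1-exp-ati}) acting as a mollifier to boost regularity to the full exponent $\eta$, $\chi_R$ is an $\eta$-Hölder cutoff supported in $B(x_0,R)$, and $c_{J,R}$ is the constant needed to restore the cancellation condition. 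Routine estimates, analogous to those already used for $Q_k\widetilde{Q}_{k'}$ in Lemma \ref{4.23z}, should show that these approximants belong to $\mathring{\cg}(\eta,\eta)$ and converge to $K(\cdot,y)$ in $\mathring{\cg}(\beta,\gamma)$ as $J,R\to\infty$, thereby closing the argument.
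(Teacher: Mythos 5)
Your first two steps coincide with the paper's own proof: the paper also expands $f$ via the homogeneous discrete reproducing formula of Lemma \ref{crf}, truncates to a finite sum $f_n$ over $|k|<n$ and $\alpha$ in an exhausting family of finite subsets $\ca_k^n\subset\ca_k$, and shows $\|f-f_n\|_{\hb}\to 0$ by bounding the tail through the estimates \eqref{es-ex}, \eqref{6.2.1}, \eqref{6.2.2} that underlie the Plancherel--P\^olya inequality and then invoking dominated convergence, which is where $p,q<\infty$ is used. Your citation of \eqref{bhpp} itself is slightly imprecise (that inequality compares two different exp-ATIs), but the estimates you mean to invoke are exactly those used to prove it, so the plan is sound here.

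Where you diverge from the paper is the final step. The paper simply asserts ``by Lemma \ref{crf}, it is easy to show $f_n\in\mathring{\cg}(\eta,\eta)$'' and stops. You are right to be uneasy: Lemma \ref{crf}, as stated, only guarantees the kernel estimates \eqref{4.23x}--\eqref{4.23a} with exponents $\beta,\gamma\in(0,\eta)$, i.e.\ $\widetilde{Q}_k(\cdot,\ya)\in\mathring{\cg}(\beta,\gamma)$, and nothing there places $\widetilde{Q}_k(\cdot,\ya)$ in $\mathring{\cg}(\eta,\eta)$ itself. Your alternative --- prove that $\widetilde{Q}_k(\cdot,\ya)$ lies in the completion $\mathring{\cg}^\eta_0(\widetilde\beta,\widetilde\gamma)$ for suitable $\widetilde\beta,\widetilde\gamma$ satisfying \eqref{6.14.1}, then route through the continuous embedding $\mathring{\cg}^\eta_0(\widetilde\beta,\widetilde\gamma)\hookrightarrow\hb$ from Proposition \ref{phb}(iii) and the definitional density of $\mathring{\cg}(\eta,\eta)$ in that completion --- is a cleaner path, since it never needs outright membership of the reproducing kernel in $\mathring{\cg}(\eta,\eta)$.

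That said, you yourself flag exactly this verification as the ``main obstacle'' and then leave it as an ``I expect to handle it via\ldots'' sketch. This is the real content of the lemma, and as written your proposal has a genuine gap there. Two concrete issues the sketch glosses over: first, subtracting a bare constant $c_{J,R}$ destroys both the decay and the integrability needed for $\mathring{\cg}(\eta,\eta)$; the cancellation must be restored by subtracting $c_{J,R}\phi_R$ for a fixed smooth bump $\phi_R$ of unit integral. Second, mollifying with $P_J$ and cutting off at radius $R$ converges in the $\cg(\widetilde\beta,\widetilde\gamma)$-norm only when $\widetilde\beta$ and $\widetilde\gamma$ are strictly below the exponents appearing in the kernel estimates for $\widetilde{Q}_k$; so $\widetilde\beta,\widetilde\gamma$ must be chosen simultaneously inside the open range of \eqref{6.14.1} and strictly below the fixed $\beta,\gamma$ --- possible, but it needs to be said, and the resulting convergence in $\hb$ is then obtained through Proposition \ref{phb}(iii). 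Until these details are written out, the argument is not complete at precisely the step where the paper is also thinnest.
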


\begin{proof}
Let $\{Q_k\}_{k\in\zz}$ be an exp-AT{\rm I} and $f\in\hb$ with $s,\ p$ and $q$ as in this lemma.
Then, for any $k\in\zz$, $\alpha\in\ca_k$ and $m\in\{1,\dots,N(k,\alpha)\}$,
suppose that $\ya$ is an arbitrary point in $\qa$. Then, by Lemma \ref{crf},
we know that there exists a sequence $\{\widetilde{Q}_k\}_{k=-\infty}^\infty$
of bounded linear integral operators on $L^2(\mathcal{X})$ such that
$$f(\cdot) = \sum_{k=-\infty}^\infty\sum_{\alpha \in \ca_k}
\sum_{m=1}^{N(k,\alpha)}\mu\left(\qa\right)\widetilde{Q}_k(\cdot,\ya)Q_k(f)\left(\ya\right) \qquad\text{in}\quad (\cggi)'.$$
For any $k\in\zz$, since $\ca_k$ is at
most countable, it follows that there exists a sequence of finite sets, $\{\ca_k^n\}_{n\in\nn}$,
satisfying that $\ca_k^n\subset\ca_k^{n+1}$ for any $n\in\nn$,
and $\bigcup_{n\in\nn}\ca_k^n=\ca_k$. For any $n\in\nn$ and $x\in\cx$, define
$$f_n(x) := \sum_{|k|<n}\sum_{\alpha \in \ca_k^n}\sum_{m=1}^{N(k,\alpha)}
\mu\left(\qa\right)\widetilde{Q}_k(x,\ya)Q_k(f)\left(\ya\right).$$
By Lemma \ref{crf}, it is easy to show $f_n\in\mathring{\cg}(\eta,\eta)$.
Moreover, we have
\begin{align*}
f(\cdot)-f_n(\cdot)&=\left[\sum_{|k|\geq n}\sum_{\alpha \in \ca_k}
\sum_{m=1}^{N(k,\alpha)}+\sum_{|k|<n}\sum_{\alpha \notin \ca_k^n}
\sum_{m=1}^{N(k,\alpha)}\right]\mu\left(\qa\right)\widetilde{Q}_k(\cdot,\ya)Q_k(f)\left(\ya\right).
\end{align*}
From this and an argument similar to that used in
the estimations of \eqref{es-ex}, \eqref{6.2.1} and \eqref{6.2.2}, we deduce that
\begin{align*}
\|f-f_n\|_{\hb}&\lesssim \left\{\sum_{|k|\geq n}\delta^{-ksq}\left[
\sum_{\alpha \in \ca_k}\sum_{m=1}^{N(k,\alpha)}\mu\left(\qa\right)
\left|Q_k(f)\left(\ya\right)\right|^p\right]^{q/p}\right.\\
&\qquad+\left.\sum_{|k|<n}\delta^{-ksq}\left[\sum_{\alpha \notin \ca_k^n}
\sum_{m=1}^{N(k,\alpha)}\mu\left(\qa\right)
\left|Q_k(f)\left(\ya\right)\right|^p\right]^{q/p}\right\}^{1/q}\\
&\lesssim \|f\|_{\hb}.
\end{align*}
By this, $p,\ q\in(0,\fz)$ and the dominated convergence theorem, we conclude that
$\|f-f_n\|_{\hb}\to 0$ as $n\to \infty$. Thus, $\mathring{\cg}(\eta,\eta)$ is dense in $\hb$.
The proof that
$\mathring{\cg}(\eta,\eta)$ is dense in $\hf$ is similar and we omit the details.
This finishes the proof of Lemma \ref{12.15.2}.
\end{proof}

\begin{lemma}\label{4.8.1}
For any $k\in \zz$, there exists a non-negative measurable function
$\Phi_k:\ \cx\times\cx\to[0,\infty)$ such that
\begin{enumerate}
\item[{\rm(i)}] for any $x\in\cx$,
$\supp(\Phi_k(x,\cdot)) \subset B(x,C_1\delta^k)$;
\item[{\rm(ii)}] for any $x,\ y_1,\ y_2\in\cx$,
$$|\Phi_k(x,y_1)-\Phi_k(x,y_2)|\leq C_2\left[\frac{d(y_1,y_2)}{\delta^k}\right]^\eta;$$
\item[{\rm(iii)}] for any $x\in\cx$ and $y\in B(x,\delta^k)$,
$\Phi_k(x,y)=1$,
\end{enumerate}
where $C_1$ and  $C_2$ are positive constants independent of $k$, $x$, $y_1$ and $y_2$.
Moreover, for any $k\in\zz$,
\begin{equation}\label{4.8.9}
\|\Phi_k\|_{L^\infty(\cx\times\cx)} = 1.
\end{equation}
\end{lemma}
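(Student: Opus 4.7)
The plan is to construct $\Phi_k(x,\cdot)$ as a Urysohn-type bump associated to the ball $B(x,\delta^k)$, built from the Hölder cutoff furnished by \cite[Corollary 4.2]{ah13}. That corollary provides, for any quasi-metric ball $B(x_0,r)\subset\cx$, a non-negative function $\phi_{x_0,r}$ on $\cx$ satisfying $\phi_{x_0,r}\equiv 1$ on $B(x_0,r)$, $\supp\phi_{x_0,r}\subset B(x_0,C_1 r)$ for a fixed dilation constant $C_1=C_1(A_0)>1$, the pointwise bound $0\le\phi_{x_0,r}\le 1$, and an $\eta$-Hölder estimate of the form $|\phi_{x_0,r}(y_1)-\phi_{x_0,r}(y_2)|\le C_2[d(y_1,y_2)/r]^\eta$, with constants $C_1,C_2$ depending only on $(\cx,d,\mu)$ and $\eta$. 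The key feature to exploit is that the AH13 construction is produced by composing a fixed Lipschitz cutoff on $[0,\infty)$ with a Macías--Segovia-type regularization of the distance function $d(\cdot,x_0)$, so both its support radius and its Hölder constant scale exactly like $r$ and $r^{-\eta}$, respectively, \emph{uniformly} in the center $x_0$ of the ball.

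Given these bumps, I would set, for each $k\in\zz$ and $(x,y)\in\cx\times\cx$,
\begin{equation*}
\Phi_k(x,y):=\phi_{x,\delta^k}(y).
\end{equation*}
Property (i) (support in $B(x,C_1\delta^k)$) and property (iii) ($\Phi_k(x,y)=1$ when $y\in B(x,\delta^k)$) then follow immediately from the defining properties of $\phi_{x,\delta^k}$, and the uniform bound $0\le\Phi_k\le 1$ gives both non-negativity and \eqref{4.8.9}, since equality to $1$ is attained on the diagonal by (iii). Property (ii), the $\eta$-Hölder estimate in $y$ with constant $C_2\delta^{-k\eta}$, is exactly the AH13 Hölder bound specialised to $r:=\delta^k$.

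The main obstacle is verifying that the constants $C_1$ and $C_2$ can indeed be chosen uniformly in both $x\in\cx$ and $k\in\zz$, and that the resulting function $\Phi_k$ is jointly measurable on $\cx\times\cx$. Uniformity is ensured by the scale- and center-invariance of the AH13 construction: because $\phi_{x,r}$ is built pointwise from $d(\cdot,x)$ and a single scalar cutoff depending only on $r$, translating the center $x$ or rescaling $r=\delta^k$ alters neither the dilation factor $C_1$ nor the normalised Hölder constant $C_2$; only the effective radius changes. Joint measurability of $\Phi_k$ will follow from the fact that $d:\cx\times\cx\to[0,\infty)$ is continuous (hence Borel measurable) and the AH13 regularization procedure is a Borel-measurable function of $d(\cdot,x)$, so that $(x,y)\mapsto\phi_{x,\delta^k}(y)$ is a Borel measurable function on $\cx\times\cx$. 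These geometric verifications, which directly use the dyadic-ball structure of $(\cx,d,\mu)$ inherited from Theorem \ref{10.22.1} and \cite[Corollary 4.2]{ah13}, complete the proof.
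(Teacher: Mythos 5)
Your high-level idea — define $\Phi_k(x,y)$ by attaching one Urysohn bump from \cite[Corollary 4.2]{ah13} to each ball $B(x,\delta^k)$ and varying the center $x$ — is plausible, and properties (i), (ii), (iii), and the $L^\infty$-normalization do follow from the Corollary 4.2 properties once such a family is in hand. However, your argument has a genuine gap in the step where you assert joint measurability and scale-uniformity. You justify both by claiming that the Auscher–Hyt\"onen cutoffs are ``produced by composing a fixed Lipschitz cutoff on $[0,\infty)$ with a Macías--Segovia-type regularization of the distance function.'' That is not how the construction in \cite{ah13} works: those cutoffs are assembled from the dyadic spline system of \cite[Theorem 3.1]{ah13} by summing splines over a (center- and scale-dependent) finite index set, and the exponent $\eta$ appearing in the paper is precisely the one from that spline construction, not the Macías--Segovia smoothing exponent. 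If the mechanism were the one you describe, the resulting regularity exponent would be different from the $\eta$ fixed throughout the paper. Furthermore, quasi-metrics $d$ are not continuous in general, so the ``$d$ is continuous, hence Borel'' step does not stand on its own. Finally, Corollary 4.2 of \cite{ah13} is a pointwise existence statement (``for each ball there exists a bump''); to write $\Phi_k(x,y):=\phi_{x,\delta^k}(y)$ you also need to produce a \emph{measurable selection} $x\mapsto\phi_{x,\delta^k}$, which the corollary does not automatically furnish. So the measurability clause in the lemma is exactly the part your argument leaves open.

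The paper sidesteps all of this by using the dyadic cutoffs only at the dyadic reference points: taking $\varphi_\alpha^k$ from \cite[Corollary 4.2]{ah13} with $\mathbf{1}_{B(z_\alpha^k,4A_0^2C_0\delta^k)}\le\varphi_\alpha^k\le\mathbf{1}_{B(z_\alpha^k,4A_0^2C_0\delta^{k-1})}$, one sets
\[
\widetilde{\Phi}_k(x,y):=\sum_{\alpha\in\ca_k}\varphi_\alpha^k(x)\varphi_\alpha^k(y),
\qquad
\Phi_k:=\min\bigl\{\widetilde{\Phi}_k,1\bigr\}.
\]
The bounded-overlap property of the dyadic system (Lemma \ref{11.14.2}) guarantees that for every $(x,y)$ only a uniformly bounded number of terms contribute, whence the $\eta$-H\"older bound in $y$ (and by symmetry, in $x$) and the support estimate. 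Joint measurability is then immediate, because $\widetilde{\Phi}_k$ is a locally finite sum of products of measurable functions, and the truncation by $\min\{\cdot,1\}$ both normalizes $\|\Phi_k\|_{L^\infty}$ and preserves measurability. This construction also makes $\Phi_k$ symmetric in $x$ and $y$, which, while not stated in the lemma, is a convenience that the subsequent spline decompositions exploit without extra bookkeeping. If you want to repair your approach along the lines you proposed, you would need to exhibit a specific, measurably-selected version of the AH13 cutoff for every center, not just invoke the abstract existence statement, and you should either prove or cite the fact that this yields the same exponent $\eta$ as the wavelet construction.
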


\begin{proof}
For any $k\in\zz$ and $\alpha\in\ca_k$, by  \cite[Corollary 4.2]{ah13}, we know that
there exists a function $\varphi^k_\alpha:\ \cx\to \rr$ satisfying
$\mathbf{1}_{B(z_\alpha^k, 4A_0^2C_0\delta^k)}\leq \varphi^k_\alpha
\leq\mathbf{1}_{B(z_\alpha^k, 4A_0^2C_0\delta^{k-1})}$ and,
for any $x,\ y\in \cx$,
\begin{equation}\label{4.8.2}
|\varphi^k_\alpha(x)-\varphi^k_\alpha(y)|\lesssim \left[\frac{d(x,y)}{\delta^k}\right]^\eta,
\end{equation}
where the implicit positive constant is independent of $k$ and $\alpha$.
For any $k\in\zz$ and $x,\ y\in \cx$, define
$$\widetilde{\Phi}_k(x,y):=\sum_{\alpha\in\ca_k}\varphi^k_\alpha(x)\varphi^k_\alpha(y).$$
We claim that there exist positive constants $C_1$ and $C_2$ such that,
for any $k\in\zz$ and $x\in\cx$,
\begin{equation}\label{4.8.3}
\supp\lf(\widetilde{\Phi}_k(x,\cdot)\r)\subset B(x,C_1\delta^k)
\end{equation}
and, for any $x,\ y_1,\ y_2\in\cx$,
\begin{equation}\label{4.8.4}
\left|\widetilde{\Phi}_k(x,y_1)-\widetilde{\Phi}_k(x,y_2)\right|\leq C_2
\left[\frac{d(y_1,y_2)}{\delta^k}\right]^\eta.
\end{equation}
To this end, fix $k\in\zz$ and $x\in\cx$. By (i) and (iii) of
Theorem \ref{10.22.1}, we know that there exists a unique
$\alpha_0\in\ca_k$ such that $x\in Q_{\alpha_0}^k
\subset B(z_{\alpha_0}^k, 2A_0C_0\delta^k)\subset B(z_{\alpha_0}^k, 4A_0^2C_0\delta^{k-1})$.
Let
$$\ca_k(x):=\{\alpha\in\ca_k:\ x\in B(z_\alpha^k,4A_0^2C_0\delta^{k-1})
\cap B(z_{\alpha_0}^k, 4A_0^2C_0\delta^{k-1})\}.$$
Notice that, if $\alpha\in\ca_k\setminus\ca_k(x)$, then $\varphi_\alpha^k(x)=0$. Therefore,
\begin{equation}\label{4.8.7}
\widetilde{\Phi}_k(x,y)=\sum_{\alpha\in\ca_k(x)}\varphi^k_\alpha(x)\varphi^k_\alpha(y).
\end{equation}
For any $\alpha\in\ca_k(x)$, we have $x\in B(z_\alpha^k,4A_0^2C_0\delta^{k-1})
\cap B(z_{\alpha_0}^k, 4A_0^2C_0\delta^{k-1})$.
By this, we conclude that, for any
$z \in B(z_\alpha^k,4A_0^2C_0\delta^{k-1})$,
\begin{align}\label{4.8.5}
d(z,z_{\alpha_0}^k)&\leq A_0[d(z,z_\alpha^k)+d(z_\alpha^k,z_{\alpha_0}^k)]
\leq4A_0^3C_0\delta^{k-1}+A_0^2[d(z_\alpha^k,x)+d(x,z_{\alpha_0}^k)]\\
&\leq(4A_0^3C_0+8A_0^4C_0)\delta^{k-1},\notag
\end{align}
which implies that
$$\bigcup_{\alpha\in\ca_k(x)}B(z_\alpha^k,4A_0^2C_0\delta^{k-1})
\subset B(z_{\alpha_0}^k, (4A_0^3C_0+8A_0^4C_0)\delta^{k-1}).$$
From this, Lemma \ref{11.14.2} and the fact that $(4A_0^3C_0+8A_0^4C_0)/
(4A_0^2C_0)=A_0+2A_0^2>1$, we deduce
that there exists a positive constant $M_0$, independent of $x$ and $k$, such that $\#\ca_k(x)\leq M_0$.
By this, \eqref{4.8.7} and \eqref{4.8.2}, we obtain, for any $y_1,\ y_2\in\cx$,
\begin{align}\label{4.8.6}
\left|\widetilde{\Phi}_k(x,y_1)-\widetilde{\Phi}_k(x,y_2)\right|
&=\left|\sum_{\alpha\in\ca_k(x)}\varphi^k_\alpha(x)\varphi^k_\alpha(y_1)
-\sum_{\alpha\in\ca_k(x)}\varphi^k_\alpha(x)\varphi^k_\alpha(y_2)\right|\\
&\leq \sum_{\alpha\in\ca_k(x)}|\varphi^k_\alpha(y_1)-\varphi^k_\alpha(y_2)|
\lesssim \left[\frac{d(y_1,y_2)}{\delta^k}\right]^\eta,\notag
\end{align}
where the implicit positive constant is
independent of $k$, $x$, $y_1$ and $y_2$.

Moreover, if $\widetilde{\Phi}_k(x,y)\neq 0$, then, by \eqref{4.8.7}, we know that there exists an
$\alpha\in\ca_k(x)$ such that $\varphi_\alpha^k(y)\neq 0$, which further implies that
$y\in B(z_\alpha^k,4A_0C_0\delta^k)$. From this, together with \eqref{4.8.5},
it follows that
$$d(y,x)\leq A_0[d(y,z_{\alpha_0}^k)+d(z_{\alpha_0}^k,x)]
\leq (4A_0^4C_0+8A_0^5C_0+4A_0^2C_0)\delta^{k-1},
$$
which implies that
\begin{equation}\label{4.8.8}
\supp\lf(\widetilde{\Phi}_k(x,\cdot)\r)\subset B(x,C_1\delta^k),
\end{equation}
where $C_1:=(4A_0^3C_0+8A_0^4C_0+4A_0C_0)\delta^{-1}$.

For any $k\in\zz$ and $x,\ y\in\cx$, define
$$\Phi_k(x,y):=\min\left\{\widetilde{\Phi}_k(x,y),1\right\}.$$
By \eqref{4.8.6} and \eqref{4.8.8}, we know that $\Phi_k$ satisfies (i), (ii) and \eqref{4.8.9}.

Now we show that $\Phi_k$ satisfies (iii). Indeed, for any
fixed $k\in\zz$ and $x\in\cx$,
let $\alpha_0\in\ca_k$ be such that
$x\in Q_{\alpha_0}^k$ as in the above. Then,
for any $y\in B(x,\delta^k)$, we have
$$d(y,z_{\alpha_0}^k)\leq A_0[d(y,x)+d(x,z_{\alpha_0}^k)]
\leq A_0\delta^k+2A_0^2C_0\delta^k<4A_0^2C_0\delta^k,$$
which implies that, for any $y \in B(x,\delta^k)$, $\varphi_{\alpha_0}^k(y)=1$. Notice that
$\varphi_{\alpha_0}^k(x)=1$ and hence
$\widetilde{\Phi}_k(x,y)\geq\varphi_{\alpha_0}^k(x)\varphi_{\alpha_0}^k(y)=1$,
which further implies that $\Phi_k(x,y)=1$.
This finishes the proof of Lemma \ref{4.8.1}.
\end{proof}

Using Lemma \ref{4.8.1} and some ideas similar to those used in the proof of \cite[Lemma 3.5.1]{ns},
we can prove the following lemma.

\begin{lemma}\label{4.9.1}
Let $\theta\in(0,\infty)$, $\eta\in(0,1)$ be as in Definition \ref{10.23.2}, and
$\{Q_k\}_{k\in\zz}$ an {\rm $\exp$-ATI}. Then, for any $k\in\zz$, there exists a sequence
$\{Q_{k,j}\}_{j=0}^\infty$ of functions on $\cx\times\cx$ such that
\begin{enumerate}
\item[{\rm(i)}]  for any $x\in\cx$,
\begin{equation}\label{5.8x}
Q_k(x,\cdot)=\sum_{j=0}^\infty \delta^{j\theta}Q_{k,j}(x,\cdot)
\end{equation}
both pointwisely and in $ C^{\eta'}(\cx)$ with any given $\eta'\in(0,\eta]$;
\item[{\rm(ii)}] for any $k\in\zz$, $j\in\zz_+$ and $\eta'$ as in (i), $Q_{k,j}(x,\cdot)$
is an  $\eta'$-bump function associated to the  ball $B(x,C_1\delta^{k-j})$,
with its $\eta'$-bump constant independent of $k$ and $j$,
where $C_1$ is as in Lemma \ref{4.8.1}(i). Moreover, for any $k\in\zz$, $j\in\zz_+$ and $x\in\cx$,
\begin{equation}\label{van-qkj}
\int_\cx Q_{k,j}(x,y)\,d\mu(y)=0.
\end{equation}
\end{enumerate}
\end{lemma}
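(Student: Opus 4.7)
The plan is to build the decomposition by a dyadic annular splitting of $Q_k(x,\cdot)$ using the cut-off functions $\Phi_k$ constructed in Lemma \ref{4.8.1}, and then to enforce the cancellation condition \eqref{van-qkj} via an Abel-type summation by parts. First, set $\eta_0(x,y):=\Phi_k(x,y)$ and, for any $j\in\nn$,
$$\eta_j(x,y):=\Phi_{k-j}(x,y)-\Phi_{k-j+1}(x,y).$$
By Lemma \ref{4.8.1}(i) and (iii), $\sum_{j=0}^J\eta_j(x,y)=\Phi_{k-J}(x,y)$, which equals $1$ for $J$ large enough once $y$ is fixed, while $\eta_j(x,\cdot)$ is supported in the annular region $\{y\in\cx:\,\delta^{k-j+1}\leq d(x,y)\leq C_1\delta^{k-j}\}$ for $j\geq 1$. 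Thus, pointwise in $y$,
$$Q_k(x,y)=\sum_{j=0}^\infty Q_k(x,y)\eta_j(x,y).$$

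Next I would remedy the fact that the raw pieces $Q_k(x,\cdot)\eta_j(x,\cdot)$ do not have zero mean. Define normalised bumps $\psi_{k,j}(x,y):=\Phi_{k-j}(x,y)/\int_\cx\Phi_{k-j}(x,z)\,d\mu(z)$, which by Lemma \ref{4.8.1} and the doubling property give $\int_\cx\psi_{k,j}(x,y)\,d\mu(y)=1$, $\supp\psi_{k,j}(x,\cdot)\subset B(x,C_1\delta^{k-j})$, with sharp size and regularity controlled by $1/V_{\delta^{k-j}}(x)$. Let $c_j(x):=\int_\cx Q_k(x,y)\eta_j(x,y)\,d\mu(y)$ and $\sigma_j(x):=\sum_{l=0}^j c_l(x)$, and set $\sigma_{-1}(x):=0$. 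Since $\int_\cx Q_k(x,y)\,d\mu(y)=0$ by Definition \ref{10.23.2}(v) and the super-exponential decay of $Q_k$ gives $\sigma_j(x)\to 0$ as $j\to\infty$, define
\begin{align*}
Q_{k,j}(x,y):=\delta^{-j\theta}\bigl[&Q_k(x,y)\eta_j(x,y)-c_j(x)\psi_{k,j}(x,y)\\
&+\sigma_{j-1}(x)\bigl(\psi_{k,j-1}(x,y)-\psi_{k,j}(x,y)\bigr)\bigr]
\end{align*}
for $j\geq 1$, and $Q_{k,0}(x,y):=Q_k(x,y)\Phi_k(x,y)-c_0(x)\psi_{k,0}(x,y)$. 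Then $\int_\cx Q_{k,j}(x,y)\,d\mu(y)=0$ is immediate, and an Abel summation (using $\sigma_j\to0$) shows $\sum_{j=0}^\infty\delta^{j\theta}Q_{k,j}(x,y)=Q_k(x,y)$. The support condition is automatic since $B(x,C_1\delta^{k-j+1})\subset B(x,C_1\delta^{k-j})$ for $\delta\in(0,1)$.

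The central technical task is the uniform bump estimate in (ii). By \eqref{10.23.3}, on $\supp\eta_j(x,\cdot)$ with $j\geq 1$ we have, for any $\Gamma\in(0,\infty)$,
$$|Q_k(x,y)|\ls\frac{1}{V_{\delta^{k-j}}(x)}\delta^{(j-1)\Gamma},$$
and the regularity of $Q_k$ in $y$ (Remark 2.9) together with that of $\Phi_{k-j}$ and $\Phi_{k-j+1}$ yields the bound $\|Q_k(x,\cdot)\eta_j(x,\cdot)\|_{\dot C^{\eta'}(\cx)}\ls\delta^{(j-1)\Gamma}\delta^{-(k-j)\eta'}$ for any $\eta'\in(0,\eta]$. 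Integrating gives $|c_j(x)|\ls\delta^{(j-1)\Gamma}$, and since $\sigma_j(x)=-\sum_{l>j}c_l(x)$, the same kind of bound propagates to $\sigma_{j-1}(x)$. Choosing $\Gamma$ large (depending on $\theta$ and $\eta$) and combining with the standard regularity and size estimates for $\psi_{k,j-1}-\psi_{k,j}$ gives
$$\|Q_{k,j}(x,\cdot)\|_{\dot C^{\eta'}(\cx)}\ls\delta^{-j\theta}\delta^{(j-1)\Gamma}\delta^{-(k-j)\eta'}\ls(C_1\delta^{k-j})^{-\eta'}$$
with constant independent of $k$ and $j$; the analogous argument handles $j=0$. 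Finally, the same computation shows $\|\delta^{j\theta}Q_{k,j}(x,\cdot)\|_{C^{\eta'}(\cx)}\ls\delta^{j(\Gamma-\theta-\eta')-k\eta'}$, so with $\Gamma$ sufficiently large we get geometric summability, which gives convergence of \eqref{5.8x} in $C^{\eta'}(\cx)$.

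The main obstacle is the careful balancing of the super-exponential (actually $\exp\{-\nu\delta^{-a(j-1)}\}$ via Definition \ref{10.23.2}(ii), or equivalently any polynomial decay from \eqref{10.23.3}) decay of $Q_k$ against both the blow-up factor $\delta^{-j\theta}$ and the shrinking measure $V_{\delta^{k-j}}(x)$, while simultaneously controlling the Hölder seminorm coming from three distinct contributions: $Q_k(x,y)\eta_j(x,y)$, $c_j(x)\psi_{k,j}$, and the Abel-correction $\sigma_{j-1}(x)(\psi_{k,j-1}-\psi_{k,j})$. Each of these is supported in a slightly different ball and has its own regularity scale; the argument must verify that after summing they fit into the single scale $\delta^{k-j}$ with constants uniform in $k$ and $j$.
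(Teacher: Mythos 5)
Your proposal is correct and follows essentially the same route as the paper's own proof: the same annular splitting $[\Phi_{k-j}-\Phi_{k-j+1}]Q_k$ built from Lemma \ref{4.8.1}, the same normalized bumps $\Phi_{k-j}/\int_\cx\Phi_{k-j}(x,z)\,d\mu(z)$, and the same mean-correction via Abel summation with the partial sums of the means (your correction $\sigma_{j-1}(\psi_{k,j-1}-\psi_{k,j})$ is just an index-shifted version of the paper's $s_{k,j}(\phi_{k,j}-\phi_{k,j+1})$, and in fact keeps the support inside $B(x,C_1\delta^{k-j})$ without enlarging the constant). The only bookkeeping slip is in your final H\"older estimate, which should retain the volume normalization, namely $\|Q_{k,j}(x,\cdot)\|_{\dot{C}^{\eta'}(\cx)}\lesssim\delta^{-\eta'(k-j)}[V_{\delta^{k-j}}(x)]^{-1}$ rather than $(C_1\delta^{k-j})^{-\eta'}$ alone (your own size bounds carry the factor $[V_{\delta^{k-j}}(x)]^{-1}$, and this is exactly the form the paper proves and later uses), but this does not affect the structure of the argument.
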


\begin{proof}
Let $\{\Phi_k\}_{k\in\zz}$ be as in Lemma \ref{4.8.1}. For any $k\in\zz$ and $x,\ y\in\cx$, define
$$A_{k,0}(x,y):=\Phi_k(x,y)Q_k(x,y)$$
and, for any $k\in\zz$, $j\in\nn$ and $x,\ y\in\cx$, define $$A_{k,j}(x,y):=[\Phi_{k-j}(x,y)-\Phi_{k-j+1}(x,y)]Q_k(x,y).$$
Notice that, for any given $x$, $y\in\cx$, there exists a $j_0\in\zz_+$ such that
$d(x,y)<\delta^{k-j_0}$. Then, by Lemma \ref{4.8.1}(iii), we know that, for any $j\in
\{j_0,j_0+1,\dots\}$, $\Phi_{k-j}(x,y)=1$,
which implies that, for any $j\in\{ j_0+1,
j_0+2,\dots\}$,
$$A_{k,j}(x,y)=0.$$
By this, we conclude that, for any $x,\ y\in\cx$,
\begin{equation}\label{4.9.2}
\sum_{j=0}^\infty A_{k,j}(x,y)=\sum_{j=0}^{j_0} A_{k,j}(x,y)=\Phi_{k-j_0}(x,y)Q_k(x,y)=Q_k(x,y).
\end{equation}

We claim that, for any fixed $\theta\in(0,\infty)$,  and any $k\in\zz$,
$j\in\zz_+$ and $x,\ y\in\cx$,
\begin{equation}\label{4.9.4}
|A_{k,j}(x,y)|\lesssim \delta^{j\theta}\frac{1}{V_{\delta^{k-j}}(x)}.
\end{equation}
Indeed, if $j=0$, \eqref{4.9.4} is obviously true by
\eqref{4.8.9} and the size condition of
$Q_k$.
Now we assume $j\in\nn$. From the size condition of $Q_k$, we deduce that, for any
fixed $\Gamma \in (0,\infty)$, and any $k\in\zz$ and $x,\ y\in\cx$,
$$|Q_k(x,y)|\lesssim\frac{1}{V_{\delta^k}(x)}\left[\frac{\delta^k}{\delta^k+d(x,y)}\right]^\Gamma.$$
By Lemma \ref{4.8.1}(iii), we find that, for any $j\in\zz_+$ and $y\in B(x, \delta^{k-j})$,
$\Phi_{k-j}(x,y)=1$, which implies that, for any $j\in\nn$
and $y\in B(x, \delta^{k-j+1})$, $A_{k,j}(x,y)=0$.
Moreover, for any $y\in\cx$ satisfying $d(x,y) \geq \delta^{k-j+1}$, we have
\begin{equation}\label{4.9.3}
\frac{\delta^k}{\delta^k+d(x,y)}\lesssim \delta^{j}.
\end{equation}
Let $\Gamma :=\omega+\theta$. Then,  by \eqref{4.8.9}, \eqref{4.9.3}
and  $V_{\delta^{k-j}}(x)\lesssim\delta^{-j\omega}V_{\delta^k}(x)$,  we obtain
\begin{equation}\label{4.10.7}
|A_{k,j}(x,y)|\lesssim |Q_k(x,y)|\lesssim\delta^{j\theta}\frac{1}{V_{\delta^{k-j}}(x)},
\end{equation}
which shows \eqref{4.9.4}.

For any $k\in\zz$, $j\in\zz_+$ and $x\in\cx$, define
$$a_{k,j}(x):=\int_{\cx}A_{k,j}(x,y)d\mu(y).$$
Notice that, by Lemma \ref{4.8.1}(i), for any $k\in\zz$, $j\in\zz_+$ and $x\in\cx$, we have
\begin{equation}\label{4.9.5}
\supp(A_{k,j}(x,\cdot))\subset B(x,C_1\delta^{k-j}),
\end{equation}
where $C_1$ is as in Lemma \ref{4.8.1}(i).
From this and \eqref{4.9.4}, we deduce that, for any $k\in\zz$ and $x\in\cx$,
\begin{equation}\label{4.10.2}
\int_{B(x,C_1\delta^{k-j})}|A_{k,j}(x,y)|\,d\mu(y)\lesssim \delta^{j\theta}\frac{1}{V_{\delta^{k-j}}(x)}\mu(B(x,C_1\delta^{k-j}))\lesssim \delta^{j\theta}
\end{equation}
and hence
\begin{equation*}
\sum_{j=0}^\infty\int_{B(x,C_1\delta^{k-j})}
|A_{k,j}(x,y)|\,d\mu(y)\lesssim \sum_{j=0}^\infty\delta^{j\theta}
\frac{1}{V_{\delta^{k-j}}(x)}\mu(B(x,C_1\delta^{k-j}))\lesssim 1.
\end{equation*}
Thus, by this,  the Fubini theorem, \eqref{4.9.2} and the cancellation of $Q_k$,
we know that, for any $k\in\zz$ and $x\in\cx$,
\begin{equation}\label{4.9.6}
\sum_{j=0}^\infty a_{k,j}(x)
=\int_{\cx}\sum_{j=0}^\infty A_{k,j}(x,y)\,d\mu(y)
=\int_{\cx}Q_k(x,y)d\mu(y)=0.
\end{equation}
For any $k\in\zz$, $j\in\zz_+$ and $x\in\cx$, define
$$s_{k,j}(x):=\sum_{l=0}^j a_{k,l}(x).$$
From \eqref{4.9.6}, it follows that
$s_{k,j}=-\sum_{l=j+1}^\infty a_{k,l}$.
Then, for any $k\in\zz$, $j\in\zz_+$ and $x,\ y\in\cx$, define
$$\phi_{k,j}(x,y):=\Phi_{k-j}(x,y)\left[\int_{\cx}\Phi_{k-j}(x,z)d\mu(z)\right]^{-1},$$
$$\widetilde{A}_{k,j}(x,y):=A_{k,j}(x,y)-a_{k,j}(x)
\phi_{k,j}(x,y)+s_{k,j}(x)[\phi_{k,j}(x,y)-\phi_{k,j+1}(x,y)]$$
and
$$Q_{k,j}(x,y):=\delta^{-j\theta}\widetilde{A}_{k,j}(x,y).$$
By the definition of $\phi_{k,j}$, it is easy to see
that $\int_\cx\phi_{k,j}(x,y)\,d\mu(y)=1$. From this, we deduce that \eqref{van-qkj} holds true.
Moreover, observe that, by \eqref{4.9.6}, for any $k\in\zz$ and $x,\ y\in\cx$,
\begin{align*}
&\sum_{j=0}^\infty s_{k,j}(x)[\phi_{k,j}(x,y)-\phi_{k,j+1}(x,y)]\\
&\quad=\sum_{j=0}^\infty\sum_{l\leq j} a_{k,l}(x)\phi_{k,j}(x,y)
+\sum_{j=0}^\infty\sum_{l> j} a_{k,l}(x)\phi_{k,j+1}(x,y)\\
&\quad=\sum_{l=0}^\fz\sum_{j\geq l} a_{k,l}(x)\phi_{k,j}(x,y)+\sum_{l=1}^\infty
\sum_{j<l} a_{k,l}(x)\phi_{k,j+1}(x,y)\\
&\quad=\sum_{l=1}^\infty a_{k,l}(x)\left[\sum_{j\geq l}
\phi_{k,j}(x,y)+\sum_{j< l}\phi_{k,j+1}(x,y)\right]+a_{k,0}(x)\sum_{j=0}^\infty \phi_{k,j}(x,y)\\
&\quad=\sum_{l=1}^\infty a_{k,l}(x)\left[\sum_{j=1}^\infty
\phi_{k,j}(x,y)+\phi_{k,l}(x,y)\right]+a_{k,0}(x)\sum_{j=1}^\infty
\phi_{k,j}(x,y)+a_{k,0}(x)\phi_{k,0}(x,y)\\
&\quad=\sum_{l=0}^\infty a_{k,l}(x)\phi_{k,l}(x,y)
+\left[\sum_{l=0}^\infty a_{k,l}(x)\right]\left[\sum_{j=1}^\infty \phi_{k,j}(x,y)\right]
=\sum_{l=0}^\infty a_{k,l}(x)\phi_{k,l}(x,y),
\end{align*}
which, combined with \eqref{4.9.2}, further implies that
\begin{align*}
\sum_{j=0}^\infty\delta^{j\tz}Q_{k,j}(x,y)&=\sum_{j=0}^\infty \widetilde{A}_{k,j}(x,y)\\
&=\sum_{j=0}^\infty A_{k,j}(x,y)-\sum_{j=0}^\infty a_{k,j}(x)\phi_{k,j}(x,y)
+\sum_{j=0}^\infty s_{k,j}(x)[\phi_{k,j}(x,y)-\phi_{k,j+1}(x,y)]\\
&=Q_k(x,y).
\end{align*}
This shows that \eqref{5.8x} holds true pointwisely.

Now we prove that $\{Q_{k,j}\}_{j=0}^\infty$ satisfies (ii).
Indeed, by Lemma \ref{4.8.1}(i) and \eqref{4.10.2},  we easily know that,
for any $k\in\zz$, $j\in\zz_+$ and $x\in\cx$,
$$\supp (Q_{k,j}(x,\cdot)) \subset B(x, C_1\delta^{k-j}).$$
We claim that, for any
$\eta'\in(0,\eta]$, $k\in\zz$, $j\in\zz_+$ and $x\in\cx$,
\begin{equation}\label{4.10.1}
\|Q_{k,j}(x,\cdot)\|_{\dot{C}^{\eta'}(\cx)}\lesssim \delta^{-\eta'(k-j)}\frac{1}{V_{\delta^{k-j}}(x)}.
\end{equation}
Indeed, from the definition of $s_{k,j}$ and \eqref{4.10.2}, we deduce that, for any $k\in\zz$, $j\in\zz_+$ and
$x\in\cx$,
\begin{equation}\label{4.10.4}
|s_{k,j}(x)|\lesssim \delta^{j\theta}.
\end{equation}
By the definition of $\phi_{k,j}$ and Lemma \ref{4.8.1}(iii), we know that,
for any $k\in\zz$, $j\in\zz_+$ and $x,\ y\in\cx$,
$$|\phi_{k,j}(x,y)|\leq \left[\int_{B(x,\delta^{k-j})}1\,d\mu(y)\right]^{-1}= \frac{1}{V_{\delta^{k-j}}(x)}.$$
Thus, we conclude that, for any $k\in\zz$, $j\in\zz_+$ and $x,\ y\in\cx$,
\begin{equation}\label{sizeqkj}
|Q_{k,j}(x,y)|\lesssim \frac{1}{V_{\delta^{k-j}}(x)}.
\end{equation}
From this, we deduce that, for any $x,\ y_1,\ y_2\in\cx$ with $d(y_1,y_2)\geq\delta^{k-j}$,
\begin{equation}\label{4.10.3}
\frac{|Q_{k,j}(x,y_1)-Q_{k,j}(x,y_2)|}{[d(y_1,y_2)]^{\eta'}}\lesssim \delta^{-\eta'(k-j)}\frac{1}{V_{\delta^{k-j}}(x)}.
\end{equation}
Moreover, by (ii) and (iii) of Lemma \ref{4.8.1}, we know that,
for any $k\in\zz$, $j\in\zz_+$ and  $x,\ y_1,\ y_2\in\cx$ with $d(y_1,y_2)<\delta^{k-j}$,
\begin{align}\label{4.10.5}
|\phi_{k,j}(x,y_1)-\phi_{k,j}(x,y_2)|&=|\Phi_{k-j}(x,y_1)-\Phi_{k-j}(x,y_2)|\left[\int_{\cx}\Phi_{k-j}(x,z)\,
d\mu(z)\right]^{-1}\\
&\lesssim \left[\frac{d(y_1,y_2)}{\delta^{k-j}}\right]^\eta \frac{1}{V_{\delta^{k-j}}(x)}.\notag
\end{align}
Now, we estimate the regularity of $\{A_{k,j}\}_{j\in\zz_+}$.
When $j=0$, from \eqref{10.23.3}, \eqref{10.23.4}, \eqref{4.8.9} and Lemma \ref{4.8.1}(ii),
we deduce  that, for any $k\in\zz$ and $x,\ y_1,\ y_2\in\cx$,
\begin{align}\label{4.10.9}
|A_{k,0}(x,y_1)-A_{k,0}(x,y_2)|&=|\Phi_k(x,y_1)Q_k(x,y_1)-\Phi_k(x,y_1)Q_k(x,y_1)|\\
&=|\Phi_k(x,y_1)-\Phi_k(x,y_2)||Q_k(x,y_1)|\notag\\
&\qquad+|\Phi_k(x,y_2)||Q_k(x,y_1)-Q_k(x,y_2)|\notag\\
&\lesssim  \left[\frac{d(y_1,y_2)}{\delta^k}\right]^\eta \frac{1}{V_{\delta^{k}}(x)}.\notag
\end{align}

When $j\in\nn$, we consider four cases.

{\it Case 1) $d(x,y_1) < (2A_0)^{-1}\delta^{k-j+1}$ and $d(x,y_2) < \delta^{k-j+1}$.}
In this case, by Lemma \ref{4.8.1}(iii), we conclude that
\begin{equation}\label{4.10.11}
|A_{k,j}(x,y_1)-A_{k,j}(x,y_2)|=0.
\end{equation}

{\it Case 2) $d(x,y_1) < (2A_0)^{-1}\delta^{k-j+1}$ and $d(x,y_2) \geq \delta^{k-j+1}$.}
In this case, we have $A_{k,j}(x,y_1)=0$ and
$$d(y_1,y_2)\geq A_0^{-1}[d(x,y_2)-A_0d(x,y_1)]\gtrsim \delta^{k-j}.$$
From this, Lemma \ref{4.8.1}(iii) and \eqref{4.10.7}, we deduce that
\begin{align}\label{4.10.6}
|A_{k,j}(x,y_1)-A_{k,j}(x,y_2)|&=|A_{k,j}(x,y_2)|
=|[\Phi_{k-j}(x,y_2)-\Phi_{k-j+1}(x,y_2)]Q_k(x,y_2)|\\
&\lesssim \delta^{j\theta}\frac{1}{V_{\delta^{k-j}}(x)}
\lesssim\left[\frac{d(y_1,y_2)}{\delta^{k-j}}\right]^\eta\delta^{j\theta}\frac{1}{V_{\delta^{k-j}}(x)}\notag.
\end{align}

{\it Case 3) $d(x,y_1) \geq (2A_0)^{-1}\delta^{k-j+1}$ and $d(x,y_2) < (4A_0^2)^{-1}\delta^{k-j+1}$.}
In this case, we conclude that $A_{k,j}(x,y_2)=0$.
Using an argument similar to that used in the estimation of \eqref{4.10.6},
we find that
\begin{align}\label{4.10.8}
|A_{k,j}(x,y_1)-A_{k,j}(x,y_2)|&=|A_{k,j}(x,y_1)|
=|[\Phi_{k-j}(x,y_1)-\Phi_{k-j+1}(x,y_1)]Q_k(x,y_1)|\\
&\lesssim \delta^{j\theta}\frac{1}{V_{\delta^{k-j}}(x)}
\lesssim\left[\frac{d(y_1,y_2)}{\delta^{k-j}}\right]^\eta\delta^{j\theta}
\frac{1}{V_{\delta^{k-j}}(x)}\notag.
\end{align}

{\it Case 4) $d(x,y_1) \geq (2A_0)^{-1}\delta^{k-j+1}$ and $d(x,y_2) \geq (4A_0^2)^{-1}\delta^{k-j+1}$.}
In this case, we have
\begin{align*}
&|A_{k,j}(x,y_1)-A_{k,j}(x,y_2)|\\
&\qquad=|[\Phi_{k-j}(x,y_1)-\Phi_{k-j+1}(x,y_1)]Q_k(x,y_1)\\
&\qquad\qquad-[\Phi_{k-j}(x,y_2)-\Phi_{k-j+1}(x,y_2)]Q_k(x,y_2)|\\
&\qquad\leq |\Phi_{k-j}(x,y_1)Q_k(x,y_1)-\Phi_{k-j}(x,y_2)Q_k(x,y_2)|\\
&\qquad\qquad+|\Phi_{k-j+1}(x,y_1)Q_k(x,y_1)-\Phi_{k-j+1}(x,y_2)Q_k(x,y_2)|,
\end{align*}
which, together  with \eqref{4.10.7},
(ii) and (iii) of Lemma \ref{4.8.1},
and an argument similar to that used in the estimation of  \eqref{4.10.9},
further implies that
\begin{equation}\label{4.10.10}
|A_{k,j}(x,y_1)-A_{k,j}(x,y_2)|\lesssim\left[\frac{d(y_1,y_2)}
{\delta^{k-j}}\right]^\eta\delta^{j\theta}\frac{1}{V_{\delta^{k-j}}(x)}.
\end{equation}

From \eqref{4.10.9} through \eqref{4.10.10}, we deduce that,
for any $k\in\zz$, $j\in\zz_+$
and $x,\ y_1,\ y_2\in\cx$ with $d(y_1,y_2)<\delta^{k-j}$,
\begin{equation}\label{4.25.4}
|A_{k,j}(x,y_1)-A_{k,j}(x,y_2)|\lesssim\left[\frac{d(y_1,y_2)}{\delta^{k-j}}\right]^\eta
\delta^{j\theta}\frac{1}{V_{\delta^{k-j}}(x)},
\end{equation}
which, combined with \eqref{4.10.2}, \eqref{4.10.4} and \eqref{4.10.5}, implies that,
for any $k\in\zz$, $j\in\zz_+$,  $x,\ y_1,\ y_2\in\cx$ with $0<d(y_1,y_2)<\delta^{k-j}$,
\begin{align}\label{4.11.1}
\frac{|Q_{k,j}(x,y_1)-Q_{k,j}(x,y_2)|}{[d(y_1,y_2)]^{\eta'}}
&\leq \delta^{-j\theta}[d(y_1,y_2)]^{\eta-\eta'}\frac{1}{[d(y_1,y_2)]^{\eta}}[|A_{k,j}(x,y_1)-A_{k,j}(x,y_2)|\\
&\qquad+|a_{k,j}(x)||\phi_{k,j}(x,y_1)-\phi_{k,j}(x,y_2)|\notag\\
&\qquad+|s_{k,j}(x)||\phi_{k,j}(x,y_1)-\phi_{k,j}(x,y_2)|\notag\\
&\qquad+|s_{k,j}(x)||\phi_{k,j+1}(x,y_1)-\phi_{k,j+1}(x,y_2)|]\notag\\
&\lesssim \delta^{-\eta'(k-j)}\frac{1}{V_{\delta^{k-j}}(x)}.\notag
\end{align}
Combining this with \eqref{sizeqkj}, we know that \eqref{4.10.1} holds true.

Finally, by \eqref{4.10.1}, we find that, for any fixed $\eta'\in(0,\eta]$ and any $N\in\nn$,
\begin{align}\label{ceta}
\left\|Q_k(x,\cdot)-\sum_{j=0}^N \delta^{j\theta}Q_{k,j}(x,\cdot)\right\|_{\dot{C}^{\eta'}(\cx)}
&=\left\|\sum_{j=N+1}^\infty \delta^{j\theta}Q_{k,j}(x,\cdot)\right\|_{\dot{C}^{\eta'}(\cx)}\\
&\lesssim\sum_{j=N+1}^\infty\delta^{j\theta} \delta^{-\eta'(k-j)}\frac{1}{V_{\delta^{k-j}}(x)}\notag\\
&\lesssim \delta^{-k\eta'}\frac{1}{V_{\delta^{k}}(x)}\dz^{(\eta'+\theta)N}\to 0\notag
\end{align}
as $N\to\infty$ and, by \eqref{sizeqkj},
\begin{align}\label{linf}
\left\|Q_k(x,\cdot)-\sum_{j=0}^N \delta^{j\theta}Q_{k,j}(x,\cdot)\right\|_{L^\infty(\cx)}
&=\left\|\sum_{j=N+1}^\infty \delta^{j\theta}Q_{k,j}(x,\cdot)\right\|_{L^\infty(\cx)}\\
&\lesssim\sum_{j=N+1}^\infty\delta^{j\theta} \frac{1}{V_{\delta^{k-j}}(x)}
\lesssim \delta^{j\theta N}\frac{1}{V_{\delta^{k}}(x)}\to 0\notag
\end{align}
as $N\to\fz$, which implies that \eqref{5.8x} holds true in $ C^{\eta'}(\cx)$. This finishes the proof
of Lemma \ref{4.9.1}.
\end{proof}

\begin{remark}\label{exoft}
Let $T$ be a Calder\'on--Zygmund  operator of order $\eta$ with $\eta$
as in Definition \ref{10.23.2}. By Lemma \ref{4.9.1} and Definition \ref{4.11.2}(ii),
we find that, for any $\eta'\in(0,\eta)$, $k\in\zz$,
$j\in\zz_+$ and $x\in\cx$, $T(Q_{k,j}(x,\cdot))\in \dot{C}^{\eta'}(\cx)$.
Then, from Lemma \ref{4.9.1} and Definition \ref{4.11.2}(ii), it follows that,
for any $k\in\zz$, $j\in\zz_+$, $\varphi\in \mathring{\cg}(\beta,\gamma)$
with $\beta,\ \gamma\in(0,\eta)$, and $x\in\cx$,
\begin{align}\label{tqp}
|\langle T(Q_{k,j}(x,\cdot)),\varphi\rangle|
&=\left|\int_{\cx}T(Q_{k,j}(x,\cdot))(y)\varphi(y)\,d\mu(y)\right|\\
&=\left|\int_{\cx}[T(Q_{k,j}(x,\cdot))(y)-T(Q_{k,j}(x,\cdot))(x_0)]\varphi(y)\,d\mu(y)\right|\notag\\
&\lesssim \|T(Q_{k,j}(x,\cdot))\|_{\dot{C}^{\eta'}(\cx)}
\|\varphi\|_{\mathring{\cg}(\beta,\gamma)}\notag\\
&\qquad\times\int_\cx [d(x_0,y)]^{\eta'}\frac{1}{V_1(x_0)+V(x_0,y)}
\left[\frac{1}{1+d(x_0,y)}\right]^\gamma\,d\mu(y)\notag\\
&\lesssim \|T(Q_{k,j}(x,\cdot))\|_{\dot{C}^{\eta'}(\cx)}
\|\varphi\|_{\mathring{\cg}(\beta,\gamma)}
\lesssim \delta^{-\eta'(k-j)}\frac{1}{V_{\delta^k}(x)}\|\varphi\|_{\mathring{\cg}(\beta,\gamma)},\notag
\end{align}
where $x_0$ is the same as in $\mathring{\cg}(\beta,\gamma)$ and
we chose $\eta'\in(0,\gamma)$.
For any $N\in\nn$, let
$$\widetilde{Q}_{k,N}:=\sum_{j=0}^N\delta^{j\theta}Q_{k,j}.$$
Then, for any $M,\  N\in\nn$ with $M < N$, we have
\begin{align*}
\left|\left\langle T\left(\widetilde{Q}_{k,M}(x,\cdot)\right)
-T\left(\widetilde{Q}_{k,N}(x,\cdot)\right),\varphi\right\rangle\right|
&\leq\sum_{j=M+1}^N|\langle T(Q_{k,j}(x,\cdot)),\varphi\rangle|\\
&\lesssim \sum_{j=M+1}^N\|T(Q_{k,j}(x,\cdot))\|_{\dot{C}^{\eta'}(\cx)}
\|\varphi\|_{\mathring{\cg}(\beta,\gamma)}\\
&\lesssim \delta^{-\eta'k}\frac{1}{V_{\delta^k}(x)}\|\varphi\|_{\mathring{\cg}(\beta,\gamma)}
\sum_{j=M+1}^\infty\delta^{\eta'j}\\
&\lesssim \delta^{-\eta'k}\delta^{\eta'M}\frac{1}{V_{\delta^k}(x)}\|\varphi\|_{\mathring{\cg}(\beta,\gamma)},
\end{align*}
which implies that $\{\langle T(\widetilde{Q}_{k,N}(x,\cdot)),\varphi\rangle\}_{N=0}^\infty$ is a Cauchy
sequence of $\cc$. Thus,
$$
\lim_{N\to\infty}\lf\langle T\lf(\widetilde{Q}_{k,N}(x,\cdot)\r),\varphi\r\rangle
$$
exists. Define
\begin{equation}\label{exp-t}
\langle T(Q_k(x,\cdot)),\varphi\rangle :=\lim_{N\to\infty}\left\langle
T\left(\widetilde{Q}_{k,N}(x,\cdot)\right),\varphi\right\rangle.\end{equation}
It is easy to see that $\langle T(Q_k(x,\cdot)),\varphi\rangle$ is independent of the choice of
$\{Q_{k,j}\}_{j=0}^\infty$. By \eqref{tqp}, we conclude that
$T(Q_k(x,\cdot))\in (\mathring{\cg}(\beta,\gamma))'$ with $\beta,\ \gamma\in(0,\eta)$.
\end{remark}

In \cite[Proposition 2.12]{hmy08}, Han et al.\ introduced an expansion $\widetilde{T}$ of $T$
from $C^\eta(\cx)$ to $(\mathring{C}_b^\eta(\cx))'$.
Namely, for any $\varphi\in\mathring{C}_b^\eta(\cx)$,
suppose $\supp \varphi \subset B(x_0,r)$  for some $x_0\in\cx$ and $r\in(0,\infty)$.
Choose $\psi\in C_b^\eta(\cx)$ such that $\mathbf{1}_{B(x_0,2A_0r)}
\leq \psi\leq\mathbf{1}_{B(x_0,4A_0^2r)}$.
Define
\begin{align}\label{exp-t2}
\left\langle \widetilde{T}(Q_k(x,\cdot)),\varphi\right\rangle &:=\langle T(\psi Q_k(x,\cdot)),\varphi\rangle \\
&\qquad + \int_\cx\int_\cx[K(w,y)-K(x_0,y)][1-\psi(y)]Q_k(x,y)\varphi(w)\,d\mu(w)\,d\mu(y).\notag
\end{align}
In \cite[Proposition 2.12]{hmy08}, it was also proved that the expansion \eqref{exp-t2} is independent of the choice of
$\psi$. We point out that, for any  $\varphi\in\mathring{C}_b^\eta(\cx)$,
\eqref{exp-t} coincides with \eqref{exp-t2}, which is proved  in the following proposition.

\begin{proposition}\label{tttp}
Let $\eta$ be as in Definition \ref{10.23.2},
$\{Q_k\}_{k\in\zz}$ an {\rm exp-ATI} and $\varphi\in\mathring{C}_b^\eta(\cx)$ with
$\supp\vz\subset B(x_0,r)$ for some $x_0\in\cx$ and $r\in(0,\fz)$. For any $x\in\cx$, suppose that
$\langle T(Q_k(x,\cdot)),\varphi\rangle$
and $\langle \widetilde{T}(Q_k(x,\cdot)),\varphi\rangle$ are,
respectively, as in \eqref{exp-t} and \eqref{exp-t2}. Then
\begin{equation}\label{ttt}
\left\langle \widetilde{T}(Q_k(x,\cdot)),\varphi\right\rangle=\langle T(Q_k(x,\cdot)),\varphi\rangle.
\end{equation}
\end{proposition}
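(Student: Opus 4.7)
The plan is to decompose $Q_k(x,\cdot)$ via Lemma \ref{4.9.1}, verify the identity \eqref{ttt} at the level of each partial sum using the disjoint-support kernel representation together with the cancellation $\int_\cx \varphi\,d\mu=0$, and then pass to the limit using the continuity of $T$ and dominated convergence. More precisely, fix $\eta'\in(0,\eta)$, let $\{Q_{k,j}\}_{j=0}^\infty$ be the decomposition from Lemma \ref{4.9.1} applied to $Q_k$ with parameter $\theta\in(\eta',\infty)$, and set $\widetilde{Q}_{k,N}(x,\cdot):=\sum_{j=0}^N\delta^{j\theta}Q_{k,j}(x,\cdot)$. Since each $Q_{k,j}(x,\cdot)$ has bounded support, so does $\widetilde{Q}_{k,N}(x,\cdot)$, and hence $\widetilde{Q}_{k,N}(x,\cdot)\in C_b^{\eta'}(\cx)$, which allows us to act $T$ on it directly.

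First I will split $\widetilde{Q}_{k,N}(x,\cdot)=\psi\,\widetilde{Q}_{k,N}(x,\cdot)+(1-\psi)\widetilde{Q}_{k,N}(x,\cdot)$, where $\psi$ is as in \eqref{exp-t2}. Since $(1-\psi)\widetilde{Q}_{k,N}(x,\cdot)$ is supported in $\cx\setminus B(x_0,2A_0r)$ and $\varphi$ is supported in $B(x_0,r)$, these two have disjoint supports, so by Definition \ref{4.11.2}(i),
\begin{align*}
\bigl\langle T\bigl((1-\psi)\widetilde{Q}_{k,N}(x,\cdot)\bigr),\varphi\bigr\rangle
=\int_\cx\int_\cx K(w,y)\,[1-\psi(y)]\widetilde{Q}_{k,N}(x,y)\varphi(w)\,d\mu(y)\,d\mu(w).
\end{align*}
Using $\int_\cx\varphi\,d\mu=0$ and Fubini (which is justified because the $y$-support of $(1-\psi)\widetilde{Q}_{k,N}(x,\cdot)$ has positive distance from $\supp\varphi$, so the kernel is locally integrable there), I can insert the subtraction $-K(x_0,y)$ and obtain
\begin{align*}
\bigl\langle T(\widetilde{Q}_{k,N}(x,\cdot)),\varphi\bigr\rangle
&=\bigl\langle T(\psi\,\widetilde{Q}_{k,N}(x,\cdot)),\varphi\bigr\rangle\\
&\quad+\int_\cx\int_\cx [K(w,y)-K(x_0,y)]\,[1-\psi(y)]\widetilde{Q}_{k,N}(x,y)\varphi(w)\,d\mu(y)\,d\mu(w).
\end{align*}

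The final step is to pass $N\to\infty$ in both terms. For the first term, by Lemma \ref{4.9.1}, $\widetilde{Q}_{k,N}(x,\cdot)\to Q_k(x,\cdot)$ in $C^{\eta'}(\cx)$, and since $\psi\in C_b^\eta(\cx)$ has bounded support independent of $N$, the products $\psi\,\widetilde{Q}_{k,N}(x,\cdot)$ converge to $\psi\,Q_k(x,\cdot)$ in $C_b^{\eta'}(\cx)$; the continuity of $T$ from $C_b^{\eta'}(\cx)$ to $(C_b^{\eta'}(\cx))'$ then gives convergence of this term to $\langle T(\psi\,Q_k(x,\cdot)),\varphi\rangle$. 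For the second term, I will apply the dominated convergence theorem: the integrand converges pointwise (via Lemma \ref{4.9.1}), and on the region $\{y:\ 1-\psi(y)\neq 0\}$ one has $d(x_0,y)\geq 2A_0 r\geq 2A_0\,d(x_0,w)$ for $w\in\supp\varphi$, so the kernel regularity yields
$$
|K(w,y)-K(x_0,y)|\lesssim\frac{[d(x_0,w)]^\eta}{V(x_0,y)[d(x_0,y)]^\eta};
$$
combining this with the size estimate of $Q_k$ from \eqref{10.23.3} gives an integrable dominant independent of $N$. Taking limits in both terms and recalling \eqref{exp-t} and \eqref{exp-t2} produces \eqref{ttt}.

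The main obstacle will be the continuity step: establishing that $\psi\widetilde{Q}_{k,N}(x,\cdot)\to\psi Q_k(x,\cdot)$ in a topology on which $T$ is known to be continuous, and at the same time justifying that the ``tail'' term $T((1-\psi)\widetilde{Q}_{k,N}(x,\cdot))$ is genuinely captured by the integral representation rather than by the abstract continuity hypothesis. This is precisely why the splitting by $\psi$ is introduced: it allows the non-compact behaviour of $Q_k(x,\cdot)$ to be absorbed into a convergent integral using only kernel estimates and the zero-integral property of $\varphi$, while the compactly supported piece is handled by continuity of $T$ on $C_b^{\eta'}(\cx)$.
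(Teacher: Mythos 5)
Your proposal is essentially correct and reaches the same conclusion by a mildly different route from the paper. The paper uses an $N$-dependent family of cutoffs $\psi_N$ with growing supports (exploiting the $\psi$-independence of \eqref{exp-t2} established in \cite[Proposition 2.12]{hmy08}); its key geometric observation is that $\psi_N\,\widetilde{Q}_{k,N}(x,\cdot)=\widetilde{Q}_{k,N}(x,\cdot)$ once $N$ is large, which makes the main term equal $\langle T(\widetilde{Q}_{k,N}(x,\cdot)),\varphi\rangle$ directly; the remaining terms (the ``$\psi_N$ correction'' and the far-field integral) are then shown to vanish by continuity of $T$ and dominated convergence. You instead keep the single cutoff $\psi$ from \eqref{exp-t2}, decompose $\widetilde{Q}_{k,N}(x,\cdot)=\psi\,\widetilde{Q}_{k,N}(x,\cdot)+(1-\psi)\widetilde{Q}_{k,N}(x,\cdot)$, handle the near piece by continuity of $T$ on $C_b^{\eta'}(\cx)$ (valid because the supports of $\psi\,\widetilde{Q}_{k,N}(x,\cdot)$ lie in the fixed bounded set $\supp\psi$ and the $C^{\eta'}$-convergence of Lemma \ref{4.9.1} passes through multiplication by the fixed $\psi$), and handle the far piece via the disjoint-support kernel representation, the cancellation of $\varphi$, and dominated convergence. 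Your route avoids invoking the $\psi$-independence of $\widetilde T$ and avoids the need for $\psi_N\to 1$, which is a modest simplification.

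There are two small technical slips to fix. First, the constraint $\theta\in(\eta',\infty)$ is not sufficient: to get a dominant for $|\widetilde{Q}_{k,N}(x,y)|$ that is uniform in $N$, one bounds $|\widetilde{Q}_{k,N}(x,y)|\leq\sum_{j=0}^\infty\delta^{j\theta}|Q_{k,j}(x,y)|\lesssim\sum_j\delta^{j\theta}\delta^{-j\omega}/V_{\delta^k}(x)$, using \eqref{sizeqkj} and $V_{\delta^{k-j}}(x)\gtrsim\delta^{j\omega}V_{\delta^k}(x)$, and this geometric series requires $\theta>\omega$. Second, the dominant in the dominated-convergence step does not come from ``the size estimate of $Q_k$ from \eqref{10.23.3}'': the partial sums $\widetilde{Q}_{k,N}(x,\cdot)$ are not pointwise dominated by $|Q_k(x,\cdot)|$, only by $\sum_j\delta^{j\theta}|Q_{k,j}(x,\cdot)|\lesssim 1/V_{\delta^k}(x)$; the needed $y$-integrability must come from the kernel regularity estimate and Lemma \ref{6.15.1}(iii) (integrating $V(x_0,y)^{-1}[d(x_0,y)]^{-\eta}$ over $\{d(x_0,y)\geq 2A_0 r\}$), not from any decay of $\widetilde{Q}_{k,N}(x,\cdot)$. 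With these corrections the argument is sound.
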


\begin{proof}
Let $\eta$ be as in Definition \ref{10.23.2} and
$\varphi\in\mathring{C}_b^\eta(\cx)$ with
$\supp\vz\subset B(x_0,r)$ for some $x_0\in\cx$ and $r\in(0,\fz)$.
By \cite[Corollary 4.2]{ah13}, we know that, for any $N\in\nn$,
there exists a $\psi_N\in C_b^\eta(\cx)$ such that
$$\mathbf{1}_{B(x_0,C_1[\delta^{k-N}+d(x_0,x)])}
\leq \psi_N\leq\mathbf{1}_{B(x_0,2A_0C_1[\delta^{k-N}+d(x_0,x)])},$$
where $C_1$ is as in Lemma \ref{4.8.1}(i) and, for any $y_1,\ y_2\in\cx$,
$$|\psi_N(y_1)-\psi_N(y_2)|\lesssim
\left[\frac{d(y_1,y_2)}{\delta^{k-N}+d(x_0,x)}\right]^\eta,$$
which implies that $\sup_{N\in\zz_+}\|\psi_N\|_{C^\eta(\cx)}\lesssim[d(x_0,x)]^{-\eta}$.
Choose $N_0\in\nn$ sufficiently large such that $C_1[\delta^{k-N_0}+d(x_0,x)]>2A_0r$. Then, by
\cite[Proposition 2.12]{hmy08},
we conclude that, for any $N\in\{N_0,N_0+1,\ldots\}$ and $x\in\cx$,
\begin{align*}
\left\langle \widetilde{T}(Q_k(x,\cdot)),\varphi\right\rangle &=\langle T(\psi_N Q_k(x,\cdot)),\varphi\rangle \\
&\qquad + \int_\cx\int_\cx[K(w,y)-K(x_0,y)][1-\psi_N(y)]Q_k(x,y)\varphi(w)\,d\mu(w)d\mu(y).
\end{align*}
For any $k\in\zz$ and $N\in\nn$, let $\widetilde{Q}_{k,N}$ be as in Remark \ref{exoft}.
From this, we deduce that, for any $k\in\zz$ and $x\in\cx$,
\begin{align*}
\left\langle \widetilde{T}(Q_k(x,\cdot)),\varphi\right\rangle &=\lim_{N\to\infty}
\langle T(\psi_N Q_k(x,\cdot)),\varphi\rangle \\
&\qquad +\lim_{N\to\infty} \int_\cx\int_\cx[K(w,y)-K(x_0,y)][1-\psi_N(y)]Q_k(x,y)
\varphi(w)\,d\mu(w)d\mu(y).\\
&= \lim_{N\to\infty}\langle T(\psi_N[Q_k(x,\cdot)-\widetilde{Q}_{k,N}(x,\cdot)]),
\varphi\rangle+\lim_{N\to\infty}\langle T(\psi_N \widetilde{Q}_{k,N}(x,\cdot)),\varphi\rangle\\
&\qquad + \lim_{N\to\infty}\int_\cx\int_\cx[K(w,y)-K(x_0,y)]
[1-\psi_N(y)]Q_k(x,y)\varphi(w)\,d\mu(w)d\mu(y)\\
&=: \mathrm{I}_1+\mathrm{I}_2+\mathrm{I}_3.
\end{align*}

We first estimate $\mathrm{I}_2$. By Lemma \ref{4.9.1}(ii), we find that, for any $N\in\zz_+$,
$\supp \widetilde{Q}_{k,N}(x,\cdot)\subset B(x,C_1\delta^{k-N})$. For any $y\in B(x,C_1\delta^{k-N})$, we have
$$d(y,x_0)\leq A_0[d(y,x)+d(x,x_0)]\leq A_0C_1[\delta^{k-N}+d(x_0,x)],$$
which implies that $\psi_N \widetilde{Q}_{k,N}(x,\cdot)=\widetilde{Q}_{k,N}(x,\cdot)$
and hence, by \eqref{exp-t}, we further obtain
$$
\mathrm{I}_2=\lim_{N\to\infty}\lf\langle T\lf(\widetilde{Q}_{k,N}(x,\cdot)\r),\varphi\r\rangle
=\langle T(Q_k(x,\cdot),\varphi)\rangle.
$$

Now we estimate $\mathrm{I}_1$. For any $N\in\zz_+$ and $x\in\cx$,
\begin{align*}
&\|\psi_N[Q_k(x,\cdot)-\widetilde{Q}_{k,N}(x,\cdot)]\|_{C^\eta(\cx)}\\
&\quad=\sup_{y_1,\ y_2\in\cx,\ y_1\neq y_2}
\frac{|\psi_N(y_1)[Q_k(x,y_1)-\widetilde{Q}_{k,N}(x,y_1)]-\psi_N(y_2)[Q_k(x,y_2)-\widetilde{Q}_{k,N}(x,y_2)]|}
{[d(y_1,y_2)]^\eta}\\
&\quad\leq \sup_{y_1,\ y_2\in\cx,\ y_1\neq y_2}
\frac{|\psi_N(y_1)[Q_k(x,y_1)-\widetilde{Q}_{k,N}(x,y_1)]-\psi_N(y_1)[Q_k(x,y_2)-\widetilde{Q}_{k,N}(x,y_2)]|}
{[d(y_1,y_2)]^\eta}\\
&\quad\qquad +\sup_{y_1,\ y_2\in\cx,\ y_1\neq y_2}
\frac{|\psi_N(y_1)[Q_k(x,y_2)-\widetilde{Q}_{k,N}(x,y_2)]-\psi_N(y_2)[Q_k(x,y_2)-\widetilde{Q}_{k,N}(x,y_2)]|}
{[d(y_1,y_2)]^\eta}\\
&\quad\leq\|\psi_N\|_{L^\infty(\cx)}
\left\|Q_k(x,\cdot)-\widetilde{Q}_{k,N}(x,\cdot)\right\|_{C^\eta(\cx)}
+\|\psi_N\|_{C^\eta(\cx)}
\left\|Q_k(x,\cdot)-\widetilde{Q}_{k,N}(x,\cdot)\right\|_{L^\infty(\cx)},
\end{align*}
which, together with Lemma \ref{4.9.1}(i), implies $\lim_{N\to\infty}\|\psi_N[Q_k(x,\cdot)
-\widetilde{Q}_{k,N}(x,\cdot)]\|_{C^\eta(\cx)}=0$.
From this and the assumption that $T$ is continuous on $C_b^\eta(\cx)$,
we deduce that $\mathrm{I}_1=0$.

Finally we estimate $\mathrm{I}_3$.
Observe that there exists an $N_1\in\zz_+$ such that $r\leq2^{-1}C_1[\delta^{k-N_1}+d(x_0,x)]$.
Then, for any $w\in B(x_0,r)$ and $y\notin B(x_0,A_0C_1[\delta^{k-N_1}+d(x_0,x)])$,
we have
$$d(w,x_0)<r\leq2^{-1}C_1[\delta^{k-N_1}+d(x_0,x)]\leq (2A_0)^{-1}d(x_0,y).$$
From this and Definition \ref{4.11.2}(iii), we deduce that,
for any $N\in\{N_1,N_1+1,\dots\}$ and $x\in\cx$,
\begin{align*}
&\int_\cx\int_\cx|K(w,y)-K(x_0,y)||1-\psi_{N}(y)||Q_k(x,y)||\varphi(w)|\,d\mu(w)\,d\mu(y)\\
&\quad\lesssim\int_{\cx\setminus B(x_0,A_0C_1[\delta^{k-N_1}+d(x_0,x)])}\int_{B(x_0,r)}
\frac{[d(x_0,w)]^\eta}{V(x_0,y)[d(x_0,y)]^\eta}|Q_k(x,y)||\varphi(w)|\,d\mu(w)d\mu(y)\\
&\quad \lesssim \frac{1}{\mu(B(x_0,A_0C_1[\delta^{k-N_1}+d(x_0,x)]))}
\mu(B(x_0,r))\|Q_k(x,\cdot)\|_{L^1(\cx)}\|\varphi\|_{L^\infty(\cx)}.
\end{align*}
By this and the Lebesgue dominated convergence theorem, we conclude that
$\mathrm{I}_3=0$.
From the estimates of $\mathrm{I}_1$, $\mathrm{I}_2$ and $\mathrm{I}_3$,
we deduce that \eqref{ttt} holds true, which  completes the proof of Proposition \ref{tttp}.
\end{proof}

Now we show Theorem \ref{12.15.1}.

\begin{proof}[Proof of Theorem \ref{12.15.1}]
Let $f\in \mathring{\cg}(\eta,\eta)$ and
$\{Q_k\}_{k=-\infty}^\infty$ be an exp-AT{\rm I}. For any $k\in\zz$,
$\alpha\in\ca_k$ and $m\in\{1,\dots,N(k,\alpha)\}$, suppose that
$\ya$ is an arbitrary point in $\qa$. Then, for
any fixed $\beta,\ \gamma\in(0,\eta)$,
by Lemma \ref{crf}, we know that there exists a sequence
$\{\widetilde{Q}_k\}_{k=-\infty}^\infty$ of bounded linear operators on $L^2(\mathcal{X})$ such that
$$f(\cdot) = \sum_{k'=-\infty}^\infty\sum_{\alpha' \in \ca_{k'}}
\sum_{m'=1}^{N(k',\alpha')}\mu\left(\qap\right)Q_{k'}\left(\cdot,\yap\right)\widetilde{Q}_{k'}(f)
\left(\yap\right) \qquad\text{in}\quad \cggi.$$
Let $\{P_k\}_{k=-\infty}^\infty$ be  another exp-AT{\rm I}.
From Remark \ref{exoft}, we deduce that, for any $k\in\zz$ and $x\in\cx$,
$$P_kT(x,\cdot)=[T^\ast(P_k(x,\cdot))]^\ast\in(\cggi)'.$$
Thus, for any $k\in\zz$ and $x\in\cx$,
$$P_kT(f)(x) =\sum_{k'=-\infty}^\infty\sum_{\alpha' \in \ca_{k'}}
\sum_{m'=1}^{N(k',\alpha')}\mu\left(\qap\right)\left(P_kTQ_{k'}\left(\cdot,\yap\right)\right)(x)
\widetilde{Q}_{k'}(f)\left(\yap\right).$$
Now, for any $k',\ k\in\zz$, $\alpha'\in\ca_{k'}$,
$m'\in\{1,\dots,N(k',\alpha')\}$, $x\in\cx$ and $\yap \in \qap$, let
\begin{equation}
I_{k,k'}(x,\yap):=\left(P_kTQ_{k'}\left(\cdot,\yap\right)\right)(x).
\end{equation}
We claim that, for any fixed $\eta'\in(0,\eta)$,
and any $k'\in\zz$, $\alpha'\in\ca_{k'}$,
$m'\in\{1,\dots,N(k',\alpha')\}$ and $x\in\cx$,
\begin{equation}\label{12.16.1}
\left|I_{k,k'}\left(x,\yap\right)\right|\lesssim\delta^{|k-k'|\eta'}\frac{1}{V_{\delta^{k\wedge k'}}(x)+V(x,\yap)}
\left[\frac{\delta^{k\wedge k'}}{\delta^{k\wedge k'}+d(x,\yap)}\right]^{\eta'}.
\end{equation}

To show \eqref{12.16.1}, by symmetry, we only need to consider the case $k\leq k'$.
To this end, by Lemma \ref{4.9.1},
Remark \ref{exoft} and the Lebesgue dominated convergence theorem,
we have
\begin{equation}\label{ptq}
\left(P_kTQ_{k'}\left(\cdot,\yap\right)\right)(x)=\sum_{j=0}^\infty\sum_{j'=0}^\infty
\delta^{j\theta}\delta^{j'\theta}\left(P_{k,j}TQ_{k',j'}\left(\cdot,\yap\right)\right)(x).
\end{equation}
We claim that, for any $k,\ k'\in\zz$, $j,\ j'\in\zz_+$, $\alpha'\in\ca_{k'}$,
$m'\in\{1,\dots,N(k',\alpha')\}$ and $x\in\cx$,
\begin{equation}\label{12.16.2}
\left|\left(P_{k,j}TQ_{k',j'}\left(\cdot,\yap\right)\right)(x)\right|\lesssim\delta^{[(k'-j')-(k-j)]\eta'}
\frac{\delta^{-j'\omega}}{V_{\delta^{k}}(x)+V(x,\yap)}\left[
\frac{\delta^{k-j-j'}}{\delta^k+d(x,\yap)}\right]^{\eta'}.
\end{equation}

Now we show \eqref{12.16.2}. If $d(x,\yap)\leq3A_0^2C_1\delta^{k-j-j'}$
with $C_1$ as in  Lemma \ref{4.8.1}(i),
we then have
$$\delta^k+d(x,\yap)\lesssim \delta^{k-j-j'}+d(x,\yap)\lesssim\delta^{k-j-j'}$$
and
$$\frac{1}{V_{\delta^{k-j}}(x)}\lesssim \delta^{-j'\omega}\frac{1}{V_{\delta^{k-j-j'}}(x)}
\lesssim \delta^{-j'\omega}\frac{1}{V_{\delta^{k}}(x)+V(x,\yap)}.$$
From these, \eqref{sizeqkj}, Definition
\ref{4.11.2}(ii) and Lemma \ref{4.9.1}(ii), we deduce that, for any $\alpha'\in\ca_{k'}$,
$m'\in\{1,\dots,N(k',\alpha')\}$ and $x\in\cx$,
\begin{align*}
\left|\left(P_{k,j}TQ_{k',j'}\left(\cdot,\yap\right)\right)(x)\right|
&=\left|\int_\cx(T^\ast (P_{k,j}(x,\cdot)))^\ast(y)Q_{k',j'}\left(y,\yap\right)\,d\mu(y)\right|\\
&\leq\int_\cx\left|(T^\ast (P_{k,j}(x,\cdot)))^\ast(y)-(T^\ast (P_{k,j}(x,\cdot)))^\ast\left(\yap\right)\right|\\
&\qquad\times\left|Q_{k',j'}\left(y,\yap\right)\right|\,d\mu(y)\notag\\
&\lesssim \int_\cx\|(T^\ast (P_{k,j}(x,\cdot)))^\ast\|_{\dot{C}^{\eta'}(\cx)}\left[d\left(y,\yap\right)\right]^{\eta'}\notag\\
&\qquad\times\frac{1}{V_{\delta^{k'-j'}}(y)+V(y,\yap)}
\left[\frac{\delta^{k'-j'}}{\delta^{k'-j'}+d(y,\yap)}\right]^\eta\,d\mu(y)\notag\\
&\lesssim \delta^{[(k'-j')-(k-j)]\eta'}\frac{1}{V_{\delta^{k-j}}(x)}\notag\\
&\lesssim\delta^{[(k'-j')-(k-j)]\eta'}
\frac{\delta^{-j'\omega}}{V_{\delta^{k}}(x)+V(x,\yap)}\left[\frac{\delta^{k-j-j'}}
{\delta^k+d(x,\yap)}\right]^{\eta'}.
\end{align*}
If $d(x,\yap)>3A_0^2C_1\delta^{k-j-j'}$,
it is easy to see that
$$B(x,C_1\delta^{k-j})\cap B(\yap,C_1\delta^{k'-j'})=\emptyset.$$
Moreover, for any $z\in B(x,C_1\delta^{k-j})$ and $y\in B(\yap,C_1\delta^{k'-j'})$, we have
\begin{align*}
d(z,\yap)&\geq A_0^{-1}[d(x,\yap)-A_0d(x,z)]
\geq A_0^{-1}(3A_0^2C_1\delta^{k-j-j'}-A_0C_1\delta^{k-j})\\
&\geq 2A_0C_1\delta^{k-j-j'}\geq 2A_0d(y,\yap)
\end{align*}
and
$$d(x,\yap)\sim d(x,\yap)-A_0d(x,z)\lesssim d(z,\yap).$$
From these and Definition \ref{4.11.2}(iii), we deduce that
$$\left|K(z,y)-K\left(z,\yap\right)\right|\lesssim \frac{[d(y,\yap)]^\eta}{V(z,\yap)[d(z,\yap)]^\eta }
\lesssim  \frac{\delta^{(k'-j')\eta'}}{V(x,\yap)[d(x,\yap)]^{\eta'}},$$
which, combined  with
$$d(x,\yap)\gtrsim \delta^{k-j-j'}+d(x,\yap)\gtrsim \delta^{k}+d(x,\yap),$$
implies that
\begin{align*}
\left|\left(P_{k,j}TQ_{k',j'}\left(\cdot,\yap\right)\right)(x)\right|&=\left|\int_{\cx}
\int_{\cx}\left[K(z,y)-K\left(z,\yap\right)\right]
P_{k,j}(z,x)Q_{k',j'}\left(y,\yap\right)\,d\mu(y)d\mu(z)\right|\\
&\lesssim \left[\frac{\delta^{k'-j'}}{d(x,\yap)}\right]^{\eta'}\frac{1}{V(x,\yap)}\\
&\lesssim \delta^{[(k'-j')-(k-j)]\eta'}
\frac{\delta^{-j'\omega}}{V_{\delta^{k}}(x)+V(x,\yap)}
\left[\frac{\delta^{k-j-j'}}{\delta^k+d(x,\yap)}\right]^{\eta'}.
\end{align*}
This shows \eqref{12.16.2}. From \eqref{12.16.2} and  \eqref{ptq} with
$\theta:=2(\omega+\eta)$,  we deduce that
\eqref{12.16.1} holds true.

Notice that \eqref{12.16.1} is analogous to \eqref{pp} in the proof of
Lemma \ref{5.12.1}. Then we can use an  argument similar to
that used in the proof of Lemma \ref{5.12.1} to obtain that $\|Tf\|_{\hb}\lesssim \|f\|_{\hb}$ and
$\|Tf\|_{\hf}\lesssim \|f\|_{\hf}$; we omit the details here.
Finally, by Lemma \ref{12.15.2} and a density argument, we complete the
proof of Theorem \ref{12.15.1}.
\end{proof}

\begin{remark}
Let $\cx$ be a regular space.
Suppose that $T$ is a Calder\'on--Zygmund operator
whose distribution kernel only satisfies (i), (ii) and (iii) of Definition \ref{4.11.2}. On one hand,
by \cite[Theorem 5.2]{hs}, we know that the conclusion of Theorem \ref{12.15.1} in this case
holds true for any $s\in(0,\beta\wedge\gamma)$.
On the other hand, if $T$ is a Calder\'on--Zygmund operator and
the distribution kernel of $T^\ast$ only satisfies (i), (ii) and (iii) of Definition \ref{4.11.2},
by \cite[Theorem 5.2]{hs}, we also know that the conclusion of
Theorem \ref{12.15.1} in this case holds true for
any $s\in(-(\beta\wedge\gamma),0)$.
These facts indicate that our assumptions on $T$  in Theorem \ref{12.15.1}
is stronger when $s\in(-(\beta\wedge\gamma),\beta\wedge\gamma)$.
However, our underlying spaces are more general than regular spaces considered  in \cite{hs}.
Moreover, our result also covers
the corresponding one when $\cx$ is an RD-space (see \cite[Theorem 5.23]{hmy06}).
\end{remark}

We now turn to the boundedness of Calder\'on--Zygmund operators on $\ihb$ and $\ihf$.
To this end, let us first introduce
the inhomogeneous Calder\'on--Zygmund operators.

\begin{definition}\label{4.12.1}
Let $\epsilon\in(0,1]$ and $\sigma \in (0,\infty)$. A linear operator $T$,
which is continuous from $C_b^{\epsilon'}(\cx)$ to $(C_b^{\epsilon'}(\cx))'$
for any $\epsilon'\in(0,\epsilon)$, is called an \emph{inhomogeneous Calder\'on--Zygmund operator of order
$(\epsilon,\sigma$)} if $T$ has a distributional kernel $K$,
which is locally integrable away from the diagonal of $\cx\times\cx$,
and satisfies the conditions (i) through (iv) of Definition \ref{4.11.2}
and the following additional conditions:
\begin{enumerate}
\item[{\rm(ii)$'$}] if $\phi$ is a normalized $\epsilon$-bump function associated to a ball with radius $r$,
then there exists a positive constant $C$, independent of $\phi$, such that $\|T\phi\|_{L^\infty(\cx)}\leq C$.
\item[{\rm(iii)$'$}] there exists a positive constant $C$ such that, for any $x,\ y\in\cx$ with $d(x,y)>1$,
$$|K(x,y)|\leq C\frac{1}{V(x,y)}\frac{1}{[d(x,y)]^\sigma}.$$
\end{enumerate}
\end{definition}

We have the following conclusion on the boundedness of inhomogeneous Calder\'on--Zygmund
operators of order $(\eta,\sigma)$.

\begin{theorem}\label{4.12.2}
Let $\beta,\ \gamma\in(0,\eta)$ with $\eta$ as in Definition \ref{10.23.2} and
$s\in(-(\beta\wedge\gamma),\beta\wedge\gamma)$.
\begin{enumerate}
\item[{\rm(i)}] Suppose $p\in(p(s,\bz\wedge\gz),\fz]$,
$q\in(0,\fz]$, $\bz$ and  $\gz$ satisfy \eqref{10.19.3}
and \eqref{4.23.x}, and $T$  is  an inhomogeneous Calder\'on--Zygmund operators of order $(\eta,\sigma)$
with $\sigma\in(\omega(1/p-1)_+,\fz)$, where $p(s,\bz\wedge\gz)$ and $\omega$ are, respectively, as in
\eqref{eq-doub} and \eqref{pseta}. Then $T$ is bounded on $\ihb$ when $\max\{p,q\}<\fz$,
and from $\ihb\cap\cg(\eta,\eta)$ to $\ihb$ when $\max\{p,q\}=\infty$,
where $\ihb$ is viewed as a subspace of $(\icgg)'$;
\item[{\rm(ii)}] Suppose $p\in(p(s,\bz\wedge\gz),\fz)$, $q\in(p(s,\bz\wedge\gz),\fz]$, $\bz$ and $\gz$
satisfy \eqref{10.19.3} and \eqref{4.23.x}, and $T$ is  an inhomogeneous Calder\'on--Zygmund operators
of order $(\eta,\sigma)$ with $\sigma\in(\omega(1/p-1)_+,\fz)$. Then $T$ is bounded on $\ihf$ when $q\neq\fz$,
and from $F^s_{p,\fz}(\cx)\cap\cg(\eta,\eta)$ to $F^s_{p,\fz}(\cx)$,
where $\ihb$ is viewed as a subspace of $(\icgg)'$.
\end{enumerate}
\end{theorem}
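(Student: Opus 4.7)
The plan is to follow the general strategy used for Theorem \ref{12.15.1} in the homogeneous setting, but adapted to the inhomogeneous context through the inhomogeneous Calder\'on reproducing formula (Lemma \ref{icrf}) and the inhomogeneous Plancherel--P\^olya inequality (Lemma \ref{6.9.1}). First, I would fix $f\in \cg(\eta,\eta)$ (which lies in $\ihb$ by Proposition \ref{10.20.1}(iii)) and, for any fixed exp-IATIs $\{P_k\}_{k=0}^\infty$ (used to measure $T(f)$) and $\{Q_{k'}\}_{k'=0}^\infty$ (used to decompose $f$), expand $f$ via Lemma \ref{icrf}:
\begin{align*}
f(\cdot)&=\sum_{\alpha'\in\ca_0}\sum_{m'=1}^{N(0,\alpha')}
\int_{Q_{\alpha'}^{0,m'}}\widetilde{Q}_0(\cdot,y)\,d\mu(y)\,Q_{\alpha',1}^{0,m'}(f)\\
&\quad+\sum_{k'=1}^N\sum_{\alpha'\in\ca_{k'}}\sum_{m'=1}^{N(k',\alpha')}
\mu(Q_{\alpha'}^{k',m'})\widetilde{Q}_{k'}(\cdot,y_{\alpha'}^{k',m'})Q_{\alpha',1}^{k',m'}(f)\\
&\quad+\sum_{k'=N+1}^\infty\sum_{\alpha'\in\ca_{k'}}\sum_{m'=1}^{N(k',\alpha')}
\mu(Q_{\alpha'}^{k',m'})\widetilde{Q}_{k'}(\cdot,y_{\alpha'}^{k',m'})Q_{k'}f(y_{\alpha'}^{k',m'}).
\end{align*}
Applying $T$ term by term (justified by its continuity from $C_b^{\eta'}(\cx)$ to $(C_b^{\eta'}(\cx))'$, together with the extension \eqref{exp-t} combined with Lemma \ref{4.25.1} to make sense of $T(\widetilde{Q}_{k'}(\cdot,y))$) and then composing with $P_k$ reduces the problem to estimating the matrix coefficients
$$
J_{k,k'}(x,y):=P_kT\widetilde{Q}_{k'}(\cdot,y)(x).
$$

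The heart of the proof is an almost-orthogonality bound of the form
$$
|J_{k,k'}(x,y)|\leq C\,\delta^{|k-k'|\eta'}\,
\frac{1}{V_{\delta^{k\wedge k'}}(x)+V(x,y)}\left[\frac{\delta^{k\wedge k'}}{\delta^{k\wedge k'}+d(x,y)}\right]^{\eta'+\sigma'}
$$
for any fixed $\eta'\in(0,\eta)$ and some $\sigma'\in(\omega(1/p-1)_+,\sigma]$. I split into two regimes. When both $k,\,k'\geq N+1$, one can decompose both $P_k$ and $\widetilde{Q}_{k'}$ via Lemma \ref{4.9.1} into sums of bump functions, and reproduce the argument behind \eqref{12.16.1} in the proof of Theorem \ref{12.15.1}, using conditions (i)--(iv) of Definition \ref{4.11.2}; the cancellation of $Q_{k'}$ gives the exponential factor $\delta^{|k-k'|\eta'}$ via the H\"older regularity of $T^\ast(P_{k,j}(x,\cdot))^\ast$. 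The genuinely new case is the low-frequency regime, where one of $k$ or $k'$ lies in $\{0,\ldots,N\}$: here $\widetilde{Q}_{k'}(\cdot,y)$ (or the analogous kernel for $P_k$) is only a normalized $\eta$-bump function without cancellation, so Definition \ref{4.11.2}(ii) must be replaced by Definition \ref{4.12.1}(ii)$'$ (to control the $L^\infty$-norm on the bump support) and the global decay of $K$ at infinity from Definition \ref{4.12.1}(iii)$'$ is essential in order to exploit the long-range part of the integral $\int K(x,y)\widetilde Q_{k'}(z,y)\,d\mu(y)$ when $d(x,z)\gg 1$. This is where the hypothesis $\sigma>\omega(1/p-1)_+$ enters: it guarantees that summation of the spatial tails against the measures $\mu(Q_{\alpha'}^{k',m'})$ yields a finite analog of the estimate in Lemma \ref{9.14.1}.

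Once the almost-orthogonality estimate is established, the remaining argument parallels that of Lemma \ref{6.9.1} and Proposition \ref{6.4.1} step by step. Indeed, $J_{k,k'}$ plays the role of the kernel $Q_k\widetilde{P}_{k'}$ in that proof, so one can apply the same combination of Lemma \ref{9.14.1}, Lemma \ref{10.18.5} and the Fefferman--Stein vector-valued maximal inequality (Lemma \ref{fsvv}) to obtain
$$
\|T(f)\|_{\ihb}\leq C\|f\|_{\ihb}\quad\text{and}\quad \|T(f)\|_{\ihf}\leq C\|f\|_{\ihf}
$$
for $f\in \cg(\eta,\eta)$. Finally, when $\max\{p,q\}<\fz$ (respectively $q<\fz$), density of $\cg(\eta,\eta)$ in $\ihb$ (respectively $\ihf$), which can be proved exactly as in Lemma \ref{12.15.2} using the inhomogeneous reproducing formula, extends the boundedness to the full space; when $\max\{p,q\}=\fz$ (or $q=\fz$), we just state the bound on $\ihb\cap\cg(\eta,\eta)$ (or $\ihf\cap\cg(\eta,\eta)$). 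The principal difficulty, as anticipated, is the careful bookkeeping of the low-frequency matrix entries $J_{0,k'}$ and $J_{k,0}$ at the threshold $k=N$ or $k'=N$, since these mix the ``averaging'' kernel $\widetilde{Q}_0$ (which integrates to $1$) with the non-trivial decay required at infinity; this is precisely where Definition \ref{4.12.1}(ii)$'$--(iii)$'$ are indispensable and where the parameter $\sigma$ is forced to satisfy $\sigma>\omega(1/p-1)_+$.
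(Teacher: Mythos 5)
Your overall strategy is aligned with the paper's, but there is a substantive gap in the way you set up the matrix coefficients. You expand $f$ using Lemma~\ref{icrf} as literally stated, so that $T$ lands on $\widetilde Q_{k'}(\cdot,y)$, and you then propose to ``decompose both $P_k$ and $\widetilde Q_{k'}$ via Lemma~\ref{4.9.1}.'' This step fails: Lemmas~\ref{4.9.1} and~\ref{4.25.1} only apply to exp-(I)ATI kernels, whose exponential decay is essential for the construction. Concretely, the bump decomposition relies on the estimate $|A_{k,j}(x,y)|\lesssim\delta^{j\theta}/V_{\delta^{k-j}}(x)$ for an arbitrarily large parameter $\theta$, which is obtained by taking the size-exponent $\Gamma:=\omega+\theta$ as large as one likes. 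The dual kernel $\widetilde Q_{k'}$ only satisfies~\eqref{4.23x}--\eqref{4.23y} with a fixed decay exponent $\gamma<\eta<1$, so the coefficient gain one can extract from its tail is at most $\delta^{j\gamma}$, which is far too weak to absorb the $\delta^{-j\omega}$ factors that arise when summing over scales. For the same reason the mollifiers $T(\widetilde Q_{k'}(\cdot,y))$ cannot be made sense of through the construction in Remark~\ref{exoft} as you suggest, since that construction also proceeds through the bump decomposition.

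What the paper does differently is to apply the \emph{dual} (symmetric) form of the discrete reproducing formula, $f(\cdot)=\sum_{k'}\sum_{\alpha',m'}\mu(\qap)Q_{k'}(\cdot,\yap)\widetilde Q_{k'}(f)(\yap)$ (and its inhomogeneous analogue), so that $T$ always acts on the exp-(I)ATI kernel $Q_{k'}$ rather than on $\widetilde Q_{k'}$; this is exactly the step taken in the proof of Theorem~\ref{12.15.1} and carried over to Theorem~\ref{4.12.2}. Then both $Q_k$ and $Q_{k'}$ enjoy the exponential decay needed for Lemmas~\ref{4.9.1}/\ref{4.25.1}, and the composed kernel $J_{k,k'}(x,y)=(Q_kTQ_{k'}(\cdot,y))(x)$ is estimated through the double bump sum $\sum_{j,j'}\delta^{j\theta}\delta^{j'\theta}(Q_{k,j}TQ_{k',j'}(\cdot,y))(x)$, giving~\eqref{4.12.4} for $k,k'\le N$ via the inhomogeneous conditions~(ii)$'$--(iii)$'$ of Definition~\ref{4.12.1} and~\eqref{4.12.6} in all other cases via the cancellation of whichever of $Q_{k},Q_{k'}$ has index $>N$. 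Your intuition about where $\sigma>\omega(1/p-1)_+$ is needed and how Lemma~\ref{6.9.1} and the Fefferman--Stein inequality close the argument is correct, but the proof as you have outlined it does not get off the ground until the reproducing formula is used in its dual form.
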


To prove Theorem \ref{4.12.2}, we also need the following several technical lemmas.
The following lemma is similar to  Lemma \ref{12.15.2},
and the details of its proof are omitted here.

\begin{lemma}\label{4.12.3}
Let $p,\ q\in(0,\infty]$, $\beta,\ \gamma\in(0,\eta)$ with $\eta$ as in Definition \ref{10.23.2}, and
$s\in(-(\beta\wedge\gamma),\beta\wedge\gamma)$ satisfy \eqref{10.19.3} and \eqref{4.23.x}. Let $\ihb$ and $\ihf$ be as in Definition \ref{ih}. Then $\cg(\eta,\eta)$ is dense in $\ihb$ when $p\in(p(s,\beta\wedge\gamma),\infty)$,
and $q\in(0,\infty)$, and dense in $\ihf$ when $p,\ q\in(p(s,\beta\wedge\gamma),\infty)$,
where $p(s,\beta\wedge\gamma)$ is as in \eqref{pseta}.
\end{lemma}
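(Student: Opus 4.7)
The plan is to mirror the strategy of Lemma \ref{12.15.2}, but starting from the \emph{inhomogeneous} discrete Calder\'on reproducing formula of Lemma \ref{icrf}, and using the inhomogeneous Plancherel--P\^olya inequality (Lemma \ref{6.9.1}) to control the tail of the approximants. Fix $f \in \ihb$ with $\beta,\ \gamma$ satisfying \eqref{10.19.3} and \eqref{4.23.x}; then $f \in (\icgg)'$, so Lemma \ref{icrf} yields an $N \in \nn$, bounded linear operators $\{\widetilde{Q}_k\}_{k \in \zz_+}$ with kernels satisfying \eqref{4.23x}, \eqref{4.23y} and the integral condition of Lemma \ref{ih_c_crf}, and points $\ya \in \qa$ such that, in $(\icgg)'$,
\begin{align*}
f(\cdot)
&= \sum_{\alpha \in \ca_0}\sum_{m=1}^{N(0,\alpha)}\int_{\qo}\widetilde{Q}_0(\cdot,y)\,d\mu(y)\,Q^{0,m}_{\alpha,1}(f)
 + \sum_{k=1}^{N}\sum_{\alpha \in \ca_k}\sum_{m=1}^{N(k,\alpha)}\mu(\qa)\,\widetilde{Q}_k(\cdot,\ya)\,Q^{k,m}_{\alpha,1}(f)\\
&\quad + \sum_{k=N+1}^{\infty}\sum_{\alpha \in \ca_k}\sum_{m=1}^{N(k,\alpha)}\mu(\qa)\,\widetilde{Q}_k(\cdot,\ya)\,Q_k f(\ya).
\end{align*}

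For each $k \in \zz_+$, since $\ca_k$ is at most countable, choose finite sets $\ca_k^n \subset \ca_k^{n+1}$ with $\bigcup_{n \in \nn}\ca_k^n = \ca_k$. For any $n \in \nn$ with $n \geq N+1$ and any $x \in \cx$, define the partial sum
\begin{align*}
f_n(x)
&:= \sum_{\alpha \in \ca_0^n}\sum_{m=1}^{N(0,\alpha)}\int_{\qo}\widetilde{Q}_0(x,y)\,d\mu(y)\,Q^{0,m}_{\alpha,1}(f)
 + \sum_{k=1}^{N}\sum_{\alpha \in \ca_k^n}\sum_{m=1}^{N(k,\alpha)}\mu(\qa)\,\widetilde{Q}_k(x,\ya)\,Q^{k,m}_{\alpha,1}(f)\\
&\quad + \sum_{k=N+1}^{n}\sum_{\alpha \in \ca_k^n}\sum_{m=1}^{N(k,\alpha)}\mu(\qa)\,\widetilde{Q}_k(x,\ya)\,Q_k f(\ya).
\end{align*}
Each $f_n$ is a finite linear combination of functions of the form $\widetilde{Q}_k(\cdot,\ya)$ and $\int_{\qo}\widetilde{Q}_0(\cdot,y)\,d\mu(y)$. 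By the size and regularity estimates \eqref{4.23x}--\eqref{4.23y} on $\widetilde{Q}_k$, each such function belongs to $\cg(\beta',\gamma')$ for every $\beta',\ \gamma' \in (0,\eta)$; a standard completion/limit argument (as used for instance in the proof of Proposition \ref{phb}(iii)) upgrades this to membership in $\cg(\eta,\eta)$, so $f_n \in \cg(\eta,\eta)$.

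The key step is to control $\|f - f_n\|_{\ihb}$. The difference $f - f_n$ has exactly the same discrete reproducing expansion as $f$, but with coefficient sequences supported in the complements
\[
\ce_n := \Bigl\{(0,\alpha,m):\alpha \in \ca_0 \setminus \ca_0^n\Bigr\} \cup \bigcup_{k=1}^{N}\Bigl\{(k,\alpha,m):\alpha \in \ca_k \setminus \ca_k^n\Bigr\} \cup \bigcup_{k=N+1}^{\infty}\Bigl\{(k,\alpha,m):\alpha \in \ca_k \setminus \ca_k^n \text{ or } k > n\Bigr\}.
\]
Applying Lemma \ref{6.9.1} with $\{P_k\}_{k \in \zz_+}$ taken to be the same $\exp$-IATI $\{Q_k\}_{k \in \zz_+}$, and using \eqref{10.15.6} to identify $|Q^{k,m}_{\alpha,1}(f)| \leq m_{\qa}(|Q_k f|)$, we obtain
\begin{align*}
\|f-f_n\|_{\ihb}
&\ls \left\{\sum_{k=0}^{N}\sum_{(k,\alpha,m)\in\ce_n}\mu(\qa)\bigl[m_{\qa}(|Q_k f|)\bigr]^p\right\}^{1/p}\\
&\quad + \left\{\sum_{k=N+1}^{\infty}\delta^{-ksq}\left[\sum_{(k,\alpha,m)\in\ce_n}\mu(\qa)\,|Q_k f(\ya)|^p\right]^{q/p}\right\}^{1/q}.
\end{align*}
Since $f \in \ihb$, the full sums (without the restriction to $\ce_n$) are finite and bound a constant multiple of $\|f\|_{\ihb}$. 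Since $\ca_k^n \uparrow \ca_k$ for every $k$ and $n \geq N+1 \to \infty$, the indicator functions $\mathbf{1}_{\ce_n}$ decrease pointwise to $0$ on the countable index set of triples $(k,\alpha,m)$. With $p \in [p(s,\beta\wedge\gamma),\infty)$ and $q \in (0,\infty)$ (both finite), the dominated convergence theorem on the counting measure (applied first to the sum in $(\alpha,m)$ for fixed $k$, then to the outer $\ell^q$-sum in $k$) gives $\|f-f_n\|_{\ihb} \to 0$ as $n \to \infty$. This proves density in $\ihb$. The argument for $\ihf$ is identical, replacing the Besov-type quasi-norm on the right-hand side with the Triebel--Lizorkin one from \eqref{fihpp}, and invoking the dominated convergence theorem in $L^p(\cx;\ell^q)$, which is valid since both $p$ and $q$ are assumed finite.

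The main obstacle is verifying that the truncated $f_n$ really sits inside $\cg(\eta,\eta)$ (not merely in some $\cg(\beta',\gamma')$) and, more substantively, ensuring that the indexing structure of the low-level averages $Q^{k,m}_{\alpha,1}(f)$ is compatible with Lemma \ref{6.9.1}. The former is handled by the completion construction of the test-function spaces, together with the fact that for $k \in \{0,\ldots,N\}$ one has $\delta^k \sim 1$, so $\widetilde{Q}_k(\cdot,\ya)$ is uniformly in $\cg(\eta,\eta)$. The latter is handled precisely by inequality \eqref{10.15.6}, which bounds $|Q^{k,m}_{\alpha,1}(f)|$ by the cube average $m_{\qa}(|Q_k f|)$, and hence fits into the left-hand side of the inhomogeneous Plancherel--P\^olya inequality \eqref{bihpp}.
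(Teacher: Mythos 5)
Your proof follows exactly the route the paper intends: the paper omits the argument for this lemma, saying only that it is ``similar to Lemma~\ref{12.15.2}'', and your truncation of the inhomogeneous discrete reproducing formula of Lemma~\ref{icrf}, followed by a Plancherel--P\^olya-type estimate on the remainder and dominated convergence (using that $p$ and $q$ are finite), is precisely that adaptation. Two steps, however, are stated more strongly than your justification supports. First, you cannot literally ``apply Lemma~\ref{6.9.1}'' to $f-f_n$: the right-hand sides of \eqref{bihpp} and \eqref{fihpp} are expressed in terms of $P_{k'}(f-f_n)$, not in terms of the coefficients of $f$ restricted to your complementary index set $\mathcal{E}_n$, so quoting the lemma with $P=Q$ gives nothing. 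What is actually needed --- and what the paper does in the homogeneous case via \eqref{es-ex}, \eqref{6.2.1} and \eqref{6.2.2} --- is to rerun the \emph{proof} of Lemma~\ref{6.9.1}: insert the truncated expansion of $f-f_n$ into $m_{\qa}(|Q_k(f-f_n)|)$ and $Q_k(f-f_n)(\ya)$, and use the almost-orthogonality bounds \eqref{qkpk} and \eqref{qkpkm} together with Lemmas~\ref{9.14.1} and \ref{10.18.5}; since restricting the summation index set only decreases those sums, this does yield the displayed bound with the sums over $\mathcal{E}_n$, after which your dominated convergence argument is fine.

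Second, membership of $f_n$ in $\cg(\beta',\gamma')$ for every $\beta',\ \gamma'\in(0,\eta)$ does \emph{not} ``upgrade'' to membership in $\cg(\eta,\eta)$; that implication is false in general (the intersection of the spaces $\cg(\beta',\gamma')$ over $\beta',\ \gamma'<\eta$ is strictly larger than $\cg(\eta,\eta)$). Either assert, as the paper does in the proof of Lemma~\ref{12.15.2}, that $f_n\in\cg(\eta,\eta)$ directly from the construction of the kernels $\widetilde{Q}_k$, or argue more safely as follows: $f_n$ is a finite linear combination of the functions $\widetilde{Q}_k(\cdot,\ya)$ and $\int_{\qo}\widetilde{Q}_0(\cdot,y)\,d\mu(y)$, which belong to $\cg^\eta_0(\wz\beta,\wz\gamma)$, the closure of $\cg(\eta,\eta)$ in the $\cg(\wz\beta,\wz\gamma)$-norm; by the continuous embedding $\cg(\wz\beta,\wz\gamma)\subset\ihb$ (resp.\ $\ihf$) of Proposition~\ref{10.20.1}(iii), any $\cg(\eta,\eta)$-approximation of $f_n$ in the $\cg(\wz\beta,\wz\gamma)$-norm is also an approximation in the $\ihb$ (resp.\ $\ihf$) quasi-norm, which is all that density requires. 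With these two repairs the argument is complete and coincides with the paper's intended proof.
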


\begin{lemma}\label{4.25.1}
Let $\theta\in(0,\infty)$, $\eta\in(0,1)$ be as in Definition \ref{10.23.2},
and $\{Q_k\}_{k=0}^\infty$ an {\rm exp-IATI}.
Then, for any $k\in\zz_+$, there exists a sequence
$\{Q_{k,j}\}_{j=0}^\infty$ of functions on $\cx\times\cx$ such that
\begin{enumerate}
\item[{\rm(i)}] for any $x\in\cx$
$$Q_k(x,\cdot)=\sum_{j=0}^\infty \delta^{j\theta}Q_{k,j}(x,\cdot)$$
both pointwisely and in $C^{\eta'}(\cx)$ with any given $\eta'\in(0,\eta]$;
\item[{\rm(ii)}] for any $k,\ j\in\zz_+$ and $\eta'$ as in (i), $Q_{k,j}(x,\cdot)$
is an $\eta'$-bump function associated to the ball $B(x,C_1\delta^{k-j})$,
with its $\eta'$-bump constant independent of $k$ and $j$,
where $C_1$ is as in Lemma \ref{4.8.1}(i). Moreover, for any $k\in\nn$, $j\in\zz_+$ and $x\in\cx$,
\begin{equation}\label{ivan-qkj}
\int_\cx Q_{k,j}(x,y)\,d\mu(y)=0.
\end{equation}
\end{enumerate}
\end{lemma}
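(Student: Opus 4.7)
The plan is to split the construction according to whether $k=0$ or $k\in\nn$, reducing most of the work to the homogeneous case already handled in Lemma \ref{4.9.1}.

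First I would treat the case $k\in\nn$. By Definition \ref{iati}(ii), for any $k\in\nn$ the kernel $Q_k$ in an exp-IATI satisfies conditions (ii) through (v) of Definition \ref{10.23.2}, hence it enjoys the same size, regularity, second-difference regularity, and cancellation properties as a kernel of an exp-ATI. Consequently, the construction in the proof of Lemma \ref{4.9.1} applies verbatim: let $\{\Phi_m\}_{m\in\zz}$ be the auxiliary cutoffs from Lemma \ref{4.8.1}, set
\begin{equation*}
A_{k,0}(x,y):=\Phi_k(x,y)Q_k(x,y),\qquad
A_{k,j}(x,y):=[\Phi_{k-j}(x,y)-\Phi_{k-j+1}(x,y)]Q_k(x,y)\ (j\in\nn),
\end{equation*}
and then correct the means by $a_{k,j}(x):=\int_\cx A_{k,j}(x,y)\,d\mu(y)$, $s_{k,j}(x):=\sum_{l=0}^j a_{k,l}(x)$, and the normalized cutoffs $\phi_{k,j}(x,y):=\Phi_{k-j}(x,y)/\int_\cx\Phi_{k-j}(x,z)\,d\mu(z)$, finally defining
\begin{equation*}
Q_{k,j}(x,y):=\delta^{-j\theta}\bigl\{A_{k,j}(x,y)-a_{k,j}(x)\phi_{k,j}(x,y)+s_{k,j}(x)[\phi_{k,j}(x,y)-\phi_{k,j+1}(x,y)]\bigr\}.
\end{equation*}
The cancellation $\int_\cx Q_k(x,y)\,d\mu(y)=0$ (which holds because $k\in\nn$) is used precisely as in Lemma \ref{4.9.1} to telescope the correction terms and recover the identity $Q_k(x,\cdot)=\sum_{j=0}^\infty\delta^{j\theta}Q_{k,j}(x,\cdot)$, while the bump estimate \eqref{4.10.1} is obtained by the same four-case analysis.

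The case $k=0$ requires a separate, in fact simpler, construction, because $Q_0$ no longer has cancellation (see Definition \ref{iati}(iii)) and the statement of (ii) only demands the vanishing integral \eqref{ivan-qkj} for $k\in\nn$. For $j\in\zz_+$ set
\begin{equation*}
A_{0,0}(x,y):=\Phi_0(x,y)Q_0(x,y),\qquad
A_{0,j}(x,y):=[\Phi_{-j}(x,y)-\Phi_{-j+1}(x,y)]Q_0(x,y)\ (j\in\nn),
\end{equation*}
and directly define $Q_{0,j}(x,y):=\delta^{-j\theta}A_{0,j}(x,y)$, with no mean-correction term. The telescoping identity $Q_0(x,\cdot)=\sum_{j=0}^\infty A_{0,j}(x,\cdot)$ holds pointwise by Lemma \ref{4.8.1}(iii) exactly as in \eqref{4.9.2}, and $Q_{0,j}(x,\cdot)$ is supported in $B(x,C_1\delta^{-j})$ by Lemma \ref{4.8.1}(i). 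The crucial size estimate
$$|A_{0,j}(x,y)|\lesssim \delta^{j\theta}\frac{1}{V_{\delta^{-j}}(x)}$$
is obtained by exploiting the exponential factor in the size bound furnished by Definition \ref{iati}(iii): when $j\ge1$ and $A_{0,j}(x,y)\ne0$ we have $d(x,y)\ge\delta^{-j+1}$, so $\exp(-\nu d(x,y)^a)$ majorizes any power $\delta^{j\theta}\delta^{-j\omega}$ (with room to spare), which combined with $V_1(x)\le V_{\delta^{-j}}(x)\lesssim\delta^{-j\omega}V_1(x)$ gives the claim; the regularity bound analogous to \eqref{4.25.4} is obtained similarly by the same case analysis on $d(x,y_1)$, $d(x,y_2)$ relative to $\delta^{-j+1}$, combining Lemma \ref{4.8.1}(ii) with the size and Lipschitz bounds of $Q_0$.

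The main technical point to be careful about is ensuring that the convergence of the series $\sum_{j=0}^\infty\delta^{j\theta}Q_{k,j}(x,\cdot)$ to $Q_k(x,\cdot)$ in $C^{\eta'}(\cx)$ is uniform in the appropriate sense, which follows by repeating the tail estimates \eqref{ceta} and \eqref{linf}; these extend routinely to the $k=0$ case thanks to the exponential decay of $Q_0$ that replaces the role of the cancellation used in the $k\in\nn$ case. I expect no genuine obstacle here: the inhomogeneous lemma is essentially the homogeneous one, with the $k=0$ piece being easier rather than harder because no cancellation correction is imposed.
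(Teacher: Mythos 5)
Your proposal matches the paper's proof: for $k\in\nn$ both repeat the Lemma \ref{4.9.1} construction with the mean-correction terms (which is where the cancellation of $Q_k$ is used), and for $k=0$ both simply set $Q_{0,j}:=\delta^{-j\theta}A_{0,j}$ with no correction, relying on the built-in decay of $Q_0$ to obtain the size bound $|A_{0,j}(x,y)|\lesssim\delta^{j\theta}/V_{\delta^{-j}}(x)$ (via the polynomial majorant $\big[\tfrac{1}{1+d(x,y)}\big]^{\Gamma}$ with $\Gamma=\omega+\theta$, applicable because $A_{0,j}(x,y)\neq0$ with $j\geq1$ forces $d(x,y)\geq\delta^{-j+1}$) and then the same four-case H\"older estimate and tail argument. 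This is correct and essentially the same route as the paper.
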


\begin{proof}
When $k\in\nn$, we use an argument similar to
that used in the proof of Lemma \ref{4.9.1} and
obtain the desired conclusion; we omit the details.
It suffices to show this lemma when $k=0$.
Let $\Phi_0$ be as in Lemma \ref{4.8.1}.
For any $x,\ y\in\cx$, let
$$A_{0,0}(x,y):=\Phi_0(x,y)Q_0(x,y)$$
and, for any $j\in\nn$,
$$A_{0,j}(x,y):=[\Phi_{-j}(x,y)-\Phi_{-j+1}(x,y)]Q_0(x,y).$$
Moreover, for any $j\in\zz_+$ and
$x,\ y\in\cx$, let
$$Q_{0,j}(x,y):=\delta^{-j\theta}A_{0,j}(x,y).$$
By Lemma \ref{4.8.1}(i),  it is easy to see that, for any $j\in\zz_+$ and $x\in\cx$,
$$\supp (Q_{0,j}(x,\cdot)) \subset B(x, C_1\delta^{-j}).$$
Then, using an argument similar to
that used in the estimations of \eqref{4.9.2} and \eqref{4.9.4},
we conclude that, for any $x,\ y\in\cx$,
\begin{equation}\label{4.25.2}
\sum_{j=0}^\infty \delta^{j\theta}Q_{0,j}(x,y)=\sum_{j=0}^{\infty} A_{0,j}(x,y)=Q_0(x,y)
\end{equation}
and, for any $j\in\zz_+$ and $x,\ y\in\cx$,
\begin{equation*}
A_{0,j}(x,y)\lesssim \delta^{j\theta}\frac{1}{V_{\delta^{-j}}(x)}.
\end{equation*}
From this and  an  argument similar to that
used in the estimations of \eqref{4.10.3}
and \eqref{4.25.4}, we deduce that, for
any $\eta'\in(0,\eta]$, $j\in\zz_+$ and $x,\ y_1,\ y_2\in\cx$,
\begin{equation*}
|A_{0,j}(x,y_1)-A_{0,j}(x,y_2)|\lesssim\left[\frac{d(y_1,y_2)}{\delta^{-j}}\right]^{\eta'}
\delta^{j\theta}\frac{1}{V_{\delta^{-j}}(x)},
\end{equation*}
which implies that, for
any $\eta'\in(0,\eta]$,  $j\in\zz_+$ and $x\in\cx$,
\begin{equation}\label{qoceta}
\|Q_{0,j}(x,\cdot)\|_{\dot{C}^{\eta'}(\cx)}=\delta^{-j\theta}\|A_{0,j}(x,\cdot)\|_{\dot{C}^{\eta'}(\cx)}
\lesssim \delta^{\eta'j}\frac{1}{V_{\delta^{-j}}(x)}.
\end{equation}
Finally, using an argument similar to that used in the estimations of \eqref{ceta} and \eqref{linf},
we obtain
$$
\lim_{N\to \infty} \left\|Q_0(x,\cdot)-\sum_{j=0}^N
\delta^{j\theta}Q_{0,j}(x,\cdot)\right\|_{C^{\eta'}(\cx)}=0,
$$
which completes the proof of Lemma
\ref{4.25.1}.
\end{proof}

\begin{remark}\label{expofit}
Let $\theta,\ \sigma\in(0,\infty)$ and $\beta,\ \gamma\in(0,\eta)$ with $\eta$  as in Definition \ref{10.23.2}.
Let $T$ be an inhomogeneous Calder\'on--Zygmund operator of order $(\eta,\sigma)$.
Using an argument similar to that used in Remark \ref{exoft},
we conclude that, for any exp-IATI $\{Q_k\}_{k=0}^\infty$ and $k\in\zz_+$,
$Q_kT:=\lim_{N\to\infty}(T^\ast(\sum_{j=0}^N\delta^{j\theta}Q_{k,j}(x,\cdot)))^\ast\in(\icgg)'$.
Moreover, by the Lebesgue dominated convergence theorem,
we have, for any $k,\ k'\in\zz_+$,
$$Q_kTQ_{k'}=\sum_{j=0}^\infty\sum_{j'=0}^\infty \delta^{j\theta}\delta^{j'\theta}Q_{k,j}TQ_{k',j'}.$$

We also point out that, in \cite[Proposition 2.25]{hmy08}, for any inhomogeneous Calder\'on--Zygmund
operator $T$, an expansion $\widetilde{T}$ from $C^\eta(\cx)$ to $(C_b^\eta(\cx))'$ with $\cx$ being an
RD-space was introduced. Using an argument similar to that used in the proof of Proposition \ref{tttp},
we deduce that, for any $k\in\zz_+$, $\varphi\in C_b^\eta(\cx)$ and $x\in\cx$,
$$\left\langle \widetilde{T}(Q_k(x,\cdot)),\varphi\right\rangle
=\left\langle \lim_{N\to\infty}T\left(\sum_{j=0}^N\delta^{j\theta}Q_{k,j}(x,\cdot)\right),\varphi\right\rangle;$$
we omit the details here.
\end{remark}

Now we prove  Theorem \ref{4.12.2}.

\begin{proof}[Proof of Theorem \ref{4.12.2}]
Let $\eta$ be as in Definition \ref{10.23.2},
$f\in\cg(\eta,\eta)$ and  $\{Q_k\}_{k=0}^\infty$ be an
exp-{\rm I}AT{\rm I}. Using Remark \ref{expofit}, Lemma \ref{icrf}
and an argument similar to that used in the proof of Theorem
\ref{12.15.1}, we know that, for any $k\in\zz_+$ and $x\in\cx$,
\begin{align*}
Q_k(Tf)(x)&=\sum_{\alpha' \in \ca_0}\sum_{m'=1}^{N(0,\alpha')}
\int_{\qop}(Q_kTQ_0(\cdot,y))(x)\,d\mu(y)\widetilde{Q}^{0,m'}_{\alpha',1}(f)\\
&\qquad+\sum_{k'=1}^{N'}\sum_{\alpha' \in \ca_{k'}}\sum_{m'=1}^{N(k',\alpha')}
\mu\left(\qap\right)\left(Q_kTQ_{k'}\left(\cdot,\yap\right)\right)(x)\widetilde{Q}^{k',m'}_{\alpha',1}(f)\\
& \qquad+\sum_{k'=N'+1}^\infty\sum_{\alpha' \in \ca_{k'}}
\sum_{m'=1}^{N(k',\alpha')}\mu\left(\qap\right)\left(Q_kTQ_{k'}\left(\cdot,\yap\right)\right)(x)
\widetilde{Q}_{k'}f\left(\yap\right),
\end{align*}
where all the notation is as in Lemma \ref{icrf}.

For any $k',\ k\in\zz_+$, $\alpha'\in\ca_{k'}$,  $m'\in\{1,\dots,N(k',\alpha')\}$, $x\in\cx$ and $y \in \qap$, let
$$J_{k,k'}(x,y):=(Q_kTQ_{k'}(\cdot,y))(x).$$
We now claim that, for any $k,\ k'\in\{0,\dots, N\}$ and $x,\ y\in\cx$,
\begin{equation}\label{4.12.4}
|J_{k,k'}(x,y)|\lesssim\left[\frac{1}{1+d(x,y)}\right]^\sigma\frac{1}{V_1(x)+V(x,y)};
\end{equation}
for any fixed $\eta'\in(0,\eta)$, and any $k\in \{N+1,N+2,\dots\}$ and
$k'\in\{0,\dots,N\}$, $k\in\{0,\dots, N\}$ and $k'\in \{N+1,N+2,\dots\}$,
or $k,\ k'\in \{N+1,N+2,\dots\}$, and any $x,\ y\in\cx$,
\begin{equation}\label{4.12.6}
|J_{k,k'}(x,y)|\lesssim\delta^{|k-k'|\eta'}\frac{1}{V_{\delta^{k\wedge k'}}(x)
+V(x,y)}\left[\frac{\delta^{k\wedge k'}}{\delta^{k\wedge k'}+d(x,y)}\right]^{\eta'}.
\end{equation}
Obviously, using an argument similar
to that used in the estimation of  \eqref{12.16.1},
we obtain \eqref{4.12.6} when $k,\ k'\in \{N+1,N+2,\dots\}$.
To show \eqref{4.12.4} and \eqref{4.12.6} when $k\in \{N+1,N+2,\dots\}$ and
$k'\in\{0,\dots,N\}$, or $k\in\{0,\dots, N\}$ and $k'\in \{N+1,N+2,\dots\}$,
by symmetry, we may assume $k\leq N$ and $k\leq k'$.
From Remark \ref{expofit}, we deduce that, for any $x,\ y\in\cx$,
\begin{equation}\label{qitq}
J_{k,k'}(x,y)=\sum_{j=0}^\infty\sum_{j'=0}^\infty\delta^{j\theta}\delta^{j'\theta}(Q_{k,j}TQ_{k',j'}(\cdot,y))(x).
\end{equation}
We claim that, for any $k,\ k'\in\{0,\dots, N\}$, $j,\ j'\in\zz_+$ and $x,\ y\in\cx$,
\begin{equation}\label{lessn}
|(Q_{k,j}TQ_{k',j'}(\cdot,y))(x)|\lesssim\delta^{-j(\omega+\sigma)}
\delta^{-j'(\omega+\sigma)}\left[\frac{1}{1+d(x,y)}\right]^\sigma\frac{1}{V_1(x)+V(x,y)}
\end{equation}
and, for any $k\in\{0,\dots, N\}$, $k'\in \{N+1,N+2,\dots\}$, $j,\ j'\in\zz_+$,
$\alpha'\in\ca_{k'}$, $m'\in\{1,N(k',\alpha')\}$ and $x\in\cx$,
\begin{equation}\label{kn}
\left|\left(Q_{k,j}TQ_{k',j'}\left(\cdot,\yap\right)\right)(x)\right|\lesssim\delta^{[(k'-j')-(k-j)]\eta'}
\frac{\delta^{-j'\omega}}{V_{\delta^{k}}(x)+V(x,\yap)}\left[\frac{\delta^{k-j-j'}}
{\delta^k+d(x,\yap)}\right]^{\eta'}.
\end{equation}
Here and thereafter, for any $k\in\zz_+$, $\{Q_{k,j}\}_{j=0}^\fz$ is as in Lemma \ref{4.25.1} with
$\theta\in(0,\fz)$ determined later.
Observe that the estimation of \eqref{12.16.2} only needs the cancellation of $Q_{k',j'}$.
Then, using an argument similar to that used in
the estimation of \eqref{12.16.2}, we conclude that \eqref{kn} holds true.

To prove \eqref{lessn}, we consider two cases.

{\it Case 1) $d(x,y)<3A_0^2C_1\delta^{-N}\delta^{k-j'-j}$ with $C_1$ as in Lemma \ref{4.8.1}(i)}.
By \eqref{4.10.1}, \eqref{qoceta} and  Definition \ref{4.12.1}(ii)$'$,
we know that, for any $k\in\{0,\dots,N\}$ and $j\in\zz_+$,
$$\|(T^\ast (Q_{k,j}(x,\cdot)))^\ast\|_{L^\infty(\cx)}
\lesssim \frac{1}{V_{\delta^{k-j}}(x)}\lesssim \frac{1}{V_1(x)},$$
which implies that
$$|(Q_{k,j}TQ_{k',j'}(\cdot,y))(x)|=\left|\int_\cx(T^\ast (Q_{k,j}(x,\cdot)))^\ast
Q_{k',j'}(z,y)\,d\mu(z)\right|\lesssim \frac{1}{V_1(x)}.$$
Notice that, when $d(x,y)<3A_0^2\delta^{-N}\delta^{k-j'-j}$,
then $1+d(x,y)\lesssim \delta^{-j'-j}$. From this, we deduce that
$$\frac{1}{V_1(x)}\lesssim\delta^{-j\omega}\delta^{-j'\omega}
\frac{1}{V_{\delta^{-j'-j}}(x)}\lesssim\delta^{-j\omega}\delta^{-j'\omega}\frac{1}{V_1(x)+V(x,y)}.$$
By this, we further conclude that, for any $k,\ k'\in\{0,\dots, N\}$ and $j,\ j'\in\zz_+$,
\begin{equation}\label{xyclose}
|(Q_{k,j}TQ_{k',j'}(\cdot,y))(x)|\lesssim\delta^{-j\omega}\delta^{-j'\omega}\frac{1}{V_1(x)+V(x,y)}
\left[\frac{\delta^{-j'-j}}{1+d(x,y)}\right]^\sigma.
\end{equation}

{\it Case 2)} $d(x,y)\geq3A_0^2C_1\delta^{-N}\delta^{k-j'-j}$. In this case, it is easy to see that
$$B(x,C_1\delta^{k-j})\cap B(y,C_1\delta^{k'-j'})=\emptyset.$$
Moreover, for any $z\in B(y,C_1\delta^{k'-j'})$ and $w\in B(x,C_1\delta^{k-j})$,
\begin{align}\label{4.13.3}
d(w,y)&\geq A_0^{-1}[d(x,y)-A_0d(w,x)]\geq(2A_0)^{-1}d(x,y)
\end{align}
and
\begin{equation}\label{4.13.4}
d(w,z)\geq A_0[d(w,y)-A_0d(y,z)]\geq(2A_0)^{-1}d(w,y).
\end{equation}
From \eqref{4.13.3} and \eqref{4.13.4}, we deduce that
$$V(w,z)\gtrsim V(w,y)\sim V(y,w)\gtrsim V(y,x)\sim V(x,y).$$
Besides, we also find that
\begin{align*}
d(w,z)&\geq A_0^{-1}[d(x,z)-A_0d(x,w)]\\
&\geq A_0^{-1}\{A_0^{-1}[d(x,y)-A_0d(y,z)]-A_0d(x,w)\}\\
&>A_0^{-1}[3A_0C_1\delta^{-N}\delta^{k-j'-j}-\delta^{k'-j'}-A_0\delta^{k-j}]
\geq 1.
\end{align*}
Thus, by Definition \ref{4.12.1}(iii)$'$, we have
\begin{align*}
|(Q_{k,j}TQ_{k',j'}(\cdot,y))(x)|&\lesssim \int_{\cx}\int_{\cx}|Q_{k,j}(w,x)|
\frac{1}{V(w,z)}\frac{1}{[d(w,z)]^\sigma}|Q_{k',j'}(z,y)|\,d\mu(w)d\mu(z)\\
&\lesssim \frac{1}{V(x,y)}\frac{1}{[d(x,y)]^\sigma}\\
&\lesssim\delta^{-j\omega}\delta^{-j'\omega}\frac{1}{V_1(x)+V(x,y)}
\left[\frac{\delta^{-j'-j}}{1+d(x,y)}\right]^\sigma,
\end{align*}
which, together with \eqref{xyclose}, implies
that \eqref{lessn} holds true. From \eqref{qitq} and  \eqref{lessn}
via choosing $\theta > \omega+\sigma$, we deduce that \eqref{4.12.4} holds true.
Thus, \eqref{4.12.4} and \eqref{4.12.6} hold true.

Using these estimates and an argument similar to that used in the
estimation of  \eqref{6.9.1}, we obtain
$\|Tf\|_{\ihb}\lesssim\|f\|_{\ihb}$ and $\|Tf\|_{\ihf}\lesssim\|f\|_{\ihf}$.
Finally, by a density argument, we complete  the proof of Theorem \ref{4.12.2}.
\end{proof}

At the end of this section, we consider the boundedness
of Calder\'on--Zygmund operators on $\hfi$ and $\ihfi$.

\begin{theorem}\label{cz}
Let $\beta,\ \gamma\in(0,\eta)$ with $\eta$ as in Definition \ref{10.23.2},
$s\in (-(\beta\wedge\gamma),\beta\wedge\gamma)$
and $q\in(p(s,\beta\wedge\gamma),\infty]$ with $p(s,\beta\wedge\gamma)$
as in \eqref{pseta}. Let $\hfi$ be as in Definition \ref{hfi}
and $T$  a Calder\'on--Zygmund operator of  order $\eta$
as in Definition \ref{4.11.2}. Then $T$ is bounded
from $\hfi\cap\mathring{\cg}(\eta,\eta)$ to $\hfi$.
\end{theorem}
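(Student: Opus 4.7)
The plan is to follow closely the proof of Theorem \ref{12.15.1}, replacing the homogeneous Plancherel--P\^olya inequality of Lemma \ref{5.12.1} by its $p=\infty$ counterpart, Lemma \ref{11.14.1}. For $f\in\hfi\cap\mathring{\cg}(\eta,\eta)$, fix an exp-ATI $\{Q_k\}_{k\in\zz}$ defining the $\hfi$ quasi-norm. By Proposition \ref{12.2.7} (applied with parameters satisfying \eqref{12.1.2}), $f\in(\cggi)'$, so the discrete Calder\'on reproducing formula of Lemma \ref{crf} yields
$$f(\cdot)=\sum_{k'\in\zz}\sum_{\alpha'\in\ca_{k'}}\sum_{m'=1}^{N(k',\alpha')}\mu\left(\qap\right)Q_{k'}(\cdot,\yap)\widetilde{Q}_{k'}(f)(\yap)$$
in $(\cggi)'$. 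Applying $Q_kT$ termwise, with $T$ acting on each $Q_{k'}(\cdot,\yap)$ through the limiting pairing set up in Remark \ref{exoft}, one obtains
$$Q_k(Tf)(x)=\sum_{k',\alpha',m'}\mu\left(\qap\right)I_{k,k'}\left(x,\yap\right)\widetilde{Q}_{k'}(f)\left(\yap\right),\qquad I_{k,k'}(x,y):=(Q_kTQ_{k'}(\cdot,y))(x).$$

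The crux of the argument is to prove the almost orthogonality estimate, for any fixed $\eta'\in(|s|,\eta)$,
$$\left|I_{k,k'}\left(x,\yap\right)\right|\lesssim\delta^{|k-k'|\eta'}\frac{1}{V_{\delta^{k\wedge k'}}(x)+V(x,\yap)}\left[\frac{\delta^{k\wedge k'}}{\delta^{k\wedge k'}+d(x,\yap)}\right]^{\eta'},$$
which is precisely \eqref{12.16.1} from the proof of Theorem \ref{12.15.1}. Its verification goes through here unchanged: decompose both $Q_k(x,\cdot)$ and $Q_{k'}(\cdot,\yap)$ by Lemma \ref{4.9.1} with $\theta>2(\omega+\eta)$ into infinite sums of normalized $\eta'$-bump functions, estimate each $(Q_{k,j}TQ_{k',j'}(\cdot,\yap))(x)$ by Definition \ref{4.11.2} after splitting into the geometric cases $d(x,\yap)\le 3A_0^2C_1\delta^{(k\wedge k')-j-j'}$ and its complement, and sum in $j,j'$. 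Once \eqref{12.16.1} is established, the expression for $Q_k(Tf)(x)$ displayed above has exactly the same structure as the one obtained in the proof of Lemma \ref{11.14.1} after inserting \eqref{pp}; the remaining combinatorial--geometric argument from that proof (the decomposition of cube sums via the sets $\cb_\alpha^{l,0}$ and $\cb_\alpha^{l,j}$ together with the observation \eqref{eq-x1}, and the case split $q\in(p(s,\beta\wedge\gamma),1]$ versus $q\in(1,\infty]$) then applies verbatim, yielding $\|Tf\|_{\hfi}\lesssim\|f\|_{\hfi}$.

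The main obstacle is the derivation of \eqref{12.16.1}, which is delicate because the kernel $Q_{k'}(\cdot,\yap)$ has unbounded support, so $TQ_{k'}(\cdot,\yap)$ is a priori undefined. This is resolved exactly as in Remark \ref{exoft}: the bump decomposition of Lemma \ref{4.9.1} reduces matters to the compactly supported pieces $Q_{k',j'}(\cdot,\yap)$, on which $T$ acts via Definition \ref{4.11.2}, and the factor $\delta^{(j+j')\theta}$ with $\theta$ taken sufficiently large dominates the polynomial growth $\delta^{-(j+j')\omega}$ coming from the Calder\'on--Zygmund bounds, guaranteeing absolute convergence of the double sum $\sum_{j,j'}\delta^{(j+j')\theta}(Q_{k,j}TQ_{k',j'}(\cdot,\yap))(x)$ to a well-defined $I_{k,k'}(x,\yap)$. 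A secondary but necessary check is that $Tf\in(\cggi)'$ for appropriate $\beta,\gamma$, so that Lemma \ref{11.14.1} can be invoked on $Tf$; this follows from $f\in\mathring{\cg}(\eta,\eta)$ combined with the continuity of $T$ on $C_b^{\eta'}(\cx)$ granted by Definition \ref{4.11.2}.
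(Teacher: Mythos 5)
Your proposal is correct and follows essentially the same route as the paper: the paper's proof of Theorem \ref{cz} consists of one sentence invoking the almost-orthogonality estimate \eqref{12.16.1} established in the proof of Theorem \ref{12.15.1}, together with the argument from the $p=\infty$ Plancherel--P\^olya inequality (Lemma \ref{11.14.1}; the paper's reference to Lemma \ref{11.14.2} is a typo). Your write-up merely fills in the details that the paper leaves implicit, including the correct remark that $\eta'$ must be chosen close enough to $\eta$ to play the role of $\gamma$ in the Lemma \ref{11.14.1} computation.
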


\begin{proof}
By \eqref{12.16.1}, which is analogous to \eqref{pp} in the proof of
Lemma \ref{5.12.1}, we can use an argument similar to that used in the
proof of Lemma \ref{11.14.2} to obtain
$\|Tf\|_{\hfi}\lesssim \|f\|_{\hfi}$; we
omit the details. This finishes the proof of Theorem \ref{cz}.
\end{proof}

\begin{theorem}\label{czi}
Let $\beta,\ \gamma\in(0,\eta)$ with $\eta$ as in Definition \ref{10.23.2},
$s\in (-(\beta\wedge\gamma),\beta\wedge\gamma)$
and $q\in(p(s,\beta\wedge\gamma),\infty]$ with $p(s,\beta\wedge\gamma)$
as in \eqref{pseta}. Let $\ihfi$ be as in Definition \ref{ihfi}
and $T$  a Calder\'on--Zygmund operator of order $(\eta,\sigma)$
with $\sigma\in(0,\infty)$  as
in Definition \ref{4.12.1}. Then $T$ is bounded  from $\ihfi\cap\mathring{\cg}(\eta,\eta)$ to $\ihfi$.
\end{theorem}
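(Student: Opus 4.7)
The plan is to combine the kernel estimates already produced in the proof of Theorem \ref{4.12.2} with the classification/summation argument developed in the proof of Lemma \ref{12.3.2}, exactly as Theorem \ref{cz} was deduced from Lemma \ref{11.14.1} via the estimate \eqref{12.16.1}.

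First, pick an exp-IATI $\{Q_k\}_{k=0}^\infty$ and let $N\in\nn$ be as in Lemma \ref{icrf}. For $f\in \ihfi\cap \mathring{\cg}(\eta,\eta)$ and any $k\in\zz_+$, I will apply the inhomogeneous discrete Calder\'on reproducing formula (Lemma \ref{icrf}) to $Tf$, which is legitimate because by Remark \ref{expofit} the composition $Q_k T$ is a well-defined element of $(\icgg)'$ and $Tf\in (\icgg)'$. This gives a representation
$$
Q_k(Tf)(x)=\sum_{\alpha'\in\ca_0}\sum_{m'=1}^{N(0,\alpha')}\int_{\qop}J_{k,0}(x,y)\,d\mu(y)\,\widetilde{Q}^{0,m'}_{\alpha',1}(f)+\sum_{k'=1}^{N}\cdots+\sum_{k'=N+1}^{\infty}\cdots,
$$
with $J_{k,k'}(x,y):=(Q_kTQ_{k'}(\cdot,y))(x)$, in parallel with the display following Remark \ref{expofit} in the proof of Theorem \ref{4.12.2}.

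Second, I would invoke the kernel bounds \eqref{4.12.4} and \eqref{4.12.6} which are already established in the proof of Theorem \ref{4.12.2}: when $k,k'\in\{0,\dots,N\}$ the kernel $J_{k,k'}$ obeys the bump-type estimate with decay $[1+d(x,y)]^{-\sigma}$, while in the remaining three cases it obeys the almost-orthogonality estimate $\delta^{|k-k'|\eta'}$ multiplied by the standard exp-IATI-type size factor. These are the precise analogues of \eqref{pp} and of the $\delta^{|k-k'|\eta'}$ decay used in the proof of Lemma \ref{12.3.2}; the decay parameter $\sigma$ controls exactly the inhomogeneous ``low-frequency'' part which in Lemma \ref{12.3.2} was controlled by the bound \eqref{3.6x}.

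Third, I would mimic the case-splitting of the proof of Lemma \ref{12.3.2}. Split $Q_k(Tf)$ according to whether $k\in\{0,\dots,N\}$ or $k>N$, and for each such $k$ split the reproducing sum over $k'$ into the three ranges $k'=0$, $1\le k'\le N$, and $k'\ge N+1$. The low--low and low--high pieces $(k\le N)$ are handled by \eqref{4.12.4} together with the Lemma \ref{6.15.1}(ii)-type integrals and the bound $|\widetilde{Q}^{k',m'}_{\alpha',1}(f)|\le m_{\qap}(|\widetilde{Q}_{k'}f|)\le \|f\|_{\ihfi}$; this yields the bound by the first (cube-average) piece of $\|f\|_{\ihfi}$, and the contributions from $k'\ge N+1$ are handled by the $\delta^{|k-k'|\eta'}$ estimate exactly as in \eqref{12.4.1}--\eqref{12.4.2}. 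The high $(k>N)$ case reduces, via the classification $Q_\tau^{k,m}\subset Q_\alpha^l$ and the argument based on \eqref{eq-x1}--\eqref{11.15.1} in the proof of Lemma \ref{11.14.1}, to a sum that is controlled by the sup-average quasi-norm of $\ihfi$; the low-frequency ($k'\le N$) input in this case is absorbed via \eqref{4.12.6} together with the crude bound $\sup_{z\in\qa}|\mathrm{I}_4+\mathrm{I}_5|\lesssim \delta^{k\Gamma}\|f\|_{\ihfi}$ analogous to \eqref{10.18.8} and \eqref{12.5.2}.

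The main obstacle I anticipate is the bookkeeping in the high-high regime $k,k'\ge N+1$: one must geometrically decompose the dyadic cubes $Q_\alpha^l$ with $l>N$ relative to the reference cubes at scale $k'$ exactly as in \eqref{eq-x1}--\eqref{11.15.1}, which crucially uses the geometric cardinality bound \eqref{11.15.1}; but since the relevant $\delta^{|k-k'|\eta'}$ almost-orthogonality in \eqref{4.12.6} is formally identical to \eqref{pp}, this step is a faithful transcription of the argument in Lemma \ref{12.3.2}. Once all three contributions are summed, one obtains
$\|Tf\|_{\ihfi}\lesssim \|f\|_{\ihfi}$, and the theorem follows; the details are routine repetitions of the arguments in Lemma \ref{12.3.2} and the proof of Theorem \ref{4.12.2} and will be omitted.
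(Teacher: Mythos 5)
Your proposal is correct and follows essentially the same route as the paper: the paper's proof of Theorem \ref{czi} consists precisely of invoking the kernel estimates \eqref{4.12.4} and \eqref{4.12.6} from the proof of Theorem \ref{4.12.2} and then repeating the summation/classification argument of Lemma \ref{12.3.2}, with the details omitted. Your additional remarks on the high-high regime and the use of \eqref{eq-x1}--\eqref{11.15.1} correctly identify where the geometric bookkeeping enters, exactly as in the paper's scheme.
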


\begin{proof}
Using \eqref{4.12.4}, \eqref{4.12.6} and an  argument similar
to that used in the proof of Lemma \ref{12.3.2}, we obtain
$\|Tf\|_{\ihfi}\lesssim \|f\|_{\ihfi}$; we omit the details.
This finishes the proof of Theorem \ref{czi}.
\end{proof}

\bigskip

\noindent Fan Wang, Ziyi He and Dachun Yang (Corresponding author)

\medskip

\noindent Laboratory of Mathematics and Complex Systems (Ministry of Education of China),
School of Mathematical Sciences, Beijing Normal University, Beijing 100875, People's Republic of China

\smallskip

\noindent{\it E-mails:} \texttt{fanwang@mail.bnu.edu.cn} (F. Wang)

\noindent\phantom{{\it E-mails:} }\texttt{ziyihe@mail.bnu.edu.cn} (Z. He)

\noindent\phantom{{\it E-mails:} }\texttt{dcyang@bnu.edu.cn} (D. Yang)

\bigskip

\noindent Yongsheng Han

\medskip

\noindent Department of Mathematics, Auburn University, Auburn, AL 36849-5310, USA

\smallskip

\noindent{\it E-mail:} \texttt{hanyong@auburn.edu}

\end{document}